\newtheorem{definition}{Definition}[section]
\newtheorem{theorem}{Theorem}[section]
\newtheorem{lemma}{Lemma}[section]
\newtheorem{proposition}{Proposition}[section]
\newtheorem{axiom}{Axiom}
\newtheorem{remark}{Remark}[section]
\begin{document}
\title{General Motivic Cohomology and Symplectic Orientation}
\author{Nanjun Yang}
\thanks{This work has been partially supported by ERC ALKAGE}
\address{Institut Fourier-UMR 5582 \\ Universit\'e Grenoble-Alpes \\ CS 40700 \\ 38058 Grenoble Cedex 9 \\ France}
\email{ynj.t.g@126.com}

\keywords{Correspondence theory, Motivic cohomology, Symplectic orientation}
\subjclass[2010]{Primary: 14C15, 14C17, 14F42}

\begin{abstract}
In this paper, we present a general approach to establish motivic cohomology and build part of its six operations formalism. Applying this together with symplectic orientation on MW-motivic cohomology, we discuss the embedding theorem of effective Chow-Witt motives.
\end{abstract}

\maketitle
\tableofcontents
\section{Introduction}\label{Introduction}
Suppose \(E^n_C(X)\) is a kind of cohomology theory for smooth schemes, for example, \(E\) is Chow rings (CH) or \(E\) is Chow-Witt rings (\(\widetilde{CH}\), See \cite{F}). We could construct a series of bigraded cohomologies \(H_E^{p,q}(X,\mathbb{Z})\) such that
\[H_E^{2p,p}(X,\mathbb{Z})=E^p(X).\]
They are called \emph{motivic cohomologies}. The case \(E=CH\) has been done by V. Voevoedsky and the case \(E=\widetilde{CH}\) is developing by B. Calm\`es, F. D\'eglise and J. Fasel (See \cite{CF}, \cite{DF}). The main feature in the case \(E=\widetilde{CH}\) is that the Thom isomorphism like
\[E_C^n(X)\longrightarrow E_C^{n+rk(V)}(V)\]
doesn't exist any more for \(V\) vector bundle over \(X\) and \(C\) closed in \(X\). This gives an example of a non-oriented cohomology and the only solution is to make the cohomology theory dependent on the choice of vector bundles, namely, to expand the notation to \(E^n_C(X,V)\) so that 
\[E^n_C(X,V)\cong E_C^{n+rk(V)}(V)\]
canonically. Here \(V\) is called the \emph{twist} of the correspondence \(E^n_C(X,V)\) by historical reasons. So this gives rise to our general definition of motivic cohomology, which still works under the non-oriented cases.

The first step is to generalize the concept of correspondence in the case of non-oriented cohomology. This requires us to understand the twist \(V\). The serious approach to that is to regard it as an element in the category of virtual vector bundles. We will describe what is called a correspondence theory (See Section \ref{Correspondence Theory}) and provide a systematic method characterizing behaviors of twists and doing calculation (See Section \ref{Virtual Objects and Their Calculation}) on them. And as an example, we will prove that the theory of MW-correspondences is indeed a correspondence theory (See Section \ref{MW-Correspondence as a Correspondence Theory}, this part is still in progress).

Then, given a correspondence theory \(E\), we will establish the theory of sheaves with \(E\)-transfers over any smooth base and some operations, such as \(f^*\), \(f_{\#}\) and \(\otimes\), on those sheaves (See Section \ref{Sheaves}). And the category of effective and stabilized motives will be realized by localizing bounded above derived categories of sheaves with \(E\)-transfers, with the same operations above inherited, as derived functors (See Section \ref{Operations on Localized Categories}). Finally we will compare our approach with the method using unbounded complexes and its model structures, for example, \cite{CD}, \cite{CD1} and \cite{DF}.

Last but not the least, in the case \(E=\widetilde{CH}\), we will calculate the Thom space of symplectic bundles over any smooth base by using results in \cite{Y}, which will enable us to calculate the inverse Thom space of any vector bundle. Then by using the operations we defined in Section \ref{Operations on Localized Categories} together with the duality in the stable \(\mathbb{A}^1\)-derived categories (See \cite{CD1}, \cite{DF1}), we could figure out the \(Hom\)-group between motives of proper schemes, as shown below (See \cite{DF} and \cite{MVW} for notations):
\begin{theorem}
(See Theorem \ref{embedding}) Let \(X, Y\in Sm/k\) with \(Y\) proper, then we have
\[Hom_{\widetilde{DM}^{eff,-}}(\widetilde{M}(X),\widetilde{M}(Y))\cong\widetilde{CH}^{d_Y}(X\times Y,\omega_{X\times Y/X}).\]
\end{theorem}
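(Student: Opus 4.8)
The plan is to reduce the computation to a duality statement in the stabilized category and then to identify the resulting twisted cohomology group with a Chow-Witt group. Since $Y\in Sm/k$ is smooth and proper, $\widetilde{M}(Y)$ is strongly dualizable in the stable $\mathbb{A}^1$-derived category $\widetilde{DM}$; by Atiyah duality for smooth proper schemes (\cite{CD1}, \cite{DF1}) its dual is the Thom motive of the negative tangent bundle, $\widetilde{M}(Y)^\vee\cong\widetilde{M}(Y;-T_Y)$. Using the cancellation theorem to identify $Hom_{\widetilde{DM}^{eff,-}}$ with $Hom_{\widetilde{DM}}$ on effective motives, together with the tensor--Hom adjunction for the dualizable object $\widetilde{M}(Y)$, I would first rewrite
\[
Hom_{\widetilde{DM}^{eff,-}}(\widetilde{M}(X),\widetilde{M}(Y))\cong Hom_{\widetilde{DM}}(\widetilde{M}(X)\otimes\widetilde{M}(Y)^\vee,\mathbf{1}).
\]
By monoidality of $\widetilde{M}$ and base change of the Thom twist along $p_Y\colon X\times Y\to Y$, the left tensor factor becomes the Thom motive $\widetilde{M}(X\times Y;-p_Y^*T_Y)$.

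Next I would make the negative Thom twist explicit. Here the operations of Section \ref{Operations on Localized Categories} together with the computation of Thom spaces of symplectic bundles via \cite{Y} enter: they produce the \emph{inverse Thom space} of an arbitrary vector bundle, hence a non-oriented Thom isomorphism. Applied to $V=p_Y^*T_Y$ of rank $d_Y$, this should give
\[
\widetilde{M}(X\times Y;-V)\cong\widetilde{M}(X\times Y)(-d_Y)[-2d_Y]\otimes\gamma_{\det V},
\]
where $\gamma_{\det V}$ denotes the twist by the determinant line bundle, recording exactly the failure of orientability. Transporting the Tate shift and the line-bundle twist across the Hom and invoking the representability of MW-motivic cohomology, I obtain
\[
Hom_{\widetilde{DM}}(\widetilde{M}(X\times Y;-V),\mathbf{1})\cong\widetilde{H}^{2d_Y,d_Y}(X\times Y,(\det V)^\vee).
\]

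Finally I would match the twist and the degree. One has $(\det V)^\vee=\det(p_Y^*T_Y)^\vee=p_Y^*\omega_Y=\omega_{X\times Y/X}$, the relative canonical sheaf, and $\widetilde{H}^{2q,q}(Z,\mathcal{L})=\widetilde{CH}^q(Z,\mathcal{L})$ in weight $q$. Taking $Z=X\times Y$, $q=d_Y$ and $\mathcal{L}=\omega_{X\times Y/X}$ yields the asserted isomorphism with $\widetilde{CH}^{d_Y}(X\times Y,\omega_{X\times Y/X})$, which is precisely the group of MW-correspondences, so that the statement expresses the expected full faithfulness.

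The main obstacle I expect is the non-oriented Thom isomorphism: unlike the oriented case there is no Thom isomorphism for an arbitrary $V$, and one must genuinely construct the inverse Thom space through the symplectic-orientation computation of \cite{Y}, keeping precise track of the determinant line bundle so that the twist assembles to the relative canonical $\omega_{X\times Y/X}$ and not merely to a Tate twist. A secondary technical point is the passage between the effective category $\widetilde{DM}^{eff,-}$, where the statement lives, and the stabilized category $\widetilde{DM}$, where duality is available; this needs the cancellation theorem and care that the negatively twisted dual is manipulated only after stabilization, while the final Hom-group remains effective.
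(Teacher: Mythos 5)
Your outline follows the same architecture as the paper's proof: pass to the stabilized category via the fully faithful \(\Sigma^{\infty,st}\) (Propositions \ref{ff3} and \ref{fff}, which is where the cancellation theorem of \cite{DF} enters), apply Atiyah duality for the smooth proper \(Y\) imported from the stable \(\mathbb{A}^1\)-derived category (Propositions \ref{duality} and \ref{duality1}), use the projection formula to rewrite everything over \(X\times Y\), and identify the resulting Hom-group out of a Thom space with a twisted Chow--Witt group. The one place you genuinely diverge is the inverse Thom space. You assert \(\widetilde{M}(X\times Y;-V)\cong\widetilde{M}(X\times Y)(-d_Y)[-2d_Y]\otimes\gamma_{\det V}\), i.e.\ a determinant-twisted Thom isomorphism for an arbitrary \(V\); the paper never proves such a statement and deliberately avoids needing it. Instead it only uses the symplectic Thom isomorphism (Theorem \ref{Thom}) applied to the canonically symplectic bundle \(V\oplus V^{\vee}\), which gives \(Th(V)\otimes Th(V^{\vee})\cong\mathbbm{1}(2d_Y)[4d_Y]\) and hence \((\Sigma^{\infty,st}Th(V))^{-1}\cong(\Sigma^{\infty,st}Th(V^{\vee}))(-2d_Y)[-4d_Y]\) (Proposition \ref{symplectic orientation}): the inverse Thom space is expressed as another Thom space, namely that of \(\Omega_{X\times Y/X}\), rather than untwisted into a Tate twist tensored with a line bundle. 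The final identification \(Hom(Th_{pt}(\Omega_{X\times Y/X}),\mathbbm{1}_{pt}(2d_Y)[4d_Y])\cong\widetilde{CH}^{d_Y}(X\times Y,-T_{X\times Y/X})\) is then quoted from the representability discussion after Remark 4.2.7 of \cite{DF}, and the twist \(-T_{X\times Y/X}\) agrees with \(\omega_{X\times Y/X}\) since only the determinant matters, exactly as you note. So your route is viable but would require the line-bundle-twisted Thom isomorphism as an additional input not established in this paper; following the paper's bookkeeping, you can keep the Thom space of the cotangent bundle throughout and sidestep that ingredient entirely.
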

This shows that the opposite category of effective Chow-Witt motives (See Definition \ref{effective Chow-Witt}) is a full subcategory of \(\widetilde{DM}^{eff,-}\), which is called the embedding theorem of \(\widetilde{CH}^{eff}\). A parallel result in motivic cohomology defined by V. Voevodsky says that (See \cite[Proposition 20.1]{MVW})
\[Hom_{DM^{eff,-}}(M(X),M(Y))\cong CH^{d_Y}(X\times Y),\]
for proper \(Y\), hence the opposite category of Grothendieck's effective Chow motives (\(CH^{eff}\)) is a full subcategory of \(DM^{eff,-}\).

Throughout in this article, we denote by \(Sm/k\) the category of separated schemes being smooth over \(k\) with some relative dimension (See \cite[Chapter 10]{H}), where \(k\) is an infinite perfect field with \(char(k)\neq 2\). For any \(X\in Sm/k\), we denote \(dimX\) by \(d_X\) and for any \(f:X\longrightarrow Y\) in \(Sm/k\), we set \(d_f=d_X-d_Y\).
\section{Virtual Objects and Their Calculation}\label{Virtual Objects and Their Calculation}
In this section we will introduce the category of virtual vector bundles and explain basic techniques of calculation. The definitions all come from \cite[Section 4]{D1} and we may state them here as well for clarity.
\begin{definition}\label{Picard}
(See \cite[4.1]{D1}) A category \(\mathscr{C}\) is called a commutative Picard category if
\begin{enumerate}
\item All morphisms are isomorphisms.
\item There is a bifunctorial pairing
\[+:\mathscr{C}\times\mathscr{C}\longrightarrow\mathscr{C}\]
satisfying
\begin{enumerate}
\item For every \(x,y,z\in\mathscr{C}\), an associativity isomorphism
\[a(x,y,z):(x+y)+z\longrightarrow x+(y+z);\]
\item For every \(x,y\in\mathscr{C}\), a commutativity isomorphism
\[c(x,y):x+y\longrightarrow y+x.\]
\end{enumerate}
And they satisfy associativity and commutativity constraints (See \cite{Mac}).
\item For every \(P\in\mathscr{C}\), the functors \(X\longmapsto P+X\) and \(X\longmapsto X+P\) are equivalences of categories. Thus there is a unity element \(0\) such that \(0+X\cong X\) for every \(X\in\mathscr{C}\), there is a \(-X\in\mathscr{C}\) such that \(X+(-X)\cong 0\).
\end{enumerate}
\end{definition}
\begin{definition}
Let \(X\) be a scheme, define \(Vect(X)\) to be the category of vector bundles over \(X\). Denote by \((Vect(X),iso)\) the subcategory of \(Vect(X)\) with same objects but picking only isomorphisms as morphisms.
\end{definition}
\begin{definition}\label{bracket}
(See \cite[4.3]{D1}) Let \(\mathscr{C}\) be a commutative Picard category and \(X\) be a scheme, we say it has a bracket functor on \(X\) if there is a functor
\[[-]:(Vect(X),iso)\longrightarrow\mathscr{C}\]
such that
\begin{enumerate}
\item For any exact sequence of vector bundles
\[0\longrightarrow E_1\longrightarrow E_2\longrightarrow E_3\longrightarrow 0,\]
there is an isomorphism \(\Sigma:[E_2]\longrightarrow[E_1]+[E_3]\) being natural with respect to isomorphisms between exact sequences.
\item There is an isomophism \(z:[0]\longrightarrow 0\) such that for every \(E\in Vect(X)\), the composition
\[\xymatrix{[E]\ar[r]^-{\Sigma}&[0]+[E]\ar[r]^z&0+[E]\ar[r]&[E]}\]
is \(id_{[E]}\).
\item (See Remark \ref{precise}) For every consecutive subbundle inclusions \(E_1\subseteq E_2\subseteq E_3\), there is a commutative diagram
\[
	\xymatrix
	{
		[E_3]\ar[r]^-{\Sigma}\ar[d]_{\Sigma}	&[E_1]+[E_3/E_1]\ar[d]_{\Sigma}\\
		[E_2]+[E_3/E_2]\ar[r]^-{\Sigma}		&[E_1]+[E_2/E_1]+[E_3/E_2]
	}.
\]
\item For every \(E_1, E_2\), there is a commutative diagram
\[
	\xymatrix
	{
		[E_1\oplus E_2]\ar[r]^-{\Sigma}\ar[d]_{\Sigma}	&[E_1]+[E_2]\ar[dl]^{c(E_1,E_2)}\\
		[E_2]+[E_1]							&
	}.
\]
\end{enumerate}
\end{definition}

The following comes from \cite[4.3]{D1}:
\begin{proposition}
Let \(X\) be a scheme. There is a commutative Picard category \(V(Vect(X))\) with a bracket functor on \(X\), which is called the category of virtual vector bundles over \(X\), such that for every commutative Picard category \(\mathscr{C}\) with a bracket functor on \(X\), there is a unique additive functor \(F:V(Vect(X))\longrightarrow\mathscr{C}\) making the following diagram commute
\[
	\xymatrix
	{
		(Vect(X),iso)\ar[r]^{[-]}\ar[d]_{[-]}	&V(Vect(X))\ar[ld]_F\\
		\mathscr{C}					&
	}.
\]
\end{proposition}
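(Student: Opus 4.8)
The plan is to construct \(V(Vect(X))\) as an explicit universal object and then verify the factorization property essentially formally, so that the only real work is in the construction itself. The guiding idea is to impose all the bracket data (the sums, inverses, and the isomorphisms \(\Sigma\)) freely, subject only to the coherence relations (1)--(4) of Definition \ref{bracket} together with Mac Lane's constraints; once this is well defined, the universal functor \(F\) is forced on generators.

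\textbf{Construction.} I would model \(V(Vect(X))\) on the low-dimensional homotopy of the algebraic \(K\)-theory of the exact category \(Vect(X)\), where the admissible exact sequences are the short exact sequences of vector bundles. This exact structure is genuine and not merely the split one, since over a general scheme such sequences need not split, which is exactly why condition (1) refers to arbitrary exact sequences rather than to \(\oplus\) alone. Concretely, let \(V(Vect(X))\) be the fundamental Picard groupoid \(\tau_{\le 1}\) of the \(K\)-theory space of \(Vect(X)\): its isomorphism classes of objects form \(K_0(Vect(X))\), the automorphism group of every object is \(K_1(Vect(X))\), and \(+\) is induced by direct sum. The bracket \([-]\) sends \(E\) to its canonical class, \(z\colon[0]\to 0\) is the basepoint identification, and the \(\Sigma\) attached to each exact sequence are the canonical paths provided by the additivity theorem. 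An equivalent and more transparent presentation for checking the universal property is by generators and relations: objects are finite formal sums \(\sum_i\varepsilon_i[E_i]\) with \(\varepsilon_i\in\{\pm1\}\), and morphisms are generated by isomorphisms of bundles, by the associativity and commutativity constraints \(a(x,y,z)\) and \(c(x,y)\), and by one formal arrow \(\Sigma\colon[E_2]\to[E_1]+[E_3]\) per exact sequence, modulo relations (2), (3), (4) and Mac Lane coherence.

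\textbf{Existence and uniqueness of \(F\).} Given a commutative Picard category \(\mathscr{C}\) with bracket functor \([-]_{\mathscr{C}}\), I define \(F\) on objects by \(F([E])=[E]_{\mathscr{C}}\), extended by \(F(x+y)=F(x)+F(y)\), \(F(-x)=-F(x)\) and \(F(0)=0\), which is legitimate because \(\mathscr{C}\) is Picard. On morphisms, \(F\) sends each generating bundle isomorphism to its image under \([-]_{\mathscr{C}}\), each constraint \(a,c\) of \(V(Vect(X))\) to the corresponding constraint of \(\mathscr{C}\), and each \(\Sigma\) to the \(\Sigma\) of \(\mathscr{C}\). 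Since the bracket functor on \(\mathscr{C}\) satisfies precisely relations (2), (3), (4), and since \(a,c\) in \(\mathscr{C}\) obey Mac Lane coherence, \(F\) respects every defining relation and is a well-defined additive functor with \(F\circ[-]=[-]_{\mathscr{C}}\) on the nose. For uniqueness, any additive \(F'\) compatible with the bracket data must agree with \(F\) on the generators \([E]\), hence, by additivity, on all objects; being additive it preserves \(a,c\), and being a morphism of bracket functors it must carry \(\Sigma\) to \(\Sigma\), so \(F'\) and \(F\) agree on generating morphisms and therefore coincide.

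\textbf{Main obstacle.} The delicate point is not the factorization but the well-definedness of the construction: one must show that simultaneously imposing all the \(\Sigma\) does not collapse \(V(Vect(X))\) and that every diagram assembled from \(\Sigma\), \(a\) and \(c\) commutes. In the \(K\)-theoretic model this is the additivity theorem, which guarantees that exact sequences induce the additive structure, together with the identification \(\mathrm{Aut}(0)=K_1(Vect(X))\) and the computation of the relevant \(k\)-invariant; in the generators-and-relations model it is exactly Mac Lane's coherence theorem augmented by the \(3\times 3\) compatibility (3) and the direct-sum compatibility (4). This coherence is the technical heart of the argument, and it is precisely the content established in \cite[4.3]{D1}.
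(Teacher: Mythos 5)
The paper does not actually prove this proposition: it is quoted verbatim from Deligne \cite[4.3]{D1}, and your sketch is essentially an outline of Deligne's own construction (free Picard category on generators \([E]\) and relations (1)--(4), equivalently the fundamental Picard groupoid of the \(K\)-theory space, with \(\pi_0=K_0\) and \(\mathrm{Aut}=K_1\)). So in approach you are aligned with the source the paper cites. The formal part of your argument --- defining \(F\) on generators and checking it respects the defining relations, then arguing uniqueness on generators --- is the right skeleton for the generators-and-relations model.

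That said, as a proof your proposal has a genuine gap, and you identify it yourself: the entire mathematical content is the well-definedness and coherence of the free construction (that imposing all the \(\Sigma\)'s does not collapse the category, and that every diagram built from \(\Sigma\), \(a\), \(c\) that ought to commute does commute), and you defer exactly this point back to \cite[4.3]{D1}. Mac Lane coherence alone does not cover the interaction of \(a\), \(c\) with the \(\Sigma\)'s; conditions (3) and (4) of Definition \ref{bracket} are axioms imposed on a bracket functor, not a proof that the free object satisfying them is nontrivial or that no further relations are forced. Two smaller points deserve care. First, your two models are not interchangeable for the statement as written: in the \(K\)-theoretic model the triangle commutes only up to coherent natural isomorphism, and the functor \(F\) is unique only up to unique isomorphism (this is also how Deligne states the universal property), whereas the proposition as transcribed here asserts strict uniqueness --- that only makes literal sense in a strictified generators-and-relations model where objects are formal sums and \(F\) is forced by \(F(x+y)=F(x)+F(y)\). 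Second, "extended by \(F(x+y)=F(x)+F(y)\)" presumes a strictly additive \(F\); for a general additive functor between Picard categories additivity is data (a coherent isomorphism), so uniqueness of \(F\) on objects does not by itself give uniqueness of \(F\) as an additive functor without fixing that data. None of this invalidates the approach, but a complete proof would have to supply the coherence argument rather than cite it.
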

For convenience, we will denote \([E]\) still by \(E\) in the sequel.

The following proposition strengthens Definition \ref{bracket}, (4) a little bit.
\begin{proposition}\label{basicvvb}
Suppose we have a commutative diagram among vector bundles over \(X\) with exact row and column
\[
	\xymatrix
	{
				&						&0\ar[d]					&		&\\
				&						&D\ar[d]_d\ar[rd]^{\cong}_v	&		&\\
		0\ar[r]	&A\ar[r]^a\ar[rd]_{\cong}^u	&B\ar[r]_b\ar[d]^c			&C\ar[r]	&0\\
				&						&E\ar[d]					&		&\\
				&						&0						&		&
	}.
\]
Then the following diagram commutes in \(V(Vect(X))\)
\[
	\xymatrix
	{
		B\ar[r]\ar[d]	&A+C\ar[ld]^{c(E,D)\circ(u+v^{-1})}\\
		{D+E}			&
	}.
\]
\end{proposition}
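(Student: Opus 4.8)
The plan is to reduce the claim to the direct-sum situation already controlled by Definition \ref{bracket}(4), via the internal decomposition of \(B\) determined by the two monomorphisms \(a\) and \(d\). First I would check that \(B\) is the internal direct sum of \(\operatorname{im}a\) and \(\operatorname{im}d\). Exactness of the column gives \(\ker c=\operatorname{im}d\); since \(u=c\circ a\) is an isomorphism, \(c\) is injective on \(\operatorname{im}a\), so \(\operatorname{im}a\cap\operatorname{im}d=0\), and given \(\beta\in B\) the surjectivity of \(u\) lets me write \(c(\beta)=c(a(x))\), whence \(\beta-a(x)\in\ker c=\operatorname{im}d\) and \(\beta\in\operatorname{im}a+\operatorname{im}d\). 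As \(a,d\) are monomorphisms the map \(\phi:=(a,d)\colon A\oplus D\to B\), \((x,y)\mapsto a(x)+d(y)\), is an isomorphism, and a direct computation using \(b\circ a=0\), \(c\circ d=0\), \(b\circ d=v\), \(c\circ a=u\) gives \(b\circ\phi=v\circ\operatorname{pr}_D\) and \(c\circ\phi=u\circ\operatorname{pr}_A\).

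Next I would transport the row and the column of \(B\) back to \(A\oplus D\). Together with \(\phi\circ\iota_A=a\) and \(\phi\circ\iota_D=d\) (for the canonical inclusions \(\iota_A,\iota_D\)), the two identities above say precisely that \((\operatorname{id}_A,\phi,v)\) is an isomorphism from the split sequence \(0\to A\to A\oplus D\xrightarrow{\operatorname{pr}_D}D\to 0\) to the row \(0\to A\xrightarrow{a}B\xrightarrow{b}C\to 0\), and that \((\operatorname{id}_D,\phi,u)\) is an isomorphism from \(0\to D\to A\oplus D\xrightarrow{\operatorname{pr}_A}A\to 0\) to the column \(0\to D\xrightarrow{d}B\xrightarrow{c}E\to 0\). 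Writing \(\Sigma_1\colon A\oplus D\to A+D\) and \(\Sigma_2\colon A\oplus D\to D+A\) for the \(\Sigma\)-isomorphisms of the two split sequences, and \(\Sigma^{\mathrm{row}}_B\colon B\to A+C\), \(\Sigma^{\mathrm{col}}_B\colon B\to D+E\) for the top and left arrows of the diagram to be proved, naturality of \(\Sigma\) (Definition \ref{bracket}(1)) yields
\[
\Sigma^{\mathrm{row}}_B\circ\phi=(\operatorname{id}_A+v)\circ\Sigma_1,\qquad
\Sigma^{\mathrm{col}}_B\circ\phi=(\operatorname{id}_D+u)\circ\Sigma_2 .
\]

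Finally I would assemble the triangle formally. The content of Definition \ref{bracket}(4) applied to \(A\oplus D\) is exactly \(c(A,D)\circ\Sigma_1=\Sigma_2\). Using the first naturality identity, bifunctoriality of \(+\) (which gives \((u+v^{-1})\circ(\operatorname{id}_A+v)=u+\operatorname{id}_D\)), and naturality of the commutativity constraint \(c\) (which gives \(c(E,D)\circ(u+\operatorname{id}_D)=(\operatorname{id}_D+u)\circ c(A,D)\)), I compute
\begin{align*}
c(E,D)\circ(u+v^{-1})\circ\Sigma^{\mathrm{row}}_B
&=(\operatorname{id}_D+u)\circ c(A,D)\circ\Sigma_1\circ\phi^{-1}\\
&=(\operatorname{id}_D+u)\circ\Sigma_2\circ\phi^{-1}
=\Sigma^{\mathrm{col}}_B ,
\end{align*}
which is the asserted commutativity. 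I expect no conceptual obstacle once \(\phi\) is in place: beyond axioms (1) and (4) nothing new enters. The one delicate point is bookkeeping—matching each leg of the two naturality squares to the correct summand and orientation, and invoking bifunctoriality of \(+\) and naturality of \(c\) in the right slots, so that \(v^{-1}\) cancels \(v\) and \(c(E,D)\) moves \(u\) past the swap cleanly; the main risk is simply a transposed arrow.
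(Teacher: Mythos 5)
Your proof is correct, and it reaches the conclusion by a genuinely different (though parallel) route. The paper also reduces to Definition \ref{bracket}(4) on a direct sum via naturality of \(\Sigma\), but it models \(B\) as \(D\oplus E\): it first constructs a splitting \(\xi:E\to B\) of \(c\) from the retraction \(v^{-1}\circ b\) (the standard splitting-lemma argument, \(\xi\circ c=1-d\circ v^{-1}\circ b\), \(c\circ\xi=\mathrm{id}_E\)), shows \(d+\xi:D\oplus E\to B\) is an isomorphism of the two relevant exact sequences, and then applies axiom (4) to \(D\oplus E\) with \(c(D,E)\), finishing with the involutivity \(c(E,D)\circ c(D,E)=\mathrm{id}\). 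You instead model \(B\) as \(A\oplus D\) via \(\phi=(a,d)\), which has the advantage that no auxiliary map needs to be constructed: both monomorphisms are already in the diagram, and the fiberwise check that \(\phi\) is an isomorphism is immediate from \(\ker c=\operatorname{im}d\) and \(u=c\circ a\) being an isomorphism. The price is that your final step needs naturality of the commutativity constraint \(c\) (to move \(u\) past the swap) in addition to bifunctoriality of \(+\), whereas the paper only needs \(c(E,D)\circ c(D,E)=\mathrm{id}\); both are part of the commutative Picard structure, so neither argument uses anything beyond Definition \ref{bracket}(1) and (4) plus the axioms of Definition \ref{Picard}. Your identification of the two comparison morphisms of exact sequences \((\mathrm{id}_A,\phi,v)\) and \((\mathrm{id}_D,\phi,u)\), and the resulting identities \(\Sigma^{\mathrm{row}}_B\circ\phi=(\mathrm{id}_A+v)\circ\Sigma_1\) and \(\Sigma^{\mathrm{col}}_B\circ\phi=(\mathrm{id}_D+u)\circ\Sigma_2\), are all correctly oriented, so the bookkeeping you flagged as the main risk is in fact clean.
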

\begin{proof}
Since \(v^{-1}\circ b\) splits \(d\), it's a standard argument that there exists a unique \(\xi:E\longrightarrow B\) such that
\[\xi\circ c=1-d\circ v^{-1}\circ b, c\circ\xi=id_E.\]
So we have commutative diagrams with exact rows:
\[
	\xymatrix
	{
			0\ar[r]	&D\ar[r]^-{i_1}\ar@{=}[d]	&D\oplus E\ar[r]^-{p_2}\ar[d]_{d+\xi}	&E\ar[r]\ar@{=}[d]	&0\\
			0\ar[r]	&D\ar[r]^{d}			&B\ar[r]^{c}					&E\ar[r]			&0
	}
\]
\[
	\xymatrix
	{
			0\ar[r]	&E\ar[r]^-{i_2}\ar[d]_{u^{-1}}	&D\oplus E\ar[r]^-{p_1}\ar[d]_{d+\xi}		&D\ar[r]\ar[d]_v	&0\\
			0\ar[r]	&A\ar[r]^{a}				&B\ar[r]^{b}						&C\ar[r]			&0
	}.
\]
Hence the statement follows from the commutative diagram by Definition \ref{bracket}, (4):
\[
	\xymatrix
	{
									&					&A+C\\
		B\ar[rru]\ar[r]\ar[rd]_{(d+\xi)^{-1}}	&D+E\ar[r]^{c(D,E)}		&E+D\ar[u]_{u^{-1}+v}\\
									&D\oplus E\ar[u]\ar[ru]	&
	}.
\]
\end{proof}

The next theorem is a fundamental tool for calculations in virtual vector bundles.
\begin{theorem}\label{four diagrams}
\begin{enumerate}
\item Suppose we have a commutative diagram with exact rows and columns
\[
	\xymatrix
	{
				&0\ar[d]			&0\ar[d]			&				&\\
				&K\ar[d]\ar@{=}[r]	&K\ar[d]			&				&\\
		0\ar[r]	&V_1\ar[r]\ar[d]	&V_2\ar[d]\ar[r]	&C\ar[r]\ar@{=}[d]	&0\\
		0\ar[r]	&W_1\ar[r]\ar[d]	&W_2\ar[r]\ar[d]	&C\ar[r]			&0\\
				&0				&0				&				&
	}
\]
among vector bundles over \(X\). Then we have a commutative diagram in \(V(Vect(X))\)
\[
	\xymatrix
	{
		V_2\ar[r]\ar[d]	&K+W_2\ar[d]\\
		V_1+C\ar[r]	&K+W_1+C.
	}
\]
\item Suppose we have a commutative diagram with exact rows and columns
\[
	\xymatrix
	{
				&0\ar[d]			&0\ar[d]			&				&\\
		0\ar[r]	&V_1\ar[r]\ar[d]	&V_2\ar[d]\ar[r]	&C\ar[r]\ar@{=}[d]	&0\\
		0\ar[r]	&W_1\ar[r]\ar[d]	&W_2\ar[r]\ar[d]	&C\ar[r]			&0\\
				&D\ar[d]\ar@{=}[r]	&D\ar[d]			&				&\\
				&0				&0				&				&
	}
\]
among vector bundles over \(X\). Then we have a commutative diagram in \(V(Vect(X))\)
\[
	\xymatrix
	{
		W_2\ar[r]\ar[d]	&V_2+D\ar[r]	&V_1+C+D\ar[ddll]^{c(C,D)}\\
		W_1+C\ar[d]	&			&\\
		V_1+D+C.		&			&
	}
\]
\item Suppose we have a commutative diagram with exact rows and columns
\[
	\xymatrix
	{
				&				&0\ar[d]			&0\ar[d]		&\\
				&				&K\ar[d]\ar@{=}[r]	&K\ar[d]		&\\
		0\ar[r]	&T\ar[r]\ar@{=}[d]	&V_1\ar[d]\ar[r]	&V_2\ar[r]\ar[d]&0\\
		0\ar[r]	&T\ar[r]			&W_1\ar[r]\ar[d]	&W_2\ar[r]\ar[d]&0\\
				&				&0				&0			&
	}
\]
among vector bundles over \(X\). Then we have a commutative diagram in \(V(Vect(X))\)
\[
	\xymatrix
	{
		V_1\ar[r]\ar[d]	&T+V_2\ar[r]	&T+K+W_2\ar[ddll]^{c(T,K)}\\
		K+W_1\ar[d]	&			&\\
		K+T+W_2.		&			&
	}
\]
\item Suppose we have a commutative diagram with exact rows and columns
\[
	\xymatrix
	{
				&				&0\ar[d]			&0\ar[d]		&\\
		0\ar[r]	&K\ar[r]\ar@{=}[d]	&V_1\ar[d]\ar[r]	&V_2\ar[r]\ar[d]&0\\
		0\ar[r]	&K\ar[r]			&W_1\ar[r]\ar[d]	&W_2\ar[r]\ar[d]&0\\
				&				&C\ar[d]\ar@{=}[r]	&C\ar[d]		&\\
				&				&0				&0			&
	}
\]
among vector bundles over \(X\). Then we have a commutative diagram in \(V(Vect(X))\)
\[
	\xymatrix
	{
		W_1\ar[r]\ar[d]	&K+W_2\ar[d]\\
		V_1+C\ar[r]	&K+V_2+C
	}.
\]
\end{enumerate}
\end{theorem}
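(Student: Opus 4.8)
The plan is to derive all four commutative diagrams from the axioms of the bracket functor in Definition \ref{bracket}, the only substantive inputs being the associativity constraint (3) and the commutativity constraint (4); the latter is conveniently repackaged in Proposition \ref{basicvvb}. In each case I would first read off from the grid the canonical identifications of the subquotients with the labelled bundles. For instance, in (1) the two exact columns give $V_2/K\cong W_2$ and $V_1/K\cong W_1$, while the exact rows give $V_2/V_1\cong C$ and $W_2/W_1\cong C$, and the commutativity of the grid forces the inclusions $K\subseteq V_1\subseteq V_2$ to be consecutive. Since $\Sigma$ is natural with respect to isomorphisms of short exact sequences (Definition \ref{bracket}, (1)), transporting an abstract axiom along these identifications reproduces exactly the arrows appearing in the statement, and this transport is where most of the routine verification lives.

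Parts (1) and (4) then come from a single chain of three consecutive subbundles and are immediate instances of associativity. In (1), Definition \ref{bracket}, (3) applied to $K\subseteq V_1\subseteq V_2$ gives precisely the asserted square once $V_2/K$, $V_1/K$, $V_2/V_1$ and $(V_2/K)/(V_1/K)$ are identified with $W_2$, $W_1$, $C$ and $C$. Part (4) is handled identically using the chain $K\subseteq V_1\subseteq W_1$, whose iterated quotients are $W_1/K\cong W_2$, $V_1/K\cong V_2$, $W_1/V_1\cong C$ and $(W_1/K)/(V_1/K)\cong C$.

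Parts (2) and (3) are governed instead by commutativity, which is the origin of the factors $c(C,D)$ and $c(T,K)$. In (2) the two exact columns realise $V_2$ and $W_1$ as subbundles of $W_2$ each containing $V_1$, and the commutativity of the grid (whose third column and bottom row are identities) yields $V_2\cap W_1=V_1$ and $V_2+W_1=W_2$; hence $Q:=W_2/V_1\cong(V_2/V_1)\oplus(W_1/V_1)\cong C\oplus D$. Applying Definition \ref{bracket}, (3) to the two chains $V_1\subseteq V_2\subseteq W_2$ and $V_1\subseteq W_1\subseteq W_2$ shows that both composites in the statement factor through $V_1+Q$, so the diagram collapses to the comparison of the two decompositions $Q\to C+D$ and $Q\to D+C$; this is exactly Proposition \ref{basicvvb}, whose connecting isomorphisms $u,v$ are forced to be the identities of $C$ and $D$ by the grid, and which therefore contributes the single factor $c(C,D)$. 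Part (3) is the mirror situation: $T$ and $K$ are subbundles of $V_1$ with $T\cap K=0$, so $T\oplus K\subseteq V_1$ with $V_1/(T\oplus K)\cong W_2$, and applying (3) to $T\subseteq T\oplus K\subseteq V_1$ and $K\subseteq T\oplus K\subseteq V_1$ reduces the statement to comparing the two isomorphisms $T\oplus K\to T+K$ and $T\oplus K\to K+T$, which is Definition \ref{bracket}, (4) and produces $c(T,K)$.

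I expect the main obstacle to be the coherent bookkeeping rather than any isolated deep step: one must confirm that each subquotient is matched with the advertised bundle through the maps actually supplied by the grid, that the connecting isomorphisms in the two-chain comparisons are genuine identities so that no stray automorphism of $C$, $D$, $T$ or $K$ survives, and that the orderings of the summands agree so that the commutativity isomorphism lands precisely where the statement places it. A secondary technical point in (2) and (3) is to justify that the sum of the two relevant subbundles is again a subbundle with locally free quotient, so that the axioms---phrased for honest subbundle inclusions---apply; this holds because in each case the quotient in question is one of the given bundles.
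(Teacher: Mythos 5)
Your proof is correct and follows essentially the same route as the paper: parts (1) and (4) are single applications of Definition \ref{bracket}, (3) to the chains $K\subseteq V_1\subseteq V_2$ and $K\subseteq V_1\subseteq W_1$, while parts (2) and (3) combine two such applications (to $V_1\subseteq V_2\subseteq W_2$ and $V_1\subseteq W_1\subseteq W_2$, resp.\ to the two chains through the common subbundle of $V_1$) with a comparison of the two decompositions of the shared middle term, which is where $c(C,D)$ and $c(T,K)$ arise. The only cosmetic difference is in (3), where the paper works with $\ker(V_1\to W_2)$ and invokes Proposition \ref{basicvvb} rather than identifying that kernel with the internal direct sum $T\oplus K$ and citing Definition \ref{bracket}, (4) directly; since Proposition \ref{basicvvb} is itself proved by splitting the extension, the two formulations coincide.
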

\begin{proof}
\begin{enumerate}
\item
We have injections \(K\longrightarrow V_1\longrightarrow V_2\), which give the diagram by Definition \ref{bracket}, (3).
\item
We have injections \(V_1\longrightarrow V_2\longrightarrow W_2\) and \(V_1\longrightarrow W_1\longrightarrow W_2\). These give two commutative diagrams by Definition \ref{bracket}, (3)
\[
	\xymatrix
	{
		W_2\ar[r]\ar[d]	&V_1+W_2/V_1\ar[d]_{\Sigma_1}\\
		V_2+D\ar[r]	&V_1+C+D
	}
	\xymatrix
	{
		W_2\ar[r]\ar[d]	&V_1+W_2/V_1\ar[d]_{\Sigma_2}\\
		W_1+C\ar[r]	&V_1+D+C
	}.
\]
And we have a commutative diagram with exact row and column
\[
	\xymatrix
	{
						&\Sigma_2:			&0\ar[d]			&		&\\
						&				&D\ar[d]\ar@{=}[rd]	&		&\\
		\Sigma_1:0\ar[r]	&C\ar[r]\ar@{=}[rd]	&W_2/V_1\ar[r]\ar[d]	&D\ar[r]	&0\\
						&				&C\ar[d]			&		&\\
						&				&0				&		&
	}.
\]
Thus we have a commutative diagram
\[
	\xymatrix
	{
		W_2/V_1\ar[r]^{\Sigma_1}\ar[rd]_{\Sigma_2}	&C+D\ar[d]^{c(C,D)}\\
										&D+C
	}
\]
by Proposition \ref{basicvvb}. So combining the diagrams above gives the result.
\item
We denote the morphism \(V_1\longrightarrow V_2\longrightarrow W_2\) by \(\alpha\). There are morphisms \(ker(\alpha)\longrightarrow K\) and \(ker(\alpha)\longrightarrow T\) satisfying the following commutative diagrams
\[
	\xymatrix
	{
		ker(\alpha)\ar[r]\ar[d]	&K\ar[d]\\
		V_1\ar[r]				&V_2
	}
	\xymatrix
	{
		ker(\alpha)\ar[r]\ar[d]	&V_1\ar[d]\\
		T\ar[r]				&W_1
	}
\]
by the universal property of \(K\) and \(T\) as kernels. Then there is a commutative diagram with exact row and column
\[
	\xymatrix
	{
						&\Sigma_2:			&0\ar[d]				&		&\\
						&				&K\ar[d]\ar@{=}[rd]		&		&\\
		\Sigma_1:0\ar[r]	&T\ar[r]\ar@{=}[rd]	&ker(\alpha)\ar[r]\ar[d]	&K\ar[r]	&0\\
						&				&T\ar[d]				&		&\\
						&				&0.					&		&
	}
\]
Hence we have a commutative diagram
\[
	\xymatrix
	{
		ker(\alpha)\ar[r]^{\Sigma_1}\ar[d]_{\Sigma_2}	&T+K\ar[dl]^{c(T,K)}\\
		{K+T}									&\\
	}
\]
by Proposition \ref{basicvvb}.

We have injections \(T\longrightarrow ker(\alpha)\longrightarrow V_1\), \(K\longrightarrow ker(\alpha)\longrightarrow V_1\), which induce the following commutative diagrams by Definition \ref{bracket}, (3):
\[
	\xymatrix
	{
		V_1\ar[r]\ar[d]				&T+V_2\ar[d]\\
		ker(\alpha)+W_2\ar[r]^{\Sigma_1}&T+K+W_2
	}
\]
\[
	\xymatrix
	{
		V_1\ar[r]\ar[d]				&K+W_1\ar[d]\\
		ker(\alpha)+W_2\ar[r]^{\Sigma_2}&K+T+W_2.
	}
\]
So combining the diagrams above gives the result.
\item The diagram is a rotation and reflection of the diagram in (1).
\end{enumerate}
\end{proof}
\begin{remark}\label{precise}
We remark that (1) in the above theorem is actually the \emph{precise} meaning of Definition \ref{bracket}, (3).
\end{remark}
\begin{remark}
We would like to point out that the calculation of virtual objects is \emph{not} trivial, especially when judging commutativity of diagrams. We will see this point in the sections below.
\end{remark}
\section{Correspondence Theory}\label{Correspondence Theory}
In this section, we are going to define what is a correspondence theory by axiomatic method, under the language of virtual vector bundles defined in the previous section.
\begin{definition}
Let \(X\) be a noetherian scheme and \(i\in\mathbb{N}\). We denote \(Z^i(X)\) to be the set of closed subsets in \(X\) whose components have codimension \(i\).
\end{definition}
\begin{definition}
Let \(X\in Sm/k\), \(C\in Z^i(X)\) and \(D\in Z^j(X)\). We say \(C\) and \(D\) intersect properly if \(C\cap D\in Z^{i+j}(X)\).
\end{definition}
\begin{axiom}\label{T}
(Twists) For every \(X\in Sm/k\), we have a commutative Picard category (See Definition \ref{Picard}) \(\mathscr{P}_X\) with an additive functor \(p_X:V(Vect(X))\longrightarrow\mathscr{P}_X\) and a rank morphism \(rk_X:\mathscr{P}_X\longrightarrow\mathbb{Z}/2\mathbb{Z}\) such that:
\begin{enumerate}
\item The following diagram commutes
\[
	\xymatrix
	{
		V(Vect(X))\ar[r]^-{rk}\ar[d]_{p_X}	&\mathbb{Z}\ar[d]\\
		\mathscr{P}_X\ar[r]^-{rk_X}		&\mathbb{Z}/2\mathbb{Z},
	}
\]
where the upper horizontal arrow is defined by \(rk([E])=rk(E)\).
\item For every \(f:X\longrightarrow Y\) in \(Sm/k\), there is a pull-back morphism \(f^*:\mathscr{P}_Y\longrightarrow\mathscr{P}_X\) such that the following diagrams commute
\[
	\xymatrix
	{
		\mathscr{P}_Y\ar[r]^{f^*}\ar[d]_{rk_Y}	&\mathscr{P}_X\ar[dl]^{rk_X}\\
		\mathbb{Z}/2\mathbb{Z}				&
	}
	\xymatrix
	{
		V(Vect(Y))\ar[r]^{f^*}\ar[d]_{p_Y}	&V(Vect(X))\ar[d]_{p_X}\\
		\mathscr{P}_Y\ar[r]^{f^*}			&\mathscr{P}_X
	},
\]
where \(f^*:V(Vect(Y))\longrightarrow V(Vect(X))\) is defined by \(f^*([E])=[f^*E]\). And \(f^*g^*=(g\circ f)^*\) for any morphisms \(f, g\) in \(Sm/k\). Moreover, \(f^*(-v)=-f^*(v)\).
\end{enumerate}
\end{axiom}
\begin{remark}
In application, the categories \(\mathscr{P}_X\) should be chosen as `small' as possible. Since this will allow more isomophisms, such as orientations as we will see in Definition \ref{symplectic oriented}.
\end{remark}
\begin{axiom}\label{C}
(Correspondences) For every \(X\in Sm/k\), \(i\in\mathbb{N}\), \(C\in Z^i(X)\) and \(v\in\mathscr{P}_X\), we associate an abelian group \(E_C^i(X,v)\), which is called the correspondence supported on \(C\) with twist \(v\). They are functorial with respect to \(v\). Moreover, if \(C=\emptyset\), \(E_C^i(X,v)\) is understood as \(0\).
\end{axiom}
\begin{remark}
Note that we always assume that \(C\in Z^i(X)\). This has the advantage to keep the operations very concrete (in the case of Chow or Chow-Witt groups), but has the disadvantage that the group \(E^i(X,v)\) with full support is in general not defined.
\end{remark}

We are going to describe further properties these groups should satisfy.
\begin{axiom}\label{ES}
(Extension of Supports) For every  \(X\in Sm/k\), \(C_1\subseteq C_2\in Z^i(X)\), \(i\in\mathbb{N}\), \(v\in\mathscr{P}_X\), we have a morphism
\[e(C_1,C_2):E^i_{C_1}(X,v)\longrightarrow E^i_{C_2}(X,v)\]
which is called the extension of support. This map is functorial with respect to \(v\).

For any disjoint \(C_1, C_2\in Z^i(X)\), we have
\[E^i_{C_1\cup C_2}(X,v)\cong E^i_{C_1}(X,v)\oplus E^i_{C_2}(X,v)\]
via extension of supports. Moreover, for any \(C_1\subseteq C_2\subseteq C_3\) we have
\[e(C_2,C_3)\circ e(C_1,C_2)=e(C_1,C_3).\]
\end{axiom}
\begin{axiom}\label{P}
(Product) Suppose \(X\in Sm/k\), \(v_1,v_2\in\mathscr{P}_X\), \(C_1, C_2\in Z^i(X)\) and \(i,j\in\mathbb{N}\). Suppose \(C_1\) and \(C_2\) intersect properly, then we have a product
\[\xymatrix{E_{C_1}^i(X,v_1)\times E_{C_2}^j(X,v_2)\ar[r]^-{\cdot}&E_{C_1\cap C_2}^{i+j}(X,v_1+v_2)},\]
And this product is functorial with respect to twists and extension of supports.
\end{axiom}
\begin{axiom}\label{A}
(Associativity) For any \(X\in Sm/k\), \(v_a\in\mathscr{P}_X\) and \(C_a\in Z^{i_a}(X)\), \(a=1,2,3\), the following diagram commutes
\[
	\xymatrix
	{
		E_{C_1}^{i_1}(X,v_1)\times E_{C_2}^{i_2}(X,v_2)\times E_{C_3}^{i_3}(X,v_3)\ar[d]_{\cdot\times id}\ar[r]^-{id\times\cdot}	&E_{C_1}^{i_1}(X,v_1)\times E_{C_2\cap C_3}^{i_2+i_3}(X,v_2+v_3)\ar[d]_{\cdot}\\
		E_{C_1\cap C_2}^{i_1+i_2}(X,v_1+v_2)\times E_{C_3}^{i_3}(X,v_3)\ar[d]_{\cdot}									&E_{C_1\cap C_2\cap C_3}^{i_1+i_2+i_3}(X,v_1+(v_2+v_3))\ar[dl]_{a(v_1,v_2,v_3)^{-1}}\\
		E_{C_1\cap C_2\cap C_3}^{i_1+i_2+i_3}(X,(v_1+v_2)+v_3)													&
	},
\]
where every intersection appeared above is proper.
\end{axiom}
\begin{axiom}\label{CC}
(Conditional Commutativity) Let \(X\in Sm/k\), \(C_a\in Z^{i_a}(X)\), \(i_a\in\mathbb{N}\), \(v_a\in\mathscr{P}_X\) where \(a=1,2\). If \((i_1+rk_X(v_1))(i_2+rk_X(v_2))=0\in\mathbb{Z}/2\mathbb{Z}\) and \(C_1\) and \(C_2\) intersect properly, the following diagram commutes:
\[
	\xymatrix
	{
		E^{i_1}_{C_1}(X,v_1)\times E^{i_2}_{C_2}(X,v_2)\ar[r]^-{\cdot}\ar[d]	&E^{i_1+i_2}_{C_1\cap C_2}(X,v_1+v_2)\ar[d]_{c(v_1,v_2)}\\
		E^{i_2}_{C_2}(X,v_2)\times E^{i_1}_{C_1}(X,v_1)\ar[r]^-{\cdot}		&E^{i_1+i_2}_{C_1\cap C_2}(X,v_2+v_1)
	}.
\]
\end{axiom}
\begin{axiom}\label{I}
(Identity) For any \(X\in Sm/k\), there is an element \(e\) in \(E^0_X(X,0)\) such that for any \(v\in\mathscr{P}_X\), \(i\in\mathbb{N}\) and \(C\in Z^i(X)\), the following diagrams commute
\[
	\xymatrix
	{
		E^i_C(X,v)\ar[r]^-{e\cdot}\ar@{=}[d]	&E^i_C(X,0+v)\ar[dl]_u\\
		E^i_C(X,v)						&
	}
	\xymatrix
	{
		E^i_C(X,v)\ar[r]^-{\cdot e}\ar@{=}[d]	&E^i_C(X,v+0)\ar[dl]_u\\
		E^i_C(X,v)						&
	},
\]
where \(u\) are the unity isomorphisms of \(0\) in \(\mathscr{P}_X\). We call \(e\) the identity and denote it by \(1\).
\end{axiom}
\begin{axiom}\label{PB}
(Pull-Back) Suppose \(f:X\longrightarrow Y\) is morphism in \(Sm/k\), \(i\in\mathbb{N}\), \(C\in Z^i(Y)\), \(f^{-1}(C)\in Z^i(X)\) and \(v\in\mathscr{P}_Y\). Then we have a pull-back morphism
\[E^i_C(Y,v)\longrightarrow E^i_{f^{-1}(C)}(X,f^*v).\]
This morphism is functorial with respect to \(v\) and extension of supports.
\end{axiom}
\begin{axiom}\label{FPB}
(Functoriality of Pull-Back) Let \(\xymatrix{X\ar[r]^g&Y\ar[r]^f&Z}\) be morphisms in \(Sm/k\), \(i\in\mathbb{N}\), \(C\in Z^i(Z)\), \(f^{-1}(C)\in Z^i(Y)\), \(g^{-1}f^{-1}(C)\in Z^i(X)\) and \(v\in\mathscr{P}_Z\). We have
\[(f\circ g)^*=g^*\circ f^*.\]
And the pull-back of identity morphism is just the identity morphism.
\end{axiom}
\begin{axiom}\label{CPB}
(Compability of Pull-Back) Suppose \(f:X\longrightarrow Y\) is a morphism in \(Sm/k\), \(i,j\in\mathbb{N}\) and \(C_1\in Z^i(Y)\), \(C_2\in Z^j(Y)\) and they intersect properly (the same for their preimages). For any \(v_1,v_2\in\mathscr{P}_Y\), we have a commutative diagram
\[
	\xymatrix
	{
		E^i_{C_1}(Y,v_1)\times E^j_{C_2}(Y,v_2)\ar[r]^-{\cdot}\ar[d]^{f^*\times f^*}	&E^{i+j}_{C_1\cap C_2}(Y,v_1+v_2)\ar[d]^{f^*}\\
		E^i_{f^{-1}(C_1)}(X,f^*(v_1))\times E^j_{f^{-1}(C_2)}(X,f^*(v_2))\ar[r]^-{\cdot}&E^{i+j}_{f^{-1}(C_1\cap C_2)}(X,f^*(v_1+v_2))
	}.
\]

And we always have \(f^*(1)=1\).
\end{axiom}

We recall some facts about tangent bundles and normal bundles.
\begin{lemma}\label{t}
Let \(\xymatrix{X\ar[r]^f&Y\ar[r]^g&Z}\) be morphisms in \(Sm/k\).
\begin{enumerate}
\item If \(f\), \(g\) are smooth, then we have an exact sequence
\[0\longrightarrow T_{X/Y}\longrightarrow T_{X/Z}\longrightarrow f^*T_{Y/Z}\longrightarrow 0.\]
\item If \(f\) is a closed immersion and \(g\), \(g\circ f\) are smooth, then we have an exact sequence
\[0\longrightarrow T_{X/Z}\longrightarrow f^*T_{Y/Z}\longrightarrow N_{X/Y}\longrightarrow 0.\]
\item If \(g\) is smooth and \(f\), \(g\circ f\) are closed immersions, then we have an exact sequence
\[0\longrightarrow f^*T_{Y/Z}\longrightarrow N_{X/Y}\longrightarrow N_{X/Z}\longrightarrow 0.\]
\item If \(f\), \(g\) are closed immersions, then we have an exact sequence
\[0\longrightarrow N_{X/Y}\longrightarrow N_{X/Z}\longrightarrow f^*N_{Y/Z}\longrightarrow 0.\]
\end{enumerate}
\end{lemma}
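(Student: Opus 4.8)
The plan is to deduce all four sequences from the two fundamental exact sequences of Kähler differentials by passing to duals. Working in $Sm/k$ provides the standing simplifications that make this work: every object is smooth over $k$, so each $\Omega_{-/k}$ is locally free; a smooth morphism has locally free relative differentials fitting into a left-exact relative cotangent sequence; and a closed immersion between smooth $k$-schemes is a regular immersion, so its conormal sheaf $C_{X/Y}:=\mathcal I/\mathcal I^2$ is locally free with $C_{X/Y}^\vee=N_{X/Y}$ and $\Omega_{X/Y}^\vee=T_{X/Y}$. Consequently every sheaf occurring below is a vector bundle, dualizing $(-)^\vee$ is exact, and it suffices to produce the ``cotangent/conormal'' form of each statement and then dualize and reindex. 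For instance (1) is dual to $0\to f^*\Omega_{Y/Z}\to\Omega_{X/Z}\to\Omega_{X/Y}\to 0$ and (2) is dual to $0\to C_{X/Y}\to f^*\Omega_{Y/Z}\to\Omega_{X/Z}\to 0$.

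For (1) and (2) I would argue directly over the base $Z$. Statement (1) is the relative cotangent sequence of $f$ relative to $Z$, which is short exact on the left precisely because $f$ is smooth; dualizing gives the claim. Statement (2) is the conormal sequence of the closed immersion $f$ taken relative to the base $Z$, namely $C_{X/Y}\to f^*\Omega_{Y/Z}\to\Omega_{X/Z}\to 0$; this is short exact on the left because $X\to Z=g\circ f$ is smooth, and dualizing (using $C_{X/Y}^\vee=N_{X/Y}$) yields the stated sequence.

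For (3) and (4), which are the substantive cases, the plan is to run the snake lemma over the base field $k$. Write $h=g\circ f$ and compare the conormal sequence of $h$ with that of $f$, both taken relative to $k$:
\[0\to C_{X/Z}\to h^*\Omega_{Z/k}\to\Omega_{X/k}\to 0,\qquad 0\to C_{X/Y}\to f^*\Omega_{Y/k}\to\Omega_{X/k}\to 0.\]
These share the cokernel $\Omega_{X/k}$, and there is a morphism from the first to the second whose right column is $\operatorname{id}_{\Omega_{X/k}}$, whose left column is the natural map $C_{X/Z}\to C_{X/Y}$, and whose middle column is $h^*\Omega_{Z/k}=f^*g^*\Omega_{Z/k}\to f^*\Omega_{Y/k}$, i.e. $f^*$ of the canonical map $g^*\Omega_{Z/k}\to\Omega_{Y/k}$. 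In case (3), where $g$ is smooth, this middle column is the pullback of the left-exact relative cotangent sequence of $g$, hence injective with cokernel $f^*\Omega_{Y/Z}$; the snake lemma then forces $C_{X/Z}\to C_{X/Y}$ to be injective with cokernel $f^*\Omega_{Y/Z}$, giving $0\to C_{X/Z}\to C_{X/Y}\to f^*\Omega_{Y/Z}\to 0$, whose dual is (3). In case (4), where $g$ is a closed immersion, the same middle column is instead the pullback of the conormal sequence of $g$, hence surjective with kernel $f^*C_{Y/Z}$; the snake lemma now yields $0\to f^*C_{Y/Z}\to C_{X/Z}\to C_{X/Y}\to 0$, whose dual is (4).

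The step I expect to be the main obstacle is (3): there the naive attempt via $\Omega_{X/Z}=0$ (a closed immersion is unramified) only shows the relevant map is surjective and does not identify the kernel, so one genuinely needs the snake-lemma comparison over $k$ (equivalently, the transitivity triangle of cotangent complexes) to recognize $C_{X/Z}$ as that kernel. The non-formal inputs I would isolate are that closed immersions of smooth $k$-schemes are regular with locally free conormal sheaves (so the normal bundles are vector bundles and the snake-lemma terms are the expected conormal sheaves), and the left-exactness of the fundamental sequences, which rests on the smoothness and regularity hypotheses. The only remaining verification is the commutativity of the comparison square, which follows from functoriality of $\Omega_{-/k}$ along $X\to Y\to Z$ together with the identity $f^*g^*=h^*$; once these are in place, all four sequences follow by dualization.
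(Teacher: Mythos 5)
Your proof is correct. The paper gives no argument of its own here — it simply cites Hartshorne (II.8.11, II.8.12, II.8.17, III.10.4) — and your derivation is precisely the standard one underlying those citations: dualize the relative cotangent and conormal sequences for (1) and (2), and for (3) and (4) run the snake lemma on the comparison of the two conormal sequences over $k$, correctly flagging that in (3) the identification of the cokernel (equivalently, of $C_{X/Z}$ as the kernel) is the step that genuinely requires the comparison rather than just $\Omega_{X/Z}=0$.
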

\begin{proof}
See \cite[Chapter II, Proposition 8.11, Proposition 8.12 and Theorem 8.17 and Chapter III, Proposition 10.4]{H}.
\end{proof}
\begin{lemma}\label{t1}
Suppose we have a Cartesian square of schemes
\[
	\xymatrix
	{
		X'\ar[r]^v\ar[d]^{g}	&X\ar[d]^f\\
		Y'\ar[r]^{u}		&Y.
	}
\]
Then the composition \(T_{X'/Y'}\longrightarrow T_{X'/Y}\longrightarrow v^*T_{X/Y}\) is an isomorphism.
\end{lemma}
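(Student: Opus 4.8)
The two arrows in the statement are precisely the maps furnished by Lemma \ref{t}(1): the first, \(T_{X'/Y'}\to T_{X'/Y}\), is the injection coming from the tower \(X'\xrightarrow{g}Y'\xrightarrow{u}Y\), while the second, \(T_{X'/Y}\to v^*T_{X/Y}\), is the surjection coming from the tower \(X'\xrightarrow{v}X\xrightarrow{f}Y\). For these to be defined we are implicitly in the situation where \(f\) and \(u\) are smooth, so that \(g\) and \(v\) are smooth after base change and every relative tangent bundle occurring is an honest vector bundle. Since \(g\) is the base change of \(f\) we have \(\mathrm{rk}\,T_{X'/Y'}=d_f=\mathrm{rk}\,v^*T_{X/Y}\); thus the composite is a morphism of vector bundles of equal rank, and it suffices to prove that it is injective (equivalently surjective, equivalently an isomorphism on fibres).

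The plan is to dualize and to recognize the dual as the base-change comparison morphism for Kähler differentials. Writing \(T=\Omega^{\vee}\), the injection \(T_{X'/Y'}\to T_{X'/Y}\) is dual to the canonical surjection \(\Omega_{X'/Y}\to\Omega_{X'/Y'}\) attached to \(Y'\to Y\), and the map \(T_{X'/Y}\to v^*T_{X/Y}\) is dual to the pullback inclusion \(v^*\Omega_{X/Y}\to\Omega_{X'/Y}\) attached to \(v\) over \(Y\). Hence the composition in the statement is the dual of
\[
	v^*\Omega_{X/Y}\longrightarrow\Omega_{X'/Y}\longrightarrow\Omega_{X'/Y'}.
\]
A local computation on affines, with \(Y=\mathrm{Spec}\,B\), \(X=\mathrm{Spec}\,A\), \(Y'=\mathrm{Spec}\,B'\) and \(X'=\mathrm{Spec}\,(A\otimes_B B')\), identifies this composite with the natural map \(\Omega_{A/B}\otimes_A(A\otimes_B B')\to\Omega_{(A\otimes_B B')/B'}\) sending \(da\otimes 1\mapsto d(a\otimes 1)\), which is exactly the base-change morphism for differentials.

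I would then invoke the standard fact that differentials commute with base change along a Cartesian square, i.e.\ that \(v^*\Omega_{X/Y}\xrightarrow{\ \sim\ }\Omega_{X'/Y'}\) is an isomorphism (\cite[Chapter II, Proposition 8.10]{H}). Dualizing an isomorphism of locally free sheaves yields an isomorphism, so the composition \(T_{X'/Y'}\to v^*T_{X/Y}\) is an isomorphism, as required.

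The main obstacle I anticipate is bookkeeping rather than depth: one must verify that the composite of the two canonical maps of Lemma \ref{t}(1) dualizes to precisely the base-change morphism, with the pullbacks correctly identified and no stray twists, which is exactly what the local affine computation is meant to pin down. Once this identification is in place the conclusion is immediate. As a sanity check, and as an alternative that avoids dualizing, one can instead use the Cartesian hypothesis \(X'=Y'\times_Y X\) to split \(T_{X'/Y}\cong g^*T_{Y'/Y}\oplus v^*T_{X/Y}\) via \((dg,dv)\); under this splitting \(T_{X'/Y'}=\ker(dg)\) is the summand \(v^*T_{X/Y}\), onto which the second projection \(dv\) restricts as the identity, giving the same statement read directly on tangent bundles.
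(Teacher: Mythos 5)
Your proof is correct and takes essentially the same route as the paper, which simply cites \cite[Chapter II, Proposition 8.10]{H} (base change for K\"ahler differentials); your dualization argument and the local affine identification are exactly the content behind that citation. The only addition worth keeping is your explicit remark that smoothness of the relevant morphisms is implicitly assumed so that the tangent sheaves are locally free and dualizing preserves the isomorphism.
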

\begin{proof}
See \cite[Chapter II, Proposition 8.10]{H}.
\end{proof}
\begin{lemma}\label{t2}
Suppose we have a Cartesian square in \(Sm/k\)
\[
	\xymatrix
	{
		X'\ar[r]^v\ar[d]^{g}	&X\ar[d]^f\\
		Y'\ar[r]^{u}		&Y
	}
\]
such that \(f\) is a closed immersion. If one of the following conditions holds
\begin{enumerate}
\item\(u\) is smooth
\item\(u\) is a closed immersion and \(dimX'-dimY'=dimX-dimY\),
\end{enumerate}
the natural morphism \(\gamma\) defined by the following commutative diagram with exact rows
\[
	\xymatrix
	{
		0\ar[r]	&v^*T_{X/k}\ar[r]			&v^*f^*T_{Y/k}\ar[r]				&v^*N_{X/Y}\ar[r]			&0\\
		0\ar[r]	&T_{X'/k}\ar[r]\ar[u]_{\alpha}	&g^*T_{Y'/k}\ar[r]\ar[u]_{\beta}	&N_{X'/Y'}\ar[r]\ar[u]_{\gamma}	&0
	}
\]
is an isomorphism.
\end{lemma}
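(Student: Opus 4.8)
The plan is to read the two rows as instances of Lemma~\ref{t}, (2) and then to run a five/snake lemma argument, reducing the isomorphy of \(\gamma\) to properties of the outer comparison maps \(\alpha,\beta\). First I would note that the bottom row is the tangent--normal sequence attached to \(X'\to Y'\to\mathrm{Spec}\,k\), while the top row is its analogue for \(X\to Y\to\mathrm{Spec}\,k\) pulled back along \(v\), using \(v^*f^*=(fv)^*=(ug)^*=g^*u^*\); both are short exact sequences of vector bundles. The maps \(\alpha\) and \(\beta\) are the canonical comparison maps of relative tangent bundles induced by \(v\) and by \(g^*\) applied to the map coming from \(u\), and the commutativity of the diagram --- hence the very existence of the induced map \(\gamma\) on cokernels --- is the naturality of these sequences. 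The key reduction is that in both cases \(\gamma\) is a morphism of vector bundles of equal rank, so it suffices to prove it surjective, a surjection between vector bundles of the same rank being automatically an isomorphism. Here \(\operatorname{rk}N_{X'/Y'}=\operatorname{codim}_{Y'}X'\) and \(\operatorname{rk}v^*N_{X/Y}=\operatorname{codim}_YX\): in case (2) these coincide by the dimension hypothesis, while in case (1) the closed immersion \(f\) is base changed along the smooth \(u\), which preserves codimension, so the dimension equation of (2) in fact holds automatically.

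In case (1), \(u\) and hence \(v\) are smooth, so Lemma~\ref{t}, (1) makes \(T_{Y'/k}\to u^*T_{Y/k}\) surjective; applying \(g^*\) shows \(\beta\) is surjective. Since the horizontal surjections \(p':g^*T_{Y'/k}\twoheadrightarrow N_{X'/Y'}\) and \(p:v^*f^*T_{Y/k}\twoheadrightarrow v^*N_{X/Y}\) satisfy \(\gamma\circ p'=p\circ\beta\), surjectivity of \(\beta\) and of \(p\) forces \(\gamma\) surjective, hence an isomorphism. Alternatively one identifies \(\ker\alpha=T_{X'/X}\) and \(\ker\beta=g^*T_{Y'/Y}\) and observes that the induced map between them is precisely the isomorphism of Lemma~\ref{t1} applied to the transposed Cartesian square, so that the snake lemma returns \(\gamma\) as an isomorphism directly.

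In case (2) all four maps are closed immersions, and I would argue fibrewise, checking that \(\gamma\otimes\kappa(x)\) is surjective at every \(x\in X'\), which suffices by Nakayama. Since \(X\) and \(Y'\) are closed subschemes of \(Y\) and \(X'=X\times_YY'\), and the tangent space functor preserves fibre products, we get \(T_xX'=T_xX\cap T_xY'\) inside \(T_xY\); smoothness of \(X'\) yields \(\dim_{\kappa(x)}T_xX'=\dim X'=\dim X+\dim Y'-\dim Y\), and a dimension count then forces transversality \(T_xX+T_xY'=T_xY\). The fibre of \(\gamma\) is the map \(T_xY'/T_xX'\to T_xY/T_xX\) induced by the inclusion \(T_xY'\hookrightarrow T_xY\), which transversality makes an isomorphism.

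The main obstacle is exactly this last case: the whole argument rests on converting the numerical hypothesis (2), together with the smoothness of \(X'\) built into \(X'\in Sm/k\), into the transversality of tangent spaces. Without it \(\gamma\) would in general be only injective and not surjective, and this is precisely where condition (2) is indispensable.
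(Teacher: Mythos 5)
Your argument is correct, and for case (1) it coincides with the paper's: both reduce to the surjectivity of \(\beta\) (Lemma \ref{t}, (1)) together with the identification of \(\ker\alpha=T_{X'/X}\) with \(\ker\beta=g^*T_{Y'/Y}\) via Lemma \ref{t1}, followed by the snake lemma or the equal-rank observation. For case (2) you take a genuinely different route. The paper also starts from the equal-rank reduction, but then dualizes and computes \(\gamma^{\vee}\) explicitly in affine coordinates: writing \(Y=\mathrm{Spec}\,A\), \(X=\mathrm{Spec}\,A/J\), \(Y'=\mathrm{Spec}\,A/I\), \(X'=\mathrm{Spec}\,A/(I+J)\), the conormal comparison map becomes the obviously surjective map induced by \(J/J^2\otimes A/(I+J)\to (I+J)/(I+J^2)\). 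You instead argue fibrewise on tangent spaces: residue fields agree since all four maps are closed immersions, \(T_xX'=T_xX\cap T_xY'\) because the \(\kappa(x)[\epsilon]\)-point functor preserves fibre products, and the numerical hypothesis plus smoothness of \(X'\) converts into the transversality \(T_xX+T_xY'=T_xY\), which makes \(T_xY'/T_xX'\to T_xY/T_xX\) an isomorphism; Nakayama then finishes. The paper's computation is shorter and purely algebraic, and does not require tracking residue fields or tangent-space dimensions; your version is more geometric and makes visible exactly what the dimension hypothesis is doing (it is a transversality condition), at the cost of having to justify that the fibre of the normal sheaf is the quotient of tangent spaces and that fibrewise surjectivity suffices. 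Both are complete proofs.
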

\begin{proof}
If \(u\) is smooth, then \(\alpha\) and \(\beta\) are surjective and have the same kernel by the previous two lemmas. So \(\gamma\) is an isomorphism by the snake lemma.

In the other case, the dimension condition implies \(N_{X'/Y'}\) and \(N_{X/Y}\) have the same rank. So we only have to prove \(\gamma^{\vee}\) is surjective. Then we could assume all schemes are affine. Suppose \(Y=Spec A\), \(X=Spec A/J\), \(Y'=Spec A/I\) and \(X'=Spec A/(I+J)\). Then \(N_{X/Y}^{\vee}=I/I^2\) and \(N_{X'/Y'}^{\vee}=(I+J)/(I^2+J)\). Now the morphism \(\gamma\) is given by
\[\begin{array}{ccccc}I/I^2&\otimes_{A/I}&A/(I+J)&\longrightarrow&(I+J)/(I^2+J)\\(\overline{i}&,&\overline{a})&\longmapsto&\overline{ai}\end{array},\]
which is obviously surjective.
\end{proof}
\begin{axiom}\label{P-FSM}
(Push-Forward for Smooth Morphisms) Suppose \(f:X\longrightarrow Y\) is a smooth morphism in \(Sm/k\), \(n\in\mathbb{N}\), \(v\in\mathscr{P}_X\) and \(C\in Z^{n+d_f}(X)\) is finite over \(Y\). Then we have a morphism
\[f_*:E_C^{n+d_f}(X,f^*v-T_{X/Y})\longrightarrow E_{f(C)}^n(Y,v),\]
which is also functorial with respect to \(v\) and extension of supports. The push-forward of the identity morphism is just the identity morphism, by using \(T_{X/Z}=0\).
\end{axiom}
We may also use the simplified notation
\[f^*v-T_{X/Y}\longrightarrow v\]
to denote \(f_*\). Moreover, we could talk about the push-forward of the form
\[f_*:E_C^{n+d_f}(X,f^*v_1-T_{X/Y}+f^*v_2)\longrightarrow E_{f(C)}^n(Y,v_1+v_2).\]
It is defined by the composition of the push-forward defined above and the commutativity isomorphism \(c(-T_{X/Y},f^*v_2)\).
\begin{axiom}\label{FPFSM}
(Functoriality of Push-Forward for Smooth Morphisms) Suppose \(\xymatrix{X\ar[r]^g&Y\ar[r]^f&Z}\) are smooth morphisms in \(Sm/k\), \(i\in\mathbb{N}\), \(C\in Z^{i+d_X-d_Z}(X)\) is finite over \(Z\) and \(v\in\mathscr{P}_Z\). Then we have a commutative diagram
\[
	\xymatrix
	{
		E^{i+d_X-d_Z}_C(X,(f\circ g)^*v-T_{X/Z})\ar[r]^-{\varphi}\ar[ddr]^{(f\circ g)_*}	&E^{i+d_X-d_Z}_C(X,(f\circ g)^*v-g^*T_{Y/Z}-T_{X/Y})\ar[d]_{g_*}\\
																		&E^{i+d_Y-d_Z}_{g(C)}(Y,f^*v-T_{Y/Z})\ar[d]_{f_*}\\
																		&E^i_{f(g(C))}(Z,v)
	},
\]
where \(\varphi\) is obtained via
\begin{eqnarray*}
(f\circ g)^*v-T_{X/Z}	&\longrightarrow	&(f\circ g)^*v-(T_{X/Y}+g^*T_{Y/Z})\\
				&\longrightarrow 	&(f\circ g)^*v-g^*T_{Y/Z}-T_{X/Y}.
\end{eqnarray*}
\end{axiom}
\begin{axiom}\label{P-FCI}
(Push-Forward for Closed Immersions) Suppose \(f:X\longrightarrow Y\) is a closed immersion in \(Sm/k\), \(v\in\mathscr{P}_Y\) and \(C\in Z^{n+d_f}(X)\). Then we have an \emph{isomorphism}
\[f_*:E_C^{n+d_f}(X,N_{X/Y}+f^*v)\longrightarrow E_{f(C)}^n(Y,v),\]
And this morphism is also functorial in  \(v\) and extension of supports. The push-forward of the identity is just the identity, by using \(N_{X/Y}=0\).
\end{axiom}
So given a vector bundle \(V\) over \(X\), the definition above gives the isomorphism \(E^n_C(X,V)\cong E_C^{n+rk_X(V)}(V,0)\) (See Section \ref{Introduction}) via the push-forward of the zero section.

We may also use the simplified notation
\[N_{X/Y}+f^*v\longrightarrow v\]
to denote \(f_*\). Moreover, we could talk about the push-forward of the form
\[f_*:E_C^{n+d_f}(X,f^*v_1+N_{X/Y}+f^*v_2)\longrightarrow E_{f(C)}^n(Y,v_1+v_2).\]
It is defined by the composition of the push-forward defined above and the commutativity isomorphism \(c(f^*v_1,N_{X/Y})\).
%
%
\begin{axiom}\label{FPFCI}
(Functoriality of Push-Forward for Closed Immersions) Suppose \(\xymatrix{X\ar[r]^g&Y\ar[r]^f&Z}\) are closed immersions in \(Sm/k\), \(C\in Z^{i+d_X-d_Z}(X)\) and \(v\in\mathscr{P}_Z\). Then we have a commutative diagram
\[
	\xymatrix
	{
		E^{i+d_X-d_Z}_C(X,N_{X/Z}+(f\circ g)^*v)\ar[r]^-{\varphi}\ar[ddr]^{(f\circ g)_*}	&E^{i+d_X-d_Z}_C(X,N_{X/Y}+g^*N_{Y/Z}+(f\circ g)^*v)\ar[d]_{g_*}\\
																		&E^{i+d_Y-d_Z}_{g(C)}(Y,N_{Y/Z}+f^*v)\ar[d]_{f_*}\\
																		&E^i_{f(g(C))}(Z,v)
	},
\]
where \(\varphi\) is got by the isomorphism \(N_{X/Z}+(f\circ g)^*v\cong N_{X/Y}+g^*N_{Y/Z}+(f\circ g)^*v\).
\end{axiom}
\begin{axiom}\label{BCSM}
(Base Change for Smooth Morphisms) Suppose we have a Cartesian square with all schemes being smooth
\[
	\xymatrix
	{
		X'\ar[r]^v\ar[d]^{g}	&X\ar[d]^f\\
		Y'\ar[r]^{u}		&Y
	}
\]
where \(c=d_X-d_Y=d_{X'}-d_{Y'}\), \(n\in\mathbb{N}\), \(s\in\mathscr{P}_Y\), \(f\) smooth, \(C\in Z^{n+c}(X)\) is finite over \(Y\) and \(v^{-1}(C)\in Z^{n+c}(X)\). Then the following diagram commutes
\[
	\xymatrix
	{
		E_C^{n+c}(X,f^*s-T_{X/Y})\ar[r]^-{f_*}\ar[d]^{v^*}		&E^n_{f(C)}(Y,s)\ar[d]^{u^*}\\
		E_{v^{-1}(C)}^{n+c}(X',v^*f^*s-v^*T_{X/Y})\ar[r]^-{g_*}	&E_{g(v^{-1}(C))}^n(Y',u^*s)
	}.
\]
Here we have used the canonical isomorphism \(T_{X'/Y'}\longrightarrow v^*T_{X/Y}\) in Lemma \ref{t1}.
\end{axiom}
\begin{axiom}\label{BCCI}
(Base Change for Closed Immersions) Suppose we have a Cartesian square with all schemes being smooth
\[
	\xymatrix
	{
		X'\ar[r]^v\ar[d]^{g}	&X\ar[d]^f\\
		Y'\ar[r]^{u}		&Y
	}
\]
where \(c=d_X-d_Y=d_{X'}-d_{Y'}\), \(s\in\mathscr{P}_Y\), \(f\) is a closed immersion, \(C\in Z^{n+c}(X)\) and \(v^{-1}(C)\in Z^{n+c}(X)\). Then the following diagram commutes
\[
	\xymatrix
	{
		E_C^{n+c}(X,N_{X/Y}+f^*s)\ar[r]^-{f_*}\ar[d]^{v^*}		&E^n_{f(C)}(Y,s)\ar[d]^{u^*}\\
		E_{v^{-1}(C)}^{n+c}(X',v^*N_{X/Y}+v^*f^*s)\ar[r]^-{g_*}	&E_{g(v^{-1}(C))}^n(Y',u^*s)
	}.
\]
\end{axiom}
\begin{axiom}\label{PFSM}
(Projection Formula for Smooth Morphisms) Suppose we have a smooth morphism \(f:X\longrightarrow Y\) in \(Sm/k\), \(n,m\in\mathbb{N}\), \(C\in Z^{n+d_f}(X)\) being finite over \(Y\), \(D\in Z^m(Y)\), \(C\) and \(f^{-1}(D)\) intersect properly and \(v_1,v_2\in\mathscr{P}_Y\). Then the following diagrams commute
\[
	\xymatrix
	{
		E_C^{n+d_f}(X,f^*v_1-T_{X/Y})\times E_D^m(Y,v_2)\ar[r]^-{id\times f^*}\ar[d]^{f_*\times id}	&E_C^{n+d_f}(X,f^*v_1-T_{X/Y})\times E_{f^{-1}(D)}^m(X,f^*v_2)\ar[d]^{\cdot}\\
		E_C^n(Y,v_1)\times E_D^m(Y,v_2)\ar[d]^{\cdot}									&E_{C\cap f^{-1}(D)}^{n+m+d_f}(X,f^*v_1-T_{X/Y}+f^*v_2)\ar[dl]^{f_*}\\
		E_{Y,f(C)\cap D}^{n+m}(v_1+v_2)												&
	}
\]
\[
	\xymatrix
	{
		E_D^m(Y,v_2)\times E_C^{n+d_f}(X,f^*v_1-T_{X/Y})\ar[r]^-{f^*\times id}\ar[d]^{id\times f_*}	&E_{f^{-1}(D)}^m(X,f^*v_2)\times E_C^{n+d_f}(X,f^*v_1-T_{X/Y})\ar[d]^{\cdot}\\
		E_D^m(Y,v_2)\times E_C^n(Y,v_1)\ar[d]^{\cdot}									&E_{C\cap f^{-1}(D)}^{n+m+d_f}(X,f^*v_2+f^*v_1-T_{X/Y})\ar[dl]^{f_*}\\
		E_{f(C)\cap D}^{n+m}(Y,v_2+v_1)												&
	}.
\]
\end{axiom}
\begin{axiom}\label{PFCI}
(Projection Formula for Closed Immersions) Suppose we have a closed immersion \(f:X\longrightarrow Y\) in \(Sm/k\), \(n,m\in\mathbb{N}\), \(C\in Z^{n+d_f}(X)\), \(D\in Z^m(Y)\), \(f^{-1}(D)\in Z^m(X)\), \(C\) and \(f^{-1}(D)\) intersect properly and \(v_1,v_2\in\mathscr{P}_Y\). Then the following diagrams commute
\[
	\xymatrix
	{
		E_C^{n+d_f}(X,N_{X/Y}+f^*v_1)\times E_D^m(Y,v_2)\ar[r]^-{id\times f^*}\ar[d]^{f_*\times id}	&E_C^{n+d_f}(X,N_{X/Y}+f^*v_1)\times E_{f^{-1}(D)}^m(X,f^*v_2)\ar[d]^{\cdot}\\
		E_C^n(Y,v_1)\times E_D^m(Y,v_2)\ar[d]^{\cdot}									&E_{C\cap f^{-1}(D)}^{n+m+d_f}(X,N_{X/Y}+f^*v_1+f^*v_2)\ar[dl]^{f_*}\\
		E_{f(C)\cap D}^{n+m}(Y,v_1+v_2)												&
	}
\]
\[
	\xymatrix
	{
		E_D^m(Y,v_2)\times E_C^{n+d_f}(X,N_{X/Y}+f^*v_1)\ar[r]^-{f^*\times id}\ar[d]^{id\times f_*}	&E_{f^{-1}(D)}^m(X,f^*v_2)\times E_C^{n+d_f}(X,N_{X/Y}+f^*v_1)\ar[d]^{\cdot}\\
		E_D^m(Y,v_2)\times E_C^n(Y,v_1)\ar[d]^{\cdot}									&E_{C\cap f^{-1}(D)}^{n+m+d_f}(X,f^*v_2+N_{X/Y}+f^*v_1)\ar[dl]^{f_*}\\
		E_{f(C)\cap D}^{n+m}(Y,v_2+v_1)												&
	}.
\]
\end{axiom}

We still need a compability between the two push-forwards introduced.
\begin{axiom}\label{CTPF}
(Compability between the two Push-Forwards)
\begin{enumerate}
\item Suppose \(\xymatrix{X\ar[r]^f&Z\ar[r]^g&Y}\) are morphisms in \(Sm/k\), \(C\in Z^{i+d_X-d_Y}(X)\) is finite over \(Y\), \(i\in\mathbb{N}\) and \(v\in\mathscr{P}_Y\), where \(f\) is a closed immersion and \(g\), \(g\circ f\) are smooth. Then the following diagram commutes
\[
	\xymatrix
	{
		E^{i+d_X-d_Y}_C(X,N_{X/Z}+f^*g^*v-f^*T_{Z/Y})\ar[d]_{f_*}\ar[r]^{c(N_{X/Z},f^*g^*v)}	&E^{i+d_X-d_Y}_C(X,f^*g^*v+N_{X/Z}-f^*T_{Z/Y})\ar[d]^{\varphi}\\
		E_{f(C)}^{i+d_Z-d_Y}(Z,g^*v-T_{Z/Y})\ar[d]_{g_*}							&E^{i+d_X-d_Y}_C(X,f^*g^*v-T_{X/Y})\ar[dl]_{(g\circ f)_*}\\
		E_{g(f(C))}^i(Y,v)													&
	},
\]
where \(\varphi\) is induced by Lemma \ref{t}, (2).
\item Suppose \(\xymatrix{X\ar[r]^f&Z\ar[r]^g&Y}\) are morphisms in \(Sm/k\), \(C\in Z^{i+d_X-d_Y}(X)\) finite over \(Y\), \(i\in\mathbb{N}\) and \(v\in\mathscr{P}_Y\), where \(g\) is smooth and \(f\), \(g\circ f\) are closed immersions. Then the following diagram commutes
\[
	\xymatrix
	{
		E^{i+d_X-d_Y}_C(X,N_{X/Z}+f^*g^*v-f^*T_{Z/Y})\ar[d]_{f_*}\ar[r]_{c(N_{X/Z}+f^*g^*v,-f^*T_{Z/Y})}	&E^{i+d_X-d_Y}_C(X,-f^*T_{Z/Y}+N_{X/Z}+f^*g^*v)\ar[d]^{\varphi}\\
		E_{f(C)}^{i+d_Z-d_Y}(Z,g^*v-T_{Z/Y})\ar[d]_{g_*}										&E^{i+d_X-d_Y}_C(X,N_{X/Y}+f^*g^*v)\ar[dl]_{(g\circ f)_*}\\
		E_{g(f(C))}^i(Y,v)																&
	},
\]
where \(\varphi\) is induced by Lemma \ref{t}, (3).
\item Suppose we have a Cartesian square with all schemes being smooth
\[
	\xymatrix
	{
		X'\ar[r]^v\ar[d]^{g}	&X\ar[d]^f\\
		Y'\ar[r]^{u}		&Y,
	}
\]
where \(u\) is smooth and \(f\) is a closed immersion. Suppose moreover that \(C\in Z^{n+d_f+d_v}(X')\) is finite over \(Y\), \(s\in\mathscr{P}_Y\). Then the following diagram commutes
\[
	\xymatrix
	{
		E_C^{n+d_f+d_v}(X',N_{X'/Y'}+g^*u^*s-g^*T_{Y'/Y})\ar[r]^-{g_*}\ar[d]	&E_{g(C)}^{n+d_u}(Y',u^*s-T_{Y'/Y})\ar[d]_{u_*}\\
		E_C^{n+d_f+d_v}(X',v^*N_{X/Y}+u^*f^*s-T_{X'/X})\ar[d]_{v_*}			&E_{u(g(C))}^n(Y,s)\\
		E_{v(C)}^{n+d_f}(X,N_{X/Y}+f^*s)\ar[ru]_{f_*}					&
	}.
\]
\end{enumerate}
\end{axiom}
\begin{axiom}\label{EE}
(\'Etale Excision) Suppose \(f:X\longrightarrow Y\) is an \'etale morphism in \(Sm/k\), \(C\in Z^i(Y)\) and the morphism \(f:f^{-1}(C)\longrightarrow C\) is an isomorphism (where bothe terms are endowed with their reduced closed subscheme structures), then for any \(i\in\mathbb{N}\) and \(v\in\mathscr{P}_Y\), the pull-back morphism
\[f^*:E^i_C(Y,v)\longrightarrow E^i_{f^{-1}(C)}(X,f^*(v))\]
is an isomorphism between abelian groups with inverse \(f_*\).
\end{axiom}
\begin{definition}
If the categories \(\mathscr{P}_X\) and groups \(E_C^i(X,v)\) satisfy all the axioms above, then they are called a correspondence theory.
\end{definition}
\begin{remark}
The first example of a correspondence theory is just the correspondence of cycles
\[CH_C^i(X,v):=\textrm{the free abelian group generated by components of \(C\)},\]
where we pick \(\mathscr{P}_X=\mathbb{Z}/2\mathbb{Z}\). And unfortunately, to prove that MW-correspondence is a correspondence theory needs to rewrite everything from the begining, for example, the Rost-Schmid complexes, as we will see in the next section.
\end{remark}
\section{MW-Correspondence as a Correspondence Theory}\label{MW-Correspondence as a Correspondence Theory}
In this section, we are going to give a plan for the proof that MW-correspondence is a correspondence theory. It's incomplete and will be completed in the future.

We will always assume \(E=\widetilde{CH}\) in this section.

For any scheme \(X\) and \(x\in X\), set \(\Omega_x=m_x/m_x^2\) and \(\Lambda_x=det(m_x/m_x^2)\).
\begin{definition}
Let \(G\) be an abelian group, \(M\), \(N\) be \(G\)-sets, define
\[M\times_GN=M\times N/\sim,(m,n)\sim(m',n')\Longleftrightarrow(m,n)=(gm',g^{-1}n')\textrm{ for some }g\in G\]
as a \(G\)-sets.
\end{definition}
\begin{definition}
Let \(G\) be an abelian group and \(M\) be a \(G\)-set. We denote the group algebra of \(G\) over \(\mathbb{Z}\) by \(\mathbb{Z}[G]\) and the free abelian group generated by \(M\) by \(\mathbb{Z}[M]\). Then \(\mathbb{Z}[M]\) is a \(\mathbb{Z}[G]\)-module.
\end{definition}

The following lemma is straighforward.
\begin{lemma}
\begin{enumerate}
\item Let \(M\), \(N\) be \(G\)-sets, then
\[\mathbb{Z}[M\times_GN]\cong\mathbb{Z}[M]\otimes_{\mathbb{Z}[G]}\mathbb{Z}[N].\]
\item Let \(G\longrightarrow H\) be a morphism between abelian groups, \(M\) be a \(G\)-set, then
\[\mathbb{Z}[M\times_GH]\cong\mathbb{Z}[M]\otimes_{\mathbb{Z}[G]}\mathbb{Z}[H]\]
as \(\mathbb{Z}[H]\)-modules.
\end{enumerate}
\end{lemma}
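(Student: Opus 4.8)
The plan is to exhibit, in each part, a pair of mutually inverse homomorphisms defined on generators, the only genuine content being to check that each map respects the relevant relation: the orbit relation defining \(M\times_G N\) on one side and the balancing relation of the tensor product on the other. A preliminary observation makes everything unambiguous: since \(G\) is abelian, the group algebra \(\mathbb{Z}[G]\) is commutative, so \(\mathbb{Z}[M]\) and \(\mathbb{Z}[N]\) are simultaneously left and right \(\mathbb{Z}[G]\)-modules, the tensor product \(\mathbb{Z}[M]\otimes_{\mathbb{Z}[G]}\mathbb{Z}[N]\) is well defined, and its balancing relation reads \(gx\otimes y=x\otimes gy\) for \(g\in G\).

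For (1), first I would construct \(\phi:\mathbb{Z}[M]\otimes_{\mathbb{Z}[G]}\mathbb{Z}[N]\longrightarrow\mathbb{Z}[M\times_G N]\). Starting from the \(\mathbb{Z}\)-bilinear map \(\mathbb{Z}[M]\times\mathbb{Z}[N]\to\mathbb{Z}[M\times_G N]\) that sends a pair \((m,n)\) of basis elements to the class \([m,n]\), I would verify it is \(\mathbb{Z}[G]\)-balanced, i.e. \([gm,n]=[m,gn]\) for every \(g\in G\); this holds because \((gm,n)=(g\cdot m,\,g^{-1}\cdot(gn))\) exhibits \((gm,n)\sim(m,gn)\) under the defining relation of \(M\times_G N\). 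By the universal property of the tensor product, the map descends to \(\phi\) with \(\phi(m\otimes n)=[m,n]\).

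Conversely I would define \(\psi:\mathbb{Z}[M\times_G N]\to\mathbb{Z}[M]\otimes_{\mathbb{Z}[G]}\mathbb{Z}[N]\) on generators by \([m,n]\mapsto m\otimes n\), and check it respects the orbit relation: if \((m,n)=(gm',g^{-1}n')\), then \(m\otimes n=gm'\otimes g^{-1}n'=m'\otimes g(g^{-1}n')=m'\otimes n'\), using the balancing relation. Thus \(\psi\) is well defined, and \(\phi,\psi\) are visibly inverse to one another on generators, which proves (1). For (2), I would apply (1) with \(N=H\) endowed with the \(G\)-action \(g\cdot h=\bar{g}h\) induced by the morphism \(G\to H\) (writing \(\bar{g}\) for the image of \(g\)), obtaining at once an isomorphism of abelian groups \(\mathbb{Z}[M\times_G H]\cong\mathbb{Z}[M]\otimes_{\mathbb{Z}[G]}\mathbb{Z}[H]\). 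It then remains to upgrade this to an isomorphism of \(\mathbb{Z}[H]\)-modules, where \(H\) acts on \(M\times_G H\) through the second factor by \(h'\cdot[m,h]=[m,h'h]\) and on the tensor product through \(\mathbb{Z}[H]\); here I would check that this \(H\)-action is well defined against the orbit relation and that \(\phi(m\otimes h'h)=[m,h'h]=h'\cdot[m,h]\), so \(\phi\) is \(H\)-equivariant.

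The computations are entirely formal, and the lemma is correspondingly elementary. The one point requiring care — the closest thing to an obstacle — is bookkeeping the appearance of \(g\) versus \(g^{-1}\) when translating between the orbit relation \((gm',g^{-1}n')\) and the tensor balancing relation, and, in (2), recognizing that it is precisely the commutativity of \(H\) that makes the residual left translation action on \(M\times_G H\) descend compatibly through the \(G\)-quotient.
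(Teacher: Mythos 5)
Your argument is correct and is exactly the standard verification the paper has in mind when it declares the lemma ``straightforward'' and omits the proof: mutually inverse maps on generators, with the orbit relation $(m,n)\sim(gm',g^{-1}n')$ matching the balancing relation $gx\otimes y=x\otimes gy$, and commutativity of $H$ ensuring the $\mathbb{Z}[H]$-module structures agree in (2). Nothing is missing.
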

\begin{definition}
Let \(R\) be a commutative ring, define \(Q(R)=R^*/(R^*)^2\) as an abelian group and for any one dimensional free \(R\)-module \(L\), define
\[Q(L)=(L\setminus \{0\})/\sim,x\sim y\Longleftrightarrow x=r^2y\textrm{ for some }r\in R^*\]
as a \(Q(R)\)-sets.
\end{definition}

The following lemma is straighforward.
\begin{lemma}
\begin{enumerate}
\item Let \(L_1\), \(L_2\) be one dimensional \(R\)-modules, then
\[Q(L_1\otimes_RL_2)\cong Q(L_1)\times_{Q(R)}Q(L_2).\]
\item Let \(L\) be a one dimensional \(R\)-module, then
\[Q(L^{\vee})\cong Hom_{Q(R)}(Q(L),Q(R)).\]
\item Let \(S\) be an \(R\)-algebra and \(L\) be a one dimensional \(R\)-module, then
\[Q(L\otimes_RS)\cong Q(L)\times_{Q(R)}Q(S)\]
as \(Q(S)\)-sets.
\end{enumerate}
\end{lemma}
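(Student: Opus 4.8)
The plan is to treat all the objects in sight as \(Q(R)\)-torsors and to read the three statements as the standard arithmetic of torsors under the abelian group \(Q(R)\): the tensor product of lines corresponds to the contracted product \(\times_{Q(R)}\), the dual line to the inverse torsor \(Hom_{Q(R)}(-,Q(R))\), and extension of scalars along \(R\longrightarrow S\) to base change of torsors. Concretely, for a rank-one free module \(L\) with generator \(e\) whose coefficients are units (the case of residue fields, which is the one of interest), every class in \(Q(L)\) is \([ae]\) with \(a\) a unit, and \([u]\cdot[ae]:=[uae]\) exhibits \(Q(L)\) as a \(Q(R)\)-torsor. In each part I would first write down an explicit natural map, then check it is well defined, and finally verify bijectivity by a short computation with generators.

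For (1), I would define
\[Q(L_1)\times Q(L_2)\longrightarrow Q(L_1\otimes_R L_2),\quad ([x_1],[x_2])\longmapsto[x_1\otimes x_2].\]
This is well defined because scaling \(x_i\) by a square scales \(x_1\otimes x_2\) by a square, and it descends to \(Q(L_1)\times_{Q(R)}Q(L_2)\) because \((ux_1)\otimes(u^{-1}x_2)=x_1\otimes x_2\), which is exactly the relation defining the contracted product. Surjectivity is immediate since any nonzero element of \(L_1\otimes_R L_2\) is of the form \((ae_1)\otimes e_2\). The heart of the matter is injectivity: writing \(x_i=a_ie_i\) and \(y_i=b_ie_i\), the relation \(x_1\otimes x_2=r^2(y_1\otimes y_2)\) says \(a_1a_2=r^2b_1b_2\), and this is precisely the condition under which \(([a_1e_1],[a_2e_2])\) and \(([b_1e_1],[b_2e_2])\) are identified in \(Q(L_1)\times_{Q(R)}Q(L_2)\), as one sees by taking \(u=a_1b_1^{-1}\).

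For (2), I would send \([\phi]\in Q(L^{\vee})\) to the map \([x]\mapsto[\phi(x)]\). This is \(Q(R)\)-equivariant since \(\phi(ux)=u\phi(x)\), and well defined since a square change in \(\phi\) produces a square change in \(\phi(x)\). Injectivity follows by evaluating at the generator \(e\), and surjectivity from the fact that a \(Q(R)\)-equivariant map out of the torsor \(Q(L)\) is determined by its value at the single point \([e]\): given the target value \([c]\) one realizes it by the functional \(ce^{\vee}\). Part (3) is formally identical to (1) with the \(R\)-algebra \(S\) playing the role of \(L_2\): the map \(([x],[s])\mapsto[x\otimes s]\) descends to \(Q(L)\times_{Q(R)}Q(S)\) and one checks it is bijective and intertwines the residual \(Q(S)\)-actions, the only extra point being that the \(Q(R)\)-action on \(Q(S)\) is the one induced by \(R^*\longrightarrow S^*\).

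The computations are routine, so the genuine content is concentrated in one place: matching the \emph{differ by a square} relation in each target with the identification defining the contracted product, which is what makes the injectivity statements work. The one thing to be careful about is the standing hypothesis ensuring that \(Q(L)\) is an honest torsor, namely that the relevant generators are units, so that a division such as \(u=a_1b_1^{-1}\) is legitimate; this holds in the intended application where \(R\) is the residue field \(\kappa(x)\) and \(L=\Lambda_x\). Alternatively, after applying \(\mathbb{Z}[-]\) one may deduce module-level isomorphisms from the previous lemma \(\mathbb{Z}[M\times_GN]\cong\mathbb{Z}[M]\otimes_{\mathbb{Z}[G]}\mathbb{Z}[N]\), but arguing directly on the underlying \(Q(R)\)-sets is cleaner.
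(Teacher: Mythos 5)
Your proof is correct; the paper itself offers no argument here (it simply declares the lemma ``straightforward''), and the torsor-arithmetic verification you give -- explicit maps on representatives, descent through the contracted-product relation, and injectivity via the substitution \(u=a_1b_1^{-1}\) -- is exactly the computation being left to the reader. You are also right to flag the one real hypothesis: the coefficients must be units for the division defining \(u\) to make sense, which holds in the paper's applications since \(Q(L)\) is only ever used for free rank-one modules via their generators (residue fields \(k(x)\), or free bases of \(\Omega\) over local rings), so \(Q(L)\) is an honest \(Q(R)\)-torsor there.
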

\begin{definition}
Let \(X\) be a scheme, define a category \(\mathscr{P}_X\) with objects of the form \((E_1,\cdots,E_n)\), namely a series of vector bundles over \(X\). For any \((E_1,\cdots,E_n)\) and \((F_1,\cdots,F_m)\), if
\[rkE_1+\cdots+rkE_n\equiv rkF_1+\cdots+rkF_m (mod 2)\]
(\(rk=rank\)) define
\[Hom_{\mathscr{P}_X}((E_1,\cdots,E_n),(F_1,\cdots,F_m))\]
to be isomorphisms in
\[Hom_{O_X}(det(E_1,\cdots,E_n),det(F_1,\cdots,F_m)),\]
where
\[det(E_1,\cdots,E_n)=detE_1\otimes\cdots\otimes detE_n.\]
Otherwise define it to be empty.

The composition law is the same with that in the category of line bundles. (Here we define \(det(0)=O_X\))
\end{definition}
\begin{proposition}\label{px}
The categories \(\mathscr{P}_X\) for \(X\in Sm/k\) satisfy the Axiom \ref{T}.
\end{proposition}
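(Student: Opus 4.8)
The plan is to endow each \(\mathscr{P}_X\) with an explicit commutative Picard structure, realize \(p_X\) through the universal property of the category of virtual vector bundles by building a suitable bracket functor, and then read off \(rk_X\) and the pull-backs. First I would define the pairing \(+\colon\mathscr{P}_X\times\mathscr{P}_X\to\mathscr{P}_X\) on objects as concatenation of tuples, \((E_1,\dots,E_n)+(F_1,\dots,F_m)=(E_1,\dots,E_n,F_1,\dots,F_m)\), and on morphisms as the tensor product of the underlying determinant isomorphisms; this is legitimate because \(det\) of a concatenation is the tensor product of the two determinants. The unit \(0\) is the empty tuple, with determinant \(O_X\), and the associativity isomorphism \(a\) is inherited from the canonical associativity of \(\otimes\) on line bundles, so the pentagon holds automatically. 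The commutativity isomorphism \(c((E_\bullet),(F_\bullet))\) I would take to be the switch of tensor factors twisted by the Koszul sign \((-1)^{(\sum_i rkE_i)(\sum_j rkF_j)}\); as explained below this sign is forced. For inverses I would set \(-(E_1,\dots,E_n)=(E_1^\vee,\dots,E_n^\vee)\), which has the same rank parity and determinant \((det(E_\bullet))^\vee\), so that \(P+(-P)\) has trivial determinant and even parity, hence is isomorphic to \(0\); combined with the fact that \(X\mapsto P+X\) is fully faithful (tensoring determinant isomorphisms by the fixed invertible \(det\,P\) is a bijection on isomorphisms) and essentially surjective (take \(A=(-P)+Q\) to hit \(Q\)), this establishes condition (3) of Definition \ref{Picard}.

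Next I would define \(rk_X\colon\mathscr{P}_X\to\mathbb{Z}/2\mathbb{Z}\) by \((E_1,\dots,E_n)\mapsto\sum_i rkE_i\bmod 2\), with \(\mathbb{Z}/2\mathbb{Z}\) regarded as a discrete Picard category; it is additive since concatenation adds total ranks, and it is compatible with \(c\) because the target carries the trivial commutativity constraint. To obtain \(p_X\) I would exhibit the bracket functor \([-]\colon(Vect(X),iso)\to\mathscr{P}_X\), \(E\mapsto(E)\), sending an isomorphism \(E\cong E'\) to the induced \(detE\cong detE'\), and check the conditions of Definition \ref{bracket}: for a short exact sequence \(\Sigma\) is the classical isomorphism \(detE_2\cong detE_1\otimes detE_3\), the unit condition (2) is immediate from \(det(0)=O_X\), and the filtration compatibility (3) is the standard transitivity of these determinant isomorphisms. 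The universal property of virtual vector bundles then yields a unique additive \(p_X\) restricting to \([-]\) on \((Vect(X),iso)\). The compatibility square (1) of Axiom \ref{T} holds on generators, as \(rk_X(p_X([E]))=rkE\bmod 2\) equals the mod-\(2\) reduction of \(rk([E])\), and hence everywhere by additivity.

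The main obstacle is condition (4) of Definition \ref{bracket}, the compatibility of \(\Sigma\) for a direct sum \(E_1\oplus E_2\) with \(c(E_1,E_2)\): the two splittings \(0\to E_1\to E_1\oplus E_2\to E_2\to 0\) and \(0\to E_2\to E_1\oplus E_2\to E_1\to 0\) yield determinant isomorphisms of \(det(E_1\oplus E_2)\) that differ exactly by the Koszul sign \((-1)^{rkE_1\cdot rkE_2}\), which is what both forces and vindicates the signed definition of \(c\); verifying that this \(c\) then satisfies the full hexagon of Definition \ref{Picard} is the delicate coherence point of the whole argument. Once this is settled, the pull-backs \(f^*\colon\mathscr{P}_Y\to\mathscr{P}_X\), \((E_\bullet)\mapsto(f^*E_\bullet)\) together with \(f^*\) of the determinant isomorphisms, are routine: \(f^*\) commutes with \(det\), \(\otimes\) and duals and preserves ranks, so it is additive and compatible with \(rk_X,rk_Y\), satisfies \(p_X\circ f^*=f^*\circ p_Y\) on generators and \(f^*g^*=(g\circ f)^*\) by the corresponding identities for vector bundles, and sends \(-v\) to \(-f^*v\) by commuting with dualization.
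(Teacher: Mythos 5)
Your construction coincides with the paper's: concatenation as the sum, total rank mod $2$ as $rk_X$, the determinant functor as the bracket, the Koszul-signed switch as the commutativity constraint, and componentwise pull-back; your observation that Definition \ref{bracket}(4) forces the sign $(-1)^{rkE_1\cdot rkE_2}$ is exactly the reason the paper builds that sign into $c(A,B)$. The only cosmetic divergence is your choice $-(E_1,\dots,E_n)=(E_1^\vee,\dots,E_n^\vee)$ versus the paper's reversed tuple $(E_n^\vee,\dots,E_1^\vee)$, which is immaterial since only the existence of an inverse up to isomorphism is required.
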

\begin{proof}
From the definition of \(\mathscr{P}_X\), we see that for every \(A=(E_1,\cdots,E_n)\), \(rkE_1+\cdots+rkE_n\) is well defined in \(\mathbb{Z}/2\mathbb{Z}\), independent of isomorphisms in \(\mathscr{P}_X\). Hence there is a rank morphism \(rk_X:\mathscr{P}_X\longrightarrow\mathbb{Z}/2\mathbb{Z}\).

Define a bifunctor
\[\begin{array}{cccccc}+:&\mathscr{P}_X&\times&\mathscr{P}_X&\longrightarrow&\mathscr{P}_X\\&((E_1,\cdots,E_n)&,&(F_1,\cdots,F_m))&\longmapsto&(E_1,\cdots,E_n,F_1,\cdots,F_m)\end{array}.\]
Then it makes \(\mathscr{P}_X\) a Picard category with \(-(E_1,\cdots,E_n)=(E_n^{\vee},\cdots,E_1^{\vee})\). For any \(A, B\in\mathscr{P}_X\), we attach a commutativity isomorphism
\[c=c(A,B):A\oplus B\longrightarrow B\oplus A\]
by
\[(-1)^{rk_X(A)rk_X(B)}id_{det(A)\otimes det(B)}.\]
This makes \(\mathscr{P}_X\) a commutative Picard category.

There is a functor \(i:(Vect(X),iso)\longrightarrow\mathscr{P}_X\) sending \(E\) to \((E)\) and \(f:E_1\longrightarrow E_2\) to \(det(f)\). And for every exact sequence
\[0\longrightarrow E_1\longrightarrow E_3\longrightarrow E_2\longrightarrow 0,\]
we attach an isomorphism \((E_3)\longrightarrow (E_1,E_2)\) by the isomorphism \(detE_3\longrightarrow detE_1\otimes detE_2\) sending \(\alpha\wedge\beta\) to \(\alpha\otimes\beta\) for any local base \(\alpha\) (resp. \(\beta\)) of \(E_1\) (resp. \(E_3\)). This functor satisfies all conditions given in Definition \ref{bracket}.

Finally, for any \(f:X\longrightarrow Y\) in \(Sm/k\), we define \(f^*:\mathscr{P}_Y\longrightarrow\mathscr{P}_X\) by \(f^*(E_1,\cdots,E_n)=(f^*E_1,\cdots,f^*E_n)\).
\end{proof}

We set \(K_n^{MW}(F,L)=K_n^{MW}(F)\otimes_{\mathbb{Z}[Q(F)]}\mathbb{Z}[Q(L)]\) (See \cite[Remark 2.21]{Mo}) for every one dimensional \(F\)-vector space \(L\). For every \(X\in Sm/k\), \(x\in X\), \(T\) closed in \(X\) and \(v\in\mathscr{P}_X\), define
\[K_n^{MW}(k(x),\Lambda_x^*\otimes v)=K_n^{MW}(k(x),\Lambda_x^*\otimes_{k(x)}det(v)|_{k(x)})\]
and
\[C_{RS,T}^n(X;\underline{K}_m^{MW};v)=\bigoplus_{y\in X^{(n)}\cap T}K^{MW}_{m-n}(k(y),\Lambda_y^*\otimes v),\]
where \(X^{(n)}\) means the points of codimension \(n\) in \(X\) (See \cite[Chapter 4]{Mo}).

Now for every \(X\in Sm/k\), \(i\in\mathbb{N}\), \(v\in\mathscr{P}_X\) and \(T\) closed in \(X\), we define
\[\widetilde{CH}_T^i(X,v)=H^i(C_{RS,T}^*(X;\underline{K}_i^{MW};v))\]
to give Axiom \ref{C}. And the Axiom \ref{ES} just comes from the extension of supports in Chow-Witt rings.
\subsection{Operations without Intersection}
\begin{lemma}\label{l1}
Let \(f:X\longrightarrow X'\) be a smooth morphism in \(Sm/k\), \(x\in X\) with \([k(x):k(f(x))]<\infty\). Then we have an isomorphism
\[Q(\Lambda_x^*)\cong Q(\omega_{X/X'}^{\vee}|_{k(x)})\times_{Q(k(x))}Q(\Lambda_{f(x)}^*\otimes k(x))\]
(the \(Q\)s will be ignored in the sequel for convenience).
\end{lemma}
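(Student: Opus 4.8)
The plan is to deduce the stated isomorphism from a canonical short exact sequence of \(k(x)\)-vector spaces
\[
0 \longrightarrow (m_{x'}/m_{x'}^2)\otimes_{k(x')} k(x) \stackrel{\phi}{\longrightarrow} m_x/m_x^2 \stackrel{\psi}{\longrightarrow} \Omega^1_{X/X'}\otimes_{\mathcal{O}_X} k(x) \longrightarrow 0,
\]
where \(x'=f(x)\), by taking determinants and then applying the multiplicativity of \(Q\) recorded in the straightforward lemma above (the formulas \(Q(L_1\otimes_R L_2)\cong Q(L_1)\times_{Q(R)}Q(L_2)\) and \(Q(L\otimes_R S)\cong Q(L)\times_{Q(R)}Q(S)\)). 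Once the sequence is in hand, everything downstream is formal.

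To construct the sequence I would work with the local rings \(A=\mathcal{O}_{X',x'}\to B=\mathcal{O}_{X,x}\) and the fibre \(Y=X\times_{X'}\mathrm{Spec}\,k(x')\), whose local ring at \(x\) is \(B/m_{x'}B\). The inclusion \(m_{x'}\hookrightarrow m_x\) induces \(\phi\), and by construction its cokernel is \(m_x/(m_x^2+m_{x'}B)=m_{x,Y}/m_{x,Y}^2\), so exactness at the middle and right terms holds tautologically. Since \(f\) is smooth, \(Y\) is smooth over \(k(x')\) of dimension \(d_f\) and \(\Omega^1_{X/X'}\) is locally free of rank \(d_f\); because \([k(x):k(x')]<\infty\), the point \(x\) is a \emph{closed} point of \(Y\), hence \(k(x)/k(x')\) is finite separable and the conormal sequence for \(Y/k(x')\) gives \(m_{x,Y}/m_{x,Y}^2\cong \Omega^1_{Y/k(x')}\otimes k(x)=\Omega^1_{X/X'}\otimes_{\mathcal{O}_X} k(x)\). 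This identifies the cokernel with the relative cotangent fibre.

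For the injectivity of \(\phi\) I would argue by dimension count: smoothness gives flatness, so the fibre-dimension formula yields \(\mathrm{codim}(x)=\mathrm{codim}(x')+\dim\mathcal{O}_{Y,x}=\mathrm{codim}(x')+d_f\); comparing the \(k(x)\)-dimensions of the three terms (\(\mathrm{codim}(x')\), \(\mathrm{codim}(x)\) and \(d_f\)) forces \(\phi\) to be injective. Taking determinants of the sequence and dualising then gives a canonical isomorphism of lines
\[
\Lambda_x^* \cong \big(\Lambda_{x'}^*\otimes_{k(x')} k(x)\big)\otimes_{k(x)}\big(\omega_{X/X'}^{\vee}|_{k(x)}\big),
\]
using that \(\det\) commutes with \(\otimes_{k(x')}k(x)\) and with restriction to \(k(x)\), together with \(\omega_{X/X'}=\det\Omega^1_{X/X'}\). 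Applying \(Q\) and the two multiplicativity formulas yields exactly
\[
Q(\Lambda_x^*)\cong Q(\omega_{X/X'}^{\vee}|_{k(x)})\times_{Q(k(x))}Q(\Lambda_{f(x)}^*\otimes k(x)).
\]
The main obstacle is establishing the short exact sequence cleanly: the identification of the cokernel with \(\Omega^1_{X/X'}\otimes k(x)\) secretly uses that \(x\) is a closed point of the smooth fibre (so the residue extension is separable and the fibre cotangent space equals the relative cotangent fibre), and the injectivity of \(\phi\) rests on the dimension bookkeeping above; by contrast the passage through \(\det\) and \(Q\) is a purely formal consequence of the \(Q\)-functor lemmas.
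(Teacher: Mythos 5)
Your argument has a genuine gap at the step where you identify the cokernel of \(\phi\) with \(\Omega^1_{X/X'}\otimes_{\mathcal{O}_X}k(x)\). It is true that \(x\) is a closed point of the fibre \(Y=X\times_{X'}\operatorname{Spec}k(x')\), but a closed point of a smooth variety over a \emph{non-perfect} field need not have separable residue field, and \(k(x')\) is in general not perfect (it is merely a finitely generated extension of the perfect base \(k\)). Concretely, in characteristic \(p>2\) take \(X'=\mathbb{A}^1_k=\operatorname{Spec}k[t]\) with \(x'\) the generic point, \(X=\mathbb{A}^2_k=\operatorname{Spec}k[t,u]\) with \(f\) the projection, and let \(x\) be the generic point of the curve \(u^p=t\). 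Then \(k(x)=k(u)\supseteq k(x')=k(u^p)\) is purely inseparable of degree \(p\), the space \(m_x/m_x^2\) is generated by \(u^p-t\), and the natural map \(m_x/m_x^2\to\Omega^1_{X/X'}\otimes k(x)\) sends \(\overline{u^p-t}\) to \(pu^{p-1}\,du=0\): both sides are one-dimensional but the map vanishes. So the surjectivity of \(\psi\), which you call tautological, is exactly what breaks, and no dimension count can rescue it, since the obstruction lies in the maps rather than in the dimensions. What survives is the exact sequence \(0\to (m_{x'}/m_{x'}^2)\otimes k(x)\to m_x/m_x^2\to m_{x,Y}/m_{x,Y}^2\to 0\), but the identification of the last term with \(\Omega^1_{X/X'}\otimes k(x)\) requires \(k(x)/k(x')\) to be separable.

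This is precisely the dichotomy in the paper's own proof: in the separable case it runs an exact-diagram argument essentially equivalent to yours, but in the general case it abandons the vector-space level entirely and works with \(Q(\det(-))\), inserting and cancelling the factor \(Q(\omega_{k(x)/k})\) and invoking \cite[Lemma 4.1]{Mo}, which provides a canonical identification \(Q(\omega_{k(x)/k})\cong Q(\omega_{k(f(x))/k}\otimes k(x))\) for an arbitrary finite extension of finitely generated field extensions of a perfect field, even though the corresponding map of one-dimensional vector spaces may be zero. That non-formal input is what your proposal is missing; without it the lemma cannot be deduced from the multiplicativity of \(Q\) alone.
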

\begin{proof}
If \(k(x)\) is separable over \(k(f(x))\), then we have a commutative diagram with exact rows and columns
\[
	\xymatrix
	{
				&0								&0										&										&\\
				&\Omega_{X/X'}|_{k(x)}\ar[r]^=\ar[u]		&\Omega_{X/X'}|_{k(x)}\ar[u]					&										&\\
		0\ar[r]	&\Omega_x\ar[r]\ar[u]					&\Omega_{X/k}|_{k(x)}\ar[r]\ar[u]				&\Omega_{k(x)/k}\ar[r]						&0\\
		0\ar[r]	&\Omega_{f(x)}\otimes k(x)\ar[r]\ar[u]	&\Omega_{X'/k}|_{k(f(x))}\otimes k(x)\ar[r]\ar[u]	&\Omega_{k(f(x))/k}\otimes k(x)\ar[r]\ar[u]_{\cong}	&0\\
				&0\ar[u]							&0\ar[u]									&										&
	},
\]
so we have an isomorphism
\[\Lambda_x^*\cong\omega_{X/X'}|_{k(x)}^{\vee}\otimes(\Lambda_{f(x)}^*\otimes k(x))\]
which induces an isomophism
\[Q(\Lambda_x^*)\cong Q(\omega_{X/X'}|_{k(x)}^{\vee})\times_{Q(k(x))}Q(\Lambda_{f(x)}^*\otimes k(x)).\]
In general cases, we only have the horizontal exact sequences and the middle vertical arrows. But \(Q(\omega_{k(x)/k})\cong Q(\omega_{k(f(x))/k}\otimes k(x))\) still holds (See \cite[Lemma 4.1]{Mo}), so we have isomorphisms
\begin{align*}
	&Q(\Lambda_x^*)\\
\cong	&Q(\omega_{k(x)/k})\times_{Q(k(x))}Q(\omega_{k(x)/k}^{\vee})\times_{Q(k(x))}Q(\Lambda_x^*)\\
\cong	&Q(\omega_{k(x)/k})\times_{Q(k(x))}Q(\omega_{X/k}^{\vee}|_{k(x)})\\
\cong	&Q(\omega_{k(x)/k})\times_{Q(k(x))}Q(\omega_{X/X'}^{\vee}|_{k(x)})\times_{Q(k(x))}Q(\omega_{X'/k}^{\vee}|_{k(f(x))}\otimes k(x))\\
\cong	&Q(\omega_{k(x)/k})\times_{Q(k(x))}Q(\omega_{X/X'}^{\vee}|_{k(x)})\times_{Q(k(x))}Q(\omega_{k(f(x))/k}^{\vee}\otimes k(x))\times_{Q(k(x))}Q(\Lambda_{f(x)}^*\otimes k(x))\\
\cong	&Q(\omega_{k(x)/k})\times_{Q(k(x))}Q(\omega_{X/X'}^{\vee}|_{k(x)})\times_{Q(k(x))}Q(\omega_{k(x)/k}^{\vee})\times_{Q(k(x))}Q(\Lambda_{f(x)}^*\otimes k(x))\\
\cong	&Q(\omega_{X/X'}^{\vee}|_{k(x)})\times_{Q(k(x))}Q(\Lambda_{f(x)}^*\otimes k(x))
\end{align*}
This coincides with the isomorphism we got in the case of separate extension by applying Theorem \ref{four diagrams}, (2) to the diagram above.
\end{proof}
\begin{lemma}\label{l2}
Let \(f:X\longrightarrow X'\) be a closed immersion in \(Sm/k\) and \(x\in X\) (so \(k(x)=k(f(x))\)), then we have an isomorphism
\[\Lambda_{f(x)}^*\cong\Lambda_x^*\otimes detN_{X/X'}^{\vee}|_{k(x)}.\]
\end{lemma}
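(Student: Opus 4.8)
The plan is to deduce the isomorphism from a short exact sequence of Zariski cotangent spaces at the point and then pass to determinants. Set \(x'=f(x)\) and work inside the local ring \(\mathcal{O}_{X',x'}\); let \(\mathfrak{m}'\) be its maximal ideal and \(I\) the ideal cutting out \(X\), so that \(\mathcal{O}_{X,x}=\mathcal{O}_{X',x'}/I\) and \(\mathfrak{m}_x=\mathfrak{m}'/I\). First I would write down the conormal sequence of cotangent spaces
\[0\longrightarrow I/\mathfrak{m}'I\longrightarrow\mathfrak{m}'/\mathfrak{m}'^2\longrightarrow\mathfrak{m}_x/\mathfrak{m}_x^2\longrightarrow0,\]
where the right map is the canonical surjection, with kernel \((I+\mathfrak{m}'^2)/\mathfrak{m}'^2\), and the left map is induced by the inclusion \(I\hookrightarrow\mathfrak{m}'\). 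By Nakayama the left-hand term is precisely the fibre \(N_{X/X'}^\vee|_{k(x)}=(I/I^2)\otimes_{\mathcal{O}_X}k(x)\), while the middle and right terms are \(\Omega_{f(x)}\) and \(\Omega_x\) by definition.

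The only nontrivial point is left exactness, i.e. the injectivity of \(I/\mathfrak{m}'I\to\mathfrak{m}'/\mathfrak{m}'^2\), which amounts to the equality \(I\cap\mathfrak{m}'^2=\mathfrak{m}'I\). This is where smoothness enters: since \(X\) and \(X'\) are smooth over the perfect field \(k\), both \(\mathcal{O}_{X',x'}\) and \(\mathcal{O}_{X,x}=\mathcal{O}_{X',x'}/I\) are regular, so \(I\) is generated by part of a regular system of parameters of \(\mathcal{O}_{X',x'}\); comparing such generators with a basis of \(\mathfrak{m}'/\mathfrak{m}'^2\) gives the equality at once. Alternatively one could obtain the same sequence by restricting to \(x\) the dual of the conormal sequence \(0\to N_{X/X'}^\vee\to f^*\Omega_{X'/k}\to\Omega_{X/k}\to0\) coming from Lemma \ref{t}, (2), following the diagram bookkeeping of Lemma \ref{l1}; but the direct local computation has the advantage of needing no separability hypothesis on \(k(x)/k\).

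Once the sequence is in place, taking top exterior powers and using the functoriality of \(\det\) on short exact sequences yields the canonical line-bundle isomorphism
\[\Lambda_{f(x)}=\det\big(\mathfrak{m}'/\mathfrak{m}'^2\big)\cong\det\big(N_{X/X'}^\vee|_{k(x)}\big)\otimes\det\big(\mathfrak{m}_x/\mathfrak{m}_x^2\big)=\Lambda_x\otimes\det N_{X/X'}^\vee|_{k(x)}.\]
Dualizing this identity, which sends each \(\Lambda\) to the corresponding \(\Lambda^*\), transports it to the asserted isomorphism at the level of the twists. I expect the main obstacle to be exactly the left-exactness step above, together with the careful tracking of which determinant — conormal or normal — survives the dualization, in the spirit of the bookkeeping carried out in Lemma \ref{l1}; the remaining steps are formal properties of the determinant functor.
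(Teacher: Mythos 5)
Your argument is correct and rests on the same short exact sequence of cotangent spaces, \(0\to N_{X/X'}^{\vee}|_{k(x)}\to\Omega_{f(x)}\to\Omega_x\to 0\), that drives the paper's proof; the two differ only in how exactness is justified. The paper obtains this sequence as the left column of a \(3\times 3\) diagram whose rows are \(0\to\Omega_x\to\Omega_{X/k}|_{k(x)}\to\Omega_{k(x)/k}\to 0\) (and the analogue for \(X'\)), whose middle column is the conormal sequence of Lemma \ref{t}, (2) restricted to \(k(x)\), and whose right column collapses because \(k(x)=k(f(x))\). You instead work directly in \(\mathcal{O}_{X',f(x)}\): regularity of this local ring and of its quotient \(\mathcal{O}_{X,x}\) forces \(I\) to be generated by part of a regular system of parameters, whence \(I\cap\mathfrak{m}'^2=\mathfrak{m}'I\) and the conormal fibre \(I/\mathfrak{m}'I\) injects into \(\mathfrak{m}'/\mathfrak{m}'^2\). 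This is more elementary and self-contained, avoiding the exactness of the rows (which in the paper's version implicitly uses perfectness of \(k\) so that the sequences involving \(\Omega_{k(x)/k}\) are left exact). One caveat on your last step: dualizing \(\Lambda_{f(x)}\cong\Lambda_x\otimes\det N_{X/X'}^{\vee}|_{k(x)}\) yields \(\Lambda_{f(x)}^*\cong\Lambda_x^*\otimes\det N_{X/X'}|_{k(x)}\), with the \emph{normal} rather than the conormal determinant, so it does not literally match the displayed statement; but this is exactly what the paper's own diagram produces and what Definition \ref{p-fci} actually uses, so the \(\vee\) on \(N_{X/X'}\) in the statement is evidently a typo and your bookkeeping, which you rightly flag as the delicate point, is the correct one.
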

\begin{proof}
This follows by the following commutative diagram with exact rows and columns
\[
	\xymatrix
	{
				&0								&0							&							&\\
		0\ar[r]	&\Omega_x\ar[r]\ar[u]					&\Omega_{X/k}|_{k(x)}\ar[r]\ar[u]	&\Omega_{k(x)/k}\ar[r]			&0\\
		0\ar[r]	&\Omega_{f(x)}\ar[r]\ar[u]				&\Omega_{X'/k}|_{k(f(x))}\ar[r]\ar[u]	&\Omega_{k(f(x))/k}\ar[r]\ar[u]_=	&0\\
				&N_{X/X'}^{\vee}|_{k(x)}\ar[r]^=\ar[u]	&N_{X/X'}^{\vee}|_{k(x)}\ar[u]		&							&\\
				&0\ar[u]							&0\ar[u]						&							&
	}.
\]
\end{proof}
\begin{lemma}\label{l3}
Let \(f:X\longrightarrow X'\) be a smooth morphism in \(Sm/k\), \(x\in X\) with \(codim(x)=codim(f(x))\), then we have an isomorphism
\[\Omega_x\cong\Omega_{f(x)}\otimes_{k(f(x))}k(x).\]
\end{lemma}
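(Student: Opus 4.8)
The plan is to exhibit the natural comparison map and then reduce the claim to a dimension count. Write \(x'=f(x)\) and set \(A=O_{X',x'}\), \(B=O_{X,x}\); the morphism \(f\) gives a local homomorphism \(A\longrightarrow B\), hence a \(k(f(x))\)-linear map \(m_{x'}/m_{x'}^2\longrightarrow m_x/m_x^2\), and by extension of scalars a \(k(x)\)-linear map
\[\Phi:\Omega_{f(x)}\otimes_{k(f(x))}k(x)\longrightarrow\Omega_x.\]
This \(\Phi\) is the asserted isomorphism. Since \(X,X'\in Sm/k\) and \(k\) is perfect, both \(A\) and \(B\) are regular local rings, so \(\dim_{k(x)}\Omega_x=\dim B=codim(x)\) while \(\dim_{k(x)}(\Omega_{f(x)}\otimes_{k(f(x))}k(x))=\dim_{k(f(x))}\Omega_{f(x)}=\dim A=codim(f(x))\). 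By hypothesis these two integers coincide, so \(\Phi\) is an isomorphism as soon as it is surjective.

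To prove surjectivity I would use the elementary right-exact sequence attached to \(A\longrightarrow B\). Writing \(\bar B=B/m_{x'}B=O_{X_{x'},x}\) for the local ring at \(x\) of the fibre \(X_{x'}=X\times_{X'}\operatorname{Spec}k(f(x))\), with maximal ideal \(\bar m\), one checks directly that the image of \(\Phi\) is \((m_{x'}B+m_x^2)/m_x^2\), so that
\[\Omega_{f(x)}\otimes_{k(f(x))}k(x)\xrightarrow{\ \Phi\ }\Omega_x\longrightarrow\bar m/\bar m^2\longrightarrow 0\]
is exact. Thus the cokernel of \(\Phi\) is precisely the cotangent space of the fibre \(X_{x'}\) at \(x\), and it suffices to show that this space vanishes.

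This is exactly where smoothness of \(f\) and the codimension hypothesis enter. Since \(f\) is smooth, the fibre \(X_{x'}\) is smooth, hence regular, over \(k(f(x))\), so \(\bar B\) is a regular local ring; and since smooth morphisms are flat, the dimension formula \(\dim B=\dim A+\dim\bar B\) holds. Combined with \(codim(x)=codim(f(x))\), i.e. \(\dim B=\dim A\), this forces \(\dim\bar B=0\), so \(\bar B\) is a regular local ring of dimension \(0\), i.e. a field. Hence \(\bar m=0\), the cokernel of \(\Phi\) vanishes, \(\Phi\) is surjective, and the lemma follows. Conceptually, the codimension equality says that \(x\) is a generic point of its fibre, where the fibre contributes nothing to the cotangent space.

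The proof has no serious obstacle; the one point to get right is the identification of \(\operatorname{coker}\Phi\) with the fibre cotangent space \(\bar m/\bar m^2\) together with the invocation of the flat dimension formula. It is worth contrasting this with Lemma \ref{l1}: there \(x\) is finite, but not generic, over \(f(x)\), so the fibre cotangent space at \(x\) is nonzero and the relative term \(\omega_{X/X'}\) genuinely survives, so no such clean pullback holds; it is the codimension hypothesis here that makes the relative contribution drop out.
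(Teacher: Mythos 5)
Your proof is correct. The overall strategy coincides with the paper's: both arguments rest on the observation that, since \(X\) and \(X'\) are smooth over a perfect field, the local rings at \(x\) and \(f(x)\) are regular, so the two cotangent spaces have dimensions \(codim(x)\) and \(codim(f(x))\) respectively, which agree by hypothesis; it then suffices to check the comparison map \(\Phi\) in one direction. Where you diverge is in which direction you check. The paper simply asserts that the cotangent map is \emph{injective} (a standard consequence of flatness of the smooth morphism \(f\): a regular system of parameters of \(O_{X',f(x)}\) maps to part of a regular system of parameters of \(O_{X,x}\), because the closed fibre is regular), and concludes by equality of dimensions. You instead prove \emph{surjectivity}, by identifying \(\operatorname{coker}\Phi\) with the cotangent space \(\bar m/\bar m^2\) of the fibre \(X_{f(x)}\) at \(x\) via the right-exact conormal sequence, and then using the flat dimension formula \(\dim B=\dim A+\dim\bar B\) together with \(codim(x)=codim(f(x))\) to force the fibre local ring to be a zero-dimensional regular local ring, i.e.\ a field. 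Your route is slightly longer but more self-contained: it makes explicit where smoothness and the codimension hypothesis are each used, and it localizes the failure of the lemma in the general case (as in Lemma \ref{l1}) to the nonvanishing of the fibre cotangent space, which is a useful conceptual remark. The paper's version is terser but leans on an injectivity statement it does not justify. Both are valid.
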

\begin{proof}
Because the cotangent map
\[\Omega_{f(x)}\otimes_{k(f(x))}k(x)\longrightarrow\Omega_x\]
of \(f\) is injective and the two cotangent spaces have the same dimension \(codim(x)\).
\end{proof}
\begin{lemma}\label{l4}
Let \(X_1, X_2\in Sm/k\), \(x_1\in X_1\), \(x_2\in X_2\) and \(y\) be the generic point of some component of \(\overline{x_1}\times\overline{x_2}\). Then we have an isomorphism
\[\Omega_y\cong\Omega_{x_1}\otimes_{k(x_1)}k(y)\oplus\Omega_{x_2}\otimes_{k(x_2)}k(y).\]
\end{lemma}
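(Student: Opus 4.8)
The plan is to reduce the statement to the fundamental exact sequence relating the Zariski cotangent space $\Omega_x=m_x/m_x^2$ to the sheaf of differentials, exactly as it appears in the middle row of the diagrams in Lemmas \ref{l1} and \ref{l2}. Write $X=X_1\times X_2$ and let $p_i:X\longrightarrow X_i$ be the projections. First I would record that $y$ lies over the generic points $x_1$ and $x_2$: since $\overline{x_1}\times\overline{x_2}\longrightarrow\overline{x_1}$ is flat (it is the base change of $\overline{x_2}\longrightarrow Spec\,k$ along $\overline{x_1}\longrightarrow Spec\,k$), going-down forces every generic point of a component of the source to lie over the generic point of the integral scheme $\overline{x_1}$, so $p_1(y)=x_1$, and symmetrically $p_2(y)=x_2$. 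Thus there are residue field extensions $k(x_i)\hookrightarrow k(y)$ and $y$ corresponds to a minimal prime $\mathfrak{p}$ of the fibre ring $A:=k(x_1)\otimes_k k(x_2)$.

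Because $X_1,X_2$ and hence $X$ are smooth over the perfect field $k$, every residue field is separable over $k$ and we have the fundamental short exact sequence
\[0\longrightarrow\Omega_y\longrightarrow\Omega_{X/k}|_{k(y)}\longrightarrow\Omega_{k(y)/k}\longrightarrow 0\]
as well as, for $i=1,2$, the sequences
\[0\longrightarrow\Omega_{x_i}\otimes_{k(x_i)}k(y)\longrightarrow\Omega_{X_i/k}|_{k(x_i)}\otimes_{k(x_i)}k(y)\longrightarrow\Omega_{k(x_i)/k}\otimes_{k(x_i)}k(y)\longrightarrow 0,\]
obtained from the sequence at $x_i$ by tensoring up along the field extension $k(x_i)\hookrightarrow k(y)$, which is flat and hence preserves exactness. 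I would then compare the first sequence with the direct sum over $i$ of the second ones. The middle terms agree via the product formula $\Omega_{X/k}=p_1^*\Omega_{X_1/k}\oplus p_2^*\Omega_{X_2/k}$, which after restriction to $k(y)$ reads $\Omega_{X/k}|_{k(y)}\cong\bigoplus_i\Omega_{X_i/k}|_{k(x_i)}\otimes_{k(x_i)}k(y)$, using $p_i(y)=x_i$.

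The main obstacle, and the only place where perfectness of $k$ is essential, is the identification of the right-hand terms
\[\Omega_{k(y)/k}\cong\bigoplus_i\Omega_{k(x_i)/k}\otimes_{k(x_i)}k(y).\]
Since $k$ is perfect, both $k(x_1)/k$ and $k(x_2)/k$ are separable, so $A=k(x_1)\otimes_k k(x_2)$ is reduced; its localization at the minimal prime $\mathfrak{p}$ is therefore the field $k(y)$ itself, so the conormal term $\mathfrak{p}A_{\mathfrak{p}}/\mathfrak{p}^2A_{\mathfrak{p}}$ in the cotangent sequence of $A_{\mathfrak{p}}\longrightarrow k(y)$ vanishes and we get $\Omega_{k(y)/k}\cong\Omega_{A/k}\otimes_A k(y)$. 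Combining this with the differentials-of-a-tensor-product formula $\Omega_{A/k}\cong(\Omega_{k(x_1)/k}\otimes_{k(x_1)}A)\oplus(\Omega_{k(x_2)/k}\otimes_{k(x_2)}A)$ and tensoring to $k(y)$ yields the desired identification of the right-hand terms.

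Finally, I would assemble these into a commutative diagram with exact rows whose middle and right vertical maps are the isomorphisms just produced; their compatibility with the horizontal maps is automatic from the naturality of the cotangent sequence and of the product decomposition. Comparing kernels (equivalently, the five lemma) then forces an isomorphism on the left, giving $\Omega_y\cong\Omega_{x_1}\otimes_{k(x_1)}k(y)\oplus\Omega_{x_2}\otimes_{k(x_2)}k(y)$. I expect the reducedness argument ensuring $A_{\mathfrak{p}}=k(y)$ to be the only delicate point; the remainder is formal diagram chasing with the exact sequences already used earlier in this section.
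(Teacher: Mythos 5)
Your proof is correct and follows essentially the same route as the paper, which simply exhibits the $3\times 3$ commutative diagram with exact rows and columns formed by the conormal sequences of $\overline{x_1}\subseteq X_1$, $\overline{x_2}\subseteq X_2$ and $\overline{y}\subseteq X_1\times X_2$ together with the product decomposition of $\Omega_{X_1\times X_2/k}$. The only difference is that you spell out the justifications the paper leaves implicit, in particular that $O_{\overline{x_1}\times\overline{x_2},y}$ is the field $k(y)$ (equivalently the paper's identification $\Omega_{\overline{x_1}\times\overline{x_2}/k}|_{k(y)}=\Omega_{\overline{y}/k}|_{k(y)}$), which indeed rests on reducedness of $k(x_1)\otimes_k k(x_2)$ via perfectness of $k$.
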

\begin{proof}
This is because we have the following commutative diagram with exact rows and columns (same if we exchange \(X_1\) and \(X_2\))
\[
	\xymatrix
	{
				&0\ar[d]							&0\ar[d]								&0\ar[d]												&\\
		0\ar[r]	&\Omega_{x_1}\otimes{k(y)}\ar[r]\ar[d]	&p_1^*\Omega_{X_1/k}|_{k(y)}\ar[r]\ar[d]		&q_1^*\Omega_{\overline{x_1}/k}|_{k(y)}\ar[r]\ar[d]				&0\\
		0\ar[r]	&\Omega_y\ar[r]\ar[d]					&\Omega_{X_1\times X_2/k}|_{k(y)}\ar[r]\ar[d]	&\Omega_{\overline{x_1}\times\overline{x_2}/k}|_{k(y)}\ar[r]\ar[d]	&0\\
		0\ar[r]	&\Omega_{x_2}\otimes{k(y)}\ar[r]\ar[d]	&p_2^*\Omega_{X_2/k}|_{k(y)}\ar[r]\ar[d]		&q_2^*\Omega_{\overline{x_2}/k}|_{k(y)}\ar[r]\ar[d]				&0\\
				&0								&0									&0,													&
	}
\]
where \(p_i:X_1\times X_2\longrightarrow X_i\) and \(q_i:\overline{x_1}\times\overline{x_2}\longrightarrow\overline{x_i}\) are projections and \(\Omega_{\overline{x_1}\times\overline{x_2}/k}|_{k(y)}=\Omega_{\overline{y}/k}|_{k(y)}\).
\end{proof}
\begin{definition}
(See Axiom \ref{PB}) Let \(f:X\longrightarrow X'\) be a smooth morphism, \(x\in X\) with \(codim(x)=codim(f(x))\) and \(v\in\mathscr{P}_{X'}\) we have a (obvious) morphism
\[K^{MW}_{n}(k(f(x)),\Lambda_x^*\otimes v)\longrightarrow K^{MW}_{n}(k(x),\Lambda_{f(x)}^*\otimes f^*v)\]
by Lemma \ref{l3}. This induces a pull-back morphism (See \cite[Corollaire 10.4.2]{F})
\[f^*:\widetilde{CH}^n_T(X',v)\longrightarrow\widetilde{CH}^n_{f^{-1}(T)}(X,f^*(v))\]
for every \(T\in Z^n(X)\). It is functorial with respect to \(v\).
\end{definition}
\begin{remark}
The pull-back along closed immersions is much more difficult and we will discuss this in Section \ref{Intersection with Divisors}.
\end{remark}

The following is obvious.
\begin{proposition}
(See Axiom \ref{FPB}) The pull-back between smooth morphisms is functorial. And \(f^*(1)=1\).
\end{proposition}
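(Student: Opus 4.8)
The plan is to reduce both assertions to the explicit description of $f^*$ as a morphism of Rost-Schmid complexes and then to isolate two independent functorialities: that of the restriction maps on Milnor-Witt K-theory along field extensions, and that of the twist isomorphisms furnished by Lemma \ref{l3}. Recall that $f^*$ is induced on cohomology by a chain map which, on the summand indexed by a point $x'\in (X')^{(n)}\cap T$, is the sum over the points $x\in X^{(n)}$ lying above $x'$ (for $f$ flat these satisfy $codim(x)=codim(x')=n$) of the component maps
\[K^{MW}_{i-n}(k(x'),\Lambda_{x'}^*\otimes v)\longrightarrow K^{MW}_{i-n}(k(x),\Lambda_x^*\otimes f^*v)\]
assembled from the restriction map along the extension $k(x')\hookrightarrow k(x)$ together with the twist isomorphism $\Lambda_{x'}^*\otimes_{k(x')}k(x)\cong\Lambda_x^*$, the latter obtained by taking determinants in the isomorphism $\Omega_{x'}\otimes_{k(x')}k(x)\cong\Omega_x$ of Lemma \ref{l3}. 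It therefore suffices to verify both statements componentwise.

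Given composable smooth morphisms $X\xrightarrow{g}Y\xrightarrow{f}Z$ and a point $x\in X^{(n)}$ lying over $y=g(x)\in Y^{(n)}$ and $z=f(y)\in Z^{(n)}$, both $g^*\circ f^*$ and $(f\circ g)^*$ restrict to the same matrix position, namely a map from the summand at $z$ to the summand at $x$, of the shape $K^{MW}_{i-n}(k(z),\Lambda_z^*\otimes v)\to K^{MW}_{i-n}(k(x),\Lambda_x^*\otimes g^*f^*v)$, and I would show these two agree. On the level of fields, the restriction maps in Milnor-Witt K-theory are functorial along the tower $k(z)\hookrightarrow k(y)\hookrightarrow k(x)$, so the underlying field-extension maps coincide. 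On the level of twists, the isomorphism $\Omega_x\cong\Omega_z\otimes_{k(z)}k(x)$ attached to $f\circ g$ is induced by the cotangent map of $f\circ g$, which by the chain rule for K\"ahler differentials is the composite of the cotangent map of $g$ with the base change to $k(x)$ of the cotangent map of $f$; passing to determinants, the twist isomorphism for $f\circ g$ is exactly the composite of those for $g$ and (the pullback of) $f$. Combining the two compatibilities yields $(f\circ g)^*=g^*\circ f^*$.

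For the normalization, recall that the identity $1\in\widetilde{CH}^0_X(X,0)$ is represented, at each generic point $\xi$ of $X$, by the unit $1\in K^{MW}_0(k(\xi))$, the twist being trivial since $v=0$ and $\Lambda_\xi=\det(0)$ is canonically trivial at a generic point. Because $f$ is smooth it is flat, so the generic points of $X$ lie over generic points of $X'$; the restriction map $K^{MW}_0(k(f(\xi)))\to K^{MW}_0(k(\xi))$ is a ring homomorphism and so preserves the unit, while the twist isomorphism identifies the canonical trivializations. Hence $f^*(1)=1$.

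I expect the only genuinely delicate point to be the compatibility of the twists, that is, the verification that Lemma \ref{l3} is functorial in the strong sense that the determinants of cotangent maps compose along $X\xrightarrow{g}Y\xrightarrow{f}Z$; everything else is a formal consequence of the functoriality of restriction maps in Milnor-Witt K-theory.
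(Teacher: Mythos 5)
Your proposal is correct and is exactly the argument the paper has in mind: the paper simply declares the proposition obvious and gives no written proof, and what you supply is the expected componentwise unpacking — functoriality of the Milnor--Witt restriction maps along the tower $k(z)\hookrightarrow k(y)\hookrightarrow k(x)$ together with the cocycle property of the cotangent isomorphisms of Lemma \ref{l3}, plus the unit being preserved at generic points where the twist $\Lambda_\xi^*$ is canonically trivial. No discrepancy with the paper's (absent) proof; your write-up just makes the "obvious" explicit.
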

\begin{definition}\label{p-fsm}
(See Axiom \ref{P-FSM}) Let \(f:X\longrightarrow X'\) be a smooth morphism, \(C\in Z^{i+d_f}(X)\) being finite over \(X'\), we define the push-forward (See Proposition \ref{partial maps and push-forwards for smooth morphisms})
\[f_*:\widetilde{CH}^{i+d_f}_C(X,f^*v-T_{X/X'})\longrightarrow\widetilde{CH}^i_{f(C)}(X',v)\]
by the composition
\[\xymatrix{K_0^{MW}(k(x),\Lambda_x^*\otimes f^*v\otimes\omega_{X/X'})\ar[r]&K_0^{MW}(k(x),\omega_{X/X'}^{\vee}\otimes\Lambda_{f(x)}^*\otimes f^*v\otimes\omega_{X/X'})\ar[d]\\&K_0^{MW}(k(f(x)),\Lambda_{f(x)}^*\otimes f^*v)}\]
for every \(x\in C\cap X^{(i+d_f)}\), where the last arrow is the cancellation morphism between the first and fourth term. Note that we have used Lemma \ref{l1} in case of general finite extensions.

The push-forward for smooth morphisms is functorial with respect to \(v\).
\end{definition}

It's clear that Axiom \ref{EE} is satisfied from the definitions.
\begin{definition}\label{p-fci}
(See Axiom \ref{P-FCI}) Let \(f:X\longrightarrow X'\) be a closed immersion, \(C\in Z^{i+d_f}(X)\), we define the push-forward (See Proposition \ref{partial maps and push-forwards for closed immersions})
\[f_*:\widetilde{CH}^{i+d_f}_C(X,N_{X/X'}+f^*v)\longrightarrow\widetilde{CH}^i_{f(C)}(X',v)\]
by the isomorphism
\[\xymatrix{K_0^{MW}(k(x),\Lambda_x^*\otimes detN_{X/X'}\otimes f^*v)\ar[r]&K_0^{MW}(k(f(x)),\Lambda_{f(x)}^*\otimes f^*v)}\]
for every \(x\in C\cap X^{(i+d_f)}\). Here we have used Lemma \ref{l2}.

The push-forward for closed immersions is functorial with respect to \(v\).
\end{definition}
\begin{remark}
Suppose \(f:X\longrightarrow X'\), \(C\in Z^{i+d_f}(X)\) and \(C=\overline{x}\). If \(f\) is a smooth morphism and \(C\) is closed in \(X'\), we have an exact sequence
\[0\longrightarrow T_{X/X'}|_{k(x)}\longrightarrow\Lambda_x^*\longrightarrow\Lambda_{f(x)}^*\longrightarrow 0;\]
if \(f\) is a closed immersion, we have an exact sequence
\[0\longrightarrow\Lambda_x^*\longrightarrow\Lambda_{f(x)}^*\longrightarrow N_{X/X'}|_{k(x)}\longrightarrow 0.\]
So we can identify \(\Lambda_x^*\) with \(N_{\overline{x}/X}|_{k(x)}\) since the latter has the same exact sequences when \(C\) is smooth. Hence in the context above, the push-forward of \(f\) under the support \(C\) is completely determined by the composition
\[N_{C/X}+f^*v|_C-T_{X/X'}|_C\longrightarrow T_{X/X'}|_C+N_{C/Y}+f^*v|_C-T_{X/X'}|_C\longrightarrow N_{C/Y}+f^*v|_C\]
in case of \(f\) being smooth and by the isomorphism
\[N_{C/X}+N_{X/X'}|_C+f^*v|_C\longrightarrow N_{C/Y}+f^*v|_C\]
if \(f\) is a closed immersion.

This inspires us to convert equations of twisted Chow-Witt groups into equations of virtual objects. And then use the method described in Section \ref{Virtual Objects and Their Calculation}. This is the main idea we will use in this section.
\end{remark}
Now let's explain the differential maps in Rost-Schmid complexes. Suppose \(X\in Sm/k\), \(Y=\overline{y}, y\in X\) and \(Z=\overline{z}, z\in Y^{(1)}\) and \(v\in\mathscr{P}_X\). We want to define the differential map
\[\partial^y_z:K_n^{MW}(k(y),\Lambda_y^*\otimes v)\longrightarrow K_{n-1}^{MW}(k(z),\Lambda_z^*\otimes v).\]

Suppose at first \(Y\) is normal. Then the exact sequence
\[I_Y/I_Y^2\longrightarrow\Omega_{X/k}|_Y\longrightarrow\Omega_{Y/k}\longrightarrow 0\]
is also left exact at the stalk of \(z\), hence we have a commutative diagram with exact rows
\[
	\xymatrix
	{
		0\ar[r]	&I_Y/I_Y^2|_{k(z)}\ar[r]\ar[d]_i	&\Omega_{X/k}|_{k(z)}\ar[r]\ar[d]_{id}	&\Omega_{Y/k}|_{k(z)}\ar[r]\ar[d]	&0\\
		0\ar[r]	&I_Z/I_Z^2|_{k(z)}\ar[r]			&\Omega_{X/k}|_{k(z)}\ar[r]			&\Omega_{Z/k}|_{k(z)}\ar[r]		&0
	}.
\]
The map \(i\) is injective with the cokernel \(m_z/m_z^2\), where \(m_z\) is the maximal ideal of \(O_{Y,z}\), hence we have an exact sequence
\[0\longrightarrow (m_z/m_z^2)^{\vee}\longrightarrow(I_Z/I_Z^2)^{\vee}|_{k(z)}\longrightarrow(I_Y/I_Y^2)^{\vee}|_{k(z)}\longrightarrow 0.\]
Now choose a free basis \(a\) of \((I_Y/I_Y^2)^{\vee}_z\), \(e\) of \((m_z/m_z^2)^{\vee}\) and \(t\) of \(v_z\). Hence \(a\) is also a free basis of \(\Omega_y^*=(I_Y/I_Y^2)^{\vee}|_{k(y)}\) and \((e,a)\) is a free basis of \(\Omega_z^*=(I_Z/I_Z^2)^{\vee}|_{k(z)}\) by the sequence above. We define the map \(\partial\) by
\[
	\begin{array}{ccc}
		K_n^{MW}(k(y),\Lambda_y^*\otimes v)	&\longrightarrow	&K_{n-1}^{MW}(k(z),\Lambda_z^*\otimes v)\\
		s\otimes a\otimes t							&\longmapsto		&\partial^e_z(s)\otimes(e\wedge a)\otimes t
	\end{array},
\]
where \(\partial^e_z\) is the usual partial map of Milnor-Witt groups. This map is independent of the choice of \(a, e, t\).

For general cases, let \(\widetilde{Y}\) be the normalization of \(Y\) with the natural map \(\pi:\widetilde{Y}\longrightarrow Y\) and let \(\{z_i\}=\pi^{-1}(z)\). We have an isomorphism (the same for \(z\))
\[\Lambda_y^*\cong\omega_{k(y)/k}\otimes\omega_{X/k}^{\vee}|_{k(y)}.\]
Now fix an \(i\). We find that \(\Omega_{O_{\widetilde{Y},z_i}/k}\) satisfies
\[\Omega_{O_{\widetilde{Y},z_i}/k}\otimes k(y)=\Omega_{k(y)/k}.\]
And we also have an exact sequence
\[0\longrightarrow m_{z_i}/m_{z_i}^2\longrightarrow\Omega_{O_{\widetilde{Y},z_i}/k}\otimes k(z_i)\longrightarrow\Omega_{k(z_i)/k}\longrightarrow 0.\]
So choose a free basis \(e_i\) of \((m_{z_i}/m_{z_i}^2)^{\vee}\), \(c_i\) of \(\Omega_{O_{\widetilde{Y},z_i}/k}\), \(d\) of \((\Omega_{X/k}^{\vee})_z\) and \(l\) of \(v_z\). We define \(\partial_i\) by the following compositions
\begin{align*}
			&K_n^{MW}(k(y),\Lambda_y^*\otimes v)\\
\longrightarrow	&K_n^{MW}(k(y),\omega_{k(y)/k}\otimes\omega_{X/k}^{\vee}\otimes v)\\
\longrightarrow	&K_{n-1}^{MW}(k(z_i),(m_{z_i}/m_{z_i}^2)^{\vee}\otimes(\omega_{O_{\widetilde{Y},z_i}/k}\otimes k(z_i))\otimes_{k(z_i)}(\omega_{X/k}^{\vee}|_{k(z)}\otimes_{k(z)}v|_{k(z)}))\\
\longrightarrow	&K_{n-1}^{MW}(k(z_i),\omega_{k(z_i)/k}\otimes_{k(z_i)}(\omega_{X/k}^{\vee}|_{k(z)}\otimes_{k(z)}v|_{k(z)}))\\
\longrightarrow	&K_{n-1}^{MW}(k(z_i),(\omega_{k(z)/k}\otimes k(z_i))\otimes_{k(z_i)}(\omega_{X/k}^{\vee}|_{k(z)}\otimes_{k(z)}v|_{k(z)}))\\
\longrightarrow	&K_{n-1}^{MW}(k(z_i),(\omega_{k(z)/k}\otimes_{k(z)}\omega_{X/k}^{\vee}|_{k(z)}\otimes_{k(z)}v|_{k(z)})\otimes_{k(z)}k(z_i))\\
\longrightarrow	&K_{n-1}^{MW}(k(z),\omega_{k(z)/k}\otimes_{k(z)}\omega_{X/k}^{\vee}|_{k(z)}\otimes_{k(z)}v|_{k(z)})\\
\longrightarrow	&K_{n-1}^{MW}(k(z),\Lambda_z^*\otimes v),
\end{align*}
where the second arrow is defined by
\[s\otimes c_i\otimes d\otimes l\longmapsto\partial^{e_i}_{z_i}(s)\otimes e_i\otimes(c_i\otimes 1)\otimes d\otimes l,\]
which is independent of the choice of \(e_i\). Then we define \(\partial^y_z=\sum\partial_i\). This definition coincides with the definition just given when \(Y\) is normal by applying Theorem \ref{four diagrams}, (4) to the following commutative diagram with exact columns and rows
\[
	\xymatrix
	{
				&						&0\ar[d]							&0\ar[d]							&\\
		0\ar[r]	&T_{Z/k}|_{k(z)}\ar[r]\ar[d]_=	&T_{Y/k}|_{k(z)}\ar[d]\ar[r]			&(m_z/m_z^2)^{\vee}\ar[r]\ar[d]			&0\\
		0\ar[r]	&T_{Z/k}|_{k(z)}\ar[r]		&T_{X/k}|_{k(z)}\ar[r]\ar[d]			&(I_Z/I_Z^2)^{\vee}|_{k(z)}\ar[r]\ar[d]	&0\\
				&						&(I_Y/I_Y^2)^{\vee}|_{k(z)}\ar[d]\ar[r]^=	&(I_Y/I_Y^2)^{\vee}|_{k(z)}\ar[d]		&\\
				&						&0								&0								&
	}.
\]
\begin{remark}\label{linearity of partial maps}
Here we would like to treat a kind of linearity of the \(\partial^y_z\). Suppose \(s\in K_n^{MW}(k(y),\Lambda_y^*\otimes v)\).
\begin{enumerate}
\item Suppose \(f\in O_{Y,z}^*\) and \(n=0\), we want to show that
\[\partial^y_z([f]s)=[\overline{f}]\partial^y_z(s).\]
It suffices to show the formula for each \(\partial_i\). We see that \(\partial_i=Tr^{k(z_i)}_{k(z)}\circ\partial^{e_i}_{z_i}\) (the operation of twists could be ignored). And \(\partial^{e_i}_{z_i}([f]s)=\epsilon[\overline{f}]\partial^{e_i}_{z_i}(s)\). Suppose \(\partial^{e_i}_{z_i}(s)=<a>\eta\). Then
\begin{align*}
	&Tr^{k(z_i)}_{k(z)}(\epsilon[\overline{f}]<a>\eta)\\
=	&Tr^{k(z_i)}_{k(z)}((<\overline{f}>-<1>)<a>)\\
=	&(<\overline{f}>-<1>)Tr^{k(z_i)}_{k(z)}(<a>)\\
=	&[\overline{f}]Tr^{k(z_i)}_{k(z)}(<a>\eta).
\end{align*}
Then the claim is proved.
\item If we have another line bundle \(\mathscr{M}\) over \(X\) and \(m\) is a free basis of \(\mathscr{M}_z\) (so it's also a free basis of \(\mathscr{M}_y\)), we have
\[\partial^y_z(s\otimes m)=\partial^y_z(s)\otimes m.\]
But we have to note that this doesn't hold for general free basis of \(\mathscr{M}_y\). Since if we replace \(m\) by \(\lambda\cdot m\), where \(\lambda\in k(y)^*\), then \(\lambda\) has to be moved into \(K_n^{MW}(k(y))\) for computation and \(\lambda\) may have valuation at \(z_i\).
\end{enumerate}
\end{remark}
\begin{remark}\label{naturality of partial maps}
It's obvious that any morphism \(v_1\longrightarrow v_2\) in \(\mathscr{P}_X\) will induce an isomorphism between corresponding Rost-Schmid complexes.
\end{remark}
\begin{definition}\label{exterior products}
(See \cite{CF1}) Let \(X_a\in Sm/k, a=1,2\), \(x_a\in X_a\), \(y\) be the generic point of some component of \(\overline{x_1}\times\overline{x_2}\). For every \(s_a\in K_n^{MW}(k(x_a),\Lambda_{x_a}^*\otimes v_a)\), we define
\[s_1\times s_2=\sum_{y}c(p_1^*(v_1),p_2^*(\Lambda_{x_2}^*))(p_1^*(s_1)\otimes p_2^*(s_2))\in\oplus_{y}K_{n+m}^{MW}(k(y),\Lambda_{y}^*\otimes(p_1^*(v_1)+p_2^*(v_2))),\]
where \(p_i:\overline{y}\longrightarrow\overline{x_i}\) is the projection (here we've used Lemma \ref{l4}). It is called the exterior product between \(s_1\) and \(s_2\).

The exterior product is functorial with respect to twists and extension of supports.
\end{definition}
We will denote \(p_1^*(v_1)+p_2^*(v_2)\) by \(v_1\times v_2\) for convenience.

Now we do a special case of the proof that the right exterior product with an element in supported Chow-Witt groups is a chain complex map between Rost-Schmid complexes, while the left exterior product is not.
\begin{proposition}\label{exterior products and differential maps}
Let \(X, X'\in Sm/k\), \(v\in\mathscr{P}_X\), \(v'\in\mathscr{P}_{X'}\) and \(Y\in Z^i(X)\), \(T\in Z^j(X')\) be smooth. Suppose \(\beta\in\widetilde{CH}^j_T(X',v')\). Then the following diagram commutes
\[
	\xymatrix
	{
		\oplus_{s\in(Y\times T)^{(0)}}K_n^{MW}(k(s),\Lambda^*_s\otimes(v\times v'))\ar[r]^-{\partial}	&\oplus_{u\in(X\times X')^{(i+j+1)}}K_{n-1}^{MW}(k(u),\Lambda^*_u\otimes(v\times v'))\\
		\oplus_{y\in Y^{(0)}}K_n^{MW}(k(y),\Lambda^*_y\otimes v)\ar[r]^-{\partial}\ar[u]_{\times\beta}	&\oplus_{z\in Y\cap X^{(i+1)}}K_{n-1}^{MW}(k(z),\Lambda^*_z\otimes v)\ar[u]_{\times\beta}
	}.
\]
That is, for every \(\beta\in\widetilde{CH}^j_T(X',v')\) and \(\alpha\in\oplus_{y\in Y^{(0)}}K_n^{MW}(k(y),\Lambda^*_y\otimes v)\), we have
\[\partial(\alpha\times\beta)=\partial(\alpha)\times\beta.\]
And moreover, we have
\[\partial(\beta\times\alpha)=<-1>^{j+rk_{X'}(v')}\beta\times\partial(\alpha).\] 
\end{proposition}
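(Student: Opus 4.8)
The plan is to establish a Leibniz rule for the Rost--Schmid differential applied to an exterior product and then read off both formulas from the cocycle condition on \(\beta\). First I would fix a cocycle representative \(\beta=\sum_{t\in T^{(0)}}\beta_t\): since \(T\) has codimension \(j\), the supported complex \(C^*_{RS,T}(X';\underline{K}^{MW}_j;v')\) vanishes in degrees below \(j\), so a class in \(\widetilde{CH}^j_T(X',v')\) is literally a cocycle, i.e. \(\partial\beta=0\). Writing \(\alpha=\sum_{y\in Y^{(0)}}\alpha_y\) and unwinding Definition \ref{exterior products}, each generator of \(\alpha\times\beta\) has the form \(c(p_1^*v,p_2^*\Lambda^*_t)\,(p_1^*\alpha_y\otimes p_2^*\beta_t)\) supported on the components \(\overline{s}\) of \(\overline{y}\times\overline{t}\), and applying \(\partial\) amounts to summing the partial maps \(\partial^s_u\) over the codimension one points \(u\) of \(\overline{s}\).

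The key step is to classify these \(u\) by their projections. Since \(\overline{s}\) is a component of \(\overline{y}\times\overline{t}\), a prime divisor \(u\subseteq\overline{s}\) is of exactly one of three types: \emph{horizontal}, dominating \(\overline{t}\) and projecting to some \(z\in\overline{y}^{(1)}\); \emph{vertical}, dominating \(\overline{y}\) and projecting to some \(w\in\overline{t}^{(1)}\); or \emph{diagonal}, dominating both factors. I would compute \(\partial^s_u\) locally via the twisted Leibniz rule for Milnor--Witt residues together with Remark \ref{linearity of partial maps}. For a diagonal \(u\) both \(p_1^*\alpha_y\) and \(p_2^*\beta_t\) are unramified along \(u\), because each class lifts to the valuation ring \(O_{\overline{s},u}\) (the divisor dominates the corresponding factor), so the residue vanishes. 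For a horizontal \(u\) only \(p_1^*\alpha_y\) ramifies and the residue reproduces \(\partial^y_z(\alpha_y)\times\beta_t\); for a vertical \(u\) only \(p_2^*\beta_t\) ramifies, yielding \(\pm\,\alpha_y\times\partial^t_w(\beta_t)\). Summing over all \(u\) gives a Leibniz identity of the shape \(\partial(\alpha\times\beta)=\partial(\alpha)\times\beta+\varepsilon\,\alpha\times\partial(\beta)\); since \(\partial\beta=0\) the second term drops and the first formula follows. The same bookkeeping applied to \(\beta\times\alpha\) gives \(\partial(\beta\times\alpha)=\partial(\beta)\times\alpha+\varepsilon'\,\beta\times\partial(\alpha)\), and again only the second term survives. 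Conceptually this explains the asymmetry: in the right product the differential acts on the leftmost factor and no sign intervenes, so it is a strict chain map, whereas in the left product the differential must pass over \(\beta\), producing the sign \(\varepsilon'\).

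The main obstacle is the twist and sign bookkeeping that identifies \(\varepsilon'\) with \(\langle-1\rangle^{j+rk_{X'}(v')}\). Extracting it requires commuting the Milnor--Witt residue past the commutativity isomorphism \(c(p_1^*v,p_2^*\Lambda^*)\) from Definition \ref{exterior products} and past the determinant identifications of Lemma \ref{l4} used in defining \(\partial^s_u\); this is precisely the kind of nontrivial computation in \(V(Vect(-))\) flagged earlier, and I would organize it using Theorem \ref{four diagrams} and the commutativity constraints of \(\mathscr{P}_{X\times X'}\). The cohomological degree \(j\) contributes the Koszul sign from commuting the degree one differential past a degree \(j\) class, while \(rk_{X'}(v')\) enters through the sign \((-1)^{rk_X(A)rk_X(B)}\) built into \(c\) in Proposition \ref{px}; together they assemble into \(\langle-1\rangle^{j+rk_{X'}(v')}\), completing the second identity.
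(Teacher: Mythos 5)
Your overall strategy is sound and, for the first identity, essentially matches the paper's: both arguments reduce to checking the two compositions at each point \(u\in(X\times X')^{(i+j+1)}\). You actually make explicit something the paper dismisses in one line (``otherwise both paths vanish''): the classification of codimension-one points of \(\overline{y}\times\overline{t}\) into horizontal, vertical and diagonal types, the vanishing of the diagonal residues, and the use of the cocycle condition \(\partial\beta=0\) (valid because the supported complex starts in degree \(j\)) to kill the vertical contributions. That is a genuine clarification. Where the two proofs diverge is the second identity: the paper deduces \(\partial(\beta\times\alpha)=\langle-1\rangle^{j+rk_{X'}(v')}\beta\times\partial(\alpha)\) \emph{formally} from the first identity by applying the graded commutativity of Proposition \ref{commutativity of exterior product} twice, once in degree \(i\) and once in degree \(i+1\), so that the sign appears as \(\langle-1\rangle^{(i+rk_X(v))(j+rk_{X'}(v'))}\cdot\langle-1\rangle^{(i+1+rk_X(v))(j+rk_{X'}(v'))}=\langle-1\rangle^{j+rk_{X'}(v')}\); you instead propose a second direct Leibniz computation on \(\beta\times\alpha\) and identify the sign by Koszul-type bookkeeping. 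Your route would work, but the paper's is cleaner and pins the sign down without any further residue analysis; I would recommend adopting it.

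The one substantive omission is that the heart of the paper's proof of the first identity --- showing that the two identifications of the twist \(\Lambda^*_u\otimes(v\times v')\) (residue then exterior product, versus exterior product then residue) agree --- is carried out there by writing down an explicit \(3\times 3\) diagram of normal bundles for \(ZT\subseteq YT\subseteq XT\subseteq XX'\) and invoking Theorem \ref{four diagrams}, (1). You correctly name this as the main obstacle and point at the right tool, but you defer the computation rather than performing it; in this framework that diagram chase \emph{is} the content of the proposition, so it cannot be left as an organizational remark.
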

\begin{proof}
We may assume \(Y\) and \(T\) are irreducible. We check the commutativity after projecting to each \(u\in(X\times X')^{(i+j+1)}\). It suffices to let \(u\) be a generic point of \(\overline{z}\times T\), where \(z\in Y\cap X^{(i+1)}\), since otherwise both paths vanish. Set \(Z=\overline{z}\). We have a commutative diagram with exact columns and rows (we write \(X\times Y\) by \(XY\) for short)
\[
	\xymatrix
	{
				&0\ar[d]					&0\ar[d]					&							&\\
				&N_{ZT/YT}\ar[d]\ar[r]^=		&N_{ZT/YT}\ar[d]			&							&\\
		0\ar[r]	&N_{ZT/XT}\ar[r]\ar[d]		&N_{ZT/XX'}\ar[d]\ar[r]		&N_{XT/XX'}|_{ZT}\ar[r]\ar[d]_{\cong}	&0\\
		0\ar[r]	&N_{YT/XT}|_{ZT}\ar[r]\ar[d]	&N_{YT/XX'}|_{ZT}\ar[r]\ar[d]	&N_{YT/YX'}|_{ZT}\ar[r]		&0\\
				&0						&0						&							&
	}.
\]
We have projections maps \(p_1:ZT\longrightarrow Z\) and \(p_2:ZT\longrightarrow T\). By Theorem \ref{four diagrams}, (1), we have a commutative diagram
\[
	\xymatrix
	{
		p_1^*(N_{Z/Y}+N_{Y/X}|_Z+v|_Z)+p_2^*(N_{T/X'}+v'|_T)\ar[r]\ar[d]	&p_1^*(N_{Z/Y})+N_{YT/XX'}|_{ZT}+p_1^*(v|_Z)+p_2^*(v'|_T)\ar[d]\\
		p_1^*(N_{Z/X}+v|_Z)+p_2^*(N_{T/X'}+v'|_T)\ar[d]					&N_{ZT/YT}+N_{YT/XX'}|_{ZT}+p_1^*(v|_Z)+p_2^*(v'|_T)\ar[d]\\
		N_{ZT/XT}+p_1^*(v|_Z)+N_{YT/YX'}|_{ZT}+p_2^*(v'|_T)\ar[r]			&N_{ZT/XX'}+p_1^*(v|_Z)+p_2^*(v'|_T)
	},
\]
which gives the first equation. For the second one, we could calculate directly using the first equation by using Proposition \ref{commutativity of exterior product} (which still holds in this context):
\begin{align*}
	&\partial(\beta\times\alpha)\\
=	&\partial(<-1>^{(i+rk_X(v))(j+rk_{X'}(v'))}c(q_1^*v_1,q_2^*v_2)(\alpha\times\beta))\\
=	&<-1>^{(i+rk_X(v))(j+rk_{X'}(v'))}c(q_1^*v_1,q_2^*v_2)(\partial(\alpha\times\beta))\\
=	&<-1>^{(i+rk_X(v))(j+rk_{X'}(v'))}c(q_1^*v_1,q_2^*v_2)(\partial(\alpha)\times\beta)\\
=	&<-1>^{j+rk_{X'}(v')}\beta\times\partial(\alpha),
\end{align*}
where \(q_1\), \(q_2\) are projections of \(X\times X'\).
\end{proof}
\begin{definition}
The exterior product in Definition \ref{exterior products} induces a pairing
\[\widetilde{CH}^{n_1}_{T_1}(X_1,v_1)\times\widetilde{CH}^{n_2}_{T_2}(X_2,v_2)\longrightarrow\widetilde{CH}^{n_1+n_2}_{T_1\times T_2}(X_1\times X_2,v_1\times v_2)\]
for every \(X_a\in Sm/k\), \(T_a\in Z^{n_a}(X_a)\) and \(v_a\in\mathscr{P}_{X_a}\), \(a=1,2\) (at least for the case when \(T_a\) are smooth) by Proposition \ref{exterior products and differential maps}. It's called the exterior product between Chow-Witt groups.
\end{definition}
\begin{proposition}\label{commutativity of exterior product}
(See Axiom \ref{A} and \ref{CC}) In the context above, the exterior product is associative and satisfies
\[s_1\times s_2=<-1>^{(codim(x_1)+rk_{X_1}(v_1))(codim(x_2)+rk_{X_2}(v_2))}c(p_2^*(v_2),p_1^*(v_1))(s_2\times s_1)\]
where \(s_a\in\widetilde{CH}^{n_a}_{T_a}(X_a,v_a)\).
\end{proposition}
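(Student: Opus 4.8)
The plan is to verify both assertions pointwise, i.e. after projecting the exterior products to each generic point $y$ of a component of $\overline{x_1}\times\overline{x_2}$ (and of $\overline{x_1}\times\overline{x_2}\times\overline{x_3}$ for associativity), and then to split the comparison into two independent inputs: the formal associativity and graded commutativity of the underlying Milnor--Witt product $p_1^*(s_1)\otimes p_2^*(s_2)$ (which is the content of \cite{CF1}), and a purely combinatorial manipulation of the twisting lines inside the commutative Picard category $\mathscr{P}_{X_1\times X_2}$. The key simplification is that, since $s_a$ represents a class in $\widetilde{CH}^{n_a}_{T_a}(X_a,v_a)$, at a generic point of $T_a$ it sits in Milnor--Witt degree $0$; as in Proposition \ref{exterior products and differential maps}, the relevant grading for signs is $codim(x_a)+rk_{X_a}(v_a)$ rather than the Milnor--Witt degree, so the entire scalar in the statement will be produced by the reordering of determinant and twist lines, not by the $K^{MW}$-product itself.

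First I would treat commutativity. Unwinding Definition \ref{exterior products}, at $y$ one has
\[s_1\times s_2=c(p_1^*(v_1),p_2^*(\Lambda_{x_2}^*))(p_1^*(s_1)\otimes p_2^*(s_2)),\]
while $s_2\times s_1$, transported along the swap $X_2\times X_1\cong X_1\times X_2$ and evaluated at the corresponding point, equals $c(p_2^*(v_2),p_1^*(\Lambda_{x_1}^*))(p_2^*(s_2)\otimes p_1^*(s_1))$. I would then interchange the two Milnor--Witt factors by the naive symmetry of the tensor product, which is sign-free in degree $0$, and compare the two twisting lines. By Lemma \ref{l4} the line $\Lambda_y^*$ decomposes as $p_1^*(\Lambda_{x_1}^*)+p_2^*(\Lambda_{x_2}^*)$ in one reading and as $p_2^*(\Lambda_{x_2}^*)+p_1^*(\Lambda_{x_1}^*)$ in the other, and the discrepancy between these two decompositions, combined with the two auxiliary isomorphisms $c(p_1^*(v_1),p_2^*(\Lambda_{x_2}^*))$ and $c(p_2^*(v_2),p_1^*(\Lambda_{x_1}^*))$, is controlled by the commutativity constraint of $\mathscr{P}_{X_1\times X_2}$. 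Using the explicit rule $c(A,B)=(-1)^{rk(A)rk(B)}$ of Proposition \ref{px} (recall $rk(\Lambda_{x_a}^*)=codim(x_a)$) together with the fact that an automorphism of the twisting line given by $-1$ acts on Milnor--Witt $K$-theory as $<-1>$ (Remark \ref{naturality of partial maps}), a short computation collects all these transpositions into the scalar $<-1>^{(codim(x_1)+rk_{X_1}(v_1))(codim(x_2)+rk_{X_2}(v_2))}$, the remaining reordering of the $v_a$-parts of the twist being absorbed exactly by the isomorphism $c(p_2^*(v_2),p_1^*(v_1))$.

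For associativity I would argue in the same spirit at a generic point $y$ of a component of $\overline{x_1}\times\overline{x_2}\times\overline{x_3}$. The associativity of the underlying $K^{MW}$-product is immediate, so the whole content is a coherence statement for the twists: the two bracketings insert different sequences of the isomorphisms $c(p_i^*(v_i),p_j^*(\Lambda_{x_j}^*))$ and of the iterated decompositions $\Lambda_y^*=\sum_a p_a^*(\Lambda_{x_a}^*)$ coming from Lemma \ref{l4}, and I must show they agree after applying the associativity constraint $a(v_1,v_2,v_3)$ of $\mathscr{P}$. I would organize this as a single diagram of virtual objects and reduce its commutativity to the fundamental calculus of Theorem \ref{four diagrams} and Proposition \ref{basicvvb}, exactly as was done in the proof of Proposition \ref{exterior products and differential maps}.

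The main obstacle in both parts is not any individual algebraic identity but the faithful bookkeeping of the many determinant lines $\Lambda_{x_a}^*$ and twists $v_a$ as they are permuted: one must check that the commutativity and associativity constraints of the Picard category assemble into precisely the stated scalar and the single residual isomorphism $c(p_2^*(v_2),p_1^*(v_1))$, with no stray sign. This is exactly the non-trivial \emph{calculation of virtual objects} warned about in the remarks of Section \ref{Virtual Objects and Their Calculation}, and Theorem \ref{four diagrams} together with Proposition \ref{basicvvb} are the tools I would lean on to carry it through.
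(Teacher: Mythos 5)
Your proposal is correct and follows essentially the same route as the paper: the paper's (very terse) proof derives associativity from the coherence of the bracket functor (Definition \ref{bracket}, (3), whose precise form is Theorem \ref{four diagrams}) and derives the sign from the explicit commutativity constraint \(c(A,B)=(-1)^{rk_X(A)rk_X(B)}\) of Proposition \ref{px}, which is exactly the content of your pointwise bookkeeping of the determinant and twist lines. The only cosmetic difference is that you spell out the conversion of the scalar \(-1\) on the twisting line into multiplication by \(<-1>\) on Milnor--Witt \(K\)-theory, a step the paper leaves implicit.
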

\begin{proof}
Associativity comes from Definition \ref{bracket}, (3) and the second statement follows from the definition of commutativity isomorphism in Proposition \ref{px}.
\end{proof}
\begin{proposition}\label{compability of exterior product}
(See Axiom \ref{CPB}) Let \(f_a:Y_a\longrightarrow X_a, a=1,2\) be a smooth morphism in \(Sm/k\), then in the context above, we have
\[(f_1\times f_2)^*(s_1\times s_2)=f_1^*(s_1)\times f_2^*(s_2).\]
\end{proposition}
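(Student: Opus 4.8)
The plan is to check the identity termwise in the Rost--Schmid complexes, point by point over \(Y_1\times Y_2\). We may assume \(T_1,T_2\) irreducible, say \(T_a=\overline{x_a}\), so that \(s_a\) is represented by a class in \(K^{MW}_{n_a}(k(x_a),\Lambda_{x_a}^*\otimes v_a)\). Both \((f_1\times f_2)^*(s_1\times s_2)\) and \(f_1^*(s_1)\times f_2^*(s_2)\) are cochains on \(Y_1\times Y_2\), and the first task is to see they are supported on the same points. Since \(f_1\times f_2\) is smooth and
\[(f_1\times f_2)^{-1}(\overline{x_1}\times\overline{x_2})=f_1^{-1}(\overline{x_1})\times f_2^{-1}(\overline{x_2}),\]
the relevant points \(w\) are exactly the generic points of the components of \(\overline{w_1}\times\overline{w_2}\), where \(w_a\) runs over the generic points of the components of \(f_a^{-1}(\overline{x_a})\); each such \(w\) maps under \(f_1\times f_2\) to the generic point \(y\) of a component of \(\overline{x_1}\times\overline{x_2}\), and all codimensions are preserved because the morphisms are smooth. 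This matches the indexing sets on the two sides.

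Next I would unwind both definitions at a fixed \(w\), working over \(k(w)\). The exterior product uses the extension of scalars \(p_i\colon\overline{y}\to\overline{x_i}\) (resp. \(q_i\colon\overline{w}\to\overline{w_i}\)) together with Lemma \ref{l4}, while the smooth pull-back uses Lemma \ref{l3}. On the left the class \(s_a\) is transported along \(k(x_a)\to k(y)\to k(w)\); on the right along \(k(x_a)\to k(w_a)\to k(w)\). These two composites \(k(x_a)\to k(w)\) coincide, being induced by \(\overline{w}\to\overline{x_a}\), so by functoriality of the pull-back between smooth morphisms and compatibility of extension of scalars with the Milnor--Witt product, both expressions reduce to the single product \(\overline{s}_1\cdot\overline{s}_2\in K^{MW}_{n_1+n_2}(k(w),-)\) of the common pull-backs. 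The two sides can then differ only in the identification of the twist \(\Lambda_w^*\) and in the bookkeeping commutativity isomorphism \(c\).

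The heart of the argument is the coherence of the twist identifications. On the \((f_1\times f_2)^*\) side, \(\Lambda_w^*\) is identified with \(\Lambda_y^*\otimes k(w)\) by Lemma \ref{l3} and then with \((\Lambda_{x_1}^*\otimes\Lambda_{x_2}^*)\otimes k(w)\) by Lemma \ref{l4}; on the product side it is first identified with \(\Lambda_{w_1}^*\otimes\Lambda_{w_2}^*\) by Lemma \ref{l4} and each factor with \(\Lambda_{x_a}^*\otimes k(w_a)\) by Lemma \ref{l3}. I would prove these two routes agree by writing down the commutative diagram of cotangent (equivalently relative tangent) bundles over \(\overline{w}\) coming from the external splitting
\[T_{Y_1\times Y_2/X_1\times X_2}\cong \mathrm{pr}_1^*T_{Y_1/X_1}\oplus\mathrm{pr}_2^*T_{Y_2/X_2},\]
and invoking Theorem \ref{four diagrams} to conclude that the induced isomorphisms of virtual objects coincide. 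Finally, because \(rk_{Y_a}(f_a^*v_a)=rk_{X_a}(v_a)\) and codimensions are preserved, the commutativity twist in the exterior product on the \(Y\)-side matches the pull-back of the one on the \(X\)-side, so the scalar signs agree.

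The step I expect to be the main obstacle is the twist coherence of the third paragraph: assembling the correct commutative diagram of vector bundles relating the four cotangent identifications, and verifying via Theorem \ref{four diagrams} that the two composite isomorphisms of virtual objects are genuinely equal. As the authors warn, such calculations are not automatic, and the signs concealed in the commutativity isomorphisms must be tracked carefully.
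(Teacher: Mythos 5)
Your proposal is correct and takes essentially the same route as the paper, whose entire proof of this proposition is the single sentence ``This follows from Lemma \ref{l3} and Lemma \ref{l4}.'' You have simply made explicit the support-matching, the reduction to a common pull-back, and the twist-coherence check via Theorem \ref{four diagrams} that the paper leaves implicit.
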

\begin{proof}
This follows from Lemma \ref{l3} and Lemma \ref{l4}.
\end{proof}
\begin{remark}
If we have defined pull-back along any closed immersion, then the Axiom \ref{A}, \ref{CC} and \ref{CPB} can be deduced from the above two propositions.
\end{remark}
Here we would like to prove a special case that the the push-forwards defined in Definition \ref{p-fsm} and Definition \ref{p-fci} form a chain complex morphism between Rost-Schmid complexes, just to explain how to treat the twists.
\begin{proposition}\label{partial maps and push-forwards for smooth morphisms}
Suppose \(Z\subseteq Y\subseteq X\) are schemes with \(X\) and \(Y\) being smooth, \(Y=\overline{y}\) closed irreducible in \(X\), \(Z=\overline{z}\) closed irreducible in \(Y\) and \(Z\in Y^{(1)}\). Suppose \(f:X\longrightarrow X'\) is a smooth morphism, \(v\in\mathscr{P}_{X'}\) and \(Y\) is also a closed subset of \(X'\). Then we have a commutative diagram
\[
	\xymatrix
	{
		K_n^{MW}(k(y),\Lambda_y^*\otimes f^*v\otimes\omega_{X/X'})\ar[r]^{\partial}\ar[d]_{f_*}	&K_{n-1}^{MW}(k(z),\Lambda_z^*\otimes f^*v\otimes\omega_{X/X'})\ar[d]_{f_*}\\
		K_n^{MW}(k(f(y)),\Lambda_{f(y)}^*\otimes v)\ar[r]^{\partial}						&K_{n-1}^{MW}(k(f(z)),\Lambda_{f(z)}^*\otimes v)
	}.
\]
\end{proposition}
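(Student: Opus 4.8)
The plan is to verify the square pointwise at \(z\), reducing both composites to a determinant-theoretic matching of twists; as the remark preceding the statement makes clear, this twist bookkeeping is the whole point. First I would use that \(Y\) is smooth, hence normal, so that the simple description of \(\partial^y_z\) applies verbatim and the sum over the normalization of \(Y\) in the general definition collapses to the single term at \(z\). Next, recall from Definition \ref{p-fsm} that, after the identification of Lemma \ref{l1}, the vertical map \(f_*\) is the Milnor--Witt transfer along \(k(f(y))\subseteq k(y)\) twisted by the determinant isomorphism \(\Lambda_y^*\otimes\omega_{X/X'}\cong\Lambda_{f(y)}^*\), the factor \(\omega_{X/X'}^{\vee}\otimes\omega_{X/X'}\) cancelling, and similarly at \(z\); since here \(f\) carries \(Y\) isomorphically onto a closed subset of \(X'\) this transfer is the identity, and the only surviving datum is the twist isomorphism. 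Finally, because \(\omega_{X/X'}\) is a line bundle on \(X\), I would fix a local generator at \(z\) restricting to one at \(y\), so that by Remark \ref{linearity of partial maps}, (2) the top differential carries \(\omega_{X/X'}\) through untouched; the residue/transfer compatibility that would be needed for nontrivial extensions is the Milnor--Witt Bass--Tate formula of \cite{Mo}, with \(f^*v\) passed across by the projection formula of Remark \ref{linearity of partial maps}, (1).

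It then remains to match the twisting isomorphisms, and this is the substance of the argument. Four determinant lines are in play: the Lemma \ref{l1} isomorphism at \(y\), the one at \(z\), the twist change \(\Lambda_y^*\to\Lambda_z^*\) built into the top \(\partial\) by wedging in the conormal direction \((m_z/m_z^2)^{\vee}\) of \(Z\subseteq Y\), and the analogous change \(\Lambda_{f(y)}^*\to\Lambda_{f(z)}^*\) in the bottom \(\partial\). I would assemble a single commutative diagram of cotangent and conormal exact sequences at \(z\) relating \(Z\subseteq Y\subseteq X\) to \(f:X\longrightarrow X'\), by combining the sequences of Lemmas \ref{l1} and \ref{l3} with the conormal sequence of \(Z\subseteq Y\) and its image in \(X'\), and then extract the equality of the two composite determinant isomorphisms from the appropriate case of Theorem \ref{four diagrams}. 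Naturality of the Rost--Schmid complex in the twist, Remark \ref{naturality of partial maps}, reduces the comparison of the two differentials to precisely this diagram of line bundles.

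The hard part will be this last step. Organising the several conormal and cotangent sequences into one configuration to which a single case of Theorem \ref{four diagrams} applies is delicate, and once that is arranged one must still track the commutativity isomorphisms \(c(-,-)\) of \(\mathscr{P}_X\), hence the signs \(<-1>\) that Milnor--Witt K-theory produces when determinant factors are permuted. I expect the decisive check to be that the spectator factor \(\omega_{X/X'}\) is transported consistently past the conormal direction on both the source and the target side; once the twists are aligned in this way, the commutativity of the square is immediate.
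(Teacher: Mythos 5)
Your plan is essentially the paper's proof: the square is reduced to the commutativity of a diagram of (virtual) determinant twists assembled from the conormal/tangent exact sequences at \(Z\) for \(Z\subseteq Y\subseteq X\) and \(f:X\to X'\), and that diagram is verified by Theorem \ref{four diagrams} — the paper applies case (3) to the \(3\times 3\) grid with rows \(T_{X/X'}|_Z\to N_{Z/X}\to N_{Z/X'}\) and \(T_{X/X'}|_Z\to N_{Y/X}|_Z\to N_{Y/X'}|_Z\) and common column kernel \(N_{Z/Y}\), which is exactly the configuration you describe. The only cosmetic differences are that the paper works directly with the virtual-object diagram rather than spelling out the pointwise reduction, and your appeal to Bass--Tate/Remark \ref{linearity of partial maps} is unnecessary here since \(Y\) closed in \(X'\) makes the transfer trivial.
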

\begin{proof}
We have the following commutative diagram with exact rows and columns
\[
	\xymatrix
	{
				&					&0\ar[d]				&0\ar[d]				&\\
				&					&N_{Z/Y}\ar[d]\ar[r]^=	&N_{Z/Y}\ar[d]			&\\
		0\ar[r]	&T_{X/X'}|_Z\ar[r]\ar[d]_=	&N_{Z/X}\ar[d]\ar[r]		&N_{Z/X'}\ar[r]\ar[d]		&0\\
		0\ar[r]	&T_{X/X'}|_Z\ar[r]		&N_{Y/X}|_Z\ar[r]\ar[d]	&N_{Y/X'}|_Z\ar[r]\ar[d]	&0\\
				&					&0					&0					&
	}.
\]
Now the statement is to prove the following diagram commutes
\[
	\xymatrix
	{
		N_{Z/Y}+N_{Y/X}|_Z+f^*v|_Z-T_{X/X'}|_Z\ar[r]\ar[d]	&N_{Z/Y}+T_{X/X'}|_Z+N_{Y/X'}|_Z+f^*v|_Z-T_{X/X'}|_Z\ar[d]\\
		N_{Z/X}+f^*v|_Z-T_{X/X'}|_Z\ar[d]				&N_{Z/Y}+N_{Y/X'}|_Z+f^*v|_Z\ar[d]\\
		T_{X/X'}|_Z+N_{Z/X'}+f^*v|_Z-T_{X/X'}|_Z\ar[r]		&N_{Z/X'}+f^*v|_Z
	}.
\]
We have the following commutative diagrams
\[
	\xymatrix
	{
		T_{X/X'}|_Z+N_{Z/X'}+f^*v|_Z-T_{X/X'}|_Z\ar[r]				&N_{Z/X'}+f^*v|_Z\\
		T_{X/X'}|_Z+N_{Z/Y}+N_{Y/X'}|_Z+f^*v|_Z-T_{X/X'}|_Z\ar[r]\ar[u]	&N_{Z/Y}+N_{Y/X'}|_Z+f^*v|_Z\ar[u]
	}
\]
\[
	\xymatrix
	{
		N_{Z/Y}+N_{Y/X}|_Z+f^*v|_Z-T_{X/X'}|_Z\ar[r]\ar[d]	&N_{Z/Y}+T_{X/X'}|_Z+N_{Y/X'}|_Z+f^*v|_Z-T_{X/X'}|_Z\ar[d]\\
		N_{Z/X}+f^*v|_Z-T_{X/X'}|_Z\ar[d]				&T_{X/X'}|_Z+N_{Z/Y}+N_{Y/X'}|_Z+f^*v|_Z-T_{X/X'}|_Z\\
		T_{X/X'}|_Z+N_{Z/X'}+f^*v|_Z-T_{X/X'}|_Z\ar[ru]	&
	},
\]
where the second one comes from Theorem \ref{four diagrams}, (3). Then the result follows by combining the two diagrams above.
\end{proof}
\begin{proposition}\label{partial maps and push-forwards for closed immersions}
Suppose \(Z\subseteq Y\subseteq X\) are schemes with \(X\) and \(Y\) smooth, \(Y=\overline{y}\) closed irreducible in \(X\), \(Z=\overline{z}\) closed irreducible in \(Y\) and \(Z\in Y^{(1)}\). Suppose \(f:X\longrightarrow X'\) is a closed immersion and \(v\in\mathscr{P}_{X'}\). Then we have a commutative diagram
\[
	\xymatrix
	{
		K_n^{MW}(k(y),\Lambda_y^*\otimes detN_{X/X'}\otimes f^*v)\ar[r]^{\partial}\ar[d]_{f_*}	&K_{n-1}^{MW}(k(z),\Lambda_z^*\otimes detN_{X/X'}\otimes f^*v)\ar[d]_{f_*}\\
		K_n^{MW}(k(f(y)),\Lambda_{f(y)}^*\otimes v)\ar[r]^{\partial}						&K_{n-1}^{MW}(k(f(z)),\Lambda_{f(z)}^*\otimes v)
	}.
\]
\end{proposition}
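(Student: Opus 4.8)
The plan is to mirror the proof of Proposition \ref{partial maps and push-forwards for smooth morphisms}, exploiting the fact that the closed-immersion case is in fact cleaner. First I would observe that since \(f\) is a closed immersion we have \(k(y)=k(f(y))\) and \(k(z)=k(f(z))\), so by Definition \ref{p-fci} the push-forward \(f_*\) acts as the identity on the underlying Milnor-Witt groups and is nothing but the twist isomorphism supplied by Lemma \ref{l2} (there is no residue-field trace, in contrast to the smooth case). Likewise the two horizontal differentials are the \emph{same} residue differential \(\partial^e_z\) on the Milnor-Witt part (same residue fields, same codimension-one direction), so by Remark \ref{linearity of partial maps} the inert factor \(detN_{X/X'}\otimes f^*v\) may be carried along untouched. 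Consequently the whole statement reduces, exactly as in the smooth case, to the commutativity of an induced diagram of twist isomorphisms in \(V(Vect(Z))\), under the identifications of \(\Lambda_y^*\), \(\Lambda_z^*\), \(\Lambda_{f(y)}^*\), \(\Lambda_{f(z)}^*\) with the restrictions to \(Z\) of \(N_{Y/X}\), \(N_{Z/X}\), \(N_{Y/X'}\), \(N_{Z/X'}\) respectively.

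Next I would assemble the governing \(3\times 3\) diagram of normal bundles. Applying Lemma \ref{t}, (4) to the four chains of closed immersions \(Z\subseteq Y\subseteq X\), \(Z\subseteq Y\subseteq X'\), \(Z\subseteq X\subseteq X'\) and \(Y\subseteq X\subseteq X'\) produces the commutative diagram with exact rows and columns
\[
	\xymatrix
	{
				&0\ar[d]				&0\ar[d]				&				&\\
				&N_{Z/Y}\ar[d]\ar@{=}[r]	&N_{Z/Y}\ar[d]			&				&\\
		0\ar[r]	&N_{Z/X}\ar[r]\ar[d]		&N_{Z/X'}\ar[d]\ar[r]		&N_{X/X'}|_Z\ar[r]\ar@{=}[d]	&0\\
		0\ar[r]	&N_{Y/X}|_Z\ar[r]\ar[d]	&N_{Y/X'}|_Z\ar[r]\ar[d]	&N_{X/X'}|_Z\ar[r]		&0\\
				&0					&0					&				&
	}.
\]
Here the two columns are the sequences governing the differentials over \(X\) and over \(X'\) respectively, while the two rows are the sequences underlying the push-forward isomorphisms of Lemma \ref{l2} at \(z\) and at \(y\).

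The dictionary is then as follows: the differential \(\partial\) prepends the one-dimensional factor \(N_{Z/Y}\) via the splitting \(\Sigma\) of the relevant column, and the push-forward \(f_*\) folds \(detN_{X/X'}\) into \(\Lambda^*\) via the splitting \(\Sigma\) of the relevant row. Feeding the \(3\times 3\) diagram above into Theorem \ref{four diagrams}, (1), with \(K=N_{Z/Y}\), \(V_1=N_{Z/X}\), \(V_2=N_{Z/X'}\), \(W_1=N_{Y/X}|_Z\), \(W_2=N_{Y/X'}|_Z\) and \(C=N_{X/X'}|_Z\), yields exactly the square
\[
	\xymatrix
	{
		N_{Z/X'}\ar[r]\ar[d]		&N_{Z/Y}+N_{Y/X'}|_Z\ar[d]\\
		N_{Z/X}+N_{X/X'}|_Z\ar[r]	&N_{Z/Y}+N_{Y/X}|_Z+N_{X/X'}|_Z
	},
\]
whose two routes are precisely \(f_*\circ\partial\) and \(\partial\circ f_*\) at the level of twists. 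Since the summands occur in the same order along both routes, no commutativity isomorphism \(c\) intervenes, in contrast to the smooth case where Theorem \ref{four diagrams}, (3) did force one; the proof is complete once this twist square is adjoined to the inert factor \(f^*v\).

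The main obstacle is the bookkeeping in the first step: one must verify carefully that the basis-level recipe \(s\otimes a\otimes t\mapsto\partial^e_z(s)\otimes(e\wedge a)\otimes t\) defining \(\partial\), together with the linearity of Remark \ref{linearity of partial maps}, really translates into prepending \(N_{Z/Y}\) by the sequence \(\Sigma\) and nothing more, and that the residue-field trace genuinely plays no role here because \(f\) is a closed immersion. As in the definition of \(\partial\) itself, the case where \(Y\) is not normal is reduced to the normal case by passing to the normalization and applying Theorem \ref{four diagrams}, (4), so that the virtual-object computation above remains the genuine content.
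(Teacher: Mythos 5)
Your proof is correct and is essentially the paper's argument: the paper likewise reduces the statement to the single square of twists relating $N_{Z/Y}+N_{Y/X}|_Z+N_{X/X'}|_Z+f^*v|_Z$, $N_{Z/X}+N_{X/X'}|_Z+f^*v|_Z$, $N_{Z/Y}+N_{Y/X'}|_Z+f^*v|_Z$ and $N_{Z/X'}+f^*v|_Z$, and settles it by Definition \ref{bracket}, (3) applied to the filtration $N_{Z/Y}\subseteq N_{Z/X}\subseteq N_{Z/X'}$ --- which, by Remark \ref{precise}, is exactly your application of Theorem \ref{four diagrams}, (1) to the $3\times 3$ diagram of normal bundles. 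Your observations that no commutativity isomorphism intervenes (unlike the smooth case) and that the non-normal case is handled via the normalization are consistent with the paper.
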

\begin{proof}
The diagram commutes because of the following commutative diagram by Definition \ref{bracket}, (3)
\[
	\xymatrix
	{
		N_{Z/Y}+N_{Y/X}|_Z+N_{X/X'}|_Z+f^*v|_Z\ar[r]\ar[d]	&N_{Z/X}+N_{X/X'}|_Z+f^*v|_Z\ar[d]\\
		N_{Z/Y}+N_{Y/X'}|_Z+f^*v|_Z\ar[r]				&N_{Z/X'}+f^*v|_Z
	}.
\]
\end{proof}

Since we haven't defined pull-back along arbitrary closed immersions, the following definition is just an intention and won't be used.
\begin{definition}
(See Axiom \ref{P}) Let \(X\in Sm/k\), \(T_a\in Z^{n_a}(X)\), \(n_a\in\mathbb{N}\) and \(v_a\in\mathscr{P}_X\), \(a=1,2\). If \(T_1\) and \(T_2\) intersect properly, we have a product
\[\widetilde{CH}^{n_1}_{T_1}(X,v_1)\times\widetilde{CH}^{n_2}_{T_2}(X,v_2)\longrightarrow\widetilde{CH}^{n_1+n_2}_{T_1\cap T_2}(X,v_1+v_2)\]
defined by \(a\cdot b=\triangle^*(a\times b)\) where \(\triangle:X\longrightarrow X\times X\) is the diagonal. It is functorial with respect to twists and extension of supports by the same property of the exterior product.
\end{definition}
\begin{proposition}\label{functoriality of push-forwards}
Let \(\xymatrix{X\ar[r]^f&Y\ar[r]^g&Z}\) be morphisms in \(Sm/k\), \(v\in\mathscr{P}_Z\) and \(C\in Z^{i+d_{g\circ f}}(X)\).
\begin{enumerate}
\item(See Axiom \ref{FPFSM}) Suppose \(f\), \(g\) are smooth and \(C\) is a closed subset \(Z\), then the following diagram commutes
\[
	\xymatrix
	{
		\widetilde{CH}^{i+d_{g\circ f}}_C(X,(g\circ f)^*v-T_{X/Z})\ar[r]\ar[ddr]^{(g\circ f)_*}	&\widetilde{CH}^{i+d_{g\circ f}}_C(X,(g\circ f)^*v-f^*T_{Y/Z}-T_{X/Y})\ar[d]_{f_*}\\
																					&\widetilde{CH}^{i+d_g}_{f(C)}(Y,g^*v-T_{Y/Z})\ar[d]_{g_*}\\
																					&\widetilde{CH}^i_{g(f(C))}(Z,v)
	}.
\]
\item(See Axiom \ref{FPFCI}) Suppose \(f\), \(g\) are closed immersions, then the following diagram commutes
\[
	\xymatrix
	{
		\widetilde{CH}^{i+d_{g\circ f}}_C(X,N_{X/Z}+(g\circ f)^*v)\ar[r]\ar[ddr]^{(g\circ f)_*}	&\widetilde{CH}^{i+d_{g\circ f}}_C(X,N_{X/Y}+f^*N_{Y/Z}+(g\circ f)^*v)\ar[d]_{f_*}\\
																					&\widetilde{CH}^{i+d_g}_{f(C)}(Y,N_{Y/Z}+g^*v)\ar[d]_{g_*}\\
																					&\widetilde{CH}^i_{g(f(C))}(Z,v)
	}.
\]
\item(See Axiom \ref{CTPF}, (1)) Suppose \(f\) is a closed immersion, \(g\), \(g\circ f\) are smooth and \(C\) is a closed subset of \(Z\). Then the following diagram commutes
\[
	\xymatrix
	{
		\widetilde{CH}^{i+d_{g\circ f}}_C(X,N_{X/Y}+f^*g^*v-f^*T_{Y/Z})\ar[d]_{f_*}\ar[r]	&\widetilde{CH}^{i+d_{g\circ f}}_C(X,f^*g^*v+N_{X/Y}-f^*T_{Y/Z})\ar[d]\\
		\widetilde{CH}_{f(C)}^{i+d_g}(Y,g^*v-T_{Y/Z})\ar[d]_{g_*}								&\widetilde{CH}^{i+d_{g\circ f}}_C(X,f^*g^*v-T_{X/Z})\ar[dl]_{(g\circ f)_*}\\
		\widetilde{CH}_{g(f(C))}^i(Z,v)															&
	}.
\]
\item(See Axiom \ref{CTPF}, (2)) Suppose \(g\) is smooth, \(f\), \(g\circ f\) are closed immersions. Then the following diagram commutes
\[
	\xymatrix
	{
		\widetilde{CH}^{i+d_{g\circ f}}_C(X,N_{X/Y}+f^*g^*v-f^*T_{Y/Z})\ar[d]_{f_*}\ar[r]	&\widetilde{CH}^{i+d_{g\circ f}}_C(X,-f^*T_{Y/Z}+N_{X/Y}+f^*g^*v)\ar[d]\\
		\widetilde{CH}_{f(C)}^{i+d_g}(Y,g^*v-T_{Y/Z})\ar[d]_{g_*}								&\widetilde{CH}^{i+d_{g\circ f}}_C(X,N_{X/Z}+f^*g^*v)\ar[dl]_{(g\circ f)_*}\\
		\widetilde{CH}_{g(f(C))}^i(Z,v)															&
	}.
\]
\end{enumerate}
\end{proposition}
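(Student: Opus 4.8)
The plan is to verify all four diagrams termwise on the Rost-Schmid complexes. Since Propositions \ref{partial maps and push-forwards for smooth morphisms} and \ref{partial maps and push-forwards for closed immersions} already show that each push-forward appearing here is a morphism of complexes, the two composites to be compared (together with the auxiliary twist isomorphism $\varphi$) are chain maps, so it suffices to prove that they agree on each summand $K_0^{MW}(k(x),\Lambda_x^*\otimes w)$ for the relevant twist $w$, where $x$ ranges over the generic points of the components of $C$, that is over $C\cap X^{(i+d_{g\circ f})}$. On such a summand every map is the composition of a geometric transfer induced by a finite residue-field extension and an isomorphism of the twist line obtained by pulling a virtual-object identity back to $k(x)$. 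I would treat these two contributions separately: the transfers by their transitivity in Milnor-Witt K-theory (see \cite{Mo}), and the twists by Lemma \ref{t} together with the calculus of Section \ref{Virtual Objects and Their Calculation}.

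First I would dispose of the residue-field part, noting that a transfer occurs only where a smooth morphism enlarges the residue field. In (1) both $f$ and $g$ are smooth, and the identity needed is the transitivity $Tr^{k(x)}_{k(g(f(x)))}=Tr^{k(f(x))}_{k(g(f(x)))}\circ Tr^{k(x)}_{k(f(x))}$ for the tower $k(g(f(x)))\subseteq k(f(x))\subseteq k(x)$. In (3) only the smooth morphism $g$ contributes a transfer and the closed immersion $f$ leaves the residue field fixed, so the two transfers coincide at once; in (2) and (4) no residue-field extension occurs. After this reduction the whole content of the proposition becomes the compatibility of the twist isomorphisms.

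It will then remain, in each case, to exhibit a single commutative diagram of vector bundles over $X$ (restricted to $x$) whose image under Theorem \ref{four diagrams} is exactly the required twist identity, just as in the proofs of Propositions \ref{partial maps and push-forwards for smooth morphisms} and \ref{partial maps and push-forwards for closed immersions}. I would match the four cases to the four exact sequences of Lemma \ref{t}: case (1) to $0\to T_{X/Y}\to T_{X/Z}\to f^*T_{Y/Z}\to 0$, case (2) to $0\to N_{X/Y}\to N_{X/Z}\to f^*N_{Y/Z}\to 0$, case (3) to $0\to T_{X/Z}\to f^*T_{Y/Z}\to N_{X/Y}\to 0$, and case (4) to $0\to f^*T_{Y/Z}\to N_{X/Y}\to N_{X/Z}\to 0$. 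In each case one assembles the relevant sequence, together with the cotangent and conormal sequences underlying Lemmas \ref{l1} and \ref{l2}, into a three-by-three diagram with exact rows and columns, and reads off the desired equality of twist isomorphisms as the output of the corresponding item of Theorem \ref{four diagrams}, with Proposition \ref{basicvvb} supplying the auxiliary commutativities when needed.

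The hard part will be exactly this last, purely twist-theoretic step. As the remarks in Section \ref{Virtual Objects and Their Calculation} emphasize, commutativity of diagrams of virtual objects is not automatic, and the real work is to arrange each three-by-three diagram so that the commutativity isomorphisms $c(-,-)$ and the rank-dependent signs produced by Theorem \ref{four diagrams} match precisely the prescribed $\varphi$ of Axioms \ref{FPFSM}, \ref{FPFCI} and \ref{CTPF}. The two mixed cases (3) and (4) will be the most delicate, since there a tangent twist $-T$ must be turned into a normal twist $+N$ via Lemma \ref{t}, (2) and Lemma \ref{t}, (3), all while tracking the commutativity isomorphisms concealed in the definitions of the composite push-forwards, namely $c(-T_{X/Y},f^*v_2)$ and $c(f^*v_1,N_{X/Y})$ recorded after Axioms \ref{P-FSM} and \ref{P-FCI}. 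Once this bookkeeping of signs and of the $c$'s is complete, each diagram commutes by a single invocation of Theorem \ref{four diagrams}.
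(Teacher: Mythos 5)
Your plan follows the paper's own proof: reduce everything to an identity between twist isomorphisms in the category of virtual vector bundles at the generic points of \(C\) (identifying \(\Lambda_x^*\) with \(N_{C/X}\)), and verify it by applying Theorem \ref{four diagrams} to a three-by-three diagram built from the relevant exact sequence of Lemma \ref{t} --- your matching of sequences to cases is exactly the one the paper uses, except that cases (1) and (2) in fact need only Definition \ref{bracket}, (3), while (3) and (4) invoke items (2) and (3) of Theorem \ref{four diagrams} on diagrams whose rows are the normal-bundle sequences of \(C\) in \(X\), \(Y\) and \(Z\). The substantive work --- writing down those diagrams and tracking the auxiliary commutativity isomorphisms --- is deferred in your write-up, but the strategy is sound, and your explicit treatment of the residue-field transfers via transitivity is a point the paper leaves implicit.
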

\begin{proof}
\begin{enumerate}
\item This follows by the following commutative diagram
\[
	\xymatrix
	{
		N_{C/X}+f^*g^*v|_C-T_{X/Z}|_C\ar[r]\ar[dddd]	&N_{C/X}+f^*g^*v|_C-f^*T_{Y/Z}|_C-T_{X/Y}|_C\ar[d]\\
											&T_{X/Y}|_C+N_{C/Y}+f^*g^*v|_C-f^*T_{Y/Z}|_C-T_{X/Y}|_C\ar[d]\\
											&N_{C/Y}+f^*g^*v|_C-f^*T_{Y/Z}|_C\ar[d]\\
											&f^*T_{Y/Z}|_C+N_{C/Z}+f^*g^*v|_C-f^*T_{Y/Z}|_C\ar[d]\\
		T_{X/Z}|_C+N_{C/Z}+f^*g^*v|_C-T_{X/Z}|_C\ar[r]	&N_{C/Z}+f^*g^*v|_C
	}
\]
by using Definition \ref{bracket}, (3).
\item Essentially the same as in (1).
\item
We are going to prove that the following diagram commutes
\[
	\xymatrix
	{
	 	N_{C/X}+N_{X/Y}|_C+f^*g^*v|_C-f^*T_{Y/Z}|_C\ar[r]\ar[d]	&N_{C/X}+f^*g^*v|_C+N_{X/Y}|_C-f^*T_{Y/Z}|_C\ar[d]\\
	 	N_{C/Y}+f^*g^*v|_C-f^*T_{Y/Z}|_C\ar[d]				&N_{C/X}+f^*g^*v|_C+N_{X/Y}|_C-N_{X/Y}|_C-T_{X/Z}|_C\ar[d]\\
	 	f^*T_{Y/Z}|_C+N_{C/Z}+f^*g^*v|_C-f^*T_{Y/Z}|_C\ar[d]		&N_{C/X}+f^*g^*v|_C-T_{X/Z}|_C\ar[d]\\
	 	N_{C/Z}+f^*g^*v|_C 								&T_{X/Z}|_C+N_{C/Z}+f^*g^*v|_C-T_{X/Z}|_C\ar[l]
	}.
\]
Let \(A=T_{X/Z}|_C+N_{X/Y}|_C+N_{C/Z}+f^*g^*v|_C-N_{X/Y}|_C-T_{X/Z}|_C\). We have commutative diagrams
\[
	\xymatrix
	{
		f^*T_{Y/Z}|_C+N_{C/Z}+f^*g^*v|_C-f^*T_{Y/Z}|_C\ar[r]\ar[d]	&A\ar[d]\\
		N_{C/Z}+f^*g^*v|_C\ar[r]							&T_{X/Z}|_C+N_{C/Z}+f^*g^*v|_C-T_{X/Z}|_C
	}
\]
\[
	\xymatrix
	{
		N_{C/X}+N_{X/Y}|_C+f^*g^*v|_C-N_{X/Y}|_C-T_{X/Z}|_C\ar[d]\ar[rd]	&\\
		N_{C/X}+f^*g^*v|_C-T_{X/Z}|_C\ar[d]							&A\ar[dl]\\
		T_{X/Z}|_C+N_{C/Z}+f^*g^*v|_C-T_{X/Z}|_C 						&\\
	}.
\]
Furthermore, there is a commutative diagram with exact rows and columns
\[
	\xymatrix
	{
				&0\ar[d]				&0\ar[d]			&					&\\
		0\ar[r]	&T_{X/Z}|_C\ar[r]\ar[d]	&N_{C/X}\ar[d]\ar[r]	&N_{C/Z}\ar[r]\ar[d]_=	&0\\
		0\ar[r]	&f^*T_{Y/Z}|_C\ar[r]\ar[d]	&N_{C/Y}\ar[r]\ar[d]	&N_{C/Z}\ar[r]			&0\\
				&N_{X/Y}|_C\ar[d]\ar[r]^=	&N_{X/Y}|_C\ar[d]	&					&\\
				&0					&0				&					&
	}
\]
And by Theorem \ref{four diagrams}, (2), we have a commutative diagram
\[
	\xymatrix
	{
		N_{C/X}+N_{X/Y}|_C+f^*g^*v|_C-f^*T_{Y/Z}|_C\ar[r]\ar[d]	&N_{C/X}+N_{X/Y}|_C+f^*g^*v|_C-f^*T_{Y/Z}|_C\ar[d]\\
		N_{C/Y}+f^*g^*v|_C-f^*T_{Y/Z}|_C\ar[d]				&T_{X/Z}|_C+N_{C/Z}|_C+N_{X/Y}|_C+f^*g^*v|_C-f^*T_{Y/Z}|_C\ar[d]\\
		f^*T_{Y/Z}|_C+N_{C/Z}+f^*g^*v|_C-f^*T_{Y/Z}|_C\ar[r]		&A\\
	}.
\]
Then we could easily deduce the first diagram we want.
\item
We are going to prove that the following diagram commutes
\[
	\xymatrix
	{
	 	N_{C/X}+N_{X/Y}|_C+f^*g^*v|_C-f^*T_{Y/Z}|_C\ar[r]\ar[d]	&N_{C/X}-f^*T_{Y/Z}|_C+N_{X/Y}|_C+f^*g^*v|_C\ar[d]\\
	 	N_{C/Y}+f^*g^*v|_C-f^*T_{Y/Z}|_C\ar[d]				&N_{C/X}-f^*T_{Y/Z}|_C+f^*T_{Y/Z}|_C+N_{X/Z}|_C+f^*g^*v|_C\ar[d]\\
	 	f^*T_{Y/Z}|_C+N_{C/Z}+f^*g^*v|_C-f^*T_{Y/Z}|_C\ar[d]		&N_{C/X}+N_{X/Z}|_C+f^*g^*v|_C\ar[dl]\\
	 	N_{C/Z}+f^*g^*v|_C 								&
	}.
\]
We have a commutative diagram
\[
	\xymatrix
	{
		f^*T_{Y/Z}|_C+N_{C/Z}+f^*g^*v|_C-f^*T_{Y/Z}|_C\ar[r]\ar[d]	&f^*T_{Y/Z}|_C+N_{C/X}+N_{X/Z}|_C+f^*g^*v|_C-f^*T_{Y/Z}|_C\ar[d]\\
		N_{C/Z}+f^*g^*v|_C\ar[r]							&N_{C/X}+N_{X/Z}|_C+f^*g^*v|_C
	}.
\]
Furthermore, there is a commutative diagram with exact rows and columns
\[
	\xymatrix
	{
				&						&0\ar[d]						&0\ar[d]				&\\
				&						&f^*T_{Y/Z}|_C\ar[d]\ar[r]^{\cong}	&f^*T_{Y/Z}|_C\ar[d]		&\\
		0\ar[r]	&N_{C/X}\ar[r]\ar[d]_{\cong}	&N_{C/Y}\ar[d]\ar[r]				&N_{X/Y}|_C\ar[r]\ar[d]	&0\\
		0\ar[r]	&N_{C/X}\ar[r]				&N_{C/Z}\ar[r]\ar[d]				&N_{X/Z}|_C\ar[r]\ar[d]	&0\\
				&						&0							&0					&
	}.
\]
And by Theorem \ref{four diagrams}, (3), we have a commutative diagram
\[
	\xymatrix
	{
		N_{C/X}+N_{X/Y}|_C+f^*g^*v|_C-f^*T_{Y/Z}|_C\ar[r]\ar[d]	&N_{C/X}+f^*T_{Y/Z}|_C+N_{X/Z}|_C+f^*g^*v|_C-f^*T_{Y/Z}|_C\ar[dd]\\
		N_{C/Y}+f^*g^*v|_C-f^*T_{Y/Z}|_C\ar[d]				&\\
		f^*T_{Y/Z}|_C+N_{C/Z}+f^*g^*v|_C-f^*T_{Y/Z}|_C\ar[r]		&f^*T_{Y/Z}|_C+N_{C/X}+N_{X/Z}|_C+f^*g^*v|_C-f^*T_{Y/Z}|_C\\
	}.
\]
Then we could easily deduce the first diagram we want.
\end{enumerate}
\end{proof}
\begin{proposition}\label{functoriality of push-forwards1}
(See Axiom \ref{CTPF}, (3)) Suppose we have a Cartesian square with all schemes being smooth
\[
	\xymatrix
	{
		X'\ar[r]^v\ar[d]^{g}	&X\ar[d]^f\\
		Y'\ar[r]^{u}		&Y
	},
\]
where \(u\) is smooth and \(f\) is a closed immersion, \(s\in\mathscr{P}_Y\) and \(C\in Z^{n+d_f+d_v}(X')\) is closed in \(Y\). Then the following diagram commutes
\[
	\xymatrix
	{
		\widetilde{CH}_C^{n+d_f+d_v}(X',N_{X'/Y'}+g^*u^*s-g^*T_{Y'/Y})\ar[r]^-{g_*}\ar[d]	&\widetilde{CH}_{g(C)}^{n+d_u}(X,u^*s-T_{Y'/Y})\ar[d]_{u_*}\\
		\widetilde{CH}_C^{n+d_f+d_v}(X',v^*N_{X/Y}+u^*f^*s-T_{X'/X})\ar[d]_{v_*}			&\widetilde{CH}_{u(g(C))}^n(Y,s)\\
		\widetilde{CH}_{v(C)}^{n+d_f}(X,N_{X/Y}+f^*s)\ar[ru]_{f_*}								&
	}.
\]
\end{proposition}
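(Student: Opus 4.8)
The plan is to mimic the strategy of Proposition \ref{functoriality of push-forwards}: reduce the statement to the commutativity of a single diagram of virtual vector bundles and then settle it with Theorem \ref{four diagrams}. First I would observe that both composites compute a push-forward along the common morphism \(u\circ g=f\circ v\colon X'\longrightarrow Y\), factored once as the closed immersion \(g\) followed by the smooth \(u\), and once as the smooth \(v\) followed by the closed immersion \(f\). Since we may assume \(C\) irreducible (decomposing it and using Axiom \ref{ES}), it suffices to check commutativity after projecting to a generic point \(x'\) of \(C\) and its images \(g(x')\in Y'\), \(v(x')\in X\) and \(w=u(g(x'))=f(v(x'))\in Y\), i.e.\ at the level of the Rost--Schmid summands \(K^{MW}(k(x'),\Lambda_{x'}^*\otimes-)\).

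At that level I would isolate the field-theoretic content. Because \(g\) and \(f\) are closed immersions one has \(k(x')=k(g(x'))\) and \(k(v(x'))=k(w)\), so the only residue-field extension occurring in either path is \(k(x')/k(w)\); it is pushed forward by the trace coming from the smooth leg (\(u_*\) on the top-right path, \(v_*\) on the bottom-left path). As \(u\circ g=f\circ v\), these two traces are literally the same map \(\mathrm{Tr}^{k(x')}_{k(w)}\). By the naturality of the Milnor--Witt trace and partial maps with respect to isomorphisms of twists (Remark \ref{naturality of partial maps}), the remaining task is purely about twists: I must show that the total twist isomorphism produced along the top-right path (the isomorphism of Lemma \ref{l2} for \(g_*\) followed by that of Lemma \ref{l1} for \(u_*\)) coincides with the one produced along the bottom-left path (the base-change identification on the left, then Lemma \ref{l1} for \(v_*\), then Lemma \ref{l2} for \(f_*\)).

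Following the remark after Definition \ref{p-fci}, I would translate each of these twist isomorphisms into the language of virtual vector bundles over a neighbourhood of \(x'\), identifying \(\Lambda^*\) with the relevant conormal data. The elementary identifications are the normal/tangent exact sequences of Lemma \ref{t} applied to the flags \(\overline{x'}\subseteq X'\subseteq Y'\) and \(\overline{v(x')}\subseteq X\subseteq Y\), together with the two base-change isomorphisms attached to the Cartesian square: \(T_{X'/X}\cong g^*T_{Y'/Y}\) from Lemma \ref{t1} (applied to the transposed square) and \(N_{X'/Y'}\cong v^*N_{X/Y}\) from Lemma \ref{t2}, which are exactly the isomorphisms defining the left vertical arrow. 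Assembling these into one diagram of virtual vector bundles and checking that it commutes via the four cases of Theorem \ref{four diagrams} then yields the proposition.

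The hard part will be this last step, and specifically the interplay of the base-change isomorphisms with the exact-sequence maps \(\Sigma\). Unlike in Proposition \ref{functoriality of push-forwards}, where every identification arose from a genuine inclusion of bundles and so slotted automatically into one of the configurations of Theorem \ref{four diagrams}, here the map \(\gamma\) of Lemma \ref{t2} (and the tangent isomorphism of Lemma \ref{t1}) are manufactured by the snake lemma and a dimension count, so their compatibility with the \(\Sigma\)-isomorphisms is not formal. The decisive point is therefore to exhibit an honest commutative diagram of vector bundles with exact rows and columns --- built from the pullbacks of the conormal sequences along the square together with \(\gamma\) --- that realizes the comparison as an instance of Theorem \ref{four diagrams}, (2) or (3); once such a diagram is in hand the commutativity is formal, but producing it, and verifying that \(\gamma\) is indeed the induced morphism between the relevant quotients, is where the real work lies.
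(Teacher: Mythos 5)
Your proposal matches the paper's proof: the argument there is exactly the reduction to a single diagram of virtual vector bundles, settled by the $3\times 3$ commutative diagram of vector bundles with exact rows and columns that you anticipate --- rows the conormal sequences $0\to N_{C/X'}\to N_{C/Y'}\to N_{X'/Y'}|_C\to 0$ and $0\to N_{C/X}\to N_{C/Y}\to v^*N_{X/Y}|_C\to 0$, columns headed by the base-change isomorphisms $T_{X'/X}|_C\cong g^*T_{Y'/Y}|_C$ and $N_{X'/Y'}|_C\cong v^*N_{X/Y}|_C$ of Lemmas \ref{t1} and \ref{t2}. The only discrepancy is that this configuration is an instance of Theorem \ref{four diagrams}, (1) rather than (2) or (3), which is a bookkeeping point and does not affect the argument.
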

\begin{proof}
We are going to show the following diagram commutes
\[
	\xymatrix
	{
		N_{C/X'}+N_{X'/Y'}|_C+g^*u^*s|_C-g^*T_{Y'/Y}|_C\ar[d]\ar[r]	&N_{C/Y'}+g^*u^*s|_C-g^*T_{Y'/Y}|_C\ar[d]\\
		N_{C/X'}+v^*N_{X/Y}|_C+g^*u^*s|_C-T_{X'/X}|_C\ar[d]			&g^*T_{Y'/Y}|_C+N_{C/Y}+g^*u^*s|_C-g^*T_{Y'/Y}|_C\ar[d]\\
		T_{X'/X}|_C+N_{C/X}+v^*N_{X/Y}|_C+g^*u^*s|_C-T_{X'/X}|_C\ar[d]	&N_{C/Y}+g^*u^*s|_C\\
		N_{C/X}+v^*N_{X/Y}|_C+g^*u^*s|_C\ar[ru]
	}.
\]
We have a commutative diagram with exact rows and columns
\[
	\xymatrix
	{
				&0\ar[d]						&0\ar[d]			&							&\\
				&T_{X'/X}|_C\ar[d]\ar[r]^{\cong}	&g^*T_{Y'/Y}|_C\ar[d]	&							&\\
		0\ar[r]	&N_{C/X'}\ar[r]\ar[d]				&N_{C/Y'}\ar[d]\ar[r]	&N_{X'/Y'}|_C\ar[r]\ar[d]_{\cong}	&0\\
		0\ar[r]	&N_{C/X}\ar[r]\ar[d]				&N_{C/Y}\ar[r]\ar[d]	&v^*N_{X/Y}|_C\ar[r]				&0\\
				&0							&0				&							&
	}.
\]
So we have a commutative diagram by Theorem \ref{four diagrams}, (1)
\[
	\xymatrix
	{
		N_{C/Y'}\ar[r]\ar[d]						&N_{C/X'}+N_{X'/Y'}|_C\ar[d]\\
		g^*T_{Y'/Y}|_C+N_{C/Y}\ar[d]				&T_{X'/X}|_C+N_{C/X}+N_{X'/Y'}|_{C}\\
		g^*T_{Y'/Y}|_C+N_{C/X}+v^*N_{X/Y}|_C\ar[ru]	&
	}.
\]
And the statement follows easily from the data above.
\end{proof}
\begin{proposition}
Suppose we have a Cartesian square with all schemes being smooth
\[
	\xymatrix
	{
		X'\ar[r]^v\ar[d]^{g}	&X\ar[d]^f\\
		Y'\ar[r]^{u}		&Y
	}.
\]
\begin{enumerate}
\item(See Axiom \ref{BCSM}) Suppose \(f\), \(u\) are smooth, \(s\in\mathscr{P}_Y\) and \(C\in Z^{n+d_f}(X)\) is a closed subset of \(Y\). Then the following diagram commutes
\[
	\xymatrix
	{
		\widetilde{CH}_C^{n+d_f}(X,f^*s-T_{X/Y})\ar[r]^-{f_*}\ar[d]^{v^*}		&\widetilde{CH}^n_{f(C)}(Y,s)\ar[d]^{u^*}\\
		\widetilde{CH}_{v^{-1}(C)}^{n+d_f}(X',v^*f^*s-v^*T_{X/Y})\ar[r]^-{g_*}	&\widetilde{CH}_{g(v^{-1}(C))}^n(Y',u^*s)
	}.
\]
\item(See Axiom \ref{BCCI}) Suppose \(f\) is a closed immersion, \(s\in\mathscr{P}_Y\) and \(C\in Z^{n+d_f}(X)\). Suppose \(u\) is smooth. Then the following diagram commutes
\[
	\xymatrix
	{
		\widetilde{CH}_C^{n+d_f}(X,N_{X/Y}+f^*s)\ar[r]^-{f_*}\ar[d]^{v^*}		&\widetilde{CH}^n_{f(C)}(Y,s)\ar[d]^{u^*}\\
		\widetilde{CH}_{v^{-1}(C)}^{n+d_f}(X',v^*N_{X/Y}+v^*f^*s)\ar[r]^-{g_*}	&\widetilde{CH}_{g(v^{-1}(C))}^n(Y',u^*s)
	}.
\]
\end{enumerate}
\end{proposition}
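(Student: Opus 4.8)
The plan is to follow the template used throughout this section: the four sides of each square are morphisms of Rost--Schmid complexes (the pull-backs given by Lemma \ref{l3} and the push-forwards of Definitions \ref{p-fsm} and \ref{p-fci}), so it suffices to check that each square commutes as a diagram of chain maps, and this can be verified after projecting onto each summand $K^{MW}_*(k(y'),\Lambda_{y'}^*\otimes u^*s)$ of the target complex. Projecting pointwise separates the content into two parts: an \emph{algebraic} part, concerning how the Milnor--Witt transfers and the smooth pull-backs recombine under the Cartesian square, and a \emph{twist} part, concerning the compatibility of the various twist isomorphisms. The twist part will be settled by a diagram chase in $V(Vect(X'))$ by the methods of Section \ref{Virtual Objects and Their Calculation}, exactly as in Proposition \ref{functoriality of push-forwards1}.

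Part (2) is the easier case. Since $f$ is a closed immersion and $u$ is smooth, $g$ is again a closed immersion and $v$ is smooth; moreover $k(x)=k(f(x))$, so $f_*$ and $g_*$ are, summand by summand, isomorphisms induced purely by the line-bundle identification of Lemma \ref{l2}. There is therefore no transfer to track and the square is governed entirely by twists. Lemma \ref{t2}, (1) supplies the canonical isomorphism $N_{X'/Y'}\cong v^*N_{X/Y}$ matching the lower-left twist $v^*N_{X/Y}+v^*f^*s$ with $N_{X'/Y'}+g^*u^*s$, while $u^*$ and $v^*$ are given by Lemma \ref{l3}. I would assemble a commutative diagram of vector bundles with exact rows and columns relating $N_{C/X'}$, $N_{X'/Y'}$, $v^*N_{X/Y}$ and the relevant cotangent data, and conclude commutativity of the twist square by Theorem \ref{four diagrams}, (1) together with Proposition \ref{basicvvb}, in the manner of Proposition \ref{functoriality of push-forwards1}.

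For part (1) all four maps $f,g,u,v$ are smooth, and now $f_*$ of Definition \ref{p-fsm} genuinely involves a transfer $Tr^{k(x)}_{k(f(x))}$ together with the twist isomorphism of Lemma \ref{l1}. Fixing $y'\in (Y')^{(n)}$ with image $y\in Y^{(n)}$ and comparing the two composites on the summand indexed by $y'$, the left-hand path contributes $u^*\bigl(\sum_{f(x)=y}Tr^{k(x)}_{k(y)}\,\alpha_x\bigr)$ and the right-hand path contributes $\sum_{g(x')=y'}Tr^{k(x')}_{k(y')}(v^*\alpha)_{x'}$. The twist matching is handled as before, now using the canonical isomorphism $T_{X'/Y'}\cong v^*T_{X/Y}$ of Lemma \ref{t1} to identify $v^*f^*s-v^*T_{X/Y}$ with $g^*u^*s-T_{X'/Y'}$, and reducing to a diagram in $V(Vect(X'))$ built from the sequences of Lemma \ref{l1} for $f$ and $g$ and of Lemma \ref{l3} for $u$ and $v$, verified by Theorem \ref{four diagrams}.

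The main obstacle will be the algebraic identity in part (1): unlike the functoriality statements of Proposition \ref{functoriality of push-forwards}, where one merely composes transfers along a tower, base change requires showing that a single transfer $Tr^{k(x)}_{k(y)}$ followed by restriction along the smooth $u$ equals the sum, over the points $x'$ of $X'$ with $g(x')=y'$, of the transfers $Tr^{k(x')}_{k(y')}$ precomposed with the smooth pull-backs $v^*$. This is the base-change compatibility that makes the Rost--Schmid complex behave like a cycle module, and I expect it to require a careful analysis of the fibre of $v$ over $x$ and of the residue-field extensions $k(x')/k(x)$ and $k(y')/k(y)$. Once it is in hand, the compatibility with the twist isomorphisms of Lemmas \ref{l1}, \ref{l3} and \ref{t1} is routine by the virtual-vector-bundle calculus.
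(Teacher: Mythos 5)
Your treatment of part (2) is essentially the paper's: since $f$ (hence $g$) is a closed immersion, the push-forwards $f_*$ and $g_*$ are, summand by summand, pure twist isomorphisms, and the square reduces to a diagram in the Picard category. The paper disposes of this in one line, by functoriality of $v^*$ with respect to twists applied to the single square relating $N_{C/X}+N_{X/Y}|_C+f^*s|_C\to N_{C/Y}+f^*s|_C$ to its pull-back along $v$; your plan via Theorem \ref{four diagrams}, (1) and Proposition \ref{basicvvb} is heavier machinery than is needed here, but it is not wrong.

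The gap is in part (1). You have overlooked the hypothesis that $C$ is a \emph{closed subset of $Y$}, not merely finite over $Y$. This forces $k(x)=k(f(x))$ at every generic point $x$ of $C$, and since $v^{-1}(C)=C\times_YY'$ is then closed in $Y'$, likewise $k(x')=k(g(x'))$ at every generic point of $v^{-1}(C)$. Consequently neither $f_*$ nor $g_*$ involves any Milnor--Witt transfer: by the remark following Definition \ref{p-fci}, both are pure twist isomorphisms, and the whole square is once again a diagram in $V(Vect(X'))$ that commutes by functoriality of $v^*$ with respect to twists --- which is exactly how the paper argues. The ``main obstacle'' you identify, namely that $u^*\circ Tr^{k(x)}_{k(y)}$ should equal $\sum_{x'}Tr^{k(x')}_{k(y')}\circ v^*$ over the fibre of $v$, therefore does not arise under the stated hypotheses; and since you only announce that identity as something you ``expect to require a careful analysis'' without supplying it, your proposal as written does not close the argument. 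That transfer identity is genuinely the hard content of the unrestricted Axiom \ref{BCSM} (it would involve the quadratic multiplicities of the fibre of $v$ over $x$), and the closedness hypothesis on $C$ is there precisely so that the paper never has to prove it.
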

\begin{proof}
\begin{enumerate}
\item We have a commutative diagram by the functoriality of \(v^*\) respect to twists
\[
	\xymatrix
	{
		N_{C/X}+f^*v|_C-T_{X/Y}|_C\ar[r]\ar[d]										&T_{X/Y}|_C+N_{C/Y}+f^*s|_C-T_{X/Y}|_C\ar[d]\\
		N_{v^{-1}(C)/X'}+v^*f^*s|_{v^{-1}(C)}-T_{X'/Y'}|_{v^{-1}(C)}\ar[d]					&N_{C/Y}+f^*s|_C\ar[d]\\
		T_{X'/Y'}|_{v^{-1}(C)}+N_{v^{-1}(C)/Y'}+v^*f^*s|_{v^{-1}(C)}-T_{X'/Y'}|_{v^{-1}(C)}\ar[r]	&N_{v^{-1}(C)/Y'}+f^*s|_{v^{-1}(C)}
	}.
\]
\item We have a commutative diagram by the functoriality of \(v^*\) respect to twists
\[
	\xymatrix
	{
		N_{C/X}+N_{X/Y}|_C+f^*s|_C\ar[r]\ar[d]						&N_{C/Y}+f^*s|_C\ar[d]\\
		N_{v^{-1}(C)/X'}+N_{X'/Y'}|_{v^{-1}(C)}+v^*f^*s|_{v^{-1}(C)}\ar[r]	&N_{v^{-1}(C)/Y'}+v^*f^*s|_{v^{-1}(C)}
	}.
\]
\end{enumerate}
\end{proof}
\begin{proposition}\label{expf}
\begin{enumerate}
\item (See Axiom \ref{PFSM}) Suppose \(f:X\longrightarrow Y\) is a smooth morphism in \(Sm/k\), \(v\in\mathscr{P}_Y\) and \(C\in Z^{n+d_f}(X)\) is a closed subset of \(Y\). Then for any \(Z\in Sm/k\), \(v'\in\mathscr{P}_Z\) and \(D\in Z^m(Z)\), the following diagrams commute
\[
	\xymatrix
	{
		\widetilde{CH}^{n+d_f}_C(X,f^*v-T_{X/Y})\times\widetilde{CH}^m_D(Z,v')\ar[r]^-{\times}\ar[dd]_{f_*\times id}	&\widetilde{CH}^{n+d_f+m}_{C\times D}(X\times Z,(f^*v-T_{X/Y})\times v')\ar[d]_c\\
																									&\widetilde{CH}^{n+d_f+m}_{C\times D}(X\times Z,(f^*v\times v')-T_{X\times Z/Y\times Z})\ar[d]_{(f\times id)_*}\\
		\widetilde{CH}^n_{f(C)}(Y,v)\times\widetilde{CH}^m_D(Z,v')\ar[r]^-{\times}										&\widetilde{CH}^{n+m}_{f(C)\times D}(Y\times Z,v\times v')
	}
\]
\[
	\xymatrix
	{
		\widetilde{CH}^m_D(Z,v')\times\widetilde{CH}^{n+d_f}_C(X,f^*v-T_{X/Y})\ar[r]^-{\times}\ar[d]_{id\times f_*}	&\widetilde{CH}^{n+d_f+m}_{D\times C}(Z\times X,v'\times(f^*v-T_{X/Y}))\ar[d]_{(id\times f)_*}\\
		\widetilde{CH}^m_D(Z,v')\times\widetilde{CH}^n_{f(C)}(Y,v)\ar[r]^-{\times}										&\widetilde{CH}^{n+m}_{D\times f(C)}(Z\times Y,v'\times v)
	}.
\]
\item (See Axiom \ref{PFCI}) Suppose \(f:X\longrightarrow Y\) is a closed immersion in \(Sm/k\), \(v\in\mathscr{P}_Y\) and \(C\) is a closed subset of \(X\). Then for any \(Z\in Sm/k\), \(v'\in\mathscr{P}_Z\) and \(D\in Z^m(Z)\), the following diagrams commute
\[
	\xymatrix
	{
		\widetilde{CH}^{n+d_f}_C(X,N_{X/Y}+f^*v)\times\widetilde{CH}^m_D(Z,v')\ar[r]^-{\times}\ar[d]_{f_*\times id}	&\widetilde{CH}^{n+d_f+m}_{C\times D}(X\times Z,(N_{X/Y}+f^*v)\times v')\ar[d]_{(f\times id)_*}\\
		\widetilde{CH}^n_{f(C)}(Y,v)\times\widetilde{CH}^m_D(Z,v')\ar[r]^-{\times}									&\widetilde{CH}^{n+m}_{f(C)\times D}(Y\times Z,v\times v')
	}
\]
\[
	\xymatrix
	{
		\widetilde{CH}^m_D(Z,v')\times\widetilde{CH}^{n+d_f}_C(X,N_{X/Y}+f^*v)\ar[r]^-{\times}\ar[dd]_{id\times f_*}	&\widetilde{CH}^{n+d_f+m}_{D\times C}(Z\times X,v'\times(N_{X/Y}+f^*v))\ar[d]_c\\
																									&\widetilde{CH}^{n+d_f+m}_{D\times C}(Z\times X,(v'\times f^*v)+N_{X\times Z/Y\times Z})\ar[d]_{(id\times f)_*}\\
		\widetilde{CH}^m_D(Z,v')\times\widetilde{CH}^n_{f(C)}(Y,v)\ar[r]^-{\times}										&\widetilde{CH}^{n+m}_{D\times f(C)}(Z\times Y,v'\times v)
	}.
\]
\end{enumerate}
\end{proposition}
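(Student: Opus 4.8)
The plan is to prove all four diagrams by reducing to the defining pointwise formulas on Rost--Schmid generators and then splitting the verification into an \emph{untwisted} part, governed by the standard behaviour of Milnor--Witt K-theory, and a \emph{twist} part, governed by the virtual vector bundle calculus of Section \ref{Virtual Objects and Their Calculation}. Concretely, I would fix a point $x\in C\cap X^{(n+d_f)}$ and a point $e\in D\cap Z^{(m)}$; the exterior product (Definition \ref{exterior products}) is computed over the generic points $s$ of the components of $\overline{x}\times\overline{e}$, while the push-forwards (Definitions \ref{p-fsm}, \ref{p-fci}) are computed by the Milnor--Witt transfer along the residue extension at $x$. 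First I would record the structural fact that the square with vertical maps $f\times id$ and $f$ and horizontal projections $X\times Z\to X$, $Y\times Z\to Y$ is Cartesian, so that $X\times Z=X\times_Y(Y\times Z)$ and $T_{X\times Z/Y\times Z}\cong p_1^*T_{X/Y}$; this identifies the residue field extension produced by $(f\times id)_*$ with the one produced by $f_*$ and makes the two underlying transfers literally the same map. Note that one cannot simply combine base change with the projection formula here, since the internal product and the pull-back along closed immersions are not yet available in this section, so the proposition must be proved directly.

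Granting the untwisted coincidence---which amounts to compatibility of the Milnor--Witt transfer with the exterior product, i.e. with adjoining the independent factor $Z$---the content of each diagram is that the two chains of twist isomorphisms agree. I would write out the twist attached to every corner: on $X\times Z$ the source twist is $p_1^*(f^*v-T_{X/Y})+p_2^*v'$, which the displayed isomorphism $c$ rearranges to $(f^*v\times v')-T_{X\times Z/Y\times Z}=p_1^*f^*v+p_2^*v'-p_1^*T_{X/Y}$, precisely the transposition $c(-p_1^*T_{X/Y},p_2^*v')$ in $\mathscr{P}_{X\times Z}$. The remaining identifications come from Lemma \ref{l4} (the splitting $\Omega_s\cong\Omega_x\otimes k(s)\oplus\Omega_e\otimes k(s)$), from Lemma \ref{l1} in the smooth case (relating $\Lambda_x^*$, $\omega_{X/Y}^{\vee}$ and $\Lambda_{f(x)}^*$) or Lemma \ref{l2} in the closed-immersion case (the determinant of the normal bundle), and from the equality $T_{X\times Z/Y\times Z}=p_1^*T_{X/Y}$. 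The claim is that these assemble into a single commutative $3\times 3$ diagram of vector bundles to which the appropriate case of Theorem \ref{four diagrams} applies, exactly as in the proofs of Propositions \ref{partial maps and push-forwards for smooth morphisms} and \ref{functoriality of push-forwards}. In the smooth case one is working with genuinely virtual twists because of the term $-T_{X/Y}$, so I would add and subtract $T_{X/Y}$ before invoking Theorem \ref{four diagrams}, just as in Definition \ref{p-fsm}.

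For each part the second, order-reversed diagram follows from the first together with the graded commutativity of the exterior product, Proposition \ref{commutativity of exterior product}: the commutativity isomorphism $c$ already displayed in those diagrams is exactly the one needed to transport $f_*\times id$ on $D\times C$ back to the $C\times D$ case. The closed-immersion part (2) is formally parallel to the smooth part (1): the push-forward is now the isomorphism of Definition \ref{p-fci}, the tangent-bundle sequences are replaced by the normal-bundle sequences of Lemma \ref{t}, and the relevant instance of Theorem \ref{four diagrams} is the one carrying the extra normal direction, but no new idea is required.

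The main obstacle will be purely the bookkeeping of twists: choosing the correct tower of subbundles and the correct $3\times 3$ configuration so that one of the four cases of Theorem \ref{four diagrams} applies on the nose, and verifying that the commutativity isomorphisms $c$ produced by the calculus carry the rank parities matching those written in the statement. As the remark following Theorem \ref{four diagrams} warns, commutativity of such diagrams of virtual objects is not automatic, and the negative tangent twist in the smooth case forces the add-and-subtract manoeuvre rather than a direct appeal to an exact sequence; getting these transpositions and signs to land correctly, while the underlying Milnor--Witt computation is essentially formal, is where the real work lies.
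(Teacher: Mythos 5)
Your proposal matches the paper's proof in both structure and substance: the paper likewise reduces each square to a commutative diagram of twists in $V(Vect)$ (using that the push-forwards and exterior products are determined by the corresponding virtual-object morphisms), verifies the key triangle by applying Theorem \ref{four diagrams}, (1) (smooth case) resp.\ (2) (closed-immersion case) to the $3\times 3$ diagram built from $p_1^*T_{X/Y}\cong T_{X\times Z/Y\times Z}$ and the normal bundles of $C$, $D$, $C\times D$, and derives the order-reversed diagrams from Proposition \ref{commutativity of exterior product} together with functoriality of push-forward with respect to twists. No essential difference from the paper's argument.
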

\begin{proof}
We have projections \(p_1:C\times D\longrightarrow C\) and \(p_2:C\times D\longrightarrow D\).
\begin{enumerate}
\item
For the first diagram, we are going to prove the following diagram commutes
\[
	\xymatrix
	{
		(N_{C/X}+f^*v|_C-T_{X/Y}|_C,N_{D/Z}+v'|_D)\ar[r]\ar[d]_{f_*}	&p_1^*(N_{C/X}+f^*v|_C-T_{X/Y}|_C)+p_2^*(N_{D/Z}+v'|_D)\ar[d]\\
		(N_{C/Y}+f^*v|_C,N_{D/Z}+v'|_D)\ar[d]						&N_{C\times D/X\times Z}+p_1^*(f^*v|_C)+p_2^*(v'|_D)-T_{X\times Z/Y\times Z}|_{C\times D}\ar[d]_{(f\times id)_*}\\
		p_1^*(N_{C/Y}+f^*v|_C)+p_2^*(N_{D/Z}+v'|_D)\ar[r]			&N_{C\times D/Y\times Z}+p_1^*(f^*v|_C)+p_2^*(v'|_D)
	}.
\]
We have a commutative diagram
\[
	\xymatrix
	{
		(N_{C/X}+f^*v|_C-T_{X/Y}|_C,N_{D/Z}+v'|_D)\ar[r]\ar[d]_{f_*}	&p_1^*(N_{C/X}+f^*v|_C-T_{X/Y}|_C)+p_2^*(N_{D/Z}+v'|_D)\ar[ddl]^{p_1^*(f_*)+p_2^*(id)}\\
		(N_{C/Y}+f^*v|_C,N_{D/Z}+v'|_D)\ar[d]						&\\
		p_1^*(N_{C/Y}+f^*v|_C)+p_2^*(N_{D/Z}+v'|_D)				&
	}.
\]
Hence we just have to show the following diagram commutes
\[
	\xymatrix
	{
														&p_1^*(N_{C/X}+f^*v|_C-T_{X/Y}|_C)+p_2^*(N_{D/Z}+v'|_D)\ar[d]\ar@/_1pc/[ddl]_{p_1^*(f_*)+p_2^*(id)}\\
														&N_{C\times D/X\times Z}+p_1^*(f^*v|_C)+p_2^*(v'|_D)-T_{X\times Z/Y\times Z}|_{C\times D}\ar[d]_{(f\times id)_*}\\
		p_1^*(N_{C/Y}+f^*v|_C)+p_2^*(N_{D/Z}+v'|_D)\ar[r]			&N_{C\times D/Y\times Z}+p_1^*(f^*v|_C)+p_2^*(v'|_D)
	}.
\]
This follows by Theorem \ref{four diagrams}, (1) from the following commutative diagram with exact rows and columns
\[
	\xymatrix
	{
				&0\ar[d]							&0\ar[d]								&						&\\
				&p_1^*(T_{X/Y}|_C)\ar[d]\ar[r]^-{\cong}	&T_{X\times Z/Y\times Z}|_{C\times D}\ar[d]	&						&\\
		0\ar[r]	&p_1^*N_{C/X}\ar[r]\ar[d]				&N_{C\times D/X\times Z}\ar[d]\ar[r]			&p_2^*N_{D/Z}\ar[r]\ar[d]_=	&0\\
		0\ar[r]	&p_1^*N_{C/Y}\ar[r]\ar[d]				&N_{C\times D/Y\times Z}\ar[r]\ar[d]			&p_2^*N_{D/Z}\ar[r]			&0\\
				&0								&0									&						&
	}.
\]

For the second diagram, we suppose \(\alpha\in\widetilde{CH}^{n+d_f}_C(X,f^*v-T_{X/Y})\) and \(\beta\in\widetilde{CH}^m_D(Z,v')\). And we have a commutative diagram
\[
	\xymatrix
	{
		X\ar[d]_f	&X\times Z\ar[l]_{p_1}\ar[r]^{p_2}\ar[d]_{f\times id}	&D\\
		Y		&Y\times Z\ar[l]_{q_1}\ar[ru]^{q_2}				&
	}.
\]
Then
\begin{align*}
	&(id\times f)_*(\beta\times\alpha)\\
=	&(f\times id)_*(<-1>^{(n+rk_Y(v))(m+rk_Z(v'))}c(p_1^*(f^*v-T_{X/Y}),p_2^*(v'))(\alpha\times\beta))\\
	&\textrm{by Proposition \ref{commutativity of exterior product}}\\
=	&<-1>^{(n+rk_Y(v))(m+rk_Z(v'))}(f\times id)_*(c(p_1^*(f^*v-T_{X/Y}),p_2^*(v'))(\alpha\times\beta))\\
=	&<-1>^{(n+rk_Y(v))(m+rk_Z(v'))}(f\times id)_*((c(p_1^*(f^*v),p_2^*(v'))\circ c(-p_1^*T_{X/Y},p_2^*(v')))(\alpha\times\beta))\\
=	&<-1>^{(n+rk_Y(v))(m+rk_Z(v'))}c(q_1^*(v),q_2^*(v'))((f\times id)_*(c(-p_1^*T_{X/Y},p_2^*(v'))(\alpha\times\beta)))\\
	&\textrm{by functoriality of push-forward with respect to twists}\\
=	&<-1>^{(n+rk_Y(v))(m+rk_Z(v'))}c(q_1^*(v),q_2^*(v'))(f_*(\alpha)\times\beta))\\
=	&\beta\times f_*(\alpha)\\
	&\textrm{by Proposition \ref{commutativity of exterior product}.}
\end{align*}
\item
For the first diagram, we are going to prove the following diagram commutes
\[
	\xymatrix
	{
		(N_{C/X}+N_{X/Y}|_C+f^*v|_C,N_{D/Z}+v')\ar[r]\ar[d]	&p_1^*(N_{C/X}+N_{X/Y}|_C+f^*v|_C)+p_2^*(N_{D/Z}+v')\ar[d]\\
		(N_{C/Y}+f^*v|_C,N_{D/Z}+v')\ar[d]				&N_{C\times D/X\times Z}+N_{X\times Z/Y\times Z}|_{C\times D}+p_1^*(f^*v|_C)+p_2^*(v')\ar[d]\\
		p_1^*(N_{C/Y}+f^*v|_C)+p_2^*(N_{D/Z}+v')\ar[r]		&N_{C\times D/Y\times Z}+p_1^*(f^*v|_C)+p_2^*(v')
	}.
\]
We have a commutative diagram
\[
	\xymatrix
	{
		(N_{C/X}+N_{X/Y}|_C+f^*v|_C,N_{D/Z}+v')\ar[r]\ar[d]	&p_1^*(N_{C/X}+N_{X/Y}|_C+f^*v|_C)+p_2^*(N_{D/Z}+v')\ar[ddl]^{p_1^*(f_*)+p_2^*(id)}\\
		(N_{C/Y}+f^*v|_C,N_{D/Z}+v')\ar[d]				&\\
		p_1^*(N_{C/Y}+f^*v|_C)+p_2^*(N_{D/Z}+v')			&
	}.
\]
Hence we just have to show the following diagram commutes
\[
	\xymatrix
	{
											&p_1^*(N_{C/X}+N_{X/Y}|_C+f^*v|_C)+p_2^*(N_{D/Z}+v')\ar[d]\ar@/_1pc/[ddl]_{p_1^*(f_*)+p_2^*(id)}\\
											&N_{C\times D/X\times Z}+N_{X\times Z/Y\times Z}|_{C\times D}+p_1^*(f^*v|_C)+p_2^*(v')\ar[d]\\
		p_1^*(N_{C/Y}+f^*v|_C)+p_2^*(N_{D/Z}+v')\ar[r]	&N_{C\times D/Y\times Z}+p_1^*(f^*v|_C)+p_2^*(v')
	}.
\]
This follows by Theorem \ref{four diagrams}, (2) from the following commutative diagram with exact rows and columns
\[
	\xymatrix
	{
				&0\ar[d]							&0\ar[d]								&						&\\
		0\ar[r]	&p_1^*N_{C/X}\ar[r]\ar[d]				&N_{C\times D/X\times Z}\ar[d]\ar[r]			&p_2^*N_{D/Z}\ar[r]\ar[d]_=	&0\\
		0\ar[r]	&p_1^*N_{C/Y}\ar[r]\ar[d]				&N_{C\times D/Y\times Z}\ar[r]\ar[d]			&p_2^*N_{D/Z}\ar[r]			&0\\
				&p_1^*(N_{X/Y}|_C)\ar[d]\ar[r]^-{\cong}	&N_{X\times Z/Y\times Z}|_{C\times D}\ar[d]	&						&\\
				&0								&0									&						&
	}.
\]

The second diagram follows by the same method as in the proof of the second diagram of (1).
\end{enumerate}
\end{proof}
\begin{remark}
If we could define pull-back for closed immersions and prove Axiom \ref{FPB}, then Axiom \ref{PFSM} and \ref{PFCI} can be deduced by the proposition above and the method in \cite[Corollary 3.5]{CF}.
\end{remark}
\subsection{Intersection with Divisors}\label{Intersection with Divisors}
Next we would like to discuss a special case of intersection, namely pull-back along a divisor of smooth support. The constructions here come basically from \cite{CF1}, but the treatments of push-forwards are different.
\begin{definition}
Let \(X\in Sm/k\), \(D=\{(U_i,f_i)\}\) be a Cartier divisor on \(X\). Suppose \(C\in Z^n(X)\), \(s\in\widetilde{CH}^n_C(X,v)\) and \(dim(C\cap|D|)<dimC\). Suppose
\[s=\sum_as_a\otimes u_a\otimes v_a\in\oplus_{y_a\in X^{(n)}}K_0^{MW}(k(y_a),\Lambda_{y_a}^*\otimes v)\]
where \(s_a\otimes u_a\otimes v_a\in K_0^{MW}(k(y_a),\Lambda_{y_a}^*\otimes v)\) and \(y_a\in X^{(n)}\).
For every \(x\in X^{(n+1)}\), suppose \(x\in U_i\) for some \(i\), hence \(y_a\in U_i\) also. And since \(y_a\notin|D|\), \(f_i\in O_{X,y_a}^*\). So we have an element \(\overline{f_i}\in k(y_a)\). Define
\[ord_x(D\cdot s)=\sum_{x\in\overline{y_a}}\partial^{y_a}_x(<-1>^{codim(y_a)}[\overline{f_i}]s_a\otimes u_a\otimes f_i\otimes v_a)\in K_0^{MW}(k(x),\Lambda_x^*\otimes\mathscr{L}(-D)\otimes v).\]
And define
\[D\cdot s=\sum_{x\in X^{(n+1)}}ord_x(D\cdot s)\in\oplus_{x\in X^{(n+1)}}K_0^{MW}(k(x),\Lambda_x^*\otimes\mathscr{L}(-D)\otimes v).\]
It's functorial with respect to \(v\) by Remark \ref{naturality of partial maps}.
\end{definition}
\begin{lemma}
The definition of \(ord_x(D\cdot s)\) above is independent of the choice of \(i\) and \(f_i\) and
\[D\cdot s\in\widetilde{CH}^{n+1}_{C\cap|D|}(X,\mathscr{L}(-D)+v).\]
\end{lemma}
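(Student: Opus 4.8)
The plan is to recognise $ord_x(D\cdot s)$, on each chart $U_i$, as the value at $x$ of the Rost--Schmid differential applied to a single twisted lift of $s$. Concretely, over $U_i$ I set
\[\tau_i=\sum_a<-1>^{codim(y_a)}[\overline{f_i}]s_a\otimes u_a\otimes f_i\otimes v_a,\]
where $f_i$ is read as the free basis of $\mathscr{L}(-D)|_{U_i}$ that it defines (the crucial point being that, although $f_i$ vanishes along $D$ as a function, it is a nowhere-vanishing generator of the invertible sheaf $\mathscr{L}(-D)$ on all of $U_i$). Then $\tau_i$ is a genuine degree-$n$ cochain in $C^{*}_{RS}(U_i;\underline{K}^{MW}_{n+1};\mathscr{L}(-D)+v)$, since at $y_a\in X^{(n)}$ the coefficient $[\overline{f_i}]s_a$ lies in $K^{MW}_{1}$, and by construction $ord_x(D\cdot s)=(\partial\tau_i)_x$ for every $x\in U_i\cap X^{(n+1)}$. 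Once this is in place the cocycle property and the support statement become essentially formal, and the only real content is the independence of the choice of $(U_i,f_i)$.

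First I would treat well-definedness, which I expect to be the main obstacle. For $x\in U_i\cap U_j$ write $f_j=g f_i$ with $g=f_j/f_i\in O_{X,x}^{*}$ a unit (the two data define the same Cartier divisor). Expanding with the Milnor--Witt relation $[\overline{gf_i}]=[\overline{g}]+<\overline{g}>[\overline{f_i}]$ and converting the orientation basis $f_j=gf_i$ back to $f_i$ (which multiplies the coefficient by $<\overline{g}>$), the relation $<\overline{g}>^2=<\overline{g}^2>=1$ collapses the $[\overline{f_i}]$-term to exactly the $f_i$-expression, leaving a discrepancy
\[\sum_{x\in\overline{y_a}}\partial^{y_a}_x\big(<-1>^{n}<\overline{g}>[\overline{g}]\,s_a\otimes u_a\otimes f_i\otimes v_a\big).\]
Since $g$ is a unit at $x$, the linearity of the differential in Remark \ref{linearity of partial maps}, (1) lets me pull $[\overline{g}]$ and, via $<\overline{g}>=1+\eta[\overline{g}]$, also $<\overline{g}>$ out of $\partial^{y_a}_x$ as the scalar $[\overline{g}|_x]<\overline{g}|_x>$, while Remark \ref{linearity of partial maps}, (2) pulls the orientation $f_i$ across $\partial^{y_a}_x$ (legitimate precisely because $f_i$ is a global basis of $\mathscr{L}(-D)$ on $U_i$). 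The discrepancy thus becomes $[\overline{g}|_x]<\overline{g}|_x>\,(\partial s)_x\otimes f_i$, which vanishes because $s$ is a cocycle, i.e. $\partial s=0$. This is where the bookkeeping of twists and the Milnor--Witt identities carry all the weight; the sign $<-1>^{codim(y_a)}=<-1>^{n}$ is constant over the relevant $y_a$ and plays no role here.

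Next I would establish the support statement. That the nonzero orders occur only over $C$ is immediate: $s$ is supported on $C$, so each $y_a$ with $s_a\neq0$ has $\overline{y_a}\subseteq C$, whence any $x$ for which $\partial^{y_a}_x$ contributes lies in $C$. To see that $x$ must also lie in $|D|$, take $x\notin|D|$; then $f_i$ is a unit at $x$, so Remark \ref{linearity of partial maps}, (1) and (2) give $ord_x(D\cdot s)=<-1>^{n}[\overline{f_i}|_x]\,(\partial s)_x\otimes f_i=0$, again by $\partial s=0$. Together with the hypothesis $dim(C\cap|D|)<dim C$, which forces $C\cap|D|$ to have codimension at least $n+1$, this shows $D\cdot s$ is supported on the codimension-$(n+1)$ part of $C\cap|D|$, so it is a cochain of $C^{*}_{RS,C\cap|D|}(X;\underline{K}^{MW}_{n+1};\mathscr{L}(-D)+v)$.

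Finally, the cocycle condition is formal once well-definedness is known. On each $U_i$ we have $(D\cdot s)|_{U_i}=\partial\tau_i$, and since the Rost--Schmid differential is compatible with restriction to open subschemes and satisfies $\partial\circ\partial=0$, we get $\partial(D\cdot s)|_{U_i}=\partial\partial\tau_i=0$; as the $U_i$ cover $X$ this yields $\partial(D\cdot s)=0$, so $D\cdot s$ defines a class in $\widetilde{CH}^{n+1}_{C\cap|D|}(X,\mathscr{L}(-D)+v)$, as claimed. The principal difficulty throughout is the first step: tracking how a change of local equation acts \emph{simultaneously} on the Milnor--Witt symbol $[\overline{f_i}]$ and on the $\mathscr{L}(-D)$-orientation $f_i$, and seeing that the two effects conspire---through $<\overline{g}>^2=1$ and $\partial s=0$---to cancel.
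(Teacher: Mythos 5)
Your proof follows the paper's argument essentially verbatim: the same local lift $t=\sum_a<-1>^{codim(y_a)}[\overline{f_i}]s_a\otimes u_a\otimes f_i\otimes v_a$, the same Milnor--Witt expansion $[\overline{f_j}]=[\overline{f_j/f_i}]+<\overline{f_j/f_i}>[\overline{f_i}]$ combined with the basis change on $\mathscr{L}(-D)$ and $<\overline{g}>^2=1$, the same appeal to the two linearity properties of $\partial^{y_a}_x$ together with $\partial s=0$ for both well-definedness and the support statement, and $\partial\circ\partial=0$ for the cocycle condition. The only cosmetic difference is that you express the discrepancy term in the $f_i$-basis (giving $<\overline{g}>[\overline{g}]$) where the paper keeps the $f_j$-basis (giving $[\overline{g}]$); both vanish for the same reason.
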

\begin{proof}
For any other \(j\) and \(f_j\) with \(x\in U_j\), \(f_j/f_i\in O_{X,x}^*\) holds. And we have
\[\sum_{x\in\overline{y_a}}\partial^{y_a}_x(s_a\otimes u_a\otimes v_a)=0\]
since \(s\in\widetilde{CH}^n_C(X,v)\). So we have
\[\sum_{x\in\overline{y_a}}\partial^{y_a}_x(s_a\otimes u_a\otimes f_i\otimes v_a)=0\]
and
\[\sum_{x\in\overline{y_a}}\partial^{y_a}_x(s_a\otimes u_a\otimes f_j\otimes v_a)=0\]
by Remark \ref{linearity of partial maps}, (2). Moreover,
\begin{align*}
	&[\overline{f_j}]s_a\otimes u_a\otimes f_j\otimes v_a\\
=	&([\overline{f_j/f_i}]+<\overline{f_j/f_i}>[\overline{f_i}])s_a\otimes u_a\otimes f_j\otimes v_a\\
=	&[\overline{f_j/f_i}]s_a\otimes u_a\otimes f_j\otimes v_a+[\overline{f_i}]s_a\otimes u_a\otimes f_i\otimes v_a.
\end{align*}
Hence
\begin{align*}
	&\sum_{x\in\overline{y_a}}\partial^{y_a}_x([\overline{f_j}]s_a\otimes u_a\otimes f_j\otimes v_a)\\
=	&\partial^{y_a}_x(\sum_{x\in\overline{y_a}}[\overline{f_j/f_i}]s_a\otimes u_a\otimes f_j\otimes v_a)+\partial^{y_a}_x(\sum_{x\in\overline{y_a}}[\overline{f_i}]s_a\otimes u_a\otimes f_i\otimes v_a)\\
=	&\partial^{y_a}_x(\sum_{x\in\overline{y_a}}[\overline{f_i}]s_a\otimes u_a\otimes f_i\otimes v_a),
\end{align*}
which shows that \(ord_x(D\cdot s)\) is well-defined.

If \(x\notin|D|\), then \(\overline{f_i}\in O_{\overline{y_a},x}^*\). So
\begin{align*}
	&ord_x(D\cdot s)\\
=	&\sum_{x\in\overline{y_a}}\partial^{y_a}_x(<-1>^{codim(y_a)}[\overline{f_i}]s_a\otimes u_a\otimes f_i\otimes v_a)\\
=	&\sum_{x\in\overline{y_a}}[\overline{f_i}]\partial^{y_a}_x(<-1>^{codim(y_a)}s_a\otimes u_a\otimes f_i\otimes v_a)\\
	&\textrm{by Remark \ref{linearity of partial maps}, (1)}\\
=	&0.
\end{align*}
Hence the support of \(D\cdot s\) is contained in \(C\cap|D|\).

Finally let's prove that \(\partial(D\cdot s)=0\), where for every \(z\), we denote \(\sum_{y,z\in\overline{y}}\partial^y_z\) by \(\partial_z\) and the differential map \(\partial\) is then just \((\partial_z)\). For this, suppose \(u\in X^{(n+2)}\), we prove that
\[\partial_u(D\cdot s):=\sum_{x\in X^{(n+1)},u\in\overline{x}}\partial^x_u(ord_x(D\cdot s))=0.\]
Suppose \(u\in U_i\). Then
\[ord_x(D\cdot s)=\sum_{x\in\overline{y_a}}\partial^{y_a}_x(<-1>^{codim(y_a)}[\overline{f_i}]s_a\otimes u_a\otimes f_i\otimes v_a)\]
by definition. So let \(t=\sum_a<-1>^{codim(y_a)}[\overline{f_i}]s_a\otimes u_a\otimes f_i\otimes v_a\).
\[\sum_{x\in X^{(n+1)},u\in\overline{x}}\partial^x_u(ord_x(D\cdot s))=\sum_{x\in X^{(n+1)},u\in\overline{x}}\partial^x_u(\partial_x(t))=\partial_u(\partial(t))=0.\]
\end{proof}
\begin{definition}\label{pull-back along divisors}
(See Axiom \ref{PB}) Let \(X\in Sm/k\) and \(D\) be a smooth effective Cartier divisor on \(X\). Let \(i:|D|\longrightarrow X\) be the inclusion. So we have \(N_{D/X}\cong i^*\mathscr{L}(D)\). Suppose \(v\in\mathscr{P}_X\), \(C\in Z^n(X)\), \(s\in\widetilde{CH}^n_C(X,v)\) and \(dim(C\cap|D|)<dimC\). We have a push-forward isomorphism
\[i_*:\widetilde{CH}^n_{C\cap|D|}(|D|,i^*\mathscr{L}(D)+i^*\mathscr{L}(-D)+i^*v)\longrightarrow\widetilde{CH}^{n+1}_{C\cap|D|}(X,\mathscr{L}(-D)+v).\]
Denote by \(s(\mathscr{L}(D))\) the isomorphism \(i^*v\longrightarrow i^*\mathscr{L}(D)+i^*\mathscr{L}(-D)+i^*v\), define
\[i^*(s)\in\widetilde{CH}^n_{C\cap|D|}(|D|,i^*v)\]
to be the unique element such that
\[i_*(s(\mathscr{L}(D))(i^*(s)))=D\cdot s.\]
It's functorial with respect to \(v\).
\end{definition}
\begin{proposition}\label{exdi}
Suppose \(a=1,2\). Let \(X_a\in Sm/k\), \(v_a\in\mathscr{P}_{X_a}\), \(C_a\in Z^{n_a}(X_a)\) be smooth, \(\alpha_a\in\widetilde{CH}^{n_a}_{C_a}(X_a,v_a)\), \(p_a:X_1\times X_2\longrightarrow X_a\) be projections and \(D_a\) be smooth effective Cartier divisors on \(X_a\). Then
\[(D_1\cdot\alpha_1)\times\alpha_2=p_1^*(D_1)\cdot(\alpha_1\times\alpha_2)\]
and
\[c(p_1^*v_1,p_2^*\mathscr{L}(-D_2))(\alpha_1\times(D_2\cdot\alpha_2))=p_2^*(D_2)\cdot(\alpha_1\times\alpha_2).\]
\end{proposition}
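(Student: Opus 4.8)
The plan is to reduce both identities to the single structural observation that the divisor action factors through the differential, $D\cdot s=\partial(m_D(s))$, where $m_D$ is the \emph{differential-free} part of the construction, and then to transport $m_D$ across the exterior product and feed the result into Proposition \ref{exterior products and differential maps}. Concretely, fixing a local equation $f_i$ for $D$ on a chart meeting the codimension-$n$ points of $s$, and writing $s=\sum_a s_a\otimes u_a\otimes v_a$ as in the definition of $D\cdot s$, I set
\[m_D(s)=\sum_a <-1>^{codim(y_a)}[\overline{f_i}]\,s_a\otimes u_a\otimes f_i\otimes v_a,\]
a Rost--Schmid chain twisted by $\mathscr{L}(-D)+v$ (of Milnor--Witt degree one higher than $s$). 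By construction $ord_x(D\cdot s)=\partial_x(m_D(s))$, so $D\cdot s=\partial(m_D(s))$, and well-definedness of $ord_x$ lets me compute in any chart. By additivity of $\partial$, of the exterior product, and of $m_D$, I may then assume $\alpha_1,\alpha_2$ are concentrated at single generic points $y_a,y_b$ and verify the claims componentwise on $\overline{y_a}\times\overline{y_b}$.

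The heart of the argument is the compatibility of $m$ with $\times$. Using that $p_a^*$ sends $f_i$ to the local equation $p_a^*f_i$ of $p_a^*D$ and $\mathscr{L}(-D)$ to $\mathscr{L}(-p_a^*D)$, that multiplication by the unit $[\overline{f_i}]$ passes through $p_a^*$ and (raising the Milnor--Witt degree but leaving every twist rank unchanged) commutes with the commutativity isomorphism of Definition \ref{exterior products}, and keeping the tensor factors in the order $\Lambda^*_y$, then the $\mathscr{L}$-factor, then $v_1\times v_2$, I would establish
\[m_{D_1}(\alpha_1)\times\alpha_2=m_{p_1^*D_1}(\alpha_1\times\alpha_2)\]
and
\[c(p_1^*v_1,p_2^*\mathscr{L}(-D_2))(\alpha_1\times m_{D_2}(\alpha_2))=<-1>^{n_1+rk_{X_1}(v_1)}\,m_{p_2^*D_2}(\alpha_1\times\alpha_2).\]
The first identity is sign-clean because on the product the explicit factor is $<-1>^{n_1+n_2}$ rather than $<-1>^{n_1}$, and the extra $<-1>^{n_2}$ is cancelled exactly by the rank-one enlargement of the left twist ($\mathscr{L}(-D_1)+v_1$ in place of $v_1$) inside the commutativity isomorphism of the exterior product; the second identity produces the residual sign $<-1>^{n_1+rk_{X_1}(v_1)}$ after moving $\mathscr{L}(-D_2)$ past $p_1^*v_1$ via $c(p_1^*v_1,p_2^*\mathscr{L}(-D_2))$.

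Granting these, the two statements follow formally. For the first, apply $\partial$ and use $\partial(\alpha\times\beta)=\partial(\alpha)\times\beta$ (Proposition \ref{exterior products and differential maps}) with $\beta=\alpha_2$:
\[(D_1\cdot\alpha_1)\times\alpha_2=\partial(m_{D_1}(\alpha_1)\times\alpha_2)=\partial(m_{p_1^*D_1}(\alpha_1\times\alpha_2))=p_1^*(D_1)\cdot(\alpha_1\times\alpha_2).\]
For the second, use instead $\partial(\alpha_1\times\eta)=<-1>^{n_1+rk_{X_1}(v_1)}\alpha_1\times\partial(\eta)$ (the second formula of Proposition \ref{exterior products and differential maps}) with $\eta=m_{D_2}(\alpha_2)$, the naturality of $c$ with respect to $\partial$ (Remark \ref{naturality of partial maps}), and $<-1>^{2m}=1$; the two copies of $<-1>^{n_1+rk_{X_1}(v_1)}$ then cancel and leave $p_2^*(D_2)\cdot(\alpha_1\times\alpha_2)$, which is the claim.

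I expect the main obstacle to be exactly the sign-and-twist bookkeeping in the second compatibility: because $D_2$ sits on the second factor, its twist $\mathscr{L}(-D_2)$ must be commuted past $p_1^*v_1$, which both forces the appearance of $c(p_1^*v_1,p_2^*\mathscr{L}(-D_2))$ in the statement and generates the quadratic signs $<-1>^{rk_{X_1}(v_1)n_2}$ that must be tracked consistently against the codimension signs $<-1>^{n_1+n_2}$ carried by $m$. Everything else is routine once $D\cdot(-)=\partial\circ m$ is in place and Proposition \ref{exterior products and differential maps} is invoked.
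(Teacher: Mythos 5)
Your proposal is correct, and for the first identity it is essentially the paper's own argument: the paper also reduces to components at generic points of $\overline{t_1}\times\overline{t_2}$, writes $D_1\cdot\alpha_1$ as $\partial$ of the chain you call $m_{D_1}(\alpha_1)$, pulls $\partial$ across the exterior product via the formula $\partial(\alpha\times\beta)=\partial(\alpha)\times\beta$ of Proposition \ref{exterior products and differential maps}, and identifies the resulting chain with $m_{p_1^*D_1}(\alpha_1\times\alpha_2)$; your observation that the codimension factor $<-1>^{n_2}$ is absorbed by the rank-one enlargement of the left twist inside $c(p_1^*(\mathscr{L}(-D_1)+v_1),p_2^*\Lambda_{x_2}^*)$ is exactly the sign mechanism that makes the paper's computation close. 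Where you genuinely diverge is the second identity. The paper does not redo any sign analysis there: it swaps the two factors using the graded commutativity of the exterior product (Proposition \ref{commutativity of exterior product}), applies the already-proved first identity on $X_2\times X_1$, uses functoriality of $D\cdot(-)$ with respect to twist isomorphisms to move $c$ past the divisor intersection, and swaps back, so that the quadratic signs $<-1>^{(n_1+rk_{X_1}(v_1))(n_2+rk_{X_2}(v_2))}$ cancel automatically. You instead prove the second identity directly from the formula $\partial(\beta\times\alpha)=<-1>^{j+rk_{X'}(v')}\beta\times\partial(\alpha)$ together with the compatibility $c(p_1^*v_1,p_2^*\mathscr{L}(-D_2))(\alpha_1\times m_{D_2}(\alpha_2))=<-1>^{n_1+rk_{X_1}(v_1)}m_{p_2^*D_2}(\alpha_1\times\alpha_2)$; I checked that this sign is right (it is the product of the $(-1)^{rk(v_1)}$ from the extra commutativity isomorphism with the ratio $<-1>^{n_1}$ of the codimension prefactors), so the two copies of $<-1>^{n_1+rk_{X_1}(v_1)}$ do cancel as you claim. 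The trade-off is that the paper's symmetry trick avoids ever computing this sign and is therefore less error-prone, while your direct route makes explicit exactly which commutations produce the $c(p_1^*v_1,p_2^*\mathscr{L}(-D_2))$ in the statement; to turn your sketch into a complete proof you would still need to write out the componentwise verification of the two $m$-compatibilities, which is precisely the content of the paper's displayed computations.
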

\begin{proof}
Let's prove the first equation. Since both sides live in
\[\widetilde{CH}^{n_1+n_2+1}_{p_1^{-1}(|D_1|\cap C_1)\cap p_2^{-1}(C_2)}(X_1\times X_2,\mathscr{L}(-D_1)+(v_1\times v_2)),\]
it suffices to check their components at any generic point \(u\) in \(\overline{t_1}\times\overline{t_2}\) where \(t_1\in (|D_1|\cap C_1)^{(0)}\), \(t_2\in C_2^{(0)}\). Suppose \(D_1=\{(U_i,f_i)\}\), \(t_1\in U_i\). Then at \(u\), we have
\begin{align*}
	&(D_1\cdot\alpha_1)\times\alpha_2\\
=	&\partial_{t_1}(<-1>^{n_1}[\overline{f_i}]\otimes f_i\otimes\alpha_1)\times\alpha_2\\
=	&\partial(<-1>^{n_1}[\overline{f_i}]\otimes f_i\otimes\alpha_1)\times\alpha_2\\
=	&\partial(<-1>^{n_1}([\overline{f_i}]\otimes f_i\otimes\alpha_1)\times\alpha_2)\\
	&\textrm{by Proposition \ref{exterior products and differential maps}.}\\
=	&\partial_u(<-1>^{n_1}([\overline{f_i}]\otimes f_i\otimes\alpha_1)\times\alpha_2)\\
=	&\partial_u(<-1>^{n_1}([\overline{p_1^*(f_i)}]\otimes p_1^*(f_i)\otimes(\alpha_1\times\alpha_2))\\
=	&p_1^*(D_1)\cdot(\alpha_1\times\alpha_2).
\end{align*}
For the second equation, we exchange the role of \(X_1\) and \(X_2\) as before:
\begin{align*}
	&c(p_1^*v_1,p_2^*\mathscr{L}(-D_2))(\alpha_1\times(D_2\cdot\alpha_2))\\
=	&<-1>^{(n_1+rk_{X_1}(v_1))(n_2+rk_{X_2}(v_2))}c(p_2^*v_2,p_1^*v_1)((D_2\cdot\alpha_2)\times\alpha_1)\\
	&\textrm{by Proposition \ref{commutativity of exterior product}}\\
=	&<-1>^{(n_1+rk_{X_1}(v_1))(n_2+rk_{X_2}(v_2))}c(p_2^*v_2,p_1^*v_1)(p_2^*(D_2)\cdot(\alpha_2\times\alpha_1))\\
	&\textrm{by the first equation}\\
=	&<-1>^{(n_1+rk_{X_1}(v_1))(n_2+rk_{X_2}(v_2))}p_2^*(D_2)\cdot c(p_2^*v_2,p_1^*v_1)(\alpha_2\times\alpha_1)\\
	&\textrm{by the functoriality of intersection with respect to twists}\\
=	&p_2^*(D_2)\cdot(\alpha_1\times\alpha_2)\\
	&\textrm{by Proposition \ref{commutativity of exterior product}.}
\end{align*}
\end{proof}
\begin{proposition}\label{pfdi}
\begin{enumerate}
\item (See Axiom \ref{PFSM}) Let \(f:X\longrightarrow Y\) be a smooth morphism in \(Sm/k\), \(C\in Z^{i+d_f}(X)\) be smooth and closed in \(Y\), \(D\) be a Cartier divisor over \(Y\), \(dim(|D|\cap f(C))<dim(f(C))\) and \(\alpha\in\widetilde{CH}^{i+d_f}_C(X,f^*v-T_{X/Y})\). Then
\[D\cdot f_*(\alpha)=f_*(f^*(D)\cdot\alpha).\]
\item (See Axiom \ref{PFCI}) Let \(f:X\longrightarrow Y\) be a closed immersion in \(Sm/k\), \(C\in Z^{i+d_f}(X)\) be smooth, \(D\) be a Cartier divisor over \(Y\), \(dim(|D|\cap f(C))<dim(f(C))\) and \(\alpha\in\widetilde{CH}^{i+d_f}_C(X,N_{X/Y}+f^*v)\). Then
\[D\cdot f_*(\alpha)=f_*(c(\mathscr{L}(-f^*D),N_{X/Y})(f^*(D)\cdot\alpha)).\]
\end{enumerate}
\end{proposition}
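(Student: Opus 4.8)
The plan is to reduce both identities to the defining formula for $D\cdot(-)$ together with the two compatibilities of the Rost--Schmid differential with push-forward already established in Propositions \ref{partial maps and push-forwards for smooth morphisms} and \ref{partial maps and push-forwards for closed immersions}. Recall that intersection with $D=\{(U_i,f_i)\}$ sends a cocycle $\alpha=\sum_a s_a\otimes u_a\otimes v_a$ supported on the generic points $y_a$ of $C$ to $\sum_x\partial^{y_a}_x\big(<-1>^{\mathrm{codim}(y_a)}[\overline{f_i}]\,s_a\otimes u_a\otimes f_i\otimes v_a\big)$; that is, $D\cdot(-)$ is nothing but ``multiply by the symbol $[\overline{f_i}]$, insert the local trivialization $f_i$ of $\mathscr L(-D)$, apply the sign $<-1>^{\mathrm{codim}}$, and then apply $\partial$''. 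Both sides of each identity will therefore be written as $\partial$ applied to such decorated cocycles, and the content of the proposition is exactly that push-forward may be commuted through each of these four operations.

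First I would fix an irreducible $C$ and a representative cocycle $\alpha$, and check both identities componentwise at the relevant codimension-one points of $Y$. For part (1) I would compute $f^*(D)\cdot\alpha$ from the definition, using that $f^*D=\{(f^{-1}(U_i),f_i\circ f)\}$ and that $f_i\circ f$ is the local trivialization of $\mathscr L(-f^*D)=f^*\mathscr L(-D)$; then I would apply $f_*$ and commute it through $\partial$ by Proposition \ref{partial maps and push-forwards for smooth morphisms}. On the other side I would expand $D\cdot f_*(\alpha)$ directly. The two resulting expressions are matched by three facts: (a) the transfer underlying $f_*$ commutes with multiplication by the base symbol $[\overline{f_i}]$, which is precisely the projection formula for transfers used in Remark \ref{linearity of partial maps}, (1), valid since $\overline{f_i}$ lies in the residue field of $f(y_a)$; (b) the trivialization $f_i$ of $\mathscr L(-D)$ pulls back to $f_i\circ f$ and is therefore compatible with the twist-functoriality of $f_*$, by Remark \ref{linearity of partial maps}, (2); and (c) the twists already agree, since $\mathscr L(-f^*D)+f^*v-T_{X/Y}=f^*(\mathscr L(-D)+v)-T_{X/Y}$ is carried by $f_*$ to $\mathscr L(-D)+v$.

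For part (2) the same outline applies, with Proposition \ref{partial maps and push-forwards for closed immersions} replacing Proposition \ref{partial maps and push-forwards for smooth morphisms} and $k(y_a)=k(f(y_a))$ so that no transfer intervenes. The only genuinely new feature is the placement of the divisor twist: $D\cdot(-)$ produces the summand $\mathscr L(-f^*D)$ on the \emph{left} of the twist, whereas the push-forward for a closed immersion is defined, via Definition \ref{p-fci}, on twists of the shape $N_{X/Y}+f^*(-)$, i.e.\ with $N_{X/Y}$ on the left. To bring $\mathscr L(-f^*D)+N_{X/Y}+f^*v$ into the form $N_{X/Y}+f^*(\mathscr L(-D)+v)$ one must swap the first two summands, and this swap is exactly the commutativity isomorphism $c(\mathscr L(-f^*D),N_{X/Y})$ appearing in the statement; this is where it comes from.

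I expect the main obstacle to be the coherent bookkeeping of signs and twists, which I would organize through the four-diagram calculus of Theorem \ref{four diagrams} rather than by hand. The delicate point is that the sign $<-1>^{\mathrm{codim}}$ is evaluated in the source $X$ on one side of each identity and in the target $Y$ on the other, so it must be reconciled: the codimension of $y_a$ in $X$ exceeds that of $f(y_a)$ in $Y$ by $d_f$ (respectively by $\mathrm{rk}\,N_{X/Y}$ in the closed-immersion case), and this discrepancy has to be absorbed precisely by the $\omega_{X/Y}$-cancellation in the smooth push-forward of Definition \ref{p-fsm} (respectively by the $N_{X/Y}$-reordering in Definition \ref{p-fci}). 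Verifying that these sign contributions match, and that the chain of twist isomorphisms coming from the definition of $D\cdot(-)$ composes with those in the definition of $f_*$ to yield exactly the claimed map, is the crux; once it is checked on a single generic point via Theorem \ref{four diagrams}, summing over components and invoking the functoriality of all operations with respect to twists completes the argument. (An alternative, which I would keep in reserve, is to deduce the identity from the exterior-product compatibilities of Propositions \ref{exdi} and \ref{expf} by the diagonal method, but this presupposes a divisor-compatible pull-back along the diagonal that is not yet available here.)
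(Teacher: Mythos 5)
Your proposal follows essentially the same route as the paper's proof: reduce to a componentwise check at the generic points of $f(C)\cap|D|$, expand $D\cdot(-)$ by its definition, commute $f_*$ through $\partial$ via Propositions \ref{partial maps and push-forwards for smooth morphisms} and \ref{partial maps and push-forwards for closed immersions}, and reconcile the twists and the sign discrepancy $<-1>^{i}$ versus $<-1>^{i+d_f}$ through a commutative diagram of twist isomorphisms (which is exactly where the isomorphism $c(\mathscr{L}(-f^*D),N_{X/Y})$ in part (2) arises, as you correctly identify). Your explicit appeal to Remark \ref{linearity of partial maps} to move the symbol $[\overline{f_i}]$ and the trivialization $f_i$ across the transfer is the same mechanism the paper packages into its displayed twist diagram, so the two arguments coincide in substance.
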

\begin{proof}
\begin{enumerate}
\item Both sides live in the same Chow-Witt group, so we check their components at any generic point \(y\) of \(f(C)\cap|D|\). Suppose \(D=\{(U_i,f_i)\}\), \(y\in U_i\).
We have a commutative diagram
\[
	\xymatrix
	{
		(\mathscr{L}(-D)|_C,N_{C/X}+f^*v|_C-T_{X/Y}|_C)\ar[r]^-{(id,f_*)}\ar[d]	&(\mathscr{L}(-D)|_C,N_{C/Y}+f^*v|_C)\ar[d]\\
		\mathscr{L}(-D)|_C+N_{C/X}+f^*v|_C-T_{X/Y}|_C\ar[r]^-{id+f_*}\ar[d]		&\mathscr{L}(-D)|_C+N_{C/Y}+f^*v|_C\ar[d]\\
		N_{C/X}+\mathscr{L}(-D)|_C+f^*v|_C-T_{X/Y}|_C\ar[r]^-{f_*}				&N_{C/Y}+\mathscr{L}(-D)|_C+f^*v|_C
	}.
\]
Then at \(y\), we have
\begin{align*}
	&D\cdot f_*(\alpha)\\
=	&\partial_y(<-1>^i[\overline{f_i}]\otimes f_i\otimes f_*(\alpha))\\
=	&\partial_y(<-1>^{i+d_f}f_*([\overline{f^*(f_i)}]\otimes f^*(f_i)\otimes\alpha))\\
	&\textrm{by the diagram above}\\
=	&f_*\partial_y(<-1>^{i+d_f}[\overline{f^*(f_i)}]\otimes f^*(f_i)\otimes\alpha)\\
	&\textrm{by Proposition \ref{partial maps and push-forwards for smooth morphisms}}\\
=	&f_*(f^*(D)\cdot\alpha).
\end{align*}
\item Both sides live in the same Chow-Witt group, so we check their components at any generic point \(y\) of \(f(C)\cap|D|\). Suppose \(D=\{(U_i,f_i)\}\), \(y\in U_i\).
We have a commutative diagram
\[
	\xymatrix
	{
		(\mathscr{L}(-D)|_C,N_{C/X}+N_{X/Y}|_C+f^*v|_C)\ar[r]^-{(id,f_*)}\ar[d]	&(\mathscr{L}(-D)|_C,N_{C/Y}+f^*v|_C)\ar[d]\\
		\mathscr{L}(-D)|_C+N_{C/X}+N_{X/Y}|_C+f^*v|_C\ar[r]^-{id+f_*}\ar[d]		&\mathscr{L}(-D)|_C+N_{C/Y}+f^*v|_C\ar[d]\\
		N_{C/X}+N_{X/Y}|_C+\mathscr{L}(-D)|_C+f^*v|_C\ar[r]^-{f_*}				&N_{C/Y}+\mathscr{L}(-D)|_C+f^*v|_C
	}.
\]
Then at \(y\), we have
\begin{align*}
	&D\cdot f_*(\alpha)\\
=	&\partial_y(<-1>^i[\overline{f_i}]\otimes f_i\otimes f_*(\alpha))\\
=	&\partial_y(<-1>^{i+d_f}f_*([\overline{f^*(f_i)}]\otimes f^*(f_i)\otimes\alpha))\\
	&\textrm{by the diagram above}\\
=	&f_*\partial_y(<-1>^{i+d_f}[\overline{f^*(f_i)}]\otimes f^*(f_i)\otimes\alpha)\\
	&\textrm{by Proposition \ref{partial maps and push-forwards for closed immersions}}\\
=	&f_*(c(\mathscr{L}(-f^*D),N_{X/Y})(f^*(D)\cdot\alpha)).
\end{align*}
\end{enumerate}
\end{proof}

Now we are ready for basic formulas concerning pull-back along divisors. We will use the notation in Definition \ref{pull-back along divisors}.
\begin{proposition}
(See Axiom \ref{CPB}) Suppose \(a=1,2\). Let \(X_a\in Sm/k\), \(D_a\) be effective smooth divisors over \(X_a\), \(v_a\in\mathscr{P}_{X_a}\), \(C_a\in Z^{n_a}(X_a)\) be smooth, \(dim(C_a\cap|D_a|)<dim(C_a)\), \(\alpha_a\in\widetilde{CH}^{n_a}_{C_a}(X_a,v_a)\) and \(i_a:|D_a|\longrightarrow X_a\) be inclusions. Then we have
\[i_1^*(\alpha_1)\times\alpha_2=(i_1\times id)^*(\alpha_1\times\alpha_2)\]
\[\alpha_1\times i_2^*(\alpha_2)=(id\times i_2)^*(\alpha_1\times\alpha_2).\]
\end{proposition}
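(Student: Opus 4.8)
The plan is to exploit the uniqueness built into Definition \ref{pull-back along divisors}: for a smooth effective Cartier divisor \(D\) on a smooth scheme \(W\) with inclusion \(j:|D|\longrightarrow W\), the pull-back \(j^*(s)\) is the \emph{unique} element whose image under the isomorphism \(j_*\circ s(\mathscr{L}(D))\) equals \(D\cdot s\). Since both \(j_*\) and \(s(\mathscr{L}(D))\) are isomorphisms, to identify \(j^*(s)\) it suffices to identify \(j_*(s(\mathscr{L}(D))(\,\cdot\,))\). I would apply this with \(W=X_1\times X_2\) and \(D=p_1^*(D_1)\) for the first equation, noting that \(|p_1^*(D_1)|=|D_1|\times X_2\) and that its inclusion into \(X_1\times X_2\) is exactly \(i_1\times id\); and with \(D=p_2^*(D_2)\), inclusion \(id\times i_2\), for the second.

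For the first equation, I would feed the candidate \(i_1^*(\alpha_1)\times\alpha_2\) into the defining isomorphism of \((i_1\times id)^*\) and check that the output is \(p_1^*(D_1)\cdot(\alpha_1\times\alpha_2)\). First I record the geometric identifications \(N_{|D_1|\times X_2/X_1\times X_2}\cong p_1^*N_{|D_1|/X_1}\) and \(\mathscr{L}(-p_1^*D_1)\cong p_1^*\mathscr{L}(-D_1)\), which, together with functoriality of \(\times\) with respect to twists, identify \(s(\mathscr{L}(p_1^*D_1))(i_1^*(\alpha_1)\times\alpha_2)\) with \((s(\mathscr{L}(D_1))(i_1^*(\alpha_1)))\times\alpha_2\). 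Then the projection formula for closed immersions (Proposition \ref{expf}, (2), first diagram, with \(f=i_1\)) turns \((i_1\times id)_*\) of this into \((i_{1*}(s(\mathscr{L}(D_1))(i_1^*(\alpha_1))))\times\alpha_2=(D_1\cdot\alpha_1)\times\alpha_2\), using the very definition of \(i_1^*(\alpha_1)\). Finally Proposition \ref{exdi}, first equation, rewrites \((D_1\cdot\alpha_1)\times\alpha_2=p_1^*(D_1)\cdot(\alpha_1\times\alpha_2)\), which is exactly the required output; uniqueness then yields \(i_1^*(\alpha_1)\times\alpha_2=(i_1\times id)^*(\alpha_1\times\alpha_2)\).

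For the second equation I would run the same argument with \(p_2^*(D_2)\) and \(id\times i_2\), now invoking the second equation of Proposition \ref{exdi} and the second diagram of Proposition \ref{expf}, (2). The difference is that both of these carry a commutativity isomorphism: Proposition \ref{exdi} gives \(p_2^*(D_2)\cdot(\alpha_1\times\alpha_2)=c(p_1^*v_1,p_2^*\mathscr{L}(-D_2))(\alpha_1\times(D_2\cdot\alpha_2))\), while the projection formula inserts a \(c\) before applying \((id\times i_2)_*\). The crux is then to verify that the commutativity constraint produced by \(s(\mathscr{L}(p_2^*D_2))\) on the product, the one appearing in the projection formula, and the one in Proposition \ref{exdi} all compose correctly after the relevant reshuffling of twists; this is a diagram among virtual objects that I would settle with the calculus of Theorem \ref{four diagrams} together with Proposition \ref{commutativity of exterior product}.

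The main obstacle is precisely this last point: keeping track of the zero-section isomorphisms \(s(\mathscr{L}(-))\) and the commutativity constraints \(c\) across the product, and checking that the diagram of twists relating \(N_{|p_a^*D_a|/X_1\times X_2}\), \(p_a^*N_{|D_a|/X_a}\), \(\mathscr{L}(\pm p_a^*D_a)\) and \(p_a^*\mathscr{L}(\pm D_a)\) commutes. Everything else is a formal chase through the uniqueness property, the projection formula, and Proposition \ref{exdi}; the genuine work is the virtual-bundle bookkeeping, which, as warned earlier, is not automatic.
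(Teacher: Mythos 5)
Your treatment of the first equation coincides with the paper's: both reduce to checking the identity after applying the isomorphism \((i_1\times id)_*\circ s(\mathscr{L}(p_1^*D_1))\), identify \(s(\mathscr{L}(p_1^*D_1))(i_1^*(\alpha_1)\times\alpha_2)\) with \((s(\mathscr{L}(D_1))(i_1^*(\alpha_1)))\times\alpha_2\) via bifunctoriality of the exterior product in the twists, apply the projection formula of Proposition \ref{expf} to pull \(i_{1*}\) out of the product, and conclude with the first equation of Proposition \ref{exdi}. Where you diverge is the second equation: you propose to re-run the whole argument directly with \(p_2^*(D_2)\) and \(id\times i_2\), which forces you to track three separate commutativity constraints (from \(s(\mathscr{L}(p_2^*D_2))\), from the second diagram of Proposition \ref{expf}, and from the second equation of Proposition \ref{exdi}) and to verify by hand that they compose correctly — you rightly identify this as the main obstacle, but you leave it unresolved. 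The paper instead deduces the second equation \emph{from the first}: writing \(\alpha_1\times i_2^*(\alpha_2)=\langle-1\rangle^{(n_1+rk_{X_1}(v_1))(n_2+rk_{X_2}(v_2))}c(q_2^*i_2^*v_2,q_1^*v_1)(i_2^*(\alpha_2)\times\alpha_1)\) by Proposition \ref{commutativity of exterior product}, applying the already-proved first equation with the roles of \(X_1\) and \(X_2\) exchanged, and then commuting the constraint \(c\) past \((i_2\times id)^*\) by functoriality of pull-back with respect to twists. This symmetry argument buys exactly what your route lacks: it eliminates the virtual-bundle bookkeeping entirely, at the cost of nothing. Your plan is not wrong, but as written the second half is a program rather than a proof, and the unverified interaction of the three commutativity isomorphisms is precisely where a sign error in \(\langle-1\rangle\) or a misplaced \(c(-,-)\) would hide; I would recommend adopting the symmetry reduction.
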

\begin{proof}
We denote the projection \(X_1\times X_2\longrightarrow X_a\) by \(p_a\). For the first equation, it suffices to check the equation after applying the isomorphism \((i_1\times id)_*\circ s(\mathscr{L}(p_1^*D_1))\) on both sides. We have
\begin{align*}
	&(i_1\times id)_*(s(\mathscr{L}(p_1^*D_1))(i_1^*(\alpha_1)\times\alpha_2))\\
=	&(i_1\times id)_*((s(\mathscr{L}(D_1))i_1^*(\alpha_1))\times\alpha_2)\\
	&\textrm{by bifunctoriality of exterior product with respect to twists}\\
=	&i_{1*}(s(\mathscr{L}(D_1))i_1^*(\alpha_1))\times\alpha_2\\
	&\textrm{by Proposition \ref{expf}}\\
=	&(D_1\cdot\alpha_1)\times\alpha_2\\
=	&p_1^*(D_1)\cdot(\alpha_1\times\alpha_2)\\
	&\textrm{by Proposition \ref{exdi}}\\
=	&(i_1\times id)_*(s(\mathscr{L}(p_1^*D_1))((i_1\times id)^*(\alpha_1\times\alpha_2))).
\end{align*}
The second equation follows by exchanging the roles of \(X_1\) and \(X_2\):
\begin{align*}
	&\alpha_1\times i_2^*(\alpha_2)\\
=	&<-1>^{(n_1+rk_{X_1}(v_1))(n_2+rk_{X_2}(v_2))}c(q_2^*i_2^*v_2,q_1^*v_1)(i_2^*(\alpha_2)\times\alpha_1)\\
=	&<-1>^{(n_1+rk_{X_1}(v_1))(n_2+rk_{X_2}(v_2))}c(q_2^*i_2^*v_2,q_1^*v_1)((i_2\times id)^*(\alpha_2\times\alpha_1))\\
=	&<-1>^{(n_1+rk_{X_1}(v_1))(n_2+rk_{X_2}(v_2))}(i_2\times id)^*(c(p_2^*v_2,p_2^*v_1)(\alpha_2\times\alpha_1))\\
	&\textrm{by functoriality of pull-back with respect to twists}\\
=	&(id\times i_2)^*(\alpha_1\times\alpha_2).
\end{align*}
\end{proof}
\begin{proposition}
Suppose we have a Cartesian square with all schemes being smooth
\[
	\xymatrix
	{
		X'\ar[r]^v\ar[d]^{g}	&X\ar[d]^f\\
		Y'\ar[r]^{u}		&Y
	},
\]
where \(u\) is a closed immersion, \(dim(X')=dim(X)-1\) and \(dim(Y')=dim(Y)-1\).
\begin{enumerate}
\item (See Axiom \ref{BCCI}) If \(f\) is a closed immersion , \(s\in\mathscr{P}_Y\), \(C\in Z^{n+d_f}(X)\) is smooth and \(dim(u^{-1}(f(C)))<dim(f(C))\), the following diagram commutes
\[
	\xymatrix
	{
		\widetilde{CH}_C^{n+d_f}(X,N_{X/Y}+f^*s)\ar[r]^-{f_*}\ar[d]^{v^*}		&\widetilde{CH}^n_{f(C)}(Y,s)\ar[d]^{u^*}\\
		\widetilde{CH}_{v^{-1}(C)}^{n+d_f}(X',v^*N_{X/Y}+v^*f^*s)\ar[r]^-{g_*}	&\widetilde{CH}_{g(v^{-1}(C))}^n(Y',u^*s)
	}.
\]
\item (See Axiom \ref{BCSM}) If \(f\) is smooth, \(s\in\mathscr{P}_Y\) and \(C\in Z^{n+d_f}(X)\) is smooth and closed in \(Y\), the following diagram commutes
\[
	\xymatrix
	{
		\widetilde{CH}_C^{n+d_f}(X,f^*s-T_{X/Y})\ar[r]^-{f_*}\ar[d]^{v^*}		&\widetilde{CH}^n_{f(C)}(Y,s)\ar[d]^{u^*}\\
		\widetilde{CH}_{v^{-1}(C)}^{n+d_f}(X',v^*f^*s-v^*T_{X/Y})\ar[r]^-{g_*}	&\widetilde{CH}_{g(v^{-1}(C))}^n(Y',u^*s)
	}.
\]
\end{enumerate}
\end{proposition}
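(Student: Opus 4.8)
The plan is to reduce both statements to the compatibility of divisor intersection with push-forward (Proposition \ref{pfdi}) together with a coherence computation among twists. First I would fix the geometry: since the square is Cartesian and $u$ is the inclusion of the smooth divisor $Y'=|D_Y|$ attached to an effective Cartier divisor $D_Y$ on $Y$, the fibre $X'=f^{-1}(Y')$ is the support of the pullback divisor $D_X=f^*D_Y$, which by the hypothesis $\dim X'=\dim X-1$ is again a smooth effective Cartier divisor; hence $v$ is the inclusion of $|D_X|$, and both $u^*$ and $v^*$ are the divisor pullbacks of Definition \ref{pull-back along divisors}. In particular they are characterised by the defining relations $u_*\circ s(\mathscr{L}(D_Y))\circ u^*=D_Y\cdot(-)$ and $v_*\circ s(\mathscr{L}(D_X))\circ v^*=D_X\cdot(-)$, the line bundles satisfy $v^*\mathscr{L}(\pm D_X)=g^*u^*\mathscr{L}(\pm D_Y)$, and $N_{Y'/Y}\cong u^*\mathscr{L}(D_Y)$, $N_{X'/X}\cong v^*\mathscr{L}(D_X)$. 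I would first check that the dimension/proper-intersection hypotheses ($\dim(f(C)\cap|D_Y|)<\dim f(C)$, and likewise on $X$) needed to apply Definition \ref{pull-back along divisors} and Proposition \ref{pfdi} follow from the stated hypotheses on $C$.

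Since $u_*$ is an isomorphism (Axiom \ref{P-FCI}) and $s(\mathscr{L}(D_Y))$ is an isomorphism of twists, it suffices to prove the desired equality after applying $u_*\circ s(\mathscr{L}(D_Y))$. The image of $u^*(f_*\alpha)$ under this map is $D_Y\cdot f_*(\alpha)$ by definition, and here I would invoke Proposition \ref{pfdi}: in case (1), with $f$ a closed immersion, it gives $f_*\bigl(c(\mathscr{L}(-D_X),N_{X/Y})(D_X\cdot\alpha)\bigr)$, while in case (2), with $f$ smooth, it gives simply $f_*(D_X\cdot\alpha)$, using $f^*D_Y=D_X$. Substituting $D_X\cdot\alpha=v_*(s(\mathscr{L}(D_X))\,v^*\alpha)$ from the definition of $v^*$ turns the right-hand side into a composite $f_*\circ v_*$ applied to a twist of $v^*\alpha$. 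The claim then becomes the commutation of the two composite push-forwards along the two factorisations $f\circ v=u\circ g$ of the map $X'\to Y$.

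For case (1), where $f,g,u,v$ are all closed immersions, this commutation is pure functoriality of the closed-immersion push-forward (Proposition \ref{functoriality of push-forwards}, (2)), so that $f_*\circ v_*=(f\circ v)_*=(u\circ g)_*=u_*\circ g_*$. What remains is to verify that the bundle isomorphisms produced by the two functoriality decompositions of $N_{X'/Y}$ agree with the factors $\mathscr{L}(\pm D)$. This is exactly the cross-shaped situation of Proposition \ref{basicvvb}, fed by the two short exact sequences $0\to N_{X'/X}\to N_{X'/Y}\to v^*N_{X/Y}\to 0$ and $0\to N_{X'/Y'}\to N_{X'/Y}\to g^*N_{Y'/Y}\to 0$ of Lemma \ref{t}, (4), together with the diagonal isomorphisms $N_{X'/Y'}\cong v^*N_{X/Y}$ (Lemma \ref{t2}) and $N_{X'/X}\cong g^*N_{Y'/Y}$; Proposition \ref{basicvvb} then delivers precisely the commutativity isomorphism matching the $c(\mathscr{L}(-D_X),N_{X/Y})$ coming from Proposition \ref{pfdi}, (2).

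For case (2), where $f,g$ are smooth and $u,v$ are closed immersions, the commutation $f_*\circ v_*=u_*\circ g_*$ is no longer formal: it is a base-change identity for the two push-forwards in the configuration opposite to Proposition \ref{functoriality of push-forwards1} (smooth morphisms vertical, closed immersions horizontal). I would establish it by the same Rost--Schmid/virtual-object method, reducing to a diagram chase among $T_{X'/Y'}\cong v^*T_{X/Y}$ (Lemma \ref{t1}), $N_{X'/X}\cong v^*\mathscr{L}(D_X)$ and $N_{Y'/Y}\cong u^*\mathscr{L}(D_Y)$ through the appropriate case of Theorem \ref{four diagrams}. This opposite base-change compatibility, and the twist bookkeeping it entails, is the main obstacle; once it is in hand, cancelling the isomorphism $u_*\circ s(\mathscr{L}(D_Y))$ yields $u^*\circ f_*=g_*\circ v^*$ in both cases.
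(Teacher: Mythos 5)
Your proposal follows the paper's own proof almost exactly: reduce the claim by applying the isomorphism $u_*\circ s(\mathscr{L}(D))$, identify $u_*(s(\mathscr{L}(D))(u^*f_*\alpha))$ with $D\cdot f_*(\alpha)$, push the divisor intersection through $f_*$ via Proposition \ref{pfdi}, substitute the defining relation for $v^*$, and finish by commuting the two composite push-forwards $X'\to Y$ around the square; case (1) is then Proposition \ref{functoriality of push-forwards} plus the twist bookkeeping you correctly locate in Proposition \ref{basicvvb} and Lemma \ref{t}, (4). The one place you diverge is case (2), where you declare the commutation $f_*\circ v_*=u_*\circ g_*$ to be ``the main obstacle,'' in a configuration ``opposite'' to Proposition \ref{functoriality of push-forwards1}, and propose to re-derive it by hand. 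In fact a Cartesian square is symmetric under transposition: relabelling so that the closed immersion $u$ plays the role of the right-hand vertical map and the smooth $f$ the bottom horizontal one puts you exactly in the situation of Proposition \ref{functoriality of push-forwards1}, whose conclusion (with the twists $N_{X'/X}\cong g^*N_{Y'/Y}$ and $T_{X'/Y'}\cong v^*T_{X/Y}$ matching your $\mathscr{L}(\pm D)$ factors) is precisely the identity you need — this is how the paper concludes, so no new base-change lemma is required.
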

\begin{proof}
The conditions give us a unique effective smooth divisor \(D\) (resp. \(D'\)) over \(Y\) (resp. \(X\)) such that \(|D|=Y'\) (resp. \(|D'|=X'\)). And we have \(D'=f^*(D)\). It suffices to check the equation after applying \(u_*\circ s(\mathscr{L}(D))\) on both sides.
\begin{enumerate}
\item Suppose \(\alpha\in\widetilde{CH}_C^{n+d_f}(X,N_{X/Y}+f^*s)\), we have
\begin{align*}
	&u_*(s(\mathscr{L}(D))(u^*f_*(\alpha)))\\
=	&D\cdot f_*(\alpha)\\
=	&f_*(c(\mathscr{L}(-D'),N_{X/Y})(D'\cdot\alpha))\\
	&\textrm{by Proposition \ref{pfdi}, (2)}\\
=	&f_*(c(\mathscr{L}(-D'),N_{X/Y})(v_*(s(\mathscr{L}(D'))(v^*(\alpha)))))\\
=	&f_*v_*((c(v^*\mathscr{L}(-D'),v^*N_{X/Y})\circ s(\mathscr{L}(D')))(v^*(\alpha)))\\
=	&u_*g_*((c(v^*\mathscr{L}(D')+v^*\mathscr{L}(-D'),v^*N_{X/Y})\circ s(\mathscr{L}(D')))(v^*(\alpha)))\\
	&\textrm{by Proposition \ref{functoriality of push-forwards}}\\
=	&u_*(s(\mathscr{L}(D))(g_*(v^*(\alpha))))\\
	&\textrm{by functoriality of push-forwards with respect to twists.}
\end{align*}
\item Suppose \(\alpha\in\widetilde{CH}_C^{n+d_f}(X,f^*s-T_{X/Y})\), we have
\begin{align*}
	&u_*(s(\mathscr{L}(D))(u^*f_*(\alpha)))\\
=	&D\cdot f_*(\alpha)\\
=	&f_*(D'\cdot\alpha)\\
	&\textrm{by Proposition \ref{pfdi}, (1)}\\
=	&f_*(v_*((s(\mathscr{L}(D'))(v^*(\alpha)),v^*\mathscr{L}(D')-v^*\mathscr{L}(D')+v^*f^*s-v^*T_{X/Y})))\\
=	&u_*(g_*((g^*s(\mathscr{L}(D))(v^*(\alpha)),g^*u^*\mathscr{L}(D)-g^*u^*\mathscr{L}(D)+v^*f^*s-T_{X'/Y'})))\\
	&\textrm{by Proposition \ref{functoriality of push-forwards1}}\\
=	&u_*(s(\mathscr{L}(D))(g_*v^*(\alpha)))\\
	&\textrm{by functoriality of push-forwards with respect to twists.}
\end{align*}
\end{enumerate}
\end{proof}
\section{Sheaves with \(E\)-Tranfers and Their Operations}\label{Sheaves}
In this section, we develop the theory of sheaves with \(E\)-transfers over a smooth base as in \cite{D} and \cite{CF}, where \(E\) is a correspondence theory.

Since there will be heavy calculation on twists, from now on, for convenience and clarity, we will use notations like \((\alpha,v)\) for \(\alpha\in E^i_C(X,v)\). With this in mind, we have operations like \((\alpha,v)\cdot(\beta,u)\), \(f^*((\alpha,v))\) with obvious meaning.

Let \(S\in Sm/k\) and denote the category of smooth schemes over \(S\) by \(Sm/S\). We need the notion of admissible subset coming from \cite[Definition 4.1]{CF}.
\begin{definition}
Let \(X, Y\in Sm/S\), we denote by \(\mathscr{A}_S(X,Y)\) the closed subsets \(T\) of \(X\times_SY\) whose irreducible component are all finite over \(X\) and of dimension \(\textrm{dim}X\). They are called admissible subsets from \(X\) to \(Y\) over \(S\).
\end{definition}
\begin{lemma}
In the definition above, \(T\) itself is also finite over \(X\).
\end{lemma}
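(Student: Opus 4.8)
The plan is to reduce the finiteness of the projection $p\colon T\longrightarrow X$ (the restriction of the structural projection $X\times_S Y\longrightarrow X$) to the finiteness of the irreducible components of $T$, which is exactly what the definition of $\mathscr{A}_S(X,Y)$ provides. The guiding principle is that, for a morphism of finite type between noetherian schemes, finiteness is equivalent to the conjunction of properness and quasi-finiteness (Zariski's main theorem). So it suffices to establish these two properties for $p$. Note that $X\times_S Y$ is of finite type over $k$ (hence noetherian), so $T$ has only finitely many irreducible components.

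Write $T=T_1\cup\cdots\cup T_n$ for the irreducible components, each with its reduced structure, so that by hypothesis each $p_i\colon T_i\longrightarrow X$ is finite. The key device is the surjection $h\colon\coprod_i T_i\longrightarrow T$. Since a finite disjoint union of finite morphisms over a common base is again finite (affine-locally, $\operatorname{Spec}(B_1\times\cdots\times B_n)$ with each $B_j$ finite over $A$), the composite $p\circ h\colon\coprod_i T_i\longrightarrow X$ is finite, hence proper. Moreover $p$ is separated and of finite type, because $T$ is a closed subscheme of $X\times_S Y$, which is separated and of finite type over $X$ as the base change of $Y\longrightarrow S$ (a morphism of smooth, in particular separated and finite type, $S$-schemes).

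I would then invoke the cancellation property: if a composite $p\circ h$ is universally closed and $h$ is surjective, then $p$ is universally closed; combined with $p$ being separated and of finite type, this yields that $p$ is proper. For quasi-finiteness, observe that the fibre of $p$ over a point $x\in X$ is the finite union $\bigcup_i (p_i)^{-1}(x)$ of fibres of the $p_i$, each finite since $p_i$ is finite, so $p$ has finite fibres. Properness together with quasi-finiteness gives that $p$ is finite, as desired.

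The main obstacle to watch for is that one \emph{cannot} deduce properness of $T\longrightarrow X$ simply from $T$ being closed in $X\times_S Y$, since $X\times_S Y\longrightarrow X$ is not proper in general ($Y\longrightarrow S$ need not be proper); the finiteness of the individual components is genuinely used, entering through the properness of $\coprod_i T_i\longrightarrow X$ and the cancellation step. An alternative route avoids the cancellation lemma and checks universal closedness of $p$ by hand: after base change along an arbitrary $X'\longrightarrow X$, any closed subset $W$ of $T\times_X X'$ decomposes as the finite union of the closed subsets $W\cap(T_i\times_X X')$, each of which has closed image in $X'$ by universal closedness of the finite morphism $p_i$, so that the image of $W$, a finite union of closed sets, is closed.
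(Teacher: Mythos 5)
Your proof is correct, but it takes a genuinely different route from the paper's. You reduce finiteness of \(p:T\longrightarrow X\) to properness plus quasi-finiteness (Zariski's main theorem), obtaining properness from the finite surjection \(\coprod_i T_i\longrightarrow T\) together with a cancellation/descent argument for universal closedness, and quasi-finiteness from the fibrewise decomposition. The paper instead argues affine-locally on \(X\): over an affine open \(U\subseteq X\) the preimage \(T\cap p^{-1}(U)\) is affine because each of its irreducible components is affine and a reduced noetherian scheme with affine irreducible components is affine (Hartshorne, Ch.~III, Ex.~3.2); its coordinate ring then embeds into the product of the coordinate rings of the components, which is a finite \(O_X(U)\)-module, and a submodule of a finite module over a noetherian ring is finite. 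The paper's argument is shorter and stays entirely within commutative algebra once the affineness criterion is granted, but it leans on that (nontrivial) exercise and implicitly on the reducedness of \(T\) to get the embedding into the product; your argument avoids the affineness criterion entirely at the cost of invoking the proper-plus-quasi-finite characterization of finite morphisms, and it isolates clearly where the finiteness of the individual components is actually used (in the properness step), which the paper's one-line module argument somewhat obscures. Both are complete proofs.
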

\begin{proof}
For every affine open subset \(U\) of \(X\), \(T\cap U\) is affine since each of its components are affine (see \cite[Chapter III, Exercises 3.2]{H}). Its structure ring is a submodule of a finite \(O_X(U)\)-module. Hence we conclude that \(T\cap U\) is finite over \(U\).
\end{proof}
\begin{definition}\label{cor}
Let \(S\in Sm/k\), \(X, Y\in Sm/S\), we define
\[\widetilde{Cor}_S(X,Y)=\varinjlim_TE_T^{d_Y-d_S}(X\times_SY,-T_{X\times_SY/X})\]
to be the group of finite \(E\)-correspondences between \(X\) and \(Y\) over \(S\), where \(T\in\mathscr{A}_S(X,Y)\).
\end{definition}

We define a category \(\widetilde{Cor}_S(X,Y)\) whose objects are smooth schemes over \(S\) and morphisms between \(X\) and \(Y\) are just \(\widetilde{Cor}_S(X,Y)\) defined above. Let's now study the compositions in that category. Let's denote, for example, \(X\times_SY\times_SZ\) by \(XYZ\) and the projection \(X\times_SY\times_SZ\longrightarrow Y\times_SZ\) by \(p^{XYZ}_{YZ}\) if no confusion arises.

Given any \(\alpha\in\widetilde{Cor}_S(X,Y)\) and \(\beta\in\widetilde{Cor}_S(Y,Z)\), we may suppose that they come from some groups with admissible supports. Then the image of
\[p^{XYZ}_{XZ*}(p^{XYZ*}_{YZ}((\beta,-T_{YZ/Y}))\cdot p^{XYZ*}_{XY}((\alpha,-T_{XY/X})))\]
in \(\widetilde{Cor}_S(X,Z)\) is just defined as \(\beta\circ\alpha\). This definition is obviously compatible with extension of supports so it's well-defined.
\begin{proposition}\label{associativity}
The composition defined above is associative.
\end{proposition}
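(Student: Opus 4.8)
The plan is to show that both $(\gamma\circ\beta)\circ\alpha$ and $\gamma\circ(\beta\circ\alpha)$ coincide with a single symmetric expression on the fourfold product $XYZW:=X\times_SY\times_SZ\times_SW$, namely the class of
\[p^{XYZW}_{XW*}\Big(p^{XYZW*}_{ZW}((\gamma,-T_{ZW/Z}))\cdot p^{XYZW*}_{YZ}((\beta,-T_{YZ/Y}))\cdot p^{XYZW*}_{XY}((\alpha,-T_{XY/X}))\Big)\]
in $\widetilde{Cor}_S(X,W)$, where $\alpha\in\widetilde{Cor}_S(X,Y)$, $\beta\in\widetilde{Cor}_S(Y,Z)$, $\gamma\in\widetilde{Cor}_S(Z,W)$ are represented on admissible supports chosen so that all intersections below are proper and every support occurring is finite over the relevant factor. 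This support bookkeeping is the same as for the well-definedness of a single composition, and I would dispatch it first; the substance of the proof lies in the twist-tracking afterwards.

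Starting from $\gamma\circ(\beta\circ\alpha)$, I would expand the outer composition on $XZW$ and then commute the pull-back $p^{XZW*}_{XZ}$ past the push-forward $p^{XYZ}_{XZ*}$ that defines $\beta\circ\alpha$. The square
\[\xymatrix{XYZW\ar[r]\ar[d]&XYZ\ar[d]\\XZW\ar[r]&XZ}\]
(horizontal maps forgetting $W$, vertical maps forgetting $Y$) is Cartesian with smooth legs, so Base Change for smooth morphisms (Axiom \ref{BCSM}) applies; the canonical identification of relative tangent bundles of Lemma \ref{t1} is exactly what matches the twist $-T_{XYZW/XZW}$ with the pull-back of $-T_{XYZ/XZ}$. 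Functoriality of Pull-Back (Axiom \ref{FPB}) and its compatibility with the product (Axiom \ref{CPB}) then collapse the pulled-back product into $p^{XYZW*}_{YZ}(\beta)\cdot p^{XYZW*}_{XY}(\alpha)$ on $XYZW$. The Projection Formula (Axiom \ref{PFSM}) for the smooth projection $p^{XYZW}_{XZW}$ absorbs the factor $p^{XZW*}_{ZW}(\gamma)$ into the push-forward, and Functoriality of Push-Forward (Axiom \ref{FPFSM}), i.e. $p^{XZW}_{XW*}\circ p^{XYZW}_{XZW*}=p^{XYZW}_{XW*}$, produces the symmetric expression above. The computation for $(\gamma\circ\beta)\circ\alpha$ is verbatim the same after permuting the roles of the factors, using instead the Cartesian square relating $XYZW\to XYW$ to $YZW\to YW$. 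Hence both sides reduce to the symmetric expression and differ only in the bracketing of the triple product.

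The remaining and genuinely non-trivial point is that these two bracketings agree as twisted groups. The triple product lands in $E^\bullet_\bullet(XYZW,w)$ with $w$ equal to one of $(u_1+u_2)+u_3$ or $u_1+(u_2+u_3)$, where $u_1,u_2,u_3$ are the three pulled-back twists; Associativity of the product (Axiom \ref{A}) identifies these via the associativity constraint $a(u_1,u_2,u_3)$ of $\mathscr{P}_{XYZW}$. I would therefore have to verify that the entire web of twist isomorphisms introduced along the two reductions---the base-change identifications from Lemma \ref{t1}, the commutativity isomorphisms built into the push-forwards with composite twists, the reorderings in the projection formula, and the $\varphi$ appearing in Axiom \ref{FPFSM}---assembles into precisely $a(u_1,u_2,u_3)$, leaving no residual sign or transposition. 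This is the step I expect to be the main obstacle: it is a diagram chase in the commutative Picard category $\mathscr{P}_{XYZW}$, and it is exactly the kind of computation that the four-diagram calculus of Theorem \ref{four diagrams}, together with Proposition \ref{basicvvb}, is designed to carry out. Once that diagram of twists is shown to commute, the equality $(\gamma\circ\beta)\circ\alpha=\gamma\circ(\beta\circ\alpha)$ follows.
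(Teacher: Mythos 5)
Your proposal is correct and follows essentially the same route as the paper: both sides are reduced to the symmetric push-forward of the triple product on $X\times_SY\times_SZ\times_SW$ using Axioms \ref{BCSM}, \ref{FPB}, \ref{CPB}, \ref{PFSM} and \ref{FPFSM} for exactly the Cartesian squares you name. The twist subtlety you flag at the end is the same one the paper isolates, where it records the single commutativity isomorphism $c(-T_{XYZW/XYW},-T_{XYZW/XZW})$ needed when regrouping the iterated push-forward.
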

\begin{proof}
Suppose \(\xymatrix{X\ar[r]^{\alpha}&Y\ar[r]^{\beta}&Z\ar[r]^{\gamma}&W}\) are morphisms in \(\widetilde{Cor}_S\). As above, we may suppose that they come from groups with admissible supports.

We have Cartesian squares
\[
	\xymatrix
	{
		XYZW\ar[r]\ar[d]	&XZW\ar[d]\\
		XYZ\ar[r]			&XZ
	}
	\xymatrix
	{
		XYZW\ar[r]\ar[d]	&XYW\ar[d]\\
		YZW\ar[r]			&YW
	}
\]
So
\begin{align*}
	&\gamma\circ(\beta\circ\alpha)\\
=	&p^{XZW}_{XW*}(p^{XZW*}_{ZW}((\gamma,-T_{ZW/Z}))p^{XZW*}_{XZ}p^{XYZ}_{XZ*}(p^{XYZ*}_{YZ}((\beta,-T_{YZ/Y}))p^{XYZ*}_{XY}((\alpha,-T_{XY/X}))))\\
	&\textrm{by definition}\\
=	&p^{XZW}_{XW*}(p^{XZW*}_{ZW}(\gamma)p^{XZW*}_{XZ}p^{XYZ}_{XZ*}((p^{XYZ*}_{YZ}(\beta)p^{XYZ*}_{XY}(\alpha),-T_{XYZ/XY}-T_{XYZ/XZ})))\\
	&\textrm{by definition of the product}\\
=	&p^{XZW}_{XW*}(p^{XZW*}_{ZW}(\gamma)p^{XYZW}_{XZW*}p^{XYZW*}_{XYZ}((p^{XYZ*}_{YZ}(\beta)p^{XYZ*}_{XY}(\alpha),-T_{XYZ/XY}-T_{XYZ/XZ})))\\
	&\textrm{by Axiom \ref{BCSM} for the left square above}\\
=	&p^{XZW}_{XW*}(p^{XZW*}_{ZW}(\gamma)p^{XYZW}_{XZW*}(p^{XYZW*}_{YZ}(\beta)p^{XYZW*}_{XY}(\alpha),-T_{XYZW/XYW}-T_{XYZW/XZW}))\\
	&\textrm{by Axiom \ref{FPB} and Axiom \ref{CPB}}\\
=	&p^{XZW}_{XW*}p^{XYZW}_{XZW*}((p^{XYZW*}_{ZW}(\gamma),-T_{XYZW/XYZ})p^{XYZW*}_{YZ}(\beta)p^{XYZW*}_{XY}(\alpha))\\
	&\textrm{by Axiom \ref{PFSM} for \(p^{XYZW}_{XZW}\)}\\
=	&p^{XZW}_{XW*}p^{XYZW}_{XZW*}((\delta,-p^{XYZW*}_{XW}T_{XW/X}-p^{XYZW*}_{XZW}T_{XZW/XW}-T_{XYZW/XZW}))\\
	&\textrm{by definition of the product where \(\delta=p^{XYZW*}_{ZW}(\gamma)p^{XYZW*}_{YZ}(\beta)p^{XYZW*}_{XY}(\alpha)\)}\\
=	&p^{XYZW}_{XW*}((\delta,-p^{XYZW*}_{XW}T_{XW/X}-T_{XYZW/XW}))\\
	&\textrm{by Axiom \ref{FPFSM}}
\end{align*}
\begin{align*}
=	&p^{XYW}_{XW*}p^{XYZW}_{XYW*}((\delta,-p^{XYZW*}_{XW}T_{XW/X}-p^{XYZW*}_{XYW}T_{XYW/XW}-T_{XYZW/XYW}))\\
	&\textrm{by Axiom \ref{FPFSM}, note that we have used \(c(-T_{XYZW/XYW},-T_{XYZW/XZW})\)}\\
=	&p^{XYW}_{XW*}(p^{XYZW}_{XYW*}(p^{XYZW*}_{ZW}(\gamma)p^{XYZW*}_{YZ}(\beta))p^{XYW*}_{XY}(\alpha))\\
	&\textrm{by Axiom \ref{PFSM} for \(p^{XYZW}_{XYW}\)}\\
=	&p^{XYW}_{XW*}(p^{XYZW}_{XYW*}p^{XYZW*}_{YZW}(p^{YZW*}_{ZW}(\gamma)(p^{YZW*}_{YZ}(\beta),-T_{YZW/YW}))p^{XYW*}_{XY}(\alpha))\\
	&\textrm{by Axiom \ref{FPB} and Axiom \ref{CPB}}\\
=	&p^{XYW}_{XW*}(p^{XYW*}_{YW}p^{YZW}_{YW*}(p^{YZW*}_{ZW}(\gamma)p^{YZW*}_{YZ}(\beta))p^{XYW*}_{XY}(\alpha))\\
	&\textrm{by Axiom \ref{BCSM} for the right square above}\\
=	&(\gamma\circ\beta)\circ\alpha\\
	&\textrm{by definition.}\\
\end{align*}
\end{proof}
\begin{definition}\label{graph}
Define a functor
\[\widetilde{\gamma}:Sm/S\longrightarrow\widetilde{Cor}_S,\]
where \(\widetilde{\gamma}(X)=X\). Given an \(S\)-morphism \(f:X\longrightarrow Y\), we have the graph morphism \(\Gamma_f:X\longrightarrow X\times_SY\) and the natural map
\[\Gamma_f^*T_{X\times_SY/X}\longrightarrow N_{X/X\times_SY}\]
in Lemma \ref{t} is an isomorphism. So we have maps
\[\xymatrix{E^0_X(X,0)\ar[r]&E^0_X(X,N_{X/X\times_SY}-\Gamma_f^*T_{X\times_SY/X})\ar[r]^-{\Gamma_{f*}}&E^{d_Y-d_S}_X(X\times_SY,-T_{X\times_SY/X})\ar[d]\\&&\widetilde{Cor}_S(X,Y)}\]
and denote the image of \(1\) under the composition of these maps by \(\widetilde{\gamma}(f)\).
\end{definition}
The following propositions treat some easy cases of composition.
\begin{proposition}\label{easy composition}
Let \(f:X\longrightarrow Y\) be a morphism in \(Sm/S\) and \(g:Y\longrightarrow Z\) be a morphism in \(\widetilde{Cor}_S\). Then we have
\[g\circ\widetilde{\gamma}(f)=(f\times id_Z)^*(g)\]
where the right hand side means its image into the direct limit.
\end{proposition}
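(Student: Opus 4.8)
The plan is to unwind the definition of composition, replace the pullback of the graph correspondence $\widetilde{\gamma}(f)$ by a genuine graph immersion sitting inside the triple product, and then collapse the resulting push-forward against the projection $p^{XYZ}_{XZ}$. Writing $XZ=X\times_SZ$, $XYZ=X\times_SY\times_SZ$, etc., Definition \ref{graph} gives $\widetilde{\gamma}(f)=\Gamma_{f*}(1)$ for the graph immersion $\Gamma_f\colon X\longrightarrow XY$, and the definition of composition gives
\[g\circ\widetilde{\gamma}(f)=p^{XYZ}_{XZ*}\Big(p^{XYZ*}_{YZ}((g,-T_{YZ/Y}))\cdot p^{XYZ*}_{XY}((\widetilde{\gamma}(f),-T_{XY/X}))\Big).\]
The key object is the closed immersion $\Gamma\colon XZ\longrightarrow XYZ$, $(x,z)\mapsto(x,f(x),z)$: viewing $XYZ=XZ\times_SY$ it is exactly the graph of $f\circ p^{XZ}_X$, so Lemma \ref{t} supplies $N_{XZ/XYZ}\cong\Gamma^*T_{XYZ/XZ}$, and moreover $p^{XYZ}_{XZ}\circ\Gamma=id_{XZ}$ while $p^{XYZ}_{YZ}\circ\Gamma=f\times id_Z$.

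First I would run base change. The square
\[\xymatrix{XZ\ar[r]^{\pi}\ar[d]_{\Gamma}&X\ar[d]^{\Gamma_f}\\XYZ\ar[r]^{p^{XYZ}_{XY}}&XY}\]
(with $\pi=p^{XZ}_X$) is Cartesian, $\Gamma_f$ is a closed immersion, and the dimension condition $d_X-d_{XY}=d_{XZ}-d_{XYZ}=d_S-d_Y$ holds, so Axiom \ref{BCCI} together with $\pi^*(1)=1$ (Axiom \ref{CPB}) and the normal-bundle identification $N_{XZ/XYZ}\cong\pi^*N_{X/XY}$ of Lemma \ref{t2} yields $p^{XYZ*}_{XY}((\widetilde{\gamma}(f),-T_{XY/X}))=(\Gamma_*(1),-T_{XYZ/XZ})$, since $-p^{XYZ*}_{XY}T_{XY/X}=-T_{XYZ/XZ}$. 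Next I would apply the projection formula for the closed immersion $\Gamma$ (the second diagram of Axiom \ref{PFCI}) to the product $p^{XYZ*}_{YZ}(g)\cdot\Gamma_*(1)$, obtaining $\Gamma_*(\Gamma^*p^{XYZ*}_{YZ}(g)\cdot 1)$. By functoriality of pull-back (Axiom \ref{FPB}) we have $\Gamma^*p^{XYZ*}_{YZ}(g)=(p^{XYZ}_{YZ}\circ\Gamma)^*(g)=(f\times id_Z)^*(g)$, and multiplication by $1$ is the identity by Axiom \ref{I}, so the inner term is $(f\times id_Z)^*(g)$, carrying the twist $-T_{XZ/X}$.

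Finally I would collapse the two push-forwards: since $p^{XYZ}_{XZ}\circ\Gamma=id_{XZ}$ with $\Gamma$ a closed immersion and $p^{XYZ}_{XZ}$ smooth, Axiom \ref{CTPF}, (1) gives $p^{XYZ}_{XZ*}\circ\Gamma_*=(id_{XZ})_*=id$ (the push-forward of the identity being the identity, using $T=0$, $N=0$). Combining these steps gives $g\circ\widetilde{\gamma}(f)=(f\times id_Z)^*(g)$, and this lands in the correct group $\varinjlim E^{d_Z-d_S}_{T'}(XZ,-T_{XZ/X})$ because $(f\times id_Z)^*T_{YZ/Y}=T_{XZ/X}$. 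The scheme-theoretic content is routine; I expect the only real work, exactly as in Propositions \ref{partial maps and push-forwards for closed immersions} and \ref{functoriality of push-forwards}, to be the twist bookkeeping — checking that the isomorphisms in $\mathscr{P}_{XZ}$ and $\mathscr{P}_{XYZ}$ coming from the graph identification $N_{XZ/XYZ}\cong\Gamma^*T_{XYZ/XZ}$, from the base-change tangent isomorphisms of Lemma \ref{t1}, from the commutativity constraint inside the projection formula, and from the compatibility isomorphism of Axiom \ref{CTPF}, (1) compose to the single correct morphism. This verification is handled by the diagram calculus of Theorem \ref{four diagrams} and Proposition \ref{basicvvb}, and is where the main obstacle lies.
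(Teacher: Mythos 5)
Your proposal is correct and follows essentially the same route as the paper's own proof: the same Cartesian square relating $\Gamma_f$ and $\Gamma_f\times id_Z$, then Axiom \ref{BCCI}, the projection formula \ref{PFCI} for the graph immersion, Axiom \ref{FPB}, the collapse of the two push-forwards via Axiom \ref{CTPF} using the cancellation $N_{XZ/XYZ}\cong(\Gamma_f\times id_Z)^*T_{XYZ/XZ}$, and the identity axiom, with the remaining content being the twist bookkeeping exactly as you indicate.
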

\begin{proof}
We have a Cartesian square
\[
	\xymatrix
	{
		X\ar[r]^{\Gamma_f}						&XY\\
		XZ\ar[u]^{p^{XZ}_X}\ar[r]^{\Gamma_f\times id_Z}	&XYZ\ar[u]_{p^{XYZ}_{XY}}
	}
\]
and denote the map \(E^0_X(X,0)\longrightarrow E^0_X(X,N_{X/X\times_SY}-\Gamma_f^*T_{X\times_SY/X})\) by \(t\). Suppose \(g\) comes from some cohomology with support. We have
\begin{align*}
	&g\circ\widetilde{\gamma}(f)\\
=	&p^{XYZ}_{XZ*}(p^{XYZ*}_{YZ}((g,-T_{YZ/Y}))\cdot p^{XYZ*}_{XY}\Gamma_{f*}((t(1),N_{X/XY}-\Gamma_f^*T_{XY/X})))\\
	&\textrm{by definition}\\
=	&p^{XYZ}_{XZ*}(p^{XYZ*}_{YZ}((g,-T_{YZ/Y}))\cdot(\Gamma_f\times id_Z)_*p^{XZ*}_X(t(1)))\\
	&\textrm{by Axiom \ref{BCCI} for the square above}\\
=	&p^{XYZ}_{XZ*}((\Gamma_f\times id_Z)_*((\Gamma_f\times id_Z)^*p^{XYZ*}_{YZ}((g,-T_{YZ/Y}))\cdot p^{XZ*}_X(t(1))))\\
	&\textrm{by Axiom \ref{PFCI} for \(\Gamma_f\times id_Z\)}\\
=	&p^{XYZ}_{XZ*}((\Gamma_f\times id_Z)_*((f\times id_Z)^*((g,-T_{YZ/Y}))\cdot p^{XZ*}_X(t(1))))\\
	&\textrm{by Axiom \ref{FPB}}\\
=	&p^{XYZ}_{XZ*}(\Gamma_f\times id_Z)_*((f\times id_Z)^*(g)\cdot p^{XZ*}_X(t(1)),-T_{XZ/X}+N_{XZ/XYZ}-(\Gamma_f\times id_Z)^*T_{XYZ/XZ})\\
	&\textrm{by definition of the product and pull-back and Lemma \ref{t2}}\\
=	&s(((f\times id_Z)^*(g)\cdot p^{XZ*}_X(t(1)),-T_{XZ/X}+N_{XZ/XYZ}-(\Gamma_f\times id_Z)^*T_{XYZ/XZ}))\\
	&\textrm{by Axiom \ref{CTPF} and \(s\) is the isomorphism cancelling \(N_{XZ/XYZ}\cong(\Gamma_f\times id_Z)^*T_{XYZ/XZ}\)}\\
=	&(f\times id_Z)^*(g)\cdot s(p^{XZ*}_X(t(1)))\\
	&\textrm{by bifunctoriality of product with respect to twists}\\
=	&(f\times id_Z)^*(g)\cdot p^{XZ*}_X(1)\\
	&\textrm{by functoriality of pull-back with respect to twists}\\
=	&(f\times id_Z)^*(g)\\
	&\textrm{by the definition of identity and Axiom \ref{FPB}}
\end{align*}
\end{proof}
\begin{proposition}\label{easy composition1}
Let \(f:X\longrightarrow Y\) be a morphism in \(\widetilde{Cor}_S\) and \(g:Y\longrightarrow Z\) be a smooth morphism in \(Sm/S\). Let \(t\) be the composition
\begin{align*}
			&-T_{XY/X}\\
\longrightarrow	&-(id_X\times\Gamma_g)^*T_{XYZ/XY}+N_{XY/XYZ}-T_{XY/X}\\
\longrightarrow	&-(id_X\times\Gamma_g)^*T_{XYZ/XY}+N_{XY/XYZ}-(id_X\times\Gamma_g)^*T_{XYZ/XZ}\\
\longrightarrow	&-(id_X\times\Gamma_g)^*T_{XYZ/XY}+N_{XY/XYZ}-N_{XY/XYZ}-T_{XY/XZ}\\
\longrightarrow	&-(id_X\times\Gamma_g)^*T_{XYZ/XY}-T_{XY/XZ}\\
\longrightarrow	&-(id_X\times g)^*T_{XZ/X}-T_{XY/XZ}.
\end{align*}
Then we have
\[\widetilde{\gamma}(g)\circ f=(id_X\times g)_*(t(f)),\]
where the right-hand side means the image into the direct limit.
\end{proposition}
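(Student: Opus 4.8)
The plan is to mirror the proof of Proposition \ref{easy composition}, but with the two factors in the opposite order, so that the base-change/projection-formula pair is applied to the graph $\Gamma_g$ of the smooth map $g$ rather than to the graph of the domain morphism, and then to collapse the resulting composite of push-forwards by the compatibility of the two push-forwards. First I would unwind the definition of composition from the paragraph preceding Proposition \ref{associativity}. Writing $\widetilde{\gamma}(g)=\Gamma_{g*}(\vartheta)$, where $\vartheta\in E^0_Y(Y,N_{Y/YZ}-\Gamma_g^*T_{YZ/Y})$ is the image of $1$ under the twist isomorphism of Definition \ref{graph}, this gives
\[\widetilde{\gamma}(g)\circ f=p^{XYZ}_{XZ*}\bigl(p^{XYZ*}_{YZ}((\Gamma_{g*}(\vartheta),-T_{YZ/Y}))\cdot p^{XYZ*}_{XY}((f,-T_{XY/X}))\bigr).\]

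The geometric backbone is the Cartesian square
\[
	\xymatrix
	{
		XY\ar[r]^{id_X\times\Gamma_g}\ar[d]_{p^{XY}_Y}	&XYZ\ar[d]^{p^{XYZ}_{YZ}}\\
		Y\ar[r]^{\Gamma_g}							&YZ
	},
\]
in which $\Gamma_g$ and $id_X\times\Gamma_g$ are closed immersions and the two verticals are smooth, together with the identities $p^{XYZ}_{XY}\circ(id_X\times\Gamma_g)=id_{XY}$ and $p^{XYZ}_{XZ}\circ(id_X\times\Gamma_g)=id_X\times g$. Applying base change for closed immersions (Axiom \ref{BCCI}) to this square rewrites $p^{XYZ*}_{YZ}\Gamma_{g*}(\vartheta)$ as $(id_X\times\Gamma_g)_*p^{XY*}_Y(\vartheta)$; here I use Lemma \ref{t2} (for the smooth leg $p^{XYZ}_{YZ}$) and Lemma \ref{t1} to identify the pulled-back twist as $N_{XY/XYZ}-(id_X\times\Gamma_g)^*T_{XYZ/XY}$. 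Then the projection formula for closed immersions (Axiom \ref{PFCI}) moves the product inside $(id_X\times\Gamma_g)_*$: because $p^{XYZ}_{XY}\circ(id_X\times\Gamma_g)=id_{XY}$, the factor $(id_X\times\Gamma_g)^*p^{XYZ*}_{XY}((f,-T_{XY/X}))$ is simply $(f,-T_{XY/X})$ by Axiom \ref{FPB}, while $p^{XY*}_Y(\vartheta)$ is the pull-back of a twisted identity, so by $f^*(1)=1$ (Axiom \ref{CPB}) together with the identity axiom (Axiom \ref{I}) and functoriality of the product with respect to twists (Axiom \ref{P}), multiplying by it merely transports $f$ along a twist isomorphism. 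This reduces the whole expression to $p^{XYZ}_{XZ*}(id_X\times\Gamma_g)_*$ applied to $f$ carried to the twist on the second line of $t$.

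Finally, since $id_X\times\Gamma_g$ is a closed immersion while $p^{XYZ}_{XZ}$ and $id_X\times g=p^{XYZ}_{XZ}\circ(id_X\times\Gamma_g)$ are smooth, I would invoke the commutativity of the diagram in Axiom \ref{CTPF}, (1) to replace $p^{XYZ}_{XZ*}\circ(id_X\times\Gamma_g)_*$ by $(id_X\times g)_*$ preceded by the reordering commutativity $c$ and the twist isomorphism $\varphi$ that Axiom \ref{CTPF}, (1) builds from the exact sequence $0\to T_{XY/XZ}\to(id_X\times\Gamma_g)^*T_{XYZ/XZ}\to N_{XY/XYZ}\to0$ of Lemma \ref{t}, (2); the terminal twist $-(id_X\times g)^*T_{XZ/X}-T_{XY/XZ}$ is precisely the source twist of the smooth push-forward $(id_X\times g)_*$ into $\widetilde{Cor}_S(X,Z)$. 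The hard part will be the twist bookkeeping rather than any new geometric input: I must check that the composite of all the twist isomorphisms produced above --- the base-change identification, the identity-axiom transport, the commutativities $c$, the isomorphism $\varphi$ of Lemma \ref{t}, (2), and the cancellation $N_{XY/XYZ}-N_{XY/XYZ}=0$ --- agrees line by line with the map $t$ in the statement. This is carried out through the virtual-object calculus of Section \ref{Virtual Objects and Their Calculation}, using the identifications $T_{XY/X}\cong(id_X\times\Gamma_g)^*T_{XYZ/XZ}$ and $(id_X\times\Gamma_g)^*T_{XYZ/XY}\cong(id_X\times g)^*T_{XZ/X}$ (both instances of Lemma \ref{t1}) and the coherence constraints of the commutative Picard category, so that no spurious transposition or sign is introduced.
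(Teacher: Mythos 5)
Your proposal is correct and follows essentially the same route as the paper's proof: base change for closed immersions (Axiom \ref{BCCI}) applied to the Cartesian square relating \(\Gamma_g\) and \(id_X\times\Gamma_g\), then the projection formula for closed immersions (Axiom \ref{PFCI}), the identity/twist transport, the compatibility of push-forwards (Axiom \ref{CTPF}, (1)) to collapse \(p^{XYZ}_{XZ*}\circ(id_X\times\Gamma_g)_*\) into \((id_X\times g)_*\), and Axiom \ref{FPB} to remove \((id_X\times\Gamma_g)^*p^{XYZ*}_{XY}\). The remaining twist bookkeeping you defer is exactly what the paper packages into the isomorphisms \(s\), \(r\) and the stated map \(t\).
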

\begin{proof}
We have a Cartesian square
\[
	\xymatrix
	{
		Y\ar[r]^{\Gamma_g}						&YZ\\
		XY\ar[u]^{p^{XY}_Y}\ar[r]^{id_X\times\Gamma_g}	&XYZ,\ar[u]_{p^{XYZ}_{YZ}}
	}
\]
an isomorphism \(s:0\longrightarrow N_{Y/YZ}-\Gamma_g^*T_{YZ/Y}\) and an isomorphism \(r:-T_{XY/X}\longrightarrow N_{XY/XYZ}-(id_X\times\Gamma_g)^*T_{XYZ/XY}-T_{XY/X}\). Suppose \(f\) comes from some group with support. So
\begin{align*}
	&\widetilde{\gamma}(g)\circ f\\
=	&p^{XYZ}_{XZ*}(p^{XYZ*}_{YZ}\Gamma_{g*}((s(1),N_{Y/YZ}-\Gamma_g^*T_{YZ/Y}))\cdot p^{XYZ*}_{XY}((f,-T_{XY/X}))\\
	&\textrm{by definition}\\
=	&p^{XYZ}_{XZ*}((id_X\times\Gamma_g)_*p^{XY*}_Y((s(1),N_{Y/YZ}-\Gamma_g^*T_{YZ/Y}))\cdot p^{XYZ*}_{XY}(f))\\
	&\textrm{by Axiom \ref{BCCI} for the square above}\\
=	&p^{XYZ}_{XZ*}(id_X\times\Gamma_g)_*(p^{XY*}_Y((s(1),N_{Y/YZ}-\Gamma_g^*T_{YZ/Y}))\cdot(id_X\times\Gamma_g)^*p^{XYZ*}_{XY}(f))\\
	&\textrm{by Axiom \ref{PFCI} for \(id_X\times\Gamma_g\)}\\
=	&p^{XYZ}_{XZ*}(id_X\times\Gamma_g)_*(r((id_X\times\Gamma_g)^*p^{XYZ*}_{XY}(f)))\\
	&\textrm{by functoriality of pull-back and product with respect to twists}\\
=	&(id_X\times g)_*(t((id_X\times\Gamma_g)^*p^{XYZ*}_{XY}(f)))\\
	&\textrm{by Axiom \ref{CTPF}}\\
=	&(id_X\times g)_*(t(f))\\
	&\textrm{by Axiom \ref{FPB}}.
\end{align*}
\end{proof}
\begin{proposition}\label{easy composition2}
Let \(f:X\longrightarrow Y\) be a morphism in \(\widetilde{Cor}_S\) and \(g:Y\longrightarrow Z\) be a closed immersion in \(Sm/S\). Let \(t'\) be the composition
\begin{align*}
			&-T_{XY/X}\\
\longrightarrow	&-T_{XY/X}+N_{XY/XYZ}-(id_X\times\Gamma_g)^*T_{XYZ/XY}\\
\longrightarrow	&-T_{XY/X}+(id_X\times\Gamma_g)^*T_{XYZ/XZ}+N_{XY/XZ}-(id_X\times\Gamma_g)^*T_{XYZ/XY}\\
\longrightarrow	&-T_{XY/X}+T_{XY/X}+N_{XY/XZ}-(id_X\times\Gamma_g)^*T_{XYZ/XY}\\
\longrightarrow	&N_{XY/XZ}-(id_X\times\Gamma_g)^*T_{XYZ/XY}\\
\longrightarrow	&N_{XY/XZ}-(id_X\times g)^*T_{XZ/X}.
\end{align*}
Then we have
\[\widetilde{\gamma}(g)\circ f=(id_X\times g)_*(t'(f)),\]
where the right-hand side means the image into the direct limit.
\end{proposition}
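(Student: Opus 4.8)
The plan is to follow verbatim the strategy of Proposition \ref{easy composition1}, replacing the use of the smooth push-forward compatibility by its closed-immersion counterpart. The only structural difference between the two statements is the nature of \(g\): here \(g\) is a closed immersion, so the composite \(id_X\times g:XY\to XZ\) is again a closed immersion, whereas in Proposition \ref{easy composition1} it was smooth.

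First I would record the Cartesian square
\[
	\xymatrix
	{
		Y\ar[r]^{\Gamma_g}						&YZ\\
		XY\ar[u]^{p^{XY}_Y}\ar[r]^{id_X\times\Gamma_g}	&XYZ,\ar[u]_{p^{XYZ}_{YZ}}
	}
\]
together with the factorization \(id_X\times g=p^{XYZ}_{XZ}\circ(id_X\times\Gamma_g)\), in which \(id_X\times\Gamma_g\) is a closed immersion and \(p^{XYZ}_{XZ}\) is smooth. Unwinding Definition \ref{graph} and the definition of composition, \(\widetilde\gamma(g)\circ f\) is the image of
\[p^{XYZ}_{XZ*}\big(p^{XYZ*}_{YZ}\Gamma_{g*}((s(1),N_{Y/YZ}-\Gamma_g^*T_{YZ/Y}))\cdot p^{XYZ*}_{XY}((f,-T_{XY/X}))\big),\]
where \(s:0\to N_{Y/YZ}-\Gamma_g^*T_{YZ/Y}\) is the isomorphism attached to the graph. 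I would then rewrite this in three moves: apply Axiom \ref{BCCI} to the square above to turn \(p^{XYZ*}_{YZ}\Gamma_{g*}\) into \((id_X\times\Gamma_g)_*p^{XY*}_Y\); apply Axiom \ref{PFCI} for the closed immersion \(id_X\times\Gamma_g\) to absorb the product into that push-forward; and use functoriality with respect to twists to identify what remains with \((id_X\times\Gamma_g)^*p^{XYZ*}_{XY}(f)\) sitting in the twist of the second line of \(t'\).

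The decisive step is then Axiom \ref{CTPF}, (2), applied to the factorization \(id_X\times g=p^{XYZ}_{XZ}\circ(id_X\times\Gamma_g)\) (with \(p^{XYZ}_{XZ}\) smooth and both \(id_X\times\Gamma_g\) and \(id_X\times g\) closed immersions). This replaces \(p^{XYZ}_{XZ*}(id_X\times\Gamma_g)_*\) by the single closed-immersion push-forward \((id_X\times g)_*\), and its built-in isomorphism \(\varphi\) — coming from Lemma \ref{t}, (3) — together with the commutativity isomorphism accounts for the passage from the second to the last line of \(t'\). Finally, since \(p^{XYZ}_{XY}\circ(id_X\times\Gamma_g)=id_{XY}\), Axiom \ref{FPB} gives \((id_X\times\Gamma_g)^*p^{XYZ*}_{XY}(f)=f\), yielding \((id_X\times g)_*(t'(f))\) as claimed.

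The main obstacle is the twist bookkeeping, i.e.\ checking that the explicit five-arrow composite \(t'\) is \emph{precisely} the isomorphism obtained by stringing together the graph identification \(N_{XY/XYZ}\cong(id_X\times\Gamma_g)^*T_{XYZ/XY}\), the canonical identifications \((id_X\times\Gamma_g)^*T_{XYZ/XZ}\cong T_{XY/X}\) and \((id_X\times\Gamma_g)^*T_{XYZ/XY}\cong(id_X\times g)^*T_{XZ/X}\), the exact sequence of Lemma \ref{t}, (3), and the commutativity isomorphism hidden in Axiom \ref{CTPF}, (2). Verifying this coherence — rather than merely equality of the underlying virtual bundles — is where Theorem \ref{four diagrams} and Proposition \ref{basicvvb} of Section \ref{Virtual Objects and Their Calculation} do the real work; once it is in place, the homological part of the argument is identical to that of Proposition \ref{easy composition1}.
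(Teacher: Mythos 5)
Your proposal is correct and matches the paper's approach exactly: the paper's own proof of this proposition is literally ``the same as the above proposition,'' i.e.\ it reruns the argument of Proposition \ref{easy composition1} with Axiom \ref{CTPF}, (2) in place of Axiom \ref{CTPF}, (1), since the composite \(id_X\times g=p^{XYZ}_{XZ}\circ(id_X\times\Gamma_g)\) is now a closed immersion rather than a smooth morphism. Your step-by-step unwinding (Axiom \ref{BCCI} for the graph square, Axiom \ref{PFCI} for \(id_X\times\Gamma_g\), functoriality in twists, Axiom \ref{CTPF}, (2), then Axiom \ref{FPB}) is precisely the intended argument, and the twist bookkeeping you flag is absorbed into the definition of \(t'\) given in the statement.
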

\begin{proof}
The same as the above proposition.
\end{proof}

Now we would like to simplify the isomorphisms \(t\) and \(t'\) above. This will involve more complicated calculations in the category of virtual vector bundles.
\begin{lemma}\label{hardvvb}
Suppose we have a commutative diagram in \(Sm/k\) with the square being Cartesian:
\[
	\xymatrix
	{
		Y\ar[d]^i\ar[rd]^{id_Y}\ar@/_1pc/[dd]_j	&\\
		X\ar[r]\ar[d]						&Y\ar[d]_f\\
		Z\ar[r]^{g}						&S,
	}
\]
where \(f\), \(g\) are smooth and \(i\) is a closed immersion.
\begin{enumerate}
\item If \(j\) is a closed immersion, then the following diagram commutes
\[
	\xymatrix
	{
		T_{X/Y}|_Y+T_{Y/S}\ar[d]			&T_{X/S}|_Y\ar[l]\ar[r]	&T_{X/Z}|_Y+T_{Z/S}|_Y\ar[dd]\\
		N_{Y/X}+T_{Y/S}\ar[d]				&					&\\
		T_{X/Z}|_Y+N_{Y/Z}+T_{Y/S}\ar[rr]	&					&T_{X/Z}|_Y+T_{Y/S}+N_{Y/Z}.
	}
\]
\item If \(j\) is smooth, then the following diagram commutes
\[
	\xymatrix
	{
		T_{X/Y}|_Y+T_{Y/S}\ar[d]			&T_{X/S}|_Y\ar[l]\ar[r]	&T_{X/Z}|_Y+T_{Z/S}|_Y\ar[dd]\\
		N_{Y/X}+T_{Y/S}\ar[d]				&					&\\
		N_{Y/X}+T_{Y/Z}+T_{Z/S}|_Y\ar[rr]	&					&T_{Y/Z}+N_{Y/X}+T_{Z/S}|_Y.
	}
\]
\end{enumerate}
\end{lemma}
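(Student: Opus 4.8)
The plan is to restrict every tangent and normal bundle to $Y$ along the section $i$ and to realise each arrow of the square as the $\Sigma$-isomorphism of an explicit short exact sequence over $Y$, produced from Lemma \ref{t} and Lemma \ref{t1}. From the two smooth structure maps $p : X \to Y$ and $q : X \to Z$ one gets, via Lemma \ref{t}, (1), the sequences
\[ 0 \to T_{X/Y}|_Y \to T_{X/S}|_Y \to T_{Y/S} \to 0, \qquad 0 \to T_{X/Z}|_Y \to T_{X/S}|_Y \to T_{Z/S}|_Y \to 0; \]
the section $i$ gives, via Lemma \ref{t}, (2) applied to $Y \xrightarrow{i} X \xrightarrow{p} Y$ (whose composite is $\mathrm{id}_Y$), the canonical isomorphism $T_{X/Y}|_Y \cong N_{Y/X}$; and in case (1) the tower $Y \xrightarrow{i} X \xrightarrow{q} Z$ with Lemma \ref{t}, (3) yields $0 \to T_{X/Z}|_Y \to N_{Y/X} \to N_{Y/Z} \to 0$, while $Y \xrightarrow{j} Z \xrightarrow{g} S$ with Lemma \ref{t}, (2) yields $0 \to T_{Y/S} \to T_{Z/S}|_Y \to N_{Y/Z} \to 0$. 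In case (2), where $j$ is smooth, the last two sequences are replaced by those coming from Lemma \ref{t}, (2) (resp.\ (1)) for the same towers, now both featuring $T_{Y/Z}$. These are exactly the sequences whose $\Sigma$'s are the labelled-by-convention arrows of the square, and I would begin by writing down all of them.

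The top of the square — the two maps out of $T_{X/S}|_Y$, call their $\Sigma$'s $\sigma_1$ (left) and $\sigma_2$ (right) — is governed by the fibre product structure. Since $X = Y \times_S Z$, the sub-bundles $T_{X/Y}|_Y$ and $T_{X/Z}|_Y$ are complementary in $T_{X/S}|_Y$, and Lemma \ref{t1} identifies each with the other's quotient: the base-change isomorphisms $T_{X/Z}|_Y \cong T_{Y/S}$ and $T_{X/Y}|_Y \cong T_{Z/S}|_Y$ are \emph{precisely} the composites $T_{X/Z}|_Y \hookrightarrow T_{X/S}|_Y \twoheadrightarrow T_{Y/S}$ and $T_{X/Y}|_Y \hookrightarrow T_{X/S}|_Y \twoheadrightarrow T_{Z/S}|_Y$. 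This is exactly the hypothesis of Proposition \ref{basicvvb}, which I would apply to this transverse pair to express $\sigma_1$ in terms of $\sigma_2$ through the commutativity isomorphism $c(T_{Y/S}, T_{X/Y}|_Y)$. This is the origin of the transposition $c(N_{Y/Z}, T_{Y/S})$ (resp.\ $c(N_{Y/X}, T_{Y/Z})$) appearing in the bottom arrow of the target square.

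For the lower part I would assemble the two normal-bundle sequences — which share the common term $N_{Y/Z}$ in case (1), resp.\ $T_{Y/Z}$ in case (2) — into a single $3\times 3$ commutative diagram of vector bundles over $Y$ with exact rows and columns, its vertical comparison maps being the base-change isomorphisms above, and then invoke the appropriate part of Theorem \ref{four diagrams} to convert it into the required commutative polygon of $\Sigma$'s; the fact that the relevant comparison term vanishes here reflects that these two sequences correspond under base change. A consecutive-subbundle step (Definition \ref{bracket}, (3)), exactly as in the proof of Proposition \ref{functoriality of push-forwards}, then splices this polygon to the output of Proposition \ref{basicvvb}. Combining the two, in the repeated style of Section \ref{MW-Correspondence as a Correspondence Theory}, yields the square, and case (2) should then be entirely parallel.

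The hard part will not be the commutativity of the underlying diagrams of honest vector bundles, which is routine, but the orientation bookkeeping of the commutativity isomorphisms. The inclusion $T_{X/Z}|_Y \hookrightarrow T_{X/S}|_Y$ furnished by $\sigma_2$ (the ``horizontal'' direction) and the one obtained by routing $T_{X/Z}|_Y \to N_{Y/X} \cong T_{X/Y}|_Y \hookrightarrow T_{X/S}|_Y$ (the ``vertical'' direction, through the $N_{Y/Z}$-sequence and the section isomorphism) are genuinely distinct sub-bundles; the equality of the two composites in the square holds only after the complementarity supplied by Proposition \ref{basicvvb} reshuffles them, and the resulting discrepancy must be absorbed exactly into the transposition $c(N_{Y/Z}, T_{Y/S})$. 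Ensuring that every symmetry $c$ enters with the correct arguments and direction — rather than the bare existence of the isomorphisms — is precisely the delicate point flagged in the remarks following Theorem \ref{four diagrams} (cf.\ Remark \ref{precise}), and is where I expect the genuine work to lie.
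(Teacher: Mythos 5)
Your inventory of the exact sequences is correct, and the overall architecture (Proposition \ref{basicvvb} for the two maps out of \(T_{X/S}|_Y\), a \(3\times 3\) argument for the normal-bundle part, coherence of the \(c\)'s to splice the two) is the paper's as well. But one step fails as written. You assert that the two sequences
\[0\to T_{X/Z}|_Y\xrightarrow{\,a\,} N_{Y/X}\to N_{Y/Z}\to 0\quad\text{and}\quad 0\to T_{Y/S}\xrightarrow{\,dj\,} T_{Z/S}|_Y\to N_{Y/Z}\to 0\]
``correspond under base change'', i.e.\ form a morphism of extensions whose vertical arrows are the base-change isomorphism \(v\colon T_{X/Z}|_Y\to T_{Y/S}\), the composite \(w\colon N_{Y/X}\cong T_{X/Y}|_Y\cong T_{Z/S}|_Y\), and \(\mathrm{id}_{N_{Y/Z}}\), so that naturality of \(\Sigma\) applies with vanishing comparison term. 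The left-hand square of that diagram \emph{anticommutes}: the arrow \(a\) reduces a vector of \(T_{X/Z}|_Y\subseteq T_{X/S}|_Y\) modulo the diagonally embedded \(di(T_{Y/S})\), which replaces its \(T_{X/Z}\)-component by \emph{minus} its \(T_{X/Y}\)-component, so \(w\circ a=-\,dj\circ v\). (Take \(Y=\mathbb{A}^1\subseteq Z=\mathbb{A}^2\), \(X=Y\times Z=\mathbb{A}^3\), \(i(t)=(t,t,0)\): then \(di(\partial_t)=\partial_t+\partial_{x_1}\), so \(a(\partial_t)=\overline{\partial_t}=-\overline{\partial_{x_1}}\) in \(N_{Y/X}\), while \(dj(v(\partial_t))=+\partial_{x_1}\).) To obtain an actual map of exact sequences one must negate a vertical arrow, and the determinant of that negation contributes the sign \((-1)^{\mathrm{rk}\,T_{X/Z}|_Y}\) that your splicing step does not account for; carried out with the unsigned comparison, your argument produces the target square only up to this sign, which is nontrivial whenever \(d_Y-d_S\) is odd. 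So the ``genuine work'' you defer to the end cannot be completed from the stated inputs.

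The paper avoids this trap by never comparing the two normal-bundle sequences directly. It forms the honest \(3\times3\) diagram centred at \(T_{X/S}|_Y\), with middle row the \(X/Z\)-sequence, bottom row your first normal sequence, middle column \(0\to T_{Y/S}\to T_{X/S}|_Y\to N_{Y/X}\to 0\) (Lemma \ref{t}, (2) for \(Y\to X\to S\)) and right column your second; every square there commutes on the nose, since all arrows are differentials of actual scheme morphisms, and Theorem \ref{four diagrams}, (3) (resp.\ (2) in case (2)) then yields the comparison with the transposition \(c(T_{X/Z}|_Y,T_{Y/S})\) already built into its conclusion --- that \(c\) is exactly the device absorbing the sign above. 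Correspondingly, the paper applies Proposition \ref{basicvvb} not to your transverse pair \((T_{X/Y}|_Y,\,T_{X/Z}|_Y)\) but to the \(X/Y\)-sequence against that same middle column, so that both paths of the square are routed through the single morphism \(T_{X/S}|_Y\to T_{Y/S}+N_{Y/X}\) and the residual discrepancy is a pure permutation of summands, disposed of by naturality of \(c\) and the hexagon. Your route is repairable by inserting the sign into the middle comparison map and tracking it through, but as stated the step would fail.
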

\begin{proof}
In both cases, there is a commutative diagrams with exact row and column
\[
	\xymatrix
	{
				&							&0\ar[d]				&			&\\
				&							&T_{Y/S}\ar[d]\ar@{=}[rd]	&			&\\
		0\ar[r]	&T_{X/Y}|_Y\ar[r]\ar[rd]_{\cong}	&T_{X/S}|_Y\ar[r]\ar[d]	&T_{Y/S}\ar[r]	&0\\
				&							&N_{Y/X}\ar[d]			&			&\\
				&							&0.					&			&
	}
\]
It induces a commutative diagram
\[
	\xymatrix
	{
		T_{X/S}|_Y\ar[r]\ar[d]	&T_{X/Y}|_Y+T_{Y/S}\ar[dl]\\
		T_{Y/S}+N_{Y/X}			&\\
	}
\]
by Theorem \ref{four diagrams}, (3).
\begin{enumerate}
\item We have a commutative diagram with exact columns and rows
\[
	\xymatrix
	{
				&					&0\ar[d]				&0\ar[d]				&\\
				&					&T_{Y/S}\ar[d]\ar@{=}[r]	&T_{Y/S}\ar[d]			&\\
		0\ar[r]	&T_{X/Z}|_Y\ar[r]\ar@{=}[d]	&T_{X/S}|_Y\ar[d]\ar[r]	&T_{Z/S}|_Y\ar[r]\ar[d]	&0\\
		0\ar[r]	&T_{X/Z}|_Y\ar[r]		&N_{Y/X}\ar[r]\ar[d]		&N_{Y/Z}\ar[r]\ar[d]		&0\\
				&					&0					&0,					&
	}
\]

So we get the following commutative diagram by Theorem \ref{four diagrams}, (3)
\[
	\xymatrix
	{
		T_{X/S}|_Y\ar[r]\ar[d]	&T_{X/Z}|_Y+T_{Z/S}|_Y\ar[r]	&T_{X/Z}|_Y+T_{Y/S}+N_{Y/Z}\ar[ddll]\\
		T_{Y/S}+N_{Y/X}\ar[d]		&						&\\
		T_{Y/S}+T_{X/Z}|_Y+N_{Y/Z}.	&						&
	}
\]
Furthermore, there is an obvious commutative diagram
\[
	\xymatrix
	{
		N_{Y/X}+T_{Y/S}\ar[d]				&T_{Y/S}+N_{Y/X}\ar[r]\ar[l]	&T_{Y/S}+T_{X/Z}|_Y+N_{Y/Z}\ar[d]\\
		T_{X/Z}|_Y+N_{Y/Z}+T_{Y/S}\ar[rr]	&						&T_{X/Z}|_Y+T_{Y/S}+N_{Y/Z}
	}.
\]
So the statement follows by combining the diagrams above.

\item We have a commutative diagram with exact columns and rows
\[
	\xymatrix
	{
				&0\ar[d]				&0\ar[d]				&					&\\
		0\ar[r]	&T_{Y/Z}\ar[r]\ar[d]		&T_{Y/S}\ar[d]\ar[r]		&T_{Z/S}|_Y\ar[r]\ar@{=}[d]	&0\\
		0\ar[r]	&T_{X/Z}|_Y\ar[r]\ar[d]	&T_{X/S}|_Y\ar[r]\ar[d]	&T_{Z/S}|_Y\ar[r]		&0\\
				&N_{Y/X}\ar[d]\ar@{=}[r]	&N_{Y/X}\ar[d]			&					&\\
				&0					&0					&					&
	}
\]
Then the statement follows by the same method as in (1) by applying Theorem \ref{four diagrams}, (2) to the diagram above.
\end{enumerate}
\end{proof}
\begin{lemma}\label{hardvvb1}
Suppose \(X, Y, Z\in Sm/S\) and \(g:Y\longrightarrow Z\) is a morphism in \(Sm/S\).
\begin{enumerate}
\item If \(g\) is a closed immersion, then the isomorphism \(t\) in Proposition \ref{easy composition1} is equal to
\[-T_{XY/X}\longrightarrow N_{XY/XZ}-N_{XY/XZ}-T_{XY/X}\longrightarrow N_{XY/XZ}-(id_X\times g)^*T_{XZ/X}.\]
\item If \(g\) is smooth, then the isomorphism \(t'\) in Proposition \ref{easy composition2} is equal to
\[-T_{XY/X}\longrightarrow-(id_X\times g)^*T_{XZ/X}-T_{XY/XZ}.\]
\end{enumerate}
\end{lemma}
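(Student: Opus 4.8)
The plan is to reduce both items to the single coherence already established in Lemma \ref{hardvvb}, applied to one universal local configuration. First I would unwind the chains \(t\) and \(t'\) into their constituent canonical isomorphisms and classify each arrow. Every arrow occurring in \(t\) (resp. \(t'\)) is of exactly one of three types: (i) insertion or deletion of a canonically trivial pair, either the section pair \((id_X\times\Gamma_g)^*T_{XYZ/XY}\cong N_{XY/XYZ}\) of Definition \ref{graph} or a diagonal pair \(w-w\cong 0\); (ii) a base-change isomorphism from Lemma \ref{t1}, namely \(T_{XY/X}\cong(id_X\times\Gamma_g)^*T_{XYZ/XZ}\) and \((id_X\times\Gamma_g)^*T_{XYZ/XY}\cong(id_X\times g)^*T_{XZ/X}\); and (iii) one of the exact-sequence isomorphisms of Lemma \ref{t} attached to the tower \(XY\xrightarrow{id_X\times\Gamma_g}XYZ\to XZ\) over \(X\). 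Making this classification explicit fixes the combinatorial shape of each chain.

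Next I would recognize this tower as an instance of Lemma \ref{hardvvb} under the substitution \(Y'=XY\), \(X'=XYZ\), \(Z'=XZ\), \(S'=X\), with the closed immersion \(i'=id_X\times\Gamma_g\) (a graph), the two projections \(X'\to Y'\) and \(X'\to Z'\), and the structure maps \(Y'\to S'\), \(Z'\to S'\), which are smooth because \(Y,Z\in Sm/S\); here the square \(XYZ=XY\times_X XZ\) is Cartesian and the diagonal composite is \(j'=id_X\times g\). Under this substitution \(j'\) is a closed immersion precisely when \(g\) is, giving item (1) via Lemma \ref{hardvvb}(1), and \(j'\) is smooth precisely when \(g\) is, giving item (2) via Lemma \ref{hardvvb}(2). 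The commutative diagram furnished by Lemma \ref{hardvvb} is exactly the compatibility between the factorization of \((id_X\times\Gamma_g)^*T_{XYZ/X}\) that routes through \((id_X\times\Gamma_g)^*T_{XYZ/XY}\), which is the route taken by \(t\) and \(t'\), and the factorization routing through \((id_X\times g)^*T_{XZ/X}\), which is the target isomorphism in each item.

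Granting that diagram, the remaining step is formal. Using that the translation functors \((-)+w\) are equivalences, hence faithful, I would strip the two canonically trivial pairs from \(t\) (resp. \(t'\))—the section pair introduced at the first arrow and the internal diagonal cancellation (\(N_{XY/XYZ}-N_{XY/XYZ}\) in the smooth case, \(T_{XY/X}-T_{XY/X}\) in the closed-immersion case)—and read off that the surviving composite is precisely the asserted two-arrow isomorphism of item (1) or the one-arrow isomorphism of item (2). The base-change isomorphisms of type (ii) are what convert the intermediate bundles \((id_X\times\Gamma_g)^*T_{XYZ/XZ}\) and \((id_X\times\Gamma_g)^*T_{XYZ/XY}\) into \(T_{XY/X}\) and \((id_X\times g)^*T_{XZ/X}\), thereby matching source and target.

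The hard part will be the sign and commutativity bookkeeping rather than the exact sequences. Because \(\mathscr{P}_X\) carries the Koszul commutativity constraint \((-1)^{rk\cdot rk}\) of Proposition \ref{px}, two isomorphisms with equal source and target may still differ by a unit, so I must check that every reordering of summands implicit in \(t\) and \(t'\) is realized by a commutativity isomorphism literally equal to the one on the bottom edge of the diagram in Lemma \ref{hardvvb}, and that the trivial pairs are inserted and deleted in compatible orders. Theorem \ref{four diagrams} and Proposition \ref{basicvvb} are the coherence tools underwriting these rearrangements, so the task is to deploy them in the correct sequence rather than to produce any genuinely new identity.
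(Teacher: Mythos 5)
Your proposal matches the paper's own proof: the paper applies Lemma \ref{hardvvb} to precisely the tower you identify (the Cartesian square $XYZ=XY\times_X XZ$ with section $id_X\times\Gamma_g$ and diagonal composite $id_X\times g$, using part (1) when $g$ is a closed immersion and part (2) when $g$ is smooth), and then carries out the same cancellation bookkeeping by writing out the composite chain and stripping the trivially inserted pairs. The approach and the key reduction are essentially identical.
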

\begin{proof}
We have a commutative diagram in \(Sm/k\) with the square being Cartesian:
\[
	\xymatrix@C=5em
	{
		XY\ar[d]^>>{id_X\times\Gamma_g}\ar[rd]^{id_{XY}}\ar@/_1pc/[dd]_{id_X\times g}	&\\
		XYZ\ar[r]_{p^{XYZ}_{XY}}\ar[d]^{p^{XYZ}_{XZ}}							&XY\ar[d]^{p^{XY}_X}\\
		XZ\ar[r]_{p^{XZ}_X}												&X.
	}
\]
\begin{enumerate}
\item We show that the composition
\begin{align*}
			&-T_{XY/X}\\
\longrightarrow	&-T_{XY/X}+N_{XY/XYZ}-(id_X\times\Gamma_g)^*T_{XYZ/XY}\\
\longrightarrow	&-T_{XY/X}+(id_X\times\Gamma_g)^*T_{XYZ/XZ}+N_{XY/XZ}-(id_X\times\Gamma_g)^*T_{XYZ/XY}\\
\longrightarrow	&N_{XY/XZ}-(id_X\times\Gamma_g)^*T_{XYZ/XY}\\
\longrightarrow	&N_{XY/XZ}-(id_X\times g)^*T_{XZ/X}\\
\longrightarrow	&N_{XY/XZ}-N_{XY/XZ}-T_{XY/X}\\
\longrightarrow &-T_{XY/X}
\end{align*}
is just \(id_{-T_{XY/X}}\). It is equal to
\begin{align*}
			&-T_{XY/X}\\
\longrightarrow	&-T_{XY/X}+N_{XY/XYZ}-(id_X\times\Gamma_g)^*T_{XYZ/XY}\\
\longrightarrow	&-T_{XY/X}-(id_X\times\Gamma_g)^*T_{XYZ/XY}+N_{XY/XYZ}\\
\longrightarrow	&-T_{XY/X}-(id_X\times g)^*T_{XZ/X}+N_{XY/XYZ}\\
\longrightarrow	&-T_{XY/X}-N_{XY/XZ}-T_{XY/X}+N_{XY/XYZ}\\
\longrightarrow	&-T_{XY/X}-N_{XY/XZ}-T_{XY/X}+(id_X\times\Gamma_g)^*T_{XYZ/XZ}+N_{XY/XZ}\\
\longrightarrow	&-N_{XY/XZ}-T_{XY/X}+N_{XY/XZ}\\
\longrightarrow	&-T_{XY/X},
\end{align*}
where the sixth arrow is the cancellation map between the first and the fourth term. By Lemma \ref{hardvvb}, (1) and the commutative diagram above, we have a commutative diagram
\[
	\xymatrix
	{
		(id_X\times\Gamma_g)^*T_{XYZ/XY}+T_{XY/X}\ar[d]			&(id_X\times\Gamma_g)^*T_{XYZ/X}\ar[l]\ar[d]\\
		N_{XY/XYZ}+T_{XY/X}\ar[d]							&(id_X\times\Gamma_g)^*T_{XYZ/XZ}+(id_X\times g)^*T_{XZ/X}\ar[d]\\
		(id_X\times\Gamma_g)^*T_{XYZ/XZ}+N_{XY/XZ}+T_{XY/X}\ar[r]	&(id_X\times\Gamma_g)^*T_{XYZ/XZ}+T_{XY/X}+N_{XY/XZ}.
	}
\]
Hence the composition above is equal to
\begin{align*}
			&-T_{XY/X}\\
\longrightarrow	&-T_{XY/X}+N_{XY/XYZ}-(id_X\times\Gamma_g)^*T_{XYZ/XY}\\
\longrightarrow	&-T_{XY/X}-(id_X\times\Gamma_g)^*T_{XYZ/XY}+N_{XY/XYZ}\\
\longrightarrow	&-T_{XY/X}-N_{XY/XYZ}+N_{XY/XYZ}\\
\longrightarrow	&-T_{XY/X}-N_{XY/XZ}-(id_X\times\Gamma_g)^*T_{XYZ/XZ}+N_{XY/XYZ}\\
\longrightarrow	&-T_{XY/X}-N_{XY/XZ}-(id_X\times\Gamma_g)^*T_{XYZ/XZ}+(id_X\times\Gamma_g)^*T_{XYZ/XZ}+N_{XY/XZ}\\
\longrightarrow	&-T_{XY/X},
\end{align*}
which gives the result.
\item We show that the composition
\begin{align*}
			&-T_{XY/X}\\
\longrightarrow	&-(id_X\times\Gamma_g)^*T_{XYZ/XY}+N_{XY/XYZ}-T_{XY/X}\\
\longrightarrow	&-(id_X\times\Gamma_g)^*T_{XYZ/XY}+N_{XY/XYZ}-(id_X\times\Gamma_g)^*T_{XYZ/XZ}\\
\longrightarrow	&-(id_X\times\Gamma_g)^*T_{XYZ/XY}+N_{XY/XYZ}-N_{XY/XYZ}-T_{XY/XZ}\\
\longrightarrow	&-(id_X\times\Gamma_g)^*T_{XYZ/XY}-T_{XY/XZ}\\
\longrightarrow	&-(id_X\times g)^*T_{XZ/X}-T_{XY/XZ}\\
\longrightarrow	&-T_{XY/X}
\end{align*}
is just \(id_{-T_{XY/X}}\). By Lemma \ref{hardvvb}, (2) and the commutative diagram in the beginning, we get a commutative diagram
\[
	\xymatrix
	{
		(id_X\times\Gamma_g)^*T_{XYZ/XY}+T_{XY/X}\ar[d]		&(id_X\times\Gamma_g)^*T_{XYZ/X}\ar[l]\ar[d]\\
		N_{XY/XYZ}+T_{XY/X}\ar[d]						&(id_X\times\Gamma_g)^*T_{XYZ/XZ}+(id_X\times g)^*T_{XZ/X}\ar[d]\\
		N_{XY/XYZ}+T_{XY/XZ}+(id_X\times g)^*T_{XZ/X}\ar[r]	&T_{XY/XZ}+N_{XY/XYZ}+(id_X\times g)^*T_{XZ/X}.
	}
\]
Hence the composition given is equal to
\begin{align*}
			&-T_{XY/X}\\
\longrightarrow	&-(id_X\times\Gamma_g)^*T_{XYZ/XY}+N_{XY/XYZ}-T_{XY/X}\\
\longrightarrow	&-(id_X\times\Gamma_g)^*T_{XYZ/XY}-T_{XY/X}+N_{XY/XYZ}\\
\longrightarrow	&-N_{XY/XYZ}-T_{XY/X}+N_{XY/XYZ}\\
\longrightarrow	&-N_{XY/XYZ}-(id_X\times g)^*T_{XZ/X}-T_{XY/XZ}+N_{XY/XYZ}\\
\longrightarrow	&-(id_X\times g)^*T_{XZ/X}-T_{XY/XZ}\\
\longrightarrow	&-T_{XY/X},
\end{align*}
where the fifth arrow is the cancellation between the first and the fourth term. Hence the result follows.
\end{enumerate}
\end{proof}
\begin{proposition}\label{identity}
For any \(X\in Sm/S\), \(\widetilde{\gamma}(id_X)\) is an identity. That is, for any \(X, Y\in Sm/S\), \(f\in\widetilde{Cor}_S(X,Y)\), \(g\in\widetilde{Cor}_S(Y,X)\), we have
\[\widetilde{\gamma}(id_Y)\circ f=f, g\circ\widetilde{\gamma}(id_X)=g.\]
\end{proposition}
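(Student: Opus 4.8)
The plan is to deduce both unit identities directly from the explicit composition formulas already established in Propositions \ref{easy composition}, \ref{easy composition1} and \ref{easy composition2}, so that essentially no new computation is required beyond recognizing that pull-back and push-forward along identity morphisms act trivially (Axioms \ref{FPB} and \ref{P-FSM}) and that the twist isomorphisms occurring in those formulas collapse to the identity when the genuine morphism is an identity.

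For the relation $g\circ\widetilde{\gamma}(id_X)=g$, I would apply Proposition \ref{easy composition} with the genuine $S$-morphism $f=id_X$. Since $id_X\times id_Z=id_{X\times_SZ}$, that formula gives $g\circ\widetilde{\gamma}(id_X)=(id_X\times id_Z)^*(g)$, and this equals $g$ because pull-back along an identity is the identity, which is exactly the second assertion of Axiom \ref{FPB}. This half needs no twist bookkeeping at all.

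For the relation $\widetilde{\gamma}(id_Y)\circ f=f$, I would invoke Proposition \ref{easy composition1}, using that $id_Y$ is a smooth morphism of relative dimension $0$. Taking $g=id_Y$ and $Z=Y$, the formula reads $\widetilde{\gamma}(id_Y)\circ f=(id_X\times id_Y)_*(t(f))$. The push-forward is taken along $id_{X\times_SY}$, hence is the identity by Axiom \ref{P-FSM} (using $T_{XY/XY}=0$), so it remains only to check that the twist isomorphism $t$ is the identity of $-T_{XY/X}$. Here Lemma \ref{hardvvb1} does the work: in the smooth case it identifies $t$ with the canonical map $-T_{XY/X}\longrightarrow -(id_X\times id_Y)^*T_{XY/X}-T_{XY/XY}$. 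Because $id_Y$ is the identity, $T_{XY/XY}=0$ and $(id_X\times id_Y)^*T_{XY/X}=T_{XY/X}$, so this map is the identity composed with the unity isomorphism of $0$; thus $t=\mathrm{id}$ and $\widetilde{\gamma}(id_Y)\circ f=f$. One could equally run the closed-immersion versions, Proposition \ref{easy composition2} with Lemma \ref{hardvvb1}, since $id_Y$ is also a closed immersion with $N_{XY/XY}=0$, giving the same conclusion.

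The only genuinely delicate point is this last twist computation: one must verify that the formal chain of canonical isomorphisms defining $t$ really reduces to the identity of $-T_{XY/X}$, and not merely to some a priori nontrivial automorphism of it. This is precisely what the simplification in Lemma \ref{hardvvb1} guarantees, and the vanishing of $T_{XY/XY}$ (respectively $N_{XY/XY}$) together with the unity constraints of the commutative Picard category $\mathscr{P}_{XY}$ from Definition \ref{Picard} make the collapse canonical. Once $t=\mathrm{id}$ is established the rest is formal, and the passage to the direct limit is already incorporated in the cited propositions.
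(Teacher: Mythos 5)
Your proof is correct and follows essentially the same route as the paper, which likewise deduces $g\circ\widetilde{\gamma}(id_X)=g$ from Proposition \ref{easy composition} together with Axiom \ref{FPB}, and $\widetilde{\gamma}(id_Y)\circ f=f$ from Proposition \ref{easy composition1} combined with the simplification of the twist $t$ (the paper cites Lemma \ref{hardvvb}, which is the engine behind Lemma \ref{hardvvb1} that you invoke, so the underlying computation is identical). Your explicit observation that the simplified twist reduces to the unity constraint because $T_{XY/XY}=0$ is exactly the point the paper leaves implicit.
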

\begin{proof}
The second equation follows by Proposition \ref{easy composition} and the first one follows from Lemma \ref{hardvvb}, (1) and Proposition \ref{easy composition1}.
\end{proof}

So combining Proposition \ref{associativity} and Proposition \ref{identity}, we have proved that \(\widetilde{Cor}_S\) is indeed a category.

Let's prove that \(\widetilde{\gamma}\) is indeed a functor.
\begin{proposition}\label{functor}
For any \(\xymatrix{X\ar[r]^f&Y\ar[r]^g&Z}\) in \(Sm/S\), we have
\[\widetilde{\gamma}(g\circ f)=\widetilde{\gamma}(g)\circ\widetilde{\gamma}(f).\]
\end{proposition}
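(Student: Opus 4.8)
The plan is to reduce everything to the base-change behaviour of the pushforward along a graph. Applying Proposition \ref{easy composition} to the morphism \(f:X\longrightarrow Y\) of \(Sm/S\) and the correspondence \(\widetilde{\gamma}(g):Y\longrightarrow Z\), we get
\[\widetilde{\gamma}(g)\circ\widetilde{\gamma}(f)=(f\times id_Z)^*(\widetilde{\gamma}(g)).\]
So it is enough to show \((f\times id_Z)^*(\widetilde{\gamma}(g))=\widetilde{\gamma}(g\circ f)\), that is, that pulling back the class defining \(\widetilde{\gamma}(g)\) along \(f\times id_Z\) produces the class defining \(\widetilde{\gamma}(g\circ f)\).

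First I would set up the Cartesian square. Writing \(\Gamma_g:Y\longrightarrow Y\times_S Z\) and \(\Gamma_{g\circ f}:X\longrightarrow X\times_S Z\) for the graphs, a direct check on points shows that
\[
	\xymatrix
	{
		X\ar[r]^f\ar[d]_{\Gamma_{g\circ f}}	&Y\ar[d]^{\Gamma_g}\\
		X\times_S Z\ar[r]^{f\times id_Z}		&Y\times_S Z
	}
\]
is Cartesian, that \(\Gamma_g\) is a closed immersion, and that \(d_Y-d_{Y\times_S Z}=d_S-d_Z=d_X-d_{X\times_S Z}\), so the dimension hypothesis of Axiom \ref{BCCI} is satisfied. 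By Definition \ref{graph}, \(\widetilde{\gamma}(g)=\Gamma_{g*}(\theta_g)\), where \(\theta_g\) is the image of \(1\) under the twist isomorphism \(0\cong N_{Y/Y\times_S Z}-\Gamma_g^*T_{Y\times_S Z/Y}\) provided by Lemma \ref{t}, (2). Applying Axiom \ref{BCCI} to the square above gives \((f\times id_Z)^*\circ\Gamma_{g*}=\Gamma_{(g\circ f)*}\circ f^*\), whence
\[(f\times id_Z)^*(\widetilde{\gamma}(g))=\Gamma_{(g\circ f)*}(f^*\theta_g).\]
Since \(f^*(1)=1\) by Axiom \ref{CPB}, the class \(f^*\theta_g\) is again the image of \(1\), now under the pullback along \(f\) of the twist isomorphism defining \(\theta_g\).

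The remaining and main point is to identify \(f^*\theta_g\) with the twist datum \(\theta_{g\circ f}\) used to build \(\widetilde{\gamma}(g\circ f)\); this is a computation in \(V(Vect(X))\) and I expect it to be the real obstacle. Concretely, one must compare the two isomorphisms \(0\cong N_{X/X\times_S Z}-\Gamma_{g\circ f}^*T_{X\times_S Z/X}\): the one obtained by pulling back along \(f\) the isomorphism living over \(Y\), and the intrinsic one from Lemma \ref{t}, (2). For the tangent summand this rests on Lemma \ref{t1} applied to the Cartesian presentation \(X\times_S Z=X\times_Y(Y\times_S Z)\), giving \((f\times id_Z)^*T_{Y\times_S Z/Y}\cong T_{X\times_S Z/X}\) and hence \(\Gamma_{g\circ f}^*(f\times id_Z)^*T_{Y\times_S Z/Y}\cong\Gamma_{g\circ f}^*T_{X\times_S Z/X}\); for the normal summand it rests on \(f^*N_{Y/Y\times_S Z}\cong N_{X/X\times_S Z}\) furnished by Lemma \ref{t2}. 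What must then be checked is that these identifications are compatible with the splitting isomorphisms \(\Sigma\), so that the resulting diagram of virtual objects commutes; I expect this to follow from the naturality under base change of the isomorphism in Lemma \ref{t}, (2), together with one application of Theorem \ref{four diagrams}. Once this commutativity is established one gets \(f^*\theta_g=\theta_{g\circ f}\), so that \((f\times id_Z)^*(\widetilde{\gamma}(g))=\Gamma_{(g\circ f)*}(\theta_{g\circ f})=\widetilde{\gamma}(g\circ f)\), which is the claim.
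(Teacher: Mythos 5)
Your overall strategy coincides with the paper's: first rewrite \(\widetilde{\gamma}(g)\circ\widetilde{\gamma}(f)\) as \((f\times id_Z)^*(\widetilde{\gamma}(g))\) via Proposition \ref{easy composition}, then commute the pull-back past \(\Gamma_{g*}\) using the graph square. The gap is in the second step: you invoke Axiom \ref{BCCI} (and, for the normal summand, Lemma \ref{t2}) for the Cartesian square whose base-change morphism is \(f\times id_Z\), with \(f\) an \emph{arbitrary} morphism of \(Sm/S\). But the identification \(N_{X/X\times_S Z}\cong f^*N_{Y/Y\times_S Z}\) that is needed even to make sense of the bottom row of Axiom \ref{BCCI} is exactly the map \(\gamma\) of Lemma \ref{t2}, and that lemma is only stated (and only true in the paper's framework) when the base-change morphism is smooth or a closed immersion --- that is, when \(f\) itself is smooth or a closed immersion. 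For a general \(f\) neither hypothesis holds, so the base-change axiom cannot be applied as you do; this is precisely why the paper's own proof treats only these two special cases directly.

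What is missing is the reduction of the general case to those two: factor \(f=p\circ i\) with \(i\) a closed immersion and \(p\) smooth (for instance \(X\xrightarrow{\Gamma_f}X\times_SY\xrightarrow{pr}Y\)), apply the already-established identities for the closed-immersion and smooth cases, and conclude using associativity of composition (Proposition \ref{associativity}). Once you add this reduction, the rest of your argument matches the paper's; your final paragraph only sketches the compatibility of \(f^*\theta_g\) with \(\theta_{g\circ f}\), but the paper disposes of that point with the same appeal to Axiom \ref{FPB} and functoriality of pull-back with respect to twists, so I would not count it as an additional gap.
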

\begin{proof}
Suppose at first \(f\) is a closed immersion or smooth. We have a Cartesian square
\[
	\xymatrix
	{
		XZ\ar[r]^{f\times id_Z}			&YZ\\
		X\ar[u]^{\Gamma_{g\circ f}}\ar[r]^f	&Y\ar[u]_{\Gamma_g}
	}
\]
and two cancelling ismorphisms \(a:N_{Y/YZ}-\Gamma_g^*T_{YZ/Y}\longrightarrow 0\) and \(b:N_{X/XZ}-\Gamma_f^*T_{XZ/X}\longrightarrow 0\). For convenience, we denote the induced morphisms between \(E\) still by \(a\) and \(b\), respectively. Then we have
\begin{align*}
	&\widetilde{\gamma}(g)\circ\widetilde{\gamma}(f)\\
=	&(f\times id_Z)^*(\widetilde{\gamma}(g))\\
	&\textrm{by Proposition \ref{easy composition}}\\
=	&(f\times id_Z)^*(\Gamma_{g*}(a^{-1}(1),N_{Y/YZ}-\Gamma_g^*T_{YZ/Y}))\\
	&\textrm{by definition of \(\widetilde{\gamma}\)}\\
=	&(\Gamma_{g\circ f})_*f^*(a^{-1}(1))\\
	&\textrm{by Axiom \ref{BCCI} for the square above}\\
=	&(\Gamma_{g\circ f})_*(b^{-1}(1))\\
	&\textrm{by Axiom \ref{FPB} and functoriality of pull-back with respect to twists}\\
=	&\widetilde{\gamma}(g\circ f)\\
	&\textrm{by definition of \(\widetilde{\gamma}\)}.
\end{align*}
Now suppose \(f=p\circ i\) in \(Sm/S\) where \(p\) is smooth and \(i\) is a closed immersion. Then
\[\widetilde{\gamma}(g)\circ\widetilde{\gamma}(f)=\widetilde{\gamma}(g)\circ\widetilde{\gamma}(i)\circ\widetilde{\gamma}(p)=\widetilde{\gamma}(i\circ g)\circ\widetilde{\gamma}(p)=\widetilde{\gamma}(g\circ f)\]
by the statements above.
\end{proof}
\begin{definition}
Define \(\widetilde{PSh}(S)\) to be the category of contravariant additive functors from \(\widetilde{Cor}_S\) to \(Ab\) as in \cite[Definition 1.2.1]{DF} and \cite[Definition 2.1]{MVW}, which are called presheaves with \(E\)-transfers over \(S\). Define moreover \(\widetilde{Sh}(S)\) to be its full subcategory with objects the presheaves whose restriction to \(Sm/S\) via \(\widetilde{\gamma}\) are Nisnevich sheaves. We call them sheaves with \(E\)-transfers over \(S\).
\end{definition}
\begin{definition}
Let \(X, Y\in Sm/S\), we define \(\widetilde{c}_S(X)\) by \(\widetilde{c}_S(X)(Y)=\widetilde{Cor}_S(Y,X)\). It is the presheaf with \(E\)-transfers representing \(X\).
\end{definition}
We recall the following three propositions which are the technical heart when dealing with Nisnevich sheaves:
\begin{proposition}\label{limit}
Let \(f:X\longrightarrow S\) be a morphism locally of finite type between locally noetherian schemes. Suppose \(I\) is a directed set and \(\{T_i\}\) is an inverse system of \(S\)-schemes such that for any \(i_1\preceq i_2\), the morphism \(T_{i_2}\longrightarrow T_{i_1}\) is affine. Then \(\varprojlim_i T_i\) exists in the category of \(S\)-schemes and we have
\[Hom_S(\varprojlim_i T_i,X)=\varinjlim_i Hom_S(T_i,X).\]
\end{proposition}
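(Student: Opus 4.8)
The plan is to obtain both assertions from the standard theory of projective limits of schemes with affine transition morphisms, as developed in Grothendieck's EGA IV, \S8; the argument splits into the construction of the limit scheme and the verification of the adjunction-type formula.

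For existence I would fix an index \(i_0\in I\); since \(I\) is directed, the indices \(i\succeq i_0\) form a cofinal subsystem, so it is harmless to compute the limit over them. For every \(i\succeq i_0\) the affine morphism \(T_i\longrightarrow T_{i_0}\) presents \(T_i\) as the relative spectrum \(\underline{\mathrm{Spec}}_{T_{i_0}}(\mathcal{A}_i)\) of a quasi-coherent \(\mathcal{O}_{T_{i_0}}\)-algebra, and the transition morphisms correspond to a filtered system of algebra maps \(\mathcal{A}_{i_1}\longrightarrow\mathcal{A}_{i_2}\). Setting \(\mathcal{A}=\varinjlim_i\mathcal{A}_i\), which is again quasi-coherent as a filtered colimit, I claim that \(T:=\underline{\mathrm{Spec}}_{T_{i_0}}(\mathcal{A})\) together with the evident projections represents \(\varprojlim_iT_i\). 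This is checked directly against the universal property of the relative spectrum, using that morphisms into \(\underline{\mathrm{Spec}}_{T_{i_0}}(\mathcal{A})\) are computed by algebra homomorphisms out of \(\mathcal{A}\), which by the colimit description factor compatibly through the \(\mathcal{A}_i\). Note that \(T\longrightarrow T_{i_0}\) is affine, hence quasi-compact.

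For the formula, the comparison map \(\varinjlim_i\mathrm{Hom}_S(T_i,X)\longrightarrow\mathrm{Hom}_S(T,X)\) is the one induced by precomposition with the projections \(\pi_i:T\longrightarrow T_i\). To prove it bijective I would reduce to the affine case. Since \(S\) is locally noetherian and \(f\) is locally of finite type, \(f\) is in fact locally of finite presentation; one covers \(X\) by affine opens \(\mathrm{Spec}(B)\) that are finitely presented over affine opens \(\mathrm{Spec}(R)\) of \(S\). In the affine situation \(T=\mathrm{Spec}(A)\) with \(A=\varinjlim_iA_i\), and a morphism corresponds to an \(n\)-tuple in \(A^n\) annihilating finitely many polynomials; since \(A\) is a filtered colimit, both the tuple and the finitely many relations already live at a single finite stage, giving the bijection there. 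Globalizing, an arbitrary morphism \(T\longrightarrow X\) pulls the chosen finite affine cover of \(X\) back to an open cover of \(T\); using quasi-compactness of \(T\) over \(T_{i_0}\) together with finite-presentation descent of open immersions, this cover, the patch-wise morphisms, and their agreement on overlaps all descend to a common finite index \(i\), producing the required \(T_i\longrightarrow X\). Injectivity is the parallel statement obtained by applying the same spreading-out to the locus where two morphisms agree.

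The routine parts are the affine computation and the universal-property check for the relative spectrum. The main obstacle is the globalization in the last paragraph: one must descend simultaneously the open cover of \(T\), the morphisms on the patches, and the gluing data on overlaps to a single finite stage, which is precisely the delicate quasi-compactness and finite-presentation bookkeeping underlying EGA IV, 8.8.2 and 8.14.2. Since this proposition is only recalled here as a technical input, I would in practice simply cite those results.
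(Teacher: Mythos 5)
The paper gives no argument of its own here: it simply cites the Stacks Project (the limits chapter) for both the existence of the limit and the Hom formula, which are exactly the results your sketch reconstructs (relative Spec of the colimit algebra for existence, reduction to the finitely presented affine case plus spreading out for the bijection). Your outline is the standard proof behind those citations, and you correctly note at the end that in practice one would just cite them, so this matches the paper's approach.
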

\begin{proof}
See \cite[Lemma 2.2]{SP} and \cite[Proposition 6.1]{SP}.
\end{proof}
Now let \(A\) be a noetherian ring and \(p\in\textrm{Spec}A\). Consider the set \(I\) whose elements are pairs \((B,q)\), where \(B\) is a connected \'etale \(A\)-algebra, \(q\in\textrm{Spec}B\), \(q\cap A=p\) and \(k(p)=k(q)\). Set \((B_1,q_1)\preceq(B_2,q_2)\) if there is an \(A\)-algebra morphism (always unique if exists) \(f:B_1\longrightarrow B_2\) such that \(f^{-1}(q_2)=q_1\).
\begin{proposition}\label{Henselization}
The set \(I\) is a directed set and we have
\[\varinjlim_{(B,q)}B\cong A_p^h,\]
where the right hand side is the Henselization of \(A_p\).
\end{proposition}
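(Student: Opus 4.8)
The plan is to construct the colimit $C=\varinjlim_{(B,q)}B$ explicitly, show that it is a Henselian local ring with residue field $k(p)$, and identify it with $A_p^h$ through the universal property of the Henselization. The whole argument rests on the \emph{rigidity} of \'etale morphisms: since \'etale maps are unramified and hence have open diagonal, an $A$-algebra map $f:B_1\longrightarrow B_2$ with $f^{-1}(q_2)=q_1$ (inducing the identity on $k(p)$) is unique whenever it exists, which is what makes $\preceq$ a genuine partial order rather than merely a preorder.

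First I would verify that $I$ is directed. Given $(B_1,q_1)$ and $(B_2,q_2)$, form $B_1\otimes_AB_2$, which is \'etale over $A$. Because $k(q_1)=k(q_2)=k(p)$, the fibre over $p$ has $k(p)$ as a direct factor, so there is a prime $q_3$ of $B_1\otimes_AB_2$ lying over both $q_1$ and $q_2$ with $k(q_3)=k(p)$. Cutting out by an idempotent the connected component containing $q_3$ produces a connected \'etale $A$-algebra $(B_3,q_3)\in I$ receiving canonical maps from both $(B_1,q_1)$ and $(B_2,q_2)$; hence $I$ is directed. Next I would show that $C$ is local with residue field $k(p)$. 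Let $\mathfrak{m}_C$ be the image of $\varinjlim q$. Any $x\notin\mathfrak{m}_C$ is represented by some $b\in B$ with $b\notin q$; the connected component of $B[1/b]$ at the point above $p$ is again \'etale over $A$ with residue field $k(p)$, and there $b$ is invertible, so $x$ becomes a unit in $C$. Thus $C$ is local and $C/\mathfrak{m}_C=\varinjlim B/q=k(p)$.

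Then comes the heart of the matter, that $C$ is Henselian. I would use the criterion that a local ring is Henselian if and only if every \'etale neighbourhood of its closed point admits a section. Given an \'etale $C$-algebra $D$ with a point over $\mathfrak{m}_C$ of residue field $k(p)$, the finite-presentation limit formalism (the ring-theoretic dual of Proposition \ref{limit}, which lets one descend a finitely presented \'etale algebra down a filtered colimit) provides an \'etale $B$-algebra $D_0$ at some finite stage with $D\cong D_0\otimes_BC$. The connected component of $D_0$ at the relevant point is then itself an object $(B',q')$ of $I$ with $B\to B'$, and the structural morphism $B'\longrightarrow C$ furnishes the desired section after passing to the colimit. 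Hence $C$ is Henselian.

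Finally, $A_p\longrightarrow C$ is a local homomorphism into a Henselian local ring inducing an isomorphism on residue fields, and by the directedness established above it is initial among all such \'etale neighbourhoods; this is precisely the universal property characterizing the Henselization, so $C\cong A_p^h$. I expect the main obstacle to be the Henselian step: one must run the descent of the \'etale algebra $D$ to a finite stage with care, checking that the residue-field and connectedness conditions defining $I$ are preserved throughout and that the section obtained at stage $B'$ is genuinely compatible in the colimit. The remaining verifications (directedness, locality) are formal once rigidity is in hand.
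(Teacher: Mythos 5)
Your argument is correct and is essentially the standard construction of the Henselization as the filtered colimit of \'etale neighbourhoods (directedness via tensor products and connected components, locality via inverting elements outside the colimit of the primes, Henselianness via descending a finitely presented \'etale algebra to a finite stage, then the universal property). The paper offers no independent proof here --- it simply cites the remarks around \cite[Lemma 4.8]{M} --- and your write-up is a faithful unpacking of exactly that reference.
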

\begin{proof}
See the remarks around \cite[Lemma 4.8]{M} and see for example \cite[Theorem 4.2]{M} for basic properties of Henselian rings.
\end{proof}
\begin{proposition}\label{splitting}
Let \(U\), \(X\), \(Y\) be locally noetherian schemes, \(p:U\longrightarrow X\) be a Nisnevich covering and \(f:X\longrightarrow Y\) be a finite morphism. Then for every \(y\in Y\), there exists a scheme \(V\) with an \'etale morphism \(V\longrightarrow Y\) being Nisnevich at \(y\) such that the morphism \(U\times_YV\longrightarrow X\times_YV\) has a section.
\end{proposition}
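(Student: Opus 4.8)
The plan is to reduce the assertion to a Henselian local base, where the covering splits for elementary reasons, and then to descend the resulting section to a finite étale neighbourhood of \(y\) by means of Proposition \ref{limit} and Proposition \ref{Henselization}.

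First I would set \(A=O_{Y,y}\) and pass to its Henselization \(A^h\), base-changing everything along \(\mathrm{Spec}(A^h)\longrightarrow Y\). Because \(f\) is finite and \(A^h\) is Henselian local, \(X\times_Y\mathrm{Spec}(A^h)\) breaks up as a finite disjoint union of Henselian local schemes \(X_1,\dots,X_r\), one for each point of \(X\) lying over \(y\) (see \cite{M}). Over each piece \(X_i\) the pulled-back morphism is again a Nisnevich covering of a Henselian local scheme; the distinguished point lying over the closed point of \(X_i\) with trivial residue extension---which exists since \(p\) is a Nisnevich covering at every point of \(X\)---generates a connected component that is open and closed and maps isomorphically onto \(X_i\), by the Henselian property, and this furnishes a section over \(X_i\). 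Gluing these partial sections over the finitely many pieces produces a section \(\sigma\) of the projection \(U\times_Y\mathrm{Spec}(A^h)\longrightarrow X\times_Y\mathrm{Spec}(A^h)\).

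Next I would descend \(\sigma\). By Proposition \ref{Henselization} we may write \(A^h=\varinjlim_{(B,q)}B\), whence \(X\times_Y\mathrm{Spec}(A^h)=\varprojlim_{(B,q)}\bigl(X\times_Y\mathrm{Spec}(B)\bigr)\) as an inverse system of \(X\)-schemes with affine transition morphisms. Since \(p\) is étale, \(U\) is locally of finite type over \(X\), so Proposition \ref{limit}, applied with base \(X\) and total space \(U\), identifies \(\sigma\)---viewed as an \(X\)-morphism \(X\times_Y\mathrm{Spec}(A^h)\longrightarrow U\)---with the pull-back of some \(X\)-morphism \(s_0:X\times_Y\mathrm{Spec}(B_0)\longrightarrow U\) for a suitable \((B_0,q_0)\). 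Being a morphism of \(X\)-schemes, \(s_0\) is automatically a section of \(U\times_Y\mathrm{Spec}(B_0)\longrightarrow X\times_Y\mathrm{Spec}(B_0)\).

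Finally I would spread out. The finitely presented connected étale \(A\)-algebra \(B_0\), together with the point \(q_0\) over \(y\) of trivial residue extension, extends to an étale morphism \(V\longrightarrow Y\) that is Nisnevich at \(y\) and whose base change to \(\mathrm{Spec}(A)\) recovers \(\mathrm{Spec}(B_0)\); writing \(\mathrm{Spec}(B_0)=\varprojlim_{Y'\ni y}(V\times_Y Y')\) over the open neighbourhoods \(Y'\) of \(y\) and invoking Proposition \ref{limit} once more, the section \(s_0\) extends to a section of \(U\times_Y V\longrightarrow X\times_Y V\) after replacing \(V\) by \(V\times_Y Y'\) for a small enough \(Y'\), which is still étale over \(Y\) and Nisnevich at \(y\). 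The conceptual crux is the production of \(\sigma\) over the Henselization, resting on the two classical facts that a finite scheme over a Henselian local ring is a sum of local ones and that a Nisnevich covering of a Henselian local scheme splits; granting these, the remaining descent is the purely formal limit manipulation packaged in Propositions \ref{limit} and \ref{Henselization}.
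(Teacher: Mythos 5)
Your proof is correct and follows essentially the same route as the paper: Henselize at \(y\), split the pulled-back covering over the Henselian base using that a finite scheme over a Henselian local ring is a finite disjoint union of Henselian local schemes (so the Nisnevich covering acquires a section there), and then descend the section via the limit arguments of Propositions \ref{limit} and \ref{Henselization}. The only cosmetic difference is that you index the colimit by \'etale algebras over the local ring \(O_{Y,y}\) and therefore need an extra spreading-out step at the end, whereas the paper indexes it by \'etale algebras over \(O_Y(U_0)\) for an affine neighbourhood \(U_0\) of \(y\), so that \(V=\mathrm{Spec}\,B\) is already an \'etale \(Y\)-scheme of finite type and no further spreading out is required.
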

\begin{proof}
Consider the following commutative diagram with Cartesian squares:
\[
	\xymatrix
	{
		U\ar[r]^p						&X\ar[r]^f				&Y\\
		R_2\ar[r]^{\alpha}\ar[u]_{\gamma}	&R_1\ar[r]^-{\beta}\ar[u]	&\textrm{Spec}O_{Y,y}^h.\ar[u]
	}
\]
Since \(\beta\) is a finite morphism, \(R_1\) is a finite direct product of Henselian rings (see \cite[Theorem 4.2]{M}). Hence \(\alpha\) has a section \(s\) since it is Nisnevich at every maximal ideal of \(R_1\). Pick an affine neighbourhood \(U_0\) of \(y\). By \cite[Lemma 2.3]{SP} and Proposition \ref{Henselization},
\[R_1=(\varprojlim_{(B,q)\succeq(O_Y(U_0),y)}\textrm{Spec}B)\times_{U_0}f^{-1}(U_0)=\varprojlim_{(B,q)\succeq(O_Y(U_0),y)}(\textrm{Spec}B\times_{U_0}f^{-1}(U_0)),\]
hence there exists a \((B,q)\succeq(O_Y(U_0),y)\) such that \(\gamma\circ s\) factor through the projection
\[\varprojlim_{(B,q)\succeq(O_Y(U_0),y)}(\textrm{Spec}B\times_{U_0}f^{-1}(U_0))\longrightarrow\textrm{Spec}B\times_{U_0}f^{-1}(U_0)\]
by using Proposition \ref{limit} for \(p\). And we finally let \(V=\textrm{Spec}B\).
\end{proof}

Now we are going to prove a similar result as in \cite[Lemma 1.2.6]{DF}.
\begin{proposition}\label{Cech}
Let \(X, U\in Sm/S\) and \(p:U\longrightarrow X\) be a Nisnevich covering. Denote the \(n\)-fold product \(A\times_BA\times_B\cdots A\) by \(A_B^n\) for any schemes \(A\) and \(B\). Then the following complex
\[\xymatrix{\cdots\ar[r]&\widetilde{c}_S(U_X^n)\ar[r]^-{d_n}&\cdots\ar[r]&\widetilde{c}_S(U\times_XU)\ar[r]^-{d_2}&\widetilde{c}_S(U)\ar[r]^-{d_1}&\widetilde{c}_S(X)\ar[r]^-{d_0}&0,}\]
denoted by \(\breve{C}(U/X)\), is exact after sheafifying as a complex of \(\widetilde{PSh}(S)\). Here, if \(p_i:U_X^n\longrightarrow U_X^{n-1}\) is the projection omitting \(i\)-th factor, then \(d_n=\sum_i(-1)^{i-1}\widetilde{c}_S(p_i)\).
\end{proposition}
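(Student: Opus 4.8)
The plan is to follow the strategy of \cite[Lemma 1.2.6]{DF}. Since sheafification is exact, a complex of Nisnevich sheaves is exact if and only if it is exact on every stalk, so it suffices to prove that $\breve{C}(U/X)$ becomes acyclic after passing to Nisnevich stalks. The points of the Nisnevich topology on $Sm/S$ are the Henselian local schemes, so I would fix a point $z$ of some $Z\in Sm/S$ and compute the stalk over the Henselization $W=\mathrm{Spec}\,O_{Z,z}^h$. By Proposition \ref{Henselization} this Henselization is the cofiltered limit of the \'etale neighbourhoods $V$ of $z$, and the stalk of $\widetilde{c}_S(X)$ is $\varinjlim_V\widetilde{Cor}_S(V,X)$. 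Using Proposition \ref{limit} together with the continuity of the groups $E$ and of their admissible supports (Definition \ref{cor}), I would identify this stalk with $\widetilde{Cor}_S(W,X)$, thereby reducing the statement to showing that the augmented complex of abelian groups
\[\cdots\longrightarrow\widetilde{Cor}_S(W,U_X^n)\longrightarrow\cdots\longrightarrow\widetilde{Cor}_S(W,U)\longrightarrow\widetilde{Cor}_S(W,X)\longrightarrow 0\]
is acyclic for every Henselian local $W$.

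The acyclicity would be established by constructing a contracting homotopy (an extra degeneracy). An element of $\widetilde{Cor}_S(W,U_X^n)$ is, by Definition \ref{cor}, represented by a class supported on some admissible $T\in\mathscr{A}_S(W,U_X^n)$, so $T$ is finite over $W$ and hence, $W$ being Henselian local, $T$ is a finite disjoint union of Henselian local schemes. Composing $T\longrightarrow U_X^n\longrightarrow X$ and pulling back the covering $p$ along this morphism yields a Nisnevich covering of $T$; applying Proposition \ref{splitting} to the finite morphism $T\longrightarrow W$ (with $V=W$, since $W$ is already Henselian local) produces a section $\sigma$, i.e.\ a lift $\ell\colon T\longrightarrow U$ over $X$. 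Geometrically, a genuine section $s$ of $p$ would define maps $U_X^n\longrightarrow U_X^{n+1}$, $(u_1,\dots,u_n)\mapsto(s(p(u_1)),u_1,\dots,u_n)$; I would transport this construction to the level of correspondences, using $\ell$ in place of a global section, to define operators $h_n\colon\widetilde{Cor}_S(W,U_X^n)\longrightarrow\widetilde{Cor}_S(W,U_X^{n+1})$.

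I would then verify the chain-homotopy identity $d_{n+1}h_n+h_{n-1}d_n=\mathrm{id}$, with the signs dictated by $d_n=\sum_i(-1)^{i-1}\widetilde{c}_S(p_i)$. This is the usual simplicial computation for an augmented complex carrying an extra degeneracy, but in the present setting each face and degeneracy relation must be derived at the level of correspondences from the base-change and projection-formula axioms (Axioms \ref{BCSM}, \ref{BCCI}, \ref{PFSM}, \ref{PFCI}) and from the functoriality of composition (Propositions \ref{associativity} and \ref{easy composition}), exactly in the spirit of the composition computations carried out above.

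The hard part will be two-fold. First, the lift $\ell$ produced by Proposition \ref{splitting} is only a morphism of schemes and depends on the chosen support $T$, whereas the homotopy must be an honest homomorphism of correspondence groups; I must check that the resulting operators are independent of the representative and compatible with extension of supports, so that they descend to the colimit defining $\widetilde{Cor}_S(W,-)$. Second, every push-forward and pull-back involved introduces a twist by a virtual vector bundle, and the homotopy identity only holds after all the resulting twist-isomorphisms are shown to coincide. Verifying this coherence, by the calculus of Section \ref{Virtual Objects and Their Calculation} (notably Theorem \ref{four diagrams}) as in the proofs of Propositions \ref{easy composition1} and \ref{easy composition2}, is where the real work lies.
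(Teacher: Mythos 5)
Your overall shape --- split the covering over the finite support, then build a homotopy from the splitting --- is the right one, but there is a genuine gap in your reduction step. You propose to identify the Nisnevich stalk of $\widetilde{c}_S(X)$ with $\widetilde{Cor}_S(W,X)$ for $W=\mathrm{Spec}\,O^h_{Z,z}$, invoking ``continuity of the groups $E$ and of their admissible supports.'' No such continuity is among the axioms of a correspondence theory, and the groups $E^i_C(X,v)$ are only defined for $X\in Sm/k$; the Henselization is not of finite type, so $\widetilde{Cor}_S(W,-)$ is simply not defined in this framework, and for a general $E$ you cannot manufacture it. The paper never passes to the stalk: it fixes $y\in Y$, takes a cycle $a$ with $d_n(a)=0$ supported on $R^n$ over $T\in\mathscr{A}_S(Y,X)$, and uses Proposition \ref{splitting} to produce an actual Nisnevich neighbourhood $V\in Sm/S$ of $y$ over which $R\times_YV\longrightarrow T\times_YV$ has a section $s$; it then shows $a|_V$ is a boundary. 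Your argument can be repaired by working element-by-element in the colimit defining the stalk rather than pretending the colimit is a correspondence group over $W$, but as written the reduction does not go through.

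The second, smaller issue is the homotopy operator itself. The section produced by Proposition \ref{splitting} splits the covering only after pulling back to the support $T$, so the homotopy cannot be ``composition with the graph of a lift $\ell\colon T\to U$'' as a correspondence $U_X^n\to U_X^{n+1}$. The paper's operator is $b=(j_{n+1}^*)^{-1}\circ(p_{n+1}\circ j_{n+1})^*$, where $j_{n+1}$ is the \'etale map from the open-closed piece $W^{n+1}$ carved out by the complement $D$ of $s(T\times_YV)$, and the inverse $(j_{n+1}^*)^{-1}$ exists precisely by \'Etale Excision (Axiom \ref{EE}) --- an axiom you never invoke but which is the engine of the whole construction. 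The identity $d_{n+1}(b)=(\cdots)\circ d_n(a)+(-1)^na|_V$ is then verified using Axioms \ref{BCSM}, \ref{EE}, \ref{FPFSM} and Lemma \ref{hardvvb1}, (2), together with Proposition \ref{easy composition1}; the projection formulas and Axiom \ref{BCCI} that you list do not enter. You are right that the twist bookkeeping is where the real work lies, but the specific coherence needed is the simplification of the isomorphism $t_{i,n+1}$ from Lemma \ref{hardvvb1}, not a fresh application of Theorem \ref{four diagrams}.
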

\begin{proof}
For \(Y\in Sm/S\), we are to prove that the complex is exact at every point \(y\in Y\). Now assume we have an element \(a\in\widetilde{Cor}_S(Y,U^n_X)\) such that \(d_n(a)=0\). We may suppose that there is a \(T\in\mathscr{A}_S(Y,X)\) such that \(a\) comes from \(E_{R^n}^{d_X-d_S}((Y\times_SU)^n_{Y\times_SX},-T_{Y\times_SU_X^n/Y})\) and \(d_n(a)=0\), where \(R^n\) is defined by the following Cartesian squares \((R:=R^1)\)
\[
	\xymatrix
	{
		R^n\ar[r]\ar[d]			&Y\times_SU_X^n\ar[d]\ar[r]	&U_X^n\ar[d]\\
		T\ar[r]				&Y\times_SX\ar[r]			&X.		
	}
\]
By Proposition \ref{splitting}, there is a Nisnevich neighbourhood \(V\) of \(y\) such that the map \(p:R\times_YV\longrightarrow T\times_YV\) has a section \(s\), which is both an open immersion and a closed immersion (see \cite[Corollary 3.12]{M}). And let \(D=(R\times_YV)\setminus s(T\times_YV)\). Then \(d_n(a|_{V\times_SU_X^n})=0\).
We have a commutative diagram
\[
	\xymatrix
	{
		V\times_SU_X^n\ar[r]\ar[d]	&Y\times_SU^n_X\ar[d]\\
		V\times_SX\ar[r]		&Y\times_SX,
	}
\]
Cartesian squares
\[
	\xymatrix
	{
		R^n\times_YV\ar[r]\ar[d]	&V\times_SU_X^n\ar[d]\ar[r]	&V\ar[d]\\
		R^n\ar[r]				&Y\times_SU_X^n\ar[r]			&Y,
	}
	\xymatrix
	{
		T\times_YV\ar[r]^-s\ar[d]_s	&(V\times_SU)\setminus D\ar[d]\\
		R\times_YV\ar[r]			&V\times_SU,
	}
\]
equations
\[Y\times_SU_X^n=(Y\times_SU)^n_{Y\times_SX},\]
\[V\times_SU_X^n=(V\times_SU)^n_{V\times_SX},\]
\[R^n=R^n_T,\]
\[R^n\times_YV=(R\times_YV)^n_{T\times_YV}=(T\times_{Y\times_SX}(V\times_SU))^n_{T\times_YV},\]
\[(R\times_YV)^n_{T\times_YV}=(R\times_YV)^n_{V\times_SX},\]
and a diagram of Cartesian squares with right-hand vertical maps being \'etale:
\[
	\xymatrix
	{
		(R\times_YV)^n_{T\times_YV}\ar[r]\ar[d]^{id^n\times s}\ar@/_3pc/[dd]_{id}	&(V\times_SU)^n_{V\times_SX}\times_{V\times_SX}((V\times_SU)\setminus D):=W^{n+1}\ar[d]_{j_{n+1}}\\
		(R\times_YV)^{n+1}_{T\times_YV}\ar[r]\ar[d]^{p_{n+1}}					&(V\times_SU)^{n+1}_{V\times_SX}\ar[d]_{p_{n+1}}\\
		(R\times_YV)^n_{T\times_YV}\ar[r]								&(V\times_SU)^n_{V\times_SX},
	}
\]
where \(p_{n+1}\) denotes the projection omitting the last factor. Moreover, the maps
\[\xymatrix{E^{d_X-d_S}_{R^n\times_YV}((V\times_SU)^n_{V\times_SX},-T_{V\times_SU^n_X/V})\ar[r]^{(p_{n+1}\circ j_{n+1})^*}&E^{d_X-d_S}_{R^n\times_YV}(W^{n+1},-T_{V\times_SU^{n+1}_X/V}|_{W^{n+1}})}\]
and
\[\xymatrix{E^{d_X-d_S}_{R^n\times_YV}((V\times_SU)^{n+1}_{V\times_SX},-T_{V\times_SU^{n+1}_X/V})\ar[r]^{j_{n+1}^*}&E^{d_X-d_S}_{R^n\times_YV}(W^{n+1},-T_{V\times_SU^{n+1}_X/V}|_{W^{n+1}})}\]
are isomorphisms with respective inverses \((p_{n+1}\circ j_{n+1})_{*}\) and \((j_{n+1})_*\) by Axiom \ref{EE}.

Let's consider the element
\[b:=((j_{n+1}^*)^{-1}\circ(p_{n+1}\circ j_{n+1})^*)(a|_{(V\times_SU)^n_{V\times_SX}})\in E^{d_X-d_S}_{R^n\times_YV}((V\times_SU)^{n+1}_{V\times_SX},-T_{V\times_SU^{n+1}_X/V}),\]
where we have used the isomorphism
\[p_{n+1}^*T_{V\times_SU_X^n/V}\longrightarrow T_{V\times_SU_X^{n+1}/V}\]
since \(U\longrightarrow X\) is \'etale. Then
\[d_{n+1}(b)=\sum_{i=1}^{n+1}(-1)^{i-1}\widetilde{c}_S(p_i)(b)=\sum_{i=1}^{n+1}(-1)^{i-1}p_{i*}(t_{i,n+1}(b))\]
by Proposition \ref{easy composition1}, where
\[t_{i,n+1}:-T_{V\times_SU_X^{n+1}/V}\longrightarrow-(id_V\times_Sp_i)^*T_{(V\times_SU^n_X)/V}-T_{(V\times_SU_X^{n+1})/(V\times_SU_X^n)}\]
is the isomorphism as in the Proposition \ref{easy composition1} applying to
\[\xymatrixcolsep{5pc}\xymatrix{V\ar[r]^b&U^{n+1}_X\ar[r]^{p_i}&U^n_X}.\]
If \(1\leq i<n+1\), we have Cartesian squares
\[
	\xymatrix
	{
		W^{n+1}\ar[r]_-{p_{n+1}\circ j_{n+1}}\ar[d]_{p_i}	&(V\times_SU)^n_{V\times_SX}\ar[d]_{p_i}\\
		W^n\ar[r]^-{p_n\circ j_n}						&(V\times_SU)^{n-1}_{V\times_SX}
	}
	\xymatrix
	{
		W^{n+1}\ar[r]_-{j_{n+1}}\ar[d]_{p_i}	&(V\times_SU)^{n+1}_{V\times_SX}\ar[d]_{p_i}\\
		W^n\ar[r]^-{j_n}			&(V\times_SU)^n_{V\times_SX}.
	}
\]
So
\begin{align*}
	&p_{i*}(t_{i,n+1}(b))\\
=	&(p_{i*}\circ t_{i,n+1}\circ(j_{n+1}^*)^{-1}\circ(p_{n+1}\circ j_{n+1})^*)((a|_{(V\times_SU)^n_{V\times_SX}},-T_{V\times_SU_X^n/V}))\\
	&\textrm{by definition}\\
=	&(p_{i*}\circ(j_{n+1}^*)^{-1}\circ j_{n+1}^*(t_{i,n+1})\circ(p_{n+1}\circ j_{n+1})^*)(a|_{(V\times_SU)^n_{V\times_SX}})\\
	&\textrm{by functoriality of pullback with respect to twists}\\
=	&((j_n^*)^{-1}\circ p_{i*}\circ j_{n+1}^*(t_{i,n+1})\circ(p_{n+1}\circ j_{n+1})^*)(a|_{(V\times_SU)^n_{V\times_SX}})\\
	&\textrm{by Axiom \ref{BCSM} for the right square above}\\
=	&((j_n^*)^{-1}\circ p_{i*}\circ(p_{n+1}\circ j_{n+1})^*\circ t_{i,n})(a|_{(V\times_SU)^n_{V\times_SX}})\\
	&\textrm{by functoriality of pull-back with respect to twists}\\
=	&((j_n^*)^{-1}\circ(p_n\circ j_n)^*\circ p_{i*}\circ t_{i,n})(a|_{(V\times_SU)^n_{V\times_SX}})\\
	&\textrm{by Axiom \ref{BCSM} for the left square above.}
\end{align*}
Now let \(i=n+1\), we have
\begin{align*}
	&p_{n+1*}(t_{n+1,n+1}(b))\\
=	&(p_{n+1*}\circ t_{n+1,n+1}\circ(j_{n+1}^*)^{-1}\circ(p_{n+1}\circ j_{n+1})^*)((a|_{(V\times_SU)^n_{V\times_SX}},-T_{V\times_SU_X^n/V}))\\
	&\textrm{by definition}\\
=	&(p_{n+1*}\circ t_{n+1,n+1}\circ j_{n+1*}\circ(p_{n+1}\circ j_{n+1})^*)(a|_{(V\times_SU)^n_{V\times_SX}})\\
	&\textrm{by Axiom \ref{EE}}\\
=	&(p_{n+1*}\circ j_{n+1*}\circ j_{n+1}^*(t_{n+1,n+1})\circ(p_{n+1}\circ j_{n+1})^*)(a|_{(V\times_SU)^n_{V\times_SX}})\\
	&\textrm{by functoriality of push-forwards with respect to twists}\\
=	&((p_{n+1}\circ j_{n+1})_*\circ(p_{n+1}\circ j_{n+1})^*)(a|_{(V\times_SU)^n_{V\times_SX}})\\
	&\textrm{by Axiom \ref{FPFSM} and Lemma \ref{hardvvb1}, (2)}\\
=	&a|_{(V\times_SU)^n_{V\times_SX}}\\
	&\textrm{by Axiom \ref{EE}}.
\end{align*}
Hence 
\begin{align*}
	&d_{n+1}(b)\\
=	&((j_n^*)^{-1}\circ(p_n\circ j_n)^*\circ d_n)(a|_{(V\times_SU)^n_{V\times_SX}})+(-1)^na|_{(V\times_SU)^n_{V\times_SX}}\\
=	&(-1)^na|_{(V\times_SU)^n_{V\times_SX}}.
\end{align*}
So the complex is exact after Nisnevich sheafication.
\end{proof}
Then by the same proofs as in the \cite[1.2.7-1.2.11]{DF}, we have the following result:
\begin{proposition}\label{sheafication}
(1) The forgetful functor \(\widetilde{o}:\widetilde{Sh}(S)\longrightarrow\widetilde{PSh}(S)\) has a left adjoint \(\widetilde{a}\) such that the following diagram commutes:
\[
	\xymatrix
	{
		PSh(S)\ar[d]_a	&\widetilde{PSh}(S)\ar[l]_{\widetilde{\gamma}_*}\ar[d]_{\widetilde{a}}\\
		Sh(S)			&\widetilde{Sh}(S)\ar[l]_{\widetilde{\gamma}_*}
	},
\]
where \(a\) is the Nisnevich sheafication functor with respect to the smooth site over \(S\).\\
(2) The category \(\widetilde{Sh}(S)\) is a Grothendieck abelian category and the functor \(\widetilde{a}\) is exact.\\
(3) The functor \(\widetilde{\gamma}_*\) appearing in the lower line of the preceding diagram, admits a left adjoint \(\widetilde{\gamma}^*\), and commutes with every limits and colimits.
\end{proposition}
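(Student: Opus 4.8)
The plan is to transport the arguments of \cite[1.2.7--1.2.11]{DF} to the present setting. The only genuinely nonformal ingredient is the exactness of the \v{C}ech complex after Nisnevich sheafification, which is precisely Proposition \ref{Cech}, together with the local tools feeding it (Propositions \ref{limit}, \ref{Henselization} and \ref{splitting}). That $\widetilde{Cor}_S$ is an additive category and $\widetilde\gamma$ an additive functor was secured in Propositions \ref{associativity}, \ref{identity} and \ref{functor}, so $\widetilde{PSh}(S)$ is the Grothendieck abelian category of additive functors $\widetilde{Cor}_S^{op}\longrightarrow Ab$, generated by the representables $\widetilde c_S(X)$. The restriction functor $\widetilde\gamma_*:\widetilde{PSh}(S)\longrightarrow PSh(S)$ is exact and conservative, and, since $Ab$ is complete and cocomplete, it admits both a left adjoint $\widetilde\gamma^*_{PSh}$ (left Kan extension, characterised by $\widetilde\gamma^*_{PSh}\mathbb Z(X)=\widetilde c_S(X)$) and a right adjoint (right Kan extension); in particular $\widetilde\gamma_*$ preserves all limits and colimits at the presheaf level, a fact reused in (3).

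The heart of the statement is (1): that the ordinary sheafification $a(\widetilde\gamma_*F)$ of a presheaf with $E$-transfers carries a \emph{unique} structure of presheaf with $E$-transfers for which the sheafification unit $F\longrightarrow a(\widetilde\gamma_*F)$ is a morphism of $\widetilde{PSh}(S)$. Granting this, one sets $\widetilde\gamma_*(\widetilde aF)=a(\widetilde\gamma_*F)$ endowed with the transfer structure just produced, so the square commutes by construction; the adjunction $\widetilde a\dashv\widetilde o$ then follows because any morphism $F\longrightarrow\widetilde oG$ into a sheaf with transfers factors uniquely through the unit at the level of underlying presheaves, and this factorisation automatically respects transfers by the asserted uniqueness. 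To produce the transfer structure I would argue locally, after \cite{DF}: sections of $a(\widetilde\gamma_*F)$ are computed on Henselian local schemes by Propositions \ref{limit} and \ref{Henselization}, and the action of a correspondence $\alpha\in\widetilde{Cor}_S(Y,X)$ on such a stalk is forced once $\alpha$ is split into graph-type pieces after a Nisnevich refinement, which is available by Proposition \ref{splitting}. The delicate points are the independence of this action from the chosen refinement and splitting, and its compatibility with composition and with extension of supports; these are exactly what Proposition \ref{Cech} and \'etale excision (Axiom \ref{EE}) provide. This is the single step in which the topology and the correspondence-theory axioms genuinely intervene, and where I expect the bookkeeping to be most delicate.

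Granting (1), the remaining assertions are formal. For (3) I would set $\widetilde\gamma^*:=\widetilde a\circ\widetilde\gamma^*_{PSh}\circ o$, with $o:Sh(S)\longrightarrow PSh(S)$ the forgetful functor, and read off $\widetilde\gamma^*\dashv\widetilde\gamma_*$ on sheaves from the adjunctions $\widetilde\gamma^*_{PSh}\dashv\widetilde\gamma_*$, $a\dashv o$ and $\widetilde a\dashv\widetilde o$ together with the full faithfulness of $o$. Being a right adjoint, the sheaf-level $\widetilde\gamma_*$ then preserves limits; that it preserves colimits follows by computing colimits of sheaves as sheafifications of presheaf colimits and invoking the commuting square of (1) with the presheaf-level preservation noted above. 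For (2), $\widetilde{Sh}(S)$ is the reflective subcategory of the Grothendieck abelian category $\widetilde{PSh}(S)$ with reflector $\widetilde a$, so it is Grothendieck abelian once $\widetilde a$ is exact; and $\widetilde a$, right exact as a left adjoint, is left exact because the sheaf-level $\widetilde\gamma_*$ is left exact and conservative, hence reflects exactness, while $\widetilde\gamma_*\widetilde a=a\,\widetilde\gamma_*$ is exact as a composite of exact functors. Each of these steps is the literal analogue of \cite[1.2.7--1.2.11]{DF}, now available because Proposition \ref{Cech} supplies the required descent in our twisted, non-oriented framework.
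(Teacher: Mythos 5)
Your proposal is correct and follows essentially the same route as the paper, which simply invokes the proofs of \cite[1.2.7--1.2.11]{DF} with Proposition \ref{Cech} as the key non-formal input; you have merely spelled out the details (the unique transfer structure on the sheafification via Henselian stalks and the splitting result, and the formal deduction of (2) and (3)) that the paper leaves to the reference.
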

\begin{proof}
The same as \cite[Proposition 1.2.11]{DF}.
\end{proof}
\begin{definition}
Given any \(X\in Sm/S\), we define \(\widetilde{\mathbb{Z}}_S(X)=\widetilde{a}(\widetilde{c}_S(X))\). We denote \(\widetilde{\mathbb{Z}}_S(S)\) by \(\mathbbm{1}_S\).
\end{definition}
\begin{proposition}\label{MV-sequence}
Let \(X\in Sm/S\) and \(U_1\cup U_2=X\) be a Zariski covering. Then the following complex is exact as sheaves with \(E\)-transfers:
\[0\longrightarrow\widetilde{\mathbb{Z}}_S(U_1\cap U_2)\longrightarrow\widetilde{\mathbb{Z}}_S(U_1)\oplus\widetilde{\mathbb{Z}}_S(U_2)\longrightarrow\widetilde{\mathbb{Z}}_S(X)\longrightarrow 0.\]
\end{proposition}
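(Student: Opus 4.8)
The plan is to deduce this short exact sequence from the Čech resolution of Proposition \ref{Cech} applied to a two-element cover. Set $U=U_1\sqcup U_2$; since $U_1\cup U_2=X$ is a Zariski covering, the projection $p\colon U\longrightarrow X$ is a Nisnevich covering, so Proposition \ref{Cech} applies and the sheafified complex $\widetilde{a}\breve{C}(U/X)$ is exact. The first preparatory task is to record that $\widetilde{\mathbb{Z}}_S$ carries finite disjoint unions to direct sums: for $A,B\in Sm/S$ and any $Y$, the product $Y\times_S(A\sqcup B)$ is the disjoint union $(Y\times_SA)\sqcup(Y\times_SB)$, so an admissible support splits as $T=T_A\sqcup T_B$, and Axiom \ref{ES} yields $\widetilde{Cor}_S(Y,A\sqcup B)\cong\widetilde{Cor}_S(Y,A)\oplus\widetilde{Cor}_S(Y,B)$ (the twist by the relative tangent bundle restricts compatibly to each piece by locality, and finiteness over $Y$ is checked componentwise). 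As $\widetilde{a}$ is a left adjoint, this passes to $\widetilde{\mathbb{Z}}_S(A\sqcup B)\cong\widetilde{\mathbb{Z}}_S(A)\oplus\widetilde{\mathbb{Z}}_S(B)$.

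Next I would compute the terms of $\breve{C}(U/X)$ under this additivity. Because each $U_i$ is open in $X$, the $(n+1)$-fold fibre product decomposes as $U_X^{n+1}=\bigsqcup_{\mathbf{a}\in\{1,2\}^{n+1}}U_{a_0}\cap\cdots\cap U_{a_n}$, where the intersection equals $U_1$ if $\mathbf{a}=(1,\dots,1)$, equals $U_2$ if $\mathbf{a}=(2,\dots,2)$, and equals $U_1\cap U_2$ in every mixed case. Hence $\widetilde{\mathbb{Z}}_S(U)=\widetilde{\mathbb{Z}}_S(U_1)\oplus\widetilde{\mathbb{Z}}_S(U_2)$, and the face operators in low degrees are exactly the open inclusions $U_1\cap U_2\hookrightarrow U_i\hookrightarrow X$. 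In particular the two arrows in the statement are nothing but the degree-one differential and the augmentation of $\widetilde{a}\breve{C}(U/X)$, so the claimed sequence is the low-degree part of the Čech complex.

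Finally I would compare the total Čech complex with its ordered (alternating) version. For the totally ordered index set $\{1<2\}$ the ordered Čech complex is concentrated in degrees $0$ and $1$, namely $\widetilde{\mathbb{Z}}_S(U_1\cap U_2)$ in degree one and $\widetilde{\mathbb{Z}}_S(U_1)\oplus\widetilde{\mathbb{Z}}_S(U_2)$ in degree zero, augmented to $\widetilde{\mathbb{Z}}_S(X)$; this augmented ordered complex is precisely the sequence to be proved. The classical chain homotopy equivalence between the ordered and the total Čech complexes is formal — it uses only the simplicial face and degeneracy operators, which here are $\widetilde{\mathbb{Z}}_S$ of open immersions — so it lives inside the abelian category $\widetilde{Sh}(S)$ and transports the acyclicity of $\widetilde{a}\breve{C}(U/X)$ to the ordered complex. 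Exactness of the ordered complex at its three spots is then respectively the surjectivity of $\widetilde{\mathbb{Z}}_S(U_1)\oplus\widetilde{\mathbb{Z}}_S(U_2)\to\widetilde{\mathbb{Z}}_S(X)$, the middle exactness, and the injectivity of $\widetilde{\mathbb{Z}}_S(U_1\cap U_2)\to\widetilde{\mathbb{Z}}_S(U_1)\oplus\widetilde{\mathbb{Z}}_S(U_2)$.

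The genuine mathematical content is entirely contained in Proposition \ref{Cech}; given it, the rest is bookkeeping. Accordingly the main obstacle I anticipate is organizing the truncation correctly, that is, making the ordered-versus-total comparison rigorous in the setting of sheaves with $E$-transfers rather than invoking it as a black box. Concretely one must verify that the comparison maps and contracting homotopies are morphisms of complexes of sheaves with transfers; this holds because every map occurring in them is induced by an open immersion, hence lies in the image of $\widetilde{\gamma}$ and is respected by $\widetilde{\mathbb{Z}}_S$. Once this compatibility is checked, exactness of the three-term sequence follows formally.
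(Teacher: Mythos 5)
Your argument is correct and is essentially the paper's own proof: the paper disposes of this proposition by citing \cite[Proposition 6.14]{MVW} together with Proposition \ref{Cech}, and the argument behind that citation is precisely the one you spell out (apply the \v{C}ech resolution to the covering \(U_1\sqcup U_2\longrightarrow X\), use additivity of \(\widetilde{\mathbb{Z}}_S\) on disjoint unions via Axioms \ref{ES} and \ref{EE}, and identify the Mayer--Vietoris sequence with the ordered \v{C}ech complex through the standard homotopy equivalence). Your final observation --- that the comparison maps and homotopies are integer combinations of maps induced by open immersions, hence genuine morphisms of sheaves with \(E\)-transfers --- is exactly the point that makes the transfer-enriched version of the classical argument legitimate.
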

\begin{proof}
See \cite[Proposition 6.14]{MVW} with use of Proposition \ref{Cech}.
\end{proof}
\begin{definition}
Let \(X_i, Y_i\in Sm/S, i=1,2\), for any \(f_1\in\widetilde{Cor}_S(X_1,Y_1)\), \(f_2\in\widetilde{Cor}_S(X_2,Y_2)\). Define
\[f_1\times_Sf_2=p_1^*f_1\cdot p_2^*f_2\in\widetilde{Cor}(X_1\times_SX_2,Y_1\times_SY_2)\]
to be the exterior product of \(f_1\) and \(f_2\), where \(p_i:X_1\times_SX_2\times_SY_1\times_SY_2\longrightarrow X_i\times_SY_i, i=1,2\) are the projections. Here we have used the isomorphism \(-T_{X_1X_2Y_1Y_2/X_1X_2}\longrightarrow-T_{X_1X_2Y_1Y_2/X_1X_2Y_2}-T_{X_1X_2Y_1Y_2/X_1X_2Y_1}\).
\end{definition}
Now we are going to show that exterior products are compatible with compositions.
\begin{lemma}\label{exterior}
Let \(X_i, Y_i, Z_i\in Sm/S, i=1,2\), we have maps \(p_{13}^i:X_iY_iZ_i\longrightarrow X_iZ_i\), \(a_i:X_1X_2Y_1Y_2Z_1Z_2\longrightarrow X_iY_iZ_i\) and \(p_{13}:X_1X_2Y_1Y_2Z_1Z_2\longrightarrow X_1X_2Z_1Z_2\). Suppose \(\alpha_i\in E^{d_{Y_i}+d_{Z_i}}_{C_i}((p_{13}^i)^*v_i-T_{X_iY_iZ_i/X_iZ_i})\) where \(C_i\in\mathscr{A}_S(X_i,Y_iZ_i)\) and \(v_i\in\mathscr{P}_{X_iZ_i})\). Then we have
\[b_1^*p_{13*}^1(\alpha_1)\cdot b_2^*p_{13*}^2(\alpha_2)=p_{13*}(a^*_1(\alpha_1)\cdot a^*_2(\alpha_2)),\]
where we have used the isomorphism (exchanging the middle two terms and then merging the last two terms) from
\[a_1^*(p_{13}^1)^*v_1-T_{X_1X_2Y_1Y_2Z_1Z_2/X_1X_2Y_2Z_1Z_2}+a_2^*(p_{13}^2)^*v_2-T_{X_1X_2Y_1Y_2Z_1Z_2/X_1X_2Y_1Z_1Z_2}\]
to
\[p_{13}^*(b_1^*(v_1)+b_2^*(v_2))-T_{X_1X_2Y_1Y_2Z_1Z_2/X_1X_2Z_1Z_2}\]
in the right hand side.
\end{lemma}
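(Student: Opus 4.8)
The plan is to reduce the statement to the push--forward compatibilities already axiomatized for smooth morphisms---the projection formula (Axiom \ref{PFSM}), base change (Axiom \ref{BCSM}) and functoriality of push--forward (Axiom \ref{FPFSM})---by factoring the single projection \(p_{13}\) into two steps, each forgetting one middle factor. Write \(\pi_1:X_1X_2Y_1Y_2Z_1Z_2\longrightarrow X_1X_2Y_2Z_1Z_2\) for the projection omitting \(Y_1\) and \(\pi_2:X_1X_2Y_2Z_1Z_2\longrightarrow X_1X_2Z_1Z_2\) for the projection omitting \(Y_2\), so that \(p_{13}=\pi_2\circ\pi_1\). By Axiom \ref{FPFSM} one may replace \(p_{13*}\) by \(\pi_{2*}\circ\pi_{1*}\), up to the canonical identifications of relative tangent bundles coming from Lemma \ref{t}, (1). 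The reason a single Cartesian square does not suffice is precisely that \(p_{13}\) forgets \emph{both} \(Y\)-factors at once, so the two factors of the product must be handled one at a time.

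First I would treat \(\pi_{1*}\). The map \(a_2\) factors as \(a_2=\tilde a_2\circ\pi_1\) with \(\tilde a_2:X_1X_2Y_2Z_1Z_2\longrightarrow X_2Y_2Z_2\), so \(a_2^*(\alpha_2)=\pi_1^*(\tilde a_2^*(\alpha_2))\) is pulled back along \(\pi_1\); the projection formula (Axiom \ref{PFSM}, first diagram) then pulls this factor out of the push--forward, yielding \(\pi_{1*}(a_1^*(\alpha_1))\cdot\tilde a_2^*(\alpha_2)\). Moreover the square
\[\xymatrix{X_1X_2Y_1Y_2Z_1Z_2\ar[r]^-{a_1}\ar[d]_{\pi_1} & X_1Y_1Z_1\ar[d]^{p_{13}^1}\\ X_1X_2Y_2Z_1Z_2\ar[r]^-{c} & X_1Z_1}\]
is Cartesian, where \(c\) is the projection, so Axiom \ref{BCSM} identifies \(\pi_{1*}\circ a_1^*\) with \(c^*\circ p_{13*}^1\). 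This reduces the right--hand side to the product \(c^*p_{13*}^1(\alpha_1)\cdot\tilde a_2^*(\alpha_2)\) on \(X_1X_2Y_2Z_1Z_2\).

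Next I would apply the same two axioms to \(\pi_{2*}\). Now \(c^*p_{13*}^1(\alpha_1)\) is pulled back along \(\pi_2\) (since \(c\) factors through \(\pi_2\)), so the projection formula (Axiom \ref{PFSM}, second diagram) pulls it out, while base change along the Cartesian square
\[\xymatrix{X_1X_2Y_2Z_1Z_2\ar[r]^-{\tilde a_2}\ar[d]_{\pi_2} & X_2Y_2Z_2\ar[d]^{p_{13}^2}\\ X_1X_2Z_1Z_2\ar[r]^-{b_2} & X_2Z_2}\]
turns \(\pi_{2*}\tilde a_2^*(\alpha_2)\) into \(b_2^*p_{13*}^2(\alpha_2)\); simultaneously the pulled--out factor becomes \(b_1^*p_{13*}^1(\alpha_1)\). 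Assembling the two stages produces exactly \(b_1^*p_{13*}^1(\alpha_1)\cdot b_2^*p_{13*}^2(\alpha_2)\), the left--hand side. Throughout, the supports are controlled by the admissibility of \(C_1,C_2\) together with Axiom \ref{ES}, so I do not expect any difficulty on that front.

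The main obstacle, as everywhere in this section, is matching the twists. Each invocation of Axiom \ref{PFSM}, \ref{BCSM} and \ref{FPFSM} inserts a commutativity isomorphism \(c(-,-)\) of \(\mathscr{P}\) and an identification of relative tangent bundles built from Lemma \ref{t}, and one must verify that their total composite coincides with the prescribed isomorphism in the statement---exchange the two middle summands and then merge the last two. I expect to check this by transcribing the twist equation into a diagram of virtual vector bundles over \(X_1X_2Y_1Y_2Z_1Z_2\) and applying Theorem \ref{four diagrams}, exactly in the spirit of Proposition \ref{easy composition1} and Lemma \ref{hardvvb1}; keeping track of the reordering forced by the second (right--handed) projection formula, together with any signs produced by conditional commutativity (Axiom \ref{CC}) when the product factors are permuted, is the most delicate piece of bookkeeping, and I would organize that verification into a single commutative diagram rather than computing the composite stepwise.
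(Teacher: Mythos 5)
Your proposal is correct and is essentially the paper's own argument: the paper likewise factors \(p_{13}\) through an intermediate five-fold product (removing \(Y_2\) first via \(q\) and then \(Y_1\) via \(p_{1245}\), the mirror image of your order), and at each stage applies Axiom \ref{BCSM} to the corresponding Cartesian square together with the appropriate one-sided projection formula of Axiom \ref{PFSM}, reassembling with Axioms \ref{FPB} and \ref{FPFSM}. Since the two one-sided projection formulas are used without ever permuting the product factors, no conditional-commutativity signs actually arise.
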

\begin{proof}
We have two Cartesian squares
\[
	\xymatrix
	{
		X_2Y_2Z_2\ar[r]^{p_{13}^2}				&X_2Z_2\\
		X_1X_2Y_1Y_2Z_1Z_2\ar[u]^{a_2}\ar[r]^q	&X_1X_2Y_1Z_1Z_2\ar[u]^{p_{25}}
	}
	\xymatrix
	{
		X_1Y_1Z_1\ar[r]^{p_{13}^1}				&X_1Z_1\\
		X_1X_2Y_1Z_1Z_2\ar[r]^{p_{1245}}\ar[u]^{p}	&X_1X_2Z_1Z_2\ar[u]^{b_1}
	}
\]
and equations \(p_{25}=b_2\circ p_{1245}\), \(p\circ q=a_1\) and \(p_{13}=p_{1245}\circ q\). Then we have
\begin{align*}
	&b_1^*p_{13*}^1((\alpha_1,(p_{13}^1)^*v_i-T_{X_1Y_1Z_1/X_1Z_1}))\cdot b_2^*p_{13*}^2(\alpha_2)\\
=	&(p_{1245})_*p^*(\alpha_1)\cdot b_2^*p_{13*}^2(\alpha_2)\\
	&\textrm{by Axiom \ref{BCSM} for the right square above}\\
=	&(p_{1245})_*(p^*(\alpha_1)\cdot p_{1245}^*b_2^*p_{13*}^2(\alpha_2))\\
	&\textrm{by Axiom \ref{PFSM} for \(p_{1245}\)}\\
=	&(p_{1245})_*(p^*(\alpha_1)\cdot p_{25}^*p_{13*}^2(\alpha_2))\\
	&\textrm{by Axiom \ref{FPB}}\\
=	&(p_{1245})_*(p^*(\alpha_1)\cdot q_*a_2^*(\alpha_2))\\
	&\textrm{by Axiom \ref{BCSM} for the left square above}\\
=	&(p_{1245})_*q_*(q^*p^*(\alpha_1)\cdot a_2^*(\alpha_2))\\
	&\textrm{by Axiom \ref{PFSM} for \(q\)}\\
=	&(p_{1245})_*q_*(a_1^*(\alpha_1)\cdot a_2^*(\alpha_2))\\
	&\textrm{by Axiom \ref{FPB}}\\
=	&p_{13*}(a_1^*(\alpha_1)\cdot a_2^*(\alpha_2))\\
	&\textrm{by Axiom \ref{FPFSM}}.
\end{align*}
\end{proof}
\begin{proposition}
Let \(X_i, Y_i, Z_i\in Sm/S\), \(f_i\in\widetilde{Cor}_S(X_i,Y_i)\), \(g_i\in\widetilde{Cor}_S(Y_i,Z_i)\) where \(i=1,2\). Then
\[(g_1\times_Sg_2)\circ(f_1\times_Sf_2)=(g_1\circ f_1)\times_S(g_2\circ f_2).\]
\end{proposition}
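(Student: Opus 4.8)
The plan is to reduce both sides to a single push-forward along $p_{13}:X_1X_2Y_1Y_2Z_1Z_2\longrightarrow X_1X_2Z_1Z_2$ of a four-fold product, and then to match the two integrands by reordering the factors. First I would unfold the right-hand side. Writing $\alpha_i=p^{X_iY_iZ_i*}_{Y_iZ_i}(g_i)\cdot p^{X_iY_iZ_i*}_{X_iY_i}(f_i)$ for the product defining $g_i\circ f_i$, so that $g_i\circ f_i=p^i_{13*}(\alpha_i)$ with $v_i=-T_{X_iZ_i/X_i}$, the definition of the exterior product gives $(g_1\circ f_1)\times_S(g_2\circ f_2)=b_1^*p^1_{13*}(\alpha_1)\cdot b_2^*p^2_{13*}(\alpha_2)$. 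By Lemma \ref{exterior} this equals $p_{13*}\big(a_1^*(\alpha_1)\cdot a_2^*(\alpha_2)\big)$.

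Next I would unfold the left-hand side directly from the definition of composition over the big base: $(g_1\times_Sg_2)\circ(f_1\times_Sf_2)$ is the push-forward along $p_{13}$ of the product of the pull-backs of $g_1\times_Sg_2$ and of $f_1\times_Sf_2$ to $X_1X_2Y_1Y_2Z_1Z_2$. Thus it suffices to prove that, inside $E$ over the big space and with matching twists and supports,
\[a_1^*(\alpha_1)\cdot a_2^*(\alpha_2)=\big(\text{pull-back of }g_1\times_Sg_2\big)\cdot\big(\text{pull-back of }f_1\times_Sf_2\big).\]
To this end I would expand $a_i^*(\alpha_i)=a_i^*p^{X_iY_iZ_i*}_{Y_iZ_i}(g_i)\cdot a_i^*p^{X_iY_iZ_i*}_{X_iY_i}(f_i)$ using Axiom \ref{CPB}, and likewise expand each pull-back of an exterior product into a product of two pull-backs using Axiom \ref{CPB} together with the functoriality Axiom \ref{FPB}. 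Both sides then become a four-fold product of the same factors $[g_1],[f_1],[g_2],[f_2]$ (all pulled back to the big space), occurring in the orders $[g_1][f_1][g_2][f_2]$ and $[g_1][g_2][f_1][f_2]$ respectively. Using associativity (Axiom \ref{A}) to regroup and conditional commutativity (Axiom \ref{CC}) to interchange the adjacent factors $[f_1]$ and $[g_2]$, I pass from one order to the other.

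The observation that makes Axiom \ref{CC} applicable is a parity computation: every finite correspondence $h\in\widetilde{Cor}_S(A,B)=E^{d_B-d_S}(A\times_SB,-T_{A\times_SB/A})$ has degree-plus-rank equal to $(d_B-d_S)+\big(-(d_B-d_S)\big)=0$, since $T_{A\times_SB/A}$ has rank $d_B-d_S$; hence each of the four factors satisfies $i+rk(v)\equiv 0\pmod 2$, and in particular the product of the parities of $[f_1]$ and $[g_2]$ vanishes in $\mathbb{Z}/2\mathbb{Z}$, so the interchange is legitimate.

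The main obstacle, and the only genuine content, is the bookkeeping of twists. Each application of Axiom \ref{CC} introduces a commutativity isomorphism $c(-,-)$ on the twists, and I must verify that the composite of these commutativity isomorphisms, together with the associativity isomorphisms from Axiom \ref{A}, coincides with the twist identifications already built into the definition of $\times_S$ (the splitting $-T_{X_1X_2Y_1Y_2Z_1Z_2/X_1X_2Z_1Z_2}\to -T_{\cdots/X_1X_2Y_2Z_1Z_2}-T_{\cdots/X_1X_2Y_1Z_1Z_2}$, and the analogous splittings in the factors) and into Lemma \ref{exterior} (the exchange-and-merge isomorphism). This is a diagram chase in the category of virtual vector bundles, carried out with Theorem \ref{four diagrams} and the commutativity and associativity constraints of $\mathscr{P}$, in the style of Section \ref{Virtual Objects and Their Calculation}. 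I expect it to be the longest part of the argument but to present no conceptual difficulty once the relevant exact sequences of tangent bundles over the iterated fibre products are written down.
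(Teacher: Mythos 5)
Your proposal is correct and follows essentially the same route as the paper: unfold both sides via the definition of composition and Lemma \ref{exterior}, expand into a four-fold product by Axioms \ref{CPB} and \ref{FPB}, and pass between the orderings $[g_1][g_2][f_1][f_2]$ and $[g_1][f_1][g_2][f_2]$ by Axiom \ref{CC}, with the same parity observation (each finite correspondence has degree plus rank congruent to $0$ modulo $2$) justifying the swap. The paper likewise records only the single commutativity isomorphism $c\bigl(a_1^*(p_{12}^1)^*(-T_{X_1Y_1/X_1}),a_2^*(p_{23}^2)^*(-T_{Y_2Z_2/Y_2})\bigr)$ and absorbs the remaining twist identifications into the definition of $\times_S$ and Lemma \ref{exterior}, exactly as you anticipate.
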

\begin{proof}
We have a commutative diagram \((i=1,2)\)
\[
	\xymatrix
	{
		Y_1Y_2Z_1Z_2\ar[r]^-{q_i\times r_i}										&Y_iZ_i 											&\\
		X_1X_2Y_1Y_2Z_1Z_2\ar[u]^{p_{23}}\ar[d]^{p_{12}}\ar[r]^-{a_i}\ar[rrd]_<<<<<<{p_{13}}	&X_iY_iZ_i\ar[r]^{p_{13}^i}\ar[u]_{p_{23}^i}\ar[d]^<<{p_{12}^i}	&X_iZ_i\\
		X_1X_2Y_1Y_2\ar[r]^-{p_i\times q_i}										&X_iY_i 											&X_1X_2Z_1Z_2\ar[u]_{b_i}
	}.
\]
Then 
\begin{align*}
	&(g_1\times_Sg_2)\circ(f_1\times_Sf_2)\\
=	&p_{13*}(p^*_{23}((q_1\times r_1)^*g_1\cdot(q_2\times r_2)^*g_2)\cdot p^*_{12}((q_1\times r_1)^*f_1\cdot(q_2\times r_2)^*f_2)\\
	&\textrm{by definition}\\
=	&p_{13*}(a_1^*(p_{23}^1)^*(g_1)\cdot a_2^*(p_{23}^2)^*(g_2)\cdot a_1^*(p_{12}^1)^*(f_1)\cdot a_2^*(p_{12}^2)^*(f_2))\\
	&\textrm{by Axiom \ref{CPB} and Axiom \ref{FPB}.}\\
%
%
=	&p_{13*}(c(a_1^*(p_{23}^1)^*(g_1)\cdot a_1^*(p_{12}^1)^*(f_1)\cdot a_2^*(p_{23}^2)^*(g_2)\cdot a_2^*(p_{12}^2)^*(f_2)))\\
	&\textrm{by Axiom \ref{CC} and Axiom \ref{BCCI}, \(c=c(a_1^*(p_{12}^1)^*(-T_{X_1Y_1/X_1}),a_2^*(p_{23}^2)^*(-T_{Y_2Z_2/Y_2}))\)}\\
=	&p_{13*}(c(a_1^*((p_{23}^1)^*(g_1)\cdot (p_{12}^1)^*(f_1))\cdot a_2^*((p_{23}^2)^*(g_2)\cdot (p_{12}^2)^*(f_2))))\\
	&\textrm{by Axiom \ref{CPB}}\\
=	&b_1^*p_{13*}^1((p_{23}^1)^*(g_1)\cdot (p_{12}^1)^*(f_1))\cdot b_2^*p_{13*}^2((p_{23}^2)^*(g_2)\cdot (p_{12}^2)^*(f_2))\\
	&\textrm{by Lemma \ref{exterior}}\\
=	&b_1^*(g_1\circ f_1)\cdot b_2^*(g_2\circ f_2)\\
	&\textrm{by definition}\\
=	&(g_1\circ f_1)\times_S(g_2\circ f_2)\\
	&\textrm{by definition.}
\end{align*}
%
%
\end{proof}

Here are some basic constructions of presheaves or sheaves with \(E\)-transfers.

For any \(F\in\widetilde{PSh}(S)\) and \(X\in Sm/S\), we define \(F^X\in\widetilde{PSh}(S)\) by \(F^X(Y)=F(X\times_SY)\). If \(F\in\widetilde{Sh}(S)\), then it's clear that \(F^X\in\widetilde{Sh}(S)\) also. We define \(C_*F\) for any \(F\in\widetilde{Sh}(S)\) to be a complex with \((C_*F)_n=F^{\triangle^n}\) as in \cite[Definition 2.14]{MVW}.

A pointed scheme is a pair \((X,x)\) where \(X\in Sm/S\) and \(x:S\longrightarrow X\) is a S-rational point. 
For any pointed scheme \((X,x)\), we define
\[\widetilde{\mathbb{Z}}_S(X,x)=Coker(\xymatrix{\mathbbm{1}_S\ar[r]^-x&\widetilde{\mathbb{Z}}_S(X)}).\]
Then we define \(\widetilde{\mathbb{Z}}_S(q)=\widetilde{\mathbb{Z}}_S(\mathbb{G}_m,1)^{\otimes q}[-q]\) for \(q\geq 0\) and we set \(\widetilde{\mathbb{Z}}_S=\widetilde{\mathbb{Z}}_S(0)=\mathbbm{1}_S\). The notation like \(\widetilde{\mathbb{Z}}_S(q)[2q]\) means \((\widetilde{\mathbb{Z}}_S(1)[2])^{\otimes q}\).
%
%
%

The following definitions comes from \cite[Lemma 2.1]{SV}.
\begin{definition}
Suppose \(F_i, G\in\widetilde{PSh}(S), i=1,\cdots,n (n\geq 2)\). A multilinear function \(\varphi:F_1\times\cdots\times F_n\longrightarrow G\) is a collection of multilinear maps of abelian groups
\[\varphi_{(X_1,\cdots,X_n)}:F_1(X_1)\times\cdots\times F_n(X_n)\longrightarrow G(X_1\times_S\cdots\times_SX_n)\]
for every \(X_i\in Sm/S\), such that for every \(f\in\widetilde{Cor}_S(X_i,X_i')\), we have a commutative diagram
\[
	\xymatrix
	{
		\cdots\times F_i(X_i')\times\cdots\ar[r]^-{\varphi(\cdots,X_i',\cdots)}\ar[d]_{\cdots\times f\times\cdots}	&G(\cdots\times_SX_i'\times_S\cdots)\ar[d]_{G(\cdots\times f\times\cdots)}\\
		\cdots\times F_i(X_i)\times\cdots\ar[r]^-{\varphi(\cdots,X_i,\cdots)}								&G(\cdots\times_SX_i\times_S\cdots)
	}.
\]
\end{definition}
\begin{definition}
Suppose \(F_i, G\in\widetilde{PSh}(S), i=1,\cdots,n (n\geq 2)\) (resp. \(\widetilde{Sh}(S)\)). The tensor product \(F_1\otimes_S^{pr}\cdots\otimes_S^{pr}F_n\) (resp. \(F_1\otimes_S\cdots\otimes_SF_n\)) is a presheaf (resp. sheaf) with \(E\)-transfers \(G\) such that for any \(H\in\widetilde{PSh}(S)\) (resp. \(\widetilde{Sh}(S)\)), we have
\[Hom_S(G,H)\cong\{\textrm{Multilinear functions }F_1\times\cdots\times F_n\longrightarrow H\}\]
naturally.
\end{definition}
For any \(F, G\in\widetilde{PSh}(S)\), we can construct \(F\otimes^{pr}_SG\in\widetilde{PSh}(S)\) as in the discussion before \cite[Lemma 2.1]{SV}. It has the universal property above. Moreover, we define \(\underline{Hom}_S(F,G)\) to be a presheaf with \(E\)-transfers which sends \(X\in Sm/S\) to \(Hom_S(F,G^X)\). And if \(F, G\) are sheaves with \(E\)-transfers, we define \(F\otimes_SG=\widetilde{a}(F\otimes^{pr}_SG)\). If \(G\) is a sheaf with \(E\)-transfers, it's clear that \(\underline{Hom}_S(F,G)\) is also a sheaf with \(E\)-transfers. Finally, it's clear from definition that \(F\otimes^{pr}_SG\cong G\otimes^{pr}_SF\) and \(F\otimes_SG\cong G\otimes_SF\).
\begin{proposition}\label{hom-tensor}
For any \(F,G,H\in\widetilde{PSh}(S)\), we have isomorphisms
\[Hom_S(F\otimes_S^{pr}G,H)\cong Hom_S(F,\underline{Hom}_S(G,H)),\]
\[Hom_S(F\otimes_S^{pr}G,H)\cong Hom_S(G,\underline{Hom}_S(F,H))\]
being functorial in three variables. Simililarly, for any \(F,G,H\in\widetilde{Sh}(S)\), we have isomorphisms
\[Hom_S(F\otimes_SG,H)\cong Hom_S(F,\underline{Hom}_S(G,H)),\]
\[Hom_S(F\otimes_SG,H)\cong Hom_S(G,\underline{Hom}_S(F,H))\]
being functorial in three variables.
\end{proposition}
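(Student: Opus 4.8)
The plan is to derive everything from the universal property of \(\otimes_S^{pr}\). By definition \(Hom_S(F\otimes_S^{pr}G,H)\) is naturally identified with the set of bilinear functions \(\varphi=\{\varphi_{(X,Y)}\}:F\times G\longrightarrow H\), so the first isomorphism reduces to producing a natural bijection between such \(\varphi\) and elements of \(Hom_S(F,\underline{Hom}_S(G,H))\). The bijection is ``currying'': given \(\varphi\), I send it to the morphism \(\Phi\) whose component \(\Phi_X:F(X)\longrightarrow\underline{Hom}_S(G,H)(X)=Hom_S(G,H^X)\) assigns to \(a\in F(X)\) the family \(\Phi_X(a)_Y=\varphi_{(X,Y)}(a,-):G(Y)\longrightarrow H(X\times_SY)=H^X(Y)\); conversely, given \(\Phi\), I set \(\varphi_{(X,Y)}(a,b)=\Phi_X(a)_Y(b)\). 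Additivity in each slot of \(\varphi\) corresponds exactly to \(\Phi_X\) and each \(\Phi_X(a)_{(-)}\) being additive, so these two operations are mutually inverse on the level of underlying families; the whole content is to check that they preserve the two layers of transfer-compatibility.

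There are two naturality conditions to verify. First, for fixed \(a\in F(X)\), \(\Phi_X(a)\) must be a morphism in \(\widetilde{PSh}(S)\): for \(g\in\widetilde{Cor}_S(Y,Y')\) the square relating \(\Phi_X(a)_{Y'}\), \(G(g)\), \(H^X(g)=H(\widetilde{\gamma}(id_X)\times_Sg)\) and \(\Phi_X(a)_Y\) must commute, and this is precisely the compatibility of \(\varphi\) with the transfer \(g\) in its second variable, read off the defining diagram of a multilinear function. Second, \(\Phi\) itself must be a morphism in \(\widetilde{PSh}(S)\): for \(f\in\widetilde{Cor}_S(X,X')\) the square relating \(\Phi_{X'}\), \(F(f)\), the transfer \(\underline{Hom}_S(G,H)(f)\) and \(\Phi_X\) must commute. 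Here one must first spell out the transfer structure of \(\underline{Hom}_S(G,H)\): the map \(\underline{Hom}_S(G,H)(f)\) is induced by the morphism \(H^{X'}\longrightarrow H^X\) with \(Y\)-component \(H(f\times_S\widetilde{\gamma}(id_Y))\), whose naturality in \(Y\) rests on the compatibility of exterior products with composition established in the proposition just above. I expect this second check to be the main obstacle, since it is exactly the point where the definition of transfers on the internal Hom meets the first-variable clause of \(\varphi\); once the transfer structure on \(\underline{Hom}_S(G,H)\) is unwound, the square collapses to that clause of the multilinear diagram.

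The second isomorphism \(Hom_S(F\otimes_S^{pr}G,H)\cong Hom_S(G,\underline{Hom}_S(F,H))\) then follows from the first by the symmetry \(F\otimes_S^{pr}G\cong G\otimes_S^{pr}F\) noted after the definition of the tensor product, applied with the roles of \(F\) and \(G\) exchanged. Functoriality of both bijections in the three variables is routine: the currying and uncurrying maps are given by explicit formulas that are visibly compatible with pre- and post-composition by morphisms in \(\widetilde{PSh}(S)\), so only bookkeeping remains.

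For the sheaf statements I would pass through the adjunction \(\widetilde{a}\dashv\widetilde{o}\) of Proposition \ref{sheafication}. For \(F,G,H\in\widetilde{Sh}(S)\) and \(F\otimes_SG=\widetilde{a}(F\otimes_S^{pr}G)\) one has \(Hom_S(F\otimes_SG,H)\cong Hom_S(F\otimes_S^{pr}G,\widetilde{o}H)\), and the presheaf isomorphism already proved gives \(Hom_S(F\otimes_S^{pr}G,\widetilde{o}H)\cong Hom_S(F,\underline{Hom}_S(G,\widetilde{o}H))\). Since \(H\) is a sheaf, \(\underline{Hom}_S(G,H)\) is again a sheaf with \(E\)-transfers, and because \(\widetilde{Sh}(S)\) is a full subcategory of \(\widetilde{PSh}(S)\), this last group coincides with \(Hom_S(F,\underline{Hom}_S(G,H))\) computed in \(\widetilde{Sh}(S)\). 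The same argument with \(F\) and \(G\) interchanged yields the remaining sheaf isomorphism, and functoriality in the three variables is inherited from the presheaf case.
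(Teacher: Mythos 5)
Your proposal is correct and is essentially an expanded version of the paper's own argument: the paper simply declares the isomorphisms "clear from the definition of the bilinear map," which is precisely the currying/uncurrying bijection you spell out, together with the passage to sheaves via the adjunction $\widetilde{a}\dashv\widetilde{o}$ and the fact that $\underline{Hom}_S(G,H)$ is a sheaf when $H$ is. The extra naturality checks you identify are the right ones and fill in what the paper leaves implicit.
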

\begin{proof}
This is clear from the definition of the bilinear map.
\end{proof}

Moreover, if we have \(F, G, H\in\widetilde{Sh}(S)\), it's easy to see by the above proposition that \((F\otimes_SG)\otimes_SH\) and \(F\otimes_S(G\otimes_SH)\) are all isomorphic to \(F\otimes_SG\otimes_SH\). So the tensor product is associative. And finally one checks that \(\otimes_S\) (resp. \(\otimes_S^{pr}\)) gives \(\widetilde{Sh}(S)\) (resp. \(\widetilde{PSh}(S)\)) a symmetric closed monoidal structure.
\begin{proposition}\label{sheafication1}
If a morphism \(f:F_1\longrightarrow F_2\) of presheaves with \(E\)-transfers becomes an isomorphism after sheafifying, then so does the morphism \(f\otimes^{pr}_SG\) for any presheaf with \(E\)-transfers \(G\).
\end{proposition}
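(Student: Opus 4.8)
The plan is to avoid tensoring the kernel and cokernel of $f$ directly. Since $-\otimes^{pr}_SG$ is only right exact — it is a left adjoint, to $\underline{Hom}_S(G,-)$, by Proposition \ref{hom-tensor} — tensoring need not preserve the local triviality of $\ker f$, so the naive ``kernel--cokernel'' argument breaks down at exactly the left-exactness point. Instead I would probe the morphism $\widetilde{a}(f\otimes^{pr}_SG)$ by mapping it into all sheaves and invoke Yoneda in $\widetilde{Sh}(S)$, thereby converting the assertion into one about precomposition with $\widetilde{a}(f)$, which is invertible by hypothesis.

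Concretely, I would first record the three adjunctions in play: the sheafification adjunction $\widetilde{a}\dashv\widetilde{o}$ from Proposition \ref{sheafication}; the tensor--hom adjunction $Hom_S(F\otimes^{pr}_SG,H)\cong Hom_S(F,\underline{Hom}_S(G,H))$ from Proposition \ref{hom-tensor}; and the structural fact recorded just before Proposition \ref{hom-tensor} that $\underline{Hom}_S(G,H)$ is a \emph{sheaf} with $E$-transfers whenever $H$ is one. Then, for an arbitrary $H\in\widetilde{Sh}(S)$, I would assemble the chain of isomorphisms, natural in $H$,
\[
Hom_S(\widetilde{a}(F_i\otimes^{pr}_SG),H)\cong Hom_S(F_i\otimes^{pr}_SG,\widetilde{o}H)\cong Hom_S(F_i,\underline{Hom}_S(G,\widetilde{o}H))\cong Hom_S(\widetilde{a}F_i,\underline{Hom}_S(G,H)),
\]
where the first and third isomorphisms use $\widetilde{a}\dashv\widetilde{o}$ and the middle one is Proposition \ref{hom-tensor}.

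The decisive feature of this chain is that the map induced on it by $f\otimes^{pr}_SG$ becomes simply precomposition with $\widetilde{a}(f)$ on the group $Hom_S(\widetilde{a}F_i,\underline{Hom}_S(G,H))$. Because $\widetilde{a}(f)$ is an isomorphism, this precomposition is a bijection for every $H\in\widetilde{Sh}(S)$, compatibly with $H$; the Yoneda lemma in $\widetilde{Sh}(S)$ (a Grothendieck abelian category by Proposition \ref{sheafication}(2)) then forces $\widetilde{a}(f\otimes^{pr}_SG)$ itself to be an isomorphism. Notably, this route needs no reduction of $G$ to representable objects and no analysis of higher Tor terms.

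The one step I would treat with care — and where the apparent right-exactness obstruction is actually absorbed — is the identification $\underline{Hom}_S(G,\widetilde{o}H)=\widetilde{o}\,\underline{Hom}_S(G,H)$ with the right-hand side a genuine object of $\widetilde{Sh}(S)$, which is what licenses the final application of $\widetilde{a}\dashv\widetilde{o}$. This rests on the remark preceding Proposition \ref{hom-tensor}, whose underlying content is that $X\times_S-$ sends Nisnevich covers to Nisnevich covers, so that internal homs into sheaves remain sheaves. Once this sheaf-ness is secured, the rest of the argument is purely formal.
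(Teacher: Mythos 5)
Your argument is correct and is essentially the paper's own proof: the paper likewise reduces the statement to the claim that $Hom_S(f,H)$ is bijective for every sheaf $H$ and then applies the tensor--hom adjunction $Hom_S(f\otimes^{pr}_SG,H)\cong Hom_S(f,\underline{Hom}_S(G,H))$, using that $\underline{Hom}_S(G,H)$ is again a sheaf. You have merely spelled out the adjunction chain and the Yoneda step that the paper leaves implicit.
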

\begin{proof}
The condition is equivalent to the map \(Hom_S(f,H)\) is an isomorphism between abelian groups for any sheaf with \(E\)-transfers \(H\). And
\[Hom_S(f\otimes^{pr}_SG,H)\cong Hom_S(f,\underline{Hom}_S(G,H))\]
by the proposition above.
\end{proof}
\begin{proposition}\label{tensor}
For any \(X, Y\in Sm/S\), we have
\[\widetilde{\mathbb{Z}}_S(X)\otimes_S\widetilde{\mathbb{Z}}_S(Y)\cong\widetilde{\mathbb{Z}}_S(X\times_SY)\]
as sheaves with \(E\)-transfers.
\end{proposition}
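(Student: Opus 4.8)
The plan is to first establish the statement at the level of presheaves, namely
\[
\widetilde{c}_S(X)\otimes^{pr}_S\widetilde{c}_S(Y)\cong\widetilde{c}_S(X\times_SY)
\]
in $\widetilde{PSh}(S)$, and then to transport it to $\widetilde{Sh}(S)$ by sheafification. For the presheaf statement I would use the universal property defining $\otimes^{pr}_S$ together with the Yoneda lemma for presheaves with $E$-transfers, which gives $Hom_S(\widetilde{c}_S(W),H)\cong H(W)$ naturally for every $W\in Sm/S$ and $H\in\widetilde{PSh}(S)$. For a fixed $H$, I would send a bilinear function $\varphi:\widetilde{c}_S(X)\times\widetilde{c}_S(Y)\longrightarrow H$ to the element $\varphi_{(X,Y)}(\widetilde{\gamma}(id_X),\widetilde{\gamma}(id_Y))\in H(X\times_SY)$; conversely, to $h\in H(X\times_SY)$ I would attach the function whose value on $(\alpha,\beta)\in\widetilde{Cor}_S(U,X)\times\widetilde{Cor}_S(V,Y)$ is $H(\alpha\times_S\beta)(h)\in H(U\times_SV)$.

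The verification that these two assignments are mutually inverse and natural in $H$ is where the compatibility of exterior products with compositions proved above enters. Naturality of $h\mapsto(H(\alpha\times_S\beta)(h))$ in the first variable, for $f\in\widetilde{Cor}_S(U,U')$ and $\alpha\in\widetilde{Cor}_S(U',X)$, amounts to the identity
\[
(\alpha\circ f)\times_S\beta=(\alpha\times_S\beta)\circ(f\times_S\widetilde{\gamma}(id_V)),
\]
which follows since $(\alpha\times_S\beta)\circ(f\times_S\widetilde{\gamma}(id_V))=(\alpha\circ f)\times_S(\beta\circ\widetilde{\gamma}(id_V))$ and $\beta\circ\widetilde{\gamma}(id_V)=\beta$ by Proposition \ref{identity}; the second variable is symmetric, and bilinearity is immediate because the exterior product is additive in each entry. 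By Yoneda we then get $Hom_S(\widetilde{c}_S(X)\otimes^{pr}_S\widetilde{c}_S(Y),H)\cong Hom_S(\widetilde{c}_S(X\times_SY),H)$ functorially in $H$, whence the presheaf isomorphism.

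It remains to sheafify. By definition $\widetilde{\mathbb{Z}}_S(X)\otimes_S\widetilde{\mathbb{Z}}_S(Y)=\widetilde{a}(\widetilde{a}(\widetilde{c}_S(X))\otimes^{pr}_S\widetilde{a}(\widetilde{c}_S(Y)))$. The unit maps $\widetilde{c}_S(X)\longrightarrow\widetilde{a}(\widetilde{c}_S(X))$ and $\widetilde{c}_S(Y)\longrightarrow\widetilde{a}(\widetilde{c}_S(Y))$ become isomorphisms after applying $\widetilde{a}$, so by Proposition \ref{sheafication1}, applied once in each variable and using $\widetilde{c}_S(X)\otimes^{pr}_SG\cong G\otimes^{pr}_S\widetilde{c}_S(X)$ to switch sides, the induced map
\[
\widetilde{c}_S(X)\otimes^{pr}_S\widetilde{c}_S(Y)\longrightarrow\widetilde{a}(\widetilde{c}_S(X))\otimes^{pr}_S\widetilde{a}(\widetilde{c}_S(Y))
\]
is an isomorphism after sheafification. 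Combining this with the presheaf isomorphism above yields
\[
\widetilde{\mathbb{Z}}_S(X)\otimes_S\widetilde{\mathbb{Z}}_S(Y)\cong\widetilde{a}(\widetilde{c}_S(X)\otimes^{pr}_S\widetilde{c}_S(Y))\cong\widetilde{a}(\widetilde{c}_S(X\times_SY))=\widetilde{\mathbb{Z}}_S(X\times_SY).
\]

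The main obstacle is the presheaf-level computation: one must check carefully that the exterior product $\times_S$ produces a well-defined bilinear function landing in the correct twisted group and that it is natural in both variables. All the needed input --- associativity and conditional commutativity of the product, functoriality of pull-back and push-forward, and above all the compatibility of $\times_S$ with composition --- has already been assembled, so the remaining work is bookkeeping of twists rather than new geometry; once the presheaf identity is in hand, the passage to sheaves is formal through Proposition \ref{sheafication1}.
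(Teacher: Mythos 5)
Your proposal is correct and follows essentially the same route as the paper: the paper's proof is exactly the two-step argument of first identifying \(\widetilde{c}_S(X)\otimes^{pr}_S\widetilde{c}_S(Y)\) with \(\widetilde{c}_S(X\times_SY)\) via the exterior product of correspondences and then invoking Proposition \ref{sheafication1} to sheafify. The only difference is that you spell out the Yoneda/universal-property bookkeeping and the naturality check via \((g_1\times_Sg_2)\circ(f_1\times_Sf_2)=(g_1\circ f_1)\times_S(g_2\circ f_2)\), which the paper leaves implicit.
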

\begin{proof}
We have \(\widetilde{c}_S(X)\otimes^{pr}_S\widetilde{c}_S(Y)\cong\widetilde{c}_S(X\times_SY)\) just by the exterior products of correspondences. Then the statement follows by Proposition \ref{sheafication1}.
%
\end{proof}

Now we are going to prove some functorial properties between sheaves with \(E\)-transfers over different bases. Our approach is quite similar as in \cite{D}.

The following lemma is useful when constructing adjunctions, see \cite[2.5.1]{D}.
\begin{lemma}\label{adjunction}
Let \(\varphi:\mathscr{C}\longrightarrow\mathscr{D}\) be a functor between small categories and \(\mathscr{M}\) be a category with arbitrary colimits. Then the functor
\[\varphi_*:PreShv(\mathscr{D},\mathscr{M})\longrightarrow PreShv(\mathscr{C},\mathscr{M})\]
(see \cite[Definition 4.4.1]{A}) defined by \(\varphi_*(F)=F\circ\varphi\) has a left adjoint \(\varphi^*\).
\end{lemma}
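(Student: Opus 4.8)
The plan is to realize $\varphi^*$ as the left Kan extension along $\varphi$, whose existence is guaranteed precisely by the cocompleteness hypothesis on $\mathscr{M}$ together with the smallness of $\mathscr{C}$ and $\mathscr{D}$. Concretely, for a presheaf $G\in PreShv(\mathscr{C},\mathscr{M})$ and an object $d\in\mathscr{D}$, I would set
\[(\varphi^*G)(d)=\varinjlim_{(c,u)\in\varphi/d}G(c),\]
where $\varphi/d$ denotes the comma category whose objects are pairs $(c,u)$ with $c\in\mathscr{C}$ and $u:\varphi(c)\longrightarrow d$ a morphism of $\mathscr{D}$, and whose morphisms $(c,u)\longrightarrow(c',u')$ are the arrows $w:c\longrightarrow c'$ in $\mathscr{C}$ satisfying $u'\circ\varphi(w)=u$. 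Since $\mathscr{C}$ and $\mathscr{D}$ are small, each $\varphi/d$ is a small category, so the displayed colimit exists in $\mathscr{M}$ by hypothesis. (This matches the covariant reading dictated by the formula $\varphi_*(F)=F\circ\varphi$; the usual contravariant presheaf convention is handled identically after replacing $\mathscr{C},\mathscr{D},\varphi$ by their opposites.)

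First I would check that $d\longmapsto(\varphi^*G)(d)$ is functorial: a morphism $\delta:d\longrightarrow d'$ in $\mathscr{D}$ induces a functor $\varphi/d\longrightarrow\varphi/d'$ by post-composition $(c,u)\longmapsto(c,\delta\circ u)$, hence a canonical comparison map between the corresponding colimits by their universal property; functoriality in $G$ is immediate since the colimit is natural in the underlying diagram. This produces the functor $\varphi^*:PreShv(\mathscr{C},\mathscr{M})\longrightarrow PreShv(\mathscr{D},\mathscr{M})$.

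Next I would establish the adjunction $\varphi^*\dashv\varphi_*$ by exhibiting a bijection
\[Hom(\varphi^*G,F)\cong Hom(G,\varphi_*F)\]
for $G\in PreShv(\mathscr{C},\mathscr{M})$ and $F\in PreShv(\mathscr{D},\mathscr{M})$, natural in both variables. Unwinding the colimit formula, a natural transformation $\varphi^*G\longrightarrow F$ amounts, by the universal property of the colimits defining $(\varphi^*G)(d)$, to a family of maps $G(c)\longrightarrow F(\varphi(c))=(\varphi_*F)(c)$ compatible with the comma-category structure, that is, exactly to a natural transformation $G\longrightarrow\varphi_*F$. I would make this precise by tracking the cocone data on both sides and verifying that the two assignments are mutually inverse and natural in $F$ and $G$; equivalently, one simply invokes the standard universal property of the left Kan extension, which states precisely that $\mathrm{Lan}_\varphi$ is left adjoint to restriction along $\varphi$.

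The main obstacle is bookkeeping rather than conceptual: one must check compatibility of the cocones under all morphisms of $\varphi/d$ and the naturality of the bijection in both variables, and, if one wants the unit and counit explicitly, identify the unit $G\longrightarrow\varphi_*\varphi^*G$ with the structural maps $G(c)\longrightarrow(\varphi^*G)(\varphi(c))$ arising from the object $(c,\mathrm{id}_{\varphi(c)})\in\varphi/\varphi(c)$. None of this uses anything beyond the universal property of colimits, so the smallness of $\mathscr{C},\mathscr{D}$ and the cocompleteness of $\mathscr{M}$ are exactly the hypotheses required.
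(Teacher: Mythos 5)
Your proposal is correct and takes essentially the same route as the paper: the paper also constructs $\varphi^*G$ pointwise as a colimit over a comma category (its $C_Y$, whose objects are the morphisms $Y\longrightarrow\varphi X$ --- exactly the contravariant-convention version of your $\varphi/d$ obtained by passing to opposites, as you note), with functoriality induced by composition with morphisms of $\mathscr{D}$. The only difference is that the paper leaves the verification of the adjunction to the reader (``one checks it is just what we want''), whereas you sketch it via the universal property of the colimit, i.e.\ the standard universal property of the left Kan extension.
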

\begin{proof}
Suppose \(G\in PreShv(\mathscr{C},\mathscr{M})\). For every object \(Y\in\mathscr{D}\), define \(C_Y\) to be the category whose objects are \(Hom_{\mathscr{D}}(Y,\varphi X)\) and morphisms from \(a_1:Y\longrightarrow\varphi X_1\) to \(a_2:Y\longrightarrow\varphi X_2\) are \(b\in Hom_{\mathscr{C}}(X_1,X_2)\) such that \(a_2=\varphi(b)\circ a_1\). We have a contravariant functor
\[\theta_Y:C_Y\longrightarrow\mathscr{M}\]
defined by \(\theta_Y(Y\longrightarrow\varphi X)=GX\). Then define \((\varphi^*G)Y=\varinjlim\theta_Y\). For any morphism \(c:Y_1\longrightarrow Y_2\) in \(\mathscr{D}\), we define \((\varphi^*G)(c)\) by the following commutative diagram
\[
	\xymatrix
	{
		\theta_{Y_2}(a)\ar[d]^{i_a}\ar[dr]^{i_{a\circ c}}	&\\
		\varinjlim\theta_{Y_2}\ar[r]_{(\varphi^*G)(c)}		&\varinjlim\theta_{Y_1}
	}
\]
for every \(a:Y_2\longrightarrow\varphi X\). One checks it is just what we want.
\end{proof}
\begin{definition}\label{pullbackdef}
Suppose \(f:S\longrightarrow T\) is a morphism in \(Sm/k\). For any \(X\in Sm/T\), set \(X^S=X\times_TS\in Sm/S\). For any \(X_1, X_2\in Sm/T\), denote by \(p_f\) the projection \((X_1\times_TX_2)^S\longrightarrow X_1\times_TX_2\). Define
\[\begin{array}{cccc}\varphi^f:&\widetilde{Cor}_T&\longrightarrow&\widetilde{Cor}_S\\&X&\longmapsto&X^S\\&g&\longmapsto&g^S\end{array},\]
where \(g\longmapsto g^S:\widetilde{Cor}_T(X_1,X_2)\longrightarrow\widetilde{Cor}_S(X_1^S,X_2^S)\) is the unique map such that the following diagram commutes
\[
	\xymatrix
	{
		E_Z^{d_{X_2}-d_T}(X_1\times_TX_2,-T_{X_1\times_TX_2/X_1})\ar[r]^-{p_f^*}\ar[d]	&E_{p_f^{-1}(Z)}^{d_{X_2}-d_S}((X_1\times_TX_2)^S,-T_{(X_1\times_TX_2)^S/X_1^S})\ar[d]\\
		\widetilde{Cor}_T(X_1,X_2)\ar[r]^-{\varphi^f}					&\widetilde{Cor}_S(X_1^S,X_2^S)
	},
\]
for any \(Z\in\mathscr{A}_T(X_1,X_2)\).
\end{definition}
\begin{proposition}
Suppose \(\xymatrix{X_1\ar[r]^{g_1}&X_2\ar[r]^{g_2}&X_3}\) are morphisms in \(\widetilde{Cor}_T\). Then
\[(g_2\circ g_1)^S=g_2^S\circ g_1^S.\]
So \(\varphi^f:\widetilde{Cor}_T\longrightarrow\widetilde{Cor}_S\) is indeed a functor.
\end{proposition}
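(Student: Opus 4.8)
The plan is to reduce the statement to the base-change, pull-back and product axioms, exactly in the spirit of the proof of Proposition \ref{associativity}, the only genuinely new feature being that the base-change map $p_f$ need not be smooth. First I would choose admissible supports representing $g_1\in\widetilde{Cor}_T(X_1,X_2)$ and $g_2\in\widetilde{Cor}_T(X_2,X_3)$ and unwind the definitions of composition and of $g\mapsto g^S$, so that (writing $X_iX_j$ for $X_i\times_TX_j$ as in the paper)
\[(g_2\circ g_1)^S=p_f^*\,p^{X_1X_2X_3}_{X_1X_3*}\bigl(p^{X_1X_2X_3*}_{X_2X_3}(g_2)\cdot p^{X_1X_2X_3*}_{X_1X_2}(g_1)\bigr),\]
where $p_f:(X_1X_3)^S\longrightarrow X_1X_3$ is the base-change projection of Definition \ref{pullbackdef} and the twists are those of the composition law.

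The key geometric observation is that base change along $f:S\longrightarrow T$ commutes with fibre products over $T$, so $(X_1X_2X_3)^S=X_1^S\times_SX_2^S\times_SX_3^S$ and, for every pair of indices, the square relating the projection over $T$ to the projection over $S$ through the two relevant base-change maps $p_f$ is Cartesian; moreover the relative dimensions agree, $d_{X_2}-d_T=d_{X_2^S}-d_S$, so the constant $c$ of the base-change axioms is the same upstairs and downstairs. I would then push $p_f^*$ inward in three steps. Applying Axiom \ref{BCSM} to the Cartesian square whose smooth vertical leg is the projection $p^{X_1X_2X_3}_{X_1X_3}$ interchanges $p_f^*$ with the push-forward; this is precisely where it matters that composition only ever pushes forward along smooth projections, so that $p_f$ is permitted to be an arbitrary base-change morphism. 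Next, Axiom \ref{CPB} distributes $p_f^*$ across the product, and finally Axiom \ref{FPB}, together with the Cartesian squares for $p^{X_1X_2X_3}_{X_2X_3}$ and $p^{X_1X_2X_3}_{X_1X_2}$, rewrites $p_f^*\,p^{X_1X_2X_3*}_{X_2X_3}$ as the projection over $S$ precomposed with $p_f^*$, and likewise for the $X_1X_2$-factor. By the defining diagram of $g\mapsto g^S$ the inner pull-backs $p_f^*(g_2)$ and $p_f^*(g_1)$ are exactly $g_2^S$ and $g_1^S$, so rereading the resulting expression as a composition over $S$ gives $g_2^S\circ g_1^S$.

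The main obstacle will be the twist bookkeeping rather than the scheme-theoretic part. Each push-forward and each pull-back in the chain carries a shift by a relative tangent bundle, and one must check that the canonical isomorphisms $T_{(X_iX_j)^S/X_i^S}\cong p_f^*T_{X_iX_j/X_i}$ furnished by Lemma \ref{t1} — which are exactly the identifications already incorporated into Axioms \ref{BCSM} and \ref{FPB} — fit together coherently through the whole computation, so that the twist produced at the end coincides on the nose with the one prescribed by the composition law over $S$. Concretely this reduces to the commutativity of a single diagram in $V(Vect)$ built from the base-change isomorphisms of $T_{X_1X_2X_3/X_1X_3}$, $T_{X_1X_2X_3/X_2X_3}$ and $T_{X_1X_2X_3/X_1X_2}$, which is routine with the techniques of Section \ref{Virtual Objects and Their Calculation} but is the only point that genuinely requires care.
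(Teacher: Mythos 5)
Your proposal follows the paper's own proof essentially step for step: unwind the definitions of $\circ$ and of $g\mapsto g^S$, apply Axiom \ref{BCSM} to the Cartesian square formed by the smooth projection $X_1\times_TX_2\times_TX_3\longrightarrow X_1\times_TX_3$ and the base-change maps $p_f$ (this is indeed the one place where it matters that $p_f$ need not be smooth), then use Axioms \ref{CPB} and \ref{FPB} to distribute the pull-back over the product and to identify the resulting composite pull-backs with $g_1^S$ and $g_2^S$. The only point the paper adds that you omit is the one-line check that $\widetilde{\gamma}(id_Y)^S=\widetilde{\gamma}(id_{Y^S})$, which is needed since the statement also asserts that $\varphi^f$ is a functor.
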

\begin{proof}
We have diagrams
\[
	\xymatrix
	{
					&X_1\times_TX_2\\
		X_1\times_TX_3	&X_1\times_TX_2\times_TX_3\ar[u]_{p_{12}}\ar[l]_-{p_{13}}\ar[d]^{p_{23}}\\
					&X_2\times_TX_3
	},
	\xymatrix
	{
						&X_1^S\times_SX_2^S\\
		X_1^S\times_SX_3^S	&X_1^S\times_SX_2^S\times_SX_3^S\ar[u]_{q_{12}}\ar[l]_-{q_{13}}\ar[d]^{q_{23}}\\
						&X_2^S\times_SX_3^S
	},
\]
and three Cartesian squares
\[
	\xymatrix
	{
								&X_2\times_SX_3															&\\
		X_1\times_TX_3				&X_1\times_TX_2\times_TX_3\ar[l]_-{p_{13}}\ar[r]^-{p_{12}}\ar[u]_{p_{23}}				&X_1\times_SX_2\\
		X_1^S\times_SX_3^S\ar[u]_{r}	&X_1^S\times_SX_2^S\times_SX_3^S\ar[l]_-{q_{13}}\ar[u]_t\ar[d]^{q_{23}}\ar[r]^-{q_{12}}	&X_1^S\times_SX_2^S\ar[u]_p\\
								&X_2^S\times_SX_3^S\ar@/^2pc/[uuu]^q											&
	}.
\]
Suppose \(g_1\) and \(g_2\) come from cohomologies with admissible supports. We have
\begin{align*}
	&(g_2\circ g_1)^S\\
=	&r^*p_{13*}(p_{23}^*(g_2)\cdot p_{12}^*(g_1))\\
	&\textrm{by definition}\\
=	&q_{13*}t^*(p_{23}^*(g_2)\cdot p_{12}^*(g_1))\\
	&\textrm{by Axiom \ref{BCSM} for the left square above}\\
=	&q_{13*}(q_{12}^*p^*(g_2)\cdot q_{23}^*q^*(g_1))\\
	&\textrm{by Axiom \ref{CPB} and Axiom \ref{FPB}}\\
=	&g_2^S\circ g_1^S\\
	&\textrm{by definition.}
\end{align*}
And it's easy to verify that \(\widetilde{\gamma}(id_Y)^S=\widetilde{\gamma}(id_{Y^S})\) for any \(Y\in Sm/T\). So \(\varphi^f\) is a functor.
\end{proof}
It is straightforward to check that \(\varphi^{f_1\circ f_2}=\varphi^{f_2}\circ\varphi^{f_1}\).
\begin{proposition}\label{ex*}
Suppose \(f_i\in\widetilde{Cor}_T(X_i,Y_i)\) where \(i=1,2\). Then
\[(f_1\times_Tf_2)^S=f_1^S\times_Sf_2^S.\]
\end{proposition}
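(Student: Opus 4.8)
The plan is to unwind both sides to the level of the underlying groups $E^*_Z(-,-)$ and to push the base-change pull-back past the product and the projection pull-backs that define the exterior product, in the same spirit as the proof that $\varphi^f$ is a functor. First I would record the purely geometric input. Writing $W_T=X_1\times_TX_2\times_TY_1\times_TY_2$ and $W_S=X_1^S\times_SX_2^S\times_SY_1^S\times_SY_2^S$, base change gives canonical identifications $W_T^S\cong W_S$, $(X_i\times_TY_i)^S\cong X_i^S\times_SY_i^S$ and $(X_1\times_TX_2)^S\cong X_1^S\times_SX_2^S$. Denote by $p_f:W_S\to W_T$ and $p_{f,i}:(X_i\times_TY_i)^S\to X_i\times_TY_i$ the base-change projections of Definition \ref{pullbackdef}, by $p_i:W_T\to X_i\times_TY_i$ the $T$-projections, and by $q_i:W_S\to X_i^S\times_SY_i^S$ the $S$-projections. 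Then $p_i\circ p_f=p_{f,i}\circ q_i$, and each of these squares is Cartesian; this is the diagram across which all the axioms will be applied.

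Next I would compute, taking representatives of $f_i$ in $E^{d_{Y_i}-d_T}_{Z_i}(X_i\times_TY_i,-T_{X_i\times_TY_i/X_i})$ with $Z_i\in\mathscr{A}_T(X_i,Y_i)$. By Definition \ref{pullbackdef} and the definition of the exterior product,
\[
(f_1\times_Tf_2)^S=p_f^*\bigl(p_1^*(f_1)\cdot p_2^*(f_2)\bigr).
\]
Applying Axiom \ref{CPB} to distribute $p_f^*$ over the product, then Axiom \ref{FPB} to each factor together with $p_i\circ p_f=p_{f,i}\circ q_i$, yields
\[
p_f^*p_i^*(f_i)=(p_{f,i}\circ q_i)^*(f_i)=q_i^*\bigl(p_{f,i}^*(f_i)\bigr)=q_i^*(f_i^S),
\]
so that $(f_1\times_Tf_2)^S=q_1^*(f_1^S)\cdot q_2^*(f_2^S)=f_1^S\times_Sf_2^S$ by the definition of $\times_S$.

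The real content is the compatibility of twists, which I expect to be the main obstacle. Three separate twist isomorphisms enter: the one built into $\times_T$, splitting $-T_{W_T/X_1X_2}$; those inserted when passing to the fibre over $S$, each an instance of the canonical isomorphism $T_{X'/Y'}\cong v^*T_{X/Y}$ of Lemma \ref{t1} for the Cartesian squares above; and the one built into $\times_S$, splitting $-T_{W_S/X_1^SX_2^S}$. I would verify that, under the identification $W_T^S\cong W_S$, these assemble into a single commutative diagram in $V(Vect(W_S))$: the pull-back $p_f^*$ of the $\times_T$-splitting must coincide, via Lemma \ref{t1} applied to the squares relating $T_{W_T/X_1X_2}$, $T_{W_T/X_1X_2Y_i}$ and their base changes, with the $\times_S$-splitting. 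Checking that this square of virtual bundles commutes is the one genuinely non-formal step; it is carried out by the method of Section \ref{Virtual Objects and Their Calculation}, reducing to Lemma \ref{t1}, the functoriality of $f^*$ on virtual bundles asserted in Axiom \ref{T}, and the exact sequences of Lemma \ref{t}. Once this diagram commutes, the functoriality of product and pull-back with respect to twists (Axioms \ref{CPB} and \ref{FPB}) propagates it through the computation above, and the two sides agree on the nose.
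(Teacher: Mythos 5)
Your proposal is correct and follows essentially the same route as the paper: the paper's proof consists precisely of the commutative diagram relating the base-change projections $p^f$ with the product projections onto $X_iY_i$ and $(X_iY_i)^S$, from which the identity follows by Axiom \ref{CPB} and Axiom \ref{FPB} exactly as you compute. Your additional verification that the twist splittings are compatible under Lemma \ref{t1} is a point the paper leaves implicit, but it does not change the argument.
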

\begin{proof}
This follows from the commutative diagram
\[
	\xymatrix
	{
									&(X_1Y_1X_2Y_2)^S\ar[rrr]^-{p^f}\ar[rd]\ar[ld]	&							&		&X_1Y_1X_2Y_2\ar[rd]\ar[ld]	&\\
		(X_1Y_1)^{S}\ar@/_1pc/[rrr]^-{p^f}	&									&(X_2Y_2)^{S}\ar@/_1pc/[rrr]^-{p^f}	&X_1Y_1	&						&X_2Y_2
	}.
\]
\end{proof}
\begin{proposition}\label{pullback}
In the notations above, we have an adjoint pair
\[f^*:\widetilde{Sh}(T)\rightleftharpoons\widetilde{Sh}(S):f_*,\]
where \((f_*F)X=F\circ\varphi^f\) for \(F\in\widetilde{Sh}(S)\).
\end{proposition}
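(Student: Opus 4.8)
The plan is to bootstrap the desired adjunction on sheaves from the purely formal adjunction on presheaves provided by Lemma \ref{adjunction}, together with the sheafification adjunction of Proposition \ref{sheafication}. First I would apply Lemma \ref{adjunction} to the functor \(\varphi^f:\widetilde{Cor}_T\longrightarrow\widetilde{Cor}_S\) of Definition \ref{pullbackdef}, with \(\mathscr{C}=\widetilde{Cor}_T\), \(\mathscr{D}=\widetilde{Cor}_S\) and \(\mathscr{M}=Ab\). This yields an adjoint pair \((\varphi^f)^*\dashv(\varphi^f)_*\) between \(\widetilde{PSh}(T)\) and \(\widetilde{PSh}(S)\), where \((\varphi^f)_*(F)=F\circ\varphi^f\) for \(F\in\widetilde{PSh}(S)\). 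By construction this \((\varphi^f)_*\) is precisely the functor denoted \(f_*\) in the statement, now regarded at the level of presheaves.

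The main step is to check that \(f_*=(\varphi^f)_*\) carries \(\widetilde{Sh}(S)\) into \(\widetilde{Sh}(T)\), i.e. preserves the Nisnevich-sheaf condition. Since membership in \(\widetilde{Sh}\) only concerns the restriction along \(\widetilde{\gamma}\) to the small smooth site, the claim reduces to the following: writing \((-)^S=(-)\times_TS\), for every \(X\in Sm/T\) one has \((f_*F)(X)=F(X^S)\), and for any Nisnevich cover \(\{U_i\to X\}\) in \(Sm/T\) the base-changed family \(\{U_i^S\to X^S\}\) is a Nisnevich cover of \(X^S\) in \(Sm/S\). The latter holds because étaleness and the completely-decomposed lifting property defining Nisnevich covers are stable under base change. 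As moreover \(U_i^S\times_{X^S}U_j^S=(U_i\times_XU_j)^S\), the equalizer (sheaf) condition for \(f_*F\) relative to \(\{U_i\to X\}\) is literally the sheaf condition for \(F\) relative to \(\{U_i^S\to X^S\}\), which holds by hypothesis. Hence \(f_*F\in\widetilde{Sh}(T)\), and we obtain \(f_*:\widetilde{Sh}(S)\longrightarrow\widetilde{Sh}(T)\) satisfying \(\widetilde{o}_T\circ f_*=(\varphi^f)_*\circ\widetilde{o}_S\), where \(\widetilde{o}\) denotes the fully faithful inclusion of sheaves into presheaves.

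Finally I would define \(f^*:=\widetilde{a}_S\circ(\varphi^f)^*\circ\widetilde{o}_T:\widetilde{Sh}(T)\longrightarrow\widetilde{Sh}(S)\) and verify the adjunction by composing the three adjunctions at hand. For \(G\in\widetilde{Sh}(T)\) and \(H\in\widetilde{Sh}(S)\), the adjunction \(\widetilde{a}_S\dashv\widetilde{o}_S\) of Proposition \ref{sheafication} gives \(Hom_{\widetilde{Sh}(S)}(f^*G,H)\cong Hom_{\widetilde{PSh}(S)}((\varphi^f)^*\widetilde{o}_TG,\widetilde{o}_SH)\); the presheaf-level adjunction \((\varphi^f)^*\dashv(\varphi^f)_*\) rewrites this as \(Hom_{\widetilde{PSh}(T)}(\widetilde{o}_TG,(\varphi^f)_*\widetilde{o}_SH)\); the identity \((\varphi^f)_*\widetilde{o}_SH=\widetilde{o}_Tf_*H\) from the previous step turns it into \(Hom_{\widetilde{PSh}(T)}(\widetilde{o}_TG,\widetilde{o}_Tf_*H)\); and full faithfulness of \(\widetilde{o}_T\) identifies the latter with \(Hom_{\widetilde{Sh}(T)}(G,f_*H)\). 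All these isomorphisms are natural in \(G\) and \(H\), producing the desired adjoint pair \(f^*:\widetilde{Sh}(T)\rightleftharpoons\widetilde{Sh}(S):f_*\).

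The hard part will be the second step rather than the formal chain of adjunctions: one must confirm that restricting \(f_*F\) along \(\widetilde{\gamma}_T\) really gives the presheaf \(X\mapsto F(X^S)\) (so that \(\varphi^f\circ\widetilde{\gamma}_T\) agrees with \(\widetilde{\gamma}_S\circ(-)^S\) on the underlying morphisms, compatibly with transfers), and that Nisnevich covers base change to Nisnevich covers so that the sheaf condition is transported faithfully. Once these geometric facts are in place, the remainder is a routine manipulation of adjoint functors.
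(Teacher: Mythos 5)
Your proposal is correct and follows essentially the same route as the paper, which simply applies Lemma \ref{adjunction} to \(\varphi^f\) to get the presheaf-level adjunction and then composes with the sheafification adjunction of Proposition \ref{sheafication}. You additionally spell out the step the paper leaves implicit — that \(f_*\) preserves the Nisnevich sheaf condition because Nisnevich covers are stable under base change along \(S\to T\) — and that verification is accurate.
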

\begin{proof}
By Lemma \ref{adjunction} applying to \(\varphi^f\), we obtain an adjunction \(\widetilde{PSh}(T)\rightleftharpoons\widetilde{PSh}(S)\) and we apply the sheafication functor in Proposition \ref{sheafication} to get desired result.
\end{proof}
Obviously, we have \((f_1\circ f_2)^*=f_2^*\circ f_1^*\), \((f_1\circ f_2)_*=f_{1*}\circ f_{2*}\).
\begin{proposition}\label{sheaf formulas}
Suppose \(f:S\longrightarrow T\) is a morphism in \(Sm/k\).
\begin{enumerate}
\item For any \(Y\in Sm/T\),
\[f^*\widetilde{\mathbb{Z}}_T(Y)\cong\widetilde{\mathbb{Z}}_S(Y\times_TS)\]
as sheaves with \(E\)-transfers.
\item For any \(F\in \widetilde{Sh}(S)\) and \(Y\in Sm/T\),
\[(f_*F)^Y\cong f_*(F^{Y\times_TS})\]
as sheaves with \(E\)-transfers.
\item For any \(F\in\widetilde{Sh}(T)\) and \(G\in\widetilde{Sh}(S)\),
\[\underline{Hom}_T(F,f_*G)\cong f_*\underline{Hom}_S(f^*F,G)\]
as sheaves with \(E\)-transfers.
\item For any \(F,G\in\widetilde{Sh}(T)\), we have
\[f^*F\otimes_Sf^*G\cong f^*(F\otimes_TG)\]
as sheaves with \(E\)-transfers.
\end{enumerate}
\end{proposition}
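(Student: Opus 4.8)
The plan is to prove the four statements in the order listed, each feeding into the next; the only genuinely geometric ingredient is a fibre-product identity entering in (2), while (3) and (4) are formal consequences of the adjunctions already at hand.

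For (1), I would compute $f^*$ on representable sheaves. Recall from Proposition \ref{pullback} that the sheaf-level adjunction $f^*\dashv f_*$ is obtained by sheafifying the presheaf adjunction $(\varphi^f)^*\dashv(\varphi^f)_*$ of Lemma \ref{adjunction}, and that $(f_*F)X=F\circ\varphi^f=(\varphi^f)_*(\widetilde{o}F)$ by definition. Hence $\widetilde{o}\circ f_*=(\varphi^f)_*\circ\widetilde{o}$, so $f^*\circ\widetilde{a}_T$ and $\widetilde{a}_S\circ(\varphi^f)^*$ are both left adjoint to $\widetilde{o}\circ f_*$; by uniqueness of left adjoints they are canonically isomorphic. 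It then suffices to evaluate $(\varphi^f)^*$ on the representable $\widetilde{c}_T(Y)=\widetilde{Cor}_T(-,Y)$, and the standard Yoneda computation of a left Kan extension gives, for $F\in\widetilde{PSh}(S)$, the chain $Hom_S((\varphi^f)^*\widetilde{c}_T(Y),F)\cong Hom_T(\widetilde{c}_T(Y),(\varphi^f)_*F)\cong F(\varphi^fY)=F(Y\times_TS)\cong Hom_S(\widetilde{c}_S(Y\times_TS),F)$, so $(\varphi^f)^*\widetilde{c}_T(Y)\cong\widetilde{c}_S(Y\times_TS)$. Applying $\widetilde{a}_S$ yields $f^*\widetilde{\mathbb{Z}}_T(Y)=f^*\widetilde{a}_T\widetilde{c}_T(Y)\cong\widetilde{a}_S\widetilde{c}_S(Y\times_TS)=\widetilde{\mathbb{Z}}_S(Y\times_TS)$.

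For (2), I would evaluate both sheaves on an object $X\in Sm/T$. Unwinding definitions gives $(f_*F)^Y(X)=F\big((Y\times_TX)\times_TS\big)$ and $f_*(F^{Y\times_TS})(X)=F\big((Y\times_TS)\times_S(X\times_TS)\big)$, and the canonical identity $(Y\times_TX)\times_TS\cong(Y\times_TS)\times_S(X\times_TS)$ of $S$-schemes supplies the isomorphism on objects. The real point is naturality with respect to $E$-transfers, that is, promoting this to an isomorphism in $\widetilde{Sh}(T)$. The transfer structure on $(-)^Y$ is induced by the exterior products $\widetilde{\gamma}(id_Y)\times_T(-)$ and that on $f_*(-)$ by the functor $\varphi^f$; their compatibility is exactly Proposition \ref{ex*} together with $\widetilde{\gamma}(id_Y)^S=\widetilde{\gamma}(id_{Y\times_TS})$, which identify $(id_Y\times_T\alpha)^S$ with $id_{Y\times_TS}\times_S\alpha^S$ under the fibre-product identity above. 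Making this naturality precise is the main obstacle of the whole proposition; everything else is formal.

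For (3), I would evaluate on $X\in Sm/T$ using $\underline{Hom}(-,-)(X)=Hom(-,(-)^X)$: the left-hand side is $Hom_T(F,(f_*G)^X)$, which by (2) equals $Hom_T(F,f_*(G^{X\times_TS}))$, and by the adjunction $f^*\dashv f_*$ this is $Hom_S(f^*F,G^{X\times_TS})=f_*\underline{Hom}_S(f^*F,G)(X)$; naturality in $X$ is routine. Finally, (4) is a Yoneda argument: for any $H\in\widetilde{Sh}(S)$,
\begin{align*}
Hom_S(f^*(F\otimes_TG),H)
&\cong Hom_T(F\otimes_TG,f_*H)\\
&\cong Hom_T(F,\underline{Hom}_T(G,f_*H))\\
&\cong Hom_T(F,f_*\underline{Hom}_S(f^*G,H))\\
&\cong Hom_S(f^*F,\underline{Hom}_S(f^*G,H))\\
&\cong Hom_S(f^*F\otimes_Sf^*G,H),
\end{align*}
invoking Proposition \ref{pullback}, Proposition \ref{hom-tensor}, part (3), Proposition \ref{pullback}, and Proposition \ref{hom-tensor} in turn. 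Since this is natural in $H$, the Yoneda lemma gives $f^*(F\otimes_TG)\cong f^*F\otimes_Sf^*G$.
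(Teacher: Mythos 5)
Your proposal is correct and follows essentially the same route as the paper: part (1) is the Yoneda computation via the adjunction $f^*\dashv f_*$ and $(f_*H)(Y)=H(Y\times_TS)$ (you simply make the interaction with sheafification explicit), part (2) is the evaluation on objects via the fibre-product identity with transfer-compatibility supplied by Proposition \ref{ex*}, and parts (3) and (4) are the same formal adjunction chains, with (4) written in the reverse direction. No gaps.
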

\begin{proof}
\begin{enumerate}
\item We have
\[Hom_S(f^*\widetilde{\mathbb{Z}}_T(Y),-)\cong Hom_T(\widetilde{\mathbb{Z}}_T(Y),f_*-)\cong Hom_S(\widetilde{\mathbb{Z}}_S(Y\times_TS),-).\]
\item This is because for any \(Z\in Sm/T\)
\[(f_*F)^YZ=F((Y\times_TZ)\times_TS)\cong F((Z\times_TS)\times_S(Y\times_TS))\cong (f_*(F^{Y\times_TS}))Z\]
and Proposition \ref{ex*}.
\item This is because for any \(Y\in Sm/T\),
\begin{eqnarray*}
\underline{Hom}_T(F,f_*G)Y	&=		&Hom_T(F,(f_*G)^Y)\\
					&\cong	&Hom_T(F,f_*(G^{Y\times_TS}))\\
					&		&\textrm{by (2)}\\
					&\cong	&Hom_S(f^*F,G^{Y\times_TS})\\
					&=		&(f_*\underline{Hom}_S(f^*F,G))Y
\end{eqnarray*}
\item This is because for any \(H\in\widetilde{Sh}(S)\),
\begin{eqnarray*}
Hom_S(f^*F\otimes_Sf^*G,H)	&\cong	&Hom_S(f^*G,\underline{Hom}_S(f^*F,H))\\
					&\cong	&Hom_T(G,f_*\underline{Hom}_S(f^*F,H))\\
					&\cong	&Hom_T(G,\underline{Hom}_T(F,f_*H))\\
					&		&\textrm{by (3)}\\
					&\cong	&Hom_T(F\otimes_TG,f_*H)\\
					&\cong	&Hom_S(f^*(F\otimes_TG),H)
\end{eqnarray*}
\end{enumerate}
\end{proof}
From now on in this chapter, suppose \(f:S\longrightarrow T\) is a smooth morphism in \(Sm/k\).
\begin{definition}
Suppose \(X_1, X_2\in Sm/S\), we have a closed immersion \(q_f:X_1\times_SX_2\longrightarrow X_1\times_TX_2\). Define
\[\begin{array}{cccc}\varphi_f:&\widetilde{Cor}_S&\longrightarrow&\widetilde{Cor}_T\\&X&\longmapsto&X\\&g&\longmapsto&g_T\end{array},\]
where we see a smooth \(S\)-scheme as a smooth \(T\)-scheme via \(f\) and \(g\longmapsto g_T:\widetilde{Cor}_S(X_1,X_2)\longrightarrow\widetilde{Cor}_T(X_1,X_2)\) is the unique map such that the following diagram commutes
\[
	\xymatrix
	{
		E_Z^{d_{X_1}-d_S}(X_1\times_SX_2-T_{X_1\times_SX_2/X_1})\ar[r]^-{q_{f*}\circ t_f}\ar[d]	&E_{q_f(Z)}^{d_{X_1}-d_T}(X_1\times_TX_2,-T_{X_1\times_TX_2/X_1})\ar[d]\\
		\widetilde{Cor}_T(X_1,X_2)\ar[r]^-{\varphi_f}							&\widetilde{Cor}_S(X_1,X_2)
	},
\]
for any \(Z\in\mathscr{A}_S(X_1,X_2)\) (\(q_f(Z)\in\mathscr{A}_T(X_1,X_2)\) since \(q_f\) is a closed immersion), where \(t_f\) is the isomorphism
\begin{align*}
			&-T_{X_1\times_SX_2/X_1}\\
\longrightarrow	&N_{(X_1\times_SX_2)/(X_1\times_TX_2)}-N_{(X_1\times_SX_2)/(X_1\times_TX_2)}-T_{X_1\times_SX_2/X_1}\\
\longrightarrow	&N_{(X_1\times_SX_2)/(X_1\times_TX_2)}-q_f^*T_{X_1\times_TX_2/X_1}
\end{align*}
\end{definition}
For convenience, we may denote \(T_{X/Y}\) by \(T_f\) for smooth \(f:X\longrightarrow Y\) and denote \(N_{X/Y}\) by \(N_f\) for a closed immersion \(f\) in the following few propositions.
\begin{proposition}
Suppose \(\xymatrix{X_1\ar[r]^{g_1}&X_2\ar[r]^{g_2}&X_3}\) be morphisms in \(\widetilde{Cor}_S\), we have
\[(g_2\circ g_1)_T=g_{2T}\circ g_{1T}.\]
So \(\varphi_f:\widetilde{Cor}_S\longrightarrow\widetilde{Cor}_T\) is indeed a functor.
\end{proposition}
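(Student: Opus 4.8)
The plan is to push the right-hand side $g_{2T}\circ g_{1T}$, computed by the composition law of $\widetilde{Cor}_T$, through the axioms until it becomes the left-hand side, exactly as in the diagram chase of Proposition~\ref{associativity} and in the functoriality proof for $\varphi^f$, the only new feature being the closed-immersion push-forwards defining $(-)_T$. Abbreviate $X_i\times_S X_j$ by $X_{ij}^S$, $X_i\times_T X_j$ by $X_{ij}^T$, and likewise for triple products, and write $q_{12}\colon X_{12}^S\hookrightarrow X_{12}^T$, $q_{23}\colon X_{23}^S\hookrightarrow X_{23}^T$, $q_{13}\colon X_{13}^S\hookrightarrow X_{13}^T$, $q_{123}\colon X_{123}^S\hookrightarrow X_{123}^T$ for the closed immersions obtained by base change from the diagonal $S\to S\times_T S$ (closed since $f$, being smooth, is separated). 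Unwinding the definitions,
\[
g_{1T}=q_{12,*}(t_f(g_1)),\quad g_{2T}=q_{23,*}(t_f(g_2)),\quad g_{2T}\circ g_{1T}=p^{T}_{13,*}\bigl(p^{T*}_{23}(g_{2T})\cdot p^{T*}_{12}(g_{1T})\bigr),
\]
with products and push-forwards taken over $T$, whereas
\[
(g_2\circ g_1)_T=q_{13,*}\Bigl(t_f\bigl(p^{S}_{13,*}\bigl(p^{S*}_{23}(g_2)\cdot p^{S*}_{12}(g_1)\bigr)\bigr)\Bigr).
\]

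First I would commute each projection pull-back past the closed-immersion push-forward inside $g_{iT}$. The square expressing $p^T_{12}$ and $q_{12}$ is Cartesian with fibre product the intermediate scheme $A:=X_1\times_S X_2\times_T X_3$, so Axiom~\ref{BCCI} gives $p^{T*}_{12}(g_{1T})=\tilde q_{12,*}(\pi^*t_f(g_1))$, where $\tilde q_{12}\colon A\hookrightarrow X_{123}^T$ and $\pi\colon A\to X_{12}^S$ is smooth; symmetrically $p^{T*}_{23}(g_{2T})=\tilde q_{23,*}(\rho^*t_f(g_2))$ with $B:=X_1\times_T X_2\times_S X_3$, $\tilde q_{23}\colon B\hookrightarrow X_{123}^T$ and $\rho\colon B\to X_{23}^S$. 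Since $A\cap B=X_{123}^S$, repeated application of the projection formula (Axiom~\ref{PFCI}) together with base change (Axiom~\ref{BCCI}) collapses the product of these two push-forwards into a single push-forward along $q_{123}$,
\[
p^{T*}_{23}(g_{2T})\cdot p^{T*}_{12}(g_{1T})=q_{123,*}\bigl(p^{S*}_{23}(t_f(g_2))\cdot p^{S*}_{12}(t_f(g_1))\bigr),
\]
using Axiom~\ref{FPFCI} to merge the two closed immersions $X_{123}^S\hookrightarrow A\hookrightarrow X_{123}^T$ and Axiom~\ref{FPB} to identify the restricted projections with $p^S_{12}$ and $p^S_{23}$.

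It then remains to commute the outer smooth push-forward $p^T_{13,*}$ with the embedding $q_{123,*}$. The naive square relating $p^T_{13}$, $q_{123}$, $p^S_{13}$, $q_{13}$ is not Cartesian, so I would factor $q_{123}=\bar q\circ c$ through $C:=X_2\times_T(X_1\times_S X_3)$, the fibre product of $p^T_{13}$ and $q_{13}$, where $c\colon X_{123}^S\hookrightarrow C$ is a closed immersion, $\bar q\colon C\hookrightarrow X_{123}^T$ is the closed immersion obtained by base change from $q_{13}$, and $\bar p\colon C\to X_{13}^S$ is the smooth base change of $p^T_{13}$. Axiom~\ref{CTPF},~(3) applied to this Cartesian square gives $p^T_{13,*}\bar q_{*}=q_{13,*}\bar p_{*}$, while Axiom~\ref{CTPF},~(1) identifies the smooth-after-closed composite $\bar p_* c_*$ with $p^S_{13,*}$. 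Combining everything yields $g_{2T}\circ g_{1T}=q_{13,*}\bigl(t_f(p^S_{13,*}(p^{S*}_{23}(g_2)\cdot p^{S*}_{12}(g_1)))\bigr)=(g_2\circ g_1)_T$, once the twists are matched. Preservation of identities, $\widetilde\gamma(\mathrm{id}_X)_T=\widetilde\gamma(\mathrm{id}_X)$ in $\widetilde{Cor}_T$, follows by the same short computation as for $\varphi^f$, which together with the above finishes the proof that $\varphi_f$ is a functor.

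The hard part will be the twist bookkeeping, which I have suppressed above. Concretely, I must check two virtual-vector-bundle identities: that pulling back the per-factor isomorphisms $t_f$ along $p^S_{12}$ and $p^S_{23}$ and multiplying reproduces the isomorphism $t_f$ attached to $q_{123}$, so that the displayed product equation is twist-correct; and that the isomorphism produced by the Axiom~\ref{CTPF} commutation in the last step matches $t_f$ for $q_{13}$. Both reduce, via the normal- and tangent-bundle exact sequences of Lemma~\ref{t}, to commutativity of diagrams of the type handled by Theorem~\ref{four diagrams}, and are entirely analogous to the computations in Lemma~\ref{hardvvb} and Lemma~\ref{hardvvb1}; checking that the two composite twist isomorphisms genuinely coincide, rather than merely sharing source and target, is where the real work lies.
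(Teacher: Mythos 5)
Your geometric skeleton coincides with the paper's own proof: both unwind the two composition laws, use Axiom \ref{BCCI} to move the projections past the closed-immersion push-forwards defining $(-)_T$, Axiom \ref{PFCI} (with Axiom \ref{FPB} and Axiom \ref{FPFCI}) to collapse the product into a single push-forward from $X_1\times_SX_2\times_SX_3$, and Axiom \ref{CTPF}, (1) and (3) to commute the outer smooth push-forward with the remaining closed immersion. The differences are organizational rather than mathematical: the paper processes $g_2$ first and works only through the intermediate scheme $X_1\times_T(X_2\times_SX_3)$, whereas you treat the two factors symmetrically via your $A$ and $B$ and route the final step through $C=X_2\times_T(X_1\times_SX_3)$; either bookkeeping succeeds.

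The substantive issue is precisely the part you set aside. In the paper the twist verification occupies the majority of the proof, and it is not a single routine appeal to Theorem \ref{four diagrams}: one must show that the twist isomorphism assembled from the chain of axioms equals $t_f$ for $q_{13}$, and the two composites differ a priori by commutativity constraints. The paper handles this with its ``switch'' analysis, factoring $X_1\times_SX_2\times_SX_3\hookrightarrow X_1\times_TX_2\times_TX_3$ through \emph{both} $(X_1\times_SX_2)\times_TX_3$ and $(X_1\times_SX_3)\times_TX_2$ and proving that a composite of non-switching morphisms between sums of normal bundles equals a single morphism containing a switch; only after that do the Theorem \ref{four diagrams}-type diagrams (together with the two tangent/normal bundle comparison diagrams) close the argument. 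Likewise the identity check $(\mathrm{id}_X)_T=\mathrm{id}_X$ is not the same one-line computation as for $\varphi^f$: it needs its own diagram comparing $N_{\triangle_S}$, $N_{\triangle_T}$ and $\triangle_S^*N_{q_f}$. So your outline is the right one and correctly locates the difficulty, but as written it records where the work is rather than carrying it out.
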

\begin{proof}
We have Cartesian squares
\[
	\xymatrix
	{
		X_1\times_SX_2\ar[r]^{i}						&X_1\times_TX_2\\
		X_1\times_SX_2\times_SX_3\ar[r]^{i'}\ar[u]^{q_{12}'}	&X_1\times_T(X_2\times_SX_3)\ar[u]^{q_{12}}
	},
\]
\[
	\xymatrix
	{
		X_1\times_T(X_2\times_SX_3)\ar[r]^q\ar[d]_{r}	&X_1\times_TX_2\times_TX_3\ar[d]_{p_{23}}\\
		X_2\times_SX_3\ar[r]^j					&X_1\times_TX_3
	},
\]
\[
	\xymatrix
	{
		X_1\times_SX_2\times_SX_3\ar[r]\ar[d]_{i'}	&(X_1\times_SX_2)\times_TX_3\ar[d]\\
		X_1\times_T(X_2\times_SX_3)\ar[r]^q		&X_1\times_TX_2\times_TX_3
	},
\]
\[
	\xymatrix
	{
		X_1\times_SX_2\times_SX_3\ar[d]_{i'}\ar[r]^{q_{13}'}	&X_1\times_SX_3\ar[d]_k\\
		X_1\times_T(X_2\times_SX_3)\ar[r]^{p_{13}\circ q}	&X_1\times_TX_3
	}.
\]
and commutative diagrams
\[
	\xymatrix
	{
		X_1\times_TX_2								&\\
		X_1\times_T(X_2\times_SX_3)\ar[r]^q\ar[u]^{q_{12}}	&X_1\times_TX_2\times_TX_3\ar[ul]_{p_{12}}
	},
\]
\[
	\xymatrix
	{
		X_1\times_SX_2\times_SX_3\ar[r]^{i'}\ar[rd]_{q_{23}'}	&X_1\times_T(X_2\times_SX_3)\ar[d]_r\\
												&X_2\times_SX_3
	}.
\]
Suppose \(g_1\) and \(g_2\) come from cohomologies with admissible supports, then we have
\begin{align*}
	&g_{2T}\circ g_{1T}\\
=	&j_*t_f(g_2)\circ i_*t_f(g_1)\\
	&\textrm{by definition}\\
=	&p_{13*}(p_{23}^*j_*t_f(g_2)\cdot p_{12}^*i_*t_f(g_1))\\
	&\textrm{by definition}\\
=	&p_{13*}(q_*r^*t_f(g_2)\cdot p_{12}^*i_*t_f(g_1))\\
	&\textrm{by Axiom \ref{BCCI} for the second square above}\\
=	&p_{13*}q_*(r^*t_f(g_2)\cdot q^*p_{12}^*i_*t_f(g_1))\\
	&\textrm{by Axiom \ref{PFCI} for \(q\)}\\
=	&p_{13*}q_*(r^*t_f(g_2)\cdot q_{12}^*i_*t_f(g_1))\\
	&\textrm{by Axiom \ref{FPB}}\\
=	&p_{13*}q_*(r^*t_f(g_2)\cdot i'_*q_{12}'^*t_f(g_1))\\
	&\textrm{by Axiom \ref{BCCI} for the first square above}\\
=	&p_{13*}q_*i'_*(i'^*r^*t_f(g_2)\cdot q_{12}'^*t_f(g_1))\\
	&\textrm{by Axiom \ref{PFCI} for \(i'\)}\\
=	&p_{13*}q_*i'_*((q_{23}'^*t_f(g_2)\cdot q_{12}'^*t_f(g_1),i'^*N_q-i'^*q^*p_{13}^*T_{X_1\times_TX_3/X_1}+N_{i'}-i'^*q^*T_{p_{13}}))\\
	&\textrm{by Axiom \ref{FPB}}\\
=	&(p_{13}\circ q)_*i'_*((q_{23}'^*t_f(g_2)\cdot q_{12}'^*t_f(g_1),-i'^*q^*p_{13}^*T_{X_1\times_TX_3/X_1}+N_{i'}-i'^*T_{p_{13}\circ q}))\\
	&\textrm{by Axiom \ref{CTPF}, (1) and functoriality of push-forwards with respect to twists}\\
=	&k_*q'_{13*}((q_{23}'^*t_f(g_2)\cdot q_{12}'^*t_f(g_1),-q_{13}'^*k^*T_{X_1\times_TX_3/X_1}+q_{13}'^*N_k-T_{q_{13}'}))\\
	&\textrm{by Axiom \ref{CTPF}, (3) for the last square above}.
\end{align*}

Now we have to be more careful. We say a morphism \(f:A+B\longrightarrow C+D\) in a Picard category contains a switch if there are morphisms \(g:A\longrightarrow D\) and \(h:B\longrightarrow C\) such that \(f=c(D,C)\circ(g+h)\). Conversely we say it doesn't contains a switch if there are morphisms \(g:A\longrightarrow C\) and \(h:B\longrightarrow D\) such that \(f=g+h\). There is a commutative diagram with \emph{three} squares being Cartesian
\[
	\xymatrix
	{
		X_1\times_SX_2\times_SX_3\ar[rr]^{q'}\ar[rd]^u\ar[dd]_{i'}	&							&(X_1\times_SX_2)\times_TX_3\ar[dd]_{i''}\\
													&(X_1\times_SX_3)\times_TX_2\ar[rd]^v	&\\
		X_1\times_T(X_2\times_SX_3)\ar[rr]^q					&							&X_1\times_TX_2\times_TX_3
	}.
\]
This induces a commutative diagram (all arrows contain a switch)
\[
	\xymatrix
	{
		N_{q'}+q'^*N_{i''}\ar[r]\ar[d]	&N_u+u^*N_v\ar[dl]\\
		N_{i'}+i'^*N_q				&
	}
\]
since they all come from exact sequences related to \(N_{i''\circ q'}=N_{v\circ u}=N_{q\circ i'}\). Then we have a commutative diagram (none arrow contains a switch except \(\varphi\))
\[
	\xymatrix
	{
		q'^*N_{i''}+i'^*N_q\ar[r]\ar[d]					&N_u+N_{q'}\ar[r]\ar[d]	&i'^*N_q+u^*N_v\ar[d]\\
		q'^*N_{i''}+N_{q'}\ar[r]\ar@/_2pc/[rr]^{\varphi}	&N_u+u^*N_v\ar[r]		&i'^*N_q+N_{i'}\\
	}
\]
by the diagram above. Hence the composition among morphisms without switch
\[q'^*N_{i''}+i'^*N_q\longrightarrow N_u+N_{q'}\longrightarrow i'^*N_q+u^*N_v\]
is equal to the morphism with a switch
\[q'^*N_{i''}+i'^*N_q\longrightarrow i'^*N_q+u^*N_v\]
where the morphism \(q'^*N_{i''}\longrightarrow u^*N_v\) is given by the composition
\[q'^*N_{i''}\longrightarrow N_{i'}\longrightarrow u^*N_v.\]
So the composition among morphisms without switch
\[q_{23}'^*N_j+q_{12}^*N_i\longrightarrow N_{q'}+N_u\longrightarrow q_{13}'^*N_k+i'^*N_q\]
is equal to the morphism with a switch
\[q_{23}'^*N_j+q_{12}^*N_i\longrightarrow q_{13}'^*N_k+i'^*N_q\]
where the morphism \(q_{12}^*N_i\longrightarrow q_{13}'^*N_k\) is given by the composition
\[q_{12}^*N_i\longrightarrow N_{i'}\longrightarrow q_{13}'^*N_k\]
and the morphism \(q_{23}'^*N_j\longrightarrow i'^*N_q\) is got by pulling back the morphism
\[r^*N_j\longrightarrow N_q\]
along \(i'\).

Moreover, there are commutative diagrams with squares being Cartesian
\[
	\xymatrix
	{
		X_1\times_SX_2\ar[r]^i								&X_1\times_TX_2\ar[r]						&X_1\\
		X_1\times_SX_2\times_SX_3\ar[u]^{q_{12}'}\ar[r]^u\ar[d]_{i'}	&(X_1\times_SX_3)\times_TX_2\ar[r]\ar[u]\ar[d]	&X_1\times_SX_3\ar[u]\ar[d]\\
		X_1\times_T(X_2\times_SX_3)\ar[r]^q						&X_1\times_TX_3\times_TX_2\ar[r]			&X_1\times_TX_3
	}
\]
\[
	\xymatrix
	{
		X_2\times_SX_3\ar[r]^j									&X_2\times_TX_3\ar[r]						&X_2\\
		X_1\times_SX_2\times_SX_3\ar[r]^{q'}\ar[u]^{q_{23}'}\ar[d]_{q_{13}'}	&(X_1\times_SX_2)\times_TX_3\ar[u]\ar[d]\ar[r]	&X_1\times_SX_2\ar[u]\ar[d]\\
		X_1\times_SX_3\ar[r]^k									&X_1\times_TX_3\ar[r]						&X_1
	},
\]
which induce commutative diagrams where none right-hand vertical map contains a switch
\[
	\xymatrix
	{
		i^*q_{12}'^*T_{X_1\times_TX_2/X_1}\ar[r]						&q_{12}'^*T_{X_1\times_SX_2/X_1}+q_{12}'^*N_i\\
		u^*T_{(X_1\times_SX_3)\times_TX_2/X_1\times_SX_3}\ar[r]\ar[u]\ar[d]	&T_{q_{13}'}+N_u\ar[u]\ar[d]\\
		q^*T_{p_{13}}\ar[r]										&i'^*T_{p_{13}\circ q}+i'^*N_q
	},
\]
\[
	\xymatrix
	{
		q_{23}'^*j^*T_{X_2\times_TX_3/X_2}\ar[r]						&q_{23}'^*T_{X_2\times_SX_3/X_2}+q_{23}'^*N_j\\
		q'^*T_{(X_1\times_SX_2)\times_TX_3/X_1\times_SX_2}\ar[r]\ar[u]\ar[d]	&T_{q_{12}'}+N_{q'}\ar[u]\ar[d]\\
		k^*T_{X_1\times_TX_3/X_1}\ar[r]								&q_{13}'^*T_{X_1\times_SX_3/X_1}+q_{13}'^*N_k
	}.
\]
From all these calculations above together with functoriality of \(q'_{13*}\) with respect to twists, we see that
\begin{align*}
	&k_*q'_{13*}((q_{23}'^*t_f(g_2)\cdot q_{12}'^*t_f(g_1),-q_{13}'^*k^*T_{X_1\times_TX_3/X_1}+q_{13}'^*N_k-T_{q_{13}'}))\\
=	&k_*t_f(q'_{13*}(q_{23}'^*(g_2)\cdot q_{12}'^*(g_1)))\\
=	&(g_2\circ g_1)_T\\
	&\textrm{by definition}.
\end{align*}

Finally, we have to show that \((id_X)_T=id_X\) for any \(X\in Sm/S\). We have the following commutative diagram
\[
	\xymatrix
	{
		X\ar[rd]_-{\triangle_T}\ar[r]^-{\triangle_S}	&X\times_SX\ar[r]\ar[d]_{q_f}	&X\\
											&X\times_TX\ar[ru]			&
	}
\]
where \(\triangle\) means diagonal map. We have to show that the following diagram commutes
\[
	\xymatrix
	{
		N_{\triangle_S}-N_{\triangle_S}\ar[r]	&N_{\triangle_S}-\triangle_S^*T_{X\times_SX/X}\ar[d]\ar[r]^-{\triangle_{S*}}				&-T_{X\times_SX/X}\ar[d]_{t_f}\\
		0\ar[u]\ar[d]					&N_{\triangle_S}+\triangle_S^*(N_{q_f}-q_f^*T_{X\times_TX/X})\ar[r]^-{\triangle_{S*}}\ar[d]	&N_{q_f}-q_f^*T_{X\times_TX/X}\ar[d]_{q_{f*}}\\
		N_{\triangle_T}-N_{\triangle_T}\ar[r]	&N_{\triangle_T}-\triangle_T^*T_{X\times_TX/X}\ar[r]^-{\triangle_{T*}}					&-T_{X\times_TX/X}
	}.
\]
The right two squares come from functoriality of push-forwards with respect to twists and Axiom \ref{FPFCI}. The left square comes from the following commutative diagram with exact rows
\[
	\xymatrix
	{
		0\ar[r]	&N_{\triangle_S}\ar[r]						&N_{\triangle_T}\ar[r]						&\triangle_S^*N_{q_f}\ar[r]			&0\\
		0\ar[r]	&\triangle_S^*T_{X\times_SX/X}\ar[u]^{\cong}\ar[r]	&\triangle_T^*T_{X\times_TX/X}\ar[u]^{\cong}\ar[r]	&\triangle_S^*N_{q_f}\ar[u]^{id}\ar[r]	&0
	}.
\]
\end{proof}
By using Axiom \ref{FPFCI}, it is straightforward to check that \(\varphi_{f_1\circ f_2}=\varphi_{f_1}\circ\varphi_{f_2}\).
\begin{proposition}\label{projection formula for correspondences}
Suppose \(a\in\widetilde{Cor}_S(X_1,X_2)\) and \(b\in\widetilde{Cor}_T(Y_1,Y_2)\). Identifying \((X_1\times_SX_2)\times_TY_1\times_TY_2\) with \(X_1\times_SX_2\times_S(Y_1\times_TY_2)^S\) and \(X_1\times_TY_1\times_TX_2\times_TY_2\) with \((X_1\times_SY_1^S)\times_T(X_2\times_SY_2^S)\), we have
\[a_T\times_Tb=(a\times_Sb^S)_T.\]
\end{proposition}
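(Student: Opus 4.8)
The plan is to expand both sides, transport the closed-immersion pushforward $q_{f*}$ defining $(-)_T$ to the outside of each expression by base change and the projection formula, and then reconcile the resulting twist isomorphisms. Throughout I write $A_i=X_i\times_TY_i=X_i\times_SY_i^S$, so that the common $T$-ambient space of both sides is $A_1\times_TA_2$ and the relevant closed immersion is $q_f\colon A_1\times_SA_2\longrightarrow A_1\times_TA_2$; via the two identifications in the statement, $A_1\times_SA_2$ is exactly the $S$-ambient space $(X_1\times_SX_2)\times_TY_1\times_TY_2$ on which $a\times_Sb^S$ lives, while $A_1\times_TA_2$ is the ambient space of $a_T\times_Tb$.

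First I would record the Cartesian square
\[
\xymatrix{
A_1\times_SA_2\ar[r]^-{\bar p_1}\ar[d]_{q_f} & X_1\times_SX_2\ar[d]^{q_{f,X}}\\
A_1\times_TA_2\ar[r]^-{p_1} & X_1\times_TX_2
}
\]
in which $p_1$ is the smooth projection onto the ambient space of $a_T$ and $q_{f,X}$ is the closed immersion defining $(-)_T$ on $\widetilde{Cor}_S(X_1,X_2)$. The identity $p_1^{-1}(X_1\times_SX_2)=A_1\times_SA_2$ holds because both loci are cut out by the condition that the $X_1$- and $X_2$-components agree over $S$, and the dimension hypothesis of Axiom \ref{BCCI} is satisfied since $N_{q_f}$ and $N_{q_{f,X}}$ both have rank $d_S-d_T$. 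Writing $a_T\times_Tb=p_1^*(a_T)\cdot p_2^*(b)$ with $a_T=q_{f,X*}(t_f^X(a))$, I would then: (i) apply Axiom \ref{BCCI} to the square to get $p_1^*(a_T)=q_{f*}(\bar p_1^*t_f^X(a))$; (ii) apply Axiom \ref{PFCI} for the closed immersion $q_f$ (first diagram) to move $p_2^*(b)$ inside the pushforward, yielding $q_{f*}\big(\bar p_1^*t_f^X(a)\cdot q_f^*p_2^*(b)\big)$; and (iii) use Axiom \ref{FPB} together with the equality of maps $p_2\circ q_f=p_f\circ p_2^S$ to rewrite $q_f^*p_2^*(b)=(p_2^S)^*(b^S)$. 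Since $\bar p_1=p_1^S$ as morphisms, the underlying classes now coincide with those in $(a\times_Sb^S)_T=q_{f*}\big(t_f^A((p_1^S)^*(a)\cdot(p_2^S)^*(b^S))\big)$, so only the twists remain to be matched.

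The final step is therefore a comparison of two twist isomorphisms on $A_1\times_SA_2$: the one built along the path (i)--(iii) — the composite of $t_f^X$ applied before pullback, the base-change identification $\bar p_1^*N_{q_{f,X}}\cong N_{q_f}$ of Lemma \ref{t2}, and the commutativity constraint introduced by Axiom \ref{PFCI} — against the isomorphism $t_f^A$ applied after forming the $\times_S$-product. Both are isomorphisms out of the $\times_S$-twist $(p_1^S)^*(-T_{X_1\times_SX_2/X_1})+(p_2^S)^*(-T_{(Y_1\times_TY_2)^S/Y_1^S})$ and into $N_{q_f}+q_f^*(-T_{A_1\times_TA_2/A_1})$, and I would verify their equality by a diagram chase in $V(Vect(A_1\times_SA_2))$ assembled from the cube of Cartesian squares relating the $S$- and $T$-products, invoking Theorem \ref{four diagrams}, (1) and the base-change isomorphisms of normal bundles, exactly in the spirit of the functoriality proof for $\varphi_f$.

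The hard part will be this last twist reconciliation. The normal bundle $N_{q_f}$ enters the path (i)--(iii) adjacent to the $a$-factor (coming from $t_f^X$ and Axiom \ref{PFCI}), whereas $t_f^A$ places it in front of the whole product; moving it past the $(p_2^S)^*(b^S)$-twist forces a commutativity isomorphism, and one must check — precisely as in the \emph{switch} analysis appearing in the functoriality proof of $\varphi_f$ — that these commutativity constraints, together with the signs they carry in $\mathscr{P}_{A_1\times_SA_2}$, cancel exactly against those contributed by Axiom \ref{PFCI} and by the $\times_S$- versus $\times_T$-product isomorphisms. Once this single commutative diagram of virtual vector bundles is established, the two sides agree.
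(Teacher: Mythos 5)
Your proposal is correct and follows essentially the same route as the paper's proof: the same Cartesian square comparing the closed immersion \(q_f\) of the \(S\)-product into the \(T\)-product with \(q_{f,X}\), the same sequence of applications of Axiom \ref{BCCI}, Axiom \ref{PFCI} and Axiom \ref{FPB}, and the same final reconciliation of twists via a single \(3\times 3\) diagram of normal and relative tangent bundles together with Theorem \ref{four diagrams}, (1). The only cosmetic difference is that you anticipate a possible \emph{switch} (commutativity constraint) in the twist comparison; in fact \(N_{q_f}\) stays in the leftmost position along both paths, so the paper needs no switch here and the reconciliation reduces to the diagram chase you describe.
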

\begin{proof}
We have a commutative diagram with the square being Cartesian
\[
	\xymatrix
	{
		(X_1\times_SX_2)\times_TY_1\times_TY_2\ar[r]^r\ar[d]_{p_1}\ar[rd]_{p_2}	&X_1\times_TY_1\times_TX_2\times_TY_2\ar[d]_{q_1}\ar[rd]_{q_2}	&\\
		X_1\times_SX_2\ar[r]_t										&X_1\times_TX_2										&Y_1\times_TY_2
	}.
\]
Suppose \(a, b\) come from cohomologies with supports. Denote the isomorphism
\[-q_1^*T_{X_1\times_TX_2/X_1}-q_2^*T_{Y_1\times_TY_2/Y_1}\longrightarrow-T_{X_1\times_TX_2\times_TY_1\times_TY_2/X_1\times_TY_1}\]
by \(\theta\) and the isomorphism
\[-p_1^*T_{X_1\times_SX_2/X_1}-p_2^*T_{Y_1\times_TY_2/Y_1}\longrightarrow-T_{X_1\times_SX_2\times_SY_1^S\times_SY_2^S/X_1\times_SY_1^S}\]
by \(\eta\). Then
\begin{align*}
	&a_T\times_Tb\\
=	&\theta(q_1^*t_*(t_f(a))\cdot q_2^*b)\\
	&\textrm{by definition}\\
=	&\theta(r_*p_1^*(t_f(a))\cdot q_2^*b)\\
	&\textrm{by Axiom \ref{BCCI} for the square in the diagram}\\
=	&\theta(r_*(p_1^*(t_f(a))\cdot p_2^*b))\\
	&\textrm{by Proposition \ref{PFCI} for \(r\) and Axiom \ref{CPB}}\\
=	&r_*r^*(\theta)((p_1^*(t_f(a))\cdot p_2^*b,p_1^*N_t-p_1^*t^*T_{X_1\times_TX_2/X_1}-p_2^*T_{Y_1\times_TY_2/Y_1}))\\
	&\textrm{by functoriality of push-forwards with respect to twists}\\
=	&r_*(t_f(\eta(p_1^*(a)\cdot p_2^*b)))\\
=	&(a\times_Sb^S)_T\\
	&\textrm{by definition.}
\end{align*}
Here the fifth equality comes from the following commutative diagram with exact rows and columns
\[
	\xymatrix
	{
				&0\ar[d]								&0\ar[d]							&					&\\
				&p_2^*T_{Y_1\times_TY_2/Y_1}\ar[d]\ar[r]^=		&p_2^*T_{Y_1\times_TY_2/Y_1}\ar[d]		&					&\\
		0\ar[r]	&r^*T_{X_1Y_1X_2Y_2/X_1Y_1}\ar[r]\ar[d]		&T_{X_1X_2Y_1^SY_2^S/X_1Y_1^S}\ar[d]\ar[r]	&N_r\ar[r]\ar[d]_{\cong}	&0\\
		0\ar[r]	&r^*q_1^*T_{X_1\times_TX_2/X_1}\ar[r]\ar[d]	&p_1^*T_{X_1\times_SX_2/X_1}\ar[r]\ar[d]	&p_1^*N_t\ar[r]			&0\\
				&0									&0								&					&
	}
\]
and Theorem \ref{four diagrams}, (1).
\end{proof}
By the same proof as in Proposition \ref{pullback} applying to \(\varphi_f\), we have the following:
\begin{proposition}\label{lowerhash}
There is an adjoint pair
\[f_{\#}:\widetilde{Sh}(S)\rightleftharpoons\widetilde{Sh}(T):(f_{\#})',\]
where \((f_{\#})'F=F\circ\varphi_f\) for \(F\in\widetilde{Sh}(T)\).
\end{proposition}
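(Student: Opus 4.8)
The plan is to run the argument of Proposition \ref{pullback} verbatim, now feeding the functor $\varphi_f:\widetilde{Cor}_S\longrightarrow\widetilde{Cor}_T$ into the formal machinery rather than $\varphi^f$. First I would apply Lemma \ref{adjunction} with $\mathscr{C}=\widetilde{Cor}_S$, $\mathscr{D}=\widetilde{Cor}_T$, $\mathscr{M}=Ab$ and $\varphi=\varphi_f$; since $\varphi_f$ is a functor between small categories and $Ab$ has all colimits, the lemma produces a left adjoint $(\varphi_f)^*$ to the restriction functor
\[\widetilde{PSh}(T)\longrightarrow\widetilde{PSh}(S),\qquad F\longmapsto F\circ\varphi_f.\]
This is an adjunction $(\varphi_f)^*\dashv(-\circ\varphi_f)$ at the level of presheaves with $E$-transfers, with $(\varphi_f)^*:\widetilde{PSh}(S)\longrightarrow\widetilde{PSh}(T)$.

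Next I would descend to sheaves exactly as in Proposition \ref{pullback}. Writing $\widetilde{o}$ and $\widetilde{a}$ for the forgetful and sheafification functors of Proposition \ref{sheafication}, set $f_{\#}=\widetilde{a}\circ(\varphi_f)^*\circ\widetilde{o}$. For $G\in\widetilde{Sh}(S)$ and $F\in\widetilde{Sh}(T)$, the adjunction $\widetilde{a}\dashv\widetilde{o}$ of Proposition \ref{sheafication}, (1), together with the presheaf adjunction above, give natural isomorphisms
\[Hom_{\widetilde{Sh}(T)}(f_{\#}G,F)\cong Hom_{\widetilde{PSh}(T)}((\varphi_f)^*\widetilde{o}G,\widetilde{o}F)\cong Hom_{\widetilde{PSh}(S)}(\widetilde{o}G,(\widetilde{o}F)\circ\varphi_f).\]
Granting the sheaf claim of the next paragraph, the presheaf $(\widetilde{o}F)\circ\varphi_f$ is the underlying presheaf of the sheaf $(f_{\#})'F:=F\circ\varphi_f$, so full faithfulness of $\widetilde{o}$ identifies the last group with $Hom_{\widetilde{Sh}(S)}(G,(f_{\#})'F)$. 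This is precisely the asserted adjunction $f_{\#}\dashv(f_{\#})'$.

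The single step that genuinely requires an argument — and the one I expect to be the main obstacle — is that the restriction functor $F\longmapsto F\circ\varphi_f$ really lands in $\widetilde{Sh}(S)$, i.e. that $F\circ\varphi_f$ is a Nisnevich sheaf over $S$ whenever $F$ is a Nisnevich sheaf over $T$. This is exactly where the standing hypothesis that $f$ is smooth enters. Because $f$ is smooth, every $X\in Sm/S$ is canonically an object of $Sm/T$, every Nisnevich covering in $Sm/S$ remains a Nisnevich covering in $Sm/T$, and $\varphi_f\circ\widetilde{\gamma}_S$ coincides with the forgetful functor $Sm/S\longrightarrow Sm/T$ followed by $\widetilde{\gamma}_T$. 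Hence the restriction of $F\circ\varphi_f$ to $Sm/S$ is obtained from the Nisnevich sheaf $\widetilde{\gamma}_{T*}F$ by the comorphism of Nisnevich sites induced by $f$, which preserves the sheaf condition. Once this is settled the formula $(f_{\#})'F=F\circ\varphi_f$ is legitimate and all remaining verifications are the formal ones transcribed from Proposition \ref{pullback}.
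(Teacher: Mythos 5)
Your proposal follows exactly the paper's route: the paper proves this "by the same proof as in Proposition \ref{pullback} applied to $\varphi_f$," i.e.\ Lemma \ref{adjunction} gives the presheaf-level adjunction and Proposition \ref{sheafication} is used to descend to sheaves. Your extra verification that $F\circ\varphi_f$ is again a Nisnevich sheaf (via $\varphi_f\circ\widetilde{\gamma}_S=\widetilde{\gamma}_T\circ(\text{forget})$ and the fact that the forgetful functor $Sm/S\to Sm/T$ preserves Nisnevich coverings and the relevant fibre products) is correct and makes explicit a point the paper leaves implicit.
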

The next lemma is important when identifying \((f_{\#})'\). See also \cite[Exercise 1.12]{MVW}.
\begin{lemma}
For any \(U\in Sm/S\), \(X\in Sm/T\), we have an adjoint pair:
\[\widetilde{Cor}_S(U,X^S)=\widetilde{Cor}_T(U,X).\]
\end{lemma}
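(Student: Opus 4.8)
The plan is to produce the identification by matching the two defining colimits term by term, the geometric input being the canonical scheme isomorphism
\(U\times_S X^S\cong U\times_T X\).
This holds precisely because \(U\) already lives over \(S\): on \(R\)-points both sides are the pairs \((u,x)\in U(R)\times X(R)\) whose images in \(T(R)\) coincide, where on the left the map \(U\to S\) supplies exactly the \(S\)-coordinate recorded by \(X^S=X\times_T S\). First I would fix this isomorphism and check that under it the projections to \(U\) agree.

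Next I would match the three pieces of data entering Definition \ref{cor}. The cohomological degree agrees because \(X^S\to S\) is the base change of \(X\to T\) along \(f\), so relative dimension is preserved and \(d_{X^S}-d_S=d_X-d_T\). The twist \(-T_{U\times_S X^S/U}\) is identified with \(-T_{U\times_T X/U}\) by applying Lemma \ref{t1} twice: once to the Cartesian square exhibiting \(U\times_S X^S\to U\) (resp.\ \(U\times_T X\to U\)) as a base change of \(X^S\to S\) (resp.\ \(X\to T\)), and once to the square exhibiting \(X^S\to S\) as the base change of \(X\to T\); in this way both twists become the pullback of \(-T_{X/T}\) to the common scheme. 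Finally the admissible supports correspond, since \(\mathscr{A}_S(U,X^S)\) and \(\mathscr{A}_T(U,X)\) are both the closed subsets of the identified product whose components are finite over \(U\) of dimension \(\dim U\), a condition intrinsic to the map to \(U\). Having matched each term, and noting the bijection is compatible with extension of supports by Axiom \ref{ES}, the two filtered systems of groups \(E^{d_X-d_T}_Z(-,-)\) coincide, and passing to colimits gives the stated equality of Hom-groups.

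The remaining and main work is \emph{naturality}: to say this equality is genuinely the adjunction \(\varphi_f\dashv\varphi^f\) I must verify that the identification is compatible with the composition laws on both sides, hence with the structure maps defining \(\varphi^f\) (pullback along \(p_f\), Definition \ref{pullbackdef}) and \(\varphi_f\) (the twist \(t_f\) followed by push-forward along the closed immersion \(q_f\)). I would carry this out by tracing a single correspondence through both descriptions and checking the induced twist isomorphisms agree, using the base-change Axioms \ref{BCSM} and \ref{BCCI} together with functoriality of pull-back and push-forward with respect to twists, exactly in the style of Propositions \ref{easy composition}, \ref{easy composition1} and \ref{easy composition2}.

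The hard part will be reconciling the virtual-bundle twists on the two sides, because \(\varphi_f\) inserts the normal-bundle correction \(t_f\) (interchanging \(N_{q_f}\) against \(q_f^*T_{X_1\times_T X_2/X_1}\)) whereas the direct term-by-term identification carries no such correction. Showing that these contributions cancel is the crux, and I expect to resolve it by the same virtual-object manipulations used throughout Section \ref{Sheaves}, namely by feeding the relevant commutative diagram of normal and tangent bundles into Theorem \ref{four diagrams} and appealing to functoriality of the push-forwards with respect to twists.
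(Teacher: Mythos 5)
Your proposal follows essentially the same route as the paper: the paper packages your term-by-term identification as the push-forward $\theta_{U,X*}$ along the canonical isomorphism $U\times_SX^S\cong U\times_TX$ (declared "obviously an isomorphism"), and then, exactly as you plan, devotes the real work to the two naturality checks — $\lambda_{U,X_2}(V^S\circ W)=V\circ\lambda_{U,X_1}(W)$ and $\lambda_{U_1,X}(W\circ V)=\lambda_{U_2,X}(W)\circ V_T$ — carried out by chasing a correspondence through Cartesian diagrams using Axioms \ref{BCSM}, \ref{BCCI}, \ref{PFCI}, \ref{CTPF} and functoriality with respect to twists, with the $t_f$/normal-bundle bookkeeping you correctly single out as the crux. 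No gaps; this matches the paper's argument in both structure and tools.
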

\begin{proof}
For any \(U\in Sm/S\), \(X\in Sm/T\), we have an isomorphism
\[\theta_{U,X}:U\times_SX^S\longrightarrow U\times_TX.\]
Then define
\[\begin{array}{cccc}\lambda_{U,X}:&\widetilde{Cor}_S(U,X^S)&\longrightarrow&\widetilde{Cor}_T(U,X)\\&W&\longmapsto&\theta_{U,X*}(W)\end{array},\]
which is obviously an isomorphism.

Now suppose we have \(U\in Sm/S\), \(X_1, X_2\in Sm/T\), \(V\in\widetilde{Cor}_T(X_1,X_2)\), \(W\in\widetilde{Cor}_S(U,X_1^S)\), we want to show that
\[\lambda_{U,X_2}(V^S\circ W)=V\circ\lambda_{U,X_1}(W).\]

We have a commutative diagram
\[
	\xymatrix
	{
		X_1\times_TX_2																										&									&U\times_TX_1\\
		X_1^S\times_SX_2^S\ar[u]_{p}																							&U\times_SX_1^S\ar[ru]^{\theta_{U,X_1}}_{\cong}	&\\
		U\times_S(X_1^S\times_SX_2^S)\ar[u]_{p_{23}}\ar[r]^-{p_{13}}\ar[ru]^{p_{12}}\ar@/^2pc/[uu]^{q_{23}}\ar@/^2pc/[rruu]^{q_{12}}\ar@/_2pc/[rr]^{q_{13}}	&U\times_SX_2^S\ar[r]^{\theta_{U,X_2}}_{\cong}	&U\times_TX_2\\
	},
\]
where we've identified \(U\times_S(X_1^S\times_SX_2^S)\) with \(U\times_TX_1\times_TX_2\) for convenience.

Suppose \(V\) and \(W\) come from some elements of Chow-Witt groups with support (see Definition \ref{cor}). We have
\begin{eqnarray*}
\lambda_{U,X_2}(V^S\circ W)	&=	&\theta_{U,X_2*}p_{13*}(p_{23}^*p^*V\cdot p_{12}^*W)\\
						&	&\textrm{by definition}\\
						&=	&q_{13*}(q_{23}^*V\cdot p_{12}^*W)\\
						&	&\textrm{by Axiom \ref{FPFSM}}\\
						&=	&q_{13*}(q_{23}^*V\cdot q_{12}^*\theta_{U,X_1*}W)\\
						&	&\textrm{by Axiom \ref{EE} and Axiom \ref{FPB}}\\
						&=	&V\circ\lambda_{U,X_1}(W)\\
						&	&\textrm{by definition.}
\end{eqnarray*}

Then suppose we have \(U_1, U_2\in Sm/S\), \(X\in Sm/T\), \(V\in\widetilde{Cor}_S(U_1,U_2)\), \(W\in\widetilde{Cor}_S(U_2,X^S)\), we want to show that
\[\lambda_{U_1,X}(W\circ V)=\lambda_{U_2,X}(W)\circ V_T.\]
We have a commutative diagram
\[
	\xymatrix
	{
		U_1\times_TU_2			&U_1\times_T(U_2\times_SX^S)\ar[l]_-d\ar[r]^-{b}\ar@/^2pc/[rr]^{q_{23}}\ar[drr]_{q_{13}}	&U_2\times_SX^S\ar[r]^{\theta_{U_2,X}}	&U_2\times_TX\\
		U_1\times_SU_2\ar[u]_{q_f}	&U_1\times_S(U_2\times_SX^S)\ar[l]_-{p_{12}}\ar[r]_-{p_{13}}\ar[ru]_{p_{23}}\ar[u]_a		&U_1\times_SX^S\ar[r]_{\theta_{U_1,X}}	&U_1\times_TX
	},
\]
where we have identified \(U_1\times_T(U_2\times_SX^S)\) with \(U_1\times_TU_2\times_TX\). Suppose \(V\) and \(W\) come from some elements of Chow-Witt groups with support like above and define \(\theta\) to be the isomorphism
\[-T_{U_1\times_SU_2\times_SX^S/U_1\times_TX}\longrightarrow N_a-a^*T_{U_1\times_T(U_2\times_SX^S)/U_1\times_TX}.\]
We have
\begin{align*}
	&\lambda_{U_1,X}(W\circ V)\\
=	&\theta_{U_1,X*}p_{13*}((p_{23}^*W\cdot p_{12}^*V,-T_{U_1\times_SU_2\times_SX^S/U_2\times_SX^S}-p_{12}^*T_{U_1\times_SU_2/U_1}))\\
	&\textrm{by definition}\\
=	&q_{13*}a_*((a^*q_{23}^*\theta_{U_2,X*}W\cdot\theta(p_{12}^*V),-a^*q_{23}^*T_{U_2\times_TX/U_2}+N_a-a^*T_{U_1\times_T(U_2\times_SX^S)/U_1\times_TX}))\\
	&\textrm{by Axiom \ref{EE} and Axiom \ref{CTPF}, (1)}\\
=	&q_{13*}((q_{23}^*\theta_{U_2,X*}W\cdot a_*\theta(p_{12}^*V),-q_{23}^*T_{U_2\times_TX/U_2}-T_{U_1\times_T(U_2\times_SX^S)/U_1\times_TX}))\\
	&\textrm{by Axiom \ref{PFCI} for \(a\)}\\
=	&q_{13*}((q_{23}^*\theta_{U_2,X*}W\cdot d^*q_{f*}\varphi_f(V),-q_{23}^*T_{U_2\times_TX/U_2}-d^*T_{U_1\times_TU_2/U_1}))\\
	&\textrm{by Axiom \ref{BCCI} for the leftmost square in the diagram above}\\
=	&\lambda_{U_2,X}(W)\circ V_T\\
	&\textrm{by definition}
\end{align*}
\end{proof}
\begin{proposition}
\[(f_{\#})'=f^*.\]
\end{proposition}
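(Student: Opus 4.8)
The plan is to obtain the identity formally, from the hom-set adjunction packaged in the preceding lemma, without computing either side directly. By Proposition \ref{pullback} the functor $f^{*}$ is \emph{defined} as the left adjoint of $f_{*}=-\circ\varphi^{f}$, while by Proposition \ref{lowerhash} the functor $(f_{\#})'$ is by definition the restriction $-\circ\varphi_{f}$. Since a left adjoint is unique up to canonical isomorphism, it suffices to show that $(f_{\#})'$ is \emph{also} a left adjoint of $f_{*}$; the stated equality then follows. There is no tension with $(f_{\#})'$ being the right adjoint of $f_{\#}$: a single functor may be a left adjoint of one functor and a right adjoint of another.

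First I would record that the preceding lemma is precisely the statement that $\varphi_{f}\colon\widetilde{Cor}_{S}\to\widetilde{Cor}_{T}$ is left adjoint to $\varphi^{f}\colon\widetilde{Cor}_{T}\to\widetilde{Cor}_{S}$. Indeed, writing $X^{S}=\varphi^{f}X$ and viewing $U$ as $\varphi_{f}U$, the isomorphism $\widetilde{Cor}_{S}(U,X^{S})\cong\widetilde{Cor}_{T}(U,X)$ reads $\widetilde{Cor}_{S}(U,\varphi^{f}X)\cong\widetilde{Cor}_{T}(\varphi_{f}U,X)$, and the two commuting squares verified in that lemma's proof are exactly naturality in $X$ and in $U$. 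Thus $\varphi_{f}\dashv\varphi^{f}$ as additive functors.

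Next I would transport this to presheaves. The key (purely formal) input is that the left Kan extension along a functor possessing a left adjoint is computed as restriction along that left adjoint. Applied to $\varphi^{f}$, whose left adjoint is $\varphi_{f}$, this identifies the presheaf-level left adjoint of $f_{*}=-\circ\varphi^{f}$—the functor denoted $(\varphi^{f})^{*}$ in Lemma \ref{adjunction}, whose sheafification is used to define $f^{*}$—with $-\circ\varphi_{f}$. Hence $(-\circ\varphi_{f})\dashv(-\circ\varphi^{f})$ as an adjunction $\widetilde{PSh}(T)\rightleftharpoons\widetilde{PSh}(S)$.

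Finally I would descend to sheaves. Both restriction functors preserve the Nisnevich-sheaf condition (this is built into Propositions \ref{pullback} and \ref{lowerhash}), so the presheaf adjunction restricts to the full subcategories $\widetilde{Sh}$, giving $(f_{\#})'\dashv f_{*}$. Comparing with $f^{*}\dashv f_{*}$ and invoking uniqueness of left adjoints yields $(f_{\#})'\cong f^{*}$; equivalently, for a sheaf $G$ the presheaf left adjoint already returns the sheaf $G\circ\varphi_{f}=(f_{\#})'G$, on which the sheafification $\widetilde{a}$ of Proposition \ref{sheafication} acts trivially, so $f^{*}G=(f_{\#})'G$. The substantive geometry—base change and the compatibility of the two push-forwards—has already been spent in proving $\varphi_{f}\dashv\varphi^{f}$; the only obstacle remaining here is the formal one of keeping the variances aligned and confirming that restriction preserves sheaves so that the adjunction descends.
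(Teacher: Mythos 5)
Your proof is correct and is essentially the paper's argument in abstract packaging: the paper observes that by the preceding lemma the element $\widetilde{\gamma}(id_Y)\in\widetilde{Cor}_S(Y,Y^S)$ is an initial object of the comma category $C_Y$ from Lemma \ref{adjunction}, so the colimit defining $(f^*F)Y$ collapses to $F(Y)=((f_{\#})'F)Y$ — which is precisely your statement that left Kan extension along $\varphi^f$ is restriction along its left adjoint $\varphi_f$. Both routes spend all the real work in the lemma establishing $\varphi_f\dashv\varphi^f$ and finish with the same formal collapse and trivial sheafification step.
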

\begin{proof}
This is because for any \(Y\in Sm/S\), \(\widetilde{\gamma}(id_Y)\in\widetilde{Cor}_T(Y,Y)=\widetilde{Cor}_S(Y,Y^S)\) is the initial element of \(C_Y\) in Lemma \ref{adjunction} by the lemma above applying to \(\varphi^f\) (see Definition \ref{pullbackdef}). So for any \(F\in\widetilde{PSh}(T)\), we have \((f^*F)Y=FY=((f_{\#})'F)Y\). And this gives an isomorphism between \(f^*F\) and \((f_{\#})'F\) for any presheaf with \(E\)-transfers \(F\) by the lemma above. So it also gives an isomorphism after sheafication.
\end{proof}
\begin{proposition}\label{operations}
Suppose \(f:S\longrightarrow T\) is smooth as before.
\begin{enumerate}
\item For any \(X\in Sm/S\), we have
\[f_{\#}\widetilde{\mathbb{Z}}_S(X)\cong\widetilde{\mathbb{Z}}_T(X).\]
\item For any \(F\in\widetilde{Sh}(T)\) and \(Y\in Sm/T\)
\[f^*(F^Y)\cong (f^*F)^{Y\times_TS}\]
as sheaves with \(E\)-transfers.
\item For any \(F\in\widetilde{Sh}(S)\) and \(G\in\widetilde{Sh}(T)\)
\[\underline{Hom}_T(f_{\#}F,H)\cong f_*\underline{Hom}_S(F,f^*H)\]
as sheaves with \(E\)-transfers.
\item For any \(F\in\widetilde{Sh}(S)\) and \(G\in\widetilde{Sh}(T)\)
\[f_{\#}(F\otimes_Sf^*G)\cong f_{\#}F\otimes_TG\]
as sheaves with \(E\)-transfers.
\end{enumerate}
\end{proposition}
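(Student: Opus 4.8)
The plan is to deduce all four isomorphisms from the adjunction $f_{\#}\dashv f^{*}$ (Proposition \ref{lowerhash}, together with the identification $(f_{\#})'=f^{*}$ proved just above), the Hom-tensor adjunction (Proposition \ref{hom-tensor}), and the computations already available on representable sheaves (Proposition \ref{sheaf formulas}, Proposition \ref{tensor}). Throughout I shall use the two pointwise formulas $(f^{*}H)(W)=H(W)$ for $W\in Sm/S$ regarded as a $T$-scheme via $\varphi_{f}$, and $(f_{*}G)(Y)=G(Y\times_{T}S)$; both are immediate from $(f_{\#})'F=F\circ\varphi_{f}$ and the definition of $f_{*}$. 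For (1) I would argue by Yoneda: for any $H\in\widetilde{Sh}(T)$,
\[Hom_{T}(f_{\#}\widetilde{\mathbb{Z}}_{S}(X),H)\cong Hom_{S}(\widetilde{\mathbb{Z}}_{S}(X),f^{*}H)\cong(f^{*}H)(X)\cong H(X)\cong Hom_{T}(\widetilde{\mathbb{Z}}_{T}(X),H),\]
where the second isomorphism uses $\widetilde{\mathbb{Z}}_{S}(X)=\widetilde{a}\,\widetilde{c}_{S}(X)$, the adjunction $\widetilde{a}\dashv\widetilde{o}$ and the representability of $\widetilde{c}_{S}(X)$, and the third uses $(f_{\#})'=f^{*}$. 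Naturality in $H$ and the Yoneda lemma yield the claim.

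Statement (2) I would check section-wise: evaluating at $W\in Sm/S$ gives $f^{*}(F^{Y})(W)=F(Y\times_{T}W)$ and $(f^{*}F)^{Y\times_{T}S}(W)=F((Y\times_{T}S)\times_{S}W)$, and these agree through the canonical isomorphism $(Y\times_{T}S)\times_{S}W\cong Y\times_{T}W$, which is natural in $W$ and compatible with transfers. Statement (3) then follows by combining the two adjunctions with (2): evaluating $\underline{Hom}_{T}(f_{\#}F,H)$ at $Y\in Sm/T$ gives
\[Hom_{T}(f_{\#}F,H^{Y})\cong Hom_{S}(F,f^{*}(H^{Y}))\cong Hom_{S}(F,(f^{*}H)^{Y\times_{T}S}),\]
which is exactly $f_{*}\underline{Hom}_{S}(F,f^{*}H)$ evaluated at $Y$, since $(f_{*}-)(Y)$ reads off the section over $Y\times_{T}S$.

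The substance is the projection formula (4). An adjoint manipulation reduces it to the internal-Hom identity $\underline{Hom}_{S}(f^{*}G,f^{*}H)\cong f^{*}\underline{Hom}_{T}(G,H)$, but that is essentially equivalent to (4) and offers no independent leverage; instead I would test the two functors $(F,G)\mapsto f_{\#}(F\otimes_{S}f^{*}G)$ and $(F,G)\mapsto f_{\#}F\otimes_{T}G$ on representables. On $F=\widetilde{\mathbb{Z}}_{S}(X)$ and $G=\widetilde{\mathbb{Z}}_{T}(Y)$, using Proposition \ref{sheaf formulas}(1), Proposition \ref{tensor} and part (1),
\[f_{\#}\bigl(\widetilde{\mathbb{Z}}_{S}(X)\otimes_{S}f^{*}\widetilde{\mathbb{Z}}_{T}(Y)\bigr)\cong f_{\#}\widetilde{\mathbb{Z}}_{S}(X\times_{S}Y^{S})\cong\widetilde{\mathbb{Z}}_{T}(X\times_{S}Y^{S}),\]
\[f_{\#}\widetilde{\mathbb{Z}}_{S}(X)\otimes_{T}\widetilde{\mathbb{Z}}_{T}(Y)\cong\widetilde{\mathbb{Z}}_{T}(X)\otimes_{T}\widetilde{\mathbb{Z}}_{T}(Y)\cong\widetilde{\mathbb{Z}}_{T}(X\times_{T}Y),\]
and the two right-hand sides coincide via the canonical identification $X\times_{S}Y^{S}\cong X\times_{T}Y$. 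Since $f_{\#}$, $f^{*}$ and $\otimes$ are all left adjoints, both functors preserve colimits separately in each variable; as the representable sheaves generate $\widetilde{Sh}(S)$ and $\widetilde{Sh}(T)$ under colimits, a natural isomorphism on representables extends uniquely to all $F$ and $G$.

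The hard part will be verifying that the isomorphism produced on representables is genuinely natural, i.e. compatible with arbitrary finite $E$-correspondences in $X$ and in $Y$ and not merely with honest morphisms of schemes, so that the chain of identifications above glues over colimit diagrams; this amounts to tracking the twists through Proposition \ref{sheaf formulas}(1), Proposition \ref{tensor} and part (1) simultaneously. Once naturality on the generators is established, the cocontinuity of both functors makes the extension by colimits formal, and (4) follows.
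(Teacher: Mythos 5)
Your parts (1)--(3) coincide with the paper's own argument: (1) is the same Yoneda computation via $(f_{\#})'=f^{*}$, and (3) is the same two-adjunction chain through (2). For (2), note that the ``compatible with transfers'' clause you assert in passing is exactly where the content sits: the paper invokes Proposition \ref{projection formula for correspondences} (the identity $a_T\times_Tb=(a\times_Sb^S)_T$) to make the identification $(Y\times_TS)\times_SW\cong Y\times_TW$ natural with respect to finite $E$-correspondences and their twists, not just scheme morphisms. You should cite that, but it is a citation gap rather than a logical one.

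Part (4) is where your proposal genuinely diverges, and your opening claim there is wrong: the adjoint manipulation does \emph{not} reduce (4) to the internal-Hom identity $\underline{Hom}_S(f^*G,f^*H)\cong f^*\underline{Hom}_T(G,H)$; it reduces it to (3), which you have already proved. Explicitly,
\[
Hom_T(f_{\#}(F\otimes_Sf^*G),H)\cong Hom_S(f^*G,\underline{Hom}_S(F,f^*H))\cong Hom_T(G,f_*\underline{Hom}_S(F,f^*H))\cong Hom_T(G,\underline{Hom}_T(f_{\#}F,H))\cong Hom_T(f_{\#}F\otimes_TG,H),
\]
using $f_{\#}\dashv f^*$, the hom-tensor adjunction of Proposition \ref{hom-tensor}, $f^*\dashv f_*$, then (3), then hom-tensor again; Yoneda finishes. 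This is the paper's proof and it is entirely formal. Your alternative --- checking on representables and extending by cocontinuity --- founders on exactly the step you flag as ``the hard part'': naturality of the identification $X\times_SY^S\cong X\times_TY$ with respect to arbitrary finite $E$-correspondences in both variables, with all twists tracked. You leave this unverified, and without it the extension over colimit diagrams does not go through, so as written the proof of (4) is incomplete. (The needed naturality is again Proposition \ref{projection formula for correspondences}, so your route could be repaired, but the formal reduction to (3) makes the whole issue moot.)
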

\begin{proof}
\begin{enumerate}
\item This is because for any \(F\in\widetilde{Sh}(T)\)
\[Hom_T(f_{\#}\widetilde{\mathbb{Z}}_S(X),F)\cong Hom_S(\widetilde{\mathbb{Z}}_S(X),f^*F)\cong (f^*F)(X)\cong F(X)\]
by the proposition before.
\item This is because for every \(X\in Sm/S\)
\[(f^*(H^Y))X=H(Y\times_TX)\]
and
\[(f^*H)^{Y\times_TS}X=H((Y\times_TS)\times_SX)\]
by the proposition before. Then one uses Proposition \ref{projection formula for correspondences}.
\item For any \(Y\in Sm/T\)
\begin{eqnarray*}
\underline{Hom}_T(f_{\#}F,H)Y	&=		&Hom_T(f_{\#}F,H^Y)\\
						&\cong	&Hom_S(F,f^*(H^Y))\\
						&\cong	&Hom_S(F,(f^*H)^{Y\times_TS})\\
						&		&\textrm{by (2)}\\
						&=		&\underline{Hom}_S(F,f^*H)(Y\times_TS)\\
						&=		&(f_*\underline{Hom}_S(F,f^*H))Y.
\end{eqnarray*}
\item This is because for any \(H\in\widetilde{Sh}(T)\), we have
\begin{eqnarray*}
Hom_T(f_{\#}(F\otimes_Sf^*G),H)	&\cong	&Hom_S(F\otimes_Sf^*G,f^*H)\\
						&\cong	&Hom_S(f^*G,\underline{Hom}_S(F,f^*H))\\
						&\cong	&Hom_T(G,f_*\underline{Hom}_S(F,f^*H))\\
						&\cong	&Hom_T(G,\underline{Hom}_T(f_{\#}F,H))\\
						&		&\textrm{by (3)}\\
						&\cong	&Hom_T(f_{\#}F\otimes_TG,H).
\end{eqnarray*}
\end{enumerate}
\end{proof}
\section{Operations on Localized Categories}\label{Operations on Localized Categories}
We are going to establish the theory of effective (resp. stabilized) motives on bounded above complexes (See \cite{MVW}) of sheaves of \(E\)-transfers (resp. symmetric spectra). And we will compare our theory with respect to that of \cite{CD}, \cite{CD1} and \cite{DF}, which uses unbounded complexes.
\subsection{For Sheaves with \(E\)-Transfers}\label{effective}
\subsubsection{On Derived Categories}
Denote by \(D^-(S)\) (resp. \(K^-(S)\)) the derived (resp. homotopy) category of bounded above complexes in \(\widetilde{Sh}(S)\). We are going to define \(\otimes_S\) and \(f_{\#}\) and \(f^*\) (See Section \ref{Sheaves}) over those categories. The method is inherited from \cite[Corollary 2.2]{SV} and \cite[Lemma 8.15]{MVW}.
\begin{definition}
We call a presheaf with \(E\)-transfers free if it's a direct sum of presheaves of the form \(\widetilde{c}_S(X)\). We call a presheaf with \(E\)-transfers projective if it's a direct summand of a free presheaf with \(E\)-transfers. An sheaf with \(E\)-transfers is called free (resp. projective) if it's a sheafication of a free (resp. projective) presheaf with \(E\)-transfers. A bounded above complex of sheaves with \(E\)-transfers is called free (projective) if all its terms are free (projective).
\end{definition}

\begin{definition}
A projective resolution of a bounded above sheaf complex \(K\) is a projective complex with a quasi-isomorphism \(P\longrightarrow K\). If \(K\) is already projective, we take \(P=K\).
\end{definition}
Now let \(Y\in Sm/k\) be an \(S\)-scheme and \(Y\in Sm/T\). Consider in this section the functors
\[\begin{array}{cccc}\varphi:&\widetilde{Cor}_S&\longrightarrow&\widetilde{Cor}_T\\&X&\longmapsto&(X^Y)_T\cong X\times_SY\end{array}\]
and
\[\begin{array}{cccc}\psi:&Sm_S&\longrightarrow&Sm_T\\&X&\longmapsto&X\times_SY\end{array}\]
determined by the triple \((Y, S, T)\). We have a commutative diagram
\[
	\xymatrix
	{
		Sm/S\ar[r]^{\psi}\ar[d]_{\widetilde{\gamma}}	&Sm/T\ar[d]_{\widetilde{\gamma}}\\
		\widetilde{Cor}_S\ar[r]^{\varphi}			&\widetilde{Cor}_T
	}.
\]

Recall from Lemma \ref{adjunction} the definition of \(\varphi^*\) and \(\varphi_*\).
\begin{lemma}
For any \(X\in Sm/S\),
\[\varphi^*(\widetilde{c}_S(X))\cong \widetilde{c}_S(\psi X)\]
as presheaves with \(E\)-transfers.
\end{lemma}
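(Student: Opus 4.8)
The plan is to show that the left adjoint \(\varphi^*\) of Lemma \ref{adjunction} carries representable presheaves to representable presheaves; this is a purely formal consequence of the Yoneda lemma and uses no geometry. Recall that \(\widetilde{c}_S(X)=\widetilde{Cor}_S(-,X)\) is by definition the presheaf over \(S\) represented by \(X\), and that \(\varphi\) acts on objects by \(\varphi X=X\times_SY=\psi X\). Thus \(\psi X\) is naturally an object of \(Sm/T\), and the target of the asserted isomorphism is the presheaf \(\widetilde{c}_T(\psi X)=\widetilde{Cor}_T(-,\psi X)\) over \(T\) represented by it. (In the notation of the statement, \(\widetilde{c}_S(\psi X)\) is to be read as this representable presheaf over the target base \(T\).)

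First I would run the abstract argument. For an arbitrary \(F\in\widetilde{PSh}(T)\), the adjunction \(\varphi^*\dashv\varphi_*\) of Lemma \ref{adjunction}, together with \(\varphi_*F=F\circ\varphi\) and the Yoneda lemma, yields natural isomorphisms
\begin{align*}
Hom_T(\varphi^*\widetilde{c}_S(X),F)
	&\cong Hom_S(\widetilde{c}_S(X),\varphi_*F)
	= Hom_S(\widetilde{c}_S(X),F\circ\varphi)\\
	&\cong (F\circ\varphi)(X)=F(\psi X)
	\cong Hom_T(\widetilde{c}_T(\psi X),F),
\end{align*}
where the two outer isomorphisms are Yoneda (over \(S\) and over \(T\) respectively), the first equality is the definition of \(\varphi_*\), and the middle equality uses \(\varphi X=\psi X\). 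Since these identifications are natural in \(F\), the objects \(\varphi^*\widetilde{c}_S(X)\) and \(\widetilde{c}_T(\psi X)\) corepresent the same functor on \(\widetilde{PSh}(T)\), and hence are canonically isomorphic as presheaves with \(E\)-transfers over \(T\).

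To make the isomorphism concrete, and to confirm it agrees with the explicit colimit formula for \(\varphi^*\) in the proof of Lemma \ref{adjunction}, I would then unwind \((\varphi^*\widetilde{c}_S(X))(W)=\varinjlim\theta_W\), where the index category \(C_W\) has objects the arrows \(a:W\longrightarrow\varphi Z\) of \(\widetilde{Cor}_T\) with \(Z\in Sm/S\), and \(\theta_W(a)=\widetilde{Cor}_S(Z,X)\). The forward map sends the class of a pair \((a:W\longrightarrow\varphi Z,\,c\in\widetilde{Cor}_S(Z,X))\) to \(\varphi(c)\circ a\in\widetilde{Cor}_T(W,\psi X)\), and the candidate inverse sends \(g\) to the class of \((g,id_X)\). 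The key point making these mutually inverse is that for \(c\in\widetilde{Cor}_S(Z,X)\) the morphism \(c\) is an arrow of \(C_W\) from \(a\) to \(\varphi(c)\circ a\), so the transition map \(\theta_W(c)\) identifies \(id_X\) with \(c\) in the colimit, whence \((a,c)\) and \((\varphi(c)\circ a,id_X)\) name the same element. The argument being formal, there is no genuine obstacle; the only points needing care are that the forward map is well defined on the colimit (compatibility with the contravariant transition maps of \(\theta_W\)) and that all the isomorphisms are natural in \(W\), both routine verifications resting only on the description of \(\varphi^*\) in Lemma \ref{adjunction} and on Yoneda.
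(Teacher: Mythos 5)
Your proof is correct and follows essentially the same route as the paper: the paper's entire argument is the chain \(Hom_T(\varphi^*\widetilde{c}_S(X),F)\cong Hom_S(\widetilde{c}_S(X),\varphi_*F)\cong F(\psi X)\) followed by Yoneda, exactly your first paragraph (and you rightly read the target as the representable presheaf over \(T\)). The explicit colimit unwinding you add is consistent but not needed.
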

\begin{proof}
For any \(F\in\widetilde{PSh}(T)\),
\[Hom_T(\varphi^*(\widetilde{c}_S(X)),F)\cong Hom_S(\widetilde{c}_S(X),\varphi_*F)\cong F(\psi X).\]
\end{proof}
\begin{lemma}
The functor \(\varphi_*\) maps sheaves with \(E\)-transfers to sheaves with \(E\)-transfers.
\end{lemma}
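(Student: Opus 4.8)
The plan is to reduce the claim to the elementary fact that the base-change functor $\psi=(-)\times_SY$ preserves the Nisnevich topology, and then to exploit the commutativity of the square relating $\psi$, $\varphi$ and the two graph functors $\widetilde\gamma$.

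First I would unwind the definitions. By Lemma \ref{adjunction} we have $\varphi_*F=F\circ\varphi$, so restricting along the graph functor $\widetilde\gamma_S\colon Sm/S\to\widetilde{Cor}_S$ and using the commutative diagram $\widetilde\gamma_T\circ\psi=\varphi\circ\widetilde\gamma_S$ displayed above, one computes for $F\in\widetilde{Sh}(T)$
\[
\widetilde\gamma_*(\varphi_*F)=F\circ\varphi\circ\widetilde\gamma_S=F\circ\widetilde\gamma_T\circ\psi=\psi_*\bigl(\widetilde\gamma_*F\bigr),
\]
where $\psi_*$ denotes restriction (precomposition) along $\psi\colon Sm/S\to Sm/T$ on ordinary presheaves of abelian groups. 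Since $F\in\widetilde{Sh}(T)$ means exactly that $\widetilde\gamma_*F$ is a Nisnevich sheaf on $Sm/T$, the whole statement reduces to showing that $\psi_*$ carries Nisnevich sheaves on $Sm/T$ to Nisnevich sheaves on $Sm/S$. Note that the transfer structure on $\varphi_*F$ is automatic, since $\varphi_*F=F\circ\varphi\in\widetilde{PSh}(S)$; only the underlying Nisnevich-sheaf condition has to be verified.

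To prove this reduced statement I would use the characterization of Nisnevich sheaves by elementary distinguished squares: a presheaf of abelian groups $G$ is a Nisnevich sheaf if and only if $G(\emptyset)=0$ and $G$ sends every elementary Nisnevich square to a Cartesian square of abelian groups. The key point is that $\psi=(-)\times_SY$ sends elementary Nisnevich squares in $Sm/S$ to elementary Nisnevich squares in $Sm/T$: base change preserves Cartesianness, \'etale morphisms remain \'etale, the open-immersion leg remains an open immersion, the isomorphism over the reduced closed complement is preserved, and $\psi(\emptyset)=\emptyset$. One also needs $X\times_SY\in Sm/T$, which holds because $X\times_SY\to Y$ is the base change of the smooth morphism $X\to S$ while $Y\to T$ is smooth. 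Consequently, for a Nisnevich sheaf $G$ on $Sm/T$ and any elementary Nisnevich square $Q$ in $Sm/S$, the square $(\psi_*G)(Q)=G(\psi Q)$ is Cartesian and $(\psi_*G)(\emptyset)=G(\emptyset)=0$; hence $\psi_*G$ is a Nisnevich sheaf, which is what we want.

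The main obstacle is precisely the verification that $\psi$ preserves elementary Nisnevich squares, i.e.\ the base-change stability of the three defining conditions --- \'etaleness, the open-immersion leg, and the isomorphism on the reduced closed complement. This is standard but is the only genuine content; everything else is formal manipulation of the adjunction from Lemma \ref{adjunction} together with the commutative square $\widetilde\gamma_T\circ\psi=\varphi\circ\widetilde\gamma_S$.
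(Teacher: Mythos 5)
Your proof is correct and follows essentially the same route as the paper: the paper checks the sheaf condition for $\varphi_*F$ via the exact sequence for a finite Nisnevich covering $\{U_i\}$ of $X$, which (as in your argument) reduces to the sheaf condition for $F$ on the covering $\{U_i\times_SY\}$ of $X\times_SY$, i.e.\ to the stability of Nisnevich covering data under the base change $\psi=(-)\times_SY$. Your reformulation via elementary distinguished squares rather than the \v{C}ech-type exact sequence is a cosmetic difference; the key input is identical.
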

\begin{proof}
It suffices to show that for any finite Nisnevich covering \(\{U_i\}\) of \(X\in Sm/S\), the following sequence is exact
\[0\longrightarrow G(X)\longrightarrow\oplus_iG(U_i)\longrightarrow\oplus_{i,j}G(U_i\times_XU_j)\]
where \(G=\varphi_*F\) for some \(F\in\widetilde{Sh}(T)\). And this follows easily.
\end{proof}
\begin{lemma}
Let \(f:F\longrightarrow G\) be morphism in \(\widetilde{PSh}(S)\) such that \(\widetilde{a}(f)\) is an isomorphism, then \(\widetilde{a}(\varphi^*(f))\) is also an isomorphism.
\end{lemma}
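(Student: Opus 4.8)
The plan is to reduce everything to the adjunction characterization of sheafification-isomorphisms already used in the proof of Proposition \ref{sheafication1}, and then to play the two adjunctions $\widetilde{a}\dashv\widetilde{o}$ and $\varphi^*\dashv\varphi_*$ against each other. First I would record the criterion on which the whole argument rests: since $\widetilde{a}$ is left adjoint to the forgetful functor $\widetilde{o}$ (Proposition \ref{sheafication}), a morphism $g$ of presheaves with $E$-transfers has the property that $\widetilde{a}(g)$ is an isomorphism if and only if the induced map $Hom(g,H)$ of abelian groups is an isomorphism for \emph{every} sheaf with $E$-transfers $H$. This is precisely the equivalence invoked in the proof of Proposition \ref{sheafication1}, and I would apply it on both bases $S$ and $T$.

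Next I would fix an arbitrary $H\in\widetilde{Sh}(T)$ and examine the map $Hom_T(\varphi^*(f),H)$. By the adjunction $\varphi^*\dashv\varphi_*$ supplied by Lemma \ref{adjunction}, this map is naturally identified with $Hom_S(f,\varphi_*H)$, the identification being natural in the first variable so that it carries $\varphi^*(f)$ to $f$. The crucial input here is the lemma immediately preceding this statement, which guarantees that $\varphi_*H$ is again a sheaf with $E$-transfers, i.e. $\varphi_*H\in\widetilde{Sh}(S)$. Since $\widetilde{a}(f)$ is assumed to be an isomorphism, the criterion applied over the base $S$ then shows that $Hom_S(f,\varphi_*H)$, hence also $Hom_T(\varphi^*(f),H)$, is an isomorphism of abelian groups.

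Putting these together, $Hom_T(\varphi^*(f),H)$ is an isomorphism for every $H\in\widetilde{Sh}(T)$. Applying the criterion once more, this time over the base $T$, yields that $\widetilde{a}(\varphi^*(f))$ is an isomorphism, which is exactly the assertion.

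I do not expect a serious obstacle: the statement is a formal consequence of the two adjunctions together with the fact that $\varphi_*$ carries sheaves to sheaves. The only point requiring genuine care is the naturality of the isomorphism $Hom_T(\varphi^*(-),H)\cong Hom_S(-,\varphi_*H)$ in the first variable, since it is this naturality that transports the morphism $\varphi^*(f)$ to $f$ and thereby lets the hypothesis on $\widetilde{a}(f)$ feed into the conclusion; this naturality is part of the adjunction furnished by Lemma \ref{adjunction} and so may be taken for granted.
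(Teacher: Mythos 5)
Your proof is correct and follows essentially the same route the paper intends: the paper's own proof is just the remark that the lemma ``follows from a similar method as in Proposition \ref{sheafication1}'', and you have spelled out exactly that method --- the adjunction criterion for $\widetilde{a}(g)$ being an isomorphism, transported through $\varphi^*\dashv\varphi_*$ using the preceding lemma that $\varphi_*$ preserves sheaves.
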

\begin{proof}
This follows from a similar method as in Proposition \ref{sheafication1}.
\end{proof}
\begin{proposition}
For any \(F\in\widetilde{PSh}(S)\),
\[\widetilde{a}((L_i\varphi^*)\widetilde{a}(F))\cong\widetilde{a}((L_i\varphi^*)F)\]
as sheaves with \(E\)-transfers for any \(i\geq 0\), where \(L_i\varphi^*\) means the \(i^{th}\) left derived functor of \(\varphi^*\).
\end{proposition}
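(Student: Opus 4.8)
The plan is to reduce the statement to a single local‑acyclicity claim about $\widetilde{a}\varphi^*$ and then to settle that claim by the Čech method of Proposition \ref{Cech}. First I would record that, by Proposition \ref{sheafication}, $\widetilde{a}$ is exact and is left adjoint to the forgetful functor $\widetilde{o}$ with $\widetilde{a}\widetilde{o}=\mathrm{id}$; hence the unit $\eta_F\colon F\to\widetilde{o}\widetilde{a}(F)$ becomes an isomorphism after sheafifying. Since the representables $\widetilde{c}_S(X)$ are projective (by Yoneda, $Hom(\widetilde{c}_S(X),-)$ is exact) and generate $\widetilde{PSh}(S)$, every bounded‑above presheaf admits a free resolution, and $L_i\varphi^*$ is computed by such resolutions because $\varphi^*$ is a right‑exact left adjoint. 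Choose free resolutions $P_\bullet\to F$ and $Q_\bullet\to\widetilde{o}\widetilde{a}(F)$ together with a chain map $P_\bullet\to Q_\bullet$ lifting $\eta_F$. What we must prove is that $\widetilde{a}\bigl(H_i(\varphi^*P_\bullet)\bigr)\to\widetilde{a}\bigl(H_i(\varphi^*Q_\bullet)\bigr)$, that is $\widetilde{a}(L_i\varphi^*F)\to\widetilde{a}(L_i\varphi^*\widetilde{a}(F))$, is an isomorphism; as $\widetilde{a}$ is exact it commutes with $H_i$, so this is equivalent to showing that $\widetilde{a}(\varphi^*P_\bullet)\to\widetilde{a}(\varphi^*Q_\bullet)$ is a quasi‑isomorphism.

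Next I would note that $\widetilde{a}(P_\bullet)\to\widetilde{a}(F)$ and $\widetilde{a}(Q_\bullet)\to\widetilde{a}(\widetilde{o}\widetilde{a}(F))=\widetilde{a}(F)$ are quasi‑isomorphisms (exactness of $\widetilde{a}$), and that the lift covers $\widetilde{a}(\eta_F)=\mathrm{id}_{\widetilde{a}(F)}$; therefore $\widetilde{a}(P_\bullet)\to\widetilde{a}(Q_\bullet)$ is a quasi‑isomorphism, and its cone $C_\bullet:=\mathrm{cone}(P_\bullet\to Q_\bullet)$ is a bounded‑above complex of free presheaves with $\widetilde{a}(C_\bullet)$ acyclic. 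Because $\varphi^*$ is additive, $\varphi^*C_\bullet=\mathrm{cone}(\varphi^*P_\bullet\to\varphi^*Q_\bullet)$, so the entire Proposition reduces to the claim: \emph{if $C_\bullet$ is a bounded‑above complex of free presheaves with $E$-transfers and $\widetilde{a}(C_\bullet)$ is acyclic, then $\widetilde{a}(\varphi^*C_\bullet)$ is acyclic.} (A projective presheaf is a summand of a free one and all functors involved are additive, so assuming $C_\bullet$ free costs nothing.)

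For this claim I would use that a left adjoint preserves direct sums, together with the preceding lemma identifying $\varphi^*(\widetilde{c}_S(X))$ with $\widetilde{c}_T(\psi X)$: if $C_n=\bigoplus_\alpha\widetilde{c}_S(X_{n,\alpha})$ then $\widetilde{a}(\varphi^*C_n)\cong\bigoplus_\alpha\widetilde{\mathbb{Z}}_T(\psi X_{n,\alpha})$, where $\psi X=X\times_SY$. Thus the operation $\widetilde{a}\varphi^*$, restricted to complexes of free sheaves, is built from the product‑with‑$Y$ functor on representables, which carries representables to representables (compare Proposition \ref{tensor}). The claim therefore becomes the assertion that applying such a product‑with‑a‑fixed‑scheme operation to a Nisnevich‑locally acyclic bounded‑above complex of free sheaves preserves local acyclicity.

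The main obstacle is precisely this last assertion, which is the analog of \cite[Lemma 8.15]{MVW} and cannot be reached by formal homological algebra: a free presheaf never sheafifies to $0$, so the naive dimension‑shift on syzygies $0\to Z\to P_0\to H\to 0$ is circular, since one only gets $\widetilde{a}(Z)\cong\widetilde{a}(P_0)$ rather than $\widetilde{a}(Z)=0$. I would instead prove it by the strategy of Proposition \ref{Cech}: acyclicity after sheafification is tested Nisnevich‑locally (stalkwise), where one reduces to a single Nisnevich cover and invokes the representability of products under $\widetilde{c}$, so that the functor $-\times_SY$ transforms the Čech and Mayer–Vietoris resolutions of Proposition \ref{MV-sequence} and Proposition \ref{Cech} into resolutions of the same shape; local exactness is then preserved term by term. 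Feeding this back through the two reduction steps above yields $\widetilde{a}(L_i\varphi^*\widetilde{a}(F))\cong\widetilde{a}(L_i\varphi^*F)$ for every $i\geq 0$.
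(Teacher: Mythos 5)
Your reduction of the proposition to the claim ``if $C_\bullet$ is a bounded-above complex of free presheaves with $\widetilde{a}(C_\bullet)$ acyclic, then $\widetilde{a}(\varphi^*C_\bullet)$ is acyclic'' is correct, and it is essentially equivalent to the paper's own reduction (the paper instead passes through $\ker$ and $\mathrm{coker}$ of $F\to\widetilde{a}(F)$ and reduces to: $\widetilde{a}(F)=0$ implies $\widetilde{a}(L_i\varphi^*F)=0$; the two formulations are interchangeable by taking a free resolution of such an $F$). You also correctly diagnose that the naive dimension shift on syzygies is circular, and that the only substantive input available is Proposition \ref{Cech} together with the identity $\varphi^*\breve{C}(U/X)=\breve{C}(\psi U/\psi X)$.

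The gap is that the key claim itself is never proved. The sentence ``local exactness is then preserved term by term'' is not an argument: exactness is not a termwise property, and the whole difficulty is that an arbitrary locally acyclic bounded-above complex of frees is not a \v{C}ech complex, so knowing that $\varphi^*$ carries \v{C}ech complexes to \v{C}ech complexes does not by itself say anything about $\varphi^*C_\bullet$. What the paper actually does here is an induction on the homological degree $i$: given $F$ with $\widetilde{a}(F)=0$, each section $a\in F(X)$ dies on a Nisnevich cover $U_a\to X$, so the canonical surjection $\oplus_a\widetilde{c}_S(X)\to F$ factors through $\oplus_a H_0(\breve{C}(U_a/X))$, yielding a short exact sequence $0\to K\to\oplus_a H_0(\breve{C}(U_a/X))\to F\to 0$ with $\widetilde{a}(K)=0$; the hypercohomology spectral sequence $(L_p\varphi^*)H_q(\breve{C}(U/X))\Rightarrow(\mathbb{L}_{p+q}\varphi^*)\breve{C}(U/X)$, the induction hypothesis applied to the homology presheaves $H_q$, and the local acyclicity of $\varphi^*\breve{C}(U/X)=\breve{C}(\psi U/\psi X)$ then force $\widetilde{a}((L_n\varphi^*)H_0(\breve{C}(U/X)))=0$, after which the long exact sequence shifts the vanishing from $K$ in degree $n-1$ to $F$ in degree $n$. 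Some device of this kind --- covering $F$ by $H_0$'s of \v{C}ech complexes and running a spectral-sequence or dimension-shift argument along that specific class of objects --- is indispensable, and your sketch stops exactly where it is needed.
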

\begin{proof}
We show at first that for any presheaf with \(E\)-transfers \(F\) with \(\widetilde{a}(F)=0\)
\[\widetilde{a}(L_i\varphi^*(F))=0\]
for any \(i\geq 0\). Then for any presheaf with \(E\)-transfers \(F\), denote by \(\theta\) the natural map \(F\longrightarrow\widetilde{a}(F)\). We have
\[\widetilde{a}(coker(\theta))=\widetilde{a}(ker(\theta))=0.\]
Hence for any \(i\geq 0\), we have
\[\widetilde{a}(L_i\varphi^*\widetilde{a}(F))\cong\widetilde{a}(L_i\varphi^*Im(\theta))\cong\widetilde{a}(L_i\varphi^*F)\]
by using long exact sequences. Hence the statement follows.

Now we prove the claim. We do induction on \(i\). The claim is true for \(i=0\) and suppose it's true for \(i<n\).

For any \(F\in\widetilde{PSh}(S)\), we have a surjection
\[\oplus_{a\in F(X)}\widetilde{c}_S(X)\longrightarrow F\]
defined by each section of \(F\) on each \(X\in Sm/S\). Since \(\widetilde{a}(F)=0\), for any \(a\in F(X), X\in Sm/S\), there is a finite Nisnevich covering \(U_a\longrightarrow X\) of \(X\) such that \(a|_{U_a}=0\). So the composition
\[\oplus_{a\in F(X)}\widetilde{c}_S(U_a)\longrightarrow\oplus_{a\in F(X)}\widetilde{c}_S(X)\longrightarrow F\]
is zero. Then we have got a surjection
\[\oplus_{a\in F(X)}H_0(\breve{C}(U_a/X))\longrightarrow F\]
with kernel \(K\). Proposition \ref{Cech} implies that
\[\widetilde{a}(H_p(\breve{C}(U/X)))=0\]
for any Nisnevich covering \(U\longrightarrow X\) and \(p\in\mathbb{Z}\). So \(\widetilde{a}(K)=0\) also. We have a hypercohomology spectral sequence
\[(L_p\varphi^*)H_q(\breve{C}(U/X))\Longrightarrow(\mathbb{L}_{p+q}\varphi^*)\breve{C}(U/X).\]
Hence
\[\widetilde{a}((\mathbb{L}_n\varphi^*)\breve{C}(U/X))\cong\widetilde{a}((L_n\varphi^*)H_0(\breve{C}(U/X)))\]
by induction hypothesis. But
\[\widetilde{a}((\mathbb{L}_n\varphi^*)\breve{C}(U/X))\cong\widetilde{a}(H_n(\varphi^*\breve{C}(U/X)))\]
by definition of hypercohomology and the latter one vanishes since we have
\[\varphi^*\breve{C}(U/X)=\breve{C}(\psi U/\psi X)\]
by previous lemmas. So
\[\widetilde{a}((L_n\varphi^*)H_0(\breve{C}(U/X)))=0.\]
So
\[\widetilde{a}(L_n\varphi^*F)\cong\widetilde{a}(L_{n-1}\varphi^*K)=0\]
by long exact sequence and induction hypothesis.
\end{proof}
\begin{proposition}\label{adapted}
Let functor \(\varphi^*\) takes acyclic projective complexes to acyclic projective complexes.
\end{proposition}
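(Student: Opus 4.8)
The plan is to treat the two assertions separately. Preservation of \emph{projectivity} is formal: the functor $\varphi^*$ is a left adjoint (Lemma \ref{adjunction}), so it preserves arbitrary direct sums and direct summands, and by the lemma identifying $\varphi^*(\widetilde{c}_S(X))\cong\widetilde{c}_S(\psi X)$ it carries free presheaves to free presheaves. Combining this with the preceding proposition, which allows me to interchange $\varphi^*$ with the sheafification $\widetilde{a}$ up to $\widetilde{a}$-isomorphism, shows that each term of $\varphi^*(C)$ is again a free, respectively projective, sheaf with $E$-transfers. Hence the only real content is preservation of \emph{acyclicity}, and I would spend the proof there.

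For acyclicity I would first realize the given acyclic projective sheaf complex $C$ as $\widetilde{a}(P)$ for a bounded above complex $P$ of projective presheaves with $E$-transfers. Here I use that the representable presheaves $\widetilde{c}_S(X)$ are genuine projective objects of $\widetilde{PSh}(S)$: by Yoneda $\mathrm{Hom}_{\widetilde{PSh}(S)}(\widetilde{c}_S(X),-)$ is evaluation at $X$, which is exact because kernels and cokernels of presheaves are computed sectionwise, so free and hence projective presheaves are projective objects. Acyclicity of $C$ in $\widetilde{Sh}(S)$ then says precisely that the homology presheaves satisfy $\widetilde{a}(H_q(P))=0$ for all $q$.

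With this presheaf model in hand, I would run the same hypercohomology spectral sequence already used in the preceding proposition,
\[
E^2_{p,q}=(L_p\varphi^*)\,H_q(P)\;\Longrightarrow\;H_{p+q}\bigl(\varphi^*(P)\bigr),
\]
which converges because $P$ is bounded above and its terms, being projective objects of $\widetilde{PSh}(S)$, are $\varphi^*$-acyclic, so that $\varphi^*(P)$ already computes $\mathbb{L}\varphi^*(P)$. Applying the exact sheafification functor $\widetilde{a}$ (exactness from Proposition \ref{sheafication}) to the whole spectral sequence and invoking the key vanishing established inside the preceding proposition, namely $\widetilde{a}\bigl((L_p\varphi^*)F\bigr)=0$ whenever $\widetilde{a}(F)=0$, applied to $F=H_q(P)$, every $E^2$-term dies after sheafification. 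Since $\widetilde{a}$ is exact it commutes with the formation of the spectral sequence and of its filtration, so the abutment vanishes: $\widetilde{a}\bigl(H_n(\varphi^*(P))\bigr)=0$ for all $n$, i.e.\ $\varphi^*(C)=\widetilde{a}(\varphi^*(P))$ is acyclic in $\widetilde{Sh}(T)$.

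The step I expect to be the main obstacle is the reduction to the presheaf level made rigorous: lifting the sheaf-level complex $C$ to a bona fide projective presheaf complex $P$ on which $\varphi^*$ can act before sheafifying, and checking that the functor written $\varphi^*$ in the statement agrees, up to $\widetilde{a}$-isomorphism, with $\widetilde{a}\varphi^*$ applied to this model. This is exactly where the preceding proposition's identity $\widetilde{a}(L_i\varphi^*\widetilde{a}F)\cong\widetilde{a}(L_i\varphi^*F)$ does the essential work; once it is in place the remainder is the formal spectral-sequence argument above, whose only quantitative input is the boundedness guaranteeing convergence.
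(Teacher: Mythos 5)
Your overall strategy --- reduce everything to the sheaf-local vanishing \(\widetilde{a}((L_i\varphi^*)F)=0\) whenever \(\widetilde{a}(F)=0\), established inside the preceding proposition, and then propagate it through the complex --- is the right one, and your treatment of preservation of projectivity is fine. But there is a genuine gap at exactly the step you flag, and the identity \(\widetilde{a}(L_i\varphi^*\widetilde{a}F)\cong\widetilde{a}(L_i\varphi^*F)\) does not close it. The paper's definition of a projective complex of sheaves only says that each \emph{term} \(C^n\) is of the form \(\widetilde{a}(G^n)\) with \(G^n\) a projective presheaf; it does not provide a complex \(P\) of projective presheaves with \(\widetilde{a}(P)\cong C\). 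The differential \(\widetilde{a}(G^n)\to\widetilde{a}(G^{n+1})\) corresponds under adjunction to a map \(G^n\to\widetilde{o}\widetilde{a}(G^{n+1})\), and since the unit \(G^{n+1}\to\widetilde{o}\widetilde{a}(G^{n+1})\) is neither surjective nor split, there is no reason this lifts to a map \(G^n\to G^{n+1}\). The preceding proposition's identity is a statement about a single object and does not assemble the \(G^n\) into a complex. Without the lift, your spectral sequence \((L_p\varphi^*)H_q(P)\Rightarrow H_{p+q}(\varphi^*P)\) has no \(P\) to run on. If you instead run it on \(\widetilde{o}(C)\), its terms are \(\varphi^*\)-acyclic only \emph{after sheafification}, so \(\varphi^*(\widetilde{o}C)\) does not compute \(\mathbb{L}\varphi^*(\widetilde{o}C)\) on the nose, and you would need the other hypercohomology spectral sequence, sheafified, to compare the two --- an extra step you have not supplied.

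The paper sidesteps the lifting problem entirely: it uses only the object-level statement that \(\widetilde{a}((L_i\varphi^*)F)=0\) for \(i>0\) when \(F\) is a projective sheaf, then splits the acyclic bounded-above complex into short exact sequences of sheaves and dimension-shifts via the long exact sequence: each cycle object inherits the vanishing of sheafified higher derived functors, so each short exact sequence stays exact after applying \(\varphi^*\), and splicing gives acyclicity of \(\varphi^*(C)\). Your spectral-sequence route can be repaired along the lines indicated above, but as written the reduction to a projective presheaf model of the whole complex is not justified.
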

\begin{proof}
For any projective \(F\in\widetilde{Sh}(S)\), \(F=\widetilde{a}(G)\) for some projective \(G\in\widetilde{PSh}(S)\) by definition. So
\[\widetilde{a}((L_i\varphi^*)F)\cong\widetilde{a}((L_i\varphi^*)G)=0\]
for any \(i>0\) by the proposition above. Now given a short exact sequence of sheaves with \(E\)-transfers
\[0\longrightarrow K\longrightarrow F\longrightarrow P\longrightarrow 0\]
with \(\widetilde{a}((L_i\varphi^*)P)=0\) for any \(i>0\). Then the sequence is still exact as sheaves with \(E\)-transfers after applying \(\varphi^*\) by long exact sequence. Then the statement follows easily.
\end{proof}
\begin{proposition}\label{derived}
We have an exact functor
\[L\varphi^*:D^-(S)\longrightarrow D^-(T)\]
which maps any \(K\in D^-(S)\) to \(\varphi^*P\), where \(P\) is a projective resolution \(K\).
\end{proposition}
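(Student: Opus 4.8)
The plan is to exhibit $L\varphi^*$ as the left derived functor of the right exact additive functor $\varphi^*$, using the class of projective complexes as the class of objects adapted to $\varphi^*$. The crucial input is already in hand: Proposition \ref{adapted} says precisely that $\varphi^*$ carries acyclic projective complexes to acyclic (projective) complexes, which is the single nonformal ingredient in the whole construction. Before using it, I would record two elementary facts. First, $\varphi^*$ is additive, since by Lemma \ref{adjunction} it is a left adjoint; consequently it preserves chain homotopies and, as a functor on complexes, commutes strictly with the shift $[1]$ and with the formation of mapping cones, all of which are defined termwise out of the additive structure.

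The first step is existence of resolutions in $D^-(S)$: I claim every $K\in D^-(S)$ admits a quasi-isomorphism $P\longrightarrow K$ with $P$ a bounded above \emph{projective} complex. For a single sheaf $F\in\widetilde{Sh}(S)$ the sheafification of the canonical surjection $\oplus_{a\in F(X),\, X\in Sm/S}\widetilde{c}_S(X)\longrightarrow F$ (used already in the proof of the preceding proposition) exhibits $F$ as a quotient of a free, hence projective, sheaf with $E$-transfers. Iterating this and totalizing in the standard degreewise fashion produces a bounded above projective resolution of any bounded above complex. When $K$ is already projective I take $P=K$, in accordance with the definition of projective resolution given above.

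Next I would verify that $K\longmapsto\varphi^*P$ descends to a functor on $D^-(T)$. Since projective complexes satisfy the usual lifting property, any two projective resolutions of $K$ are chain homotopy equivalent, and any morphism $K\longrightarrow K'$ lifts to a morphism of chosen resolutions, unique up to chain homotopy; applying the additive $\varphi^*$ then gives a well-defined morphism $\varphi^*P\longrightarrow\varphi^*P'$ in $K^-(T)$, independent of all choices up to homotopy. The essential point is that $L\varphi^*$ sends quasi-isomorphisms to isomorphisms: given a quasi-isomorphism $f\colon K\longrightarrow K'$, lift it to $\widetilde{f}\colon P\longrightarrow P'$ between projective resolutions, so that $\widetilde{f}$ is a quasi-isomorphism of projective complexes with acyclic projective mapping cone. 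Because $\varphi^*$ commutes with cones, $\operatorname{cone}(\varphi^*\widetilde{f})\cong\varphi^*(\operatorname{cone}\widetilde{f})$, and the latter is acyclic by Proposition \ref{adapted}; hence $\varphi^*\widetilde{f}$ is a quasi-isomorphism. This is exactly the condition for $K\longmapsto\varphi^*P$ to factor through the localization $D^-(S)$ and define $L\varphi^*\colon D^-(S)\longrightarrow D^-(T)$.

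Finally, exactness as a triangulated functor follows formally: every distinguished triangle in $D^-(S)$ is isomorphic to one arising from a termwise split short exact sequence of projective resolutions, i.e. to a cone triangle $P'\longrightarrow P\longrightarrow \operatorname{cone}\longrightarrow P'[1]$, and since $\varphi^*$ commutes strictly with $[1]$ and with cones it carries such a triangle to a distinguished triangle in $D^-(T)$. The main obstacle in this argument is not the formal homological algebra of deriving via an adapted class but rather the adaptedness itself, namely that $\varphi^*$ preserves acyclicity of projective complexes; this, however, is supplied by Proposition \ref{adapted}, so the remaining work is the careful but routine bookkeeping of resolutions, homotopies and cones sketched above.
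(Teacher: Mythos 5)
Your overall architecture is the right one, and you correctly identify Proposition \ref{adapted} as the only nonformal input; the paper's own proof is just the observation that, by that proposition, the class of projective complexes is adapted to \(\varphi^*\) in the sense of \cite[III.6.3]{GM}, followed by an appeal to \cite[III.6.6]{GM}. However, your justification of well-definedness contains a genuine gap. You assert that ``projective complexes satisfy the usual lifting property,'' so that any two projective resolutions of \(K\) are chain homotopy equivalent and any morphism \(K\longrightarrow K'\) lifts to chosen resolutions uniquely up to homotopy. This would be true if the terms of \(P\) were projective \emph{objects} of the abelian category \(\widetilde{Sh}(S)\), but they are not: a ``projective'' sheaf with \(E\)-transfers is by definition the sheafification of a direct summand of a sum of representables, and \(Hom_{\widetilde{Sh}(S)}(\widetilde{\mathbb{Z}}_S(X),F)\cong F(X)\) is not an exact functor of \(F\), since an epimorphism of Nisnevich sheaves need not be surjective on sections over \(X\). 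Consequently the classical comparison theorem for projective resolutions is unavailable, as is the factorization, up to homotopy, of a chain map \(P\longrightarrow K'\) through a quasi-isomorphism \(P'\longrightarrow K'\). This is precisely why the paper must later prove separately (Proposition \ref{cofibrant}) that bounded above projective complexes are cofibrant for \(\mathfrak{M}\) --- it is not automatic.

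The repair is exactly the content of the adapted-class formalism the paper cites: the two facts you do establish --- every sheaf is a quotient of a free sheaf, and \(\varphi^*\) sends acyclic projective complexes to acyclic complexes (Proposition \ref{adapted}) --- imply that quasi-isomorphisms between bounded above projective complexes admit a calculus of fractions and that the localization \(K^-(\mathcal{P})[\mathrm{qis}^{-1}]\longrightarrow D^-(S)\) is an equivalence, where \(\mathcal{P}\) denotes the class of projective complexes. Morphisms are then represented by roofs \(P\longleftarrow R\longrightarrow P'\) with \(R\longrightarrow P\) a quasi-isomorphism of projective complexes, so no homotopy uniqueness of resolutions is required; your cone argument (which is correct as stated) shows that \(\varphi^*\) inverts quasi-isomorphisms between projective complexes and hence descends along this equivalence. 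Your exactness argument should likewise be routed through the equivalence \(K^-(\mathcal{P})[\mathrm{qis}^{-1}]\simeq D^-(S)\), replacing a morphism of \(D^-(S)\) by a genuine chain map of projective complexes there, rather than through lifting of morphisms to resolutions.
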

\begin{proof}
By the proposition above, the class of projective complexes is adapted (see \cite[III.6.3]{GM}) to the functor \(\varphi^*\). Now apply \cite[III.6.6]{GM}.
\end{proof}

We will just write \(L\varphi^*\) above by \(\varphi^*\) for convenience.

Now we apply the general results above to \(\otimes_S\), \(f_{\#}\) and \(f^*\).
\begin{proposition}\label{derived functors}
\begin{enumerate}
\item There is a tensor product
\[\begin{array}{cccccc}\otimes_S:&D^-(S)&\times&D^-(S)&\longrightarrow&D^-(S)\\&(K&,&L)&\longmapsto&P\otimes_SQ\end{array},\]
where \(P, Q\) are projective resolutions of \(K, L\), respectively and the last tensor means taking the total complex of the bicomplex \(\{P_i\otimes_SQ_j\}\). And for any \(K\in D^-(S)\), the functor \(K\otimes_S-\) is exact.
\item Suppose \(f:S\longrightarrow T\) is a smooth morphism in \(Sm/k\). There is an exact functor
\[\begin{array}{cccc}f_{\#}:&D^-(S)&\longrightarrow&D^-(T)\\&K&\longmapsto&f_{\#}P\end{array},\]
where \(P\) is a projective resolution of \(K\).
\item Suppose \(f:S\longrightarrow T\) is a morphism in \(Sm/k\). There is an exact functor
\[\begin{array}{cccc}f^*:&D^-(T)&\longrightarrow&D^-(S)\\&K&\longmapsto&f^*P\end{array},\]
where \(P\) is a projective resolution of \(K\).
\end{enumerate}
\end{proposition}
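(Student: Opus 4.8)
The plan is to exhibit all three operations as special cases of the derived functor $L\varphi^*$ produced in Proposition \ref{derived}, and to reduce the bifunctor case to the one-variable one. The point is that each operation, on representable sheaves with $E$-transfers, agrees with a functor of the form $\varphi^*$ (or $(\varphi^f)^*$, $(\varphi_f)^*$) associated to a product/base-change functor between correspondence categories: by Proposition \ref{tensor}, $(-)\otimes_S\widetilde{\mathbb{Z}}_S(Y)$ is $\varphi^*$ for the triple $(Y,S,S)$ with $\psi=(-)\times_SY$; by Proposition \ref{operations}, (1), $f_{\#}$ is $(\varphi_f)^*$ with $\psi$ the inclusion $Sm/S\hookrightarrow Sm/T$; and by Proposition \ref{sheaf formulas}, (1) together with Definition \ref{pullbackdef}, $f^*$ is $(\varphi^f)^*$ with $\psi=(-)\times_TS$. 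In each case $\psi$ sends Nisnevich coverings to Nisnevich coverings and satisfies $\varphi^*\breve{C}(U/X)\cong\breve{C}(\psi U/\psi X)$, which is the only input used in the proofs of Propositions \ref{adapted} and \ref{derived}; hence those proofs apply verbatim to all three.

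For parts (2) and (3) I would then simply instantiate Proposition \ref{derived}. Since the projective complexes form a class adapted to the relevant $\varphi^*$ (Proposition \ref{adapted}), \cite[III.6.6]{GM} yields a well-defined exact functor $K\mapsto\varphi^*P$, with $P$ a projective resolution of $K$, independent of $P$ up to canonical isomorphism. Reading this off for the triples above gives the exact functors $f_{\#}\colon D^-(S)\to D^-(T)$ and $f^*\colon D^-(T)\to D^-(S)$.

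Part (1) needs the extra observation that $\otimes_S$ is a bifunctor, so I would check adaptedness in each variable and then pass to total complexes. Fixing a projective complex $P$ and realizing each of its terms as a direct summand of some $\bigoplus_\alpha\widetilde{\mathbb{Z}}_S(Y_\alpha)$, I would use that $\otimes_S$ is a left adjoint in each variable (Proposition \ref{hom-tensor}) and hence commutes with direct sums, together with the identification $\widetilde{\mathbb{Z}}_S(Y_\alpha)\otimes_S(-)\cong\varphi_\alpha^*$ for the triple $(Y_\alpha,S,S)$. Proposition \ref{adapted} then shows $\varphi_\alpha^*$ sends acyclic projective complexes to acyclic projective complexes, and since direct sums and summands of acyclic (resp. projective) objects stay acyclic (resp. projective) in the Grothendieck abelian category $\widetilde{Sh}(S)$, the functor $P\otimes_S(-)$ does the same. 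Thus projective complexes are adapted to $-\otimes_S-$; applying \cite[III.6.6]{GM} to the total complex of $\{P_i\otimes_SQ_j\}$ gives a well-defined bifunctor on $D^-(S)$, and commutativity of $\otimes_S$ transfers exactness from the second variable to the first, proving $K\otimes_S(-)$ exact for every $K$.

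The main obstacle is the bookkeeping in part (1): one must verify that the total-complex construction is simultaneously independent of both projective resolutions and triangulated, and this is exactly where the direct-sum reduction identifying $\widetilde{\mathbb{Z}}_S(Y)\otimes_S(-)$ with a $\varphi^*$ is indispensable, as it is what lets Proposition \ref{adapted} be invoked one variable at a time. No new geometric input is required beyond what is already encoded in Propositions \ref{adapted} and \ref{derived}; the rest is a formal application of the adapted-class formalism of \cite[III.6.3, III.6.6]{GM}.
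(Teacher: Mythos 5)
Your proposal is correct and follows essentially the same route as the paper: parts (2) and (3) are obtained by instantiating Proposition \ref{derived} for the triples $(S,S,T)$ and the base-change triple, and part (1) is handled by identifying $-\otimes_S\widetilde{\mathbb{Z}}_S(Y)$ with $\varphi^*$ for $(Y,S,S)$, checking adaptedness of projective complexes one variable at a time via Proposition \ref{adapted}, and passing to the total complex. The paper phrases the well-definedness step through the spectral sequence of the bicomplex and a cone argument rather than your explicit direct-sum/summand reduction, but the substance is identical.
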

\begin{proof}
\begin{enumerate}
\item Suppose \(Y\in Sm/S\). In the definition of \(\varphi\), we take \((Y, S, T):=(Y, S, S)\). Then \(\varphi^*F\cong F\otimes_S\widetilde{\mathbb{Z}}_S(Y)\) for any \(F\in\widetilde{Sh}(S)\) by Proposition \ref{hom-tensor}.

Now given acyclic projective complex \(P\) and a projective sheaf \(F\). \(F\otimes_SP\) is also acyclic by applying Proposition \ref{adapted} to \(\varphi\) and definition of projectiveness. So for any projective complex \(K\), the complex \(P\otimes_SK\) is also acyclic by the spectral sequence of the bicomplex \(\{P_i\otimes_SK_j\}\). Then for any projective complexes \(P, Q, R\) and quasi-isomorphism \(a:P\longrightarrow Q\), the morphism \(a\otimes_SR\) is still a quasi-isomorphism since we have
\[Cone(a\otimes_SR)\cong Cone(a)\otimes_SR\]
and the latter one is acyclic. So the statement follows easily.
\item In the definition of \(\varphi\), we take \((Y, S, T):=(S, S, T)\) and apply Proposition \ref{derived}.
\item In the definition of \(\varphi\), we take \((Y, S, T):=(T, S, T)\) and apply Proposition \ref{derived}.
\end{enumerate}
\end{proof}
\begin{proposition}\label{derived adjunction}
Suppose \(f:S\longrightarrow T\) is a smooth morphism in \(Sm/k\), we have an adjoint pair
\[f_{\#}:D^-(S)\rightleftharpoons D^-(T):f^*.\]
\end{proposition}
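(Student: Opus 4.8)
The plan is to deduce the derived adjunction from the sheaf-level adjunction $f_{\#}\dashv f^*$ (Proposition \ref{lowerhash} together with the identification $(f_{\#})'=f^*$ proved above) by means of projective resolutions, exploiting that bounded above complexes of projective sheaves are homotopically projective. Recall from Proposition \ref{derived functors} that both derived functors are computed by applying the underived functors to a projective resolution: for $K\in D^-(S)$ with projective resolution $P\longrightarrow K$ one has $f_{\#}K=f_{\#}P$, and for $M\in D^-(T)$ with projective resolution $R\longrightarrow M$ one has $f^*M=f^*R$. Thus I want to produce a natural isomorphism $Hom_{D^-(T)}(f_{\#}K,M)\cong Hom_{D^-(S)}(K,f^*M)$ and conclude by Yoneda that $f_{\#}$ and $f^*$ form an adjoint pair.

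The key preliminary observation is that $f_{\#}$ preserves projectives. Indeed, by Proposition \ref{operations}, (1) it sends the free sheaf $\widetilde{\mathbb{Z}}_S(X)$ to the free sheaf $\widetilde{\mathbb{Z}}_T(X)$; being a left adjoint it commutes with arbitrary direct sums, and being additive it preserves direct summands, so it carries free (resp. projective) sheaves to free (resp. projective) sheaves. Consequently $f_{\#}P$ is again a bounded above complex of projective sheaves, hence homotopically projective. The computation then runs as follows:
\begin{align*}
Hom_{D^-(T)}(f_{\#}K,M)
&\cong Hom_{K^-(T)}(f_{\#}P,M)\\
&\cong Hom_{K^-(T)}(f_{\#}P,R)\\
&\cong Hom_{K^-(S)}(P,f^*R)\\
&\cong Hom_{D^-(S)}(K,f^*M).
\end{align*}
The first isomorphism holds because $f_{\#}P$ is homotopically projective, so maps in $D^-(T)$ out of it coincide with homotopy classes of chain maps; the second replaces $M$ by its resolution $R$, using that $Cone(R\longrightarrow M)$ is acyclic and that homotopy classes of maps from a homotopically projective complex into an acyclic complex vanish; the third is the degreewise sheaf adjunction $Hom(f_{\#}P^i,R^j)\cong Hom(P^i,f^*R^j)$ assembled into an isomorphism of total $Hom$-complexes; and the fourth again uses that $P$ is homotopically projective.

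The main technical input, and hence the place to be careful, is the homotopy-projectivity of bounded above complexes of projective sheaves in $\widetilde{Sh}(S)$ and $\widetilde{Sh}(T)$, that is, the twin facts that $Hom_{D^-}$ out of such a complex is computed in $K^-$ and that acyclic targets are annihilated. This is the standard homological fact underlying the formalism of \cite{GM} already invoked in Proposition \ref{derived}, and it is precisely what lets me both pass from $D^-$ to $K^-$ and discard the resolution of the target $M$; note in particular that exactness of $f^*$ is never needed. Finally, all four isomorphisms are natural in $K$ and $M$, since the sheaf adjunction is natural and projective resolutions are functorial up to chain homotopy, so their composite is a natural isomorphism of bifunctors $D^-(S)^{op}\times D^-(T)\longrightarrow Ab$. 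By Yoneda this exhibits $(f_{\#},f^*)$ as an adjoint pair, as desired.
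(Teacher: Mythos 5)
Your argument hinges on the claim that a bounded above complex of projective sheaves with \(E\)-transfers is homotopically projective, i.e.\ that \(Hom_{D^-}(P,M)\cong Hom_{K^-}(P,M)\) and that chain maps from such a \(P\) into an acyclic complex are null-homotopic. This is where the proof breaks. In this paper ``projective'' does \emph{not} mean projective object of the abelian category \(\widetilde{Sh}(S)\): a projective sheaf is by definition the sheafification of a direct summand of a direct sum of representables \(\widetilde{c}_S(X)\), and \(\widetilde{\mathbb{Z}}_S(X)\) is not a projective object of \(\widetilde{Sh}(S)\), because \(Hom_{\widetilde{Sh}(S)}(\widetilde{\mathbb{Z}}_S(X),F)=F(X)\) while an epimorphism of Nisnevich sheaves need only be locally surjective on sections. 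Concretely, \(Hom_{D^-(S)}(\widetilde{\mathbb{Z}}_S(X),F[n])\) computes Nisnevich hypercohomology over \(X\) and is in general nonzero for \(n>0\), whereas the corresponding group in \(K^-(S)\) vanishes; so your first, second and fourth isomorphisms all fail. What Proposition \ref{adapted} actually provides is only that the class of projective complexes is \emph{adapted} to \(f_{\#}\) in the sense of \cite[III.6.3]{GM}: this suffices to define the derived functor via resolutions, but it does not identify \(Hom_{D^-}\) out of a projective complex with homotopy classes of chain maps. (Your preliminary observation that \(f_{\#}\) preserves projectives is correct and unobjectionable; it is the use you make of projectivity that is not.)

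Because of this, the paper cannot collapse everything to \(K^-\) and instead manipulates roofs directly. For \(\theta\), a morphism \(f_{\#}P\to R\leftarrow L\) in \(D^-(T)\) is sent, via the \(K^-\)-level adjunction applied to the leg \(f_{\#}P\to R\), to the right roof \(P\to f^*R\leftarrow f^*L\); this is a legitimate roof only because \(f^*\) is \emph{exact} (it has both adjoints on sheaves), so \(f^*L\to f^*R\) remains a quasi-isomorphism --- contrary to your remark, the exactness of \(f^*\) is genuinely used. For \(\xi\), a morphism \(P\leftarrow R\to f^*L\) is rewritten as a left roof with \(R\) projective, and Proposition \ref{adapted} guarantees that \(f_{\#}\) preserves the quasi-isomorphism \(R\to P\) (its cone being an acyclic projective complex). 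If you want a formal adjunction-of-resolutions argument of the kind you sketched, you would need genuinely cofibrant/K-projective replacements and a comparison with the model structure of Proposition \ref{cofibrant} and Proposition \ref{triangulated}; as written, the statement you call ``the standard homological fact'' is false for the paper's notion of projective.
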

\begin{proof}
By Proposition \ref{lowerhash}, it is easy to see that there is an adjunction
\[f_{\#}:K^-(S)\rightleftharpoons K^-(T):f^*.\]

Since \(f^*:\widetilde{Sh}(T)\longrightarrow\widetilde{Sh}(S)\) has both a left adjoint and a right adjoint, it's an exact functor. So \(Lf^*\cong f^*\) in this case. Suppose \(K\in D^-(S)\), \(L\in D^-(T)\) and \(p:P\longrightarrow K\) be a projective resolution of \(K\). Hence \(f_{\#}K=f_{\#}P\) by definition.

We construct a morphism
\[\theta:Hom_{D^-(S)}(f_{\#}K,L)\longrightarrow Hom_{D^-(T)}(K,f^*L)\]
by the following: Suppose \(s\in Hom_{D^-(S)}(f_{\#}K,L)\) is written as a right roof (see \cite[III.2.9]{GM})
\[
	\xymatrix
	{
						&R 	&\\
		f_{\#}P\ar[ru]_a	&	&L\ar[lu]^b
	}.
\]
By adjunction, \(a\) induces a morphism \(a':P\longrightarrow f^*R\). Then we define \(\theta(s)\) to the composition of the right roof
\[
	\xymatrix
	{
					&f^*R	&\\
		P\ar[ru]_{a'}	&	&f^*L\ar[lu]^{f^*b}
	}
\]
with \(p^{-1}\). This definition is well-defined since \(f^*\) is exact.

Then, we construct another morphism
\[\xi:Hom_{D^-(T)}(K,f^*L)\longrightarrow Hom_{D^-(S)}(f_{\#}K,L)\]
by the following: Suppose \(t\in Hom_{D^-(T)}(K,f^*L)\) and \(t\circ p\) is written as a left roof (see \cite[III.2.8]{GM})
\[
	\xymatrix
	{
			&R\ar[ld]_a\ar[rd]^b 	&\\
		P 	&				&f^*L
	}
\]
where \(R\) is also projective. By adjunction, \(b\) induces a morphism \(b':f_{\#}R\longrightarrow L\). Then we define \(\xi(t)\) to be the left roof
\[
	\xymatrix
	{
				&f_{\#}R\ar[ld]_{f_{\#}a}\ar[rd]^{b'} 	&\\
		f_{\#}P 	&								&L
	}.
\]
This definition is well-defined by Proposition \ref{adapted} applied to \(f_{\#}\).

Finally one checks that \(\theta\) and \(\xi\) are inverse to each other and the statement follows.
\end{proof}

In \cite[Theorem 1.7]{CD}, they defined a model structure \(\mathfrak{M}\) on the category of unbounded complexes of sheaves with \(E\)-transfers over \(S\). This is a cofibrantly generated model structure where the cofibrations are those \(I\)-cofibrations (See \cite[Definition 2.1.7]{Hov}) where \(I\) consists of the morphisms \(S^{n+1}\widetilde{\mathbb{Z}}_S(X)\longrightarrow D^n\widetilde{\mathbb{Z}}_S(X)\) for any \(X\in Sm/S\) (See \cite[1.9]{CD} for notations) and weak equivalences are quasi-morphisms between complexes.
\begin{proposition}\label{cofibrant}
Bounded above projective complexes are cofibrant objects in \(\mathfrak{M}\).
\end{proposition}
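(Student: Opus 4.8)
The plan is to verify cofibrancy through the lifting characterization rather than by exhibiting an explicit cell decomposition, since a bounded above projective complex may well be unbounded below. Recall that in any model category an object $P$ is cofibrant exactly when $0\to P$ has the left lifting property against every trivial fibration, and that, $\mathfrak{M}$ being cofibrantly generated by $I$, the trivial fibrations are precisely the maps with the right lifting property against $I$. So the first step is to describe these maps concretely: using the Yoneda-type identity $\mathrm{Hom}_S(\widetilde{c}_S(X),F)\cong F(X)$ together with the sheafification adjunction of Proposition \ref{sheafication}, the right lifting property against each $S^{n+1}\widetilde{\mathbb{Z}}_S(X)\to D^n\widetilde{\mathbb{Z}}_S(X)$ translates into the statement that a trivial fibration $q\colon M\to N$ is a degreewise split epimorphism of sheaves with $E$-transfers whose kernel is an acyclic complex of projective objects. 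This is the analogue of the elementary computation identifying $I$-injectives in the projective model structure on chain complexes.

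The second step reduces cofibrancy to a degreewise lifting problem. Given such a trivial fibration $q\colon M\to N$ and an arbitrary map $f\colon P\to N$, one seeks $\widetilde{f}\colon P\to M$ with $q\widetilde{f}=f$. Because $P$ is bounded above, let $N_0$ be its top nonzero degree, and construct $\widetilde{f}$ by descending induction on the degree. At degree $N_0$ there is nothing above, so projectivity of $P_{N_0}$ and surjectivity of $q$ supply a lift $\widetilde{f}_{N_0}$. At a general degree $j$, assuming $\widetilde{f}_i$ built for all $i>j$ compatibly with the differentials, the obstruction to extending is measured in the cohomology of $\ker q$; since $\ker q$ is acyclic and $P_j$ is projective, this obstruction vanishes and a compatible $\widetilde{f}_j$ exists. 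As the chain-map condition only involves adjacent degrees, the family $\{\widetilde{f}_j\}$ assembles into an honest morphism of complexes even when $P$ extends to $-\infty$; this is exactly the point at which boundedness above is used, namely to guarantee a degree from which the induction can start.

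For readers who prefer the explicit cell-complex viewpoint, the same conclusion can be packaged differently, and I would record it as a remark: one notes that $0\to S^n\widetilde{\mathbb{Z}}_S(X)$ is the pushout of the generating cofibration $S^{n+1}\widetilde{\mathbb{Z}}_S(X)\to D^n\widetilde{\mathbb{Z}}_S(X)$ along the zero map $S^{n+1}\widetilde{\mathbb{Z}}_S(X)\to 0$, that each skeletal inclusion $\sigma_{\le m-1}P\hookrightarrow\sigma_{\le m}P$ is obtained as a pushout of a coproduct of maps in $I$ (one for each free summand of $P_m$) attached along the differential $d\colon P_m\to P_{m-1}$, and that cofibrations are closed under coproducts, pushouts and retracts, the last of these reducing the projective case to the free one termwise via the description of projectives as retracts of free objects.

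The main obstacle I expect is precisely the unbounded-below direction. An explicit transfinite composition built from $0$ through the skeletal filtration $\{\sigma_{\le m}P\}$ does not literally start at $0$ when $P$ has no lowest nonzero degree, so the naive cell decomposition does not apply verbatim and the third-step remark cannot be the whole argument. The lifting-property approach circumvents this cleanly because its induction runs downward from the top degree and is entirely degreewise; the only genuine inputs are the concrete description of trivial fibrations from the first step and the vanishing of the kernel's cohomology, which is the classical fact that bounded above complexes of projectives are homotopically projective.
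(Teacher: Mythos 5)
Your overall strategy coincides with the paper's: since cofibrations in \(\mathfrak{M}\) are by definition the maps with the left lifting property against the \(I\)-injective maps, one shows that \(0\longrightarrow P\) lifts against every \(I\)-injective \(f:A\longrightarrow B\) by descending induction on the degree, the boundedness above of \(P\) providing the degree at which the induction starts, and a retract argument reducing each term to a direct sum of objects \(\widetilde{\mathbb{Z}}_S(X_i)\). This skeleton is exactly the paper's proof. However, both justifications you give for the inductive step are flawed, and the second one is a genuine gap.

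First, your description of the \(I\)-injective maps is wrong. The right lifting property against \(S^{n+1}\widetilde{\mathbb{Z}}_S(X)\longrightarrow D^n\widetilde{\mathbb{Z}}_S(X)\), combined with \(Hom_S(\widetilde{\mathbb{Z}}_S(X),M)\cong M(X)\), says that for every \(X\in Sm/S\) the map of complexes of abelian groups \(M(X)\longrightarrow N(X)\) is a degreewise surjection with acyclic kernel; it does \emph{not} say that \(q\) is a degreewise split epimorphism, nor that its kernel is a complex of projectives. (Already for chain complexes of \(\mathbb{Z}\)-modules, \(\left(\mathbb{Z}\xrightarrow{2}\mathbb{Z}\right)\longrightarrow\mathbb{Z}/2\) is a trivial fibration that is neither split nor has projective kernel.) What you have written down is closer to the shape of a cofibration than of a trivial fibration.

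Second, and more seriously, the sentence ``since \(\ker q\) is acyclic and \(P_j\) is projective, this obstruction vanishes'' does not go through with the paper's notion of projective. The objects \(\widetilde{\mathbb{Z}}_S(X)\) are \emph{not} projective objects of the abelian category \(\widetilde{Sh}(S)\): an epimorphism of Nisnevich sheaves with \(E\)-transfers need not be surjective on sections over \(X\), so \(Hom_S(\widetilde{\mathbb{Z}}_S(X),-)=(-)(X)\) is not exact, and acyclicity of \(\ker q\) as a complex of sheaves does not by itself kill the obstruction. What actually makes the degreewise step work is that, after reducing to a free summand \(\widetilde{\mathbb{Z}}_S(X_i)\), the two requirements on \(h^{n-1}\) (compatibility with \(f\) and with the differentials, using the already constructed \(h^n\)) assemble into precisely a commutative square from the generating cofibration \(S^n\widetilde{\mathbb{Z}}_S(X_i)\longrightarrow D^{n-1}\widetilde{\mathbb{Z}}_S(X_i)\) to \(f\), and \(I\)-injectivity of \(f\) produces the lift \emph{by definition} --- equivalently, the obstruction lives in the cohomology of the complex of abelian groups \((\ker q)(X_i)\), whose acyclicity is exactly what the lifting property against the generators encodes. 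This is the route the paper takes, and it is the step your write-up needs to make explicit; as it stands, the appeal to categorical projectivity is the missing link. Your closing remark correctly identifies why the naive cell-attachment description fails for complexes unbounded below, which is indeed why the lifting argument is the right one.
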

\begin{proof}
Suppose \(P\) is a bounded above projective complex and we have an \(I\)-injective (See \cite[Definition 2.1.7]{Hov}) morphism \(f:A\longrightarrow B\) between unbounded complexes with a morphism \(g:P\longrightarrow B\). We have to show that \(g=f\circ h\) for some \(h:P\longrightarrow A\).

Now we construct \(h\) by induction. Suppose for any \(m\geq n\), we have constructed a morphism \(h^m:P^m\longrightarrow A^m\) such that \(g^m=f^m\circ h^m\) and \(d^A\circ h^m=h^m\circ d^P\). Since \(P\) is bounded above, this could be done when \(n\) large enough. We are going to construct an \(h^{n-1}:P^{n-1}\longrightarrow A^{n-1}\) satisfying the same property, that is, making the following diagram commute
\[
	\xymatrix
	{
		A^{n-1}\ar[rr]^{d^A}\ar[dd]_{f^{n-1}}	&												&A^{n}\ar[dd]_<<<<<{f^n}	&\\
									&P^{n-1}\ar[lu]_{h^{n-1}}\ar[ld]^{g^{n-1}}\ar[rr]^<<<<<<{d^P}	&					&P^n\ar[lu]_{h^n}\ar[ld]^{g^n}\\
		B^{n-1}\ar[rr]^{d^B}				&												&B^{n}				&
	}.
\]
We have a splitting surjection \(F\longrightarrow P^{n-1}\) where \(F\) is a free sheaf with \(E\)-transfers. So we may assume \(P^{n-1}\) is free and it's equal to \(\oplus_i\widetilde{\mathbb{Z}}_S(X_i)\) where \(X_i\in Sm/S\). For every \(i\), we have two morphisms:
\[u:\widetilde{\mathbb{Z}}_S(X_i)\longrightarrow P^{n-1}\longrightarrow B^{n-1}\longrightarrow B^{n}\]
and
\[v:\widetilde{\mathbb{Z}}_S(X_i)\longrightarrow P^{n-1}\longrightarrow P^n\longrightarrow A^n.\]
And this two morphisms gives a commutative square with a lifting since \(f\) is \(I\)-injective:
\[
	\xymatrix
	{
		S^n\widetilde{\mathbb{Z}}_S(X_i)\ar[r]^-v\ar[d]			&A\ar[d]_f\\
		D^{n-1}\widetilde{\mathbb{Z}}_S(X_i)\ar[r]^-u\ar[ru]^{w_i}	&B
	}.
\]
And one checks that \(\oplus_iw_i:P^{n-1}\longrightarrow A^{n-1}\) is just what we want.
\end{proof}

Moreover, \(\mathfrak{M}\) is stable and left proper so it induces a triangulated structure \(\mathfrak{T}'\) on \(D(S)\) (See \cite[Theoreme 4.1.49]{A}). The classical triangulated structure of \(D(S)\) or \(D^-(S)\) is denoted by \(\mathfrak{T}\).
\begin{proposition}\label{triangulated}
The natural functor
\[i:(D^-(S),\mathfrak{T})\longrightarrow(D(S),\mathfrak{T}')\]
is fully faithful exact.
\end{proposition}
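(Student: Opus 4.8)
The plan is to prove the two assertions separately, exactness being routine and full faithfulness being the real content. First recall how $i$ is defined: the composite $K^-(S)\longrightarrow K(S)\longrightarrow D(S)=Ho(\mathfrak{M})$ sends quasi-isomorphisms to isomorphisms, because the weak equivalences of $\mathfrak{M}$ are exactly the quasi-isomorphisms, so it factors through the localization $D^-(S)$ and yields $i$. For exactness I would observe that $i$ visibly commutes with the shift $[1]$ and carries the mapping cone $Cone(f)$ of a morphism of bounded above complexes to the same underlying complex regarded in $D(S)$. Since the distinguished triangles of $\mathfrak{T}$ are, up to isomorphism, the cone triangles $K\longrightarrow L\longrightarrow Cone(f)\longrightarrow K[1]$, and the triangulated structure $\mathfrak{T}'$ induced by the stable model structure $\mathfrak{M}$ has the cofiber sequences — equivalently the cone triangles — as its distinguished triangles (see the theory of \cite{Hov} and \cite{A}), the functor $i$ is triangulated, i.e. exact.

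For full faithfulness I would fix bounded above $K,L\in\widetilde{Sh}(S)$ and choose a projective resolution $p\colon P\longrightarrow K$ as in Proposition \ref{derived}, so that $P$ is bounded above projective. The goal is a chain of natural isomorphisms
\[Hom_{D^-(S)}(K,L)\cong Hom_{K^-(S)}(P,L)\cong Hom_{D(S)}(iK,iL)\]
whose composite is the map induced by $i$. The first isomorphism is the standard fact that a bounded above complex of projectives is homotopy-projective: for every acyclic bounded above complex $A$ one has $Hom_{K^-(S)}(P,A)=0$, proved by constructing a contracting homotopy by downward induction on degree, which terminates precisely because $P$ is bounded above; hence $p$ induces the first isomorphism.

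The second isomorphism is where the model structure enters. By Proposition \ref{cofibrant}, $P$ is cofibrant in $\mathfrak{M}$, and since $p$ is a weak equivalence, $P$ is a cofibrant replacement of $iK$. Choosing a fibrant replacement $L\longrightarrow\widehat{L}$ (a trivial cofibration, in particular a quasi-isomorphism), one computes $Hom_{D(S)}(iK,iL)=\pi(P,\widehat{L})$, the set of $\mathfrak{M}$-homotopy classes of maps $P\longrightarrow\widehat{L}$. I would then identify this homotopy relation with ordinary chain homotopy, giving $\pi(P,\widehat{L})\cong Hom_{K(S)}(P,\widehat{L})$, and finally use that $P$, being bounded above projective, is homotopy-projective in the full unbounded homotopy category $K(S)$, so that the quasi-isomorphism $L\longrightarrow\widehat{L}$ induces an isomorphism $Hom_{K(S)}(P,L)\cong Hom_{K(S)}(P,\widehat{L})$; combined with $Hom_{K(S)}(P,L)=Hom_{K^-(S)}(P,L)$ (as $P$ and $L$ are bounded above) this yields the second isomorphism. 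A diagram chase through these identifications confirms the composite is precisely the effect of $i$ on $Hom$-groups.

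The main obstacle I anticipate is the identification $\pi(P,\widehat{L})\cong Hom_{K(S)}(P,\widehat{L})$: one must verify that the abstract left/right homotopy relation of $\mathfrak{M}$, restricted to maps out of the cofibrant object $P$, coincides with chain homotopy. This requires choosing a cylinder object for $P$ (or a path object for $\widehat{L}$) adapted to the generating cofibrations $S^{n+1}\widetilde{\mathbb{Z}}_S(X)\longrightarrow D^n\widetilde{\mathbb{Z}}_S(X)$ of \cite{CD} and checking it realizes the chain-homotopy relation; this is standard for chain-complex model structures but must be made explicit here, along with the use of homotopy-projectivity of $P$ against the possibly unbounded complex $\widehat{L}$. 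If one can show in addition that every object of $\mathfrak{M}$ is fibrant, the fibrant replacement step disappears and the argument shortens accordingly.
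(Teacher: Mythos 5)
Your proposal inverts the paper's division of labour, and the step you dismiss as routine is where the real content lies. For exactness you assert that the distinguished triangles of $\mathfrak{T}'$ are ``equivalently the cone triangles,'' but that is precisely what must be proved: the triangles of $\mathfrak{T}'$ are by definition the homotopy cofiber sequences of the stable model structure $\mathfrak{M}$, built from cofibrations between cofibrant objects and from the model-categorical suspension, and for an arbitrary morphism $f$ of bounded above complexes there is no a priori identification of the strict cone $Cone(f)$ with such a cofiber, nor of the shift $[1]$ of $\mathfrak{T}$ with the suspension of $\mathfrak{T}'$. The paper's proof consists exactly of supplying this: it replaces $f$ by a cofibration $g:A'\longrightarrow B'$ between cofibrant approximations using \cite[Proposition 8.1.23]{Hir}, for which $Cone(g)$ does compute the cofiber, and then invokes \cite[Lemma 1.10]{CD} and \cite[Theoreme 4.1.38]{A} to see that the two shift functors agree on cofibrant objects. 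A bare appeal to ``the theory of \cite{Hov} and \cite{A}'' does not cover the descent-structure model category at hand, so you need some version of this cofibrant-approximation argument.

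Your full-faithfulness argument, conversely, is more elaborate than the paper's one-line claim but contains a genuine error: the isomorphism $Hom_{D^-(S)}(K,L)\cong Hom_{K^-(S)}(P,L)$ fails, because the terms of $P$ are projective only in the paper's ad hoc sense (sheafifications of summands of free presheaves with transfers), and sheafification destroys categorical projectivity. Since $Hom_{\widetilde{Sh}(S)}(\widetilde{\mathbb{Z}}_S(X),F)=F(X)$ and an epimorphism of Nisnevich sheaves need not be surjective on sections over $X$, the downward-induction contracting homotopy cannot be constructed; concretely, for an acyclic complex $A=[K\to F\to G]$ with a section of $G$ over $X$ that does not lift to $F(X)$, one has $Hom_{K^-(S)}(\widetilde{\mathbb{Z}}_S(X),A)\neq 0$. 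The fibrant-replacement step you flag suffers from the same defect, since $L$ itself need not be fibrant in $\mathfrak{M}$. Full faithfulness is better obtained with no resolutions at all: any quasi-isomorphism $M\longrightarrow K$ from an unbounded complex to a bounded above one factors through a quasi-isomorphism $\tau^{\leq n}M\longrightarrow K$ for $n$ large, so bounded above roofs are cofinal among all roofs and $Hom_{D(S)}(K,L)\cong Hom_{D^-(S)}(K,L)$ directly; this is presumably what the paper means by ``clearly fully faithful.''
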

\begin{proof}
Any distinguished triangle \(T\) in \((D^-(S),\mathfrak{T})\) is isomorphic in \(D^-(S)\) to a distinguished triangle in \(\mathfrak{T}\) like
\[\xymatrix{A\ar[r]^f&B\ar[r]&Cone(f)\ar[r]&A[1]},\]
where all arrows come from explicit morphisms between chain complexes (See \cite[III.3.3 and III.3.4]{GM}). By \cite[Proposition 8.1.23]{Hir}, we could find a commutative diagram
\[
	\xymatrix
	{
		A'\ar[r]^g\ar[d]_a	&B'\ar[d]_b\\
		A\ar[r]^f			&B
	}
\]
such that \((A',a)\) (resp. \((B',b)\)) is a fibrant cofibrant approximation of \(A\) (resp. \(B\)) and \(g\) is a cofibration in \(\mathfrak{M}\). So the triangle \(T\) is isomorphic in \(D(S)\) to the distinguished triangle
\[\xymatrix{A'\ar[r]^g&B'\ar[r]&Cone(g)\ar[r]&A'[1]}\]
in \(\mathfrak{T}\). Hence \(T\) is isomorphic to the triangle above in \(D(S)\). By \cite[Lemma 1.10]{CD} and \cite[Theoreme 4.1.38]{A}, the shift functors \(-[n]\) and \(-[n]'\) in \(\mathfrak{T}\) and \(\mathfrak{T}'\), respectively, coincide on cofibrant objects in \(\mathfrak{M}\). So we have a natural isomorphism \(\eta:-[n]\longrightarrow-[n]'\) where \(\eta_K=id_{K[n]}\) if \(K\) is cofibrant in \(\mathfrak{T}'\). Hence the triangle above is distinguished in \(\mathfrak{T}'\) by \cite[Definition 4.1.45]{A}. So the functor \(i\) is exact. And it's clearly fully faithful.
\end{proof}
By in \cite[Theorem 1.18 and Proposition 2.3]{CD}, we could define \(\otimes_S\), \(f^*\) and \(f_{\#}\) on \(D(S)\).
\begin{proposition}\label{-}
\begin{enumerate}
\item We have a commutative diagram (up to a natural isomorphism)
\[
	\xymatrix
	{
		D^-(S)\times D^-(S)\ar[r]^-{\otimes_S}\ar[d]	&D^-(S)\ar[d]\\
		D(S)\times D(S)\ar[r]^-{\otimes_S}			&D(S)
	}.
\]
\item Suppose \(f:S\longrightarrow T\) is a morphism in \(Sm/k\). We have a commutative diagram (up to a natural isomorphism)
\[
	\xymatrix
	{
		D^-(T)\ar[r]^{f^*}\ar[d]	&D^-(S)\ar[d]\\
		D(T)\ar[r]^{f^*}		&D(S)
	}
\]
\item Suppose \(f:S\longrightarrow T\) is a smooth morphism in \(Sm/k\). We have a commutative diagram (up to a natural isomorphism)
\[
	\xymatrix
	{
		D^-(S)\ar[r]^{f_{\#}}\ar[d]	&D^-(T)\ar[d]\\
		D(S)\ar[r]^{f_{\#}}			&D(T)
	}
\]
\end{enumerate}
\end{proposition}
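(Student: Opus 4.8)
The plan is to reduce all three statements to a single observation about cofibrant replacements. By Proposition \ref{cofibrant}, a bounded above projective complex is cofibrant in $\mathfrak{M}$, so for any $K \in D^-(S)$ a projective resolution $P \to K$ is simultaneously a cofibrant replacement of $K$ in $\mathfrak{M}$. The operations $\otimes_S$, $f^*$ and $f_\#$ on the unbounded category $D(S)$ are, by their construction in \cite{CD}, the (left) derived functors of the corresponding functors on complexes, and such derived functors may be evaluated on any object by choosing a cofibrant replacement. Thus on an object in the image of the fully faithful exact functor $i$ of Proposition \ref{triangulated}, each unbounded operation is computed by precisely the projective resolution that defines the bounded above operation in Propositions \ref{derived functors} and \ref{derived adjunction}. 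The remaining work is to check that these identifications are natural and that all the complexes involved remain bounded above.

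First I would treat the tensor product. Given $K, L \in D^-(S)$ with projective resolutions $P \to K$ and $Q \to L$, the total complex $P \otimes_S Q$ is again bounded above and projective and computes $K \otimes_S L$ in $D^-(S)$ by Proposition \ref{derived functors}, (1). In $D(S)$ the derived tensor product is obtained from cofibrant replacements of both factors; but since tensoring with a bounded above projective complex preserves quasi-isomorphisms (this is the content established via Proposition \ref{adapted} in the proof of Proposition \ref{derived functors}, (1)), the single cofibrant object $P \otimes_S Q$ already computes the derived tensor product. Applying $i$ and using its exactness then yields the commutativity up to natural isomorphism.

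Next I would treat $f^*$ and $f_\#$ together, as both are left adjoints, hence left Quillen for $\mathfrak{M}$: $f^*$ is left adjoint to $f_*$ (Proposition \ref{pullback}) and, for smooth $f$, $f_\#$ is left adjoint to $f^*$ (Proposition \ref{derived adjunction}). Their values on $D(T)$ and $D(S)$ are their left derived functors, computed by cofibrant replacement, so for a projective resolution $P \to K$ the complexes $f^* P$ and $f_\# P$ compute the derived images of $K$ in $D(S)$ and $D(T)$ respectively. These are exactly the definitions of $f^* K$ and $f_\# K$ in $D^-$ from Proposition \ref{derived functors}, (2) and (3), so both squares commute after applying $i$. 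For smooth $f$ one may further note, as in the proof of Proposition \ref{derived adjunction}, that $f^*$ is exact and so needs no derivation, but this is not required for the argument.

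The hard part will be the bookkeeping guaranteeing that the derived functors furnished by the model structure $\mathfrak{M}$ are genuinely computed by these particular cofibrant objects rather than by some functorial cofibrant replacement internal to $\mathfrak{M}$. This is handled by the standard fact that any two cofibrant replacements of a fixed object are linked by a weak equivalence over that object, so that their images under a left Quillen functor agree in the homotopy category independently of the choice; combined with the naturality of projective resolutions this gives well-defined natural isomorphisms. The one genuinely delicate ingredient, as already indicated, is the reduction in the tensor product case to replacing a single factor, which rests on the acyclicity statement proved through Proposition \ref{adapted}.
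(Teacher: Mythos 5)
Your argument is correct and is exactly the content the paper compresses into ``this follows by direct computations'': both the bounded-above operations of Proposition \ref{derived functors} and the unbounded operations of \cite{CD} are evaluated on projective resolutions, which Proposition \ref{cofibrant} identifies as cofibrant replacements in $\mathfrak{M}$, so the two constructions literally coincide on the image of the fully faithful exact inclusion of Proposition \ref{triangulated}. The only slip is cosmetic: the underived adjunction $f_{\#}\dashv f^*$ you want is Proposition \ref{lowerhash}, not Proposition \ref{derived adjunction}.
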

\begin{proof}
This follows by direct computations.
\end{proof}
\subsubsection{On Categories of Effective Motives}
The next definition comes from \cite[Definition 9.2]{MVW}.
\begin{definition}
Define \(\mathscr{E}_{\mathbb{A}}\) to be the smallest thick subcategory of \(D^-(S)\) such that
\begin{enumerate}
\item\(Cone(\widetilde{\mathbb{Z}}_S(X\times_k\mathbb{A}^1)\longrightarrow\widetilde{\mathbb{Z}}_S(X))\in\mathscr{E}_{\mathbb{A}}.\)
\item\(\mathscr{E}_{\mathbb{A}}\) is closed under arbitrary direct sums.
\end{enumerate}
Set \(W_{\mathbb{A}}\) be the class of morphisms in \(D^-(S)\) whose cone is in \(\mathscr{E}_{\mathbb{A}}\). Define
\[\widetilde{DM}^{eff,-}(S)=D^-(S)[W_{\mathbb{A}}^{-1}]\]
to be the category of motives over \(S\). And morphisms in \(D^-(S)\) becoming isomorphisms after localizing by \(W_{\mathbb{A}}\) are called \(\mathbb{A}^1\)-weak equivalences.
\end{definition}
\begin{definition}
(See \cite[Definition 9.17]{MVW}) A complex \(K\in D^-(S)\) is called \(\mathbb{A}^1\)-local if for every \(\mathbb{A}^1\)-equivalence \(f:A\longrightarrow B\), the induced map
\[Hom_{D^-(S)}(B,K)\longrightarrow Hom_{D^-(S)}(A,K)\]
is an isomorphism.
\end{definition}
\begin{proposition}\label{homotopy invariance}
We say a map \(p:E\longrightarrow X\) in \(Sm/S\) to be an \(\mathbb{A}^n\)-bundle if there is an open covering \(\{U_i\}\) of \(X\) such that \(p^{-1}(U_i)\cong U_i\times_k\mathbb{A}^n\) over \(U_i\). In this case, \(\widetilde{\mathbb{Z}}_S(p):\widetilde{\mathbb{Z}}_S(E)\longrightarrow\widetilde{\mathbb{Z}}_S(X)\) is an isomorphism in \(\widetilde{DM}^{eff,-}(S)\).
\end{proposition}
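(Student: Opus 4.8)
The plan is to reduce the assertion to the case of the trivial bundle $X\times_k\mathbb{A}^n$ and then to induct on the number of open sets in a trivializing cover, using the Mayer--Vietoris sequence of Proposition \ref{MV-sequence} together with the fact that $\widetilde{DM}^{eff,-}(S)=D^-(S)[W_{\mathbb{A}}^{-1}]$ is the Verdier quotient of $D^-(S)$ by the thick subcategory $\mathscr{E}_{\mathbb{A}}$, so that the localization functor is exact and sends morphisms in $W_{\mathbb{A}}$ to isomorphisms.

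First I would handle the trivial bundle. The projection $X\times_k\mathbb{A}^n\longrightarrow X$ factors as
\[X\times_k\mathbb{A}^n\longrightarrow X\times_k\mathbb{A}^{n-1}\longrightarrow\cdots\longrightarrow X\times_k\mathbb{A}^1\longrightarrow X,\]
and each arrow is an $\mathbb{A}^1$-projection $Y\times_k\mathbb{A}^1\longrightarrow Y$, whose cone lies in $\mathscr{E}_{\mathbb{A}}$ by the very definition of $\mathscr{E}_{\mathbb{A}}$; hence each such map lies in $W_{\mathbb{A}}$. Since $\mathscr{E}_{\mathbb{A}}$ is thick, the octahedral axiom shows $W_{\mathbb{A}}$ is closed under composition, so $\widetilde{\mathbb{Z}}_S(X\times_k\mathbb{A}^n)\longrightarrow\widetilde{\mathbb{Z}}_S(X)$ becomes an isomorphism in $\widetilde{DM}^{eff,-}(S)$.

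Next I would induct on the size $m$ of a finite trivializing cover $\{U_1,\dots,U_m\}$ of $X$, which exists by quasi-compactness. The case $m=1$ is the trivial bundle just treated. For the inductive step set $V=U_1\cup\cdots\cup U_{m-1}$ and $W=U_m$, so that $X=V\cup W$ is a Zariski cover; the restricted bundles $p^{-1}(V)\longrightarrow V$ and $p^{-1}(V\cap W)\longrightarrow V\cap W$ are again locally trivial $\mathbb{A}^n$-bundles, trivialized by $\{U_i\}_{i<m}$ and $\{U_i\cap U_m\}_{i<m}$ respectively (covers of size $<m$), while $p^{-1}(W)\longrightarrow W$ is trivial. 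Applying Proposition \ref{MV-sequence} to $X=V\cup W$ and to $E=p^{-1}(V)\cup p^{-1}(W)$ yields two short exact sequences of sheaves with $E$-transfers, hence two distinguished triangles in $D^-(S)$, and $\widetilde{\mathbb{Z}}_S(p)$ with its restrictions gives a morphism between them. By the induction hypothesis the maps over $V$, $W$ and $V\cap W$ are isomorphisms in $\widetilde{DM}^{eff,-}(S)$, so the triangulated five lemma forces $\widetilde{\mathbb{Z}}_S(E)\longrightarrow\widetilde{\mathbb{Z}}_S(X)$ to be an isomorphism as well, completing the induction.

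The step requiring the most care is the triangulated bookkeeping rather than any geometric input: one must verify that the localization $D^-(S)\longrightarrow\widetilde{DM}^{eff,-}(S)$ is an exact functor of triangulated categories, so that the Mayer--Vietoris short exact sequences descend to compatible distinguished triangles on which the five lemma applies, and that the naturality of the Mayer--Vietoris maps in the inclusions makes $\widetilde{\mathbb{Z}}_S(p)$ genuinely a morphism of triangles. The remaining verifications — that restrictions of a locally trivial $\mathbb{A}^n$-bundle stay locally trivial with the indicated trivializing covers, and that all schemes involved lie in $Sm/S$ — are routine.
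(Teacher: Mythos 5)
Your proof is correct and follows essentially the same route as the paper: establish the trivial case $X\times_k\mathbb{A}^n\to X$ (the paper asserts this directly "by definition," while you spell out the factorization through iterated $\mathbb{A}^1$-projections and the closure of $W_{\mathbb{A}}$ under composition, which is a welcome extra detail), then induct on the size of a finite trivializing cover using the Mayer--Vietoris sequence of Proposition \ref{MV-sequence} and the five lemma for the resulting morphism of distinguished triangles in the localized category.
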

\begin{proof}
For any \(X\in Sm/S\), the projection \(\widetilde{\mathbb{Z}}_S(X\times_k\mathbb{A}^n)\longrightarrow\widetilde{\mathbb{Z}}_S(X)\) is an \(\mathbb{A}^1\)-weak equivalence by definition. Suppose we have two open sets \(U_1\) and \(U_2\) of \(X\) such that the statement is true over \(U_1\), \(U_2\) and \(U_1\cap U_2\). Set \(E_i=p^{-1}(U_i)\). Then we have a commutative diagram with exact rows
\[
	\xymatrix
	{
		0\ar[r]	&\widetilde{\mathbb{Z}}_S(E_1\cap E_2)\ar[r]\ar[d]	&\widetilde{\mathbb{Z}}_S(E_1)\oplus\widetilde{\mathbb{Z}}_S(E_2)\ar[r]\ar[d]	&\widetilde{\mathbb{Z}}_S(p^{-1}(E_1\cup E_2))\ar[r]\ar[d]	&0\\
		0\ar[r]	&\widetilde{\mathbb{Z}}_S(U_1\cap U_2)\ar[r]		&\widetilde{\mathbb{Z}}_S(U_1)\oplus\widetilde{\mathbb{Z}}_S(U_2)\ar[r]		&\widetilde{\mathbb{Z}}_S(U_1\cup U_2)\ar[r]			&0
	}
\]
by Proposition \ref{MV-sequence}. So the statement is also true over \(U_1\cup U_2\). Then we could pick an finite open covering \(\{U_i\}\) of \(X\) such that \(p^{-1}(U_i)\cong U_i\times_k\mathbb{A}^n\) for every \(i\) and do induction on the number of open sets.
\end{proof}
\begin{proposition}\label{locality}
Suppose \(K\in D^-(S)\).
\begin{enumerate}
\item The natural map \(K\longrightarrow C_*K\) is an \(\mathbb{A}^1\)-weak equivalence.
\item The complex \(C_*K\) is \(\mathbb{A}^1\)-local.
\item The functor \(C_*\) induces an endofunctor of \(D^-(S)\).
\end{enumerate}
\end{proposition}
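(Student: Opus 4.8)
The plan is to treat the three assertions in the order (3), (1), (2), since the very statement of (1) presupposes that $C_*$ already descends to an endofunctor of $D^-(S)$, and (2) is the only part carrying real content. Assertion (3) is the formal claim that $C_*$ preserves quasi-isomorphisms of bounded above complexes. I would prove it exactly as in \cite[Lecture 2]{MVW}: compute $C_*$ through the presheaf-level functor $\underline{C}_*$, where the termwise assignment $F\mapsto F^{\triangle^n}$, being $F\mapsto\big(Y\mapsto F(\triangle^n\times_SY)\big)$, is visibly exact; compare with Nisnevich sheafification $\widetilde{a}$ (which is exact by Proposition \ref{sheafication}); and run the spectral sequence of the bicomplex $\{(K_i)^{\triangle^n}\}$, which converges because $K$ is bounded above. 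This yields $C_*\colon D^-(S)\to D^-(S)$.

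For (1), I would reduce to a single representable sheaf. Since $\mathscr{E}_{\mathbb{A}}$ is thick and closed under arbitrary direct sums, and since every object of $D^-(S)$ is, up to quasi-isomorphism, a bounded above complex of free sheaves with $E$-transfers (the surjections $\oplus\widetilde{c}_S(X)\twoheadrightarrow F$ used in the proof of Proposition \ref{adapted}), a dévissage combined with (3) reduces the claim to showing that the augmentation $\widetilde{\mathbb{Z}}_S(X)\to C_*\widetilde{\mathbb{Z}}_S(X)$, which is the identity into the degree-$0$ term $\widetilde{\mathbb{Z}}_S(X)^{\triangle^0}=\widetilde{\mathbb{Z}}_S(X)$, is an $\mathbb{A}^1$-weak equivalence for each $X\in Sm/S$. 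Here I would invoke the standard simplicial contraction of the cosimplicial scheme $\triangle^\bullet$: since each $\triangle^n\cong\mathbb{A}^n$ is $\mathbb{A}^1$-contractible and the linear (barycentric) contraction $\triangle^\bullet\times\triangle^1\to\triangle^\bullet$ is given by polynomial maps, it induces, after inverting $W_{\mathbb{A}}$ and using homotopy invariance (Proposition \ref{homotopy invariance}), a contracting chain homotopy of the cone of the augmentation; closure of $\mathscr{E}_{\mathbb{A}}$ under the relevant operations then places that cone in $\mathscr{E}_{\mathbb{A}}$.

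The same polynomial homotopy is the entry point for (2): applied to the two sections $i_0,i_1$ of $-\times_k\mathbb{A}^1$, it shows that $i_0$ and $i_1$ induce chain-homotopic self-maps of $C_*$, whence each cohomology sheaf $\underline{H}^m(C_*K)$ is homotopy invariant, i.e. unchanged under pullback along $X\times_k\mathbb{A}^1\to X$. The crux, and the step I expect to be the main obstacle, is upgrading homotopy invariance of the cohomology \emph{sheaves} to genuine $\mathbb{A}^1$-locality of the \emph{complex}. Concretely, $\mathbb{A}^1$-locality of $C_*K$ is equivalent to $Hom_{D^-(S)}(E,C_*K[m])=0$ for every generator $E=Cone(\widetilde{\mathbb{Z}}_S(X\times_k\mathbb{A}^1)\to\widetilde{\mathbb{Z}}_S(X))$ of $\mathscr{E}_{\mathbb{A}}$ and all $m$; since $Hom_{D^-(S)}(\widetilde{\mathbb{Z}}_S(X),C_*K[m])$ computes the Nisnevich hypercohomology $\mathbb{H}^m_{Nis}(X,C_*K)$, this is precisely the homotopy invariance of Nisnevich hypercohomology, which does \emph{not} follow formally from homotopy invariance of the cohomology sheaves.

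To clear this obstacle I would appeal to the strict homotopy invariance theorem for sheaves with $E$-transfers, namely the Milnor--Witt analogue of Morel's theorem (cf. the Rost--Schmid formalism of Section \ref{MW-Correspondence as a Correspondence Theory} and \cite{DF}), which guarantees that a homotopy invariant sheaf with $E$-transfers has homotopy invariant Nisnevich cohomology, computed by its Rost--Schmid resolution. Granting this input, the descent/hypercohomology spectral sequence with $E_2$-terms built from the homotopy invariant sheaves $\underline{H}^m(C_*K)$ degenerates the obstruction and produces homotopy invariance of $\mathbb{H}^*_{Nis}(-,C_*K)$; this gives $Hom_{D^-(S)}(E,C_*K[m])=0$ on the generators, and thickness together with closure of $\mathscr{E}_{\mathbb{A}}$ under direct sums extends the vanishing to all of $\mathscr{E}_{\mathbb{A}}$, establishing that $C_*K$ is $\mathbb{A}^1$-local.
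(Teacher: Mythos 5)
Your treatments of (1) and (2) are in the spirit of what the paper does: the paper proves (1) ``by the same proof as \cite[Lemma 9.15]{MVW}'' (the simplicial contraction of \(\triangle^{\bullet}\)), and for (2) it simply cites \cite[Proposition 5.2.36]{CD1}, which rests on exactly the strict homotopy invariance input you isolate and then grant. So far so good, although note that the paper works with a general correspondence theory \(E\), for which strict homotopy invariance is a serious hypothesis rather than a quotable theorem.

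The genuine problem is your proof of (3), and the reordering it is meant to justify. First, the premise of the reordering is wrong: statement (1) does not presuppose that \(C_*\) descends to \(D^-(S)\); the map \(K\longrightarrow C_*K\) is already a morphism of complexes, i.e.\ a morphism in \(K^-(S)\) and hence in \(D^-(S)\), and asking it to be an \(\mathbb{A}^1\)-weak equivalence makes sense without any functoriality of \(C_*\) on the derived category. Second, your proof of (3) hinges on the claim that \(F\longmapsto F^{\triangle^n}\) is exact. It is exact on \emph{presheaves}, but not on Nisnevich sheaves: if \(G\longrightarrow H\) is an epimorphism of sheaves, a section of \(H^{X}(Y)=H(X\times_SY)\) lifts only after passing to a Nisnevich cover of \(X\times_SY\), whereas surjectivity of \(G^X\longrightarrow H^X\) requires a lift over \(X\times_SV\) for a cover \(V\longrightarrow Y\) of the base alone; the obstruction lives in \(H^1_{Nis}(X\times_SU,\ker)\) for \(U\) a henselian local scheme, and \(X\times_SU\) is not a point of the site, so this group need not vanish. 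Consequently a quasi-isomorphism of complexes of sheaves need not stay one after applying \((-)^{\triangle^n}\) termwise, and your bicomplex spectral sequence has no isomorphism on its \(E_1\)-page to propagate. This also undermines the d\'evissage in your proof of (1), which invokes (3) to replace \(K\) by a free resolution. The paper's actual logic runs the other way: it proves (1) and (2) first, and then deduces (3) by observing that for a quasi-isomorphism \(f\) the cone \(Cone(f)\) is acyclic, hence \(C_*Cone(f)\) is zero in \(\widetilde{DM}^{eff,-}(S)\) by (1); being \(\mathbb{A}^1\)-local by (2), it is then already zero in \(D^-(S)\) (via \cite[Lemma 9.21]{MVW}), so \(Cone(C_*f)=C_*Cone(f)\) is acyclic. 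You should adopt that order, or else supply a genuinely different argument for why \(C_*\) preserves quasi-isomorphisms of complexes of \emph{sheaves}.
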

\begin{proof}
\begin{enumerate}
\item By the same proof as in \cite[Lemma 9.15]{MVW}.
\item By \cite[Proposition 5.2.36]{CD1}.
\item It's easy to check that \(C_*\) induces an endofunctor of \(K^-(S)\). If \(f:K\longrightarrow L\) is a quasi-isomorphism, then \(Cone(f)\) is acyclic. By (1), the natural morphism \(Cone(f)\longrightarrow C_*Cone(f)\) is an \(\mathbb{A}^1\)-equivalence. Hence it's an quasi-isomorphism by (2) and \cite[Lemma 9.21]{MVW}. So \(C_*Cone(f)=Cone(C_*f)\) is acyclic. So \(C_*f\) is a quasi-isomorphism.
\end{enumerate}
\end{proof}
\begin{definition}
(See \cite[Definition 14.17]{MVW}) Let \(X\in Sm/k\) and \(p,q\in\mathbb{Z}, q\geq 0\), we define the groups
\[H_E^{p,q}(X,\mathbb{Z})=Hom_{\widetilde{DM}^{eff,-}(pt)}(\widetilde{\mathbb{Z}}_{pt}(X),\widetilde{\mathbb{Z}}_{pt}(q)[p])\]
to be \(E\)-motivic cohomologies of \(X\).
\end{definition}
\begin{proposition}\label{derived1}
Let \(\varphi\) be the functor as before. We have an exact functor
\[\varphi^*:\widetilde{DM}^{eff,-}(S)\longrightarrow\widetilde{DM}^{eff,-}(T)\]
which is determined by the following commutative diagram
\[
	\xymatrix
	{
		D^-(S)\ar[r]^{\varphi^*}\ar[d]				&D^-(T)\ar[d]\\
		\widetilde{DM}^{eff,-}(S)\ar[r]^{\varphi^*}	&\widetilde{DM}^{eff,-}(T)
	}
\]
\end{proposition}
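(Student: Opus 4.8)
The plan is to invoke the universal property of the localization $\widetilde{DM}^{eff,-}(S)=D^-(S)[W_{\mathbb{A}}^{-1}]$. Composing the exact functor $\varphi^*\colon D^-(S)\to D^-(T)$ of Proposition \ref{derived} with the localization functor $D^-(T)\to\widetilde{DM}^{eff,-}(T)$ yields a functor $D^-(S)\to\widetilde{DM}^{eff,-}(T)$, and this factors (uniquely, realizing the square in the statement) through $\widetilde{DM}^{eff,-}(S)$ precisely when it sends every $\mathbb{A}^1$-weak equivalence to an isomorphism. Since $\varphi^*$ is exact, for any morphism $f$ we have $Cone(\varphi^*f)\cong\varphi^*(Cone(f))$; hence $f\in W_{\mathbb{A}}$ forces $\varphi^*f\in W_{\mathbb{A}}$ as soon as $\varphi^*$ maps $\mathscr{E}_{\mathbb{A}}(S)$ into $\mathscr{E}_{\mathbb{A}}(T)$. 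So the whole statement reduces to proving the inclusion $\varphi^*(\mathscr{E}_{\mathbb{A}}(S))\subseteq\mathscr{E}_{\mathbb{A}}(T)$.

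To prove this inclusion, I would consider the full subcategory $\mathscr{C}=\{K\in D^-(S):\varphi^*K\in\mathscr{E}_{\mathbb{A}}(T)\}$ and show that it satisfies the two defining closure properties of $\mathscr{E}_{\mathbb{A}}(S)$ and contains the generators; since $\mathscr{E}_{\mathbb{A}}(S)$ is the \emph{smallest} such subcategory, this forces $\mathscr{E}_{\mathbb{A}}(S)\subseteq\mathscr{C}$. That $\mathscr{C}$ is thick follows from exactness of $\varphi^*$ (it preserves triangles and retracts) together with thickness of $\mathscr{E}_{\mathbb{A}}(T)$. That $\mathscr{C}$ is closed under arbitrary direct sums follows from the fact that $\varphi^*$ commutes with direct sums: computing the derived functor on projective resolutions, a direct sum of projective resolutions is a projective resolution of the direct sum, and the underived $\varphi^*$ on sheaves commutes with all colimits as a left adjoint (Lemma \ref{adjunction}), so $\varphi^*(\bigoplus_iK_i)\cong\bigoplus_i\varphi^*K_i$ whenever the sum exists in $D^-(S)$; closure of $\mathscr{E}_{\mathbb{A}}(T)$ under direct sums then settles this point.

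It remains to check that $\mathscr{C}$ contains each generator $Cone(\widetilde{\mathbb{Z}}_S(X\times_k\mathbb{A}^1)\to\widetilde{\mathbb{Z}}_S(X))$. Here I would use the lemma identifying $\varphi^*(\widetilde{c}_S(X))$ with $\widetilde{c}_S(\psi X)$; applying the sheafification comparison this gives $\varphi^*\widetilde{\mathbb{Z}}_S(X)\cong\widetilde{\mathbb{Z}}_T(\psi X)$ with $\psi X=X\times_SY$. The elementary rearrangement of fibre products $(X\times_k\mathbb{A}^1)\times_SY\cong(X\times_SY)\times_k\mathbb{A}^1$ shows $\psi(X\times_k\mathbb{A}^1)\cong(\psi X)\times_k\mathbb{A}^1$, so by exactness $\varphi^*Cone(\widetilde{\mathbb{Z}}_S(X\times_k\mathbb{A}^1)\to\widetilde{\mathbb{Z}}_S(X))\cong Cone(\widetilde{\mathbb{Z}}_T((\psi X)\times_k\mathbb{A}^1)\to\widetilde{\mathbb{Z}}_T(\psi X))$, which is exactly one of the defining generators of $\mathscr{E}_{\mathbb{A}}(T)$ and thus lies there. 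This establishes the inclusion, hence the required factorization; exactness of the induced functor is then inherited from exactness of $\varphi^*$, since the triangulated structure on $\widetilde{DM}^{eff,-}$ is the Verdier quotient structure by $\mathscr{E}_{\mathbb{A}}$.

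The main obstacle I anticipate is the direct-sum step: one must be careful that it is the derived $\varphi^*$, and not merely its underived version on sheaves, that commutes with the (bounded above) direct sums appearing in the closure clause defining $\mathscr{E}_{\mathbb{A}}$. This is precisely why passing through projective resolutions and exploiting the left-adjoint property of $\varphi^*$ is essential, and it is the one place where the formal localization argument genuinely uses the explicit construction of $\varphi^*$ given earlier; the generator computation itself is a short formal calculation once the identity $\psi(X\times_k\mathbb{A}^1)\cong(\psi X)\times_k\mathbb{A}^1$ is recorded.
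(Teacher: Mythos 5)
Your proposal is correct and follows essentially the same route as the paper: define the full subcategory of objects whose image under $\varphi^*$ lies in $\mathscr{E}_{\mathbb{A}}(T)$, check it is thick, closed under direct sums, and contains the generators $Cone(\widetilde{\mathbb{Z}}_S(X\times_k\mathbb{A}^1)\to\widetilde{\mathbb{Z}}_S(X))$, conclude by minimality of $\mathscr{E}_{\mathbb{A}}(S)$, and then invoke the universal property of the Verdier localization. The only difference is that you spell out the closure under arbitrary direct sums (via projective resolutions and the left-adjoint property of the underived $\varphi^*$), a point the paper's proof passes over silently when it asserts the subcategory is ``thick''; your version is the more complete one.
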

\begin{proof}
Let \(\mathbb{E}\) be the full subcategory of \(D^-(S)\) which consists of those complexes \(K\in D^-(S)\) who satisfies \(\varphi^*K\in\mathbb{E}_{\mathbb{A}}\). It's a thick subcategory of \(D^-(S)\). For any \(X\in Sm/S\), \(\varphi^*\) maps
\[\widetilde{\mathbb{Z}}_S(X\times_k\mathbb{A}^1)\longrightarrow\widetilde{\mathbb{Z}}_S(X)\]
to
\[\widetilde{\mathbb{Z}}_T((\psi X)\times_k\mathbb{A}^1)\longrightarrow\widetilde{\mathbb{Z}}_T(\psi X).\]
So \(\mathbb{E}_{\mathbb{A}}\subseteq\mathbb{E}\) by definition of \(\mathbb{E}_{\mathbb{A}}\) and exactness of \(\varphi^*\). So \(\varphi^*\) preserves objects in \(\mathbb{E}_{\mathbb{A}}\). Hence \(\varphi^*\) preserves \(\mathbb{A}^1\)-weak equivalences by exactness of \(\varphi^*\). Then the statement follows by \cite[Proposition 4.6.2]{Kra}.
\end{proof}
\begin{proposition}\label{derived2}
\begin{enumerate}
\item There is a tensor product
\[\otimes_S:\widetilde{DM}^{eff,-}(S)\times\widetilde{DM}^{eff,-}(S)\longrightarrow\widetilde{DM}^{eff,-}(S),\]
which is determined by the following commutative diagram
\[
	\xymatrix
	{
		D^-(S)\times D^-(S)\ar[r]^-{\otimes_S}\ar[d]							&D^-(S)\ar[d]\\
		\widetilde{DM}^{eff,-}(S)\times\widetilde{DM}^{eff,-}(S)\ar[r]^-{\otimes_S}	&\widetilde{DM}^{eff,-}(S)
	}.
\]
And for any \(K\in\widetilde{DM}^{eff,-}(S)\), the functor \(K\otimes_S-\) is exact.
\item Suppose \(f:S\longrightarrow T\) is a smooth morphism in \(Sm/k\). There is an exact functor
\[f_{\#}:\widetilde{DM}^{eff,-}(S)\longrightarrow\widetilde{DM}^{eff,-}(T),\]
which is determined by the following commutative diagram
\[
	\xymatrix
	{
		D^-(S)\ar[r]^{f_{\#}}\ar[d]			&D^-(T)\ar[d]\\
		\widetilde{DM}^{eff,-}(S)\ar[r]^{f_{\#}}	&\widetilde{DM}^{eff,-}(T)
	}.
\]
\item Suppose \(f:S\longrightarrow T\) is a morphism in \(Sm/k\). There is an exact functor
\[f^*:\widetilde{DM}^{eff,-}(T)\longrightarrow\widetilde{DM}^{eff,-}(S),\]
which is determined by the following commutative diagram
\[
	\xymatrix
	{
		D^-(T)\ar[r]^{f^*}\ar[d]			&D^-(S)\ar[d]\\
		\widetilde{DM}^{eff,-}(T)\ar[r]^{f^*}	&\widetilde{DM}^{eff,-}(S)
	}.
\]
\end{enumerate}
\end{proposition}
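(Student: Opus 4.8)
The plan is to deduce all three statements from the general descent mechanism of Proposition \ref{derived1}, handling the bifunctor $\otimes_S$ with some additional care. The common principle is the universal property of the Verdier localization $D^-(S)\longrightarrow\widetilde{DM}^{eff,-}(S)=D^-(S)[W_{\mathbb{A}}^{-1}]$: an exact functor out of $D^-(S)$ descends to the quotient precisely when it carries $\mathscr{E}_{\mathbb{A}}$ into the null objects of the target, and the descended functor is then automatically exact.

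Parts (2) and (3) are immediate instances of Proposition \ref{derived1}. Recall from the proof of Proposition \ref{derived functors} that $f_{\#}$ arises as $\varphi^*$ for the triple $(Y,S,T):=(S,S,T)$, and $f^*$ as $\varphi^*$ for $(T,S,T)$. Applying Proposition \ref{derived1} to each of these triples produces exact functors $f_{\#}:\widetilde{DM}^{eff,-}(S)\longrightarrow\widetilde{DM}^{eff,-}(T)$ and $f^*:\widetilde{DM}^{eff,-}(T)\longrightarrow\widetilde{DM}^{eff,-}(S)$ fitting into the asserted commutative squares, and there is nothing further to check.

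For (1), the only genuinely new point is that $\otimes_S$ is a bifunctor, so I cannot quote Proposition \ref{derived1} verbatim. First I would observe that $\otimes_S$ descends to the quotient as soon as $\mathscr{E}_{\mathbb{A}}$ is a thick $\otimes_S$-ideal, i.e. $M\otimes_S L\in\mathscr{E}_{\mathbb{A}}$ whenever $M\in\mathscr{E}_{\mathbb{A}}$ and $L\in D^-(S)$ (the condition in the other variable is identical since $\otimes_S$ is symmetric). Indeed, once this holds, for $\mathbb{A}^1$-weak equivalences $a,b$ the cones $Cone(a\otimes_S L)\cong Cone(a)\otimes_S L$ and $Cone(K\otimes_S b)\cong K\otimes_S Cone(b)$ lie in $\mathscr{E}_{\mathbb{A}}$ by exactness of $\otimes_S$ in each variable (Proposition \ref{derived functors}, (1)), so the composite $D^-(S)\times D^-(S)\xrightarrow{\otimes_S}D^-(S)\longrightarrow\widetilde{DM}^{eff,-}(S)$ inverts every morphism of the form $(a,id)$ and $(id,b)$; since an arbitrary morphism $(a,b)$ factors as $(a,id)\circ(id,b)$, it inverts all of $W_{\mathbb{A}}\times W_{\mathbb{A}}$ and hence factors through $\widetilde{DM}^{eff,-}(S)\times\widetilde{DM}^{eff,-}(S)$.

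It then remains to verify the ideal condition, and this is where the actual computation sits; I expect it to be the main obstacle, all other parts being formal. Fixing $L$ and using that $-\otimes_S L$ is exact while $\mathscr{E}_{\mathbb{A}}$ is thick and closed under arbitrary direct sums, the full subcategory of those $M$ with $M\otimes_S L\in\mathscr{E}_{\mathbb{A}}$ is again thick and closed under direct sums, so it suffices to treat the generators $G_X:=Cone(\widetilde{\mathbb{Z}}_S(X\times_k\mathbb{A}^1)\longrightarrow\widetilde{\mathbb{Z}}_S(X))$ of $\mathscr{E}_{\mathbb{A}}$. Now fixing $X$ and running the same reduction in the other variable, the full subcategory of those $L$ with $G_X\otimes_S L\in\mathscr{E}_{\mathbb{A}}$ is thick and closed under direct sums, and the representables $\widetilde{\mathbb{Z}}_S(Y)$ generate $D^-(S)$ under cones, shifts and direct sums (via projective resolutions), so I only need $G_X\otimes_S\widetilde{\mathbb{Z}}_S(Y)\in\mathscr{E}_{\mathbb{A}}$. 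By exactness of $\otimes_S$ together with Proposition \ref{tensor} and the identification $(X\times_k\mathbb{A}^1)\times_S Y\cong(X\times_S Y)\times_k\mathbb{A}^1$, I compute $G_X\otimes_S\widetilde{\mathbb{Z}}_S(Y)\cong G_{X\times_S Y}$, which lies in $\mathscr{E}_{\mathbb{A}}$ by definition. Finally, exactness of $K\otimes_S-$ on $\widetilde{DM}^{eff,-}(S)$ is inherited from its exactness on $D^-(S)$ through the localization, completing (1).
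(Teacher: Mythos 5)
Your proposal is correct and follows essentially the same route as the paper: parts (2) and (3) are the same instantiations of Proposition \ref{derived1}, and for part (1) the paper likewise first handles $-\otimes_S\widetilde{\mathbb{Z}}_S(Y)$ via Proposition \ref{derived1} (triple $(Y,S,S)$) and then bootstraps to arbitrary $K$ by the thick-subcategory/generator reduction you spell out, which it delegates to the third paragraph of \cite[Lemma 9.5]{MVW} before invoking \cite[Proposition 4.6.2]{Kra}. Your explicit verification that $\mathscr{E}_{\mathbb{A}}$ is a thick $\otimes_S$-ideal, reducing to $G_X\otimes_S\widetilde{\mathbb{Z}}_S(Y)\cong G_{X\times_SY}$ via Proposition \ref{tensor}, is exactly the content of that citation.
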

\begin{proof}
\begin{enumerate}
\item Suppose \(Y\in Sm/S\). In the definition of \(\varphi\), we take \((Y, S, T):=(Y, S, S)\). Then \(\varphi^*F\cong F\otimes_S\widetilde{\mathbb{Z}}_S(Y)\) for any \(F\in\widetilde{Sh}(S)\) by Proposition \ref{hom-tensor}.

Now given an \(\mathbb{A}^1\)-weak equivalence \(a\), \(\widetilde{\mathbb{Z}}_S(Y)\otimes_Sa\) is also an \(\mathbb{A}^1\)-weak equivalence by applying Proposition \ref{derived1} to \(\varphi\). Now apply a similar method as in the third paragraph of \cite[Lemma 9.5]{MVW} to show that the functor \(K\otimes_S-:D^-(S)\longrightarrow D^-(S)\) preserves \(\mathbb{A}^1\)-weak equivalence for any \(K\in D^-(S)\). Finally we apply \cite[Proposition 4.6.2]{Kra} to the functor \(K\otimes_S-\).
\item In the definition of \(\varphi\), we take \((Y, S, T):=(S, S, T)\) and apply Proposition \ref{derived1}.
\item In the definition of \(\varphi\), we take \((Y, S, T):=(T, S, T)\) and apply Proposition \ref{derived1}.
\end{enumerate}
\end{proof}
\begin{proposition}\label{jing}
Let \(f:S\longrightarrow T\) be a smooth morphism in \(Sm/k\). We have an adjoint pair
\[f_{\#}:\widetilde{DM}^{eff,-}(S)\rightleftharpoons\widetilde{DM}^{eff,-}(T):f^*.\]
\end{proposition}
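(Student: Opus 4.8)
The plan is to descend the adjunction already available on bounded-above derived categories to the $\mathbb{A}^1$-localizations by means of the universal property of localization. By Proposition \ref{derived adjunction} we have an adjoint pair $f_\#\dashv f^*$ on $D^-(S)\rightleftharpoons D^-(T)$, with unit $\eta:\mathrm{id}\Rightarrow f^*f_\#$ and counit $\epsilon:f_\#f^*\Rightarrow\mathrm{id}$; write $Q_S:D^-(S)\longrightarrow\widetilde{DM}^{eff,-}(S)$ and $Q_T:D^-(T)\longrightarrow\widetilde{DM}^{eff,-}(T)$ for the localization functors, which are the identity on objects and invert exactly $W_{\mathbb{A}}$. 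The functors descended in Proposition \ref{derived2}(2),(3) satisfy $f_\#Q_S=Q_Tf_\#$ and $f^*Q_T=Q_Sf^*$, so the task reduces to producing a compatible unit and counit on the localized level and checking the triangle identities there.

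The two facts I need are exactly those proved inside Proposition \ref{derived2}: by Proposition \ref{derived1}, both $f_\#$ and $f^*$ send $\mathbb{A}^1$-weak equivalences to $\mathbb{A}^1$-weak equivalences. Since $Q_S$ and $Q_T$ are universal among functors inverting $W_{\mathbb{A}}$, a natural transformation between two functors out of $D^-(S)$ that both invert $W_{\mathbb{A}}$ descends uniquely to one between the induced functors on $\widetilde{DM}^{eff,-}(S)$. Applying this to $Q_S\eta:Q_S\Rightarrow Q_Sf^*f_\#=f^*f_\#Q_S$ yields a unique $\bar\eta:\mathrm{id}\Rightarrow f^*f_\#$ on $\widetilde{DM}^{eff,-}(S)$ with $\bar\eta\,Q_S=Q_S\eta$, and symmetrically $Q_T\epsilon$ descends to $\bar\epsilon:f_\#f^*\Rightarrow\mathrm{id}$ on $\widetilde{DM}^{eff,-}(T)$.

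It then remains to verify the triangle identities for $(\bar\eta,\bar\epsilon)$. Each is an equality of natural transformations out of a localized category, so by the faithfulness in the universal property it may be checked after precomposing with $Q$, where it becomes the image under $Q$ of the corresponding identity for $(\eta,\epsilon)$ furnished by Proposition \ref{derived adjunction}. This gives $f_\#\dashv f^*$ on the localized categories, as required. The only non-formal ingredient is the object-level preservation of $\mathbb{A}^1$-weak equivalences by both functors; once that is in place the rest is formal localization calculus, so I expect no serious obstacle beyond bookkeeping. As an independent cross-check one may instead argue on $Hom$-groups: for $M\in D^-(T)$ the local replacement $M\longrightarrow C_*M$ of Proposition \ref{locality} pulls back to an $\mathbb{A}^1$-weak equivalence $f^*M\longrightarrow f^*C_*M$ whose target is again $\mathbb{A}^1$-local (because $f_\#$ carries $\mathbb{A}^1$-acyclic objects to $\mathbb{A}^1$-acyclic objects and $(f_\#,f^*)$ is adjoint), so that $f^*C_*M$ is a local replacement of $f^*M$ and the chain $Hom_{\widetilde{DM}^{eff,-}(T)}(f_\#K,M)\cong Hom_{D^-(T)}(f_\#K,C_*M)\cong Hom_{D^-(S)}(K,f^*C_*M)\cong Hom_{\widetilde{DM}^{eff,-}(S)}(K,f^*M)$ is natural in $K$ and $M$.
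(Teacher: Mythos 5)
Your proof is correct, but it is organized differently from the paper's. The paper disposes of this proposition in one line: it reruns the explicit roof construction of Proposition \ref{derived adjunction} inside the localized categories, the only new input being that \(\varphi^*\) (hence \(f_{\#}\) and \(f^*\)) preserves \(\mathscr{E}_{\mathbb{A}}\) and therefore \(\mathbb{A}^1\)-weak equivalences, which is Proposition \ref{derived1}. You instead take the adjunction on \(D^-\) as a black box and descend its unit and counit through the localization functors \(Q_S\), \(Q_T\), using that precomposition with a Gabriel--Zisman localization is fully faithful on functors inverting the localized class; the triangle identities then follow by checking after precomposition with \(Q\). Both arguments rest on exactly the same non-formal ingredient (Proposition \ref{derived1}); yours replaces the paper's hand manipulation of left and right roofs by the universal property of localization, which is cleaner and makes the uniqueness of \(\bar\eta\), \(\bar\epsilon\) automatic, at the cost of invoking the \(2\)-categorical form of that universal property rather than just its object-level form. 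Your closing cross-check via the \(\mathbb{A}^1\)-local replacement \(C_*M\) is also sound --- \(f^*C_*M\) is indeed \(\mathbb{A}^1\)-local by adjunction precisely because \(f_{\#}\) preserves \(\mathbb{A}^1\)-weak equivalences, and \(f^*M\longrightarrow f^*C_*M\) remains an \(\mathbb{A}^1\)-weak equivalence --- and is in fact closest in spirit to how such adjunctions are usually descended through Bousfield localizations; either of your two arguments would serve as a complete proof.
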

\begin{proof}
By the same method as in Proposition \ref{derived adjunction} since \(\varphi^*\) preserves \(\mathbb{E}_{\mathbb{A}}\) by Proposition \ref{derived1}.
\end{proof}
\begin{proposition}\label{operations1}
Suppose \(f:S\longrightarrow T\) is a morphism in \(Sm/k\).
\begin{enumerate}
\item For any \(K, L\in\widetilde{DM}^{eff,-}(T)\), we have
\[f^*(K\otimes_SL)\cong(f^*K)\otimes_S(f^*L).\]
\item If \(f\) is smooth, then for any \(K\in\widetilde{DM}^{eff,-}(S)\) and \(L\in\widetilde{DM}^{eff,-}(T)\), we have
\[f_{\#}(K\otimes_Sf^*L)\cong(f_{\#}K)\otimes_SL.\]
\end{enumerate}
\end{proposition}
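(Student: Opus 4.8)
The plan is to lift both formulas from the level of the abelian categories $\widetilde{Sh}(S)$, $\widetilde{Sh}(T)$, where they are already proved as Proposition \ref{sheaf formulas}, (4) and Proposition \ref{operations}, (4), first to the derived categories $D^-$ and then down to $\widetilde{DM}^{eff,-}$. The two structural facts that drive the whole argument are that the sheaf-level functors $f^*$ and $f_{\#}$ are \emph{exact} (they possess adjoints by Proposition \ref{pullback}, Proposition \ref{lowerhash} and Proposition \ref{derived adjunction}) and \emph{preserve projectivity}: each is an instance of $\varphi^*$ for a suitable triple, and $\varphi^*\widetilde{c}_S(X)\cong\widetilde{c}_S(\psi X)$ sends free objects to free objects, hence projectives to projectives. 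Consequently, by Proposition \ref{derived functors} and Proposition \ref{derived2}, the operations $\otimes$, $f^*$ and $f_{\#}$ are all computed by applying the underived functors to projective resolutions, and being additive exact functors they commute with the formation of the total complex of a bicomplex.

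For (1) (read as $f^*(K\otimes_T L)\cong f^*K\otimes_S f^*L$), I would pick projective resolutions $P\longrightarrow K$ and $Q\longrightarrow L$ in $D^-(T)$, so that $K\otimes_T L$ is represented by the total complex of $\{P_i\otimes_T Q_j\}$. Since $f^*$ is exact it carries this representative to a representative of $f^*(K\otimes_T L)$, and since $f^*$ commutes with total complexes it suffices to glue the degreewise isomorphisms $f^*(P_i\otimes_T Q_j)\cong f^*P_i\otimes_S f^*Q_j$ supplied by Proposition \ref{sheaf formulas}, (4). Exactness of $f^*$ makes $f^*P$ and $f^*Q$ projective resolutions of $f^*K$ and $f^*L$, so the resulting total complex $f^*P\otimes_S f^*Q$ represents $f^*K\otimes_S f^*L$; this yields the isomorphism in $D^-(S)$, which then descends to $\widetilde{DM}^{eff,-}$ by the compatibility diagrams of Proposition \ref{derived1} and Proposition \ref{derived2}.

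For (2) the argument is entirely parallel, replacing Proposition \ref{sheaf formulas}, (4) by the projection formula Proposition \ref{operations}, (4). Fix projective resolutions $P\longrightarrow K$ in $D^-(S)$ and $Q\longrightarrow L$ in $D^-(T)$. Because $f^*$ preserves projectives and quasi-isomorphisms, $f^*Q$ is a projective resolution of $f^*L$, so $K\otimes_S f^*L$ is represented by the total complex of $\{P_i\otimes_S f^*Q_j\}$. Applying $f_{\#}$, which again commutes with total complexes, the natural degreewise isomorphisms $f_{\#}(P_i\otimes_S f^*Q_j)\cong f_{\#}P_i\otimes_T Q_j$ assemble to $f_{\#}(K\otimes_S f^*L)\cong f_{\#}K\otimes_T L$ in $D^-(T)$, and this descends to $\widetilde{DM}^{eff,-}$ as before.

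The main obstacle is not a single calculation but the bookkeeping needed to promote the \emph{degreewise} isomorphisms into isomorphisms of \emph{complexes}: one must check that the isomorphisms of Proposition \ref{sheaf formulas}, (4) and Proposition \ref{operations}, (4) are natural in both arguments, so that they are compatible with the differentials of $P$ and $Q$ and therefore glue to chain isomorphisms of the two total complexes rather than remaining an unrelated family of isomorphisms in each bidegree. The second delicate point is the descent to $\widetilde{DM}^{eff,-}$: here one uses that the localization $D^-\to\widetilde{DM}^{eff,-}$ is compatible with $\otimes$, $f^*$ and $f_{\#}$ (Proposition \ref{derived2}) and sends isomorphisms to isomorphisms, the preservation of $\mathbb{A}^1$-weak equivalences by $f^*$ (Proposition \ref{derived1}) ensuring there is no obstruction, so that an isomorphism of functors already established on $D^-$ automatically induces the claimed isomorphism on the localized category.
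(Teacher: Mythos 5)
Your proposal is correct and follows essentially the same route as the paper, whose entire proof reads ``Directly follows from Proposition \ref{sheaf formulas} and Proposition \ref{operations} since everything works termwise'' --- you have simply spelled out the termwise argument (projective resolutions, exactness and preservation of projectives by \(f^*\) and \(f_{\#}\), naturality of the degreewise isomorphisms, descent through the localization). Your reading of (1) with \(\otimes_T\) inside the parentheses is the intended one.
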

\begin{proof}
Directly follows from Proposition \ref{sheaf formulas} and Proposition \ref{operations} since everything works termwise.
\end{proof}

In \cite[Proposition 3.5]{CD} and \cite[Definition 3.2.1]{DF}, they defined \(\widetilde{DM}^{eff}(S)\) as the the Verdier localization of \(D(S)\) with respect to homotopy invariant conditions. So the localization induces a triangulated structure on \(\widetilde{DM}^{eff}(S)\) (See \cite[Lemma 4.3.1]{Kra}). And this is the triangulated structure we will impose on \(\widetilde{DM}^{eff}(S)\).
\begin{proposition}\label{triangulated1}
There is a fully faithful exact functor \(\widetilde{DM}^{eff,-}(S)\longrightarrow\widetilde{DM}^{eff}(S)\) which is determined by the commutative diagram (See Proposition \ref{triangulated})
\[
	\xymatrix
	{
		D^-(S)\ar[r]\ar[d]			&D(S)\ar[d]\\
		\widetilde{DM}^{eff,-}(S)\ar[r]	&\widetilde{DM}^{eff}(S)
	}.
\]
\end{proposition}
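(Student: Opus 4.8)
The plan is to construct the functor by the universal property of Verdier localization and then to establish full faithfulness by computing morphism groups through the Suslin complex functor $C_*$. Write $q^-:D^-(S)\longrightarrow\widetilde{DM}^{eff,-}(S)$ and $q:D(S)\longrightarrow\widetilde{DM}^{eff}(S)$ for the two localization functors and $i:D^-(S)\longrightarrow D(S)$ for the fully faithful exact inclusion of Proposition \ref{triangulated}. The category $\widetilde{DM}^{eff}(S)$ is the Verdier quotient of $D(S)$ by the localizing subcategory generated by the cones $Cone(\widetilde{\mathbb{Z}}_S(X\times_k\mathbb{A}^1)\longrightarrow\widetilde{\mathbb{Z}}_S(X))$, which are exactly the generators of $\mathscr{E}_{\mathbb{A}}$. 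Since $i$ is exact and carries these generators (together with the direct sums of uniformly bounded such cones that occur in $\mathscr{E}_{\mathbb{A}}$) into that localizing subcategory, the composite $q\circ i$ sends every morphism of $W_{\mathbb{A}}$ to an isomorphism. By the universal property of $q^-$ there is then a unique exact functor $\bar i:\widetilde{DM}^{eff,-}(S)\longrightarrow\widetilde{DM}^{eff}(S)$ with $\bar i\circ q^-=q\circ i$; exactness is automatic, as $\bar i$ carries the triangles and shifts coming from $D^-(S)$ to those of $D(S)$.

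The core of the argument is full faithfulness, and for this I would use the functor $C_*$. By Proposition \ref{locality}, for every $L\in D^-(S)$ the natural map $L\longrightarrow C_*L$ is an $\mathbb{A}^1$-weak equivalence, $C_*L$ is $\mathbb{A}^1$-local, and $C_*L$ again lies in $D^-(S)$. Hence $C_*$ is a reflection onto the $\mathbb{A}^1$-local objects, and for $K,L\in D^-(S)$ the standard localization formalism gives
\[Hom_{\widetilde{DM}^{eff,-}(S)}(K,L)\cong Hom_{D^-(S)}(K,C_*L).\]
On the other side, in $D(S)$ the Suslin complex $C_*(iL)=i(C_*L)$ is again $\mathbb{A}^1$-local and $iL\longrightarrow i(C_*L)$ is an $\mathbb{A}^1$-weak equivalence, so $i(C_*L)$ computes the $\mathbb{A}^1$-localization of $iL$ in the unbounded setting and
\[Hom_{\widetilde{DM}^{eff}(S)}(iK,iL)\cong Hom_{D(S)}(iK,i(C_*L)).\]
Since $C_*L\in D^-(S)$, Proposition \ref{triangulated} identifies the right-hand side with $Hom_{D^-(S)}(K,C_*L)$. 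Comparing the two displays, and checking that the resulting bijection is precisely the map induced by $\bar i$, yields full faithfulness.

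The main obstacle is the middle step on the unbounded side: one must know that $C_*L$, built and shown to be local inside $D^-(S)$, remains $\mathbb{A}^1$-local in $D(S)$ and actually realizes the $\mathbb{A}^1$-localization used to define $\widetilde{DM}^{eff}(S)$. This is exactly the model-categorical input of \cite[Proposition 5.2.36]{CD1} combined with the comparison of Proposition \ref{triangulated}, and it is what forces the argument to pass through $C_*$ rather than to manipulate the two Verdier quotients abstractly. The remaining verifications—that $i$ preserves the direct sums occurring in $\mathscr{E}_{\mathbb{A}}$, and that the Hom-bijection is natural and agrees with the map induced by $\bar i$—are routine and I would only indicate them.
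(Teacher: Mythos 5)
Your proposal is correct and follows essentially the same route as the paper: the functor is induced by the universal property of the Verdier localization, and full faithfulness is obtained by replacing the target (the paper replaces both source and target) by its Suslin complex $C_*$, using Proposition \ref{locality} to identify Hom-groups in the localized categories with Hom-groups in $D^-(S)$ and $D(S)$, and then invoking the full faithfulness of $D^-(S)\longrightarrow D(S)$ from Proposition \ref{triangulated}. The only cosmetic difference is that the paper works with $Hom(C_*K,C_*L)$ rather than $Hom(K,C_*L)$, which changes nothing of substance.
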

\begin{proof}
The functor \(\widetilde{DM}^{eff,-}(S)\longrightarrow\widetilde{DM}^{eff}(S)\) is induced and exact by \cite[Proposition 4.6.2]{Kra}. And for any \(K, L\in\widetilde{DM}^{eff,-}(S)\), we have a commutative diagram
\[
	\xymatrix
	{
		Hom_{\widetilde{DM}^{eff,-}}(K,L)\ar[r]^-u\ar[d]_{\alpha}	&Hom_{\widetilde{DM}^{eff,-}}(C_*K,C_*L)\ar[d]	&Hom_{D^-}(C_*K,C_*L)\ar[l]_-{\gamma}\ar[d]_{\cong}\\
		Hom_{\widetilde{DM}^{eff}}(K,L)\ar[r]^-v				&Hom_{\widetilde{DM}^{eff}}(C_*K,C_*L))	&Hom_{D}(C_*K,C_*L)\ar[l]_-{\beta}
	},
\]
where \(u\), \(v\), \(\gamma\) and \(\beta\) are isomorphisms by Proposition \ref{locality}. So \(\alpha\) is an isomorphism.
\end{proof}
And there is also a compability result between the natural inclusion and \(\otimes_S\), \(f^*\), \(f_{\#}\) like Proposition \ref{-}.
\subsection{For Symmetric Spectra}
Now we are going to discuss operations for spectra, in order to stabilize the category \(\widetilde{DM}^{eff,-}(S)\). The main reference is \cite[5.3]{CD1}.
\subsubsection{Symmetric Spectra}\label{symmetric spectra}
Let \(\mathscr{A}\) be a symmetric closed monoidal abelian category with arbitrary products. We can define the category of symmetric sequences \(\mathscr{A}^\mathfrak{S}\) as in \cite[Definition 5.3.5]{CD1}. It is also a closed symmetric monoidal abelian category by \cite[Definition 5.3.7]{CD1} and \cite[Lemma 2.1.6]{HSS}. Here, if we have two symmetric sequences \(A\) and \(B\), we define \(A\otimes^{\mathfrak{S}}B\) by
\[(A\otimes^{\mathfrak{S}}B)_n=\oplus_pS_n\times_{S_p\times S_{n-p}}(A_p\otimes B_{n-p}).\]
And we define \(\underline{Hom}^{\mathfrak{S}}(A,B)\) by
\[\underline{Hom}^{\mathfrak{S}}(A,B)_n=\prod_p\underline{Hom}_{S_p}(A_p,B_{n+p}),\]
where \(\underline{Hom}_{S_p}(A_p,B_{n+p})\) (with an obvious \(S_n\)-action) is the kernel of the map
\[\xymatrixcolsep{4pc}\xymatrix{\underline{Hom}(A_p,B_{n+p})\ar[r]^-{(\sigma^*-(1\times\sigma)_*)}&\prod_{\sigma\in S_p}\underline{Hom}(A_p,B_{n+p})}.\]
(see \cite[Definition 2.1.3]{HSS} and \cite[Theorem 2.1.11]{HSS})
\begin{proposition}\label{sym hom-tensor}
In the context above, for any symmetric sequences \(A\), \(B\), \(C\), we have
\[Hom(A\otimes^{\mathfrak{S}}B,C)\cong Hom(A,\underline{Hom}^{\mathfrak{S}}(B,C))\]
naturally.
\end{proposition}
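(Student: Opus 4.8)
The plan is to establish the isomorphism degreewise by peeling off, one at a time, the two adjunctions already built into the definitions: Frobenius reciprocity for the induction functors $S_n\times_{S_p\times S_{n-p}}(-)$, and the internal tensor--hom adjunction of the ambient closed symmetric monoidal category $\mathscr{A}$. First I would unwind what a morphism $A\otimes^{\mathfrak{S}}B\longrightarrow C$ in $\mathscr{A}^{\mathfrak{S}}$ actually is. By definition of morphisms of symmetric sequences and of $\otimes^{\mathfrak{S}}$, it is a family of $S_n$-equivariant maps
\[
f_n:\bigoplus_{p+q=n}S_n\times_{S_p\times S_q}(A_p\otimes B_q)\longrightarrow C_n,\qquad n\geq 0,
\]
and by the universal property of the direct sum this is the same as a family of $S_n$-equivariant maps out of each individual summand, indexed by all pairs $(p,q)$.

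Next I would apply the two adjunctions in turn. Since in an additive category induction $S_n\times_{S_p\times S_q}(-)$ along the block inclusion $S_p\times S_q\hookrightarrow S_n$ is left adjoint to restriction, each summand map corresponds to an $S_p\times S_q$-equivariant map $g_{p,q}:A_p\otimes B_q\longrightarrow C_{p+q}$, where $S_p\times S_q$ acts on $C_{p+q}$ through the block inclusion $S_p\times S_q\hookrightarrow S_{p+q}$. Then, using that $\mathscr{A}$ is closed symmetric monoidal, the tensor--hom adjunction of $\mathscr{A}$ turns $g_{p,q}$ into a map $h_{p,q}:A_p\longrightarrow\underline{Hom}(B_q,C_{p+q})$. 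The essential point is that the residual equivariance of $g_{p,q}$ distributes correctly: the $S_q$-part of the equivariance is precisely the condition that $h_{p,q}$ factors through the kernel $\underline{Hom}_{S_q}(B_q,C_{p+q})$ appearing in the definition of $\underline{Hom}^{\mathfrak{S}}$, while the $S_p$-part records that $h_{p,q}$ is $S_p$-equivariant for the action on $C_{p+q}$ coming from the inclusion on the first $p$ letters.

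Finally I would reassemble. Fixing $p$ and letting $q$ vary, the maps $h_{p,q}$ combine, via the universal property of the product, into a single $S_p$-equivariant map
\[
A_p\longrightarrow\prod_q\underline{Hom}_{S_q}(B_q,C_{p+q})=\underline{Hom}^{\mathfrak{S}}(B,C)_p,
\]
and these assemble over $p$ into a morphism $A\longrightarrow\underline{Hom}^{\mathfrak{S}}(B,C)$ of symmetric sequences; here the passage from $\bigoplus$ in $\otimes^{\mathfrak{S}}$ to $\prod$ in $\underline{Hom}^{\mathfrak{S}}$ is exactly what converts $\mathrm{Hom}$ out of a direct sum into a product of $\mathrm{Hom}$'s. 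Each step above is a natural bijection, so their composite is a natural isomorphism in the three variables $A$, $B$, $C$, which is the assertion. The main obstacle, and the only place demanding real care, is the bookkeeping of the symmetric group actions across the two adjunctions: one must verify that the block action of $S_p\times S_q\hookrightarrow S_{p+q}$ inherited from the induction on the left matches \emph{simultaneously} the $S_q$-invariance built into the kernel defining $\underline{Hom}_{S_q}$ and the outer $S_p$-action on $\underline{Hom}^{\mathfrak{S}}(B,C)_p$ on the right. Granting this compatibility, which is precisely the content of \cite[Theorem 2.1.11]{HSS} transported to the abelian category $\mathscr{A}$, the argument is formal.
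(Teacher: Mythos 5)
Your proposal is correct and follows essentially the same route as the paper: reduce a morphism out of $A\otimes^{\mathfrak{S}}B$ to $S_p\times S_q$-equivariant maps $A_p\otimes B_q\to C_{p+q}$ (the paper states this directly, you justify it via Frobenius reciprocity for induction along the block inclusion), then apply the tensor--hom adjunction of $\mathscr{A}$ and observe that the residual $S_q$-equivariance is exactly the condition of factoring through the kernel $\underline{Hom}_{S_q}(B_q,C_{p+q})$, while the $S_p$-equivariance and the passage from $\bigoplus$ to $\prod$ reassemble the data into a morphism $A\to\underline{Hom}^{\mathfrak{S}}(B,C)$. Your version is merely more explicit about the intermediate adjunctions than the paper's three-line argument.
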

\begin{proof}
Giving a morphism from \(A\otimes^{\mathfrak{S}}B\) to \(C\) is equivalent to giving \(S_p\times S_q\)-equivariant maps
\[f_{p,q}:A_p\otimes B_q\longrightarrow C_{p+q}.\]
And that is equivalent to giving \(S_p\)-equivariant maps
\[g_{p,q}:A_p\longrightarrow\underline{Hom}(B_q,C_{p+q})\]
such that for any \(\sigma\in S_q\),
\[\underline{Hom}(\sigma,C_{p+q})\circ g_{p,q}=\underline{Hom}(B_q,id_{S_p}\times\sigma)\circ g_{p,q}.\]
And this just says that \(g_{p,q}\) factor through \(\underline{Hom}_{S_q}(B_q,C_{p+q})\).
\end{proof}
And the abelian structure of \(\mathscr{A}^{\mathfrak{S}}\) is just defined termwise. Moreover, we have adjunctions
\[i_0:\mathscr{A}\rightleftharpoons\mathscr{A}^{\mathfrak{S}}:ev_0\]
and
\[-\{-i\}:\mathscr{A}^{\mathfrak{S}}\rightleftharpoons\mathscr{A}^{\mathfrak{S}}:-\{i\} (i\geq 0)\]
as in \cite[5.3.5.1]{CD1} and \cite[6.4.1]{CD}.

Now suppose \(R\in\mathscr{A}\). Then \(Sym(R)\in\mathscr{A}^{\mathfrak{S}}\) is a commutative monoid object as in \cite[5.3.8]{CD1}. Define \(Sp_R(\mathscr{A})\) to be the category of \(Sym(R)\)-modules in \(\mathscr{A}^{\mathfrak{S}}\). They are called symmetric R-spectra. Then it's also a symmetric closed monoidal abelian category by \cite[Theorem 2.2.10]{HSS} and Proposition \ref{sym hom-tensor}. (The corresponding tensor product and inner-hom are just denoted by \(\otimes\) and \(\underline{Hom}\) for convenience)

We have an adjunction
\[Sym(R)\otimes^{\mathfrak{S}}-:\mathscr{A}^{\mathfrak{S}}\rightleftharpoons Sp_R(\mathscr{A}):U,\]
where \(U\) is the forgetful functor. Thus we get an adjunction
\[\Sigma^{\infty}:\mathscr{A}\rightleftharpoons Sp_R(\mathscr{A}):\Omega^{\infty},\]
where \(\Sigma^{\infty}=(Sym(R)\otimes^{\mathfrak{S}}-)\circ i_0\), \(\Omega^{\infty}=ev_0\circ U\) and \(\Sigma^{\infty}\) is monoidal.

We have a canonical identification
\[A\otimes^{\mathfrak{S}}(B\{-i\})=(A\otimes^{\mathfrak{S}}B)\{-i\}\]
and a morphism
\[A\otimes^{\mathfrak{S}}(B\{i\})\longrightarrow(A\otimes^{\mathfrak{S}}B)\{i\}\]
defined by the composition
\[A\otimes^{\mathfrak{S}}(B\{i\})\longrightarrow(A\otimes^{\mathfrak{S}}(B\{i\}))\{-i\}\{i\}=(A\otimes^{\mathfrak{S}}(B\{i\}\{-i\}))\{i\}\longrightarrow(A\otimes^{\mathfrak{S}}B)\{i\}.\]
Restricting the functors \(-\{-i\}\) and \(-\{i\}\) on spectra, there is also an adjunction
\[-\{-i\}:Sp_R(\mathscr{A})\rightleftharpoons Sp_R(\mathscr{A}):-\{i\},\]
where the module structure \(Sym(R)\otimes^{\mathfrak{S}}(A\{-i\})\longrightarrow A\{-i\}\) of \(A\{-i\}\) is just got by applying \(-\{-i\}\) on that of \(A\) and the module structure \(Sym(R)\otimes^{\mathfrak{S}}(B\{i\})\longrightarrow B\{i\}\) of \(B\{i\}\) is got by the composition
\[Sym(R)\otimes^{\mathfrak{S}}(B\{i\})\longrightarrow(Sym(R)\otimes^{\mathfrak{S}}B)\{i\}\longrightarrow B\{i\},\]
where the last arrow is got by applying \(-\{i\}\) on the module structure of \(B\). And we still have an isomorphism
\[A\otimes_S(B\{-i\})\cong(A\otimes_SB)\{-i\}\]
and a morphism
\[A\otimes_S(B\{i\})\longrightarrow(A\otimes_SB)\{i\}\]
defined by the same way as above.
\begin{definition}
(See \cite[Definition 5.3.16]{CD1}) For any \(S\in Sm/k\), define
\[\mathbbm{1}_S\{1\}=Sym(coker(\widetilde{\mathbb{Z}}_S(S)\longrightarrow\widetilde{\mathbb{Z}}_S(\mathbb{G}_m)))\]
and
\[\mathbbm{1}_S'\{1\}=Sym(coker(\widetilde{c}_S(S)\longrightarrow\widetilde{c}_S(\mathbb{G}_m))).\]
Then define \(Sp(S)\) to be \(Sp_{\mathbbm{1}_S\{1\}}(\widetilde{Sh}(S))\) and \(Sp'(S)\) to be \(Sp_{\mathbbm{1}_S'\{1\}}(\widetilde{PSh}(S))\).
\end{definition}
We have an adjunction
\[\widetilde{a}:\widetilde{PSh}(S)^{\mathfrak{S}}\rightleftharpoons\widetilde{Sh}(S)^{\mathfrak{S}}:\widetilde{o}\]
where both functors are defined termwise (see Proposition \ref{sheafication}) and \(\widetilde{a}\) is monoidal by definition. Restricting the above functors on modules, there is also an adjunction
\[\widetilde{a}:Sp'(S)\rightleftharpoons Sp(S):\widetilde{o},\]
where the module structure \(\mathbbm{1}_S\{1\}\otimes^{\mathfrak{S}}_S\widetilde{a}(A)\longrightarrow\widetilde{a}(A)\) of \(\widetilde{a}(A)\) is just got by applying \(\widetilde{a}\) on that of \(A\) and the module structure \(\mathbbm{1}_S'\{1\}\otimes^{\mathfrak{S}}_S\widetilde{o}(B)\longrightarrow\widetilde{o}(B)\) of \(\widetilde{o}(B)\) is got by precomposing that of \(B\) with the sheafication map \(\mathbbm{1}_S'\{1\}\otimes^{\mathfrak{S}}_S\widetilde{o}(B)\longrightarrow\mathbbm{1}_S\{1\}\otimes^{\mathfrak{S}}_SB\). The functor \(\widetilde{a}\) is again monoidal.

Now let \(f:S\longrightarrow T\) be a morphism in \(Sm/k\). We have an adjunction
\[f^*:\widetilde{Sh}(T)^{\mathfrak{S}}\rightleftharpoons\widetilde{Sh}(S)^{\mathfrak{S}}:f_*\]
where both functors are defined termwise (see Proposition \ref{pullback}) and \(f^*\) is monoidal by Proposition \ref{sheaf formulas}, (4). Restricting the above functors on spectra, there is also an adjunction
\[f^*:Sp(T)\rightleftharpoons Sp(S):f_*,\]
where the module structure \(\mathbbm{1}_S\{1\}\otimes^{\mathfrak{S}}_Sf^*A\longrightarrow f^*A\) of \(f^*A\) is just got by applying \(f^*\) on that of \(A\) and the module structure \(\mathbbm{1}_T\{1\}\otimes^{\mathfrak{S}}_Tf_*B\longrightarrow f_*B\) of \(f_*B\) is got by the composition
\[\mathbbm{1}_T\{1\}\otimes^{\mathfrak{S}}_Tf_*B\longrightarrow f_*(\mathbbm{1}_S\{1\}\otimes^{\mathfrak{S}}_Sf^*f_*B)\longrightarrow f_*(\mathbbm{1}_S\{1\}\otimes^{\mathfrak{S}}_SB)\longrightarrow f_*B,\]
where the last arrow is got by applying \(f_*\) on the module structure of \(B\). The functor \(f^*\) is also monoidal by the construction of the tensor product (see \cite[Lemma 2.2.2]{HSS}). And the same construction gives an another adjunction
\[f^*:Sp'(T)\rightleftharpoons Sp'(S):f_*.\]

Suppose further \(f\) is smooth. We have an adjunction
\[f_{\#}:\widetilde{Sh}(S)^{\mathfrak{S}}\rightleftharpoons\widetilde{Sh}(T)^{\mathfrak{S}}:f^*\]
where both functors are defined termwise (see Proposition \ref{lowerhash}) and
\[f_{\#}(A\otimes^{\mathfrak{S}}_Sf^*B)\cong (f_{\#}A)\otimes^{\mathfrak{S}}_TB\]
also holds by Proposition \ref{operations}, (4). Restricting the above functors on spectra, there is also an adjunction
\[f_{\#}:Sp(S)\rightleftharpoons Sp(T):f^*,\]
where the module structure \(\mathbbm{1}_T\{1\}\otimes^{\mathfrak{S}}f_{\#}A\longrightarrow f_{\#}A\) of \(f_{\#}A\) is got by the composition
\[\mathbbm{1}_T\{1\}\otimes^{\mathfrak{S}}_Tf_{\#}A\cong f_{\#}(\mathbbm{1}_S\{1\}\otimes^{\mathfrak{S}}_SA)\longrightarrow f_{\#}A,\]
where the last arrow is got by applying \(f_{\#}\) on the module structure of \(A\). And we also have
\[f_{\#}(A\otimes_Sf^*B)\cong (f_{\#}A)\otimes_TB\]
for spectra by the construction of the tensor product (see \cite[Lemma 2.2.2]{HSS}). And the same construction gives an another adjunction
\[f_{\#}:Sp'(S)\rightleftharpoons Sp'(T):f^*.\]

One checks that when F=\(-\otimes_SA\), \(f_{\#}\), \(f^*\), \(-\{-i\}\), \(-\{i\}\), \(\Sigma^{\infty}\) or \(\Omega^{\infty}\), there is a natural isomorphism \(\widetilde{a}\circ F\cong F\circ\widetilde{a}\).

Suppose \(i\geq 0\). Then for any \(F\in\widetilde{Sh}(S)\), we have
\[(\Sigma^{\infty}F)\{i\}\cong\Sigma^{\infty}(\widetilde{\mathbb{Z}}_{tr}(\mathbb{G}_m^{\wedge 1})^{\otimes i}\otimes_SF).\]
Moreover, for any \(X\in Sm/S\),
\[Hom_{Sp(S)}((\Sigma^{\infty}\widetilde{\mathbb{Z}}_S(X))\{-i\},A)=A_i(X)\]
and
\[Hom_{Sp'(S)}((\Sigma^{\infty}\widetilde{c}_S(X))\{-i\},B)=B_i(X).\]
So \((\Sigma^{\infty}\widetilde{\mathbb{Z}}_S(X))\{-i\}\) (resp. \((\Sigma^{\infty}\widetilde{c}_S(X))\{-i\}\)) are systems of generators of \(Sp(S)\) (resp. \(Sp'(S)\)) (See \cite[6.7]{CD} and \cite[5.3.11]{CD1}). This enables us to imitate methods in Section \ref{effective}.
\subsubsection{On Derived Categories}
We denote by \(D^-_{Sp}(S)\) (resp. \(D_{Sp}(S)\)) the derived category of bounded above (resp. unbounded) complex of spectra in \(Sp(S)\).
\begin{proposition}\label{spCech}
Let \(X, U\in Sm/S\) and \(p:U\longrightarrow X\) be a Nisnevich covering. Then the complex \((\Sigma^{\infty}\breve{C}(U/X))\{-i\}\) (defined by termise application), is exact after sheafifying as a complex of \(Sp(S)\).
\end{proposition}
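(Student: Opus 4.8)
The plan is to transport the presheaf-level exactness already obtained in Proposition \ref{Cech} across the functors $\Sigma^{\infty}$ and $-\{-i\}$, using the compatibility of these functors with sheafification. First I would invoke the natural isomorphisms $\widetilde{a}\circ\Sigma^{\infty}\cong\Sigma^{\infty}\circ\widetilde{a}$ and $\widetilde{a}\circ(-\{-i\})\cong(-\{-i\})\circ\widetilde{a}$ recorded above (applied termwise in the chain direction) to identify the sheafification $\widetilde{a}((\Sigma^{\infty}\breve{C}(U/X))\{-i\})$ with $(\Sigma^{\infty}\widetilde{a}(\breve{C}(U/X)))\{-i\}$ as a complex of $Sp(S)$. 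By Proposition \ref{Cech} the complex $\widetilde{a}(\breve{C}(U/X))$ is acyclic, and since each of its terms is $\widetilde{\mathbb{Z}}_S(U_X^n)=\widetilde{a}(\widetilde{c}_S(U_X^n))$, hence free and in particular projective, it is an acyclic \emph{projective} complex of sheaves with $E$-transfers. Thus the problem reduces to showing that applying $\Sigma^{\infty}$ and then $-\{-i\}$ to an acyclic projective complex yields an exact complex of spectra.

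Next I would observe that exactness of a complex in the abelian category $Sp(S)$ may be tested one symmetric-sequence degree at a time and after forgetting the $S_n$-actions, since evaluation in degree $n$ followed by the forgetful functor to $\widetilde{Sh}(S)$ is exact and conservative. In symmetric-sequence degree $n$, the term of $\Sigma^{\infty}\widetilde{a}(\breve{C}(U/X))$ is, by the formula $(\Sigma^{\infty}F)_n\cong R_n\otimes_S F$ with $R_n$ the degree-$n$ part of the symmetric sphere $Sym(\mathbbm{1}_S\{1\})$, the tensor product of the fixed sheaf $R_n$ with the complex $\widetilde{a}(\breve{C}(U/X))$. This fixed sheaf $R_n$ is itself projective: it is obtained from the projective sheaf $coker(\widetilde{\mathbb{Z}}_S(S)\to\widetilde{\mathbb{Z}}_S(\mathbb{G}_m))$ (a direct summand of $\widetilde{\mathbb{Z}}_S(\mathbb{G}_m)$ via the rational point $1$) by iterated tensor products and direct sums, and these preserve projectivity by Proposition \ref{tensor}.

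The heart of the argument is then the fact, already established inside the proof of Proposition \ref{derived functors}(1) by way of Proposition \ref{adapted}, that tensoring a projective sheaf with an acyclic projective complex produces an acyclic complex; this is precisely the step I expect to be the main obstacle to keep honest, because $\otimes_S$ is not exact in general and one must genuinely use projectivity of \emph{both} the fixed factor $R_n$ and the complex $\widetilde{a}(\breve{C}(U/X))$. Applying this degree by degree shows that $\Sigma^{\infty}\widetilde{a}(\breve{C}(U/X))$ is exact in $Sp(S)$. Finally I would note that $-\{-i\}$ preserves this exactness: in each symmetric-sequence degree it is, by the description in \cite{HSS}, a finite direct sum of copies of a lower-degree term (induction along $S_{n-i}\subseteq S_n$), and a finite direct sum of acyclic complexes is acyclic. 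Combining the two reductions yields that $(\Sigma^{\infty}\breve{C}(U/X))\{-i\}$ is exact after sheafification, as claimed.
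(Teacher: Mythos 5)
Your proof is correct, but it follows a genuinely different route from the paper's. The paper stays entirely at the presheaf level: it writes \((\Sigma^{\infty}A)\{-i\}=Sym(\mathbbm{1}_S'\{1\})\otimes^{\mathfrak{S}}_S(i_0(A)\{-i\})\), observes that in each symmetric-sequence degree this produces (direct sums of direct summands of) \(\breve{C}(U/X)\otimes_S^{pr}\widetilde{c}_S(\mathbb{G}_m^{\times m})=\breve{C}(U\times_S\mathbb{G}_m^{m}/X\times_S\mathbb{G}_m^{m})\), and then applies Proposition \ref{Cech} directly to the new Nisnevich covering \(U\times_S\mathbb{G}_m^{m}\longrightarrow X\times_S\mathbb{G}_m^{m}\). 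That argument needs nothing beyond the identity \(\breve{C}(U/X)\otimes_S^{pr}\widetilde{c}_S(Y)=\breve{C}(U\times_SY/X\times_SY)\) and the presheaf-level Čech statement. You instead push the sheafification through \(\Sigma^{\infty}\) and \(-\{-i\}\), recognize \(\widetilde{a}(\breve{C}(U/X))\) as an acyclic projective complex, and invoke the adapted-class machinery of Proposition \ref{adapted} to see that tensoring with the projective sheaf \(\mathbbm{1}_S\{1\}^{\otimes n}\) preserves acyclicity; this is precisely the content of the later proposition asserting that \(\Sigma^{\infty}\) sends acyclic projective complexes of sheaves to acyclic projective complexes of spectra, whose proof is independent of the present statement, so there is no circularity. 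Your correct identification of the pitfall --- that \(\otimes_S\) is not exact, so projectivity of \emph{both} factors must be used --- is exactly why the paper prefers to avoid the sheaf-level tensor altogether: tensoring with a representable at the presheaf level is computed on the nose and lands on another Čech complex, for which exactness after sheafification is already known. Your version is slightly heavier but has the advantage of not depending on the special shape of the Čech complex, only on its terms being free and the complex being acyclic after sheafification.
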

\begin{proof}
One easily see that \((\Sigma^{\infty}A)\{-i\}=Sym(\mathbb{Z}_S'\{1\})\otimes^{\mathfrak{S}}_S(i_0(A)\{-i\})\) for any \(A\in\widetilde{PSh}(S)\). Then the statement follows by the equality
\[\breve{C}(U/X)\otimes_S^{pr}\widetilde{c}_S(Y)=\breve{C}(U\times_SY/X\times_SY)\]
for any \(Y\in Sm/S\) and Proposition \ref{Cech}.
\end{proof}
\begin{definition}
We call a spectrum \(A\in Sp'(S)\) free if it's a direct sum of spectra of the form \((\Sigma^{\infty}\widetilde{c}_S(X))\{-i\}\). We call \(A\) projective if it's a direct summand of a free spectrum. A spectrum in \(Sp(S)\) is called free (resp. projective) if it's a sheafication of a free (resp. projective) spectrum in \(Sp'(S)\). A bounded above complex of spectra in \(Sp(S)\) is called free (projective) if all its terms are free (projective).
\end{definition}
\begin{definition}
A projective resolution of a bounded above spectrum complex \(K\) is a projective complex with a quasi-isomorphism \(P\longrightarrow K\). If \(K\) is already projective, we take \(P=K\).
\end{definition}
Now let \(S, T\in Sm/k\), \(j\geq 0\) and \(Y\) be a scheme with morphisms \(\xymatrix{S&Y\ar[l]_f\ar[r]^g&T}\) where \(g\) is smooth. Consider in this section the adjunctions
\[\begin{array}{cccc}\phi^*=\{-j\}\circ g_{\#}\circ f^*:&Sp(S)&\rightleftharpoons&Sp(T):\phi_*=f_*\circ g^*\circ\{j\}\end{array}\]
\[\begin{array}{cccc}\varphi^*=\{-j\}\circ g_{\#}\circ f^*:&Sp'(S)&\rightleftharpoons&Sp'(T):\varphi_*=f_*\circ g^*\circ\{j\}\end{array}\]
and the functor
\[\begin{array}{cccc}\psi:&Sm_S&\longrightarrow&Sm_T\\&X&\longmapsto&X\times_SY\end{array}.\]
They are determined by the quadruple \((Y, S, T, j)\).
\begin{proposition}
For any \(F\in Sp'(S)\),
\[\widetilde{a}((L_i\varphi^*)\widetilde{a}(F))\cong\widetilde{a}((L_i\varphi^*)F)\]
as spectra in \(Sp(S)\) for any \(i\geq 0\), where \(L_i\varphi^*\) means the \(i^{th}\) left derived functor of \(\varphi^*\).
\end{proposition}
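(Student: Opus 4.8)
The plan is to transcribe the proof of the analogous sheaf-level proposition (the unlabeled one preceding Proposition \ref{adapted}) into the spectrum setting, replacing the representables $\widetilde{c}_S(X)$ by the generators $(\Sigma^{\infty}\widetilde{c}_S(X))\{-i\}$ and replacing Proposition \ref{Cech} by its spectrum analogue Proposition \ref{spCech}. As there, the whole statement reduces to a single claim: for every $F\in Sp'(S)$ with $\widetilde{a}(F)=0$ one has $\widetilde{a}(L_i\varphi^*F)=0$ for all $i\geq 0$. Granting this, for arbitrary $F$ I would factor the natural map $\theta\colon F\longrightarrow\widetilde{a}(F)$ through $Im(\theta)$; since $\widetilde{a}$ is exact and defined termwise, $\widetilde{a}(ker\,\theta)=\widetilde{a}(coker\,\theta)=0$, so the claim together with the long exact sequences of the derived functors $L_i\varphi^*$ yields $\widetilde{a}(L_i\varphi^*\widetilde{a}(F))\cong\widetilde{a}(L_i\varphi^*Im(\theta))\cong\widetilde{a}(L_i\varphi^*F)$. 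This reduction is purely formal and needs only exactness of $\widetilde{a}$.

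For the claim I would induct on $i$. The base case $i=0$ is immediate from the identity $\widetilde{a}\circ\varphi^*\cong\varphi^*\circ\widetilde{a}$ noted at the end of Section \ref{symmetric spectra} (valid for each building block $f^*$, $g_{\#}$ and $\{-j\}$, hence for their composite $\varphi^*$): it gives $\widetilde{a}(\varphi^*F)\cong\varphi^*(\widetilde{a}F)=0$. For the inductive step, using that the $(\Sigma^{\infty}\widetilde{c}_S(X))\{-i\}$ generate $Sp'(S)$ I would build, exactly as before, a surjection onto $F$ with source a direct sum of such generators; since $\widetilde{a}(F)=0$, each defining section dies on a Nisnevich cover, so the surjection refines to one coming from the degree-zero homologies of the spectrum-level Čech complexes $(\Sigma^{\infty}\breve{C}(U_a/X))\{-i\}$, with kernel $K$ still satisfying $\widetilde{a}(K)=0$. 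The hypercohomology spectral sequence $(L_p\varphi^*)H_q(\breve{C})\Rightarrow(\mathbb{L}_{p+q}\varphi^*)\breve{C}$, together with the induction hypothesis applied to the sheafified-to-zero higher homologies, reduces $\widetilde{a}(L_n\varphi^*H_0)$ to $\widetilde{a}(\mathbb{L}_n\varphi^*(\Sigma^{\infty}\breve{C}(U/X))\{-i\})$, and the long exact sequence attached to $K$ then closes the induction.

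The step I expect to be the real work is identifying $\varphi^*$ on the Čech spectra, i.e. establishing the spectrum analogue of the equality $\varphi^*\breve{C}(U/X)=\breve{C}(\psi U/\psi X)$. Here $\varphi^*=\{-j\}\circ g_{\#}\circ f^*$ is a genuine composite of three functors rather than the single induced functor of the sheaf case, so I must verify that each factor carries suspension spectra of representables to suspension spectra of representables and is compatible with the twist $\{-j\}$. Concretely this uses $f^*\widetilde{\mathbb{Z}}_T(Y)\cong\widetilde{\mathbb{Z}}_S(Y\times_TS)$ (Proposition \ref{sheaf formulas}, (1)), $g_{\#}\widetilde{\mathbb{Z}}_S(X)\cong\widetilde{\mathbb{Z}}_T(X)$ (Proposition \ref{operations}, (1)), the monoidality of $\Sigma^{\infty}$, and the identity $(\Sigma^{\infty}F)\{i\}\cong\Sigma^{\infty}(\widetilde{\mathbb{Z}}_{tr}(\mathbb{G}_m^{\wedge 1})^{\otimes i}\otimes_SF)$ from Section \ref{symmetric spectra}, to rewrite $\varphi^*(\Sigma^{\infty}\breve{C}(U/X))\{-i\}$ as a suspension Čech complex of the form $(\Sigma^{\infty}\breve{C}(\psi U/\psi X))\{-i'\}$. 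Once this identification is in hand, Proposition \ref{spCech} supplies the vanishing of its sheafified homology, hence of the hypercohomology term, and the argument concludes exactly as in the sheaf case; the generator property, exactness of $\widetilde{a}$, and the spectral-sequence bookkeeping all transfer verbatim.
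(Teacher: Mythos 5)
Your proposal follows the paper's own proof essentially verbatim: the same reduction to the vanishing claim for $F$ with $\widetilde{a}(F)=0$, the same induction using surjections from the generators $(\Sigma^{\infty}\widetilde{c}_S(X))\{-t\}$ refined through the spectrum-level \v{C}ech complexes, the same hypercohomology spectral sequence, and the same key identification $\varphi^*((\Sigma^{\infty}\breve{C}(U/X))\{-t\})=(\Sigma^{\infty}\breve{C}(\psi U/\psi X))\{-t-j\}$. The approach is correct and matches the paper's argument.
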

\begin{proof}
We show at first that for any \(F\in Sp'(S)\) with \(\widetilde{a}(F)=0\)
\[\widetilde{a}(L_i\varphi^*(F))=0\]
for any \(i\geq 0\). Then for any \(F\in Sp'(S)\), denote by \(\theta\) the natural map \(F\longrightarrow\widetilde{a}(F)\). We have
\[\widetilde{a}(coker(\theta))=\widetilde{a}(ker(\theta))=0.\]
Hence for any \(i\geq 0\), we have
\[\widetilde{a}(L_i\varphi^*\widetilde{a}(F))\cong\widetilde{a}(L_i\varphi^*Im(\theta))\cong\widetilde{a}(L_i\varphi^*F)\]
by using long exact sequences. Hence the statement follows.

Now we prove the claim. We do induction on \(i\). The claim is true for \(i=0\) and suppose it's true for \(i<n\).

For any \(F\in Sp'(S)\), we have a surjection
\[\oplus_{a\in F_t(X), t\geq 0}(\Sigma^{\infty}\widetilde{c}_S(X))\{-t\}\longrightarrow F\]
defined by each section of \(F_t\) on each \(X\in Sm/S\). Since \(\widetilde{a}(F)=0\), for any \(a\in F_t(X), X\in Sm/S\), there is a finite Nisnevich covering \(U_a\longrightarrow X\) of \(X\) such that \(a|_{U_a}=0\). So the composition
\[\oplus_{a\in F_t(X), t\geq 0}(\Sigma^{\infty}\widetilde{c}_S(U_a))\{-t\}\longrightarrow\oplus_{a\in F(X), t\geq 0}(\Sigma^{\infty}\widetilde{c}_S(X))\{-t\}\longrightarrow F\]
is zero. Then we have got a surjection
\[\oplus_{a\in F(X), t\geq 0}H_0((\Sigma^{\infty}\breve{C}(U_a/X))\{-t\})\longrightarrow F\]
with kernel \(K\). Proposition \ref{Cech} implies that
\[\widetilde{a}(H_p((\Sigma^{\infty}\breve{C}(U/X))\{-t\}))=0\]
for any Nisnevich covering \(U\longrightarrow X\), \(t\geq 0\) and \(p\in\mathbb{Z}\). So \(\widetilde{a}(K)=0\) also. We have a hypercohomology spectral sequence
\[(L_p\varphi^*)H_q((\Sigma^{\infty}\breve{C}(U/X))\{-t\})\Longrightarrow(\mathbb{L}_{p+q}\varphi^*)((\Sigma^{\infty}\breve{C}(U/X))\{-t\}).\]
Hence
\[\widetilde{a}((\mathbb{L}_n\varphi^*)((\Sigma^{\infty}\breve{C}(U/X))\{-t\}))\cong\widetilde{a}((L_n\varphi^*)H_0((\Sigma^{\infty}\breve{C}(U/X))\{-t\}))\]
by induction hypothesis. But
\[\widetilde{a}((\mathbb{L}_n\varphi^*)(\Sigma^{\infty}\breve{C}(U/X))\{-t\})\cong\widetilde{a}(H_n(\varphi^*((\Sigma^{\infty}\breve{C}(U/X))\{-t\})))\]
by definition of hypercohomology and the latter one vanishes since we have
\[\varphi^*((\Sigma^{\infty}\breve{C}(U/X))\{-t\})=(\Sigma^{\infty}\breve{C}(\psi U/\psi X))\{-t-j\}.\]
So
\[\widetilde{a}((L_n\varphi^*)H_0((\Sigma^{\infty}\breve{C}(U/X))\{-t\}))=0.\]
So
\[\widetilde{a}(L_n\varphi^*F)\cong\widetilde{a}(L_{n-1}\varphi^*K)=0\]
by long exact sequence and induction hypothesis.
\end{proof}
\begin{proposition}\label{spadapted}
Let functor \(\phi^*\) takes acyclic projective complexes to acyclic projective complexes.
\end{proposition}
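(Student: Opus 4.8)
The plan is to transcribe, essentially verbatim, the argument of Proposition \ref{adapted} into the setting of symmetric spectra, using the spectral vanishing statement just established in place of its effective counterpart. First I would record the two structural facts on which everything rests: that $\varphi^*$ carries free generators to free generators, namely $\varphi^*((\Sigma^{\infty}\widetilde{c}_S(X))\{-t\})\cong(\Sigma^{\infty}\widetilde{c}_S(\psi X))\{-t-j\}$ (which follows from $\varphi^*=\{-j\}\circ g_{\#}\circ f^*$ together with the identities for $f^*$, $g_{\#}$ and $\{-i\}$ on representable spectra recorded in Section \ref{symmetric spectra}), and that $\widetilde{a}\circ\varphi^*\cong\phi^*\circ\widetilde{a}$, which is the termwise natural isomorphism $\widetilde{a}\circ F\cong F\circ\widetilde{a}$ applied to each constituent of $\varphi^*$. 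These are the spectral replacements for the corresponding inputs used in the effective case.

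Next I would treat a single projective spectrum. By definition a projective $F\in Sp(S)$ is $\widetilde{a}(G)$ for a projective $G\in Sp'(S)$, i.e.\ a direct summand of a free spectrum. Since $\varphi^*$ preserves free spectra by the first fact above, and a projective object is a retract of a free one, the higher left derived functors $L_i\varphi^*$ vanish on $G$ for $i>0$; combining this with the preceding proposition gives $\widetilde{a}((L_i\phi^*)F)\cong\widetilde{a}((L_i\varphi^*)G)=0$ for all $i>0$. Thus every projective spectrum is $\phi^*$-acyclic, and moreover $\phi^* F$ is again projective because $\phi^*$ sends free generators to free generators after sheafification.

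Then I would run the usual homological bootstrap, exactly as in Proposition \ref{adapted}. Given a short exact sequence $0\longrightarrow K\longrightarrow F\longrightarrow P\longrightarrow 0$ of projective spectra, the long exact sequence of the functors $L_i\phi^*$, together with the vanishing $\widetilde{a}((L_1\phi^*)P)=0$ just proved, shows that the sequence stays exact after applying $\phi^*$. Chaining such short exact sequences along a bounded above acyclic projective complex then shows that $\phi^*$ preserves acyclicity, and since the terms remain projective the output is an acyclic projective complex, which is the assertion.

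The main obstacle I anticipate is not the homological algebra, which is formal, but pinning down the two structural inputs of the first paragraph with the correct bookkeeping of the shift functors $\{-i\}$ and $\{i\}$ and of the module structures over $Sym(\mathbbm{1}_S\{1\})$. In particular one must check that the identification $\varphi^*((\Sigma^{\infty}\widetilde{c}_S(X))\{-t\})\cong(\Sigma^{\infty}\widetilde{c}_S(\psi X))\{-t-j\}$ is compatible with $\widetilde{a}$, so that projectivity and freeness are genuinely preserved at the level of sheaf spectra rather than merely presheaf spectra. Once this compatibility is in hand, the proof is identical in form to the effective case.
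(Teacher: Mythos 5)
Your proof is correct and follows essentially the same route as the paper, which simply transfers the argument of Proposition \ref{adapted} to the spectral setting: reduce to a single projective spectrum via $F=\widetilde{a}(G)$, invoke the preceding vanishing proposition to get $\widetilde{a}((L_i\phi^*)F)=0$ for $i>0$, and bootstrap with long exact sequences. The extra care you flag about the shift functors and module structures is the right thing to watch, but it is already covered by the identities recorded in Section \ref{symmetric spectra}.
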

\begin{proof}
The same as Proposition \ref{adapted}.
\end{proof}
\begin{proposition}\label{spderived}
We have an exact functor
\[L\phi^*:D^-_{Sp}(S)\longrightarrow D^-_{Sp}(T)\]
which maps any \(K\in D^-_{Sp}(S)\) to \(\phi^*P\), where \(P\) is a projective resolution \(K\).
\end{proposition}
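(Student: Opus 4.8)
The plan is to prove this exactly as its effective-motives counterpart Proposition \ref{derived} was proved, namely by invoking the formalism of adapted classes of objects from \cite[III.6.3, III.6.6]{GM}. The whole argument is formal once the single substantive input, Proposition \ref{spadapted}, is in hand, so I would keep the proof parallel to the one-line proof of Proposition \ref{derived} and merely record the two hypotheses that the adapted-class machinery requires.

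First I would recall that, by Proposition \ref{spadapted}, the functor \(\phi^*\) sends acyclic bounded above projective complexes of spectra to acyclic projective complexes. This is precisely the stability condition demanded of an adapted class in \cite[III.6.3]{GM}. The second condition needed is that every object of \(D^-_{Sp}(S)\) admits a projective resolution in the sense defined just above; for this I would point to the generating system of projectives exhibited in Section \ref{symmetric spectra}, where each spectrum receives a surjection from a free spectrum of the form \(\oplus_{a\in F_t(X),\,t\geq 0}(\Sigma^{\infty}\widetilde{c}_S(X))\{-t\}\). Since there are enough projectives, the standard construction produces, for any bounded above complex of spectra \(K\), a bounded above projective complex \(P\) together with a quasi-isomorphism \(P\longrightarrow K\).

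With both conditions verified, I would conclude that the class of bounded above projective complexes of spectra is adapted to the additive functor \(\phi^*\), and then apply \cite[III.6.6]{GM} directly to obtain the left derived functor \(L\phi^*:D^-_{Sp}(S)\longrightarrow D^-_{Sp}(T)\), which by construction is exact (triangulated) and is computed on any \(K\) by choosing a projective resolution \(P\longrightarrow K\) and setting \(L\phi^* K=\phi^* P\). Independence of the choice of resolution, and functoriality, are part of the general output of \cite[III.6.6]{GM}.

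I do not expect any genuine obstacle here: all the analytic content has been absorbed into Proposition \ref{spadapted} (whose proof is itself declared to run as in Proposition \ref{adapted}), and the only thing to be careful about is confirming that the spectrum category really has enough projectives so that bounded above projective resolutions exist. The mild point worth flagging is that \(\phi^*=\{-j\}\circ g_{\#}\circ f^*\) is a composite, so one should check that it is additive and that Proposition \ref{spadapted} applies to this composite rather than to a single elementary functor; this is immediate since each factor preserves projectives and the composite of adapted situations is again adapted.
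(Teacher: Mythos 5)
Your proposal is correct and follows essentially the same route as the paper: the paper's proof of Proposition \ref{spderived} is simply ``the same as Proposition \ref{derived}'', i.e.\ it invokes Proposition \ref{spadapted} to see that bounded above projective complexes form an adapted class for \(\phi^*\) and then applies \cite[III.6.6]{GM}. Your additional remarks on the existence of enough projectives and on \(\phi^*\) being a composite are sensible bookkeeping but do not change the argument.
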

\begin{proof}
The same as Proposition \ref{derived}.
\end{proof}

We will just write \(L\phi^*\) above by \(\phi^*\) for convenience.

Now we apply the general results above to \(\otimes_S\), \(f_{\#}\) and \(f^*\).
\begin{proposition}
\begin{enumerate}
\item There is a tensor product
\[\begin{array}{cccccc}\otimes_S:&D^-_{Sp}(S)&\times&D^-_{Sp}(S)&\longrightarrow&D^-_{Sp}(S)\\&(K&,&L)&\longmapsto&P\otimes_SQ\end{array},\]
where \(P, Q\) are projective resolutions of \(K, L\), respectively and the last tensor means taking the total complex of the bicomplex \(\{P_i\otimes_SQ_j\}\). And for any \(K\in D^-_{Sp}(S)\), the functor \(K\otimes_S-\) is exact.
\item Suppose \(f:S\longrightarrow T\) is a smooth morphism in \(Sm/k\). There is an exact functor
\[\begin{array}{cccc}f_{\#}:&D^-_{Sp}(S)&\longrightarrow&D^-_{Sp}(T)\\&K&\longmapsto&f_{\#}P\end{array},\]
where \(P\) is a projective resolution of \(K\).
\item Suppose \(f:S\longrightarrow T\) is a morphism in \(Sm/k\). There is an exact functor
\[\begin{array}{cccc}f^*:&D^-_{Sp}(T)&\longrightarrow&D^-_{Sp}(S)\\&K&\longmapsto&f^*P\end{array},\]
where \(P\) is a projective resolution of \(K\).
\item Suppose \(i\geq 0\), there is an exact functor
\[\begin{array}{cccc}-\{-i\}:&D^-_{Sp}(S)&\longrightarrow&D^-_{Sp}(S)\\&K&\longmapsto&P\{-i\}\end{array},\]
where \(P\) is a projective resolution of \(K\).
\end{enumerate}
\end{proposition}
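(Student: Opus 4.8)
The plan is to prove this proposition as the spectrum-level counterpart of Proposition \ref{derived functors}, following that argument line by line. The single engine is the derived pullback $L\phi^*$ of Proposition \ref{spderived}, which is exact on $D^-_{Sp}$ precisely because, by Proposition \ref{spadapted}, the class of bounded above projective complexes is adapted to $\phi^*$ in the sense of \cite[III.6.3]{GM}. All four operations will be recovered by specializing the quadruple $(Y,S,T,j)$ that determines $\phi^*=\{-j\}\circ g_\#\circ f^*$, exactly as the four operations on $D^-(S)$ were recovered from $\varphi^*$ in Proposition \ref{derived functors}.

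For (2), (3) and (4) no new work is needed beyond this specialization. For the pushforward I would take $Y=S$ with structure maps $S\xleftarrow{\mathrm{id}}S\xrightarrow{f}T$ and $j=0$, so that $\phi^*=f_\#$; for the pullback I would take $Y=S$ with $T\xleftarrow{f}S\xrightarrow{\mathrm{id}}S$ and $j=0$, so that $\phi^*=f^*$; and for $-\{-i\}$ I would take $Y=S$, both maps equal to $\mathrm{id}_S$ and $j=i$, so that $\phi^*=\{-i\}$. This last case is the only genuinely new one, and it is immediate once one observes that with these choices $g_\#$ and $f^*$ are identities and $\phi^*$ reduces to the shift $-\{-i\}$ recorded, together with its adjoint $-\{i\}$, in Section \ref{symmetric spectra}. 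In each of the three cases Proposition \ref{spderived} produces the claimed exact functor on $D^-_{Sp}$, sending $K$ to $\phi^*P$ for any projective resolution $P\to K$.

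The tensor product in (1) is where the argument has actual content, and I would reproduce the bicomplex argument of Proposition \ref{derived functors}(1) verbatim. Fix $Y\in Sm/S$ and take the quadruple $(Y,S,S,0)$ with both maps equal to the structure morphism $p:Y\to S$; then for $A\in Sp(S)$ one has $\phi^*A=p_\#p^*A\cong(\Sigma^\infty\widetilde{\mathbb{Z}}_S(Y))\otimes_SA$ by the spectrum-level projection formula and the identification $p_\#\mathbbm{1}_Y\cong\Sigma^\infty\widetilde{\mathbb{Z}}_S(Y)$ (the spectrum forms of Proposition \ref{operations} established in Section \ref{symmetric spectra}). Applying Proposition \ref{spadapted} to this $\phi^*$ shows that $F\otimes_SP$ is acyclic whenever $P$ is an acyclic projective complex and $F$ a projective spectrum; running the spectral sequence of the bicomplex $\{P_i\otimes_SK_j\}$ then upgrades this to the statement that $P\otimes_SK$ is acyclic for every projective complex $K$. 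Finally, from $\mathrm{Cone}(a\otimes_SR)\cong\mathrm{Cone}(a)\otimes_SR$ one deduces that $a\otimes_SR$ is a quasi-isomorphism whenever $a$ is, which makes $\otimes_S$ well defined on $D^-_{Sp}$ and exact in each variable.

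The only point that requires care, and hence the main obstacle, is (1): one must verify that the symmetric-spectrum tensor product interacts with total complexes and mapping cones exactly as the sheaf tensor product does, so that the two-variable derived functor is well defined and the total-complex spectral sequence behaves as expected. Since the heavy input, Proposition \ref{spadapted}, is already in hand, this reduces to checking that $\otimes_S$ on $Sp(S)$ is associative and commutes with the formation of $\mathrm{Cone}$ and of total complexes, which are formal consequences of the closed symmetric monoidal structure on $Sp(S)$ constructed in Section \ref{symmetric spectra}. Everything else is bookkeeping with the quadruple $(Y,S,T,j)$.
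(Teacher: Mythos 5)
Your proposal is correct and follows essentially the same route as the paper: the paper's proof is precisely to take $j=0$ in the quadruple defining $\phi$ and repeat the argument of Proposition \ref{derived functors} for (1)--(3) (including the identification $\phi^*A\cong\Sigma^{\infty}\widetilde{\mathbb{Z}}_S(Y)\otimes_SA$ and the bicomplex spectral-sequence argument for the tensor product), and to take the quadruple $(S,S,S,i)$ together with Proposition \ref{spderived} for (4). Your spelled-out choices of quadruples and the reduction of everything to Propositions \ref{spadapted} and \ref{spderived} match the intended proof.
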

\begin{proof}
In (1), (2) and (3), take \(j=0\) in the definition of \(\phi\) and proceed as Proposition \ref{derived functors}. For (4), take the quadruple \((S,S,S,i)\) and use Proposition \ref{spderived}.
\end{proof}
\begin{proposition}\label{spderived adjunction}
\begin{enumerate}
\item Suppose \(f:S\longrightarrow T\) is a smooth morphism in \(Sm/k\), we have an adjoint pair
\[f_{\#}:D^-_{Sp}(S)\rightleftharpoons D^-_{Sp}(T):f^*.\]
\item We have an adjoint pair
\[-\{-i\}:D^-_{Sp}(S)\rightleftharpoons D^-_{Sp}(S):-\{i\}.\]
\end{enumerate}
\end{proposition}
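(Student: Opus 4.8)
The plan is to reproduce, at the level of symmetric spectra, the argument already carried out for sheaves in Proposition \ref{derived adjunction}; the point is that all the categorical and homological inputs needed to descend to $D^-_{Sp}$ have been assembled in Section \ref{symmetric spectra} and in Proposition \ref{spadapted}. First I would record the adjunctions at the level of homotopy categories. The termwise adjunctions of Section \ref{symmetric spectra}, namely $f_{\#}\dashv f^*$ on $Sp(S)\rightleftharpoons Sp(T)$ and $-\{-i\}\dashv-\{i\}$ on $Sp(S)$, are additive and commute with translation and with the mapping-cone construction, so they induce adjunctions
\[f_{\#}:K^-_{Sp}(S)\rightleftharpoons K^-_{Sp}(T):f^*,\qquad -\{-i\}:K^-_{Sp}(S)\rightleftharpoons K^-_{Sp}(S):-\{i\}\]
on bounded-above homotopy categories.

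Next I would check, in each pair, that the right adjoint is exact (so its derived functor is computed termwise) while the left adjoint is adapted to the projective class. For (1), the functor $f^*:Sp(T)\to Sp(S)$ carries both a left adjoint $f_{\#}$ and a right adjoint $f_*$ by Section \ref{symmetric spectra}, hence is exact; thus $Lf^*\cong f^*$ and the induced $f^*$ on $D^-_{Sp}$ is the termwise one. The functor $f_{\#}$ on $D^-_{Sp}(S)$ is the one constructed via projective resolutions (Proposition \ref{spderived}), and Proposition \ref{spadapted}, applied to $\phi^*$ for the quadruple $(S,S,T,0)$ where $g$ is the given smooth $f$, shows that $f_{\#}$ sends acyclic projective complexes to acyclic projective complexes. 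For (2), the shift $-\{i\}$ is, on underlying symmetric sequences, given by reindexing, and the abelian structure of $Sp(S)$ is termwise; since the forgetful functor to $\widetilde{Sh}(S)^{\mathfrak{S}}$ is exact and conservative, $-\{i\}$ is exact, so it descends termwise to $D^-_{Sp}(S)$. The left adjoint $-\{-i\}$ on $D^-_{Sp}(S)$ is again computed via projective resolutions (Proposition \ref{spderived}), and Proposition \ref{spadapted}, applied to $\phi^*$ for the quadruple $(S,S,S,i)$ (which recovers $-\{-i\}$), shows it is adapted to the projective class.

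With these inputs in place the construction of the adjunction isomorphism is formally identical to that in Proposition \ref{derived adjunction}. For (1), fix $K\in D^-_{Sp}(S)$ with projective resolution $P\to K$ and $L\in D^-_{Sp}(T)$; a morphism $f_{\#}K\to L$ in $D^-_{Sp}(T)$ is represented by a right roof through some $R$, and transposing its left leg under the termwise adjunction and using exactness of $f^*$ produces a well-defined map
\[\theta:Hom_{D^-_{Sp}(T)}(f_{\#}K,L)\longrightarrow Hom_{D^-_{Sp}(S)}(K,f^*L).\]
Conversely, representing a morphism $K\to f^*L$ as a left roof through a projective object and using the adaptedness of $f_{\#}$ (Proposition \ref{spadapted}) produces a map $\xi$ in the other direction; one checks $\theta$ and $\xi$ are mutually inverse exactly as before. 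Replacing $f_{\#}$ by $-\{-i\}$ and $f^*$ by $-\{i\}$, the same roof computation gives the identification $Hom_{D^-_{Sp}(S)}(K\{-i\},L)\cong Hom_{D^-_{Sp}(S)}(K,L\{i\})$, which is (2).

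The only verifications genuinely new relative to the sheaf case are the exactness of $-\{i\}$ and the adaptedness of $-\{-i\}$; both reduce to Proposition \ref{spadapted} together with the termwise (reindexing) description of the shift functors, so I expect the most tedious part of the write-up to be the bookkeeping of the roofs rather than any conceptual difficulty.
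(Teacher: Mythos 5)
Your proposal is correct and follows essentially the same route as the paper, whose proof of this proposition is literally ``the same as Proposition \ref{derived adjunction} since $-\{i\}$ is an exact functor''; you have simply written out the details of that reduction, correctly identifying the only new inputs (exactness of $f^*$ and $-\{i\}$, and adaptedness of $f_{\#}$ and $-\{-i\}$ via Proposition \ref{spadapted} applied to the quadruples $(S,S,T,0)$ and $(S,S,S,i)$) before running the same roof construction of $\theta$ and $\xi$.
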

\begin{proof}
The same as Proposition \ref{derived adjunction} since \(-\{i\}\) is an exact functor.
\end{proof}
Now we are going to compare \(D^-_{Sp}(S)\) with \(D^-(S)\) defined in Section \ref{effective}.
\begin{proposition}
The functor \(\Sigma^{\infty}:\widetilde{Sh}(S)\longrightarrow Sp(S)\) takes acyclic projective complexes of sheaves to acyclic projective complexes of spectra.
\end{proposition}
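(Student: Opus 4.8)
The plan is to verify the two halves of the conclusion separately: that $\Sigma^{\infty}$ sends projective complexes to projective complexes, and that it preserves acyclicity. Since $\Sigma^{\infty}$ is applied termwise to a complex, and since it is additive, commutes with arbitrary direct sums (being a left adjoint), and commutes with the sheafification $\widetilde{a}$ and with the shifts $-\{-i\}$ (as recorded just before this proposition), the projectivity half is immediate: a free sheaf is $\widetilde{a}$ of a direct sum of copies of $\widetilde{c}_S(X)$, so $\Sigma^{\infty}$ of it is a direct sum of copies of $\Sigma^{\infty}\widetilde{\mathbb{Z}}_S(X)=(\Sigma^{\infty}\widetilde{\mathbb{Z}}_S(X))\{0\}$, which is by definition a free spectrum (the generator with $i=0$). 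Applying additivity to direct summands then sends projective sheaves to projective spectra, and hence projective complexes to projective complexes.

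For the acyclicity half, the first step is to unwind $\Sigma^{\infty}=(Sym(R)\otimes^{\mathfrak{S}}-)\circ i_0$ in each symmetric degree, where $R=coker(\widetilde{\mathbb{Z}}_S(S)\to\widetilde{\mathbb{Z}}_S(\mathbb{G}_m))$. Because $i_0(F)$ is concentrated in degree $0$, the formula for $\otimes^{\mathfrak{S}}$ collapses to a single summand and yields a canonical identification $(\Sigma^{\infty}F)_n\cong R^{\otimes n}\otimes_SF$ of sheaves with $E$-transfers (carrying a permutation $S_n$-action that is irrelevant to the homological computation). The key observation is that $R^{\otimes n}$ is a projective sheaf: the unit point $1\colon S\to\mathbb{G}_m$ splits $\widetilde{\mathbb{Z}}_S(S)\to\widetilde{\mathbb{Z}}_S(\mathbb{G}_m)$ via the projection $\mathbb{G}_m\to S$, so $R$ is a direct summand of $\widetilde{\mathbb{Z}}_S(\mathbb{G}_m)$, whence $R^{\otimes n}$ is a direct summand of $\widetilde{\mathbb{Z}}_S(\mathbb{G}_m)^{\otimes n}\cong\widetilde{\mathbb{Z}}_S(\mathbb{G}_m^n)$ by Proposition \ref{tensor}, and therefore projective.

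With this in hand, let $P$ be an acyclic projective complex of sheaves with $E$-transfers. In each symmetric degree $n$ the complex $(\Sigma^{\infty}P)_n$ is exactly $R^{\otimes n}\otimes_SP$, the tensor of the fixed projective sheaf $R^{\otimes n}$ with the acyclic projective complex $P$. This complex is acyclic by the fact established inside the proof of Proposition \ref{derived functors}, part (1) (the tensor of a projective sheaf with an acyclic projective complex is acyclic, itself a consequence of Proposition \ref{adapted}); since $R^{\otimes n}$ and the terms of $P$ are projective, the underived $\otimes_S$ here computes the derived one, so no correction terms appear. Finally, because the abelian structure on $Sp(S)$ — and hence the homology of a complex of spectra — is computed termwise on the underlying symmetric sequences, the termwise acyclicity of $(\Sigma^{\infty}P)_n$ for all $n$ shows that $\Sigma^{\infty}P$ is acyclic in $Sp(S)$.

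The main obstacle is the degreewise identification $(\Sigma^{\infty}F)_n\cong R^{\otimes n}\otimes_SF$ together with the checks that make the reduction legitimate: that the differentials of $\Sigma^{\infty}P$ in symmetric degree $n$ are precisely $R^{\otimes n}\otimes_Sd_P$, so that $(\Sigma^{\infty}P)_n$ really is the complex $R^{\otimes n}\otimes_SP$, and that the $S_n$-equivariant structure plays no role in the statement of exactness, which concerns only the underlying sheaves. Everything else then reduces cleanly to the projectivity of $R^{\otimes n}$ and the already-proved tensor-exactness on projectives. An alternative, entirely parallel route would mimic the derived-functor argument of the preceding proposition and of Proposition \ref{adapted}, showing $\widetilde{a}(L_i\Sigma^{\infty}F)=0$ for projective $F$ and $i>0$ and concluding by the short exact sequence argument; but the direct degreewise computation above is shorter.
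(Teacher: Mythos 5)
Your proof is correct and follows essentially the same route as the paper's: identify $(\Sigma^{\infty}P)_n$ with $R^{\otimes n}\otimes_S P$ where $R=\mathrm{coker}(\widetilde{\mathbb{Z}}_S(S)\to\widetilde{\mathbb{Z}}_S(\mathbb{G}_m))$, observe that $R^{\otimes n}$ is projective, and invoke Proposition \ref{adapted} for acyclicity. Your extra details (the splitting of $R$ off $\widetilde{\mathbb{Z}}_S(\mathbb{G}_m)$, the identification of $\Sigma^{\infty}$ of a free sheaf with a free spectrum on the $i=0$ generators) only make explicit what the paper leaves implicit.
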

\begin{proof}
Let \(P\) be a projective sheaf. Then
\[(\Sigma^{\infty}P)_n=\mathbbm{1}_S\{1\}^{\otimes n}\otimes_S P_n\]
by definition. And a tensor product between projective sheaves is again projective. So \(\Sigma^{\infty}P\) is projective.

Let \(Q\) be an acyclic projective complex of sheaves. Then \(\Sigma^{\infty}Q\) consists of complexes like \(\mathbbm{1}_S\{1\}^{\otimes n}\otimes_SQ\). And they are all acyclic by Proposition \ref{adapted}.
\end{proof}
\begin{proposition}
There is an exact functor
\[L\Sigma^{\infty}:D^-(S)\longrightarrow D^-_{Sp}(S)\]
which maps \(K\) to \(\Sigma^{\infty}P\), where \(P\) is a projective resolution of \(K\).
\end{proposition}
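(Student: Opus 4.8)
The plan is to follow verbatim the strategy already used for Proposition \ref{derived} and Proposition \ref{spderived}: exhibit the class of bounded above projective complexes as a class \emph{adapted} to \(\Sigma^{\infty}\) in the sense of \cite[III.6.3]{GM}, and then invoke the general existence theorem \cite[III.6.6]{GM} for left derived functors. The essential input has in fact been front-loaded into the immediately preceding proposition, so the present statement is a formal consequence.

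First I would observe that the preceding proposition asserts precisely that \(\Sigma^{\infty}\) carries acyclic projective complexes of sheaves to acyclic projective complexes of spectra, and that it carries projective complexes to projective complexes, since \((\Sigma^{\infty}P)_n=\mathbbm{1}_S\{1\}^{\otimes n}\otimes_S P_n\) is projective whenever \(P\) is. Together with additivity of \(\Sigma^{\infty}\), this is exactly the adaptedness condition required in \cite[III.6.3]{GM}. Second I would record that every \(K\in D^-(S)\) admits a projective resolution \(P\longrightarrow K\) with \(P\) a bounded above projective complex: the free sheaves \(\widetilde{\mathbb{Z}}_S(X)\) form a system of projective generators of \(\widetilde{Sh}(S)\), so one builds \(P\) degreewise from the canonical surjections \(\oplus_{a}\widetilde{\mathbb{Z}}_S(X_a)\longrightarrow K_n\) and truncates, exactly as in the construction of projective resolutions preceding Proposition \ref{derived}. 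Hence the adapted class is large enough to resolve every object of \(D^-(S)\).

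Finally, applying \cite[III.6.6]{GM} to the functor \(\Sigma^{\infty}\) and the adapted class of projective complexes yields an exact (triangulated) functor \(L\Sigma^{\infty}:D^-(S)\longrightarrow D^-_{Sp}(S)\) computing \(L\Sigma^{\infty}(K)=\Sigma^{\infty}P\) for any projective resolution \(P\) of \(K\); independence of the choice of \(P\) is built into the cited theorem. I expect no genuine obstacle here, as the whole content resides in the preceding proposition that \(\Sigma^{\infty}\) preserves acyclic projective complexes; the only point demanding a moment's care is the existence of projective resolutions in \(D^-(S)\), which is guaranteed by the projective generating family of \(\widetilde{Sh}(S)\).
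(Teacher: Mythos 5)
Your proposal is correct and matches the paper's own argument: the paper proves this by the same mechanism as Proposition \ref{derived}, namely that the preceding proposition shows projective complexes form an adapted class for \(\Sigma^{\infty}\), after which \cite[III.6.6]{GM} gives the derived functor. The extra detail you supply on the existence of projective resolutions is consistent with the paper's setup and does not change the route.
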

\begin{proof}
The same as Proposition \ref{derived}.
\end{proof}
As usual, we will write \(L\Sigma^{\infty}\) as \(\Sigma^{\infty}\) for convenience.
\begin{proposition}\label{sigma adjunction}
There is an adjoint pair
\[\Sigma^{\infty}:D^-(S)\rightleftharpoons D^-_{Sp}(S):\Omega^{\infty}.\]
\end{proposition}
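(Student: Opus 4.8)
The plan is to follow the template of Proposition \ref{derived adjunction} almost verbatim, with the roles of the two functors now played by $\Sigma^{\infty}$ and $\Omega^{\infty}$. First I would record that the termwise adjunction $\Sigma^{\infty}:\widetilde{Sh}(S)\rightleftharpoons Sp(S):\Omega^{\infty}$ constructed in Section \ref{symmetric spectra}, being defined level by level, passes immediately to an adjunction $\Sigma^{\infty}:K^-(S)\rightleftharpoons K^-_{Sp}(S):\Omega^{\infty}$ between the homotopy categories of bounded above complexes of sheaves and of spectra. This is the exact analogue of the opening paragraph of Proposition \ref{derived adjunction}.

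The key structural observation is that $\Omega^{\infty}=ev_0\circ U$ is exact: the forgetful functor $U$ from $Sym(\mathbbm{1}_S\{1\})$-modules to symmetric sequences and the evaluation functor $ev_0$ are both defined termwise and preserve short exact sequences, hence preserve quasi-isomorphisms of complexes. Therefore $R\Omega^{\infty}\cong\Omega^{\infty}$, and $\Omega^{\infty}$ descends directly to an exact functor $D^-_{Sp}(S)\longrightarrow D^-(S)$ without recourse to any resolution. On the other side, $\Sigma^{\infty}$ is already realized as a left derived functor via projective resolutions by the preceding two propositions, which also tell us that $\Sigma^{\infty}$ carries acyclic projective complexes to acyclic projective complexes.

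With these two facts in hand, I would construct the natural isomorphism
\[Hom_{D^-_{Sp}(S)}(\Sigma^{\infty}K,L)\cong Hom_{D^-(S)}(K,\Omega^{\infty}L)\]
exactly as the mutually inverse maps $\theta$ and $\xi$ of Proposition \ref{derived adjunction}. Given a class in $Hom_{D^-_{Sp}(S)}(\Sigma^{\infty}K,L)$ written as a right roof through some $R$, I fix a projective resolution $p:P\longrightarrow K$ so that $\Sigma^{\infty}K=\Sigma^{\infty}P$, use the adjunction at the level of complexes to convert the arrow out of $\Sigma^{\infty}P$ into an arrow $P\longrightarrow\Omega^{\infty}R$, and invoke exactness of $\Omega^{\infty}$ to guarantee that $\Omega^{\infty}$ applied to the roof is again a roof; composing with $p^{-1}$ defines $\theta$. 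Conversely, writing a class in $Hom_{D^-(S)}(K,\Omega^{\infty}L)$ as a left roof through a projective $R$ and applying the counit of the adjunction produces $\xi$. One then checks that $\theta$ and $\xi$ are inverse to each other and natural, just as before.

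The main obstacle is the bookkeeping of the roof calculus across two functors of different derived type: $\Sigma^{\infty}$ is a genuine left derived functor computed on projective resolutions, whereas $\Omega^{\infty}$ is strict. The well-definedness of $\theta$ rests on the exactness of $\Omega^{\infty}$, so that the image of a roof is again a roof, and the well-definedness of $\xi$ rests on $\Sigma^{\infty}$ preserving acyclic projective complexes, so that the auxiliary projective $R$ appearing in the left roof behaves correctly. Both ingredients are furnished by the earlier results, so no genuinely new input beyond the argument of Proposition \ref{derived adjunction} is needed.
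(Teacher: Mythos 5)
Your proposal is correct and follows exactly the route the paper takes: the paper's proof is simply ``The same as Proposition \ref{derived adjunction} since $\Omega^{\infty}$ is an exact functor,'' and your elaboration --- termwise adjunction on homotopy categories, exactness of $\Omega^{\infty}$ replacing exactness of $f^*$, and the roof calculus with $\theta$ and $\xi$ supported by the fact that $\Sigma^{\infty}$ preserves acyclic projective complexes --- is precisely the intended unpacking of that one-line reference.
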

\begin{proof}
The same as Proposition \ref{derived adjunction} since \(\Omega^{\infty}\) is an exact functor.
\end{proof}
\begin{proposition}\label{ff}
The functor \(\Sigma^{\infty}:D^-(S)\longrightarrow D^-_{Sp}(S)\) is fully faithful.
\end{proposition}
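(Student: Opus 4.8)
The plan is to deduce full faithfulness from the adjunction $\Sigma^{\infty}\dashv\Omega^{\infty}$ of Proposition \ref{sigma adjunction}, using the general principle that a left adjoint is fully faithful exactly when the unit of the adjunction is a natural isomorphism. Concretely, for $K,L\in D^-(S)$ the adjunction supplies a natural bijection $Hom_{D^-_{Sp}(S)}(\Sigma^{\infty}K,\Sigma^{\infty}L)\cong Hom_{D^-(S)}(K,\Omega^{\infty}\Sigma^{\infty}L)$, and under this bijection the map induced by $\Sigma^{\infty}$ on $Hom_{D^-(S)}(K,L)$ corresponds to postcomposition with the unit $\eta_L\colon L\longrightarrow\Omega^{\infty}\Sigma^{\infty}L$. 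Thus I would reduce the statement to showing that $\eta$ is an isomorphism in $D^-(S)$.

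Next I would compute $\Omega^{\infty}\Sigma^{\infty}$ on projective resolutions. Since $\Omega^{\infty}=ev_0\circ U$ is exact (both the forgetful functor $U$ and the evaluation $ev_0$ are exact and applied termwise), it needs no derivation and may be evaluated term by term. Given $K\in D^-(S)$, choose a projective resolution $P\longrightarrow K$; by the definition of $\Sigma^{\infty}$ on $D^-(S)$ one has $\Sigma^{\infty}K=\Sigma^{\infty}P$, so $\Omega^{\infty}\Sigma^{\infty}P$ is the complex with $n$-th term $\Omega^{\infty}\Sigma^{\infty}P_n$. Using the description $\Sigma^{\infty}F=Sym(\mathbbm{1}_S\{1\})\otimes^{\mathfrak{S}}i_0(F)$ recalled in Section \ref{symmetric spectra} together with the degree-zero part $(A\otimes^{\mathfrak{S}}B)_0=A_0\otimes B_0$ of the tensor product of symmetric sequences, one gets $(\Sigma^{\infty}F)_0=\mathbbm{1}_S\otimes_SF\cong F$ naturally in $F$. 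Hence $\Omega^{\infty}\Sigma^{\infty}P\cong P$ termwise, and this identification is the unit, so $\eta_K\colon K\longrightarrow\Omega^{\infty}\Sigma^{\infty}K$ is an isomorphism in $D^-(S)$.

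Finally, feeding the isomorphism $\eta_L$ into the first paragraph, the composite $Hom_{D^-(S)}(K,L)\longrightarrow Hom_{D^-_{Sp}(S)}(\Sigma^{\infty}K,\Sigma^{\infty}L)$ is a bijection for all $K,L$, which is precisely full faithfulness. The one point requiring care, and the only step that is not purely formal, is to check that the unit computed at the level of derived categories agrees with the termwise unit of the underlying adjunction of abelian categories; this is exactly where I would use that $\Omega^{\infty}$ is exact (so that $R\Omega^{\infty}=\Omega^{\infty}$ and it commutes with the termwise computation) and that $\Sigma^{\infty}$ on $D^-(S)$ is the termwise application to a chosen projective resolution. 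Everything else is a direct consequence of the adjunction and the degree-zero computation above.
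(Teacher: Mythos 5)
Your proof is correct and follows essentially the same route as the paper: both reduce to the adjunction $\Sigma^{\infty}\dashv\Omega^{\infty}$ on $D^-(S)\rightleftharpoons D^-_{Sp}(S)$ applied to projective resolutions $P,Q$, and both rest on the degree-zero computation $\Omega^{\infty}\Sigma^{\infty}Q\cong Q$ (the paper states this observation directly, while you phrase it as the unit of the adjunction being an isomorphism). Your extra remark that the exactness of $\Omega^{\infty}$ lets the derived unit be computed termwise is a point the paper leaves implicit, but there is no substantive difference in method.
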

\begin{proof}
Suppose \(K, L\in D^-(S)\) with projective resolutions \(P, Q\) respectively. Then there is a commutative diagram
\[
	\xymatrix
	{
		Hom_{D^-(S)}(K,L)\ar[r]^-{\Sigma^{\infty}}\ar[d]_{\cong}	&Hom_{D^-_{Sp}(S)}(\Sigma^{\infty}K,\Sigma^{\infty}L)\ar[d]_{\cong}\\
		Hom_{D^-(S)}(P,Q)\ar[r]^-{\Sigma^{\infty}}				&Hom_{D^-_{Sp}(S)}(\Sigma^{\infty}P,\Sigma^{\infty}Q)\ar[d]_{\cong}\\
													&Hom_{D^-(S)}(P,\Omega^{\infty}\Sigma^{\infty}Q)
	}.
\]
And we observe that \(\Omega^{\infty}\Sigma^{\infty}Q=Q\) (The same for \(P\)).
\end{proof}
\begin{proposition}
\begin{enumerate}
\item We have a commutative diagram (up to a canonical isomorphism)
\[
	\xymatrix
	{
		D^-(S)\times D^-(S)\ar[r]^-{\otimes_S}\ar[d]_{\Sigma^{\infty}\times\Sigma^{\infty}}	&D^-(S)\ar[d]_{\Sigma^{\infty}}\\
		D^-_{Sp}(S)\times D^-_{Sp}(S)\ar[r]^-{\otimes_S}							&D^-_{Sp}(S)
	}.
\]
\item Suppose \(f:S\longrightarrow T\) is a morphism in \(Sm/k\). We have a commutative diagram (up to a canonical isomorphism)
\[
	\xymatrix
	{
		D^-(T)\ar[r]^{f^*}\ar[d]_{\Sigma^{\infty}}	&D^-(S)\ar[d]_{\Sigma^{\infty}}\\
		D^-_{Sp}(T)\ar[r]^{f^*}				&D^-_{Sp}(S)
	}
\]
\item Suppose \(f:S\longrightarrow T\) is a smooth morphism in \(Sm/k\). We have a commutative diagram (up to a canonical isomorphism)
\[
	\xymatrix
	{
		D^-(S)\ar[r]^{f_{\#}}\ar[d]_{\Sigma^{\infty}}	&D^-(T)\ar[d]_{\Sigma^{\infty}}\\
		D^-_{Sp}(S)\ar[r]^{f_{\#}}					&D^-_{Sp}(T)
	}
\]
\end{enumerate}
\end{proposition}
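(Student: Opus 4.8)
The plan is to exploit that every functor occurring in the three diagrams is a left-derived functor computed through projective resolutions, together with the two structural facts established just above: that $\Sigma^{\infty}$ carries projective complexes to projective complexes and acyclic projective complexes to acyclic ones, and that on the underlying abelian categories $\Sigma^{\infty}$ is monoidal and compatible with $f^{*}$ and $f_{\#}$. Concretely, fix $K\in D^{-}(S)$ with a projective resolution $P\longrightarrow K$. By the definition of the derived functor $L\Sigma^{\infty}$ we have $\Sigma^{\infty}K=\Sigma^{\infty}P$, and by the preceding proposition $\Sigma^{\infty}P$ is \emph{already} projective. Hence, when we evaluate the spectrum-level operations $\otimes_{S}$, $f^{*}$ or $f_{\#}$, all of which are defined (as in Proposition \ref{derived functors}) in terms of projective resolutions, on $\Sigma^{\infty}K$, no further resolution is needed: each is computed directly on $\Sigma^{\infty}P$. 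This reduces each of the three diagrams to a termwise comparison of complexes of spectra.

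First I would treat (2) and (3), which are the most direct. At the level of sheaves one has natural isomorphisms $\Sigma^{\infty}f^{*}\cong f^{*}\Sigma^{\infty}$ and $\Sigma^{\infty}f_{\#}\cong f_{\#}\Sigma^{\infty}$: indeed $\Sigma^{\infty}$ is built by tensoring with powers of $\mathbbm{1}\{1\}$, and Proposition \ref{sheaf formulas}, (1) and (4) yield $f^{*}\mathbbm{1}_{T}\{1\}\cong\mathbbm{1}_{S}\{1\}$ together with the monoidality of $f^{*}$, while Proposition \ref{operations}, (1) and (4) give the corresponding statement for $f_{\#}$ through the projection formula. Since these isomorphisms are natural and additive, applying them in each degree of $\Sigma^{\infty}P$ produces the desired isomorphism of complexes of spectra, whence commutativity. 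For (1) the same scheme applies using that $\Sigma^{\infty}$ is monoidal: in each bidegree one gets $\Sigma^{\infty}(P_{i}\otimes_{S}Q_{j})\cong\Sigma^{\infty}P_{i}\otimes_{S}\Sigma^{\infty}Q_{j}$, and since $\Sigma^{\infty}$ is additive it commutes with formation of the total complex of the bicomplex $\{P_{i}\otimes_{S}Q_{j}\}$, giving $\Sigma^{\infty}(K\otimes_{S}L)\cong\Sigma^{\infty}K\otimes_{S}\Sigma^{\infty}L$.

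The hard part will be checking that the sheaf-level isomorphisms above genuinely upgrade to isomorphisms of symmetric spectra, that is, that they respect the $Sym(\mathbbm{1}_{S}\{1\})$-module structures and the symmetric group actions through which $\otimes_{S}$, $f^{*}$ and $f_{\#}$ were defined on $Sp(S)$ in Section \ref{symmetric spectra}. Each of those module structures was constructed there precisely so that the functor in question is monoidal, or compatible with the suspension, so the verification amounts to unwinding those definitions and comparing the structure maps; there are no higher coherence obstructions, since everything is additive and computed termwise on projective objects. Once this compatibility is recorded, the three isomorphisms are canonical and natural, exactly as in Proposition \ref{-}, and in the terse spirit of that proposition the remaining work is a matter of \emph{direct computations}.
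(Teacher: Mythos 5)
Your argument is correct and is essentially a spelled-out version of the paper's proof, which consists of the single line ``This follows by direct computations.'' The two ingredients you isolate --- that $\Sigma^{\infty}$ sends projective resolutions to projective resolutions, so the spectrum-level derived functors need no further resolution, and that the underlying termwise isomorphisms ($\Sigma^{\infty}$ monoidal, $f^{*}\mathbbm{1}_{T}\{1\}\cong\mathbbm{1}_{S}\{1\}$, the projection formula for $f_{\#}$) are exactly the compatibilities recorded in the symmetric-spectra subsection --- are precisely the computations the paper is alluding to.
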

\begin{proof}
This follows by direct computations.
\end{proof}

In \cite[Theorem 1.7]{CD}, they defined a model structure \(\mathfrak{M}_{Sp}\) on the category of unbounded complexes of symmetric spectra over \(S\). This is a cofibrantly generated model structure where the cofibrations are those \(I\)-cofibrations (See \cite[Definition 2.1.7]{Hov}) where \(I\) consists of the morphisms \(S^{n+1}(\Sigma^{\infty}\widetilde{\mathbb{Z}}_S(X)\{-i\})\longrightarrow D^n(\Sigma^{\infty}\widetilde{\mathbb{Z}}_S(X)\{-i\})\) for any \(X\in Sm/S\) and \(i\geq 0\) and weak equivalences are quasi-morphisms between complexes.
\begin{proposition}
Bounded above projective complexes are cofibrant objects in \(\mathfrak{M}_{Sp}\).
\end{proposition}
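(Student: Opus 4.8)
The plan is to follow the proof of Proposition \ref{cofibrant} essentially verbatim, substituting the generating objects $\widetilde{\mathbb{Z}}_S(X)$ of $\widetilde{Sh}(S)$ by the generating spectra $(\Sigma^\infty\widetilde{\mathbb{Z}}_S(X))\{-i\}$ of $Sp(S)$. Since the cofibrations in $\mathfrak{M}_{Sp}$ are by definition the $I$-cofibrations, it suffices to show that a bounded above projective complex $P$ of spectra has the left lifting property with respect to every $I$-injective morphism $f:A\longrightarrow B$ of (unbounded) complexes of spectra. So given such an $f$ together with a morphism $g:P\longrightarrow B$, I would construct a lift $h:P\longrightarrow A$ with $f\circ h=g$ by descending induction on the degree, which terminates because $P$ is bounded above.

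For the inductive step, having built $h^m$ in degrees $m\geq n$ compatible with the differentials, I would produce $h^{n-1}$. As in the sheaf case, a projective spectrum is a direct summand of a free one and the lifting problem splits over direct summands, so I may assume $P^{n-1}$ is free, say $P^{n-1}=\bigoplus_\alpha(\Sigma^\infty\widetilde{\mathbb{Z}}_S(X_\alpha))\{-i_\alpha\}$ with $X_\alpha\in Sm/S$. For each index $\alpha$ the composite into $B^n$ (through $g^{n-1}$ and $d^B$) and the composite into $A^n$ (through $d^P$ and $h^n$) give two morphisms out of $(\Sigma^\infty\widetilde{\mathbb{Z}}_S(X_\alpha))\{-i_\alpha\}$, and these assemble into a commutative square against the generating cofibration $S^{n}((\Sigma^\infty\widetilde{\mathbb{Z}}_S(X_\alpha))\{-i_\alpha\})\longrightarrow D^{n-1}((\Sigma^\infty\widetilde{\mathbb{Z}}_S(X_\alpha))\{-i_\alpha\})$. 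Since $f$ is $I$-injective it admits a lift $w_\alpha$ of this square, and I would set $h^{n-1}=\bigoplus_\alpha w_\alpha$ and check that it is compatible with the differentials.

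The technical input that makes the spectrum case go through exactly as the sheaf case is the representability identity $Hom_{Sp(S)}((\Sigma^\infty\widetilde{\mathbb{Z}}_S(X))\{-i\},A)=A_i(X)$ recorded at the end of Section \ref{symmetric spectra}, together with the fact that these objects form a system of generators of $Sp(S)$; this is what lets me translate the two morphisms above into honest evaluations of the complexes $A$ and $B$ and match them to the generating cofibrations of $\mathfrak{M}_{Sp}$. I do not expect a genuine conceptual obstacle here: the only point that requires care is the bookkeeping of the extra twist index $\{-i\}$ attached to each generator and the verification that the summandwise lift $h^{n-1}$ is indeed a chain map, both of which are formally identical to the argument already carried out in Proposition \ref{cofibrant}.
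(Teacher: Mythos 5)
Your proposal is correct and is exactly what the paper intends: its own proof of this statement consists of the single line ``The same as Proposition \ref{cofibrant},'' and your write-up is precisely that argument transplanted to spectra, with the generators $\widetilde{\mathbb{Z}}_S(X)$ replaced by $(\Sigma^{\infty}\widetilde{\mathbb{Z}}_S(X))\{-i\}$ and the generating cofibrations of $\mathfrak{M}_{Sp}$ used in place of those of $\mathfrak{M}$. No discrepancy with the paper's route.
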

\begin{proof}
The same as Proposition \ref{cofibrant}.
\end{proof}

Moreover, \(\mathfrak{M}_{Sp}\) is stable and left proper so it induces a triangulated structure \(\mathfrak{T}'\) on \(D_{Sp}(S)\) (See \cite[Theoreme 4.1.49]{A}). The classical triangulated structure of \(D_{Sp}(S)\) or \(D^-_{Sp}(S)\) is denoted by \(\mathfrak{T}\).
\begin{proposition}
The natural functor
\[(D^-_{Sp}(S),\mathfrak{T})\longrightarrow(D_{Sp}(S),\mathfrak{T}')\]
is fully faithful exact.
\end{proposition}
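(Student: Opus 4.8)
The plan is to transport the argument of Proposition \ref{triangulated} essentially verbatim to the setting of symmetric spectra, replacing the model structure $\mathfrak{M}$ on unbounded complexes of sheaves with $E$-transfers by the model structure $\mathfrak{M}_{Sp}$ on unbounded complexes of symmetric spectra recalled above. All the structural inputs are already in place: $\mathfrak{M}_{Sp}$ is cofibrantly generated, stable and left proper, hence by \cite[Theoreme 4.1.49]{A} it endows $D_{Sp}(S)$ with the triangulated structure $\mathfrak{T}'$, while $D^-_{Sp}(S)$ and $D_{Sp}(S)$ carry their classical structures $\mathfrak{T}$. The functor in question is the obvious inclusion of bounded above complexes into unbounded ones.

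First I would dispose of full faithfulness. Since $Sp(S)$ is a Grothendieck abelian category (the category of $\mathbbm{1}_S\{1\}$-modules in symmetric sequences over the Grothendieck abelian category $\widetilde{Sh}(S)$ of Proposition \ref{sheafication}), the canonical functor from the bounded above derived category to the unbounded derived category of $Sp(S)$ is fully faithful by the standard comparison of derived categories. This is exactly the reasoning marked ``clearly fully faithful'' at the end of Proposition \ref{triangulated}, now applied to $Sp(S)$ in place of $\widetilde{Sh}(S)$, so nothing new is required here.

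For exactness I would follow the same four steps. Any distinguished triangle $T$ in $(D^-_{Sp}(S),\mathfrak{T})$ is isomorphic to one of the form
\[\xymatrix{A\ar[r]^f&B\ar[r]&Cone(f)\ar[r]&A[1]}\]
coming from an honest chain map of complexes of spectra (see \cite[III.3.3 and III.3.4]{GM}). Using \cite[Proposition 8.1.23]{Hir} I would replace $f$ by a commutative square whose vertical legs are fibrant cofibrant approximations $(A',a)$, $(B',b)$ in $\mathfrak{M}_{Sp}$ and whose top horizontal map $g:A'\longrightarrow B'$ is a cofibration; then $T$ becomes isomorphic in $D_{Sp}(S)$ to the cone triangle on $g$, which is distinguished for $\mathfrak{T}'$ (see \cite[Definition 4.1.45]{A}). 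The preceding proposition, asserting that bounded above projective complexes are cofibrant in $\mathfrak{M}_{Sp}$, is what guarantees that the relevant objects and cones behave well. Finally, by \cite[Lemma 1.10]{CD} and \cite[Theoreme 4.1.38]{A} the shift functors $-[n]$ and $-[n]'$ of $\mathfrak{T}$ and $\mathfrak{T}'$ agree on cofibrant objects, yielding a natural isomorphism $\eta:-[n]\longrightarrow-[n]'$ with $\eta_K=id_{K[n]}$ for cofibrant $K$; this upgrades the image of $T$ to a genuine distinguished triangle in $\mathfrak{T}'$, so the functor is exact.

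The main obstacle will be the bookkeeping needed to confirm that the cited statements about $\mathfrak{M}$ in \cite{CD} and \cite{A} apply equally to $\mathfrak{M}_{Sp}$ --- in particular that the generating cofibrations $S^{n+1}(\Sigma^{\infty}\widetilde{\mathbb{Z}}_S(X)\{-i\})\longrightarrow D^n(\Sigma^{\infty}\widetilde{\mathbb{Z}}_S(X)\{-i\})$ produce a stable, left proper model structure with the same compatibility of shift functors on cofibrant objects as in the sheaf case. Once this is granted, which is precisely what the spectrum-level constructions of Section \ref{symmetric spectra} together with the cofibrancy proposition just proved provide, the argument is formally identical to that of Proposition \ref{triangulated} and requires no further computation.
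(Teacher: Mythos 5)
Your proposal is correct and follows exactly the route the paper takes: the paper's proof is literally ``The same as Proposition \ref{triangulated},'' and your write-up is precisely that argument transported to $\mathfrak{M}_{Sp}$, using the cofibrancy of bounded above projective complexes of spectra established just before. Nothing further is needed.
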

\begin{proof}
The same as Proposition \ref{triangulated}.
\end{proof}
And there is also a compability result between the natural inclusion and \(\otimes_S\), \(f^*\), \(f_{\#}\), \(\Sigma^{\infty}\), \(-\{-i\},i\geq 0\) like Proposition \ref{-}.
\subsubsection{On Categories of Effective Motives}\label{effective1}
\begin{definition}
(See \cite[5.2.15]{CD1}) Define \(\mathscr{E}_{\mathbb{A}}\) to be the smallest thick subcategory of \(D^-_{Sp}(S)\) such that
\begin{enumerate}
\item\((\Sigma^{\infty}Cone(\widetilde{\mathbb{Z}}_S(X\times_k\mathbb{A}^1)\longrightarrow\widetilde{\mathbb{Z}}_S(X)))\{-i\}\in\mathscr{E}_{\mathbb{A}}, i\geq 0.\)
\item\(\mathscr{E}_{\mathbb{A}}\) is closed under arbitrary direct sums.
\end{enumerate}
Set \(W_{\mathbb{A}}\) be the class of morphisms in \(D^-_{Sp}(S)\) whose cone is in \(\mathscr{E}_{\mathbb{A}}\). Define
\[\widetilde{DM}^{eff,-}_{Sp}(S)=D^-_{Sp}(S)[W_{\mathbb{A}}^{-1}].\]
And a morphism in \(D^-_{Sp}(S)\) is called a levelwise \(\mathbb{A}^1\)-equivalence if it becomes an isomorphism in \(\widetilde{DM}^{eff,-}_{Sp}(S)\).
\end{definition}
\begin{definition}
(See \cite[5.3.20]{CD1}) A complex \(K\in D^-_{Sp}(S)\) is called levelwise \(\mathbb{A}^1\)-local if for every levelwise \(\mathbb{A}^1\)-equivalence \(f:A\longrightarrow B\), the induced map
\[Hom_{D^-_{Sp}(S)}(B,K)\longrightarrow Hom_{D^-_{Sp}(S)}(A,K)\]
is an isomorphism.
\end{definition}
\begin{proposition}
A complex \(K=(K_n)\in D^-_{Sp}(S)\) is levelwise \(\mathbb{A}^1\)-local if and only if for every \(n\geq 0\), the complex \(K_n\) is \(\mathbb{A}^1\)-local in \(D^-(S)\).
\end{proposition}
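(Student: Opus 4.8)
The plan is to translate both locality conditions into orthogonality statements against the respective subcategories $\mathscr{E}_{\mathbb{A}}$, and then to pass between the effective and spectral settings by means of the adjunctions already established. The crucial computation is the identification, for $A\in D^-(S)$ and $i\geq 0$,
\[
\mathrm{Hom}_{D^-(S)}(A,K_i)\cong\mathrm{Hom}_{D^-_{Sp}(S)}\big((\Sigma^{\infty}A)\{-i\},K\big),
\]
obtained by combining the adjunction $\Sigma^{\infty}\dashv\Omega^{\infty}$ (Proposition \ref{sigma adjunction}) with $-\{-i\}\dashv-\{i\}$ (Proposition \ref{spderived adjunction}), together with the identity $K_i=\Omega^{\infty}(K\{i\})$ coming from the relation $(A\{i\})_0=A_i$. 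Since $\Sigma^{\infty}$ and $-\{-i\}$ are exact, this isomorphism is natural in $A$ and commutes with the shifts $[j]$.

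First I would record the orthogonality reformulation: a complex is levelwise $\mathbb{A}^1$-local (resp. $\mathbb{A}^1$-local) precisely when $\mathrm{Hom}(C[j],-)=0$ for every object $C$ of the corresponding $\mathscr{E}_{\mathbb{A}}$ and every $j\in\mathbb{Z}$. Both implications of this reformulation follow from the long exact sequence attached to the defining triangle of a (levelwise) $\mathbb{A}^1$-equivalence, together with the fact that $\mathscr{E}_{\mathbb{A}}$ is thick, hence stable under $[1]$.

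For the implication that each $K_n$ being $\mathbb{A}^1$-local forces $K$ to be levelwise $\mathbb{A}^1$-local, I would consider the full subcategory $\mathscr{L}\subseteq D^-_{Sp}(S)$ of those $C$ with $\mathrm{Hom}(C[j],K)=0$ for all $j$. This $\mathscr{L}$ is a thick subcategory closed under arbitrary direct sums, because $\mathrm{Hom}(\bigoplus_{\alpha}C_{\alpha},K)=\prod_{\alpha}\mathrm{Hom}(C_{\alpha},K)$. By the defining minimality of the spectral $\mathscr{E}_{\mathbb{A}}$ it therefore suffices to check that each generator $(\Sigma^{\infty}Cone(\widetilde{\mathbb{Z}}_S(X\times_k\mathbb{A}^1)\to\widetilde{\mathbb{Z}}_S(X)))\{-i\}$ lies in $\mathscr{L}$. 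Writing $G_X=Cone(\widetilde{\mathbb{Z}}_S(X\times_k\mathbb{A}^1)\to\widetilde{\mathbb{Z}}_S(X))$ and applying the displayed adjunction, this reduces to $\mathrm{Hom}_{D^-(S)}(G_X[j],K_i)=0$; since $G_X$ lies in the effective $\mathscr{E}_{\mathbb{A}}$ and $K_i$ is $\mathbb{A}^1$-local, the orthogonality reformulation gives exactly this vanishing.

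For the converse, I would start from an $\mathbb{A}^1$-equivalence $f:A\to B$ in $D^-(S)$ and observe, as in Proposition \ref{derived1}, that $(\Sigma^{\infty}f)\{-n\}$ is a levelwise $\mathbb{A}^1$-equivalence: its cone is $(\Sigma^{\infty}Cone(f))\{-n\}$, and the exact, sum-preserving functor $C\mapsto(\Sigma^{\infty}C)\{-n\}$ carries the generators of the effective $\mathscr{E}_{\mathbb{A}}$ into the spectral $\mathscr{E}_{\mathbb{A}}$, hence carries all of it there. Applying levelwise $\mathbb{A}^1$-locality of $K$ to this morphism and rewriting both sides through the displayed adjunction shows that $\mathrm{Hom}(B,K_n)\to\mathrm{Hom}(A,K_n)$ is an isomorphism, i.e. that $K_n$ is $\mathbb{A}^1$-local. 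The only delicate point, and the one I would write out with care, is the bookkeeping of the two adjunctions and the exactness needed to commute $\Sigma^{\infty}$ and $-\{-i\}$ past cones and shifts; everything else is a formal manipulation of thick subcategories.
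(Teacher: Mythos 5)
Your proposal is correct and follows essentially the same route as the paper: the paper reduces levelwise $\mathbb{A}^1$-locality to a condition on the generators $(\Sigma^{\infty}\widetilde{\mathbb{Z}}_S(X))\{-i\}[n]$ by the argument of \cite[Lemma 9.20]{MVW} (which is precisely your orthogonality-against-$\mathscr{E}_{\mathbb{A}}$ reformulation via thick subcategories closed under direct sums), and then transports the condition through the adjunctions of Proposition \ref{spderived adjunction} and Proposition \ref{sigma adjunction} using $K_i=\Omega^{\infty}(K\{i\})$, exactly as you do. Your write-up merely unpacks the cited lemma in more detail.
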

\begin{proof}
By the same proof as \cite[Lemma 9.20]{MVW}, \(K\) is levelwise \(\mathbb{A}^1\)-local if and only if for every \(X\in Sm/S\), \(n\in\mathbb{Z}\) and \(i\geq 0\), the map
\[Hom_{D^-_{Sp}(S)}((\Sigma^{\infty}\widetilde{\mathbb{Z}}_S(X))\{-i\}[n],K)\longrightarrow Hom_{D^-_{Sp}(S)}((\Sigma^{\infty}\widetilde{\mathbb{Z}}_S(X\times\mathbb{A}^1))\{-i\}[n],K)\]
is an isomorphism. And then one uses Proposition \ref{spderived adjunction} and Proposition \ref{sigma adjunction}.
\end{proof}

For every \(A=(A_n)\in Sp(S)\) and \(X\in Sm/S\), we define \(A^X\) by \((A_n)^X=(A_n^X)\). And the module structure \(\mathbbm{1}_S\{1\}\otimes^{\mathfrak{S}}A^X\longrightarrow A^X\) is given by the composition
\[\mathbbm{1}_S\{1\}\otimes^{\mathfrak{S}}A^X\longrightarrow(\mathbbm{1}_S\{1\}\otimes^{\mathfrak{S}}A)^X\longrightarrow A^X.\]
And \(A^X\) is contravariant with respect to morphisms in \(Sm/S\). So we could define \(C_*A\) by \((C_*A)_n=C_*A_n\).
\begin{proposition}\label{splocality}
Suppose \(K\in D^-_{Sp}(S)\).
\begin{enumerate}
\item The natural map \(K\longrightarrow C_*K\) is a levelwise \(\mathbb{A}^1\)-equivalence.
\item The complex \(C_*K\) is levelwise \(\mathbb{A}^1\)-local.
\item The functor \(C_*\) induces an endofunctor of \(D^-_{Sp}(S)\).
\end{enumerate}
\end{proposition}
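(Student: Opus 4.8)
The plan is to mirror the proof of Proposition \ref{locality}, exploiting that $C_*$, $\Sigma^{\infty}$ and the shifts $-\{-i\}$ are all defined levelwise, together with the characterization of levelwise $\mathbb{A}^1$-locality from the preceding proposition. The first preparatory step is to record a \emph{levelwise criterion} for membership in $W_{\mathbb{A}}$: a morphism $f\colon A\longrightarrow B$ in $D^-_{Sp}(S)$ is a levelwise $\mathbb{A}^1$-equivalence if and only if every level $f_n\colon A_n\longrightarrow B_n$ is an $\mathbb{A}^1$-weak equivalence in $D^-(S)$. One inclusion is formal: evaluation at level $n$ is exact, commutes with direct sums, and carries each generator $(\Sigma^{\infty}\mathrm{Cone}(\widetilde{\mathbb{Z}}_S(X\times_k\mathbb{A}^1)\to\widetilde{\mathbb{Z}}_S(X)))\{-i\}$ to the complex $\mathbbm{1}_S\{1\}^{\otimes(n-i)}\otimes_S\mathrm{Cone}(\widetilde{\mathbb{Z}}_S(X\times_k\mathbb{A}^1)\to\widetilde{\mathbb{Z}}_S(X))$ (or to $0$), which lies in the effective $\mathscr{E}_{\mathbb{A}}$ since $\mathbbm{1}_S\{1\}^{\otimes(n-i)}\otimes_S-$ preserves $\mathbb{A}^1$-weak equivalences (as in the proof of Proposition \ref{derived2}, (1)); hence evaluation sends the spectral $\mathscr{E}_{\mathbb{A}}$ into the effective $\mathscr{E}_{\mathbb{A}}$. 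For the converse I would use that $W_{\mathbb{A}}$ is detected by the levelwise $\mathbb{A}^1$-local objects, together with the identification $\mathrm{Hom}_{Sp(S)}((\Sigma^{\infty}\widetilde{\mathbb{Z}}_S(X))\{-i\},A)=A_i(X)$ and the characterization of such local objects from the preceding proposition.

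Granting this criterion, part (2) is immediate: by the preceding proposition $C_*K$ is levelwise $\mathbb{A}^1$-local precisely when each $(C_*K)_n=C_*K_n$ is $\mathbb{A}^1$-local in $D^-(S)$, and this is exactly Proposition \ref{locality}, (2). Part (1) then likewise follows from the criterion, since the natural map $K\longrightarrow C_*K$ is defined levelwise and its $n$-th level is $K_n\longrightarrow C_*K_n$, an $\mathbb{A}^1$-weak equivalence by Proposition \ref{locality}, (1).

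The main obstacle is precisely the converse direction of the levelwise criterion, namely passing from a map that is an effective $\mathbb{A}^1$-equivalence at every level to a genuine $W_{\mathbb{A}}$-equivalence of spectra. One cannot simply transport the homotopy of \cite[Lemma 9.15]{MVW} along $\Sigma^{\infty}$, because $C_*$ does not commute with the tensor products defining $\Sigma^{\infty}$ and the shifts (already $C_*(M\otimes_S F)\neq M\otimes_S C_*F$ in general), so $C_*\Sigma^{\infty}\neq\Sigma^{\infty}C_*$ and the reduction to generators is not literal. Instead I would argue that $\mathscr{E}_{\mathbb{A}}$ is generated, as a thick subcategory closed under direct sums, by the set of objects $(\Sigma^{\infty}\mathrm{Cone}(\widetilde{\mathbb{Z}}_S(X\times_k\mathbb{A}^1)\to\widetilde{\mathbb{Z}}_S(X)))\{-i\}$, so that $\widetilde{DM}^{eff,-}_{Sp}(S)$ is a Bousfield-type localization whose local objects are exactly the levelwise $\mathbb{A}^1$-local complexes; a morphism then lies in $W_{\mathbb{A}}$ if and only if $\mathrm{Hom}_{D^-_{Sp}(S)}(-,L)$ turns it into an isomorphism for every levelwise $\mathbb{A}^1$-local $L$. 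Testing against such $L$ and using $\mathrm{Hom}_{D^-_{Sp}(S)}((\Sigma^{\infty}\widetilde{\mathbb{Z}}_S(X))\{-i\},L)=L_i(X)$ together with the fact that each $L_n$ is effective $\mathbb{A}^1$-local reduces the required isomorphisms to the effective statement, level by level.

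Finally, part (3) follows as in Proposition \ref{locality}, (3). The functor $C_*$ is clearly an endofunctor of $K^-_{Sp}(S)$ and commutes with cones, being defined levelwise. If $f\colon K\longrightarrow L$ is a quasi-isomorphism in $D^-_{Sp}(S)$ then $\mathrm{Cone}(f)$ is acyclic, hence levelwise $\mathbb{A}^1$-local (the zero object is local), while $C_*\mathrm{Cone}(f)$ is levelwise $\mathbb{A}^1$-local by part (2), and the map $\mathrm{Cone}(f)\longrightarrow C_*\mathrm{Cone}(f)$ is a levelwise $\mathbb{A}^1$-equivalence by part (1). Applying \cite[Lemma 9.21]{MVW} levelwise, via the levelwise criterion and the characterization of levelwise $\mathbb{A}^1$-local objects, shows this map is a quasi-isomorphism, so $C_*\mathrm{Cone}(f)=\mathrm{Cone}(C_*f)$ is acyclic and $C_*f$ is a quasi-isomorphism.
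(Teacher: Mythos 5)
Parts (2) and (3) of your proposal are in substance fine: (2) is exactly the paper's argument (combine the characterization of levelwise $\mathbb{A}^1$-local complexes from the preceding proposition with Proposition \ref{locality}, (2)), and (3) can be done even more directly than you do it, since quasi-isomorphisms of complexes of spectra are detected levelwise and $C_*$ is defined levelwise, so Proposition \ref{locality}, (3) applies level by level with no detour through (1) and (2).

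The gap is in part (1), and it is a circularity. Your ``levelwise criterion'' is Proposition \ref{levelwise equivalence} of the paper, which is stated \emph{after} this proposition and proved \emph{from} it ($f$ is a levelwise $\mathbb{A}^1$-equivalence iff $C_*f$ is a quasi-isomorphism, by \ref{splocality}). The direction you actually need --- each $f_n$ an effective $\mathbb{A}^1$-equivalence implies $Cone(f)\in\mathscr{E}_{\mathbb{A}}$ --- is the hard one, and your Bousfield-type argument for it does not close: the implication ``$Hom_{D^-_{Sp}(S)}(Cone(f),L)=0$ for every levelwise $\mathbb{A}^1$-local $L$ implies $Cone(f)\in\mathscr{E}_{\mathbb{A}}$'' requires $\mathscr{E}_{\mathbb{A}}={}^{\perp}(\mathscr{E}_{\mathbb{A}}^{\perp})$, i.e.\ the existence of a local replacement for every object of $D^-_{Sp}(S)$; and that local replacement is precisely $C_*$ together with parts (1) and (2) of the proposition you are trying to prove. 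The paper sidesteps all of this: it proves (1) directly by constructing a pairing $\Sigma^{\infty}\widetilde{\mathbb{Z}}_S(X)\otimes_S A^X\longrightarrow A$, compatible with the $Sym(\mathbbm{1}_S\{1\})$-module structure, equivalently a map $A^X\longrightarrow\underline{Hom}(\Sigma^{\infty}\widetilde{\mathbb{Z}}_S(X),A)$, and then runs the explicit homotopy of \cite[Lemma 9.15]{MVW} internally to $Sp(S)$. Your objection that $C_*$ fails to commute with $\Sigma^{\infty}$ is aimed at a strategy the paper does not use: the MVW homotopy is not transported along $\Sigma^{\infty}$ but redone for spectra, and the only new input it needs is that evaluation pairing. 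To repair your proof, supply that pairing and argue as in the effective case; an independent construction of enough local objects would not be easier.
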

\begin{proof}
\begin{enumerate}
\item We have a natural morphism \(\Sigma^{\infty}\widetilde{\mathbb{Z}}_S(X)\otimes^{\mathfrak{S}}A^X\longrightarrow A\) defined by the composition
\[\mathbbm{1}_S\{1\}^{\otimes p}\otimes_S\widetilde{\mathbb{Z}}_S(X)\otimes_SA_q^X\longrightarrow\widetilde{\mathbb{Z}}_S(X)\otimes_S(\mathbbm{1}_S\{1\}^{\otimes p}\otimes_SA_q)^X\longrightarrow\widetilde{\mathbb{Z}}_S(X)\otimes_SA_{p+q}^X\longrightarrow A_{p+q}\]
for every \(p, q\geq 0\). This morphism is compatible with module action so it induces an morphism
\[\Sigma^{\infty}\widetilde{\mathbb{Z}}_S(X)\otimes_SA^X\longrightarrow A.\]
Then we get a morphism
\[A^X\longrightarrow\underline{Hom}(\Sigma^{\infty}\widetilde{\mathbb{Z}}_S(X),A).\]

Then we could use the same proof as in \cite[Lemma 9.15]{MVW} to conclude.
\item By the proposition above and Proposition \ref{locality}.
\item By Proposition \ref{locality} since quasi-isomorphisms in \(D^-_{Sp}(S)\) are defined levelwise.
\end{enumerate}
\end{proof}
\begin{proposition}\label{levelwise equivalence}
A morphism \(f:A\longrightarrow B\) in \(D^-_{Sp}(S)\) is a levelwise \(\mathbb{A}^1\)-equivalence if and only if for every \(n\geq 0\), the morphism
\[f_n=\Omega^{\infty}(f\{n\}):A_n\longrightarrow B_n\]
is an \(\mathbb{A}^1\)-equivalence in \(D^-(S)\).
\end{proposition}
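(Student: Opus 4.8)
The plan is to reduce the assertion to the corresponding fact in the effective category $D^-(S)$ (Proposition \ref{locality}) by means of the functor $C_*$, in the same spirit in which the preceding Proposition reduces levelwise $\mathbb{A}^1$-locality to levelwise locality. First I would pin down the morphism $f_n$. Since $-\{n\}$ and $\Omega^{\infty}=ev_0\circ U$ are exact, and since the identity $Hom_{Sp(S)}((\Sigma^{\infty}\widetilde{\mathbb{Z}}_S(X))\{-n\},A)=A_n(X)$ recorded in Section \ref{symmetric spectra}, combined with the adjunctions $-\{-n\}\dashv-\{n\}$ and $\Sigma^{\infty}\dashv\Omega^{\infty}$, gives $\Omega^{\infty}(A\{n\})=A_n$, the map $f_n=\Omega^{\infty}(f\{n\})$ is exactly the level-$n$ component $ev_n(f)\colon A_n\longrightarrow B_n$. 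I would also record, from the proof of Proposition \ref{splocality}, that quasi-isomorphisms in $D^-_{Sp}(S)$ are detected levelwise and that $(C_*A)_n=C_*(A_n)$, whence $(C_*f)_n=C_*(f_n)$.

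The heart of the matter is the single equivalence, call it $(\star)$, that \emph{$f$ is a levelwise $\mathbb{A}^1$-equivalence if and only if $C_*f$ is a quasi-isomorphism in $D^-_{Sp}(S)$}. Granting $(\star)$, the Proposition follows by a chain of equivalences: $C_*f$ is a quasi-isomorphism if and only if each $(C_*f)_n=C_*(f_n)$ is a quasi-isomorphism in $D^-(S)$ (quasi-isomorphisms being levelwise), if and only if each $f_n$ is an $\mathbb{A}^1$-equivalence in $D^-(S)$. This last step is purely the effective statement: by Proposition \ref{locality} the maps $A_n\to C_*A_n$ and $B_n\to C_*B_n$ are $\mathbb{A}^1$-equivalences onto $\mathbb{A}^1$-local complexes, so two-out-of-three for $\mathbb{A}^1$-equivalences together with the effective local-objects lemma \cite[Lemma 9.21]{MVW} converts ``$f_n$ is an $\mathbb{A}^1$-equivalence'' into ``$C_*(f_n)$ is a quasi-isomorphism'' and back.

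It remains to establish $(\star)$. The implication ``$\Leftarrow$'' is soft: a quasi-isomorphism $C_*f$ is in particular a levelwise $\mathbb{A}^1$-equivalence, since its cone is acyclic, hence zero in $D^-_{Sp}(S)$ and so lies in $\mathscr{E}_{\mathbb{A}}$; applying two-out-of-three for $W_{\mathbb{A}}$ to the naturality square relating $f$ to $C_*f$ through the levelwise $\mathbb{A}^1$-equivalences $A\to C_*A$ and $B\to C_*B$ of Proposition \ref{splocality}, (1), one concludes that $f$ is a levelwise $\mathbb{A}^1$-equivalence. The implication ``$\Rightarrow$'' is where the real work lies and is the main obstacle I expect: it requires that \emph{a levelwise $\mathbb{A}^1$-equivalence between levelwise $\mathbb{A}^1$-local objects is automatically a quasi-isomorphism}. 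This is the symmetric-spectrum analogue of \cite[Lemma 9.21]{MVW}, and I would prove it by transporting the same formal Yoneda argument to $D^-_{Sp}(S)$: the definition of levelwise $\mathbb{A}^1$-local object produces, for a levelwise $\mathbb{A}^1$-equivalence $g\colon K\to L$ between such objects, a two-sided inverse in $D^-_{Sp}(S)$, so $g$ is an isomorphism there, i.e. a quasi-isomorphism. The hypotheses of this lemma are supplied by Proposition \ref{splocality}, (2), which guarantees that $C_*A$ and $C_*B$ are levelwise $\mathbb{A}^1$-local (here the preceding Proposition, that levelwise locality is tested level by level, is exactly what makes \cite[Lemma 9.21]{MVW} transfer). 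Applying the lemma to $C_*f$, which is a levelwise $\mathbb{A}^1$-equivalence between these local objects because every other edge of the naturality square is, yields that $C_*f$ is a quasi-isomorphism and completes $(\star)$, and with it the Proposition. (I note in passing that the forward direction alone could also be obtained directly from the exactness of $ev_n$ and the fact that $ev_n$ carries the generators of $\mathscr{E}_{\mathbb{A}}$ in $D^-_{Sp}(S)$ into $\mathscr{E}_{\mathbb{A}}$ in $D^-(S)$, but the $C_*$ route handles both directions uniformly.)
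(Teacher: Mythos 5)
Your proposal is correct and follows essentially the same route as the paper: the paper's (much terser) proof also reduces the statement to the equivalence ``$f$ is a levelwise $\mathbb{A}^1$-equivalence iff $C_*f$ is a quasi-isomorphism'' via Proposition \ref{splocality}, and then observes that being a quasi-isomorphism is a levelwise property. Your write-up simply makes explicit the two-out-of-three arguments and the transfer of \cite[Lemma 9.21]{MVW} that the paper leaves implicit.
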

\begin{proof}
The morphism \(f\) is a levelwise \(\mathbb{A}^1\)-equivalence if and only if \(C_*f\) is a quasi-isomorphism by Proposition \ref{splocality}. And the latter property is levelwise.
\end{proof}
\begin{proposition}\label{spderived1}
Let \(\phi\) be the functor as before. We have an exact functor
\[\phi^*:\widetilde{DM}^{eff,-}_{Sp}(S)\longrightarrow\widetilde{DM}^{eff,-}_{Sp}(T)\]
which is determined by the following commutative diagram
\[
	\xymatrix
	{
		D^-_{Sp}(S)\ar[r]^{\phi^*}\ar[d]			&D^-_{Sp}(T)\ar[d]\\
		\widetilde{DM}^{eff,-}_{Sp}(S)\ar[r]^{\phi^*}	&\widetilde{DM}^{eff,-}_{Sp}(T)
	}
\]
\end{proposition}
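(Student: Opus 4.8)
The plan is to transcribe the proof of Proposition \ref{derived1} to the spectral setting, the only new ingredient being the extra twist $\{-j\}$ built into $\phi^*$. Recall that $\phi^*=\{-j\}\circ g_{\#}\circ f^*$ is exact on $D^-_{Sp}(S)$ by Proposition \ref{spderived}, that it is a left adjoint (with right adjoint $\phi_*=f_*\circ g^*\circ\{j\}$), and that, being computed termwise on projective resolutions, it commutes with arbitrary direct sums. First I would introduce the full subcategory
$$\mathbb{E}=\{K\in D^-_{Sp}(S)\mid\phi^*K\in\mathscr{E}_{\mathbb{A}}\subseteq D^-_{Sp}(T)\}.$$
Exactness of $\phi^*$ together with thickness of $\mathscr{E}_{\mathbb{A}}$ makes $\mathbb{E}$ a thick subcategory, and commutation of $\phi^*$ with direct sums together with closure of $\mathscr{E}_{\mathbb{A}}$ under direct sums makes $\mathbb{E}$ closed under arbitrary direct sums.

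The heart of the argument is to verify that $\phi^*$ carries each generator of $\mathscr{E}_{\mathbb{A}}\subseteq D^-_{Sp}(S)$ into $\mathscr{E}_{\mathbb{A}}\subseteq D^-_{Sp}(T)$. Since $f^*$ and $g_{\#}$ are exact and, being monoidal, commute with $\Sigma^{\infty}$ and with the twist functors (by the compatibilities recorded in Section \ref{symmetric spectra}), and since $\{-i\}\circ\{-j\}=\{-(i+j)\}$, I would first obtain
$$\phi^*\big((\Sigma^{\infty}\widetilde{\mathbb{Z}}_S(X))\{-i\}\big)\cong(\Sigma^{\infty}g_{\#}f^*\widetilde{\mathbb{Z}}_S(X))\{-(i+j)\}.$$
By Proposition \ref{sheaf formulas}, (1) and Proposition \ref{operations}, (1) one has $g_{\#}f^*\widetilde{\mathbb{Z}}_S(X)\cong\widetilde{\mathbb{Z}}_T(X\times_SY)=\widetilde{\mathbb{Z}}_T(\psi X)$; combining this with $\psi(X\times_k\mathbb{A}^1)=\psi X\times_k\mathbb{A}^1$ and the exactness of $\phi^*$ yields
$$\phi^*\big((\Sigma^{\infty}Cone(\widetilde{\mathbb{Z}}_S(X\times_k\mathbb{A}^1)\to\widetilde{\mathbb{Z}}_S(X)))\{-i\}\big)\cong(\Sigma^{\infty}Cone(\widetilde{\mathbb{Z}}_T(\psi X\times_k\mathbb{A}^1)\to\widetilde{\mathbb{Z}}_T(\psi X)))\{-(i+j)\}.$$
Because $i,j\geq 0$ forces $i+j\geq 0$ and $\psi X\in Sm/T$, the right-hand side is a generator of $\mathscr{E}_{\mathbb{A}}\subseteq D^-_{Sp}(T)$.

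It then follows from the minimality of $\mathscr{E}_{\mathbb{A}}$, together with the thickness and direct-sum closure of $\mathbb{E}$, that $\mathscr{E}_{\mathbb{A}}\subseteq\mathbb{E}$; that is, $\phi^*$ maps $\mathscr{E}_{\mathbb{A}}$ into $\mathscr{E}_{\mathbb{A}}$. By exactness $\phi^*$ sends the cone of a morphism in $W_{\mathbb{A}}$ to an object of $\mathscr{E}_{\mathbb{A}}$, so $\phi^*$ preserves the class $W_{\mathbb{A}}$ of levelwise $\mathbb{A}^1$-equivalences, and applying \cite[Proposition 4.6.2]{Kra} to $\widetilde{DM}^{eff,-}_{Sp}(S)=D^-_{Sp}(S)[W_{\mathbb{A}}^{-1}]$ and $\widetilde{DM}^{eff,-}_{Sp}(T)=D^-_{Sp}(T)[W_{\mathbb{A}}^{-1}]$ produces the induced exact functor fitting into the stated square. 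I expect the only genuinely non-formal step to be the twist bookkeeping in the generator computation---checking that $\{-i\}$ passes through $f^*$ and $g_{\#}$ and then merges with $\{-j\}$ into $\{-(i+j)\}$, and that this leaves a legitimate generator since $i+j\geq 0$; everything else is a word-for-word repetition of Proposition \ref{derived1}.
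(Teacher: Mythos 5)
Your proposal is correct and follows exactly the paper's route: the paper's proof likewise just records that $\phi^*$ sends the generator $\Sigma^{\infty}(\widetilde{\mathbb{Z}}_S(X\times_k\mathbb{A}^1)\to\widetilde{\mathbb{Z}}_S(X))\{-i\}$ to $\Sigma^{\infty}(\widetilde{\mathbb{Z}}_T(\psi X\times_k\mathbb{A}^1)\to\widetilde{\mathbb{Z}}_T(\psi X))\{-i-j\}$ and then invokes the same thick-subcategory argument as Proposition \ref{derived1} together with \cite[Proposition 4.6.2]{Kra}. Your explicit twist bookkeeping and the verification that $\mathbb{E}$ is thick and closed under direct sums are exactly the steps the paper leaves implicit in the phrase ``by the same method as in Proposition \ref{derived1}.''
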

\begin{proof} For any \(X\in Sm/S\), \(\phi^*\) maps
\[\Sigma^{\infty}(\widetilde{\mathbb{Z}}_S(X\times_k\mathbb{A}^1)\longrightarrow\widetilde{\mathbb{Z}}_S(X))\{-i\}\]
to
\[\Sigma^{\infty}(\widetilde{\mathbb{Z}}_T((\psi X)\times_k\mathbb{A}^1)\longrightarrow\widetilde{\mathbb{Z}}_T(\psi X))\{-i-j\}.\]
So the statement follows by the same method as in Proposition \ref{derived1}.
\end{proof}
\begin{proposition}\label{spderived2}
\begin{enumerate}
\item There is a tensor product
\[\otimes_S:\widetilde{DM}^{eff,-}_{Sp}(S)\times\widetilde{DM}^{eff,-}_{Sp}(S)\longrightarrow\widetilde{DM}^{eff,-}_{Sp}(S),\]
which is determined by the following commutative diagram
\[
	\xymatrix
	{
		D^-_{Sp}(S)\times D^-_{Sp}(S)\ar[r]^-{\otimes_S}\ar[d]						&D^-_{Sp}(S)\ar[d]\\
		\widetilde{DM}^{eff,-}_{Sp}(S)\times\widetilde{DM}^{eff,-}_{Sp}(S)\ar[r]^-{\otimes_S}	&\widetilde{DM}^{eff,-}_{Sp}(S)
	}.
\]
And for any \(K\in\widetilde{DM}^{eff,-}_{Sp}(S)\), the functor \(K\otimes_S-\) is exact.
\item Suppose \(f:S\longrightarrow T\) is a smooth morphism in \(Sm/k\). There is an exact functor
\[f_{\#}:\widetilde{DM}^{eff,-}_{Sp}(S)\longrightarrow\widetilde{DM}^{eff,-}_{Sp}(T),\]
which is determined by the following commutative diagram
\[
	\xymatrix
	{
		D^-_{Sp}(S)\ar[r]^{f_{\#}}\ar[d]			&D^-_{Sp}(T)\ar[d]\\
		\widetilde{DM}^{eff,-}_{Sp}(S)\ar[r]^{f_{\#}}	&\widetilde{DM}^{eff,-}_{Sp}(T)
	}.
\]
\item Suppose \(f:S\longrightarrow T\) is a morphism in \(Sm/k\). There is an exact functor
\[f^*:\widetilde{DM}^{eff,-}_{Sp}(T)\longrightarrow\widetilde{DM}^{eff,-}_{Sp}(S),\]
which is determined by the following commutative diagram
\[
	\xymatrix
	{
		D^-_{Sp}(T)\ar[r]^{f^*}\ar[d]			&D^-_{Sp}(S)\ar[d]\\
		\widetilde{DM}^{eff,-}_{Sp}(T)\ar[r]^{f^*}	&\widetilde{DM}^{eff,-}_{Sp}(S)
	}.
\]
\item Suppose \(i\geq 0\). There is an exact functor
\[-\{-i\}:\widetilde{DM}^{eff,-}_{Sp}(S)\longrightarrow\widetilde{DM}^{eff,-}_{Sp}(S),\]
which is determined by the following commutative diagram
\[
	\xymatrix
	{
		D^-_{Sp}(S)\ar[r]^{-\{-i\}}\ar[d]			&D^-_{Sp}(S)\ar[d]\\
		\widetilde{DM}^{eff,-}_{Sp}(S)\ar[r]^{-\{-i\}}	&\widetilde{DM}^{eff,-}_{Sp}(S)
	}.
\]
\item Suppose \(i\geq 0\). There is an exact functor
\[-\{i\}:\widetilde{DM}^{eff,-}_{Sp}(S)\longrightarrow\widetilde{DM}^{eff,-}_{Sp}(S),\]
which is determined by the following commutative diagram
\[
	\xymatrix
	{
		D^-_{Sp}(S)\ar[r]^{-\{i\}}\ar[d]			&D^-_{Sp}(S)\ar[d]\\
		\widetilde{DM}^{eff,-}_{Sp}(S)\ar[r]^{-\{i\}}	&\widetilde{DM}^{eff,-}_{Sp}(S)
	}.
\]
\end{enumerate}
\end{proposition}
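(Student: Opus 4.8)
My plan is to reduce parts (1)--(4) to a single mechanism, exactly as in the proof of Proposition \ref{derived2} for sheaves: I exhibit each functor on $D^-_{Sp}(S)$ as an instance of the functor $\phi^*$ attached to a well-chosen quadruple $(Y,S,T,j)$, and then invoke Proposition \ref{spderived1} together with the universal property of Verdier localization recorded in \cite[Proposition 4.6.2]{Kra}. For part (2) I take the quadruple $(S,S,T,0)$, so that the inner pullback is trivial and $g=f$, giving $\phi^*=f_{\#}$; for part (3) I take $(S,T,S,0)$, giving $\phi^*=f^*$ viewed as a functor $Sp(T)\to Sp(S)$; and for part (4) I take $(S,S,S,i)$, giving $\phi^*=-\{-i\}$. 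In every case Proposition \ref{spderived1} shows that $\phi^*$ preserves the thick subcategory $\mathscr{E}_{\mathbb{A}}$, hence sends $\mathbb{A}^1$-equivalences to $\mathbb{A}^1$-equivalences, and \cite[Proposition 4.6.2]{Kra} then furnishes the induced functor making the displayed square commute.

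For part (1) I first note that tensoring with a single generator $(\Sigma^{\infty}\widetilde{\mathbb{Z}}_S(Y))\{-j\}$ coincides with $\phi^*$ for the quadruple $(Y,S,S,j)$, by the spectral projection formula and the identifications of the shift functors collected in Section \ref{symmetric spectra}; hence tensoring with a generator preserves $\mathbb{A}^1$-equivalences by Proposition \ref{spderived1}. To pass from generators to an arbitrary $K\in D^-_{Sp}(S)$ I would run the bootstrapping argument of the third paragraph of \cite[Lemma 9.5]{MVW}, which reduces the claim that $K\otimes_S-$ preserves $\mathbb{A}^1$-equivalences to the case of a generator. The exactness of $K\otimes_S-$ is inherited termwise from the exactness already established on $D^-_{Sp}(S)$, and a final application of \cite[Proposition 4.6.2]{Kra} produces the descended tensor product.

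The genuinely new point is part (5). The construction underlying Proposition \ref{spderived1} only manufactures functors of the form $\{-j\}\circ g_{\#}\circ f^*$, all of which shift levels downward; the right adjoint $-\{i\}$ is therefore \emph{not} of the form $\phi^*$ and has to be treated by hand. Here I would use the levelwise criterion of Proposition \ref{levelwise equivalence}: a morphism $f$ is a levelwise $\mathbb{A}^1$-equivalence if and only if $\Omega^{\infty}(f\{n\})$ is an $\mathbb{A}^1$-equivalence in $D^-(S)$ for every $n\geq 0$. Since $-\{i\}$ only re-indexes the levels of a spectrum, applying it to a levelwise $\mathbb{A}^1$-equivalence keeps every level an $\mathbb{A}^1$-equivalence, so $-\{i\}$ preserves levelwise $\mathbb{A}^1$-equivalences and hence $\mathscr{E}_{\mathbb{A}}$; \cite[Proposition 4.6.2]{Kra} then gives the descent. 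The main obstacle I anticipate is precisely this compatibility: one must check carefully that the test functors $\Omega^{\infty}\circ(-\{n\})$ interact with $-\{i\}$ by the expected shift in the index $n$, since this is the only place where the asymmetry between the two shift functors enters, and it is what prevents a direct appeal to Proposition \ref{spderived1}.
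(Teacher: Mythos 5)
Your proposal is correct and follows essentially the same route as the paper: parts (1)--(4) are reduced to instances of the functor $\phi^*$ for suitable quadruples together with Proposition \ref{spderived1} and \cite[Proposition 4.6.2]{Kra} (with the bootstrapping step of \cite[Lemma 9.5]{MVW} for the tensor product), while part (5) is handled via the levelwise criterion of Proposition \ref{levelwise equivalence}. The paper's own proof is only a terse pointer to these same ingredients, so your write-up simply supplies the details it omits.
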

\begin{proof}
For (1), (2), (3), take \(j=0\) in the definition of \(\phi\) and proceed as Proposition \ref{derived2} by using Proposition \ref{spderived1}. For (4), take the quadruple \((S,S,S,i)\) and use Proposition \ref{spderived1}. And (5) holds by Proposition \ref{levelwise equivalence}.
\end{proof}
\begin{proposition}\label{spjing}
\begin{enumerate}
\item Let \(f:S\longrightarrow T\) be a smooth morphism in \(Sm/k\). We have an adjoint pair
\[f_{\#}:\widetilde{DM}^{eff,-}_{Sp}(S)\rightleftharpoons\widetilde{DM}^{eff,-}_{Sp}(T):f^*.\]
\item We have an adjoint pair
\[-\{-i\}:\widetilde{DM}^{eff,-}_{Sp}(S)\rightleftharpoons\widetilde{DM}^{eff,-}_{Sp}(S):-\{i\}.\]
\end{enumerate}
\end{proposition}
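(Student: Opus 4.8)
The plan is to push the adjunctions of Proposition \ref{spderived adjunction} down from the derived categories $D^-_{Sp}$ to their localizations $\widetilde{DM}^{eff,-}_{Sp}$, in exactly the way Proposition \ref{jing} descended Proposition \ref{derived adjunction} to $\widetilde{DM}^{eff,-}$. First I would record that all four functors involved are already available on the localized categories: $f_{\#}$ and $f^*$ descend by Proposition \ref{spderived2}, (2) and (3), while $-\{-i\}$ and $-\{i\}$ descend by Proposition \ref{spderived2}, (4) and (5). Thus in each pair both members are defined as exact functors on $\widetilde{DM}^{eff,-}_{Sp}$, and the task reduces to producing a natural bijection between the two relevant $Hom$-groups.

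For that bijection I would reproduce the roof construction used in the proof of Proposition \ref{derived adjunction}, now interpreting morphisms in $\widetilde{DM}^{eff,-}_{Sp}$ as roofs with respect to the class $W_{\mathbb{A}}$ of levelwise $\mathbb{A}^1$-equivalences. Writing a morphism as a right (resp. left) roof, applying the appropriate functor to its legs, and converting the forward leg via the adjunction of Proposition \ref{spderived adjunction}, one obtains the two maps $\theta$ and $\xi$ of loc. cit.; that they are mutually inverse follows by the same formal verification as before, and the triangle identities descend from the corresponding statements on $D^-_{Sp}$. The single new ingredient required is that each \emph{right} adjoint sends $W_{\mathbb{A}}$ into $W_{\mathbb{A}}$, so that applying it to the backward ($W_{\mathbb{A}}$-)leg of a roof again yields a roof in the localized category.

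This preservation of $\mathbb{A}^1$-equivalences is the heart of the matter, and the step I expect to require the most care. For $f^*$ it is supplied by Proposition \ref{spderived1}: the functor $\phi^*$ realizing $f^*$ carries $\mathscr{E}_{\mathbb{A}}$ into $\mathscr{E}_{\mathbb{A}}$, hence $W_{\mathbb{A}}$ into $W_{\mathbb{A}}$. For $-\{i\}$ the difficulty is genuine, since a positive shift need not send the generators $(\Sigma^{\infty}Cone(\cdots))\{-t\}$ of $\mathscr{E}_{\mathbb{A}}$ back into $\mathscr{E}_{\mathbb{A}}$; here one must argue instead through the levelwise criterion of Proposition \ref{levelwise equivalence}, using that $-\{i\}$ merely reindexes the levels of a spectrum and therefore preserves levelwise $\mathbb{A}^1$-equivalences even though it does not obviously preserve $\mathscr{E}_{\mathbb{A}}$. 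Granting these two facts, both adjoint pairs of the proposition follow.
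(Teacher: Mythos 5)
Your proposal is correct and follows essentially the same route as the paper, whose proof of this proposition is simply the phrase ``the same as Proposition \ref{derived adjunction}'': the adjunction is transported to the localized categories by the roof construction, and the only point needing attention is that every functor involved preserves \(W_{\mathbb{A}}\), which the paper supplies exactly as you do --- via Proposition \ref{spderived1} for the instances of \(\phi^*\) and via the levelwise criterion of Proposition \ref{levelwise equivalence} for \(-\{i\}\) (as recorded in Proposition \ref{spderived2}, (5)). Your explicit identification of the right adjoints' preservation of \(W_{\mathbb{A}}\) as the one genuinely new ingredient is a faithful, and somewhat more careful, rendering of the paper's terse argument.
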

\begin{proof}
The same as Proposition \ref{derived adjunction}.
\end{proof}
\begin{proposition}
\begin{enumerate}
\item There is an exact functor
\[\Sigma^{\infty}:\widetilde{DM}^{eff,-}(S)\longrightarrow\widetilde{DM}^{eff,-}_{Sp}(S)\]
determined by the following commutative diagram
\[
	\xymatrix
	{
		D^-(S)\ar[r]^{\Sigma^{\infty}}\ar[d]				&D^-_{Sp}(S)\ar[d]\\
		\widetilde{DM}^{eff,-}(S)\ar[r]^{\Sigma^{\infty}}	&\widetilde{DM}^{eff,-}_{Sp}(S)\\
	}
\]
\item There is an exact functor
\[\Omega^{\infty}:\widetilde{DM}^{eff,-}_{Sp}(S)\longrightarrow\widetilde{DM}^{eff,-}(S)\]
determined by the following commutative diagram
\[
	\xymatrix
	{
		D^-_{Sp}(S)\ar[r]^{\Omega^{\infty}}\ar[d]				&D^-(S)\ar[d]\\
		\widetilde{DM}^{eff,-}_{Sp}(S)\ar[r]^{\Omega^{\infty}}	&\widetilde{DM}^{eff,-}(S)\\
	}
\]
\end{enumerate}
\end{proposition}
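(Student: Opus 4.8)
The plan is to follow verbatim the localization argument used in Proposition \ref{derived1} and Proposition \ref{spderived1}. Both $\Sigma^{\infty}:D^-(S)\longrightarrow D^-_{Sp}(S)$ and $\Omega^{\infty}:D^-_{Sp}(S)\longrightarrow D^-(S)$ are already exact functors between the bounded-above derived categories, and the effective motive categories are the Verdier localizations at the classes $W_{\mathbb{A}}$. Hence, by \cite[Proposition 4.6.2]{Kra}, it is enough to show that each functor carries $\mathbb{A}^1$-weak equivalences to $\mathbb{A}^1$-weak equivalences, equivalently that it maps the thick subcategory $\mathscr{E}_{\mathbb{A}}$ of its source into the corresponding $\mathscr{E}_{\mathbb{A}}$ of its target; the induced functors and the commutativity of the two displayed squares then come for free from the universal property, and exactness is automatic. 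As in Proposition \ref{derived1}, for a functor $F$ I would introduce the full subcategory $\mathbb{E}$ of objects $K$ with $F(K)\in\mathscr{E}_{\mathbb{A}}$. Since $F$ is exact and preserves arbitrary direct sums (for $\Sigma^{\infty}$ because it is a left adjoint by Proposition \ref{sigma adjunction}, for $\Omega^{\infty}=ev_0\circ U$ because both $ev_0$ and the forgetful functor $U$ compute direct sums levelwise), $\mathbb{E}$ is a thick subcategory closed under arbitrary direct sums. It therefore suffices to check that the distinguished generators of the source $\mathscr{E}_{\mathbb{A}}$ lie in $\mathbb{E}$.

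For part (1) this is immediate: the generators of $\mathscr{E}_{\mathbb{A}}\subseteq D^-(S)$ are the cones $Cone(\widetilde{\mathbb{Z}}_S(X\times_k\mathbb{A}^1)\longrightarrow\widetilde{\mathbb{Z}}_S(X))$, and applying $\Sigma^{\infty}$ produces $\Sigma^{\infty}Cone(\widetilde{\mathbb{Z}}_S(X\times_k\mathbb{A}^1)\longrightarrow\widetilde{\mathbb{Z}}_S(X))$, which is precisely the $i=0$ instance of the generators of $\mathscr{E}_{\mathbb{A}}\subseteq D^-_{Sp}(S)$. Thus $\Sigma^{\infty}(\mathscr{E}_{\mathbb{A}})\subseteq\mathscr{E}_{\mathbb{A}}$ and the functor descends.

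The only genuinely non-routine point is part (2), and I expect the shift computation to be the main obstacle. Here the generators of $\mathscr{E}_{\mathbb{A}}\subseteq D^-_{Sp}(S)$ are the objects $(\Sigma^{\infty}C)\{-i\}$ with $C=Cone(\widetilde{\mathbb{Z}}_S(X\times_k\mathbb{A}^1)\longrightarrow\widetilde{\mathbb{Z}}_S(X))$ and $i\geq 0$, and I must evaluate $\Omega^{\infty}$ on them. Using the level description recorded in Section \ref{symmetric spectra}, the level-$n$ term of $(\Sigma^{\infty}C)\{-i\}$ is $\mathbbm{1}_S\{1\}^{\otimes(n-i)}\otimes_S C$ for $n\geq i$ and $0$ otherwise; since $\Omega^{\infty}=ev_0\circ U$ reads off the level-$0$ term, this forces $\Omega^{\infty}((\Sigma^{\infty}C)\{-i\})=0$ whenever $i>0$, and $\Omega^{\infty}(\Sigma^{\infty}C)=C$ when $i=0$ by the identity $\Omega^{\infty}\Sigma^{\infty}=\mathrm{id}$ of Proposition \ref{ff}. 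In the first case the image is $0\in\mathscr{E}_{\mathbb{A}}$, and in the second it is a generator of $\mathscr{E}_{\mathbb{A}}\subseteq D^-(S)$; either way the generators land in $\mathbb{E}$. Consequently $\Omega^{\infty}(\mathscr{E}_{\mathbb{A}})\subseteq\mathscr{E}_{\mathbb{A}}$, so $\Omega^{\infty}$ preserves $\mathbb{A}^1$-weak equivalences and induces the claimed exact functor. Once this vanishing is established, everything else reduces to the same formal localization step as before.
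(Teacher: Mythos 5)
Your proof is correct, but it takes a genuinely different route from the paper's. The paper disposes of both parts in two lines by invoking Proposition \ref{levelwise equivalence}: a morphism of spectra is a levelwise $\mathbb{A}^1$-equivalence if and only if each level $f_n=\Omega^{\infty}(f\{n\})$ is an $\mathbb{A}^1$-equivalence, so part (2) is literally the $n=0$ instance of that criterion, and part (1) follows because the $n$-th level of $\Sigma^{\infty}f$ is $\widetilde{\mathbb{Z}}_S(\mathbb{G}_m^{\wedge 1})^{\otimes n}\otimes_Sf$, which is again an $\mathbb{A}^1$-equivalence since tensoring preserves them (Proposition \ref{derived2}). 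You instead rerun the generator-checking localization argument of Propositions \ref{derived1} and \ref{spderived1}: form the thick, sum-closed preimage $\mathbb{E}$ of the target $\mathscr{E}_{\mathbb{A}}$ and verify that the generators of the source $\mathscr{E}_{\mathbb{A}}$ land in it. For (1) this is immediate ($i=0$ generators go to $i=0$ generators); for (2) it forces the computation of $\Omega^{\infty}((\Sigma^{\infty}C)\{-i\})$, and your observation that the level-$0$ term of a $\{-i\}$-shift vanishes for $i>0$ (so the image is $0$, and is $C$ for $i=0$ by $\Omega^{\infty}\Sigma^{\infty}=\mathrm{id}$) is exactly the right point — though, strictly, the level-$n$ term carries an induced $S_n$-action rather than being literally $\mathbbm{1}_S\{1\}^{\otimes(n-i)}\otimes_SC$, which is harmless at level $0$. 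The paper's route buys brevity at the cost of resting on the $C_*$-machinery behind Propositions \ref{splocality} and \ref{levelwise equivalence}; your route avoids that machinery entirely and makes the action of $\Omega^{\infty}$ on the localizing generators explicit. Both are complete proofs.
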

\begin{proof}
\begin{enumerate}
\item This is because tensor products with \(\widetilde{\mathbb{Z}}_S(\mathbb{G}_m^{\wedge 1})\) preserves \(\mathbb{A}^1\)-equivalences and Proposition \ref{levelwise equivalence}.
\item This follows by Proposition \ref{levelwise equivalence}.
\end{enumerate}
\end{proof}
\begin{proposition}
There is an adjoint pair
\[\Sigma^{\infty}:\widetilde{DM}^{eff,-}(S)\rightleftharpoons\widetilde{DM}^{eff,-}_{Sp}(S):\Omega^{\infty}.\]
\end{proposition}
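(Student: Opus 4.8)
The plan is to reproduce the argument of Proposition \ref{derived adjunction} verbatim, now carried out inside the localized categories, with \(\Sigma^{\infty}\) and \(\Omega^{\infty}\) playing the roles of \(f_{\#}\) and \(f^*\). Two inputs are already available. First, by Proposition \ref{sigma adjunction} there is an adjunction \(\Sigma^{\infty}\dashv\Omega^{\infty}\) at the level of \(D^-(S)\) and \(D^-_{Sp}(S)\), so the bijection \(Hom_{D^-_{Sp}(S)}(\Sigma^{\infty}K,L)\cong Hom_{D^-(S)}(K,\Omega^{\infty}L)\) is at our disposal for the numerators of roofs. Second, by the preceding proposition both functors descend to exact functors between \(\widetilde{DM}^{eff,-}(S)\) and \(\widetilde{DM}^{eff,-}_{Sp}(S)\); in particular \(\Sigma^{\infty}\) carries \(W_{\mathbb{A}}\) into the spectral \(W_{\mathbb{A}}\), and \(\Omega^{\infty}\) carries the spectral \(W_{\mathbb{A}}\) back into \(W_{\mathbb{A}}\) (this last point being Proposition \ref{levelwise equivalence}). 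This is exactly the preservation of weak equivalences that made the roof construction in Proposition \ref{derived adjunction} go through.

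Concretely, I would first define
\[\theta:Hom_{\widetilde{DM}^{eff,-}_{Sp}(S)}(\Sigma^{\infty}K,L)\longrightarrow Hom_{\widetilde{DM}^{eff,-}(S)}(K,\Omega^{\infty}L)\]
by representing a morphism as a right roof \(\Sigma^{\infty}K\xrightarrow{a}R\xleftarrow{b}L\) with \(b\in W_{\mathbb{A}}\), transporting \(a\) across the \(D^-\)-adjunction to \(a':K\to\Omega^{\infty}R\), and forming the right roof \(K\xrightarrow{a'}\Omega^{\infty}R\xleftarrow{\Omega^{\infty}b}\Omega^{\infty}L\); here \(\Omega^{\infty}b\in W_{\mathbb{A}}\) because \(\Omega^{\infty}\) preserves \(\mathbb{A}^1\)-equivalences, and well-definedness uses the exactness of \(\Omega^{\infty}\) exactly as the exactness of \(f^*\) was used before. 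I would then define the inverse \(\xi\) on a morphism \(t:K\to\Omega^{\infty}L\) by writing \(t\circ p\) (with \(p:P\to K\) a projective resolution) as a left roof \(P\xleftarrow{a}R\xrightarrow{b}\Omega^{\infty}L\) through a projective \(R\), pushing \(b\) across the adjunction to \(b':\Sigma^{\infty}R\to L\), and setting \(\xi(t)\) to be the left roof \(\Sigma^{\infty}P\xleftarrow{\Sigma^{\infty}a}\Sigma^{\infty}R\xrightarrow{b'}L\); here one uses that \(\Sigma^{\infty}\) takes acyclic projective complexes to acyclic projective complexes (the proposition preceding Proposition \ref{sigma adjunction}) to keep \(\Sigma^{\infty}a\) a weak equivalence. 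The last step is the routine verification that \(\theta\) and \(\xi\) are mutually inverse and natural, which is formally identical to the end of Proposition \ref{derived adjunction}.

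A cleaner variant, which I would use to shorten the bookkeeping, is to compute both Hom groups through the \(\mathbb{A}^1\)-localization functor \(C_*\). Since \(\Omega^{\infty}=ev_0\circ U\) is evaluation at level zero and \(C_*\) is defined termwise, one has \(\Omega^{\infty}C_*\cong C_*\Omega^{\infty}\); combined with Proposition \ref{locality} and Proposition \ref{splocality}, this identifies \(Hom_{\widetilde{DM}^{eff,-}(S)}(K,\Omega^{\infty}L)\) with a Hom group in \(D^-(S)\) between \(\mathbb{A}^1\)-local objects, to which Proposition \ref{sigma adjunction} applies directly. \textbf{The main obstacle} is precisely the well-definedness under the calculus of fractions: one must check that each transported roof keeps its denominator in \(W_{\mathbb{A}}\) and that the assignments are independent of the chosen representative and of the projective resolution. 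All of this reduces to the single fact that \(\Sigma^{\infty}\) and \(\Omega^{\infty}\) preserve weak equivalences and acyclic projectives, which is already secured by the preceding propositions; once that is in hand, no new difficulty arises beyond what was met in Proposition \ref{derived adjunction}.
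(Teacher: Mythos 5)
Your proposal is correct and takes essentially the same approach as the paper: the paper's own proof consists of the single line ``The same as Proposition \ref{derived adjunction}'', and your spelled-out roof construction --- transporting numerators across the \(D^-\)-level adjunction of Proposition \ref{sigma adjunction} and checking that denominators stay in \(W_{\mathbb{A}}\) via the preservation of \(\mathbb{A}^1\)-weak equivalences by \(\Sigma^{\infty}\) and \(\Omega^{\infty}\) (Proposition \ref{levelwise equivalence}) --- is exactly what that reference unpacks to.
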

\begin{proof}
The same as Proposition \ref{derived adjunction}.
\end{proof}
\begin{proposition}\label{ff1}
The functor \(\Sigma^{\infty}:\widetilde{DM}^{eff,-}(S)\longrightarrow\widetilde{DM}^{eff,-}_{Sp}(S)\) is fully faithful.
\end{proposition}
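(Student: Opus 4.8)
The plan is to deduce full faithfulness from the adjunction $\Sigma^{\infty}\dashv\Omega^{\infty}$ on the localized categories established in the preceding proposition, by showing that its unit $\eta\colon\mathrm{id}\to\Omega^{\infty}\Sigma^{\infty}$ is a natural isomorphism; recall that for an adjunction the left adjoint is fully faithful precisely when the unit is invertible. Equivalently, for $K,L\in\widetilde{DM}^{eff,-}(S)$ I would use the adjunction isomorphism $Hom_{\widetilde{DM}^{eff,-}_{Sp}(S)}(\Sigma^{\infty}K,\Sigma^{\infty}L)\cong Hom_{\widetilde{DM}^{eff,-}(S)}(K,\Omega^{\infty}\Sigma^{\infty}L)$ and reduce the whole statement to the single claim that $\Omega^{\infty}\Sigma^{\infty}L\cong L$ in $\widetilde{DM}^{eff,-}(S)$, naturally and via the unit.

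First I would record that everything is already known one level down: by Proposition \ref{ff} the functor $\Sigma^{\infty}\colon D^-(S)\to D^-_{Sp}(S)$ is fully faithful, and the proof there exhibits the stronger pointwise identity $\Omega^{\infty}\Sigma^{\infty}P=P$ for every projective complex $P$. Hence on $D^-(S)$ the unit of the adjunction of Proposition \ref{sigma adjunction} is already an isomorphism. Next I would invoke that the functors $\Sigma^{\infty}$ and $\Omega^{\infty}$ on the motivic categories are, by their defining commutative squares, induced from the corresponding functors on $D^-(S)$ and $D^-_{Sp}(S)$ through the localization functors $\lambda\colon D^-(S)\to\widetilde{DM}^{eff,-}(S)$ and $\lambda_{Sp}$. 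Consequently $\Omega^{\infty}\Sigma^{\infty}$ on $\widetilde{DM}^{eff,-}(S)$ is the functor induced by $\Omega^{\infty}\Sigma^{\infty}$ on $D^-(S)$; concretely, choosing a projective resolution $P\to L$ so that $L\cong\lambda(P)$, one gets $\Omega^{\infty}\Sigma^{\infty}L\cong\lambda(\Omega^{\infty}\Sigma^{\infty}P)=\lambda(P)\cong L$. The point to verify is that this isomorphism is precisely the whiskering $\lambda\eta^{D^-}$ of the $D^-$-level unit, so that it agrees with the motivic unit $\eta\,\lambda$; since $\lambda$ is essentially surjective and $\eta^{D^-}$ is invertible, this forces $\eta$ to be an isomorphism on all of $\widetilde{DM}^{eff,-}(S)$.

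I expect the main obstacle to be exactly this compatibility between the two units, i.e.\ checking that the motivic adjunction really is the localization of the adjunction on $D^-$ and that its unit is the image of $\eta^{D^-}$. A safe alternative, parallel to the proof of Proposition \ref{triangulated1}, is to pass to $C_*$ and compute $Hom$-groups directly: using Proposition \ref{locality} one has $Hom_{\widetilde{DM}^{eff,-}(S)}(K,L)\cong Hom_{D^-(S)}(C_*K,C_*L)$, and using Proposition \ref{splocality} one has $Hom_{\widetilde{DM}^{eff,-}_{Sp}(S)}(\Sigma^{\infty}K,\Sigma^{\infty}L)\cong Hom_{D^-_{Sp}(S)}(C_*\Sigma^{\infty}K,C_*\Sigma^{\infty}L)$. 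Here the delicate step, and the one I would spend the most care on, is that $\Sigma^{\infty}C_*K$ need not be levelwise $\mathbb{A}^1$-local, since tensoring with $\mathbbm{1}_S\{1\}^{\otimes n}$ can destroy locality; one therefore compares $C_*\Sigma^{\infty}K$ with $\Sigma^{\infty}C_*K$ through the levelwise $\mathbb{A}^1$-equivalence of Proposition \ref{levelwise equivalence} rather than through an honest isomorphism, and then feeds the result through the fully faithful bottom row supplied by Proposition \ref{ff}. Either route ends by assembling these identifications into a single commutative square whose top arrow is $\Sigma^{\infty}$, forcing it to be bijective on $Hom$-groups.
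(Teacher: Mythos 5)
Your proposal is correct and follows essentially the same route as the paper, whose proof of this proposition simply reads ``the same as Proposition \ref{ff}'': one uses the adjunction $\Sigma^{\infty}\dashv\Omega^{\infty}$ from the preceding proposition together with the pointwise identity $\Omega^{\infty}\Sigma^{\infty}Q=Q$ on projective resolutions to see that the unit is invertible. Your concern about compatibility of the motivic unit with the $D^-$-level unit is legitimate but is exactly what the paper's construction of the induced functors (via the defining commutative squares over projective resolutions) is implicitly relying on, so no new ingredient is needed.
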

\begin{proof}
The same as Proposition \ref{ff}.
\end{proof}
\begin{proposition}
\begin{enumerate}
\item We have a commutative diagram (up to a canonical isomorphism)
\[
	\xymatrix
	{
		\widetilde{DM}^{eff,-}(S)\times\widetilde{DM}^{eff,-}(S)\ar[r]^-{\otimes_S}\ar[d]_{\Sigma^{\infty}\times\Sigma^{\infty}}	&\widetilde{DM}^{eff,-}(S)\ar[d]_{\Sigma^{\infty}}\\
		\widetilde{DM}^{eff,-}_{Sp}(S)\times\widetilde{DM}^{eff,-}_{Sp}(S)\ar[r]^-{\otimes_S}								&\widetilde{DM}^{eff,-}_{Sp}(S)
	}.
\]
\item Suppose \(f:S\longrightarrow T\) is a morphism in \(Sm/k\). We have a commutative diagram (up to a canonical isomorphism)
\[
	\xymatrix
	{
		\widetilde{DM}^{eff,-}(T)\ar[r]^{f^*}\ar[d]_{\Sigma^{\infty}}	&\widetilde{DM}^{eff,-}(S)\ar[d]_{\Sigma^{\infty}}\\
		\widetilde{DM}^{eff,-}_{Sp}(T)\ar[r]^{f^*}					&\widetilde{DM}^{eff,-}_{Sp}(S)
	}
\]
\item Suppose \(f:S\longrightarrow T\) is a smooth morphism in \(Sm/k\). We have a commutative diagram (up to a canonical isomorphism)
\[
	\xymatrix
	{
		\widetilde{DM}^{eff,-}(S)\ar[r]^{f_{\#}}\ar[d]_{\Sigma^{\infty}}	&\widetilde{DM}^{eff,-}(T)\ar[d]_{\Sigma^{\infty}}\\
		\widetilde{DM}^{eff,-}_{Sp}(S)\ar[r]^{f_{\#}}					&\widetilde{DM}^{eff,-}_{Sp}(T)
	}
\]
\end{enumerate}
\end{proposition}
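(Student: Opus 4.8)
The plan is to reduce each of the three squares to its counterpart on bounded-above derived categories, which has already been recorded (the proposition comparing $\Sigma^{\infty}$ with $\otimes_S$, $f^*$ and $f_{\#}$ at the level of $D^-(S)$ and $D^-_{Sp}(S)$, proved there by direct computation), and then to descend that commutativity along the localization functors
\[
	L\colon D^-(S)\longrightarrow\widetilde{DM}^{eff,-}(S),\qquad L_{Sp}\colon D^-_{Sp}(S)\longrightarrow\widetilde{DM}^{eff,-}_{Sp}(S).
\]
The key observation is that every functor appearing in the three squares was \emph{constructed} so as to commute, up to canonical isomorphism, with these localization functors: this is exactly the content of the defining diagrams in Proposition \ref{derived2} and Proposition \ref{spderived2} for $\otimes_S$, $f^*$ and $f_{\#}$, together with the defining diagrams for $\Sigma^{\infty}$ on the localized categories.

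First I would treat (1). Since $L$ is essentially surjective (indeed the identity on objects, as $\widetilde{DM}^{eff,-}(S)=D^-(S)[W_{\mathbb{A}}^{-1}]$), any pair of objects of $\widetilde{DM}^{eff,-}(S)$ is of the form $L(K),L(M)$ with $K,M\in D^-(S)$. Chaining the canonical isomorphisms gives
\[
	\Sigma^{\infty}(L(K)\otimes_S L(M))\cong\Sigma^{\infty}L(K\otimes_S M)\cong L_{Sp}\Sigma^{\infty}(K\otimes_S M),
\]
and then, invoking the $D^-$-level square and the defining diagrams once more,
\[
	L_{Sp}\Sigma^{\infty}(K\otimes_S M)\cong L_{Sp}(\Sigma^{\infty}K\otimes_S\Sigma^{\infty}M)\cong\Sigma^{\infty}L(K)\otimes_S\Sigma^{\infty}L(M).
\]
This exhibits the desired isomorphism objectwise, and naturality holds because each step is natural in $K$ and $M$. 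Parts (2) and (3) follow by the identical pasting, replacing the $\otimes_S$-diagrams by the $f^*$- and $f_{\#}$-diagrams and invoking the corresponding parts of the $D^-$-level proposition.

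The only point requiring care is the \emph{descent of the natural isomorphisms}, as opposed to the mere objectwise isomorphisms. Because all the localized functors are produced by the universal property of localization (\cite[Proposition 4.6.2]{Kra}), a natural transformation between two composites $D^-(\cdot)\to\widetilde{DM}^{eff,-}_{Sp}(\cdot)$ that factor through $L$ on the source corresponds bijectively to a natural transformation between the induced functors on $\widetilde{DM}^{eff,-}(\cdot)$; applying this to the canonical isomorphism of the $D^-$-level proposition produces a genuine natural isomorphism downstairs. I expect this bookkeeping of $2$-cells — verifying that the several canonical isomorphisms compose coherently — to be the only nontrivial part, and it is formal once one notes that every functor involved is exact and commutes with $L$ and $L_{Sp}$ by construction.
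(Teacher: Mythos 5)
Your proposal is correct and is essentially the argument the paper intends: the paper's own proof is just ``this follows by direct computations,'' and since every functor in the three squares ($\otimes_S$, $f^*$, $f_{\#}$, $\Sigma^{\infty}$) is \emph{defined} on the localized categories by a commutative square over the corresponding functor on $D^-$ resp.\ $D^-_{Sp}$, the only possible computation is exactly your pasting of the already-recorded $D^-$-level squares followed by descent of the canonical isomorphisms along the localization functors via the universal property of localization. Your closing remark correctly isolates the one point needing care (that $2$-cells, not just objectwise isomorphisms, descend, including for the bifunctor $\otimes_S$ where one uses that localizing a product category at a product of classes yields the product of the localizations), and that point is handled exactly as you say.
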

\begin{proof}
This follows by direct computations.
\end{proof}

In \cite[Proposition 3.5]{CD} and \cite[Proposition 5.2.16]{CD1}, they defined \(\widetilde{DM}^{eff}_{Sp}(S)\) as the the Verdier localization of \(D_{Sp}(S)\) with respect to homotopy invariant conditions. So the localization induces a triangulated structure on \(\widetilde{DM}^{eff}_{Sp}(S)\) (See \cite[Lemma 4.3.1]{Kra}). And this is the triangulated structure we will impose on \(\widetilde{DM}^{eff}_{Sp}(S)\).
\begin{proposition}\label{ff2}
There is a fully faithful exact functor \(\widetilde{DM}^{eff,-}_{Sp}(S)\longrightarrow\widetilde{DM}^{eff}_{Sp}(S)\) which is determined by the commutative diagram
\[
	\xymatrix
	{
		D^-_{Sp}(S)\ar[r]\ar[d]				&D_{Sp}(S)\ar[d]\\
		\widetilde{DM}^{eff,-}_{Sp}(S)\ar[r]	&\widetilde{DM}^{eff}_{Sp}(S)
	}.
\]
\end{proposition}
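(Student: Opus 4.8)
The plan is to mimic the proof of Proposition \ref{triangulated1} line for line, substituting the spectral homotopy-localization functor $C_*$ of Proposition \ref{splocality} for the functor used there. First I would produce the functor itself. The lower horizontal arrow is to be induced from the composite $D^-_{Sp}(S)\longrightarrow D_{Sp}(S)\longrightarrow\widetilde{DM}^{eff}_{Sp}(S)$ by the universal property of the Verdier localization $\widetilde{DM}^{eff,-}_{Sp}(S)=D^-_{Sp}(S)[W_{\mathbb{A}}^{-1}]$: it suffices to check that every morphism of $W_{\mathbb{A}}$, i.e. every morphism whose cone lies in $\mathscr{E}_{\mathbb{A}}$, is sent to an isomorphism in $\widetilde{DM}^{eff}_{Sp}(S)$. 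This holds because the generators $(\Sigma^{\infty}Cone(\widetilde{\mathbb{Z}}_S(X\times_k\mathbb{A}^1)\longrightarrow\widetilde{\mathbb{Z}}_S(X)))\{-i\}$ of $\mathscr{E}_{\mathbb{A}}$ become zero in $\widetilde{DM}^{eff}_{Sp}(S)$ under the homotopy-invariance relations defining the latter as a localization of $D_{Sp}(S)$. Then \cite[Proposition 4.6.2]{Kra} supplies the induced functor, its exactness, and the commutativity of the square.

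For full faithfulness I would fix $K,L\in\widetilde{DM}^{eff,-}_{Sp}(S)$ and assemble the commutative diagram of abelian groups
\[
	\xymatrix
	{
		Hom_{\widetilde{DM}^{eff,-}_{Sp}}(K,L)\ar[r]^-u\ar[d]_{\alpha}	&Hom_{\widetilde{DM}^{eff,-}_{Sp}}(C_*K,C_*L)\ar[d]	&Hom_{D^-_{Sp}}(C_*K,C_*L)\ar[l]_-{\gamma}\ar[d]_{\cong}\\
		Hom_{\widetilde{DM}^{eff}_{Sp}}(K,L)\ar[r]^-v				&Hom_{\widetilde{DM}^{eff}_{Sp}}(C_*K,C_*L)	&Hom_{D_{Sp}}(C_*K,C_*L)\ar[l]_-{\beta}
	},
\]
where the horizontal arrows labelled by unnamed morphisms are induced by the natural maps $K\longrightarrow C_*K$ and $L\longrightarrow C_*L$. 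By Proposition \ref{splocality}, (1) these natural maps are levelwise $\mathbb{A}^1$-equivalences, hence become isomorphisms both in $\widetilde{DM}^{eff,-}_{Sp}(S)$ and, after passing to the unbounded localization, in $\widetilde{DM}^{eff}_{Sp}(S)$; this makes $u$ and $v$ isomorphisms. Since $C_*K$ and $C_*L$ are levelwise $\mathbb{A}^1$-local by Proposition \ref{splocality}, (2), the localization maps $\gamma$ and $\beta$ are isomorphisms. The right-hand vertical arrow is an isomorphism because the natural functor $D^-_{Sp}(S)\longrightarrow D_{Sp}(S)$ is fully faithful (the spectral counterpart of Proposition \ref{triangulated}, already established in the excerpt). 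A diagram chase then forces $\alpha$, which is precisely the map on Hom-groups induced by our functor, to be an isomorphism, proving full faithfulness.

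The hard part will not be the diagram chase but the identification hidden in the maps $v$ and $\beta$: one must know that the hand-built bounded-above functor $C_*$ is compatible with the model-categorical localization defining $\widetilde{DM}^{eff}_{Sp}(S)$, i.e. that a levelwise $\mathbb{A}^1$-local object in $D^-_{Sp}(S)$ remains local (fibrant) for the structure on $D_{Sp}(S)$ and therefore computes the Hom-groups of the \emph{unbounded} localized category. This is the spectral analogue of the compatibility invoked implicitly in Proposition \ref{triangulated}, and I would extract it from \cite[5.3.20]{CD1} together with the comparison assertion quoted after Proposition \ref{ff}, rather than reprove it; everything else reduces to the already-proven Proposition \ref{splocality}.
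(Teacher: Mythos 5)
Your proposal is correct and is essentially the paper's own argument: the paper's proof of this proposition is literally ``the same as Proposition \ref{triangulated1} by using Proposition \ref{splocality},'' which is exactly the substitution you carry out, including the induced functor via \cite[Proposition 4.6.2]{Kra} and the four-isomorphism diagram chase. Your closing remark about the compatibility of $C_*$-locality with the unbounded localization is a fair flag of a point the paper also leaves implicit, but it does not change the route.
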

\begin{proof}
The same as Proposition \ref{triangulated1} by using Proposition \ref{splocality}.
\end{proof}
And there is also a compability result between the natural inclusion and \(\otimes_S\), \(f^*\), \(f_{\#}\), \(\Sigma^{\infty}\), \(-\{-i\},i\geq 0\) like Proposition \ref{-}.
\subsubsection{On Categories of Stabilized Motives}\label{stable}
\begin{definition}\label{st}
(See \cite[5.3.23]{CD1}) Define \(\mathscr{E}_{\Omega}\) to be the smallest thick subcategory of \(\widetilde{DM}^{eff,-}_{Sp}(S)\) such that
\begin{enumerate}
\item\(Cone((\Sigma^{\infty}\widetilde{\mathbb{Z}}_S(X)\{1\}\{-1\}\longrightarrow\Sigma^{\infty}\widetilde{\mathbb{Z}}_S(X))\{-i\})\in\mathscr{E}_{\Omega}\)
for every \(X\in Sm/S, i\geq 0\).
\item\(\mathscr{E}_{\Omega}\) is closed under arbitrary direct sums.
\end{enumerate}
Set \(W_{\Omega}\) be the class of morphisms in \(\widetilde{DM}^{eff,-}_{Sp}(S)\) whose cone is in \(\mathscr{E}_{\Omega}\). Define
\[\widetilde{DM}^-(S)=\widetilde{DM}^{eff,-}_{Sp}(S)[W_{\Omega}^{-1}]\]
to be the category of stabilized motives over \(S\). And a morphism in \(\widetilde{DM}^{eff,-}_{Sp}(S)\) is called a stable \(\mathbb{A}^1\)-equivalence if it becomes an isomorphism in \(\widetilde{DM}^-(S)\).
\end{definition}
\begin{definition}
A complex \(K\in\widetilde{DM}^{eff,-}_{Sp}(S)\) is called \(\Omega\)-local if for every stable \(\mathbb{A}^1\)-equivalence \(f:A\longrightarrow B\), the induced map
\[Hom_{\widetilde{DM}^{eff,-}_{Sp}(S)}(B,K)\longrightarrow Hom_{\widetilde{DM}^{eff,-}_{Sp}(S)}(A,K)\]
is an isomorphism.
\end{definition}
By the same method as in Proposition \ref{spderived2}, we have the following proposition
\begin{proposition}
\begin{enumerate}
\item There is a tensor product
\[\otimes_S:\widetilde{DM}^-(S)\times\widetilde{DM}^-(S)\longrightarrow\widetilde{DM}^-(S),\]
which is determined by the following commutative diagram
\[
	\xymatrix
	{
		\widetilde{DM}^{eff,-}_{Sp}(S)\times\widetilde{DM}^{eff,-}_{Sp}(S)\ar[r]^-{\otimes_S}\ar[d]	&\widetilde{DM}^{eff,-}_{Sp}(S)\ar[d]\\
		\widetilde{DM}^-(S)\times\widetilde{DM}^-(S)\ar[r]^-{\otimes_S}							&\widetilde{DM}^-(S)
	}.
\]
And for any \(K\in\widetilde{DM}^-(S)\), the functor \(K\otimes_S-\) is exact.
\item Suppose \(f:S\longrightarrow T\) is a smooth morphism in \(Sm/k\). There is an exact functor
\[f_{\#}:\widetilde{DM}^-(S)\longrightarrow\widetilde{DM}^-(T),\]
which is determined by the following commutative diagram
\[
	\xymatrix
	{
		\widetilde{DM}^{eff,-}_{Sp}(S)\ar[r]^{f_{\#}}\ar[d]	&\widetilde{DM}^{eff,-}_{Sp}(T)\ar[d]\\
		\widetilde{DM}^-(S)\ar[r]^{f_{\#}}				&\widetilde{DM}^-(T)
	}.
\]
\item Suppose \(f:S\longrightarrow T\) is a morphism in \(Sm/k\). There is an exact functor
\[f^*:\widetilde{DM}^-(T)\longrightarrow\widetilde{DM}^-(S),\]
which is determined by the following commutative diagram
\[
	\xymatrix
	{
		\widetilde{DM}^{eff,-}_{Sp}(T)\ar[r]^{f^*}\ar[d]	&\widetilde{DM}^{eff,-}_{Sp}(S)\ar[d]\\
		\widetilde{DM}^-(T)\ar[r]^{f^*}					&\widetilde{DM}^-(S)
	}.
\]
\item Suppose \(i\geq 0\). There is an exact functor
\[-\{-i\}:\widetilde{DM}^-(S)\longrightarrow\widetilde{DM}^-(S),\]
which is determined by the following commutative diagram
\[
	\xymatrix
	{
		\widetilde{DM}^{eff,-}_{Sp}(S)\ar[r]^{-\{-i\}}\ar[d]	&\widetilde{DM}^{eff,-}_{Sp}(S)\ar[d]\\
		\widetilde{DM}^-(S)\ar[r]^{-\{-i\}}				&\widetilde{DM}^-(S)
	}.
\]
\end{enumerate}
\end{proposition}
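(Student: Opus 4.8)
The plan is to descend each of the four operations from $\widetilde{DM}^{eff,-}_{Sp}(S)$, where they were constructed in Proposition \ref{spderived2}, to the Verdier localization $\widetilde{DM}^-(S)=\widetilde{DM}^{eff,-}_{Sp}(S)[W_\Omega^{-1}]$, in exactly the way $\widetilde{DM}^{eff,-}_{Sp}$ was obtained from $D^-_{Sp}$. Concretely, for each functor $F$ among $K\otimes_S-$, $f_\#$, $f^*$ and $-\{-i\}$, I would show that $F$ carries the class $\mathscr{E}_\Omega$ (equivalently, the stable $\mathbb{A}^1$-equivalences $W_\Omega$) into $\mathscr{E}_\Omega$, and then invoke \cite[Proposition 4.6.2]{Kra} to produce the induced exact functor together with the asserted commutative square. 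Exactness is then automatic, since each $F$ is already exact on $\widetilde{DM}^{eff,-}_{Sp}(S)$ and the localization functor is exact.

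Because $\mathscr{E}_\Omega$ is the smallest thick subcategory closed under arbitrary direct sums containing the $\Omega$-generators $Cone((\Sigma^\infty\widetilde{\mathbb{Z}}_S(X)\{1\}\{-1\}\longrightarrow\Sigma^\infty\widetilde{\mathbb{Z}}_S(X))\{-i\})$, and each $F$ is exact and commutes with direct sums, it suffices to check that $F$ sends each such generator into $\mathscr{E}_\Omega$ of the target. For $f^*$ I would use the identification $f^*\Sigma^\infty\widetilde{\mathbb{Z}}_T(X)\cong\Sigma^\infty\widetilde{\mathbb{Z}}_S(X\times_T S)$ from Proposition \ref{sheaf formulas}, (1) together with the commutation of $f^*$ with $\Sigma^\infty$, plus the fact that $f^*$ commutes with $\{1\}$, $\{-1\}$, $\{-i\}$ and with forming cones; this carries the generator indexed by $X$ to the generator indexed by $X\times_T S$. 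For $f_\#$ one argues identically using $f_\#\Sigma^\infty\widetilde{\mathbb{Z}}_S(X)\cong\Sigma^\infty\widetilde{\mathbb{Z}}_T(X)$ from Proposition \ref{operations}, (1). For $-\{-i\}$, exactness together with the relation $\{-j\}\{-i\}\cong\{-(i+j)\}$ sends the generator of index $j$ to the generator of index $i+j\geq 0$, again landing in $\mathscr{E}_\Omega$.

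The tensor product requires an extra step and is the main obstacle. For a fixed object of the form $\Sigma^\infty\widetilde{\mathbb{Z}}_S(Y)\{-t\}$ with $Y\in Sm/S$ and $t\geq 0$, I would realize $-\otimes_S(\Sigma^\infty\widetilde{\mathbb{Z}}_S(Y)\{-t\})$ as the functor $\phi^*$ attached to the quadruple $(Y,S,S,t)$, exactly as in the proof of Proposition \ref{spderived2}, (1); here $Y\to S$ is smooth since $Y\in Sm/S$, and the isomorphisms $g_\# f^*F\cong F\otimes_S\Sigma^\infty\widetilde{\mathbb{Z}}_S(Y)$ and $A\otimes_S(B\{-t\})\cong(A\otimes_SB)\{-t\}$ identify $\phi^*$ with the desired tensoring functor. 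Proposition \ref{spderived1} then shows this $\phi^*$ preserves $\mathscr{E}_\Omega$. Passing from these generators to an arbitrary $K\in\widetilde{DM}^-(S)$ is the delicate part: I would run the argument of the third paragraph of \cite[Lemma 9.5]{MVW}, observing that the full subcategory of objects $K$ for which $K\otimes_S-$ preserves $W_\Omega$ is thick and closed under direct sums (by exactness of $\otimes_S$ in each variable and thickness of $\mathscr{E}_\Omega$) and contains the system of generators $\Sigma^\infty\widetilde{\mathbb{Z}}_S(Y)\{-t\}$ of Section \ref{symmetric spectra}, hence is everything. Once preservation is established, \cite[Proposition 4.6.2]{Kra} yields the bifunctor on $\widetilde{DM}^-(S)$ and the stated commutative square, and exactness of $K\otimes_S-$ follows as before.
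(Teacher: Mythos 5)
Your proposal is correct and follows essentially the same route as the paper: the paper's proof is simply ``by the same method as in Proposition \ref{spderived2}'', i.e.\ check that each of the four functors carries the generators of \(\mathscr{E}_{\Omega}\) into \(\mathscr{E}_{\Omega}\) (using exactness, compatibility with direct sums, and for \(\otimes_S\) the realization of tensoring with a generator as a \(\phi^*\) followed by the thick-subcategory argument of \cite[Lemma 9.5]{MVW}), and then invoke \cite[Proposition 4.6.2]{Kra}. Your write-up just makes explicit the verifications the paper leaves implicit.
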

We denote by \(\Sigma^{\infty,st}\) the composition
\[\xymatrix{\widetilde{DM}^{eff,-}(S)\ar[r]^{\Sigma^{\infty}}&\widetilde{DM}^{eff,-}_{Sp}(S)\ar[r]&\widetilde{DM}^-(S)}.\]
\begin{lemma}
Let \(\mathscr{C}\) be a symmetric monoidal category and \(T\in\mathscr{C}\). If there is a \(U\in\mathscr{C}\) such that \(U\otimes T\cong\mathbbm{1}\), then there are isomorphisms
\[ev:U\otimes T\longrightarrow\mathbbm{1},coev:\mathbbm{1}\longrightarrow T\otimes U\]
such that \(T\) is strongly dualizable (See \cite[2.4.30]{CD1}) with the dual \(U\) under these two maps.
\end{lemma}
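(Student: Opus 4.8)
The plan is to produce explicit evaluation and coevaluation morphisms and then to verify the two triangle (zig-zag) identities defining strong dualizability in the sense of \cite[2.4.30]{CD1}. Write $\phi\colon U\otimes T\to\mathbbm{1}$ for the given isomorphism and let $\tau_{A,B}\colon A\otimes B\to B\otimes A$ denote the symmetry. The structural observation driving everything is that the endofunctor $-\otimes T\colon\mathscr{C}\to\mathscr{C}$ is an equivalence. Indeed, using $\phi$ one gets $(X\otimes U)\otimes T\cong X\otimes(U\otimes T)\cong X$, and using $\phi$ together with the symmetry (which upgrades the one-sided isomorphism $U\otimes T\cong\mathbbm{1}$ to $T\otimes U\cong\mathbbm{1}$ via $\tau_{U,T}$) one gets $(X\otimes T)\otimes U\cong X\otimes(T\otimes U)\cong X$. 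Hence $-\otimes U$ is a quasi-inverse to $-\otimes T$, so $-\otimes T$ is an equivalence and in particular fully faithful.

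First I would set $ev:=\phi\colon U\otimes T\to\mathbbm{1}$. To build $coev$, note that $T\otimes ev\colon T\otimes U\otimes T\to T\otimes\mathbbm{1}\cong T$ is an isomorphism, so its inverse (composed with the unit constraint) is a morphism $\beta\colon T\to(T\otimes U)\otimes T$. Identifying $T\cong\mathbbm{1}\otimes T$, the morphism $\beta$ is a map $(-\otimes T)(\mathbbm{1})\to(-\otimes T)(T\otimes U)$; since $-\otimes T$ is fully faithful there is a unique $coev\colon\mathbbm{1}\to T\otimes U$ with $coev\otimes T=\beta$ up to the coherence isomorphisms. By construction the first triangle identity $(T\otimes ev)\circ(coev\otimes T)=id_T$ holds, and since $\beta$ is an isomorphism and $-\otimes T$ reflects isomorphisms, $coev$ is itself an isomorphism.

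It then remains to verify the second triangle identity, namely that $h:=(ev\otimes U)\circ(U\otimes coev)\colon U\to U$ equals $id_U$ up to the unit constraints. The strategy is to tensor with $T$ on the right: because $-\otimes T$ is faithful, it suffices to prove $h\otimes T=id_{U\otimes T}$. Expanding $h\otimes T$ and substituting the defining relation $coev\otimes T=\beta=(T\otimes ev)^{-1}$, one rewrites the whole composite in terms of $ev=\phi$, the associativity constraint, and the symmetry $\tau$; Mac Lane coherence collapses the braidings and the unit and associativity isomorphisms, while the first triangle identity is invoked once to cancel the occurrence of $(T\otimes ev)^{-1}$ against $ev$. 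The conclusion is $h\otimes T=id$, hence $h=id_U$.

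I expect this last verification to be the main obstacle, since it is the only place where the symmetry of $\mathscr{C}$ is genuinely used—in a merely monoidal category a one-sided isomorphism $U\otimes T\cong\mathbbm{1}$ does not force dualizability—and it demands careful bookkeeping of the coherence isomorphisms when transporting the identity from $U\otimes T$ back down to $U$ through the faithful functor $-\otimes T$. Once both triangle identities hold and both $ev$ and $coev$ are known to be isomorphisms, $T$ is strongly dualizable with dual $U$, which is exactly the assertion of the lemma.
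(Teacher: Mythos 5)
Your proposal is correct, but it takes a genuinely different route from the paper. The paper never writes down \(ev\) and \(coev\) explicitly: it sets \(F=-\otimes U\), \(G=-\otimes T\), observes that the hypothesis (plus the symmetry) gives natural isomorphisms \(FG\cong id\) and \(id\cong GF\), and then manufactures natural bijections \(Hom(FX,Y)\cong Hom(X,GY)\) by showing the two composites \(\eta\circ\theta\) and \(\theta\circ\eta\) are composition with natural automorphisms; the adjunction \(F\dashv G\) then yields dualizability. Your argument instead fixes \(ev:=\phi\), uses full faithfulness of \(-\otimes T\) to define \(coev\) as the unique preimage of \((id_T\otimes ev)^{-1}\), and checks the two triangle identities directly. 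What your approach buys is that the duality data \((ev,coev)\) asserted in the statement is produced explicitly and both maps are visibly isomorphisms; the paper's approach is shorter but leaves implicit the (necessary) identification of the unit and counit of its adjunction with "tensoring by a fixed map", which is what actually converts an adjunction of functors into strong dualizability of the object \(T\). Both arguments use the symmetry in the same single place: to upgrade the one-sided inverse to a two-sided one so that \(-\otimes T\) is an equivalence.

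One refinement on your "main obstacle". After tensoring, no braiding actually occurs in \(h\otimes id_T\), so Mac Lane coherence alone cannot finish the job: the composite is \((ev\otimes id_{U\otimes T})\circ\bigl(id_U\otimes(id_T\otimes ev)^{-1}\bigr)\) up to associators and unitors, and the step you need is the exchange \(\lambda_{A}\circ(ev\otimes id_{A})=\rho_{A}\circ(id_{A}\otimes ev)\) for \(A=U\otimes T\). This is not an instance of coherence (it involves the non-structural morphism \(ev\)), but it does hold precisely because \(ev\) is an \emph{isomorphism} onto \(\mathbbm{1}\): naturality of the unitors reduces both sides, after postcomposing with \(ev\), to \(ev\otimes ev\) followed by \(\lambda_{\mathbbm{1}}=\rho_{\mathbbm{1}}\). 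Once that exchange is made, \((id_T\otimes ev)^{-1}\) cancels directly against \((id_T\otimes ev)\) and \(h\otimes id_T=id\), so faithfulness of \(-\otimes T\) gives \(h=id_U\) as you intended. With that step made precise your proof is complete.
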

\begin{proof}
Let \(F=-\otimes U\) and \(G=-\otimes T\). Then the condition gives an endoequivalence
\[F:\mathscr{C}\rightleftharpoons\mathscr{C}:G\]
i.e. two natural isomorphisms \(a:FG\longrightarrow id\) and \(b:id\longrightarrow GF\). So we could construct the following two morphisms
\[\theta:\xymatrix{Hom(FX,Y)\ar[r]^-G&Hom(GFX,GY)\ar[r]^-{b^*}&Hom(X,GY)}\]
and
\[\eta:\xymatrix{Hom(X,GY)\ar[r]^-F&Hom(FX,FGY)\ar[r]^-{a_*}&Hom(FX,Y)}\]
for every \(X,Y\in\mathscr{C}\). Let \(\theta_1\) be the composition
\[\xymatrix{F\ar[r]^-{id_F\times b}&FGF\ar[r]^-{a\times id_F}&F}\]
and \(\theta_2\) be
\[\xymatrix{G\ar[r]^-{b\times id_G}&GFG\ar[r]^-{id_G\times a}&G}.\]
Then \((\eta\circ\theta)(f)=\theta_1(X)\circ f\) and \((\theta\circ\eta)(g)=g\circ\theta_2(Y)\). So \(\theta\) is an isomorphism, hence \(F\) is a left adjoint of \(G\) (vice versa).
\end{proof}
\begin{proposition}\label{invert}
The element \(\Sigma^{\infty,st}(\mathbbm{1}_S\{1\})\) has a strong dual \((\Sigma^{\infty,st}\mathbbm{1}_S)\{-1\}\) in \(\widetilde{DM}^-(S)\) with the evaluation and coevaluation maps being isomorphisms.
\end{proposition}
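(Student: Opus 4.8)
The plan is to invoke the Lemma immediately preceding this statement, which reduces the assertion — strong dualizability of $T:=\Sigma^{\infty,st}(\mathbbm{1}_S\{1\})$ together with invertibility of the evaluation and coevaluation maps — to the purely existential task of producing a \emph{single} isomorphism $U\otimes_S T\cong\mathbbm{1}$ in $\widetilde{DM}^-(S)$, where $U=(\Sigma^{\infty,st}\mathbbm{1}_S)\{-1\}$ and $\mathbbm{1}$ is the monoidal unit. Thus the whole content of the proof lies in this one isomorphism: once it is exhibited, the Lemma supplies $ev$ and $coev$ as isomorphisms automatically, and nothing further about the monoidal structure needs to be checked by hand.

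To produce the isomorphism I would first record two identifications. Since $\Sigma^{\infty}$ is monoidal and the localizations preserve the unit, $\Sigma^{\infty,st}\mathbbm{1}_S$ is the unit $\mathbbm{1}$ of $\widetilde{DM}^-(S)$, so $U=\mathbbm{1}\{-1\}$. Next, the canonical isomorphism $A\otimes_S(B\{-1\})\cong(A\otimes_S B)\{-1\}$ constructed at the level of symmetric spectra in Section \ref{symmetric spectra} descends to $\widetilde{DM}^-(S)$, because $\otimes_S$ and $-\{-1\}$ are induced compatibly through the successive localizations (Proposition \ref{spderived2} and its analogue for $\widetilde{DM}^-(S)$). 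Taking $B=\mathbbm{1}_S$ the unit spectrum yields $A\otimes_S\mathbbm{1}\{-1\}\cong A\{-1\}$, i.e. tensoring with $U$ is precisely the shift functor $-\{-1\}$.

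Combining these with the symmetry of $\otimes_S$, I then compute
\[U\otimes_S T\cong T\otimes_S\mathbbm{1}\{-1\}\cong T\{-1\}=\Sigma^{\infty,st}(\mathbbm{1}_S\{1\})\{-1\}.\]
It remains to identify $\Sigma^{\infty,st}(\mathbbm{1}_S\{1\})$ with $\mathbbm{1}\{1\}$, which follows from the compatibility $(\Sigma^{\infty}F)\{1\}\cong\Sigma^{\infty}(\widetilde{\mathbb{Z}}_{tr}(\mathbb{G}_m^{\wedge 1})\otimes_S F)$ of Section \ref{symmetric spectra} applied to $F=\mathbbm{1}_S$, together with $\mathbbm{1}_S\{1\}=Sym(\widetilde{\mathbb{Z}}_{tr}(\mathbb{G}_m^{\wedge 1}))$. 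Hence $T\{-1\}\cong\mathbbm{1}\{1\}\{-1\}$, and by the very definition of the class $W_{\Omega}$ in Definition \ref{st} (the case $X=S$, $i=0$) the canonical map $\mathbbm{1}\{1\}\{-1\}\to\mathbbm{1}$ — the counit of the adjunction $-\{-1\}\dashv-\{1\}$ of Proposition \ref{spjing} — becomes an isomorphism in $\widetilde{DM}^-(S)$. Therefore $U\otimes_S T\cong\mathbbm{1}$, and the Lemma concludes the proof.

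The main obstacle will be the bookkeeping in the second and third steps rather than any conceptual difficulty: one must verify that the symmetric-spectra-level identities $A\otimes_S(B\{-1\})\cong(A\otimes_S B)\{-1\}$ and $(\Sigma^{\infty}F)\{1\}\cong\Sigma^{\infty}(\widetilde{\mathbb{Z}}_{tr}(\mathbb{G}_m^{\wedge 1})\otimes_S F)$ survive the two-stage passage to $\widetilde{DM}^{eff,-}_{Sp}(S)$ and then $\widetilde{DM}^-(S)$, and in particular that the map whose invertibility is \emph{imposed} in Definition \ref{st} is exactly the one appearing in the computation. The algebra of the isomorphism is formal once the module structures and the shift functors $-\{\pm1\}$ are tracked correctly through the stabilization, so the care required is entirely in that tracking.
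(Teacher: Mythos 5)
Your proposal is correct and follows exactly the paper's route: the paper's own proof is the one-line ``By Definition \ref{st} and the lemma above,'' and your argument is precisely the unpacking of that line --- using the lemma to reduce to exhibiting $U\otimes_S T\cong\mathbbm{1}$, identifying $-\otimes_S\mathbbm{1}\{-1\}$ with the shift $-\{-1\}$ and $\Sigma^{\infty,st}(\mathbbm{1}_S\{1\})$ with $\mathbbm{1}\{1\}$, and then invoking the inverted morphism $\mathbbm{1}\{1\}\{-1\}\to\mathbbm{1}$ from Definition \ref{st} with $X=S$, $i=0$. No discrepancy with the paper's argument; yours simply makes explicit the bookkeeping the paper leaves implicit.
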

\begin{proof}
By Definition \ref{st} and the lemma above.
\end{proof}
Hence we define \(C(i)\) to be \(C\otimes_S\Sigma^{\infty,st}(\mathbbm{1}_S(i))\) and \(C(-i)\) to be \(C\{-i\}[i]\) for any \(C\in\widetilde{DM}^-\) and \(i\geq 0\).
\begin{proposition}\label{ff3}
(See \cite[Proposition 5.3.25]{CD1}) Suppose \(E=\widetilde{CH}\). The functor
\[\Sigma^{\infty,st}:\widetilde{DM}^{eff,-}(pt)\longrightarrow\widetilde{DM}^-(pt)\]
is fully faithful.
\end{proposition}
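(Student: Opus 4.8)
The plan is to factor the claim through the full faithfulness of $\Sigma^\infty$ on the effective level, already obtained in Proposition \ref{ff1}, and then to reduce everything to a cancellation statement for the Tate twist. Since $\widetilde{DM}^-(pt)$ is obtained from $\widetilde{DM}^{eff,-}_{Sp}(pt)$ by inverting $W_\Omega$, that is, by forcing the Tate twist $-\{1\}$ to become invertible, the first observation is that Proposition \ref{invert} provides the evaluation and coevaluation isomorphisms witnessing that $\Sigma^{\infty,st}(\mathbbm{1}_{pt}\{1\})$ is a dualizable (indeed invertible) object. This reduces the localization defining $\widetilde{DM}^-(pt)$ to inverting a single invertible object, for which the morphism groups admit the standard colimit presentation
\[Hom_{\widetilde{DM}^-(pt)}(\Sigma^{\infty,st}K,\Sigma^{\infty,st}L)\cong\varinjlim_n Hom_{\widetilde{DM}^{eff,-}_{Sp}(pt)}((\Sigma^\infty K)\{n\},(\Sigma^\infty L)\{n\}),\]
the transition maps being induced by the endofunctor $-\{1\}$. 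Thus it suffices to show that this colimit is already attained at $n=0$ and computes $Hom_{\widetilde{DM}^{eff,-}(pt)}(K,L)$.

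First I would rewrite each term of the colimit purely on the effective side. Using the identity $(\Sigma^\infty F)\{i\}\cong\Sigma^\infty(\widetilde{\mathbb{Z}}_{pt}(\mathbb{G}_m^{\wedge 1})^{\otimes i}\otimes_{pt}F)$ from the discussion of symmetric spectra (Section \ref{symmetric spectra}), together with Proposition \ref{ff1}, each group in the colimit becomes
\[Hom_{\widetilde{DM}^{eff,-}(pt)}(\widetilde{\mathbb{Z}}_{pt}(\mathbb{G}_m^{\wedge 1})^{\otimes n}\otimes_{pt}K,\widetilde{\mathbb{Z}}_{pt}(\mathbb{G}_m^{\wedge 1})^{\otimes n}\otimes_{pt}L),\]
and the transition maps are exactly those induced by the endofunctor $-\otimes_{pt}\widetilde{\mathbb{Z}}_{pt}(\mathbb{G}_m^{\wedge 1})$ of $\widetilde{DM}^{eff,-}(pt)$. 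Consequently, full faithfulness of $\Sigma^{\infty,st}$ is equivalent to the assertion that this endofunctor is itself fully faithful: if so, every transition map is a bijection, the colimit collapses onto its $n=0$ term, and that term equals $Hom_{\widetilde{DM}^{eff,-}(pt)}(K,L)$ by a final application of Proposition \ref{ff1}.

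The hard part will be precisely this last assertion, which is the cancellation theorem for MW-motivic cohomology, the Chow--Witt analogue of Voevodsky's cancellation theorem for $DM^{eff,-}$. This is where the hypothesis $E=\widetilde{CH}$ is used in an essential way: the statement is false for a general correspondence theory and rests on the internal structure of Milnor--Witt transfers rather than on the formal package of axioms. I would invoke it in the form asserting that tensoring with the reduced motive $\widetilde{\mathbb{Z}}_{pt}(\mathbb{G}_m^{\wedge 1})$ induces an isomorphism on all $Hom$-groups of $\widetilde{DM}^{eff,-}(pt)$, following \cite[Proposition 5.3.25]{CD1} in the abstract premotivic formulation. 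Granting it, the two reductions above close the argument. The only remaining care is bookkeeping: one must verify that the colimit presentation used above is indeed the correct description of morphisms in the localization $\widetilde{DM}^{eff,-}_{Sp}(pt)[W_\Omega^{-1}]$, which follows because $\mathscr{E}_\Omega$ is generated by the cones of the Tate-twist unit maps and because the invertibility supplied by Proposition \ref{invert} identifies this localization with the inversion of a single dualizable object.
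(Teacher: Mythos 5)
Your reduction to the cancellation theorem is exactly the right key input, and it is the same one the paper uses (in the form of \cite[Theorem 3.3.8]{DF}, invoked after unwinding the adjunctions $-\{-i\}\dashv -\{i\}$ and $\Sigma^{\infty}\dashv\Omega^{\infty}$). But your route through the localization is different from the paper's, and it is the weaker part of your argument. The paper does not use a colimit presentation of $Hom$ in $\widetilde{DM}^{-}(pt)$ at all. Instead it shows that for every projective $Q\in\widetilde{DM}^{eff,-}(pt)$ the spectrum $\Sigma^{\infty}Q$ is $\Omega$-local in $\widetilde{DM}^{eff,-}_{Sp}(pt)$: by the generator argument of \cite[Lemma 9.20]{MVW} this reduces to checking that
\[Hom(\Sigma^{\infty}\widetilde{\mathbb{Z}}_{pt}(X)\{1\}\{-1\}\{-i\},\Sigma^{\infty}(Q[n]))\longrightarrow Hom(\Sigma^{\infty}\widetilde{\mathbb{Z}}_{pt}(X)\{-i\},\Sigma^{\infty}(Q[n]))\]
is an isomorphism, which after the adjunctions becomes precisely the cancellation statement. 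Once the target is $\Omega$-local, $Hom$ into it is unchanged by the Verdier localization at $W_{\Omega}$, and Proposition \ref{ff1} finishes the proof. This buys you a complete bypass of any description of morphism groups in the localized category.

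The genuine gap in your version is the colimit formula
\[Hom_{\widetilde{DM}^-(pt)}(\Sigma^{\infty,st}K,\Sigma^{\infty,st}L)\cong\varinjlim_n Hom_{\widetilde{DM}^{eff,-}_{Sp}(pt)}((\Sigma^\infty K)\{n\},(\Sigma^\infty L)\{n\}).\]
First, $\widetilde{DM}^-(pt)$ is defined as a Verdier localization of a category built from \emph{symmetric} spectra, not as the naive inversion of a single endofunctor on $\widetilde{DM}^{eff,-}(pt)$; identifying the two, and in particular establishing the colimit formula for morphism groups, requires either the argument that the cyclic permutation acts as the identity on $\mathbb{G}_m^{\wedge 3}$ in the homotopy category or a semistability statement for the suspension spectra involved. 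Neither is supplied. Second, your appeal to Proposition \ref{invert} to justify this identification is circular: that proposition asserts invertibility of $\Sigma^{\infty,st}(\mathbbm{1}_{pt}\{1\})$ \emph{inside} $\widetilde{DM}^-(pt)$, i.e.\ after the localization you are trying to describe, so it cannot be used to determine what the $Hom$-groups of that localization are. If you want to keep your route you must prove the colimit presentation independently; otherwise the $\Omega$-locality argument of the paper is the cleaner way to convert the cancellation theorem into full faithfulness.
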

\begin{proof}
We first prove that for every projective \(E\in\widetilde{DM}^{eff,-}(pt)\), \(\Sigma^{\infty}E\in\widetilde{DM}^{eff,-}_{Sp}(pt)\) is \(\Omega\)-local. By the same method as in \cite[Lemma 9.20]{MVW}, this is equivalent to for any \(X\in Sm/S\), \(i\geq 0\) and \(n\in\mathbb{Z}\), the morphism
\[Hom(\Sigma^{\infty}\widetilde{\mathbb{Z}}_{pt}(X)\{1\}\{-1\}\{-i\},\Sigma^{\infty}(E[n]))\longrightarrow Hom(\Sigma^{\infty}\widetilde{\mathbb{Z}}_{pt}(X)\{-i\},\Sigma^{\infty}(E[n]))\]
is an isomorphism. And this follows by the following commutative diagram
\[
	\xymatrix
	{
		Hom(\Sigma^{\infty}\widetilde{\mathbb{Z}}_{pt}(X)\{1\}\{-1\}\{-i\},\Sigma^{\infty}(E[n]))\ar[r]\ar[d]	&Hom(\Sigma^{\infty}\widetilde{\mathbb{Z}}_{pt}(X)\{-i\},\Sigma^{\infty}(E[n]))\ar[d]\\
		Hom(\Sigma^{\infty}\widetilde{\mathbb{Z}}_{pt}(X)\{1\}\{-1\},\Sigma^{\infty}(E[n])\{i\})\ar[r]\ar[d]	&Hom(\Sigma^{\infty}\widetilde{\mathbb{Z}}_{pt}(X),\Sigma^{\infty}(E[n])\{i\})\ar[d]\ar[dl]_{-\{1\}}\\
		Hom(\Sigma^{\infty}\widetilde{\mathbb{Z}}_{pt}(X)\{1\},\Sigma^{\infty}(E[n])\{i+1\})\ar[d]			&Hom(\widetilde{\mathbb{Z}}_S(X),\mathbbm{1}_{pt}\{1\}^{\otimes i}\otimes(E[n]))\ar[dl]_{\mathbbm{1}_{pt}\{1\}\otimes-}\\
		Hom(\mathbbm{1}_{pt}\{1\}\otimes\widetilde{\mathbb{Z}}_{pt}(X),\mathbbm{1}_{pt}\{1\}^{\otimes i+1}\otimes(E[n]))	&
	}
\]
and \cite[Theorem 3.3.8]{DF}. Suppose \(K,L\in\widetilde{DM}^{eff,-}(pt)\) with projective resolutions \(P,Q\) respectively. The statement follows by the following commutative diagram
\[
	\xymatrix
	{
		Hom_{\widetilde{DM}^{eff,-}(pt)}(K,L)\ar[r]^-{\Sigma^{\infty,st}}\ar[d]_{\cong}				&Hom_{\widetilde{DM}^-(pt)}(\Sigma^{\infty,st}K,\Sigma^{\infty,st}L)\ar[d]_{\cong}\\
		Hom_{\widetilde{DM}^{eff,-}(pt)}(P,Q)\ar[r]^-{\Sigma^{\infty,st}}\ar[rd]^-{\Sigma^{\infty}}	&Hom_{\widetilde{DM}^-(pt)}(\Sigma^{\infty,st}P,\Sigma^{\infty,st}Q)\\
																				&Hom_{\widetilde{DM}^{eff,-}_{Sp}(pt)}(\Sigma^{\infty}P,\Sigma^{\infty}Q)\ar[u]^{\cong}
	}
\]
and Proposition \ref{ff1}.
\end{proof}
\begin{proposition}
Let \(f:S\longrightarrow T\) be a smooth morphism in \(Sm/k\). We have an adjoint pair
\[f_{\#}:\widetilde{DM}^-(S)\rightleftharpoons\widetilde{DM}^-(T):f^*.\]
\end{proposition}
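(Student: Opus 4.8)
The plan is to descend the adjunction one last step along the Verdier localization $\widetilde{DM}^-(S)=\widetilde{DM}^{eff,-}_{Sp}(S)[W_\Omega^{-1}]$ of Definition \ref{st}, repeating the roof-calculus argument used in Proposition \ref{derived adjunction}. The two functors $f_\#$ and $f^*$ on $\widetilde{DM}^-$ are already available from the preceding proposition, and the adjunction $f_\#\dashv f^*$ is known one level down, on $\widetilde{DM}^{eff,-}_{Sp}$, by Proposition \ref{spjing}, (1). Since $f^*$ is exact there (it is induced by an exact functor and agrees with its own left derived functor), the only genuinely new input needed to run that argument is that both $f_\#$ and $f^*$ carry the localizing subcategory $\mathscr{E}_\Omega$ into the corresponding one over the other base; this is the exact analogue of the phrase ``since $\varphi^*$ preserves $\mathbb{E}_{\mathbb{A}}$'' in the proof of Proposition \ref{jing}.

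First I would verify the preservation for $f^*$. Because $\mathscr{E}_\Omega$ is the smallest thick subcategory closed under arbitrary direct sums generated by the cones $Cone((\Sigma^\infty\widetilde{\mathbb{Z}}_T(X)\{1\}\{-1\}\longrightarrow\Sigma^\infty\widetilde{\mathbb{Z}}_T(X))\{-i\})$, and $f^*$ is exact and commutes with direct sums, it suffices to track a single generator. Using $f^*\Sigma^\infty\widetilde{\mathbb{Z}}_T(X)\cong\Sigma^\infty\widetilde{\mathbb{Z}}_S(X\times_T S)$ (Proposition \ref{sheaf formulas}, (1) together with the compatibility of $f^*$ with $\Sigma^\infty$) and the fact that $f^*$ commutes with the level functors $-\{1\}$, $-\{-1\}$ and $-\{-i\}$ (all defined levelwise, as is $f^*$), the generator over $T$ indexed by $X$ is carried to the generator over $S$ indexed by $X\times_T S$. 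Hence $f^*(\mathscr{E}_\Omega)\subseteq\mathscr{E}_\Omega$, so $f^*$ preserves $W_\Omega$ and $\Omega$-locality.

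Next I would treat $f_\#$ in the same way. Here $f_\#\Sigma^\infty\widetilde{\mathbb{Z}}_S(X)\cong\Sigma^\infty\widetilde{\mathbb{Z}}_T(X)$ by Proposition \ref{operations}, (1) and the compatibility of $f_\#$ with $\Sigma^\infty$, while commutation of $f_\#$ with $-\{-1\}$ and $-\{-i\}$ follows from uniqueness of left adjoints, their right adjoints $-\{1\}$, $-\{i\}$ commuting with $f^*$ levelwise. The one point deserving care is the right-adjoint twist $\{1\}$ entering the generator: to see $f_\#(\Sigma^\infty\widetilde{\mathbb{Z}}_S(X)\{1\})\cong\Sigma^\infty\widetilde{\mathbb{Z}}_T(X)\{1\}$ I would rewrite $-\{1\}$ on suspension spectra as a $\mathbb{G}_m$-smash, namely $\Sigma^\infty\widetilde{\mathbb{Z}}_S(X)\{1\}\cong\Sigma^\infty(\widetilde{\mathbb{Z}}_{tr}(\mathbb{G}_m^{\wedge 1})\otimes_S\widetilde{\mathbb{Z}}_S(X))$, observe that the $S$-object $\widetilde{\mathbb{Z}}_{tr}(\mathbb{G}_m^{\wedge 1})$ is the $f^*$-pullback of the corresponding $T$-object (since $\mathbb{G}_{m,S}=\mathbb{G}_{m,T}\times_T S$, via Proposition \ref{sheaf formulas}, (1)), and then apply the projection formula $f_\#(A\otimes_S f^*B)\cong f_\# A\otimes_T B$ (the spectral version of Proposition \ref{operations}, (4)). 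This identifies the image of the $S$-generator with the $T$-generator and gives $f_\#(\mathscr{E}_\Omega)\subseteq\mathscr{E}_\Omega$.

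With both preservations in hand, the descent is formal: I would copy the construction of the mutually inverse maps $\theta$ and $\xi$ between $Hom_{\widetilde{DM}^-(T)}(f_\# K,L)$ and $Hom_{\widetilde{DM}^-(S)}(K,f^*L)$ from the proof of Proposition \ref{derived adjunction}, now writing morphisms as roofs in $\widetilde{DM}^{eff,-}_{Sp}(S)[W_\Omega^{-1}]$; well-definedness of $\xi$ uses that $f_\#$ preserves $W_\Omega$, and that of $\theta$ uses the exactness of $f^*$ together with its preservation of $W_\Omega$. The main obstacle is precisely the compatibility of the left adjoint $f_\#$ with the right-adjoint twist $\{1\}$ appearing in the stabilization generators; the projection-formula rewriting above is what makes this manageable, while the remaining bookkeeping with $-\{-i\}$ and with direct sums is routine.
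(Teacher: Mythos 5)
Your proposal is correct and follows essentially the same route as the paper, which simply cites the roof-calculus descent of Proposition \ref{derived adjunction} (as already adapted in Proposition \ref{jing} via the ``preserves the localizing subcategory'' step). Your explicit verification that $f_{\#}$ and $f^*$ carry $\mathscr{E}_{\Omega}$ into $\mathscr{E}_{\Omega}$ — including the projection-formula treatment of the $\{1\}$ twist in the stabilization generators — is exactly the content the paper leaves implicit.
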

\begin{proof}
The same as Proposition \ref{derived adjunction}.
\end{proof}
\begin{proposition}
Suppose \(f:S\longrightarrow T\) is a morphism in \(Sm/k\).
\begin{enumerate}
\item For any \(K, L\in\widetilde{DM}^-(T)\), we have
\[f^*(K\otimes_SL)\cong(f^*K)\otimes_S(f^*L).\]
\item If \(f\) is smooth, then for any \(K\in\widetilde{DM}^-(S)\) and \(L\in\widetilde{DM}^-(T)\), we have
\[f_{\#}(K\otimes_Sf^*L)\cong(f_{\#}K)\otimes_SL.\]
\end{enumerate}
\end{proposition}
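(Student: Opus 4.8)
The plan is to imitate the proof of Proposition \ref{operations1}, propagating both isomorphisms down the chain of localizations
\[Sp(S)\leadsto D^-_{Sp}(S)\leadsto\widetilde{DM}^{eff,-}_{Sp}(S)\leadsto\widetilde{DM}^-(S),\]
since all three operations \(\otimes_S\), \(f^*\) and \(f_{\#}\) on \(\widetilde{DM}^-(S)\) are by construction inherited termwise from the spectrum level and are determined by the defining commutative squares with the quotient functors. So the whole point is that everything works termwise and reduces to statements already proved for sheaves.

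First I would record that both isomorphisms already hold at the level of symmetric spectra. Indeed, in Section \ref{symmetric spectra} the functor \(f^*:Sp(T)\longrightarrow Sp(S)\) is observed to be monoidal, which lifts Proposition \ref{sheaf formulas}, (4) applied termwise and gives (1) for spectra; and the projection formula \(f_{\#}(A\otimes_Sf^*B)\cong(f_{\#}A)\otimes_TB\) is recorded there as well, lifting Proposition \ref{operations}, (4). Next I would lift these to \(D^-_{Sp}(S)\) by means of projective resolutions, exactly as in Proposition \ref{derived functors} and its spectrum analogue. The key observation is that the exterior product of representable spectra is again representable, so a termwise tensor product of projective complexes is projective; moreover \(f^*\) and \(f_{\#}\) are exact and preserve projectivity. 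Hence, for projective resolutions \(P\longrightarrow K\) and \(Q\longrightarrow L\), the complex \(P\otimes_SQ\) is a projective resolution of \(K\otimes_SL\), while \(f^*P\), \(f^*Q\) are projective resolutions of \(f^*K\), \(f^*L\); applying the termwise spectrum-level isomorphisms then yields the two isomorphisms in \(D^-_{Sp}(S)\), natural in all variables.

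Finally I would descend to \(\widetilde{DM}^-(S)\). Since \(\otimes_S\), \(f^*\) and \(f_{\#}\) on \(\widetilde{DM}^{eff,-}_{Sp}(S)\) and on \(\widetilde{DM}^-(S)\) are defined so as to make the localization squares commute (Proposition \ref{spderived2} and its stabilized analogue), and since the isomorphisms constructed above are natural and hence compatible with these quotient functors, they pass to the localized categories. The step I expect to require the most care is the projection formula in (2): one must check that the comparison map \(f_{\#}(K\otimes_Sf^*L)\longrightarrow(f_{\#}K)\otimes_TL\) is compatible with the \(Sym(\mathbbm{1}_S\{1\})\)-module structures and that it carries the generators of \(\mathscr{E}_{\mathbb{A}}\) and \(\mathscr{E}_{\Omega}\) on the source to those on the target, so that it survives \emph{both} localizations. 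Via Proposition \ref{sheaf formulas}, (1) and Proposition \ref{operations}, (1) evaluated on the representable generators, this reduces to a routine verification, and everything else is formal.
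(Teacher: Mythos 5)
Your proposal is correct and follows the same route as the paper: the paper's own proof is the one-line remark that ``everything works termwise for spectra by discussion in Section \ref{symmetric spectra}'', i.e.\ exactly the reduction you spell out — monoidality of \(f^*\) and the projection formula at the spectrum level, lifted through projective resolutions to \(D^-_{Sp}\) and then passed through the two localizations via the defining commutative squares. Your extra care about module structures and about the generators of \(\mathscr{E}_{\mathbb{A}}\) and \(\mathscr{E}_{\Omega}\) is precisely what the paper delegates to Section \ref{symmetric spectra} and to the earlier localization propositions, so nothing is missing.
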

\begin{proof}
This is because everything works termwise for spectra by discussion in Section \ref{symmetric spectra}.
\end{proof}

In \cite[Proposition 5.3.23]{CD1}, they defined \(\widetilde{DM}(S)\) as the the Verdier localization of \(\widetilde{DM}^{eff,-}_{Sp}(S)\) with respect to \(W_{\Omega}\). So the localization induces a triangulated structure on \(\widetilde{DM}(S)\) (See \cite[Lemma 4.3.1]{Kra}). And this is the triangulated structure we will impose on \(\widetilde{DM}(S)\).

Here is a weak result which is enough for our purpose:
\begin{proposition}\label{fff}
There is an exact functor \(\widetilde{DM}^-(S)\longrightarrow\widetilde{DM}(S)\) which is determined by the commutative diagram
\[
	\xymatrix
	{
		\widetilde{DM}^{eff,-}_{Sp}(S)\ar[r]\ar[d]	&\widetilde{DM}^{eff}_{Sp}(S)\ar[d]\\
		\widetilde{DM}^-(S)\ar[r]				&\widetilde{DM}(S)
	}.
\]
And when \(E=\widetilde{CH}\) and \(S=pt\), the morphism
\[Hom_{\widetilde{DM}^-(S)}(X,Y)\longrightarrow Hom_{\widetilde{DM}(S)}(L(X),L(Y))\]
is an isomorphism if \(X\) and \(Y\) are of the form \((\Sigma^{\infty,st}A)\{-i\}, i\geq 0\).
\end{proposition}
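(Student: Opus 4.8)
The plan is to obtain $L$ from the universal property of Verdier localization, and then to deduce the $Hom$-comparison from the $\Omega$-locality of suspension spectra of projective objects, exactly as in the proof of Proposition \ref{ff3}, now combined with the full faithfulness of Proposition \ref{ff2}. For the existence of $L$, consider the composite
\[\widetilde{DM}^{eff,-}_{Sp}(S)\longrightarrow\widetilde{DM}^{eff}_{Sp}(S)\longrightarrow\widetilde{DM}(S),\]
where the first arrow is the inclusion of Proposition \ref{ff2} and the second is the localization defining $\widetilde{DM}(S)$. Every generator $Cone((\Sigma^{\infty}\widetilde{\mathbb{Z}}_S(X)\{1\}\{-1\}\longrightarrow\Sigma^{\infty}\widetilde{\mathbb{Z}}_S(X))\{-i\})$ of $\mathscr{E}_{\Omega}$ is carried to a generator of the localizing subcategory by which $\widetilde{DM}(S)$ is formed, hence to $0$. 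Thus the composite inverts every morphism of $W_{\Omega}$ and, by \cite[Proposition 4.6.2]{Kra}, factors uniquely through an exact functor $L:\widetilde{DM}^-(S)\longrightarrow\widetilde{DM}(S)$ rendering the square commutative.

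\emph{Reduction of the $Hom$-groups.} Now take $E=\widetilde{CH}$ and $S=pt$, and write $X=(\Sigma^{\infty,st}A)\{-i\}$, $Y=(\Sigma^{\infty,st}A')\{-i'\}$. Choose projective resolutions $P\longrightarrow A$ and $Q\longrightarrow A'$, so that $X$ and $Y$ are represented in $\widetilde{DM}^{eff,-}_{Sp}(pt)$ by $(\Sigma^{\infty}P)\{-i\}$ and $(\Sigma^{\infty}Q)\{-i'\}$. The proof of Proposition \ref{ff3} shows $\Sigma^{\infty}Q$ is $\Omega$-local in $\widetilde{DM}^{eff,-}_{Sp}(pt)$; since $-\{-i'\}$ is an equivalence on the localized category by Proposition \ref{invert} and preserves the defining $\Omega$-condition, $(\Sigma^{\infty}Q)\{-i'\}$ is again $\Omega$-local. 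Consequently the localization map identifies $Hom_{\widetilde{DM}^-(pt)}(X,Y)$ with $Hom_{\widetilde{DM}^{eff,-}_{Sp}(pt)}((\Sigma^{\infty}P)\{-i\},(\Sigma^{\infty}Q)\{-i'\})$.

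\emph{The core step and main obstacle.} One must then prove the same $\Omega$-locality of $(\Sigma^{\infty}Q)\{-i'\}$ in the \emph{unbounded} category $\widetilde{DM}^{eff}_{Sp}(pt)$, which identifies $Hom_{\widetilde{DM}(pt)}(LX,LY)$ with $Hom_{\widetilde{DM}^{eff}_{Sp}(pt)}((\Sigma^{\infty}P)\{-i\},(\Sigma^{\infty}Q)\{-i'\})$; the asserted isomorphism then follows from Proposition \ref{ff2}, which equates the two effective-spectral $Hom$-groups. The $\Omega$-locality criterion is detected, as in \cite[Lemma 9.20]{MVW}, against the generators $\Sigma^{\infty}\widetilde{\mathbb{Z}}_{pt}(X)\{-t\}$, and the decisive computation is the commutative diagram of Proposition \ref{ff3} together with the cancellation theorem \cite[Theorem 3.3.8]{DF}. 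The hard part is precisely to run this computation in the unbounded setting and to check its compatibility with the bounded-above one through Proposition \ref{ff2}: one has to ensure that the stabilized $Hom$ against each generator is unchanged when passing from $\widetilde{DM}^{eff,-}_{Sp}(pt)$ to $\widetilde{DM}^{eff}_{Sp}(pt)$, which is exactly where the hypotheses $E=\widetilde{CH}$, $S=pt$ and the cancellation theorem are indispensable. The remaining manipulations---inverting the Tate twist and passing to projective resolutions---are formal and parallel those in Propositions \ref{ff1} and \ref{ff3}.
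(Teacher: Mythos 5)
Your construction of $L$ via the universal property of the Verdier localization agrees with the paper. The gap is in the reduction of the $Hom$-groups: the claim that $(\Sigma^{\infty}Q)\{-i'\}$ is $\Omega$-local in $\widetilde{DM}^{eff,-}_{Sp}(pt)$ is false for $i'>0$, and the argument you give for it does not work. Knowing that $-\{-i'\}$ descends to an equivalence of the \emph{localized} category and preserves the generators of $\mathscr{E}_{\Omega}$ only shows that it is compatible with the localization; to transport local objects you would need the \emph{left} adjoint of $-\{-i'\}$ to preserve $W_{\Omega}$, but $-\{-i'\}$ is itself the left adjoint (of $-\{i'\}$), so at best $-\{i'\}$ preserves $\Omega$-local objects, not $-\{-i'\}$. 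Concretely, $((\Sigma^{\infty}Q)\{-i'\})_t=0$ for $t<i'$ while $((\Sigma^{\infty}Q)\{-i'\})_{i'}$ is built from $Q$; testing against the generating stable $\mathbb{A}^1$-equivalence $(\Sigma^{\infty}\widetilde{\mathbb{Z}}_{pt}(X)\{1\}\{-1\})\{-(i'-1)\}\longrightarrow(\Sigma^{\infty}\widetilde{\mathbb{Z}}_{pt}(X))\{-(i'-1)\}$, the induced map on $Hom(-,(\Sigma^{\infty}Q)\{-i'\})$ goes from a group computed from $Q$ at level $i'$ to the zero group at level $i'-1$, so it is not an isomorphism in general. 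Negative twists are exactly the non-$\Omega$-local objects that the stabilization is designed to correct, so the identification of $Hom_{\widetilde{DM}^-(pt)}(X,Y)$ with a $Hom$-group in $\widetilde{DM}^{eff,-}_{Sp}(pt)$ collapses, and the rest of your argument (which in any case is only sketched for the unbounded side) has nothing to stand on.

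The repair is to twist only \emph{after} localizing, which is what the paper does: by Proposition \ref{invert} the object $\Sigma^{\infty,st}(\mathbbm{1}\{1\})$ is invertible in $\widetilde{DM}^-(pt)$ and in $\widetilde{DM}(pt)$, so $-\{-1\}$ is an equivalence of each localized category, and since $L$ is monoidal and commutes with $-\{-1\}$, the validity of the comparison for the pair $(X,Y)$ implies it for $(X\{-1\},Y\{-1\})$. This reduces everything to the untwisted case $X=\Sigma^{\infty,st}A$, $Y=\Sigma^{\infty,st}B$, where one chains together the full faithfulness of $\Sigma^{\infty,st}:\widetilde{DM}^{eff,-}(pt)\longrightarrow\widetilde{DM}^-(pt)$ (Proposition \ref{ff3}, where the $\Omega$-locality of untwisted suspension spectra and the cancellation theorem are actually used), the full faithfulness of $\widetilde{DM}^{eff,-}(pt)\longrightarrow\widetilde{DM}^{eff}(pt)$ (Proposition \ref{triangulated1}), and the unbounded analogue \cite[Proposition 5.3.25]{CD1}, all compatibly via the commutative square relating the two stabilization functors. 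Note also that your formulation should address $X$ and $Y$ with possibly different twists $i\neq i'$; after replacing $A$ by a suitable $\mathbb{G}_m^{\wedge 1}$-twist one may assume $i=i'$ and then apply the inductive step above.
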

\begin{proof}
The functor is induced and exact by \cite[Proposition 4.6.2]{Kra}. We have thus a commutative diagram (up to a natural isomorphism)
\[
	\xymatrix
	{
		\widetilde{DM}^{eff,-}(S)\ar[r]\ar[d]_{\Sigma^{\infty,st}}	&\widetilde{DM}^{eff}(S)\ar[d]_{\Sigma^{\infty,st}}\\
		\widetilde{DM}^-(S)\ar[r]							&\widetilde{DM}(S)
	}.
\]

Now let \(E=\widetilde{CH}\) and \(S=pt\). We denote \(\mathscr{P}(X,Y)\) be the property that the statement holds for \(X\) and \(Y\). Then if \(\mathscr{P}(X,Y)\) is true, then for any \(X'\cong X\) and \(Y'\cong Y\), \(\mathscr{P}(X',Y')\) is also true. And by Proposition \ref{invert} and the fact that \(L\) is monoidal, \(\mathscr{P}(X\{-1\},Y\{-1\})\) is also true. 

By Proposition \ref{triangulated1}, Proposition \ref{ff3} and the diagram above, \(\mathscr{P}(\Sigma^{\infty,st}A,\Sigma^{\infty,st}B)\) is true for any \(A, B\in\widetilde{DM}^{eff,-}(pt)\). Hence the statement follows.
\end{proof}
And there is also a compability result between the natural inclusion and \(\otimes_S\), \(f^*\), \(f_{\#}\), \(-\{-i\},i\geq 0\) like Proposition \ref{-}.
\section{Orientations on Symplectic Bundles and Applications}\label{Orientations on Symplectic Bundles and Applications}
\subsection{Orientations on Symplectic Bundles}
In this section, we want to extend the quaternionic projective bundle theorem and Thom isomorphism in \cite{Y} to arbitrary smooth base \(S\). Recall the definition of orientable bundles from \cite[Definition 4.1]{Y}.
\begin{definition}\label{symplectic oriented}
Let \(E\) be a correspondence theory, we call it symplectic oriented if for every \(X\in Sm/k\) and rank two symplectic bundle \(E\) over \(X\), there is an isomorphism
\[s_E:E\longrightarrow 0\]
in \(\mathscr{P}_X\) such that
\begin{enumerate}
\item For every isomorphism \(\varphi:E_1\longrightarrow E_2\) between rank two symplectic bundles (preserving inner products), we have \(s_{E_1}=s_{E_2}\circ\varphi\).
\item For every \(f:X\longrightarrow Y\) in \(Sm/k\), we have \(s_{f^*E}=f^*(s_E)\).
\end{enumerate}
\end{definition}
It is then clear that MW-correspondence is symplectic oriented. From now on, we assume \(E=\widetilde{CH}\) until the end of this section.
\begin{definition}
Let \(\mathscr{E}\) be an orientable bundle over \(X\in Sm/S\) with an orientation \(s\) and rank \(n\), it has a map
\[e(\mathscr{E}):\widetilde{CH}^0(X)\longrightarrow\widetilde{CH}^n(X)\]
by \cite[Definition 4.2]{Y}. So we have an element
\[e(\mathscr{E})(1)\in Hom_{DM^{eff,-}(pt)}(\widetilde{\mathbb{Z}}_{pt}(X),\widetilde{\mathbb{Z}}_{pt}(n)[2n]).\]
It induces a morphism
\[\theta:\widetilde{\mathbb{Z}}_S(X)\longrightarrow\widetilde{\mathbb{Z}}_S(n)[2n]\]
by Proposition \ref{jing}, which is called the Euler class of \(\mathscr{E}\) over \(S\).

If \(n=2\), then \(-\theta\) is called the first Pontryagin class under the orientation \(s\) of \(\mathscr{E}\) over \(S\), which is denoted by \(p_1(\mathscr{E})\).
\end{definition}

Now let \(X\in Sm/S\). Suppose we have two morphisms \(f_i:\widetilde{\mathbb{Z}}_S(X)\longrightarrow C_i, i=1,2\) in \(\widetilde{DM}^{eff,-}(S)\), we denote by \(f_1\boxtimes f_2\) the composition
\[\xymatrix{\widetilde{\mathbb{Z}}_S(X)\ar[r]^-{\triangle}&\widetilde{\mathbb{Z}}_S(X)\otimes_S\widetilde{\mathbb{Z}}_S(X)\ar[r]^-{f_1\otimes f_2}&C_1\otimes_SC_2}.\]
Suppose we have two morphisms \(f_i:\widetilde{\mathbb{Z}}_S(X)\longrightarrow\widetilde{\mathbb{Z}}_S(n_i)[2n_i]\) in \(\widetilde{DM}^{eff,-}(S)\), we denote by \(f_1f_2\) the composition
\[\xymatrix{\widetilde{\mathbb{Z}}_S(X)\ar[r]^-{f_1\boxtimes f_2}&\widetilde{\mathbb{Z}}_S(n_1)[2n_1]\otimes\widetilde{\mathbb{Z}}_S(n_2)[2n_2]\ar[r]^-{\otimes}&\widetilde{\mathbb{Z}}_S(n_1+n_2)[2(n_1+n_2)]}.\]

The following proposition clarifies the relationship between MW-motivic cohomologies and Chow-Witt groups.
\begin{proposition}\label{base}
For any \(X\in Sm/k\), and \(i\geq 0\), we have an isomorphism (Set \(\widetilde{DM}=\widetilde{DM}^{eff,-}(pt)\))
\[Hom_{\widetilde{DM}}(\widetilde{\mathbb{Z}}_{pt}(X),\widetilde{\mathbb{Z}}_{pt}(i)[2i])\cong\widetilde{CH}^i(X)\]
which is compactible with pull-backs and sends \(id_{\mathbb{Z}_{pt}}\) to \(1\) when \(i=0\) and \(X=pt\).

Moreover, the following diagram commutes for any \(i,j\geq 0\)
\[
	\xymatrix@C=1.0em
	{
		Hom_{\widetilde{DM}}(\widetilde{\mathbb{Z}}_{pt}(X),\widetilde{\mathbb{Z}}_{pt}(i)[2i])\times Hom_{\widetilde{DM}}(\widetilde{\mathbb{Z}}_{pt}(X),\widetilde{\mathbb{Z}}_{pt}(j)[2j])\ar[r]\ar[d]^{\cdot}	&\widetilde{CH}^i(X)\times\widetilde{CH}^j(X)\ar[d]^{\cdot}\\
		Hom_{\widetilde{DM}}(\widetilde{\mathbb{Z}}_{pt}(X),\widetilde{\mathbb{Z}}_{pt}(i+j)[2(i+j)])\ar[r]															&\widetilde{CH}^{i+j}(X)
	}
\]
where the right-hand map is the intersection product on Chow-Witt groups.
\end{proposition}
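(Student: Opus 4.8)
The plan is to realize the abstract $Hom$-group as a Nisnevich hypercohomology group and then to identify the latter with the Chow-Witt group via the very Rost-Schmid complex that defines $\widetilde{CH}^i$ in this paper. First I would apply Proposition \ref{locality}: since $C_*\widetilde{\mathbb{Z}}_{pt}(i)$ is $\mathbb{A}^1$-local and the canonical map $\widetilde{\mathbb{Z}}_{pt}(i)\longrightarrow C_*\widetilde{\mathbb{Z}}_{pt}(i)$ is an $\mathbb{A}^1$-weak equivalence, the localization functor gives
\[Hom_{\widetilde{DM}}(\widetilde{\mathbb{Z}}_{pt}(X),\widetilde{\mathbb{Z}}_{pt}(i)[2i])\cong Hom_{D^-(pt)}(\widetilde{\mathbb{Z}}_{pt}(X),C_*\widetilde{\mathbb{Z}}_{pt}(i)[2i]).\]
Because $\widetilde{\mathbb{Z}}_{pt}(X)$ is the representable projective object and $Hom_{D^-}$ out of it computes Nisnevich hypercohomology, the right-hand side becomes $\mathbb{H}^{2i}_{Nis}(X,C_*\widetilde{\mathbb{Z}}_{pt}(i))$.

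The second step, which is the technical heart, is to control the cohomology sheaves of $C_*\widetilde{\mathbb{Z}}_{pt}(i)$. Using $\widetilde{\mathbb{Z}}_{pt}(i)=\widetilde{\mathbb{Z}}_{pt}(\mathbb{G}_m,1)^{\otimes i}[-i]$ I would show the complex is cohomologically concentrated in degrees $\le i$, with top cohomology sheaf the unramified Milnor-Witt K-theory sheaf $\underline{K}^{MW}_i$ carrying the canonical twist of the Rost-Schmid formalism of Section \ref{MW-Correspondence as a Correspondence Theory}. This is the Milnor-Witt analogue of the Nesterenko-Suslin-Totaro theorem; in the present setting it is the Calmès-Fasel computation, and the cleanest route is to invoke the comparison (promised via \cite{CD1}, \cite{DF1}, \cite{DF}) between $\widetilde{DM}^{eff,-}(pt)$ and the established category of MW-motives, whose bidegree $(2i,i)$ cohomology is known to be $\widetilde{CH}^i$. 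The hypercohomology spectral sequence $E_2^{p,q}=H^p_{Nis}(X,\underline{H}^q(C_*\widetilde{\mathbb{Z}}_{pt}(i)))$ then places the relevant corner at $E_2^{i,i}=H^i_{Nis}(X,\underline{K}^{MW}_i)$; pinning down that this corner alone survives to total degree $2i$ is exactly the content of that comparison (equivalently, of the higher-Chow-Witt cycle-complex description), and I would not attempt the degeneration by a naive cohomological-dimension bound. By Morel's Rost-Schmid resolution, which is precisely the flasque resolution this paper uses to define $\widetilde{CH}^i_T(X,v)=H^i(C^*_{RS,T}(X;\underline{K}^{MW}_i;v))$, one has $H^i_{Nis}(X,\underline{K}^{MW}_i)\cong\widetilde{CH}^i(X)$, yielding the isomorphism.

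For the normalization, when $i=0$ and $X=pt$ a direct check shows $\widetilde{\mathbb{Z}}_{pt}(0)=\mathbbm{1}_{pt}$ and that the class of $id$ corresponds to $\langle 1\rangle=1\in GW(k)=\widetilde{CH}^0(pt)$. Compatibility with pull-backs follows because every identification in the first two steps, the $\mathbb{A}^1$-localization, the hypercohomology interpretation, and the Rost-Schmid resolution, is natural in $X$. Finally, multiplicativity is obtained by observing that both products are a diagonal applied to an exterior product: on the $Hom$-side $f_1 f_2$ is $\widetilde{\mathbb{Z}}_{pt}(X)\xrightarrow{\triangle}\widetilde{\mathbb{Z}}_{pt}(X)\otimes\widetilde{\mathbb{Z}}_{pt}(X)\xrightarrow{f_1\otimes f_2}\widetilde{\mathbb{Z}}_{pt}(i)[2i]\otimes\widetilde{\mathbb{Z}}_{pt}(j)[2j]$ followed by the monoidal isomorphism $\widetilde{\mathbb{Z}}_{pt}(i)\otimes\widetilde{\mathbb{Z}}_{pt}(j)\cong\widetilde{\mathbb{Z}}_{pt}(i+j)$ of Proposition \ref{tensor}, while on the Chow-Witt side the intersection product is $\triangle^*$ of the exterior product $\times$. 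I would check that the sheaf-level identification intertwines the monoidal pairing $C_*\widetilde{\mathbb{Z}}(i)\otimes C_*\widetilde{\mathbb{Z}}(j)\longrightarrow C_*\widetilde{\mathbb{Z}}(i+j)$ with the ring structure $\underline{K}^{MW}_i\otimes\underline{K}^{MW}_j\longrightarrow\underline{K}^{MW}_{i+j}$ on top cohomology sheaves; combined with pull-back compatibility along $\triangle$ this gives the commuting square.

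The main obstacle is the second step. Once the Hom-group is reduced to $\mathbb{H}^{2i}_{Nis}(X,C_*\widetilde{\mathbb{Z}}_{pt}(i))$, everything else is naturality and formal monoidal bookkeeping, but identifying this hypercohomology with $\widetilde{CH}^i(X)$ rests on the genuinely deep input that the Suslin complex of $\widetilde{\mathbb{Z}}_{pt}(\mathbb{G}_m,1)^{\otimes i}$ has top homology sheaf $\underline{K}^{MW}_i$ and that only the top contribution survives in degree $2i$; this is most safely secured by the comparison with the Déglise-Fasel theory rather than redone from scratch within this framework.
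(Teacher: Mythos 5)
Your proposal is correct and, at its core, takes the same route as the paper: the paper's entire proof is the citation ``See \cite[Corollary 4.2.6]{DF}'', and your outline is essentially an unfolding of what that reference contains, with the one genuinely deep step (the identification of the top cohomology sheaf of $C_*\widetilde{\mathbb{Z}}_{pt}(i)$ with $\underline{K}^{MW}_i$ and the degeneration at the corner $E_2^{i,i}$) explicitly deferred to the same Déglise--Fasel source. The only point worth making explicit is that transporting their computation, stated in the unbounded category $\widetilde{DM}^{eff}(pt)$, into this paper's bounded-above $\widetilde{DM}^{eff,-}(pt)$ uses the full faithfulness of Proposition \ref{triangulated1}, which your reduction via Proposition \ref{locality} implicitly supplies.
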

\begin{proof}
See \cite[Corollary 4.2.6]{DF}.
\end{proof}

In \cite[Definition 3.5]{Y}, we defined \(HP^n\) (Originally defined in \cite[Section 3]{PW}) as an open set of \(Gr(2,2n+2)\) consisting of those two dimensional subspaces \(V\subseteq k^{\oplus 2n+2}\) such that the trivial symplectic form on \(k^{\oplus 2n+2}\) does not vanish on \(V\). We denote \(HP^n\times_kS\) by \(HP^n_S\). And for any symplectic bundle (See \cite[Definition 3.3]{Y}) \(\mathscr{E}\) of rank two over \(X\in Sm/S\), there is a canonical orientation \(O_X\longrightarrow det(\mathscr{E}^{\vee})\) induced by the inner product \(\mathscr{E}\times\mathscr{E}\longrightarrow O_X\) (See \cite[Definition 4.3]{Y}). And we have a dual tautological bundle (See discussion after \cite[Definition 3.6]{Y}) \(\mathscr{U}^{\vee}_S\) over \(HP^n_S\) satisfying \(\mathscr{U}^{\vee}_S=p^*\mathscr{U}^{\vee}_{pt}\) as symplectic bundles where \(p:HP^n_S\longrightarrow HP^n\) is the projection map.
\begin{theorem}\label{splitting1}
The map
\[\xymatrixcolsep{5pc}\xymatrix{\widetilde{\mathbb{Z}}_S(HP^n_S)\ar[r]^-{p_1(\mathscr{U}^{\vee}_S)^i}&\oplus_{i=0}^{n}\widetilde{\mathbb{Z}}_S(2i)[4i]}\]
is an isomorphism in \(\widetilde{DM}^{eff,-}(S)\). Here, $\mathscr{U}^{\vee}$ is endowed with its canonical orientation.
\end{theorem}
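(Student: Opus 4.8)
The plan is to reduce the statement to the point $pt=\mathrm{Spec}(k)$ and there to prove it by induction on $n$, using the cellular structure of $HP^n$ together with the symplectic Thom isomorphism of \cite{Y}. First I would exploit that everything in sight is pulled back from $pt$: writing $\pi:S\longrightarrow pt$ for the (smooth) structure morphism, Proposition \ref{sheaf formulas}, (1) gives $\pi^*\widetilde{\mathbb{Z}}_{pt}(HP^n)\cong\widetilde{\mathbb{Z}}_S(HP^n_S)$, and since $\pi^*$ is exact (Proposition \ref{derived2}, (3)) and monoidal (Proposition \ref{operations1}, (1)) it carries $\widetilde{\mathbb{Z}}_{pt}(2i)[4i]$ to $\widetilde{\mathbb{Z}}_S(2i)[4i]$, using Proposition \ref{sheaf formulas}, (1) on $\mathbb{G}_m$. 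Because $\mathscr{U}^{\vee}_S=p^*\mathscr{U}^{\vee}_{pt}$ and orientations are natural under pull-back (Definition \ref{symplectic oriented}, (2)), the Euler class and hence $p_1$ are compatible with $\pi^*$, so $\pi^*$ sends the morphism of the theorem over $pt$ to the one over $S$. As a functor preserves isomorphisms, it then suffices to treat $S=pt$.

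Over $pt$ I would argue by induction on $n$, the case $n=0$ ($HP^0=pt$) being trivial. For the inductive step I would use the geometric cell structure of $HP^n$ from \cite{Y} (following \cite{PW}): the closed embedding $HP^{n-1}\hookrightarrow HP^n$ has open complement that is $\mathbb{A}^1$-equivalent to the total space of an affine bundle, whose associated Thom space is that of a rank two symplectic bundle. Feeding the standard affine cover of $HP^n$ into the Mayer--Vietoris sequence of Proposition \ref{MV-sequence} and using the homotopy invariance of Proposition \ref{homotopy invariance}, I expect to obtain a distinguished triangle
\[
\widetilde{\mathbb{Z}}_{pt}(HP^{n-1})\longrightarrow\widetilde{\mathbb{Z}}_{pt}(HP^n)\longrightarrow\mathrm{Th}\longrightarrow\widetilde{\mathbb{Z}}_{pt}(HP^{n-1})[1]
\]
in $\widetilde{DM}^{eff,-}(pt)$, and I would identify the Thom term $\mathrm{Th}$ with $\widetilde{\mathbb{Z}}_{pt}(2n)[4n]$ via the symplectic Thom isomorphism of \cite{Y} carrying the canonical orientation.

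By the inductive hypothesis $\widetilde{\mathbb{Z}}_{pt}(HP^{n-1})\cong\bigoplus_{i=0}^{n-1}\widetilde{\mathbb{Z}}_{pt}(2i)[4i]$, so the connecting morphism of the triangle lands in the groups $\mathrm{Hom}(\widetilde{\mathbb{Z}}_{pt}(2n)[4n],\widetilde{\mathbb{Z}}_{pt}(2i)[4i+1])$ for $0\le i\le n-1$. These vanish for weight reasons, being negative-weight (shifted) motivic cohomology groups of the point, so the triangle splits and $\widetilde{\mathbb{Z}}_{pt}(HP^n)\cong\bigoplus_{i=0}^{n}\widetilde{\mathbb{Z}}_{pt}(2i)[4i]$. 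It then remains to identify this abstract splitting with the explicit morphism of the theorem: the powers $p_1(\mathscr{U}^{\vee})^i$ assemble into a map $\Phi$ between two finite sums of Tate twists supported on the diagonal bidegrees, hence $\Phi$ is encoded by a matrix over $\mathrm{End}(\widetilde{\mathbb{Z}}_{pt})=GW(k)$. Testing $\Phi$ against $\mathrm{Hom}_{\widetilde{DM}^{eff,-}(pt)}(-,\widetilde{\mathbb{Z}}_{pt}(j)[2j])$ and using Proposition \ref{base} to rewrite these groups as $\widetilde{CH}^{j}(HP^n)$ with its intersection product, this matrix becomes the transition matrix from the classes $1,p_1,\dots,p_1^{n}$ to the standard basis; the quaternionic projective bundle theorem in Chow--Witt groups from \cite{Y} states exactly that these powers form a $GW(k)$-basis, so the matrix is invertible and $\Phi$ is an isomorphism.

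The main obstacle will be the inductive step over $pt$: realizing the cellular filtration of $HP^n$ as an honest distinguished triangle inside the bounded-above localized category constructed here, where no Gysin triangle for closed immersions is available directly, so that it must be extracted from Mayer--Vietoris and homotopy invariance in the manner of \cite{PW}, and then correctly identifying the Thom term through the symplectic Thom isomorphism of \cite{Y} with the right orientation. A secondary point demanding care is the compatibility of the Pontryagin class with $\pi^*$ in the reduction step, which rests on the naturality clause of Definition \ref{symplectic oriented} together with the construction of Euler classes through Proposition \ref{jing}.
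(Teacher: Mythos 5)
Your first paragraph is precisely the paper's proof of Theorem \ref{splitting1}: the paper notes that \(p_1(\mathscr{U}^{\vee}_S)\) is the pull-back of \(p_1(\mathscr{U}^{\vee}_{pt})\) (this is nearly built into the definition of the Euler class over \(S\), which is manufactured from a class over the point via the adjunction of Proposition \ref{jing}), identifies \(\widetilde{\mathbb{Z}}_S(HP^n_S)\) and \(\widetilde{\mathbb{Z}}_S(2i)[4i]\) with the pull-backs of the corresponding objects over \(pt\), and concludes since pull-back preserves isomorphisms, quoting \cite[Theorem 4.2]{Y} for the statement over the point. That citation is the whole point of the theorem: its content is the passage from \(pt\) to an arbitrary smooth base \(S\). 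Your remaining two paragraphs re-derive the cited input by the Panin--Walter cellular induction, which is legitimate but is essentially the proof already carried out in \cite{Y} (after \cite{PW}); what it buys is self-containedness, and what it costs is a list of nontrivial verifications you only sketch: (i) the cofibre of \(\widetilde{\mathbb{Z}}_{pt}(HP^{n-1})\longrightarrow\widetilde{\mathbb{Z}}_{pt}(HP^n)\) must be identified with the single Tate motive \(\widetilde{\mathbb{Z}}_{pt}(2n)[4n]\), i.e.\ with the Thom space of a rank \(2n\) symplectic bundle over an \(\mathbb{A}^1\)-contractible base in the sense of Theorem \ref{Thom}, not merely of a rank two one; (ii) the vanishing of \(Hom(\widetilde{\mathbb{Z}}_{pt}(2n)[4n],\widetilde{\mathbb{Z}}_{pt}(2i)[4i+1])\) for \(i<n\), and the computation of the \(Hom\)-groups between Tate twists needed to see your matrix as triangular with entries in \(GW(k)\), are not formal weight observations in \(\widetilde{DM}^{eff,-}(pt)\) but rest on the cancellation theorem for MW-motives or a direct Rost--Schmid computation. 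Since you invoke the Chow--Witt quaternionic projective bundle theorem of \cite{Y} anyway to identify the abstract splitting with the explicit map, it is both shorter and safer to cite \cite[Theorem 4.2]{Y} directly for the base case, as the paper does; your reduction step alone then constitutes a complete and correct proof.
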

\begin{proof}
We have the projection \(p:HP^n_S\longrightarrow HP^n\) as before. Now we have a commutative diagram
\[
	\xymatrixcolsep{4pc}
	\xymatrix
	{
		p^*\widetilde{\mathbb{Z}}_{pt}(HP^n)\ar[r]^-{p^*(p_1(\mathscr{U}^{\vee}_{pt}))}		&p^*\widetilde{\mathbb{Z}}_{pt}(2)[4]\\
		\widetilde{\mathbb{Z}}_S(HP^n_S)\ar[u]^-{\cong}\ar[r]^-{p_1(\mathscr{U}^{\vee}_S)}	&\widetilde{\mathbb{Z}}_S(2)[4]\ar[u]^{\cong}
	}.
\]
Hence the result follows by the commutative diagram
\[
	\xymatrixcolsep{5pc}
	\xymatrix
	{
		p^*\widetilde{\mathbb{Z}}_{pt}(HP^n)\ar[r]^-{p^*(p_1(\mathscr{U}^{\vee}_{pt}))^i}	&\oplus_{i=0}^{n}p^*\widetilde{\mathbb{Z}}_{pt}(2i)[4i]\\
		\widetilde{\mathbb{Z}}_S(HP^n_S)\ar[r]^-{p_1(\mathscr{U}^{\vee}_S)^i}\ar[u]^{\cong}	&\oplus_{i=0}^{n}\widetilde{\mathbb{Z}}_S(2i)[4i]\ar[u]^{\cong}
	},
\]
where the upper horizontal arrow is an isomorphism by \cite[Theorem 4.2]{Y}.
\end{proof}

Recall in \cite[Proposition 3.4]{Y}, we defined \(HGr_X(\mathscr{E})\) for any symplectic bundle \(\mathscr{E}\) over \(X\). It parameterizes rank two symplectic subbundles of \(\mathscr{E}\). When \(\mathscr{E}\) is the trivial symplectic bundle \(\left(O_{X}^{\oplus 2n+2},\left(\begin{array}{cc}&I\\-I&\end{array}\right)\right)\), \(HGr_{X}(\mathscr{E})\cong HP^{n}\times_{k}X\) over \(X\). It associates a dual tautological bundle \(\mathscr{U}^{\vee}\), which is a rank two symplectic bundle.
\begin{theorem}\label{quaternionic projective bundle theorem}
Let \(X\in Sm/S\) and let \((\mathscr{E},m)\) be a symplectic vector bundle of rank \(2n+2\) on \(X\). Let $\pi:HGr_X(\mathscr{E})\to X$ be the projection. Then, the map
\[\xymatrixcolsep{6pc}\xymatrix{\widetilde{\mathbb{Z}}_S(HGr_X(\mathscr{E}))\ar[r]^-{\pi\boxtimes p_1(\mathscr{U}^{\vee})^i}&\oplus_{i=0}^{n}\widetilde{\mathbb{Z}}_S(X)(2i)[4i]}\]
is an isomorphism in \(\widetilde{DM}^{eff,-}(S)\), functorial for $X$ in \(Sm/k\). Here, $\mathscr{U}^{\vee}$ is endowed with its canonical orientation.
\end{theorem}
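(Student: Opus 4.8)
The plan is to reduce the general statement to the special case of Theorem \ref{splitting1} in two stages: first to an arbitrary $X\in Sm/S$ carrying a \emph{trivial} symplectic bundle, and then to an arbitrary symplectic bundle $\mathscr{E}$ by a Mayer--Vietoris induction over a Zariski cover trivialising $\mathscr{E}$.

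\emph{Trivial bundle case.} Suppose first that $\mathscr{E}$ is the trivial symplectic bundle of rank $2n+2$. Then, as recalled in the text, $HGr_X(\mathscr{E})\cong HP^n\times_k X\cong HP^n_S\times_S X$ over $X$, and the dual tautological bundle $\mathscr{U}^{\vee}$ is the pullback of $\mathscr{U}^{\vee}_S$ along the projection $HP^n_S\times_S X\to HP^n_S$. I would apply Proposition \ref{tensor} to get $\widetilde{\mathbb{Z}}_S(HP^n_S\times_S X)\cong\widetilde{\mathbb{Z}}_S(HP^n_S)\otimes_S\widetilde{\mathbb{Z}}_S(X)$, and then tensor the isomorphism of Theorem \ref{splitting1} with $\widetilde{\mathbb{Z}}_S(X)$ (using that $K\otimes_S-$ is exact) to produce the decomposition $\bigoplus_{i=0}^{n}\widetilde{\mathbb{Z}}_S(X)(2i)[4i]$. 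The substantive point is to identify the abstract isomorphism so obtained with the explicit map $\pi\boxtimes p_1(\mathscr{U}^{\vee})^i$: this uses that $p_1(\mathscr{U}^{\vee})$ is the pullback of $p_1(\mathscr{U}^{\vee}_S)$ by naturality of the Pontryagin class (Definition \ref{symplectic oriented}, (2)), together with the multiplicativity of the comparison $Hom_{\widetilde{DM}}(\widetilde{\mathbb{Z}}_{pt}(-),\widetilde{\mathbb{Z}}_{pt}(\ast)[2\ast])\cong\widetilde{CH}^{\ast}(-)$ from Proposition \ref{base}, which matches the powers $p_1^i$ against the Künneth factor contributed by $HP^n_S$.

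\emph{Mayer--Vietoris induction.} A symplectic bundle is Zariski-locally trivial as a symplectic bundle: after a local trivialisation of the underlying vector bundle, a symplectic Gram--Schmidt process over the local ring puts the alternating form into standard shape on a neighbourhood. Since $X$ is quasi-compact, I may fix a finite cover $X=U_1\cup\cdots\cup U_m$ with each $\mathscr{E}|_{U_j}$ trivial. Because $HGr$ commutes with the base change along open immersions, $HGr_X(\mathscr{E})\times_X U=HGr_U(\mathscr{E}|_U)$, so the opens $HGr_X(\mathscr{E})|_{U_j}$ form a Zariski cover of $HGr_X(\mathscr{E})$. For a decomposition $X=V\cup W$, Proposition \ref{MV-sequence} then supplies distinguished triangles in $\widetilde{DM}^{eff,-}(S)$ relating the motives of $HGr$ over $V\cap W$, over $V$ and $W$, and over $X$, and likewise for the target $\bigoplus_i\widetilde{\mathbb{Z}}_S(-)(2i)[4i]$. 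Naturality of $\pi\boxtimes p_1(\mathscr{U}^{\vee})^i$ under restriction to opens (again Definition \ref{symplectic oriented}, (2), and functoriality of $\widetilde{\mathbb{Z}}_S(-)$) makes these into a morphism of distinguished triangles, so the five lemma reduces the assertion for $X$ to the assertions for $V$, $W$ and $V\cap W$. Taking $V=U_1\cup\cdots\cup U_{m-1}$ and $W=U_m$, so that $V\cap W$ is covered by $m-1$ trivialising opens, an induction on $m$ reduces everything to the trivial case handled above.

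\emph{Functoriality and main obstacle.} The functoriality for $X$ in $Sm/k$ is then formal, since $HGr$, the tautological bundle, its Pontryagin class and the diagonal and product maps are all compatible with pullback (Definition \ref{symplectic oriented}, (2), and Proposition \ref{base}). The step I expect to be hardest is not the gluing but the bookkeeping in the trivial case: verifying that the decomposition produced by $-\otimes_S\widetilde{\mathbb{Z}}_S(X)$ from Theorem \ref{splitting1} is genuinely realised by $\pi\boxtimes p_1(\mathscr{U}^{\vee})^i$, that is, matching the external powers of the Pontryagin class against the Künneth factors. This requires unwinding the definitions of $\boxtimes$ and of the product $f_1f_2$ introduced before Proposition \ref{base}, and checking their compatibility with the monoidal structure $\otimes_S$ and with pullback along the projection $HP^n_S\times_S X\to HP^n_S$.
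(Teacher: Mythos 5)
Your proposal is correct and follows essentially the same route as the paper, whose proof simply defers to \cite[Theorem 4.3]{Y}: that argument is precisely the reduction to the trivial-bundle case via Theorem \ref{splitting1} and the K\"unneth decomposition, followed by a Mayer--Vietoris induction over a finite symplectically trivialising Zariski cover using Proposition \ref{MV-sequence} and the five lemma. The bookkeeping you flag in the trivial case (matching $\pi\boxtimes p_1(\mathscr{U}^{\vee})^i$ with the tensor decomposition) is indeed the only substantive verification, and your treatment of it via naturality of $p_1$ and Proposition \ref{base} is the intended one.
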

\begin{proof}
The same as \cite[Theorem 4.3]{Y} since we have MV-sequences (Proposition \ref{Cech}) and Theorem \ref{splitting1} over any base \(S\).
\end{proof}
\begin{definition}
Let \(X\in Sm/S\) and \(Y\subseteq X\) be a closed subset. Consider the quotient sheaf with \(E\)-transfers
\[\widetilde{M}_Y(X):=\widetilde{\mathbb{Z}}_S(X)/\widetilde{\mathbb{Z}}_S(X\setminus Y).\]
Its image in $\widetilde{DM}^{eff,-}(S)$ will be called the relative motive of \(X\) with support in \(Y\) (see \cite[Definition 2.2]{D} and the remark before \cite[Corollary 5.3]{SV}). By abuse of notation, we still denote it by $\widetilde{M}_Y(X)$.
\end{definition}
\begin{definition}
Suppose \(X\in Sm/S\) and \(E\) is a vector bundle over \(X\). Define \(Th_S(E)=\widetilde{M}_X(E)\) where \(X\subseteq E\) is the zero section of \(E\).
\end{definition}
\begin{proposition}\label{additivity}
\begin{enumerate}
\item Suppose \(f:S\longrightarrow T\) is a morphism in \(Sm/k\), \(X\in Sm/T\) and \(E\) is a vector bundle over \(X\). Then we have
\[f^*Th_T(E)\cong Th_S(f^*E)\]
in \(\widetilde{DM}^{eff,-}(S)\), where \(f^*E\) is a vector bundle over \(X^S\).
\item Suppose \(f:S\longrightarrow T\) is a smooth morphism in \(Sm/k\), \(X\in Sm/S\) and \(E\) is a vector bundle over \(X\). Then we have
\[f_{\#}Th_S(E)\cong Th_T(E)\]
in \(\widetilde{DM}^{eff,-}(S)\).
\item (See \cite[Remark 2.4.15]{CD1}) Suppose \(E_1\) and \(E_2\) are vector bundles over \(X\in Sm/k\). Then
\[Th_X(E_1)\otimes_XTh_X(E_2)\cong Th_X(E_1\oplus E_2)\]
in \(\widetilde{DM}^{eff,-}(X)\).
\end{enumerate}
\end{proposition}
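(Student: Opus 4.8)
The plan is to prove all three assertions by reducing each Thom space to its defining presentation as a cokernel (equivalently, a cofiber) of representable objects, and then transporting this presentation through the relevant operation using the exactness of $f^{*}$, $f_{\#}$ and $\otimes_{S}$ together with the representability formulas of Propositions \ref{sheaf formulas}, \ref{operations} and \ref{tensor}. Throughout I record $Th(E)$ as the third term of the cofiber sequence
\[
\widetilde{\mathbb{Z}}(E\setminus X)\longrightarrow\widetilde{\mathbb{Z}}(E)\longrightarrow Th(E),
\]
where $X\subseteq E$ is the zero section; since all the functors in play are exact (Proposition \ref{derived2}), they carry this triangle to the analogous triangle, so that it suffices to identify the first two (representable) terms.

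For (1), I would apply the exact functor $f^{*}$ to the above cofiber sequence for $Th_{T}(E)$. By Proposition \ref{sheaf formulas}, (1) the first two terms become $\widetilde{\mathbb{Z}}_{S}((E\setminus X)\times_{T}S)$ and $\widetilde{\mathbb{Z}}_{S}(E\times_{T}S)$. The only geometric input needed is that base change along $f$ commutes with the zero section and its complement: $E\times_{T}S=f^{*}E$ is the pulled-back bundle over $X^{S}$, its zero section is $X^{S}$, and $(E\setminus X)\times_{T}S=f^{*}E\setminus X^{S}$ because fibre products commute with open and closed complements. Hence the image triangle is exactly the cofiber sequence defining $Th_{S}(f^{*}E)$. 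Part (2) is identical in structure: apply the exact functor $f_{\#}$ and use Proposition \ref{operations}, (1) to identify $f_{\#}\widetilde{\mathbb{Z}}_{S}(E)\cong\widetilde{\mathbb{Z}}_{T}(E)$ and $f_{\#}\widetilde{\mathbb{Z}}_{S}(E\setminus X)\cong\widetilde{\mathbb{Z}}_{T}(E\setminus X)$, where $X\in Sm/S$ and the bundle $E$ are regarded in $Sm/T$ via the smooth map $f$ and the zero section is unchanged.

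Part (3) is the substantive one. I would represent each $Th_{X}(E_{i})$ by the two-term complex $C_{i}=[\widetilde{\mathbb{Z}}_{X}(E_{i}\setminus X)\to\widetilde{\mathbb{Z}}_{X}(E_{i})]$ in homological degrees $1,0$, whose terms are free, hence projective, so that $C_{1}\otimes_{X}C_{2}$ already computes the derived tensor product. Applying Proposition \ref{tensor} termwise and using that the fibre product of vector bundles over $X$ is the Whitney sum, $E_{1}\times_{X}E_{2}=E_{1}\oplus E_{2}$, the total complex reads
\[
\widetilde{\mathbb{Z}}_{X}((E_{1}\setminus X)\times_{X}(E_{2}\setminus X))\longrightarrow\widetilde{\mathbb{Z}}_{X}((E_{1}\setminus X)\times_{X}E_{2})\oplus\widetilde{\mathbb{Z}}_{X}(E_{1}\times_{X}(E_{2}\setminus X))\longrightarrow\widetilde{\mathbb{Z}}_{X}(E_{1}\oplus E_{2})
\]
in degrees $2,1,0$. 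The key observation is that $(E_{1}\setminus X)\times_{X}E_{2}$ and $E_{1}\times_{X}(E_{2}\setminus X)$ form a Zariski open cover of $(E_{1}\oplus E_{2})\setminus X$ with intersection $(E_{1}\setminus X)\times_{X}(E_{2}\setminus X)$. Feeding this cover into the Mayer--Vietoris short exact sequence of Proposition \ref{MV-sequence} shows that the degree $2,1$ part of the total complex is quasi-isomorphic to $\widetilde{\mathbb{Z}}_{X}((E_{1}\oplus E_{2})\setminus X)$ concentrated in degree $1$, and that the residual arrow to degree $0$ is the inclusion of the complement of the zero section. Thus $C_{1}\otimes_{X}C_{2}$ is quasi-isomorphic to $[\widetilde{\mathbb{Z}}_{X}((E_{1}\oplus E_{2})\setminus X)\to\widetilde{\mathbb{Z}}_{X}(E_{1}\oplus E_{2})]$, which is $Th_{X}(E_{1}\oplus E_{2})$.

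The main obstacle is the bookkeeping in (3): one must check that the differentials of the total complex match the restriction maps (including signs) occurring in the Mayer--Vietoris sequence, and that under the resulting quasi-isomorphism the induced arrow to $\widetilde{\mathbb{Z}}_{X}(E_{1}\oplus E_{2})$ is precisely the complement inclusion, so that the cokernel is genuinely $Th_{X}(E_{1}\oplus E_{2})$ and not a twist of it. A secondary point, used silently above, is that the open immersions $E_{i}\setminus X\hookrightarrow E_{i}$ and $(E_{1}\oplus E_{2})\setminus X\hookrightarrow E_{1}\oplus E_{2}$ induce monomorphisms of sheaves with $E$-transfers, so that the two-term complexes really compute the cokernels defining the Thom spaces; this can be verified as in the oriented case, or circumvented entirely by phrasing the whole argument in terms of cofiber sequences and the exactness of $\otimes_{X}$. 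The symmetry and associativity of $\otimes_{X}$ established in Section \ref{Sheaves} are what render the identification $E_{1}\times_{X}E_{2}=E_{2}\times_{X}E_{1}$ harmless.
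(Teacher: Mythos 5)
Your proposal is correct and follows essentially the same route as the paper: parts (1) and (2) by exactness plus the representability formulas, and part (3) by forming the total complex of the termwise tensor product, identifying its terms via $\widetilde{\mathbb{Z}}_S(X)\otimes_S\widetilde{\mathbb{Z}}_S(Y)\cong\widetilde{\mathbb{Z}}_S(X\times_SY)$, and collapsing the top two degrees with the Mayer--Vietoris sequence for the Zariski cover of $(E_1\oplus E_2)\setminus X$ (the paper invokes Proposition \ref{Cech} directly, which is what underlies Proposition \ref{MV-sequence}). Your extra remarks on projectivity of the terms and on matching the residual arrow are just the bookkeeping the paper leaves implicit.
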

\begin{proof}
(1) and (2) are easy. Let's prove (3). The total space of \(E_1\oplus E_2\) is just \(E_1\times_XE_2\). By definition, for any vector bundle \(E\) over \(X\), \(Th_X(E)\) is just the complex
\[\widetilde{\mathbb{Z}}_S(E\setminus X)\longrightarrow\widetilde{\mathbb{Z}}_S(E).\]
Hence the left hand side is the total complex
\[\widetilde{\mathbb{Z}}_S((E_1\setminus X)\times_X(E_2\setminus X))\longrightarrow\widetilde{\mathbb{Z}}_S((E_1\setminus X)\times_XE_2)\oplus\widetilde{\mathbb{Z}}_S(E_1\times_X(E_2\setminus X))\longrightarrow\widetilde{\mathbb{Z}}_S(E_1\times_XE_2).\]
And by Proposition \ref{Cech}, the complex
\[\widetilde{\mathbb{Z}}_S((E_1\setminus X)\times_X(E_2\setminus X))\longrightarrow\widetilde{\mathbb{Z}}_S((E_1\setminus X)\times_XE_2)\oplus\widetilde{\mathbb{Z}}_S(E_1\times_X(E_2\setminus X))\]
is quasi-isomorphic to
\[0\longrightarrow\widetilde{\mathbb{Z}}_S((E_1\times_XE_2)\setminus X)\]
since
\[(E_1\times_XE_2)\setminus X=(E_1\setminus X)\times_XE_2\cup E_1\times_X(E_2\setminus X).\]
Hence we have a quasi-isomorphism
\[
	\xymatrixcolsep{1pc}
	\xymatrix
	{
		\widetilde{\mathbb{Z}}_S((E_1\setminus X)\times(E_2\setminus X))\ar[r]\ar[d]	&\widetilde{\mathbb{Z}}_S((E_1\setminus X)\times E_2)\oplus\widetilde{\mathbb{Z}}_S(E_1\times(E_2\setminus X))\ar[r]\ar[d]	&\widetilde{\mathbb{Z}}_S(E_1\times E_2)\ar@{=}[d]\\
		0\ar[r]														&\widetilde{\mathbb{Z}}_S((E_1\times E_2)\setminus X)\ar[r]													&\widetilde{\mathbb{Z}}_S(E_1\times E_2)
	}.
\]
\end{proof}
\begin{proposition}\label{excision}
Let \(f:X\longrightarrow Y\) be an \'etale morphism in \(Sm/S\), \(Z\subseteq Y\) be a closed subset of \(Y\) such that the map \(f:f^{-1}(Z)\longrightarrow Z\) is an isomorphism (the schemes are endowed with their reduced structure), then the map \(\widetilde{M}_{f^{-1}(Z)}(X)\longrightarrow\widetilde{M}_Z(Y)\) is an isomorphism of sheaves with \(E\)-transfers.
\end{proposition}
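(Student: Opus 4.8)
The plan is to reduce the statement to the étale excision Axiom \ref{EE} after passing to Nisnevich stalks. First I would record that, since $\widetilde{a}$ is exact and cokernels of presheaves are computed termwise, one has $\widetilde{M}_Z(Y)\cong\widetilde{a}(\widetilde{c}_S(Y)/\widetilde{c}_S(Y\setminus Z))$ and likewise $\widetilde{M}_{f^{-1}(Z)}(X)\cong\widetilde{a}(\widetilde{c}_S(X)/\widetilde{c}_S(X\setminus W))$, with $W=f^{-1}(Z)$, so that the map in question is $\widetilde{a}$ applied to the morphism of presheaf cokernels induced by $f$. Because the forgetful functor $\widetilde{\gamma}_*$ to Nisnevich sheaves commutes with limits and colimits (Proposition \ref{sheafication}) and a Nisnevich sheaf is trivial exactly when all its stalks vanish, it suffices to prove the induced map is an isomorphism on every Nisnevich stalk. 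Fixing $U\in Sm/S$ and $u\in U$, the stalk of $\widetilde{M}_Z(Y)$ at $u$ is the colimit over Nisnevich neighbourhoods $(V,v)\to(U,u)$ of the quotients $Q_Y(V):=\widetilde{Cor}_S(V,Y)/\operatorname{im}\widetilde{Cor}_S(V,Y\setminus Z)$, and similarly for $Q_X(V)$; by Proposition \ref{Henselization} these neighbourhoods are cofinal with Henselizations.

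Next I would unwind these quotients through Definition \ref{cor}. Writing $f'=\mathrm{id}_V\times_S f\colon V\times_S X\to V\times_S Y$, which is étale, the morphism $\widetilde{\mathbb{Z}}_S(X)\to\widetilde{\mathbb{Z}}_S(Y)$ is composition with $\widetilde{\gamma}(f)$, and by Proposition \ref{easy composition1} (applied to the smooth, indeed étale, morphism $f$) this is the push-forward $f'_*$ up to a twist isomorphism. The twists match canonically: as $f'$ is étale one has $T_{V\times_S X/V\times_S Y}=0$, so Lemma \ref{t}, (1) identifies $T_{V\times_S X/V}$ with $f'^*T_{V\times_S Y/V}$, and one checks this is the isomorphism produced via Proposition \ref{easy composition1} and Lemma \ref{hardvvb1}. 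Moreover $f'$ restricts to an isomorphism $f'^{-1}(V\times_S Z)=V\times_S W\xrightarrow{\sim}V\times_S Z$, since the base change of the given isomorphism $f|_W\colon W\to Z$ remains an isomorphism. Thus Axiom \ref{EE} applies to $f'$ over any closed support contained in $V\times_S Z$, giving there mutually inverse isomorphisms $f'^*$ and $f'_*$.

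The heart of the argument is then a support decomposition carried out Nisnevich-locally. Given a class in $\widetilde{Cor}_S(V,Y)$ represented by some $\alpha\in E^{d_Y-d_S}_T(V\times_S Y,-T_{V\times_S Y/V})$ with $T\in\mathscr{A}_S(V,Y)$ finite over $V$, I would use that $f$ admits a section over $Z$ and that sections of separated étale maps are open immersions, in order to shrink $V$ to a Nisnevich neighbourhood over which $f'^{-1}(T)$ splits as a disjoint union $T_0\sqcup T_1$ with $T_0\xrightarrow{\sim}T$ over a neighbourhood of $V\times_S Z$ and $T_1\cap(V\times_S W)=\varnothing$; this is the analogue, for our non-finite $f$, of the splitting used in Proposition \ref{splitting}. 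By the disjoint-support decomposition of Axiom \ref{ES}, the component of $f'^*\alpha$ supported on $T_1$ lies in the image of $\widetilde{Cor}_S(V,X\setminus W)$ and hence dies in $Q_X(V)$, while the component on $T_0$ is the excision preimage of $\alpha$ near $Z$. Feeding this through Axiom \ref{EE} shows the quotient maps induced by $f'_*$ and by $f'^*$ are mutually inverse in the colimit, which yields the desired isomorphism of stalks and hence of sheaves with $E$-transfers.

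The main obstacle I anticipate is precisely this last bookkeeping: arranging, after a controlled Nisnevich shrink, the clean separation of each admissible support into a part lying over $Z$ (where étale excision is available) and a part disjoint from $Z$ (which is annihilated in the relative motive), and verifying that the twist identifications and the extension-of-support maps are compatible with passage to the colimit, so that $f'_*$ and $f'^*$ really descend to inverse isomorphisms of the quotients. The geometric splitting of the supports and the compatibility of the twist line bundles with Axiom \ref{EE} are where the genuine content lies; everything else is formal once the stalkwise reduction and the identification $\widetilde{M}_Z(Y)\cong\widetilde{a}(\widetilde{c}_S(Y)/\widetilde{c}_S(Y\setminus Z))$ are in place.
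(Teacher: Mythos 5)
Your overall strategy is the right one and is, in substance, the paper's: the proof given in the text is a one--line deferral to \cite[Proposition 5.1]{Y} (together with Proposition \ref{Cech}), and the argument there is exactly the stalkwise reduction you describe --- pass to a Hensel local base $V$, split the admissible support of a correspondence into the part lying over $Z$ and the part missing $Z$, lift the former through the \'etale map by Hensel's lemma, and observe that the latter dies in the relative motive.

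Two steps, as literally written, would fail and need to be rerouted (without changing the substance). First, $f'^{-1}(T)$ does not split as $T_0\sqcup T_1$ with $T_0\xrightarrow{\sim}T$: since $f$ need not be surjective away from $Z$ (take $X$ an open neighbourhood of $Z$ in $Y$), only the union $T_Z$ of those local components of $T$ whose closed points lie in $V\times_SZ$ lifts; the correct statement is $f'^{-1}(T_Z)=T_0\sqcup T_1$ with $T_0\xrightarrow{\sim}T_Z$ via the Hensel section and $T_1\cap(V\times_SW)=\emptyset$. Second, and more seriously, you cannot form ``$f'^*\alpha$'' and decompose it into correspondences: $f$ is \'etale but not finite, so $f'^{-1}(T)$ --- and in particular $T_1$ and $f'^{-1}(T\setminus T_Z)$ --- is only quasi-finite over $V$, hence is not an admissible support, and ``the component of $f'^*\alpha$ supported on $T_1$'' is not an element of $\widetilde{Cor}_S(V,X\setminus W)$; that sentence does not typecheck. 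The repair is to never pull back the whole class: using that $T$ is finite over the Hensel local $V$, write $T=T_Z\sqcup T_{\neg Z}$ and $\alpha=\alpha_Z+\alpha_{\neg Z}$ by Axiom \ref{ES}, discard $\alpha_{\neg Z}$ in $Q_Y(V)$, transport $\alpha_Z$ to a class supported on $T_0$ by Axiom \ref{EE} applied to the restriction of $f'$ to the open complement of $T_1$ (here $T_0$, being isomorphic to $T_Z$, \emph{is} finite over $V$), and check that this assignment descends to a two--sided inverse of the map induced by $f'_*$; injectivity is handled symmetrically by splitting supports on the $X$--side according to whether their closed points lie in $V\times_SW$. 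With that reorganization the proof goes through as you intend, and the twist identifications via Proposition \ref{easy composition1} and Lemma \ref{hardvvb1} are handled correctly in your sketch.
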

\begin{proof}
The same as \cite[Proposition 5.1]{Y} by using Proposition \ref{Cech}.
\end{proof}
\begin{theorem}\label{Thom}
Let \(E\) be a symplectic bundle of rank \(2n\) over \(X\in Sm/S\). Then we have
\[Th_S(E)\cong\widetilde{\mathbb{Z}}_S(X)(2n)[4n]\]
in \(\widetilde{DM}^{eff,-}(S)\).
\end{theorem}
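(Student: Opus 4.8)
The plan is to reduce the statement to the case of a trivial rank-two symplectic bundle, where the computation comes directly from the quaternionic projective bundle theorem. First I would simplify the base to $X$ itself. Let $f\colon X\longrightarrow S$ be the (smooth) structure morphism. By Proposition \ref{additivity}, (2) we have $f_{\#}Th_X(E)\cong Th_S(E)$, while Proposition \ref{operations}, (1) together with the projection formula Proposition \ref{operations1}, (2) gives $f_{\#}(\widetilde{\mathbb{Z}}_X(X)(2n)[4n])\cong\widetilde{\mathbb{Z}}_S(X)(2n)[4n]$, using that the Tate twist over $X$ is $f^{*}$ of the Tate twist over $S$ (Proposition \ref{sheaf formulas}, (1)). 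Since $f_{\#}$ is a functor it preserves isomorphisms, so it suffices to produce an isomorphism $Th_X(E)\cong\widetilde{\mathbb{Z}}_X(X)(2n)[4n]$ in $\widetilde{DM}^{eff,-}(X)$; that is, we may assume $S=X$ and that $E$ is a symplectic bundle over the base.

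Next I would construct a global morphism $\phi\colon Th_X(E)\longrightarrow\widetilde{\mathbb{Z}}_X(X)(2n)[4n]$, the Thom class, out of the canonical symplectic orientation on $E$ and the Pontryagin class formalism underlying Theorem \ref{splitting1} and Theorem \ref{quaternionic projective bundle theorem}; by construction $\phi$ is functorial and hence compatible with restriction to Zariski opens. To verify that $\phi$ is an isomorphism I would argue Zariski-locally: over a local ring the symplectic basis theorem gives $E\cong\mathbb{H}^{\oplus n}$, where $\mathbb{H}$ is the hyperbolic plane, so $E$ is trivial on a Zariski cover $\{U_i\}$ of $X$, and the $E|_{U_i}$ cover the total space $E$. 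Using the Mayer--Vietoris sequences of Proposition \ref{Cech} and Proposition \ref{MV-sequence} for these restrictions, excision (Proposition \ref{excision}), and the five lemma applied to $\phi$, the isomorphism for $E$ reduces to the isomorphism for each $E|_{U_i}$ and their intersections, i.e. to the trivial case.

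For the trivial bundle $\mathbb{H}^{\oplus n}_X$, additivity Proposition \ref{additivity}, (3) gives $Th_X(\mathbb{H}^{\oplus n}_X)\cong Th_X(\mathbb{H}_X)^{\otimes n}$, so everything comes down to the single rank-two computation $Th_X(\mathbb{H}_X)\cong\widetilde{\mathbb{Z}}_X(X)(2)[4]$. This I would extract from Theorem \ref{splitting1} (equivalently Theorem \ref{quaternionic projective bundle theorem} with $n=1$) applied to $HGr_X(\mathbb{H}_X\oplus\mathbb{H}_X)\cong HP^1_X$: the decomposition $\widetilde{\mathbb{Z}}_X(HP^1_X)\cong\widetilde{\mathbb{Z}}_X(X)\oplus\widetilde{\mathbb{Z}}_X(X)(2)[4]$, combined with the cofibre sequence attached to the section at infinity whose open complement is, up to $\mathbb{A}^1$-homotopy, the total space of $\mathbb{H}_X$ with its zero section, identifies $Th_X(\mathbb{H}_X)$ with the top Tate summand $\widetilde{\mathbb{Z}}_X(X)(2)[4]$ via excision.

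The hard part will be the rank-two base case: the precise geometric identification, via excision and an $\mathbb{A}^1$-homotopy, of the Thom space of a rank-two symplectic bundle with the top summand of the quaternionic projective line bundle, together with a coherent construction of the global Thom class $\phi$ so that the five-lemma gluing of the previous step genuinely applies. Both points are exactly those treated in \cite{Y} over the base field; the new content here is that the inputs they require---the quaternionic projective bundle theorem (Theorem \ref{quaternionic projective bundle theorem}), excision (Proposition \ref{excision}), Mayer--Vietoris (Proposition \ref{Cech}), and additivity (Proposition \ref{additivity})---are now available over an arbitrary smooth base, so the same argument carries over.
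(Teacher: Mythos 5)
Your proposal is correct and is essentially the paper's own proof: the paper disposes of this theorem in one line by citing \cite[Theorem 5.1]{Y}, whose argument is precisely the reduction you describe (Thom class, Zariski-local trivialization by the symplectic splitting, Mayer--Vietoris and the five lemma, additivity of Thom spaces, and the rank-two base case via $HP^1$ and excision), now run over the base $S$ using Theorem \ref{quaternionic projective bundle theorem}. The only substance the paper adds is the remark that the needed inputs (Proposition \ref{Cech}, Proposition \ref{excision}, Proposition \ref{additivity}, Theorem \ref{quaternionic projective bundle theorem}) are available over an arbitrary smooth base, which is exactly the point you make in your final paragraph.
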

\begin{proof}
The same as \cite[Theorem 5.1]{Y} by using Theorem \ref{quaternionic projective bundle theorem}.
\end{proof}

The following observation is quite interesting:
\begin{proposition}
Let \(E\) be a vector bundle of rank \(n\) over \(X\in Sm/k\). Then we have
\[(\Sigma^{\infty,st}Th_X(E))^{-1}\cong(\Sigma^{\infty,st}Th_X(E^{\vee}))(-2n)[-4n]\]
in \(\widetilde{DM}^-(X)\).
\end{proposition}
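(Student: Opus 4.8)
The plan is to exploit the fact that, although the Thom space of an arbitrary vector bundle is not Tate-invertible on the nose (this being precisely the non-oriented phenomenon the paper is built around), the direct sum $E\oplus E^{\vee}$ always carries a canonical symplectic structure, so that its Thom space \emph{is} computed by the symplectic Thom isomorphism of Theorem \ref{Thom}. Combining this with the multiplicativity of Thom spaces will exhibit $\Sigma^{\infty,st}Th_X(E)$ as an invertible object whose inverse is the asserted twist of $\Sigma^{\infty,st}Th_X(E^{\vee})$.

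First I would equip $E\oplus E^{\vee}$ with the \emph{hyperbolic} symplectic form
\[\omega\big((e_1,\phi_1),(e_2,\phi_2)\big)=\phi_2(e_1)-\phi_1(e_2),\]
which is a non-degenerate alternating pairing making $(E\oplus E^{\vee},\omega)$ a symplectic bundle of rank $2n$ in the sense of \cite[Definition 3.3]{Y}. Applying Proposition \ref{additivity}, (3) over the base $S=X$ gives
\[Th_X(E)\otimes_XTh_X(E^{\vee})\cong Th_X(E\oplus E^{\vee}),\]
and Theorem \ref{Thom}, applied to the symplectic bundle $E\oplus E^{\vee}$ of rank $2n$ over $X\in Sm/X$ (with its canonical orientation), identifies the right-hand side with $\widetilde{\mathbb{Z}}_X(X)(2n)[4n]=\mathbbm{1}_X(2n)[4n]$. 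Thus in $\widetilde{DM}^{eff,-}(X)$ we obtain
\[Th_X(E)\otimes_XTh_X(E^{\vee})\cong\mathbbm{1}_X(2n)[4n].\]

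Next I would apply the stabilization functor $\Sigma^{\infty,st}$, which is monoidal by construction (being the composite of the monoidal functor $\Sigma^{\infty}$ with the monoidal localization to $\widetilde{DM}^-(X)$), to get
\[\Sigma^{\infty,st}Th_X(E)\otimes_X\Sigma^{\infty,st}Th_X(E^{\vee})\cong\Sigma^{\infty,st}(\mathbbm{1}_X)(2n)[4n].\]
Setting $T=\Sigma^{\infty,st}Th_X(E)$ and $U=\Sigma^{\infty,st}Th_X(E^{\vee})(-2n)[-4n]$, and using that the twists $(2n)[4n]$ and $(-2n)[-4n]$ are mutually inverse operations on $\widetilde{DM}^-(X)$ (a consequence of Proposition \ref{invert} and the definition of the negative twist), we find $U\otimes_XT\cong\Sigma^{\infty,st}(\mathbbm{1}_X)=\mathbbm{1}$. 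The Lemma preceding Proposition \ref{invert} then shows that $T$ is strongly dualizable with dual $U$; since $U\otimes_XT\cong\mathbbm{1}$ this dual is a tensor inverse, giving $(\Sigma^{\infty,st}Th_X(E))^{-1}\cong\Sigma^{\infty,st}Th_X(E^{\vee})(-2n)[-4n]$ as required.

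The conceptual content is concentrated entirely in the hyperbolic form: once one recognizes that $E\oplus E^{\vee}$ is symplectic, the non-orientedness obstruction disappears and everything reduces to the already-established symplectic Thom isomorphism. I expect the only genuinely delicate point to be the bookkeeping of the Tate twists and shifts in the passage to $\widetilde{DM}^-(X)$—in particular confirming that $(2n)[4n]$ and $(-2n)[-4n]$ cancel via Proposition \ref{invert}—together with the routine verification that the hyperbolic pairing is non-degenerate and compatible with pullback, so that Theorem \ref{Thom} genuinely applies.
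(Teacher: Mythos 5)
Your proposal is correct and follows essentially the same route as the paper: decompose $Th_X(E\oplus E^{\vee})$ via Proposition \ref{additivity}, (3), identify it with $\mathbbm{1}_X(2n)[4n]$ by the symplectic Thom isomorphism of Theorem \ref{Thom} applied to the hyperbolic form on $E\oplus E^{\vee}$, and conclude invertibility from monoidality of $\Sigma^{\infty,st}$ together with the lemma preceding Proposition \ref{invert}. The paper leaves the hyperbolic symplectic structure implicit, so your explicit verification of it is a welcome (and necessary) elaboration rather than a deviation.
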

\begin{proof}
By Proposition \ref{additivity} and Theorem \ref{Thom}, we have
\[Th_X(E)\otimes_XTh_X(E^{\vee})\cong Th_X(E\oplus E^{\vee})\cong\mathbbm{1}_X(2n)[4n]\]
in \(\widetilde{DM}^{eff,-}(X)\). Now the statement follows from Proposition \ref{invert} and the fact that \(\Sigma^{\infty,st}\) is monoidal.
\end{proof}
Since we have a monoidal exact functor \(L:\widetilde{DM}^{eff,-}(X)\longrightarrow\widetilde{DM}^{eff}(X)\), by the same proof as above, we also have
\begin{proposition}\label{symplectic orientation}
Let \(E\) be a vector bundle of rank \(n\) over \(X\in Sm/k\). Then we have
\[(\Sigma^{\infty,st}Th_X(E))^{-1}\cong(\Sigma^{\infty,st}Th_X(E^{\vee}))(-2n)[-4n]\]
in \(\widetilde{DM}(X)\).
\end{proposition}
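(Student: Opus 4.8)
The plan is to reproduce the argument of the preceding proposition one step up the tower of localizations, transporting the effective identity along the monoidal exact comparison functors. First I would record the input in the effective bounded category. The bundle \(E\oplus E^{\vee}\) carries the canonical nondegenerate alternating pairing \(\omega((e,\phi),(e',\phi'))=\phi'(e)-\phi(e')\), so it is a symplectic bundle of rank \(2n\) over \(X\); this is exactly what makes Theorem \ref{Thom} applicable to it. Combining Proposition \ref{additivity}, (3) with Theorem \ref{Thom} gives
\[Th_X(E)\otimes_XTh_X(E^{\vee})\cong Th_X(E\oplus E^{\vee})\cong\mathbbm{1}_X(2n)[4n]\]
in \(\widetilde{DM}^{eff,-}(X)\), exactly as in the proof above.

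Next I would move this isomorphism into \(\widetilde{DM}(X)\). Applying the monoidal exact functor \(L:\widetilde{DM}^{eff,-}(X)\longrightarrow\widetilde{DM}^{eff}(X)\) sends it to the corresponding isomorphism in \(\widetilde{DM}^{eff}(X)\), and then the functor \(\Sigma^{\infty,st}\) at the unbounded level, together with the commutative square of Proposition \ref{fff} identifying the two routes \(\widetilde{DM}^{eff,-}(X)\to\widetilde{DM}^-(X)\to\widetilde{DM}(X)\) and \(\widetilde{DM}^{eff,-}(X)\to\widetilde{DM}^{eff}(X)\to\widetilde{DM}(X)\), carries it to
\[\Sigma^{\infty,st}Th_X(E)\otimes_X\Sigma^{\infty,st}Th_X(E^{\vee})\cong\mathbbm{1}_X(2n)[4n]\]
in \(\widetilde{DM}(X)\), where \(\mathbbm{1}_X\) now denotes the unit there. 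Here I use that each of \(L\), \(\Sigma^{\infty,st}\), and the comparison functor of Proposition \ref{fff} is strong monoidal, hence sends the unit to the unit and respects \(\otimes_X\) and the Tate twists \(-\{-i\}\).

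To finish I would invoke Proposition \ref{invert}: the Tate object \(\mathbbm{1}_X(2n)[4n]\) is \(\otimes_X\)-invertible with inverse \(\mathbbm{1}_X(-2n)[-4n]\), this invertibility being preserved by the strong monoidal comparison functors. Tensoring the last display with \(\mathbbm{1}_X(-2n)[-4n]\) then yields
\[\Sigma^{\infty,st}Th_X(E)\otimes_X\bigl(\Sigma^{\infty,st}Th_X(E^{\vee})(-2n)[-4n]\bigr)\cong\mathbbm{1}_X,\]
which exhibits \(\Sigma^{\infty,st}Th_X(E^{\vee})(-2n)[-4n]\) as a two-sided tensor inverse of \(\Sigma^{\infty,st}Th_X(E)\), i.e. the asserted isomorphism.

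The step I expect to be the genuine obstacle is the claim that the invertibility and explicit dual furnished by Proposition \ref{invert} really descend to \(\widetilde{DM}(X)\) through these functors, that is, that the comparison functors are strong (not merely lax) monoidal on the Tate objects and carry the evaluation and coevaluation maps of Proposition \ref{invert} to isomorphisms. Granting the compatibilities with \(\otimes_X\) and \(-\{-i\}\) asserted in Proposition \ref{fff} (in the manner of Proposition \ref{-}) and the monoidality of \(\Sigma^{\infty,st}\), this reduces to unwinding definitions; the remaining manipulations are formal in a symmetric monoidal category and require no new geometric input.
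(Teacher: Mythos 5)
Your proposal is correct and follows essentially the same route as the paper, which disposes of this proposition in one line by saying it is ``the same proof as above'' transported through the monoidal exact functor \(L:\widetilde{DM}^{eff,-}(X)\longrightarrow\widetilde{DM}^{eff}(X)\): the hyperbolic symplectic form on \(E\oplus E^{\vee}\), Proposition \ref{additivity} (3) together with Theorem \ref{Thom}, and then invertibility of the Tate object as in Proposition \ref{invert}, all preserved because the comparison functors are strong monoidal. Your only flagged worry --- that invertibility descends along these functors --- is indeed automatic for strong monoidal functors, which is exactly the justification the paper implicitly relies on.
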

\subsection{Duality for Proper Schemes and Embedding Theorem}
In this section, we are going to prove \(\widetilde{\mathbb{Z}}_{pt}(X)\) is strongly dualizable in \(\widetilde{DM}^-(pt)\) for proper \(X\in Sm/k\). And we calculate its dual by using orientations on symplectic bundles. And finally we will prove the embedding theorem in MW-motivic cohomology. For this we need to involve the stable \(\mathbb{A}^1\)-derived category \(D_{\mathbb{A}^1}(S)\) over \(S\) introduced in \cite[Section 1]{DF1} and \cite[Example 5.3.31]{CD1} and use the duality result on that category. For clarity, we describe our procedure like the following:
\[\textrm{Duality in }D_{\mathbb{A}^1}(S)\Longrightarrow\textrm{Duality in }\widetilde{DM}(S)\Longrightarrow\textrm{Duality in }\widetilde{DM}^-(S)\Longrightarrow\textrm{Embedding Theorem.}\]

Let's briefly review the construction of \(D_{\mathbb{A}^1}(S)\), the reader may also refer to \cite[Section 5]{CD1} and \cite[Section 1]{DF1}.

Define \(Sh(S)\) to be category of Nisnevich sheaves of abelian groups on \(Sm/S\). The Yoneda representative of \(F\longmapsto F(X)\) for any \(X\in Sm/S\) is denoted by \(\mathbb{Z}_S(X)\). The functor \(\gamma:Sm/S\longrightarrow\widetilde{Cor}_S\) in Proposition \ref{functor} and Lemma \ref{adjunction} give us an adjunction
\[\widetilde{\gamma}^*:Sh(S)\rightleftharpoons\widetilde{Sh}(S):\widetilde{\gamma}_*.\]
The category \(Sh(S)\) is a symmetric monoidal category with \(\mathbb{Z}_S(X)\otimes_S\mathbb{Z}_S(Y)\cong\mathbb{Z}_S(X\times_SY)\) and \(\gamma^*\) is a monoidal functor. For any \(f:S\longrightarrow T\) in \(Sm/k\), there is an adjunction
\[f^*:Sh(T)\rightleftharpoons Sh(S):f_*\]
by the same method as in Proposition \ref{pullback} and \(f^*\gamma^*\cong\gamma^*f^*\) since there is an similar equality for their right adjoints. If \(f\) is smooth, there is an adjunction
\[f_{\#}:Sh(S)\rightleftharpoons Sh(T):f^*\]
as in Proposition \ref{lowerhash}. Then \(f_{\#}\gamma^*\cong\gamma^*f_{\#}\) since there is an similar equality for their right adjoints.

By the same method as in Section \ref{symmetric spectra}, we define \(SSp(S)\) to be the category of symmetric \(\mathbbm{1}_S\{1\}\)-spectra by \(Sh(S)\), where
\[\mathbbm{1}_S\{1\}=Coker(\mathbb{Z}_S(S)\longrightarrow\mathbb{Z}_S(\mathbb{G}_m)).\]
There are adjunctions
\[\Sigma^{\infty}:Sh(S)\rightleftharpoons SSp(S):\Omega^{\infty}\]
and
\[\widetilde{\gamma}^*:SSp(S)\rightleftharpoons Sp(S):\widetilde{\gamma}_*.\]
And we could also define \(\otimes_S\), \(f^*\), \(f_*\), \(f_{\#}\), \(-\{-i\}\) and \(-\{i\}\) (\(i\geq 0\)) on \(SSp(S)\). Moreover, \(\gamma^*\) commutes with \(f^*\) and \(f_{\#}\) and it's monoidal as above.

In \cite[Theorem 1.7]{CD}, they defined a model structure \(\mathfrak{M}_{S}\) on the category of unbounded complexes of \(Sh(S)\). This is a cofibrantly generated model structure where the cofibrations are those \(I\)-cofibrations where \(I\) consists of the morphisms \(S^{n+1}(\mathbb{Z}_S(X))\longrightarrow D^n(\mathbb{Z}_S(X))\) for any \(X\in Sm/S\) and weak equivalences are quasi-morphisms between complexes. The homotopy category of \(\mathfrak{M}_{S}\) is denoted by \(D_{S}(S)\). Moreover, \(\mathfrak{M}_{S}\) is stable and left proper so it induces a triangulated structure on \(D_{S}(S)\).

By localizing \(D_{S}(S)\) with morphisms
\[\mathbb{Z}_S(X\times_k\mathbb{A}^1)\longrightarrow\mathbb{Z}_S(X)\]
as in Section \ref{effective}, we get a category \(D_{\mathbb{A}^1}^{eff}(S)\) with the induced triangulated structure.

In \cite[Theorem 1.7]{CD}, they defined a model structure \(\mathfrak{M}_{SSp}\) on the category of unbounded complexes of symmetric spectra of \(Sh(S)\). This is a cofibrantly generated model structure where the cofibrations are those \(I\)-cofibrations where \(I\) consists of the morphisms \(S^{n+1}(\Sigma^{\infty}\mathbb{Z}_S(X)\{-i\})\longrightarrow D^n(\Sigma^{\infty}\widetilde{\mathbb{Z}}_S(X)\{-i\})\) for any \(X\in Sm/S\) and \(i\geq 0\) and weak equivalences are quasi-morphisms between complexes. The homotopy category of \(\mathfrak{M}_{SSp}\) is denoted by \(D_{SSp}(S)\). Moreover, \(\mathfrak{M}_{SSp}\) is stable and left proper so it induces a triangulated structure on \(D_{SSp}(S)\).

By localizing \(D_{SSp}(S)\) with morphisms
\[(\Sigma^{\infty}\mathbb{Z}_S(X\times_k\mathbb{A}^1)\longrightarrow\Sigma^{\infty}\mathbb{Z}_S(X))\{-i\},i\geq 0\]
as in Section \ref{effective1}, we get a category with the induced triangulated structure. And localizing that category by
\[(\Sigma^{\infty}\mathbb{Z}_S(X)\{1\}\{-1\}\longrightarrow\Sigma^{\infty}\mathbb{Z}_S(X))\{-i\}\]
as in Section \ref{effective1}, we've got our category \(D_{\mathbb{A}^1}(S)\), with the induced triangulated structure. And we have an exact functor
\[\Sigma^{\infty,st}:D_{\mathbb{A}^1}^{eff}(S)\longrightarrow D_{\mathbb{A}^1}(S).\]

Here are some further properties needed of the stable \(\mathbb{A}^1\)-derived categories, they come from \cite[1.1.7 and Theorem 1.1.10]{DF1}.
\begin{proposition}\label{!}
\begin{enumerate}
\item For any \(f:S\longrightarrow T\) in \(Sm/k\), we have an adjoint pair of exact functors
\[f^*:D_{\mathbb{A}^1}(T)\rightleftharpoons D_{\mathbb{A}^1}(S):f_*.\]
\item For any smooth \(f:S\longrightarrow T\) in \(Sm/k\), we have an adjoint pair of exact functors
\[f_{\#}:D_{\mathbb{A}^1}(S)\rightleftharpoons D_{\mathbb{A}^1}(T):f^*.\]
And for any \(A\in D_{\mathbb{A}^1}(S)\) and \(B\in D_{\mathbb{A}^1}(T)\), we have
\[(f_{\#}A)\otimes B\cong f_{\#}(A\otimes f^*B).\]
\item For any \(f:S\longrightarrow T\) in \(Sm/k\), we have a functor
\[f_!:D_{\mathbb{A}^1}(S)\longrightarrow D_{\mathbb{A}^1}(T).\]
If \(f\) is proper, we have
\[f_!\cong f_*.\]
If \(f\) is smooth, we have
\[f_!\cong f_{\#}(-\otimes(\Sigma^{\infty,st}Th_S(T_{S/T}))^{-1}).\]
\end{enumerate}
\end{proposition}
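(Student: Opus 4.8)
The plan is to recognize Proposition~\ref{!} as a special case of the Grothendieck six functor formalism, which has been set up for the stable \(\mathbb{A}^1\)-derived categories in \cite{DF1} and, more generally, for motivic triangulated categories in \cite{CD1}. Concretely, the first step is to observe that the assignment \(S\longmapsto D_{\mathbb{A}^1}(S)\), together with the operations \(\otimes\), \(f^*\), \(f_*\) and \(f_{\#}\) described above, forms a \emph{motivic triangulated category} in the sense of \cite{CD1}: it is a premotivic triangulated category which is homotopy invariant, stable (the Tate twist is \(\otimes\)-invertible by construction, analogously to Proposition~\ref{invert} but now in \(D_{\mathbb{A}^1}\)), additive and satisfies the localization property. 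Parts (1) and (2) are then nothing but the defining premotivic adjunctions: the pair \((f^*,f_*)\) exists for every \(f\) because \(f^*\) is constructed as a monoidal left adjoint, while for smooth \(f\) the pair \((f_{\#},f^*)\) together with the projection formula \((f_{\#}A)\otimes B\cong f_{\#}(A\otimes f^*B)\) is inherited from the corresponding statements on \(Sh(S)\) and \(SSp(S)\) by the same termwise arguments used for Proposition~\ref{operations1}.

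For part (3) the plan is to follow the classical construction of the exceptional direct image, in the triangulated form of \cite{CD1}. First I would define \(f_!\) on the two elementary classes of morphisms: for an open immersion \(j\) one sets \(j_!=j_{\#}\) (extension by zero), and for a proper morphism \(p\) one sets \(p_!=p_*\). Using Nagata compactification, an arbitrary separated morphism of finite type factors as \(f=p\circ j\) with \(j\) an open immersion and \(p\) proper, and one defines \(f_!=p_*\circ j_{\#}\). The nontrivial verification is that this is independent of the chosen factorization and functorial in \(f\); this rests on proper and smooth base change, which are available since \(D_{\mathbb{A}^1}(-)\) is a motivic triangulated category. The existence of the right adjoint \(f^!\) completing the adjunction then follows from Brown representability for the compactly generated categories \(D_{\mathbb{A}^1}(S)\). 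The identity \(f_!\cong f_*\) for proper \(f\) is immediate, since one may take \(j=\mathrm{id}\).

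The hard part will be the relative purity isomorphism, i.e. the identification \(f_!\cong f_{\#}(-\otimes(\Sigma^{\infty,st}Th_S(T_{S/T}))^{-1})\) for smooth \(f\), equivalently \(f^!\cong(\Sigma^{\infty,st}Th_S(T_{S/T}))\otimes f^*\). This is the genuinely deep input, and it is where the geometry enters. The approach is the usual one: reduce to computing \(f^!\mathbbm{1}_T\) by deformation to the normal cone, which identifies the exceptional fiber with the Thom space of the relative tangent bundle \(T_{S/T}\). It is precisely here that the non-oriented nature of \(\widetilde{CH}\) matters: the Thom space \(Th_S(T_{S/T})\) is \emph{not} a mere Tate twist, so the twist by \((\Sigma^{\infty,st}Th_S(T_{S/T}))^{-1}\) cannot be absorbed into a shift, and the inverse Thom space must genuinely appear. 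That this inverse is well defined in \(\widetilde{DM}^-\), and hence in \(D_{\mathbb{A}^1}\), is exactly the content of Proposition~\ref{symplectic orientation}, which in turn rests on the symplectic Thom isomorphism of Theorem~\ref{Thom}. Assembling these ingredients within the framework of \cite[1.1.7 and Theorem 1.1.10]{DF1} yields the stated formula; the remaining checks that the purity isomorphism so obtained is compatible with composition and base change are routine but lengthy, and I would simply quote them from \cite{DF1} and \cite{CD1} rather than reproduce them.
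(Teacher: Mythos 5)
Your proposal is consistent with the paper, which in fact offers no proof of this proposition at all: it is stated as a quotation of \cite[1.1.7 and Theorem 1.1.10]{DF1}, so your concluding decision to "simply quote them from \cite{DF1} and \cite{CD1}" is exactly what the author does. What you add is an accurate outline of the machinery behind those citations (premotivic adjunctions, \(f_!=p_*\circ j_{\#}\) via Nagata compactification, relative purity by deformation to the normal cone), which is more than the paper provides and is a reasonable reconstruction of the source material. One correction: the invertibility of \(\Sigma^{\infty,st}Th_S(T_{S/T})\) in \(D_{\mathbb{A}^1}(T)\) is \emph{not} supplied by Proposition \ref{symplectic orientation} or Theorem \ref{Thom} of this paper --- those live in \(\widetilde{DM}\) and depend on the transfer structure, and in the paper's logic they are used \emph{downstream} of Proposition \ref{!} to compute the inverse Thom space after transporting the duality from \(D_{\mathbb{A}^1}\) to \(\widetilde{DM}\) via \(\gamma^*\). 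Invertibility of Thom spaces of vector bundles in the stable \(\mathbb{A}^1\)-derived category is part of the general stability/localization package of \cite{A} and \cite{CD1} that \cite{DF1} builds on; citing this paper's symplectic results there would make the argument circular relative to how the paper is organized.
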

\begin{proposition}\label{duality}
Let \(S\in Sm/k\) and \(f:X\longrightarrow S\) be a smooth proper morphism. Then \(\Sigma^{\infty,st}\mathbb{Z}_S(X)\in D_{\mathbb{A}^1}(S)\) is strongly dualizable with dual \(f_{\#}(\Sigma^{\infty,st}Th_X(T_{X/S})^{-1})\).
\end{proposition}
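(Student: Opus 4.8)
The plan is to realize $\Sigma^{\infty,st}\mathbb{Z}_S(X)$ as $f_{\#}\mathbbm{1}_X$ and then to compute the right adjoint of the functor ``tensor with it'' entirely inside the six-operations formalism recorded in Proposition \ref{!}, reading off the dual in the process. First I would record the identification
\[\Sigma^{\infty,st}\mathbb{Z}_S(X)\cong f_{\#}\mathbbm{1}_X,\]
where $\mathbbm{1}_X=\Sigma^{\infty,st}\mathbb{Z}_X(X)$ is the unit of $D_{\mathbb{A}^1}(X)$. This is the analogue, for the stable $\mathbb{A}^1$-derived category, of Proposition \ref{operations}, (1): one has $f_{\#}\mathbb{Z}_X(X)\cong\mathbb{Z}_S(X)$ by the same adjunction argument, and $f_{\#}$ commutes with $\Sigma^{\infty,st}$. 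Writing $M=f_{\#}\mathbbm{1}_X$, the projection formula of Proposition \ref{!}, (2) (applied with $A=\mathbbm{1}_X$) gives, for every $B\in D_{\mathbb{A}^1}(S)$,
\[M\otimes B\cong(f_{\#}\mathbbm{1}_X)\otimes B\cong f_{\#}(\mathbbm{1}_X\otimes f^*B)\cong f_{\#}f^*B,\]
so the endofunctor $M\otimes-$ of $D_{\mathbb{A}^1}(S)$ is naturally isomorphic to $f_{\#}f^*$.

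Next I would take the right adjoint of $f_{\#}f^*$. Since $f_{\#}\dashv f^*$ and $f^*\dashv f_*$ by Proposition \ref{!}, (1) and (2), the composite $f_{\#}\circ f^*$ has right adjoint $f_*\circ f^*$. Now I bring in the two incarnations of $f_!$ from Proposition \ref{!}, (3): because $f$ is proper, $f_*\cong f_!$, and because $f$ is smooth,
\[f_!\cong f_{\#}\bigl(-\otimes(\Sigma^{\infty,st}Th_X(T_{X/S}))^{-1}\bigr).\]
Combining these and applying the projection formula once more, this time with $A=(\Sigma^{\infty,st}Th_X(T_{X/S}))^{-1}$, yields for every $B$
\[f_*f^*B\cong f_!f^*B\cong f_{\#}\bigl(f^*B\otimes(\Sigma^{\infty,st}Th_X(T_{X/S}))^{-1}\bigr)\cong\bigl(f_{\#}(\Sigma^{\infty,st}Th_X(T_{X/S})^{-1})\bigr)\otimes B.\]
Setting $M^\vee=f_{\#}(\Sigma^{\infty,st}Th_X(T_{X/S})^{-1})$, this exhibits $M^\vee\otimes-$ as the right adjoint of $M\otimes-$, which is exactly the dual predicted by the statement.

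Finally I would upgrade this adjunction into honest strong dualizability. Since $D_{\mathbb{A}^1}(S)$ is closed symmetric monoidal, $M\otimes-$ always has right adjoint $\underline{Hom}(M,-)$, so the previous step produces a natural isomorphism $\underline{Hom}(M,-)\cong M^\vee\otimes-$; evaluating at $\mathbbm{1}_S$ identifies $M^\vee$ with $\underline{Hom}(M,\mathbbm{1}_S)$. I would then extract an evaluation $ev\colon M^\vee\otimes M\to\mathbbm{1}_S$ and a coevaluation $coev\colon\mathbbm{1}_S\to M\otimes M^\vee$ from this data and check the two triangle identities. The main obstacle is precisely this last point: producing the right adjoint is formal, but one must verify that the isomorphism $\underline{Hom}(M,-)\cong M^\vee\otimes-$ is realized by the \emph{canonical} duality map $M^\vee\otimes-\to\underline{Hom}(M,-)$, equivalently that the unit and counit of the constructed adjunction are compatible with the monoidal structure through the projection-formula isomorphisms of Proposition \ref{!}, (2). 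Once this compatibility is established, the triangle identities reduce to the corresponding identities for the $(f_{\#},f^*)$ and $(f^*,f_*)$ adjunctions together with properness ($f_*\cong f_!$) and the smooth purity formula for $f_!$, and the conclusion follows.
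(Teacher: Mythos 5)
Your argument is essentially identical to the paper's: both identify \(\Sigma^{\infty,st}\mathbb{Z}_S(X)\otimes_S-\) with \(f_{\#}f^*\) via the projection formula, pass to the right adjoint \(f_*f^*\), and rewrite it as \(f_{\#}(\Sigma^{\infty,st}Th_X(T_{X/S})^{-1})\otimes_S-\) using \(f_*\cong f_!\) (properness) and the smooth purity formula for \(f_!\), exactly as packaged in Proposition \ref{!}. The coherence point you single out at the end --- that the natural isomorphism \(\underline{Hom}(M,-)\cong M^\vee\otimes-\) must be realized by the canonical duality map before strong dualizability can be read off --- is not verified in the paper either, which stops after exhibiting the adjunction isomorphism.
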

\begin{proof}
Pick any \(A, B\in D_{\mathbb{A}^1}(S)\), we have
\begin{align*}
	&Hom_{D_{\mathbb{A}^1}(S)}(\Sigma^{\infty,st}\mathbb{Z}_S(X)\otimes_SA,B)\\
\cong	&Hom_{D_{\mathbb{A}^1}(S)}(f_{\#}f^*A,B)\\
	&\textrm{by Proposition \ref{!}, (2)}\\
\cong	&Hom_{D_{\mathbb{A}^1}(S)}(A,f_*f^*B)\\
	&\textrm{by Proposition \ref{!}, (1) and (2)}\\
\cong	&Hom_{D_{\mathbb{A}^1}(S)}(A,f_!f^*B)\\
	&\textrm{by Proposition \ref{!}, (3)}\\
\cong	&Hom_{D_{\mathbb{A}^1}(S)}(A,f_{\#}(f^*B\otimes_X(\Sigma^{\infty,st}Th_X(T_{X/S}))^{-1}))\\
	&\textrm{by Proposition \ref{!}, (3)}\\
\cong	&Hom_{D_{\mathbb{A}^1}(S)}(A,B\otimes_Sf_{\#}(\Sigma^{\infty,st}Th_X(T_{X/S})^{-1}))\\
	&\textrm{by Proposition \ref{!}, (2)}.
\end{align*}.
\end{proof}
\begin{proposition}
Let \(S\in Sm/k\) and \(f:X\longrightarrow S\) be a smooth proper morphism. Then \(\Sigma^{\infty,st}\widetilde{\mathbb{Z}}_S(X)\in\widetilde{DM}(S)\) is strongly dualizable with dual
\[(\Sigma^{\infty,st}Th_S(\Omega_{X/S}))(-2d)[-4d],\]
where \(d=d_X-d_S\).
\end{proposition}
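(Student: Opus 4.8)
The plan is to transport the duality statement of Proposition \ref{duality}, which lives in the stable $\mathbb{A}^1$-derived category $D_{\mathbb{A}^1}(S)$, across to $\widetilde{DM}(S)$ by means of the monoidal realization functor $\widetilde{\gamma}^*$, and then to identify the resulting dual using the symplectic orientation isomorphism of Proposition \ref{symplectic orientation} together with the additivity of Thom spaces of Proposition \ref{additivity}. Throughout, write $d=d_X-d_S=d_f$, which is the rank of the relative tangent bundle $T_{X/S}$, and recall that $\Omega_{X/S}=T_{X/S}^\vee$.

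First I would recall that the functor $\gamma:Sm/S\longrightarrow\widetilde{Cor}_S$ induces, after passing to symmetric spectra and localizing, a \emph{strong monoidal} exact functor $\widetilde{\gamma}^*:D_{\mathbb{A}^1}(S)\longrightarrow\widetilde{DM}(S)$ which sends $\Sigma^{\infty,st}\mathbb{Z}_S(X)$ to $\Sigma^{\infty,st}\widetilde{\mathbb{Z}}_S(X)$ and commutes with $f_\#$, with $\Sigma^{\infty,st}$, with the formation of Thom spaces $Th_{(-)}(E)$, and with Tate twists and shifts; this is obtained exactly as in Proposition \ref{derived1}, Proposition \ref{spderived1} and the localization arguments of Sections \ref{effective} and \ref{effective1}, using that $\widetilde{\gamma}^*$ carries the defining cones of the $\mathbb{A}^1$- and $\Omega$-localizations to cones of the same shape. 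Since a strong monoidal functor preserves strong dualizability and carries the dual of an object to the dual of its image, Proposition \ref{duality} immediately yields that $\Sigma^{\infty,st}\widetilde{\mathbb{Z}}_S(X)$ is strongly dualizable in $\widetilde{DM}(S)$, with dual $\widetilde{\gamma}^*\big(f_\#((\Sigma^{\infty,st}Th_X(T_{X/S}))^{-1})\big)$. Using that $\widetilde{\gamma}^*$ commutes with $f_\#$, $\Sigma^{\infty,st}$, $Th$ and with passage to monoidal inverses, this dual is identified with $f_\#((\Sigma^{\infty,st}Th_X(T_{X/S}))^{-1})$ computed inside $\widetilde{DM}(S)$.

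It then remains to compute this object. Working in $\widetilde{DM}(X)$, I would apply Proposition \ref{symplectic orientation} to the rank $d$ bundle $E=T_{X/S}$ to obtain
\[(\Sigma^{\infty,st}Th_X(T_{X/S}))^{-1}\cong(\Sigma^{\infty,st}Th_X(\Omega_{X/S}))(-2d)[-4d].\]
Applying $f_\#$ to this isomorphism, commuting $f_\#$ past the Tate twist and shift (Proposition \ref{operations1}, together with $f^*\mathbbm{1}_T(n)\cong\mathbbm{1}_S(n)$), and finally invoking the stable analogue of Proposition \ref{additivity}, (2) to replace $f_\#\Sigma^{\infty,st}Th_X(\Omega_{X/S})$ by $\Sigma^{\infty,st}Th_S(\Omega_{X/S})$, gives the dual $(\Sigma^{\infty,st}Th_S(\Omega_{X/S}))(-2d)[-4d]$, as claimed.

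The hard part will be the bookkeeping around the realization functor $\widetilde{\gamma}^*$: one must check carefully that it descends to a \emph{strong} monoidal exact functor on the fully localized and stabilized categories $D_{\mathbb{A}^1}(S)\to\widetilde{DM}(S)$ and that it is compatible with $\Sigma^{\infty,st}$, $f_\#$, the Thom space construction and the invertibility of the Tate object, since strong monoidality is precisely what transports the evaluation and coevaluation maps witnessing strong dualizability. A secondary technical point is that Proposition \ref{symplectic orientation} and Proposition \ref{additivity} are proved at the effective level, so one must propagate them to the full stabilized category $\widetilde{DM}$ via the compatibility results for the natural inclusions asserted after Proposition \ref{fff}; these are routine but must be spelled out so that the computation of the dual is meaningful.
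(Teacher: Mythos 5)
Your proposal is correct and follows essentially the same route as the paper: transport the duality statement of Proposition \ref{duality} along the monoidal exact functor $\gamma^*:D_{\mathbb{A}^1}(S)\longrightarrow\widetilde{DM}(S)$ (which commutes with $f_{\#}$), then rewrite the inverse Thom space via Proposition \ref{symplectic orientation} and push forward with $f_{\#}$ using the additivity of Thom spaces. Your writeup is more explicit than the paper's about the monoidality and compatibility bookkeeping, but the argument is the same.
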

\begin{proof}
Since we have a monoidal exact functor \(\gamma^*:D_{\mathbb{A}^1}(S)\longrightarrow\widetilde{DM}(S)\) which commutes with \(f_{\#}\) up to a natural isomorphism, \(\Sigma^{\infty,st}\widetilde{\mathbb{Z}}_S(X)\in\widetilde{DM}(S)\) is strongly dualizable with dual \(f_{\#}(\Sigma^{\infty,st}Th_X(T_{X/Y})^{-1})\) by Proposition \ref{duality}. And by Proposition \ref{symplectic orientation},
\[(\Sigma^{\infty,st}Th_X(T_{X/S}))^{-1}\cong(\Sigma^{\infty,st}Th_X(\Omega_{X/S}))(-2d)[-4d].\]
Finally, we have
\[f_{\#}((\Sigma^{\infty,st}Th_X(\Omega_{X/S}))(-2d)[-4d])\cong(\Sigma^{\infty,st}Th_S(\Omega_{X/S}))(-2d)[-4d].\]
\end{proof}
Now we have a monoidal exact functor \(L:\widetilde{DM}^-(pt)\longrightarrow\widetilde{DM}(pt)\) which commutes with \(-\{-i\}, i\geq 0\) up to a natural isomorphism. And by Proposition \ref{fff}, we have
\begin{proposition}\label{duality1}
Let \(X\in Sm/k\) be a proper scheme. Then \(\Sigma^{\infty,st}\widetilde{\mathbb{Z}}_{pt}(X)\in\widetilde{DM}^-(pt)\) is strongly dualizable with dual
\[(\Sigma^{\infty,st}Th_{pt}(\Omega_{X/k}))(-2d_X)[-4d_X].\]
\end{proposition}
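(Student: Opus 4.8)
The plan is to transfer the strong dualizability of \(\Sigma^{\infty,st}\widetilde{\mathbb{Z}}_{pt}(X)\), which is already available in the larger category \(\widetilde{DM}(pt)\) by the preceding proposition applied to \(S=pt\) (so that \(d=d_X\) and \(\Omega_{X/S}=\Omega_{X/k}\)), down to \(\widetilde{DM}^-(pt)\) along the monoidal exact functor \(L:\widetilde{DM}^-(pt)\longrightarrow\widetilde{DM}(pt)\). Write \(M=\Sigma^{\infty,st}\widetilde{\mathbb{Z}}_{pt}(X)\) and \(N=(\Sigma^{\infty,st}Th_{pt}(\Omega_{X/k}))(-2d_X)[-4d_X]\), viewed in \(\widetilde{DM}^-(pt)\). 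Since \(L\) is monoidal, exact and commutes with \(-\{-i\}\) up to natural isomorphism, the compatibility diagram in the proof of Proposition \ref{fff} identifies \(L(M)\) and \(L(N)\) with precisely the object and the dual appearing in the preceding proposition; thus \(L(M)\) is strongly dualizable with dual \(L(N)\), with evaluation and coevaluation maps \(ev:L(N)\otimes L(M)\longrightarrow\mathbbm{1}\) and \(coev:\mathbbm{1}\longrightarrow L(M)\otimes L(N)\) satisfying the two triangle identities.

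The crucial preliminary is to check that every object occurring in the dualizability diagrams lies in the class to which the Hom-isomorphism of Proposition \ref{fff} applies, namely objects of the form \((\Sigma^{\infty,st}A)\{-i\}\) with \(i\geq0\). Unwinding the twist notation, \(N\cong(\Sigma^{\infty,st}Th_{pt}(\Omega_{X/k}))\{-2d_X\}[-2d_X]\), and since \(\Sigma^{\infty,st}\) is exact the shift can be absorbed, giving \(N\cong(\Sigma^{\infty,st}(Th_{pt}(\Omega_{X/k})[-2d_X]))\{-2d_X\}\); likewise \(M=(\Sigma^{\infty,st}\widetilde{\mathbb{Z}}_{pt}(X))\{0\}\) and \(\mathbbm{1}=(\Sigma^{\infty,st}\mathbbm{1}_{pt})\{0\}\). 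Using that \(\Sigma^{\infty,st}\) is monoidal together with the canonical isomorphism \(A\{-i\}\otimes B\cong(A\otimes B)\{-i\}\) from Section \ref{symmetric spectra}, one sees that this class is closed under tensor products and shifts: for instance \(N\otimes M\cong\Sigma^{\infty,st}(Th_{pt}(\Omega_{X/k})\otimes\widetilde{\mathbb{Z}}_{pt}(X))\{-2d_X\}[-2d_X]\), and similarly for \(M\otimes N\), \(M\otimes N\otimes M\) and \(N\otimes M\otimes N\). Hence Proposition \ref{fff} yields, for each such pair \((P,Q)\), an isomorphism \(Hom_{\widetilde{DM}^-(pt)}(P,Q)\xrightarrow{\cong}Hom_{\widetilde{DM}(pt)}(L(P),L(Q))\).

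With this in hand, I would lift the structure maps, defining \(\widetilde{ev}:N\otimes M\longrightarrow\mathbbm{1}\) and \(\widetilde{coev}:\mathbbm{1}\longrightarrow M\otimes N\) to be the unique morphisms in \(\widetilde{DM}^-(pt)\) whose images under \(L\) are \(ev\) and \(coev\), using the Hom-isomorphisms above together with the compatibility of \(L\) with \(\otimes_S\) stated after Proposition \ref{fff}. It then remains to verify the two triangle identities in \(\widetilde{DM}^-(pt)\). Each triangle composite is a morphism \(M\to M\) (respectively \(N\to N\)) between objects in the admissible class, so by the faithfulness contained in Proposition \ref{fff} it suffices to check the identities after applying \(L\). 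There, since \(L(\widetilde{ev})=ev\), \(L(\widetilde{coev})=coev\) and \(L\) is monoidal, the composites become precisely the triangle composites for \(L(M)\) and \(L(N)\), which are identities by the preceding proposition. Therefore \(\widetilde{ev}\) and \(\widetilde{coev}\) exhibit \(M\) as strongly dualizable in \(\widetilde{DM}^-(pt)\) with dual \(N\).

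The main obstacle is the bookkeeping of twists and shifts: one must make sure that the Tate twists and suspensions hidden in \(N=(\cdots)(-2d_X)[-4d_X]\) are carried by \(L\) to the corresponding operations in \(\widetilde{DM}(pt)\), so that \(L(N)\) literally matches the dual produced by the preceding proposition, and that after commuting twists past tensor products all of \(N\otimes M\), \(M\otimes N\) and the triple tensors genuinely fall under the hypothesis \((\Sigma^{\infty,st}A)\{-i\}\) of Proposition \ref{fff}. This is exactly where the monoidality of \(\Sigma^{\infty,st}\) and \(L\), the isomorphism \(A\{-i\}\otimes B\cong(A\otimes B)\{-i\}\), and the stated compatibility of the inclusion with \(\otimes_S\) are all essential. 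Once these identifications are secured, the verification of the triangle identities is purely formal.
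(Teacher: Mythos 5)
Your proposal is correct and follows essentially the same route as the paper, which simply invokes the monoidal exact functor \(L:\widetilde{DM}^-(pt)\longrightarrow\widetilde{DM}(pt)\), its compatibility with \(-\{-i\}\), and Proposition \ref{fff} to transfer the duality established in \(\widetilde{DM}(pt)\); your write-up merely makes explicit the lifting of the evaluation and coevaluation maps and the verification that all objects involved lie in the class \((\Sigma^{\infty,st}A)\{-i\}\), which the paper leaves implicit.
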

\begin{theorem}\label{embedding}
Let \(X, Y\in Sm/k\) with \(Y\) proper, then we have
\[Hom_{\widetilde{DM}^{eff,-}(pt)}(\widetilde{\mathbb{Z}}_{pt}(X),\widetilde{\mathbb{Z}}_{pt}(Y))\cong\widetilde{CH}^{d_Y}(X\times Y,-T_{X\times Y/X}).\]
\end{theorem}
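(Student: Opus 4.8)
The plan is to reduce the computation to the motivic cohomology of a Thom space over \(X\times Y\), using the full faithfulness of \(\Sigma^{\infty,st}\) (Proposition \ref{ff3}) and the strong duality of Proposition \ref{duality1}, and then to identify that motivic cohomology with a twisted Chow-Witt group by a support version of Proposition \ref{base} combined with the Thom isomorphism of Axiom \ref{P-FCI}. The point of routing through the stable category \(\widetilde{DM}^-(pt)\) is that there \(\widetilde{\mathbb{Z}}_{pt}(Y)\) becomes dualizable, since \(Y\) is proper.

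First I would pass to the stable category and dualize. By Proposition \ref{ff3} the left-hand side equals \(Hom_{\widetilde{DM}^-(pt)}(\Sigma^{\infty,st}\widetilde{\mathbb{Z}}_{pt}(X),\Sigma^{\infty,st}\widetilde{\mathbb{Z}}_{pt}(Y))\), and by Proposition \ref{duality1} the object \(\Sigma^{\infty,st}\widetilde{\mathbb{Z}}_{pt}(Y)\) is strongly dualizable with dual \((\Sigma^{\infty,st}Th_{pt}(\Omega_{Y/k}))(-2d_Y)[-4d_Y]\). The duality adjunction \(Hom(A,M)\cong Hom(A\otimes M^\vee,\mathbbm{1}_{pt})\), the monoidality of \(\Sigma^{\infty,st}\), and transport of the invertible twist \((-2d_Y)[-4d_Y]\) to the target (Proposition \ref{invert}) rewrite this as
\[Hom_{\widetilde{DM}^-(pt)}\big(\Sigma^{\infty,st}(\widetilde{\mathbb{Z}}_{pt}(X)\otimes_{pt}Th_{pt}(\Omega_{Y/k})),\ \mathbbm{1}_{pt}(2d_Y)[4d_Y]\big).\]

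Next I would compute the tensor factor by projection formulas. Writing \(f:Y\to pt\), \(g:X\times_k Y\to Y\) for the projections and \(h=f\circ g\), Proposition \ref{additivity}(2) gives \(Th_{pt}(\Omega_{Y/k})=f_{\#}Th_Y(\Omega_{Y/k})\), Proposition \ref{sheaf formulas}(1) and Proposition \ref{operations}(1) give \(f^*\widetilde{\mathbb{Z}}_{pt}(X)\cong\widetilde{\mathbb{Z}}_Y(X\times_k Y)\cong g_{\#}\mathbbm{1}_{X\times Y}\), and two applications of the projection formula Proposition \ref{operations1}(2), together with \(g^*Th_Y(\Omega_{Y/k})\cong Th_{X\times Y}(\Omega_{X\times Y/X})\) (Proposition \ref{additivity}(1), using \(g^*\Omega_{Y/k}=\Omega_{X\times Y/X}\)), yield
\[\widetilde{\mathbb{Z}}_{pt}(X)\otimes_{pt}Th_{pt}(\Omega_{Y/k})\cong h_{\#}Th_{X\times Y}(\Omega_{X\times Y/X})=Th_{pt}(\Omega_{X\times Y/X}).\]
Descending back to the effective category by full faithfulness (Proposition \ref{ff3}), the problem becomes the identification
\[Hom_{\widetilde{DM}^{eff,-}(pt)}\big(Th_{pt}(\Omega_{X\times Y/X}),\ \widetilde{\mathbb{Z}}_{pt}(2d_Y)[4d_Y]\big)\cong\widetilde{CH}^{d_Y}(X\times Y,-T_{X\times Y/X}).\]

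The hard part is this last identification, which is the twisted analog of Proposition \ref{base} for a \emph{genuinely non-oriented} bundle: \(\Omega_{X\times Y/X}\) cannot be trivialized by an orientation, so Theorem \ref{Thom} does not apply. I would establish it in two moves. Writing \(V\) for the total space of \(\Omega_{X\times Y/X}\) with zero section \(s:X\times Y\hookrightarrow V\), the defining cofiber sequence \(\widetilde{\mathbb{Z}}_{pt}(V\setminus(X\times Y))\to\widetilde{\mathbb{Z}}_{pt}(V)\to Th_{pt}(\Omega_{X\times Y/X})\) and Proposition \ref{base} identify the target \(Hom\) with the Chow-Witt group of \(V\) in codimension \(2d_Y\) supported on the zero section, \(\widetilde{CH}^{2d_Y}_{X\times Y}(V)\) (the degree/weight match \(4d_Y=2\cdot 2d_Y\) is exactly the range of Proposition \ref{base}). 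Then the Thom isomorphism \(s_*\) of Axiom \ref{P-FCI}, applied to the zero section with normal bundle \(N_s=\Omega_{X\times Y/X}\), \(d_s=-d_Y\) and untwisted target, gives
\[\widetilde{CH}^{d_Y}(X\times Y,\Omega_{X\times Y/X})\xrightarrow{\ \cong\ }\widetilde{CH}^{2d_Y}_{X\times Y}(V),\]
and finally \(\Omega_{X\times Y/X}=-T_{X\times Y/X}\) in \(\mathscr{P}_{X\times Y}\) (equal determinant line \(\omega_{X\times Y/X}\) and rank mod \(2\)), which also reconciles the statement with the form \(\widetilde{CH}^{d_Y}(X\times Y,\omega_{X\times Y/X})\) of the introduction. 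The principal obstacle is making the support version of Proposition \ref{base} precise and checking its compatibility, via the localization sequence and the intersection pairing of Proposition \ref{base}, with the Thom isomorphism of Axiom \ref{P-FCI}, so that the composite is exactly the asserted isomorphism.
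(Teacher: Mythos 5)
Your proposal follows essentially the same route as the paper's proof: stabilize via Proposition \ref{ff3}, dualize via Proposition \ref{duality1} and Proposition \ref{invert}, collapse the tensor factor to \(Th_{pt}(\Omega_{X\times Y/X})\) by the same chain of projection formulas, and descend to the effective category. The only difference is the final identification of \(Hom(Th_{pt}(\Omega_{X\times Y/X}),\mathbbm{1}_{pt}(2d_Y)[4d_Y])\) with the twisted Chow--Witt group, which the paper simply cites from the discussion after \cite[Remark 4.2.7]{DF} and which your localization-plus-Thom-isomorphism sketch correctly reconstructs.
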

\begin{proof}
Let \(p:Y\longrightarrow pt\) be the structure map of \(Y\) and \(q:X\times Y\longrightarrow Y\) be the second projection. We have
\begin{align*}
	&Hom_{\widetilde{DM}^{eff,-}(pt)}(\widetilde{\mathbb{Z}}_{pt}(X),\widetilde{\mathbb{Z}}_{pt}(Y))\\
\cong	&Hom_{\widetilde{DM}^-(pt)}(\Sigma^{\infty,st}\widetilde{\mathbb{Z}}_{pt}(X),\Sigma^{\infty,st}\widetilde{\mathbb{Z}}_{pt}(Y))\\
	&\textrm{by Proposition \ref{ff3}}\\
\cong	&Hom_{\widetilde{DM}^-(pt)}(\Sigma^{\infty,st}\widetilde{\mathbb{Z}}_{pt}(X)\otimes(\Sigma^{\infty,st}Th_{pt}(\Omega_{Y/k}))(-2d_Y)[-4d_Y],\Sigma^{\infty,st}\mathbbm{1}_{pt})\\
	&\textrm{by Proposition \ref{duality1}}\\
\cong	&Hom_{\widetilde{DM}^-(pt)}(\Sigma^{\infty,st}\widetilde{\mathbb{Z}}_{pt}(X)\otimes(\Sigma^{\infty,st}Th_{pt}(\Omega_{Y/k})),\Sigma^{\infty,st}\mathbbm{1}_{pt}(2d_Y)[4d_Y])\\
	&\textrm{by Proposition \ref{invert}}\\
\cong	&Hom_{\widetilde{DM}^-(pt)}(\Sigma^{\infty,st}(\widetilde{\mathbb{Z}}_{pt}(X)\otimes Th_{pt}(\Omega_{Y/k})),\Sigma^{\infty,st}\mathbbm{1}_{pt}(2d_Y)[4d_Y])\\
\cong	&Hom_{\widetilde{DM}^{eff,-}(pt)}(\widetilde{\mathbb{Z}}_{pt}(X)\otimes Th_{pt}(\Omega_{Y/k}),\mathbbm{1}_{pt}(2d_Y)[4d_Y])\\
	&\textrm{by Proposition \ref{ff3}}\\
\cong	&Hom_{\widetilde{DM}^{eff,-}(pt)}(\widetilde{\mathbb{Z}}_{pt}(X)\otimes p_{\#}Th_Y(\Omega_{Y/k}),\mathbbm{1}_{pt}(2d_Y)[4d_Y])\\
	&\textrm{by Proposition \ref{additivity}}\\
\cong	&Hom_{\widetilde{DM}^{eff,-}(pt)}(p_{\#}(p^*\widetilde{\mathbb{Z}}_{pt}(X)\otimes Th_Y(\Omega_{Y/k})),\mathbbm{1}_{pt}(2d_Y)[4d_Y])\\
	&\textrm{by Proposition \ref{operations1}}\\
\cong	&Hom_{\widetilde{DM}^{eff,-}(pt)}(p_{\#}(\widetilde{\mathbb{Z}}_Y(X\times Y)\otimes Th_Y(\Omega_{Y/k})),\mathbbm{1}_{pt}(2d_Y)[4d_Y])\\
\cong	&Hom_{\widetilde{DM}^{eff,-}(pt)}(p_{\#}(q_{\#}(\mathbbm{1}_{X\times Y})\otimes Th_Y(\Omega_{Y/k})),\mathbbm{1}_{pt}(2d_Y)[4d_Y])\\
\cong	&Hom_{\widetilde{DM}^{eff,-}(pt)}(p_{\#}(q_{\#}q^*Th_Y(\Omega_{Y/k})),\mathbbm{1}_{pt}(2d_Y)[4d_Y])\\
	&\textrm{by Proposition \ref{operations1}}\\
\cong	&Hom_{\widetilde{DM}^{eff,-}(pt)}(Th_{pt}(\Omega_{X\times Y/X}),\mathbbm{1}_{pt}(2d_Y)[4d_Y])\\
	&\textrm{by Proposition \ref{additivity}}\\
\cong	&\widetilde{CH}^{d_Y}(X\times Y,-T_{X\times Y/X})\\
	&\textrm{by discussion after \cite[Remark 4.2.7]{DF}}.
\end{align*}
\end{proof}

Now we could define a category of \(\widetilde{Cor}_k'\) to be a category with objects being proper schemes in \(Sm/k\) and
\[Hom_{\widetilde{Cor}_k'}(X,Y)=\widetilde{CH}^{d_Y}(X\times Y,-T_{X\times Y/X}).\]
It's a category because by the same proof as in Proposition \ref{associativity} and Proposition \ref{identity}.
\begin{definition}\label{effective Chow-Witt}
Define the category of effective Chow-Witt motives \(\widetilde{CH}^{eff}\) by the idempotent completion (See \cite[Definition 1.2]{BS}) of the opposite category of \(\widetilde{Cor}_k'\).
\end{definition}
\begin{proposition}
The opposite category of \(\widetilde{CH}^{eff}\) is a full-subcategory of \(\widetilde{DM}^{eff,-}(pt)\).
\end{proposition}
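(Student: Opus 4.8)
The plan is to exhibit a fully faithful functor from the opposite category of $\widetilde{CH}^{eff}$ into $\widetilde{DM}^{eff,-}(pt)$. First I would observe that idempotent completion commutes with passage to the opposite category, so that $(\widetilde{CH}^{eff})^{op}=((\widetilde{Cor}_k')^{op})^{\natural,op}$ is precisely the idempotent completion (Karoubi envelope) of $\widetilde{Cor}_k'$. Hence it suffices to construct a fully faithful functor
\[\Phi:\widetilde{Cor}_k'\longrightarrow\widetilde{DM}^{eff,-}(pt),\qquad X\longmapsto\widetilde{\mathbb{Z}}_{pt}(X),\]
and then to extend it to the idempotent completion, provided the target is idempotent complete.

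On Hom-groups $\Phi$ is defined by the isomorphism of Theorem \ref{embedding}, which identifies $Hom_{\widetilde{Cor}_k'}(X,Y)=\widetilde{CH}^{d_Y}(X\times Y,-T_{X\times Y/X})$ with $Hom_{\widetilde{DM}^{eff,-}(pt)}(\widetilde{\mathbb{Z}}_{pt}(X),\widetilde{\mathbb{Z}}_{pt}(Y))$. With this definition full faithfulness is automatic, so the only thing to check is that $\Phi$ is genuinely a functor: that it sends the identity correspondence $\widetilde{\gamma}(\mathrm{id}_X)$ to $\mathrm{id}_{\widetilde{\mathbb{Z}}_{pt}(X)}$, and that it is compatible with composition.

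The main obstacle is exactly this compatibility with composition. The composition law in $\widetilde{Cor}_k'$ is built, as in Proposition \ref{associativity} and Proposition \ref{identity}, out of pull-backs, exterior products and (proper) push-forwards along the projections of the triple products $X\times Y\times Z$. To verify that $\Phi(g\circ f)=\Phi(g)\circ\Phi(f)$ I would unwind the chain of isomorphisms used in the proof of Theorem \ref{embedding} — the strong duality of $\widetilde{\mathbb{Z}}_{pt}(Y)$ from Proposition \ref{duality1}, the symplectic orientation isomorphism of Proposition \ref{symplectic orientation}, and the adjunction and Thom isomorphisms of Proposition \ref{operations1} and Proposition \ref{additivity} — and track a pair of composable correspondences through it. The claim is that under these identifications the correspondence composition, a push-forward of a product of pull-backs, is transported to the genuine composition of the corresponding maps of motives; this reduces to the projection formula (Axiom \ref{PFSM}), the base-change isomorphisms (Axiom \ref{BCSM}) and the functoriality of push-forward (Axiom \ref{FPFSM}) already established, in the same spirit as the compatibilities proved for $\widetilde{\gamma}$ in Section \ref{Sheaves} and as in \cite[Chapter 20]{MVW} in the oriented case. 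This bookkeeping, though lengthy, is formal once the duality isomorphism of Theorem \ref{embedding} is checked to be natural in both variables.

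Finally I would extend $\Phi$ to the idempotent completion of $\widetilde{Cor}_k'$: by the universal property of the Karoubi envelope a fully faithful functor into an idempotent complete category extends uniquely to a fully faithful functor on the idempotent completion. It therefore remains to confirm that $\widetilde{DM}^{eff,-}(pt)$ is idempotent complete, and this is the one secondary point requiring care. I would deduce it from the existence of the relevant countable coproducts in the localized category — guaranteed by the closure of $\mathscr{E}_{\mathbb{A}}$ under arbitrary direct sums — together with the Bökstedt--Neeman splitting principle that a triangulated category admitting countable coproducts has all idempotents split. Combining this with the full faithfulness of $\Phi$ yields the embedding of the opposite of $\widetilde{CH}^{eff}$ as a full subcategory of $\widetilde{DM}^{eff,-}(pt)$.
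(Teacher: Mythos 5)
Your overall strategy---a fully faithful functor $\widetilde{Cor}_k'\longrightarrow\widetilde{DM}^{eff,-}(pt)$ furnished by Theorem \ref{embedding}, followed by idempotent completeness of the target and the universal property of the Karoubi envelope---is exactly the paper's. Your insistence on checking that the bijection of Theorem \ref{embedding} intertwines the correspondence composition with composition of maps of motives is a point the paper treats rather lightly (it only remarks, before Definition \ref{effective Chow-Witt}, that $\widetilde{Cor}_k'$ is a category ``by the same proof as in Proposition \ref{associativity} and Proposition \ref{identity}''), so raising it is reasonable and your sketch of how to verify it is plausible.

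The genuine problem is your justification of idempotent completeness of $\widetilde{DM}^{eff,-}(pt)$. You claim the relevant countable coproducts exist in the localized category because $\mathscr{E}_{\mathbb{A}}$ is closed under arbitrary direct sums; but closure of the \emph{localizing subcategory} under direct sums says nothing about the ambient category having them, and $D^-(pt)$ --- hence $\widetilde{DM}^{eff,-}(pt)$ --- simply does not admit arbitrary countable coproducts: a countable family of bounded above complexes with unbounded amplitude has no coproduct there. So the B\"okstedt--Neeman splitting principle does not apply as stated. (It could be patched: splitting a single idempotent $e:K\to K$ only requires coproducts of countably many copies of the \emph{fixed} object $K$, which do remain bounded above and are preserved by the localization since $\mathscr{E}_{\mathbb{A}}$ is closed under direct sums; but you would have to say this, and re-run the telescope argument with only these coproducts available.) The paper avoids the issue entirely by a different route: it splits the idempotent in the unbounded category $\widetilde{DM}^{eff}(pt)$, which does have all coproducts and is therefore idempotent complete by \cite[Proposition 1.6.8]{N}, and then uses the fully faithful inclusion of Proposition \ref{triangulated1} together with the functor $C_*$ and \cite[Corollary 3.2.11]{DF} to show that the resulting summand $I$ is isomorphic to an object of $D^-(pt)$, hence lies in $\widetilde{DM}^{eff,-}(pt)$. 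Either you adopt that descent argument or you make the restricted-coproduct version of B\"okstedt--Neeman precise; as written, this step of your proof does not go through.
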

\begin{proof}
We have a fully faithful functor
\[\widetilde{Cor}_k'\longrightarrow\widetilde{DM}^{eff,-}(pt)\]
by Theorem \ref{embedding}. The category \(\widetilde{DM}^{eff}(pt)\) has infinite direct sums so it's idempotent complete (See \cite[Proposition 1.6.8]{N}). Now suppose \(g:K\longrightarrow K\) is an idempotent in \(\widetilde{DM}^{eff,-}(pt)\). It splits in \(\widetilde{DM}^{eff}(pt)\) with the image morphism \(f:K\longrightarrow I\) and the section \(s:I\longrightarrow K\). Then \(C_*(g)\) also splits with the image morphism \(C_*(f)\) and the section \(C_*(s)\) by \cite[Corollary 3.2.11]{DF}. Now \(f\) and \(s\) comes from \(D(pt)\) by \textit{loc. cit.}, hence \(C_*(I)\) is isomorphic to an object in \(D^-(pt)\). Hence \(I\) is isomorphic to an object in \(\widetilde{DM}^{eff,-}(pt)\). Hence \(\widetilde{DM}^{eff,-}(pt)\) is idempotent complete thus we get a functor (See \cite[Proposition 1.3]{BS})
\[\widetilde{CH}^{eff}\longrightarrow\widetilde{DM}^{eff,-}(pt)\]
which is also fully faithful by the construction of idempotent completion.
\end{proof}
\textbf{Acknowledgements. }The author would like to thank his PhD advisor J. Fasel for giving me this subject and helping during the subsequent research, and F. D\'eglise for helpful discussions.\\

\end{document}